\documentclass[11pt]{amsart}

\usepackage{parskip}
\allowdisplaybreaks

\usepackage{geometry} 
\usepackage{framed} 

\usepackage{tikz}
\tikzset{every picture/.style={line width=1pt}} 

\usepackage{setspace}
\setdisplayskipstretch{2.5}

\usepackage{amsthm} 
\usepackage{amsmath}
\usepackage{amssymb}
\usepackage{mathrsfs}

\usepackage{hyperref}

\usepackage{amsfonts} 
\newcommand\A{\mathcal{A}}

\newcommand\CC{\mathcal{C}}

\newcommand\BC{\mathbf{C}}
\newcommand\CE{\mathcal{E}}
\newcommand\CF{\mathcal{F}}

\newcommand\G{\mathcal{G}}
\renewcommand\H{\mathcal{H}}

\newcommand\N{\mathbb{N}}

\newcommand\R{\mathbb{R}}
\renewcommand\S{\mathcal{S}}
\newcommand\V{\mathbb{V}}
\newcommand\Z{\mathbb{Z}}

\newcommand\one{\mathbf{1}}

\newcommand\w{\omega}
\newcommand\vphi{\varphi}
\renewcommand\phi{\vphi}
\newcommand\eps{\varepsilon}

\newcommand\Etilt{\mathbf{\hat{E}}}

\usepackage{stmaryrd} 

\newcommand\id{\textnormal{id}}
\newcommand\sing{\textnormal{sing}}
\newcommand\reg{\textnormal{reg}}
\newcommand\spt{\textnormal{spt}}
\newcommand\Lip{\textnormal{Lip}}
\newcommand\dist{\textnormal{dist}}
\newcommand\graph{\textnormal{graph}}
\newcommand\ext{\mathrm{d}}
\newcommand\del{\partial}

\newcommand{\res}{\mathbin{\hspace{0.1em}\vrule height 1.3ex depth 0pt width 0.13ex\vrule height 0.13ex depth 0pt width 1.0ex}} 
\newcommand\level{\textnormal{level}}
\newcommand\grad{\textnormal{grad}}

\newcommand{\weakly}{\rightharpoonup}
\renewcommand{\div}{\textnormal{div}}

\makeatletter
\def\@tocline#1#2#3#4#5#6#7{\relax
  \ifnum #1>\c@tocdepth 
  \else
    \par \addpenalty\@secpenalty\addvspace{#2}%
    \begingroup \hyphenpenalty\@M
    \@ifempty{#4}{%
      \@tempdima\csname r@tocindent\number#1\endcsname\relax
    }{%
      \@tempdima#4\relax
    }%
    \parindent\z@ \leftskip#3\relax \advance\leftskip\@tempdima\relax
    \rightskip\@pnumwidth plus4em \parfillskip-\@pnumwidth
    #5\leavevmode\hskip-\@tempdima
      \ifcase #1
       \or\or \hskip 1em \or \hskip 2em \else \hskip 3em \fi%
      #6\nobreak\relax
    \dotfill\hbox to\@pnumwidth{\@tocpagenum{#7}}\par
    \nobreak
    \endgroup
  \fi}
\makeatother

\newtheoremstyle{newtheoremstyle}
{3pt}
{3pt}
{\itshape}
{\parindent}
{\bfseries}
{.}
{0.5em}
{} 

\newtheoremstyle{newtheoremstyledefn}
{3pt}
{3pt}
{}
{\parindent}
{\bfseries}
{.}
{0.5em}
{}

\theoremstyle{newtheoremstyle}
\newtheorem{theorem}{Theorem}
\newtheorem*{theorem*}{Theorem}
\newtheorem{lemma}[theorem]{Lemma}
\newtheorem{prop}[theorem]{Proposition}
\newtheorem{corollary}[theorem]{Corollary}

\newtheorem{thmx}{Theorem}

\theoremstyle{newtheoremstyledefn}
\newtheorem{defn}[theorem]{Definition}

\numberwithin{equation}{section} 
\numberwithin{theorem}{section}

\newtheorem{remark}[theorem]{Remark}

\usepackage{fancyhdr}
\begin{document}

\title{A Branch Set Stratification for Stationary Varifolds with Epsilon-Regularity}

\author{
	Brian Krummel
	\and
	Paul Minter
	\and
	Neshan Wickramasekera
}

\address{\textnormal{School of Mathematics \& Statistics, University of Melbourne}}
\email{brian.krummel@unimelb.edu.au}
\address{\textnormal{Department of Pure Mathematics and Mathematical Statistics, University of Cambridge}}
\email{pdtwm2@cam.ac.uk}
\address{\textnormal{Department of Pure Mathematics and Mathematical Statistics, University of Cambridge}}
\email{N.Wickramasekera@dpmms.cam.ac.uk}

\begin{abstract}
Suppose $\mathcal{V}$ is a class of stationary integral $n$-varifolds in $B^{n+k}_2(0)\subset\R^{n+k}$ which
is closed under weak limits, homotheties, rotations, and disjoint decomposition, and suppose that $\mathcal {V}$ satisfies an $\eps$-regularity property near planes of (integer) multiplicity $\leq Q\in \{2,3,\dotsc\}$. This last condition, more precisely, requires that there be a constant $\epsilon = \epsilon({\mathcal V}, Q) \in (0, 1)$ such that if $V\in \mathcal{V}$ is, in the unit cylinder ${\mathbb R}^{k} \times B_{1}^{n}(0)$, $\epsilon$-close as varifolds to the plane $\{0\} \times {\mathbb R}^{n}$ taken with multiplicity $\leq Q$ then, in the half-cylinder ${\mathbb R}^{k} \times B_{1/2}^{n}(0)$, $V$ is represented by the graph of a Lipschitz multi-valued function over $B_{1/2}^{n}(0)$ with uniform quantitative estimates of a $C^{1,\alpha}$ nature. For any varifold in such a class $\mathcal{V}$, we prove that the set of branch points with density $\leq Q$ has Hausdorff dimension $\leq n-2$.  
	
By choosing suitable $\mathcal{V}$, a direct consequence of this result and the recently established regularity theorems in \cite{BKMW25, MW24} is that if $V$ is a stationary integral $n$-varifold which is either: (a) represented by the graph of a $2$-valued Lipschitz function; or (b) codimension one, stable, and with no classical singularities of density $<Q$,  then the Hausdorff dimension of the density $Q$ branch set ($Q=2$ in (a)) is at most $n-2$. 
    
The result in (a)  together with the earlier work of Becker-Kahn \cite{BK17} improves on   the recent work of Hirsch--Spolaor \cite{HS24} (which proved the singular-set dimension upper bound $n-1$ for stationary 2-valued Lipschitz graphs) by providing a structural decomposition of the singular set as the union of a piece contained within an $(n-1)$-dimensional $C^{1, \alpha}$ submanifold and a piece that is a closed set of Hausdorff dimension $\leq n-2$. 
    
The varifold class in (b) includes as a special case area minimising hypersurfaces mod $p;$  thus, our  result in this case together with \cite{MW24} subsumes the recent work of De~Lellis--Hirsch--Marchese--Spolaor--Stuvard \cite{DLHMSS22}  (while  also providing the additional conclusion that the mod $p$ minimising hypersurface near every branch point is a Lipschitz multi-valued graph with generalised-$C^{1, \alpha}$ regularity).  
	
Our proof utilises the planar frequency function introduced by the first and third authors in their work on area minimising currents (\cite{KW23a}), and thus does not require the Almgren center manifold for the analysis of branch points except in a single, geometrically canonical case where the center manifold satisfies additional simplifying properties. We prove that the planar frequency is approximately monotone at density $Q$ branch points for $V\in\mathcal{V}$. We then partition the density $Q$ branch set into two pieces, namely those points where the planar frequency value is $\neq 2$ and those where it is exactly $2$. The former we show can be stratified, directly using the planar frequency function, by the spine dimension of tangent maps relative to the tangent plane at branch points. For any point in the latter, we construct a single center manifold which  touches $V$ at that point and at all nearby branch points of the same type, and stratify such branch points by the spine dimension of  the tangent maps arising from a linearisation procedure of $V$ relative to this center manifold.  
    
For situations where  energy convergence in the center-manifold linearisation procedure is not readily available, our analysis uses an elegant alternative argument of Hirsch--Spolaor in \cite{HS24} that treats these tangent maps as multi-valued gradient Young measures. 
\end{abstract} 

\maketitle

\tableofcontents

\section{Introduction}

Understanding the size of the branch set of general stationary integral $n$-varifolds is a question lying at the heart of geometric measure theory. A well-known open question is whether the singular set (or, equivalently, the branch set) can have positive $\H^n$-measure.\footnote{We note that when the mean curvature is non-zero singular sets of positive $\H^n$-measure can occur, even if the mean curvature is arbitrarily close to $0$ in $L^\infty$.}
Even the existence question, namely whether within a given closed Riemannian manifold $(M^{n+k},g)$ one can always find a closed, partially regular, $n$-dimensional minimal surface, is completely open except when $k=1$ (or $n=1$). The Almgren--Pitts min-max theory provides the existence of a stationary integral $n$-varifold, but extremely little is known about its regularity, precisely due to the possibility of a large branch set. Indeed, essentially all that is known in this generality is that the (open) set of regular points is a dense subset of the support of the varifold, which follows from the regularity theory of Allard \cite{All72}.

Let us briefly elaborate on two situations where significant progress has been made. The first is when the codimension is one and the varifold is additionally known to be stable; this is the situation one has in the Almgren--Pitts min-max theory when $k=1$. The work of Schoen--Simon \cite{SS81} gives a regularity and compactness theory for stable minimal hypersurfaces which are \emph{a priori known} to have sufficiently small singular sets (an assumption which, by virtue of the regularity theory for area-minimising hypersurfaces, can be  verified in the context of the Almgren--Pitts min-max theory), showing that the singular set must in fact have Hausdorff dimension $\leq n-7$; see also the recent breakthrough of Bellettini \cite{Bel25}  for extensions. Work of the third author \cite{Wic14a} provides a generalisation of the Schoen--Simon theory. This latter theory has been utilised to give an alternative proof of the existence of minimal hypersurfaces in closed Riemannian manifolds; this  approach uses, in place of direct varifold min-max constructions, a much more elementary PDE theoretic min-max principle applied to the Allen--Cahn equation (\cite{Gua18}). The energy concentration set arising from these PDE min-max solutions is a stationary integral varifold (\cite{HT}) to which the regularity theory of \cite{Wic14a} applies (see \cite{TW12}). Most recently, the work \cite{Wic14a} has been adapted in a similar phase-transition approach to the existence of optimally-regular, constant-mean-curvature and prescribed-mean-curvature hypersurfaces in closed Riemannian manifolds (see \cite{BW18, BW19, BW20}). 
On the regularity side, building on the methods of \cite{Wic14a} and  a key estimate from \cite{Bel25} (and aided also by an intermediate result of Hong--Li--Wang (\cite{HLW24})), a sharp regularity and compactness theory for stable \emph{immersed} hypersurfaces whose singular (i.e.\ non-immersed) points form an ${\mathcal H}^{n-2}$-null set has very recently been established by the second author and Xiao (\cite{MX26}), giving the conclusion that the singular set has Hausdorff dimension at most $n-7$.   

One of the key results in \cite{SS81, Wic14a} is showing that branch points \emph{do not occur} under the given assumptions, and it is through ruling out branch points \emph{entirely} that one is able to deduce further information on the size of the singular set. This is analogous to how in the classical regularity theory of area minimising hypersurfaces, one first rules out branch points using a decomposition of the current into boundaries of Caccioppoli sets (see \cite{Sim83a}),  whose singular points, by a theorem of De~Giorgi (\cite{DG61}, see also \cite{Sim83a}), cannot have tangent cones which are planes. As such, the branch set is empty in these situations.

The other situation where significant progress has been made on the regularity question is area-minimising rectifiable currents of codimension $\geq 2.$ The Federer--Fleming compactness theorem (\cite{FF60}) implies the existence of area (mass)-minimising integral currents (cycles) arising as representatives of non-trivial integral homology classes of  compact  Riemannian manifolds. Similarly, subject to appropriate conditions, this powerful compactness theorem also provides solutions to the (oriented) Plateau problem, producing area-minimisers in their relative homology classes. When the codimension is $\geq 2$, these solutions \emph{can} develop branch points, and it is precisely for this reason that the regularity question in higher codimension is significantly more complicated than in codimension 1. 

Almgren's monumental 1983 work \cite{Alm00} (clarified later by De~Lellis--Spadaro \cite{DLS14, DLS16a, DLS16b}) showed that the Hausdorff dimension of the (interior) singular set of such a minimiser $T$ is $\leq n-2$ where $n$ is the dimension of $T$. This bound is optimal when the codimension is $\geq 2$ (but can be improved to $\leq n-7$ in codimension 1 as discussed above).  With regard to Almgren's method of proof, a major source of additional difficulties is a rather complicated iterative construction of so-called \emph{center manifolds} corresponding to each branch point. This aspect of the classical theory, and the exceeding technical output needed for this procedure, seem unavoidable if the question of the size of the singular set is prioritised and studied in isolation (as done in the classical theory). 

Very recently, a new  framework for this problem has been developed by the first and the third author (see \cite{KW23a, KW23b, KW26c, KW26d, KW26e}). In this approach, the focus is shifted to the local structure of the current. In particular, the question of ${\mathcal H}^{n-2}$-a.e.~uniqueness of tangent cones is tied intimately to the question of the size of the singular set (and to subsequent structural questions). By introducing \emph{planar frequency} -- a new intrinsic frequency function for the current which is shown to be approximately monotone subject to a decay condition -- this approach simultaneously resolves a number of questions \emph{without} employing center manifolds; namely: the $\H^{n-2}$ a.e.\ uniqueness of tangent cones; the dimension of the part of the branch set that is most technically demanding (from the classical view point of using center manifolds); local structural decomposition of this part as a finite union of disjoint locally $(n-2)$-rectifiable pieces; and a similar local structural decomposition of the set of all non-branch point singularities. This is then followed by the dimension bound for the full singular set (and other outcomes), for which a center manifold is invoked only to handle a canonical case where this device becomes necessary and at the same time, importantly, satisfies \emph{key additional geometric properties}. In addition to providing a considerable simplification of Almgren's argument, this limited use of center manifolds is in fact what paves the way to a general local structural study of the current, giving a higher-dimensional generalization of the two-dimensional theory of Chang, White, and Micallef--White (\cite{Cha88, W, MW}). The specific overall new results obtained, in addition to $\H^{n-2}$-a.e.~uniqueness of tangent cones, include: higher order asymptotics at $\H^{n-2}$-a.e.~branch point; a local structural decomposition of the singular set into a finite disjoint union of locally compact, locally $(n-2)$-rectifiable sets; and frequency conditions for a branch point to have a neighborhood in which the support of the current is homeomorphic to an $n$-disk and admits a $C^{1, \mu}$ parameterisation -- a conclusion directly analogous to the Micallef--White asymptotic description of a two-dimensional current near a branch point where the current is irreducible. We mention for completeness that in contemporaneous and independent work, De~Lellis and Skorobogatova (\cite{DelSko1, DelSko2}) have taken an approach based on Almgren's iterative center manifold constructions for all branch points, which together with the work of the second author with De~Lellis and Skorobogatova  (\cite{DLMS23}) provides two of the above conclusions: $\H^{n-2}$-a.e.~uniqueness of tangent cones and countable $(n-2)$-rectifiability of the singular set. These two approaches are conceptually and technically significantly different, except for a certain excess-decay lemma in \cite{DLMS23} and \cite{KW23b} -- based in part, in either case, on \cite{Wic14a} -- which is indispensable for both approaches. 

As previously mentioned, when the ambient manifold has trivial homology of a certain degree one can attempt to find a minimal surface of that dimension using instead the Almgren--Pitts min-max theory, however this object is \emph{not} area minimising and is just a stationary integral varifold (conceivably with a form of Morse index control that could be exploited). Any further partial regularity conclusion has, to date, been unobtainable precisely due to a lack of understanding of the branch set of stationary varifolds.

In the present work, we investigate  the size of the branch set of stationary integral $n$-varifolds $V$ in situations where one has an $\eps$-regularity property near planes of multiplicity at most some fixed $Q\in \{2,3,\dotsc\}$. The $\eps$-regularity  property (see Definition~\ref{defn:eps-reg}) requires that for any integer $Q^{\prime} \in \{2, 3, \ldots, Q\},$ the structure of $V$ locally about a density  $Q^{\prime}$ branch point is given by the graph of a Lipschitz $Q^{\prime}$-valued function, which furthermore is generalised-$C^{1,\alpha}$ (see Definition~\ref{defn:gen-C1}). In turn, this reduces the problem to understanding the size of the density $Q^\prime$ branch set for Lipschitz functions $f:B^n_1(0)\to \A_{Q^\prime}(\R^k)$ whose graphs are stationary for area, and furthermore satisfy an $\eps$-regularity property when $\|Df\|_{L^2(B_1^n(0))}$ is sufficiently small, guaranteeing that $f|_{B^n_{1/2}(0)}$ is generalised-$C^{1,\alpha}$ with uniform, quantitative, estimates.

There are two situations which in recent years have been shown to satisfy such an $\eps$-regularity property for suitable $Q$. These are:
\begin{enumerate}
	\item [(a)] $Q=2$ and $V$ corresponds to the graph of a Lipschitz $2$-valued function. Indeed, in recent work with S.~Becker-Kahn \cite{BKMW25}, the second and third authors show that such $V$ satisfy an $\eps$-regularity theorem near multiplicity $2$ planes. In fact, this conclusion holds under a significantly weaker hypothesis -- a structural condition on the region $\{X \, : \, \Theta_{V}(X) <2\}$ -- in place of the assumption that the entire varifold is a 2-valued Lipschitz graph; see \cite[Definition~0.1]{BKMW25}.
	\item [(b)] $Q$ is arbitrary, $V$ has codimension one and is stable on any orientable portion of the regular part, and $V$ contains no classical singularities of density $<Q$. In the work \cite{MW24}, the second and third authors establish that such $V$ satisfy an $\eps$-regularity theorem near multiplicity $Q$ planes. This situation contains as a special case varifolds associated to currents which are area minimising mod $p$ hypersurfaces.
\end{enumerate}
\begin{remark}
One might hope that in the future it is possible to prove such an $\eps$-regularity property in more general situations, such as when $V$ corresponds to the graph of a Lipschitz $Q$-valued function for arbitrary $Q\in \{2,3,4,\dotsc\}$. This is one of the reasons that the present paper is phrased in this more general framework.
\end{remark}

\subsection{Main results}\label{sec:main-results}

Throughout the paper we fix integers, $n\geq 2$, $k\geq 1$, and write $P_0 := \{0\}^k\times \R^n$. Recall that for a Lipschitz multi-valued function $f:B_1^n(0)\to \A_Q(\R^k)$, $\mathbf{v}(f)$ denotes the varifold associated to $\graph(f)$, i.e.~the varifold push-forward of $B^n_1(0)$ under $f$. We also write $\mathcal{G}$ for the \emph{Almgren metric} on the set $\A_Q(\R^k)$ of unordered $Q$-points in $\R^k$, namely
$$\mathcal{G}\left(\sum_{i=1}^Q\llbracket a_i\rrbracket,\,\sum_{i=1}^Q\llbracket b_i\rrbracket\right):=\left(\inf_{\sigma \in S_Q}\sum^Q_{i=1}|a_i-b_{\sigma(i)}|^2\right)^{1/2}$$
where $S_Q$ denotes the set of bijections $\{1,\dotsc,Q\}\to \{1,\dotsc,Q\}$. As shorthands, for $a = \sum_{i=1}^Q\llbracket a_i\rrbracket\in \A_Q(\R^k)$ and $v\in \R^k$ we write $|a|:= \G(a,Q\llbracket 0\rrbracket)\equiv \left(\sum_{i=1}^Q|a_i|^2\right)^{1/2}$ and $a-v = \sum^Q_{i=1}\llbracket a_i-v\rrbracket$.

To state our main results we introduce the type of varifold class we consider. Let $\mathcal{V}$ denote a set of stationary integral $n$-varifolds $V$ in $B^{n+k}_2(0)\subset\R^{n+k}$. We will assume that $\mathcal{V}$ is closed under \emph{natural geometric operations} and \emph{weak limits}, namely: 
\begin{itemize}
	\item For any $V\in\mathcal{V}$, $x_0\in B^{n+k}_2(0)$, $\rho\in (0, \frac{1}{2}(2-|x_0|))$, and rotation $\Gamma\in SO(n+k)$, we have $(\Gamma\circ\eta_{x_0,\rho})_\#V\res B_2^{n+k}(0)\in \mathcal{V}$.
	\item If $(V_j)_j\subset\mathcal{V}$ has $\limsup_{j\to\infty}\|V_j\|(B^{n+k}_2(0))<\infty$, then there is a subsequence $(j^\prime)$ and $V\in\mathcal{V}$  (possibly $V = 0$) such that $V_{j^\prime}\weakly V$ in $B_{2}^{n+k}(0)$.
    \item If $V\in \mathcal{V}$ is decomposable in the sense that $V = V_1+V_2$ where $V_1$, $V_2$ are integral $n$-varifolds with disjoint supports in $B_{2}^{n+k}(0)$, then $V_1,V_2\in\mathcal{V}$.
\end{itemize}
Here, $\eta_{x_0,\rho}(y):= (y-x_0)/\rho$ is the standard homothety $\R^{n+k}\to \R^{n+k}$ and $\weakly$ denotes convergence as varifolds.

Fix an integer $Q\in \{2,3,\dotsc\}$. The other assumption we make on $\mathcal{V}$ is that varifolds $V\in \mathcal{V}$ satisfy an $\eps$-regularity property near planes of multiplicity at most $Q$:

\begin{defn}\label{defn:eps-reg}
	Let $\mathcal{V}$ and $Q$ be as above. We say that $\mathcal{V}$ obeys the \emph{$\eps$-regularity property up to multiplicity $Q$} if there exists $\eps = \eps(\mathcal{V},Q)\in (0,1)$ such that the following holds. If $V\in \mathcal{V} \setminus \{0\}$ satisfies
	\begin{itemize}
		\item $(\w_n 2^n)^{-1}\|V\|(B^{n+k}_2(0))< Q+1/2$;
		\item $\hat{E}_V<\eps$, where
		$$\hat{E}_V^2:= \int_{\R^k\times B^n_1(0)}\dist^2(x,P_0)\, \ext\|V\|(x),$$
	\end{itemize}
	then, there is $Q^\prime\in \{1,2,\dotsc,Q\}$ and a Lipschitz $Q^\prime$-valued function $u:B^n_{1/2}(0)\to \A_{Q^\prime}(\R^k)$ such that, setting  
    $\widetilde{V}:= V\res B_{15/8}(0)$, conditions (A), (B), and (C) below hold for some constants $C = C({\mathcal V}, Q) \in (0, \infty)$ and $\alpha = \alpha({\mathcal V}, Q) \in (0, 1)$:
    \begin{itemize}
	\item[(A)] $\widetilde{V}\res (\R^k\times B^n_{1/2}(0)) = \mathbf{v}(u)$ and $\sup|u| + \Lip(u) \leq C\hat{E}_V;$
	\item[(B)] $u$ is generalised-$C^{1,\alpha}$ (see Definition \ref{defn:gen-C1}); 
    \item[(C)] Letting 
    $\BC_{x}$ denote the unique tangent cone to $V$ at $x\in \spt\|\widetilde{V}\|\cap (\R^k\times B^n_{1/2}(0))$ (which is equal to the stationary union of $N\in \{2,4,\dotsc,2Q^\prime\}$ half-planes, counted with multiplicity, which may form a union of (at most $Q^\prime$) planes, including possibly a single plane occurring with multiplicity), we have:
	\begin{enumerate}
		\item [(1)] for every $x,y\in \sing(\widetilde{V})\cap \{\Theta_V=Q^\prime\}\cap (\R^k\times B_{1/2}^n(0)),$ 
		$$\dist_\H(\BC_x\cap B^{n+k}_1(0), \BC_y \cap B^{n+k}_1(0)) \leq C\hat{E}_V|x-y|^\alpha;$$
		\item [(2)] for all $x\in \sing(\widetilde{V})\cap \{\Theta_V=Q^\prime\} \cap (\R^k\times B^n_{1/2}(0)),$ 
		$$r^{-n-2}\int_{\R^k\times B^n_r(\pi_{P_0}(x))}\dist^2(y,x+\BC_x)\, \ext\|\widetilde{V}\|(y) \leq C\hat{E}_V^2 r^{2\alpha} \qquad \text{for all }r\in (0,1/4).$$
	\end{enumerate}
    \end{itemize}
\end{defn}

\begin{remark}
    Note that from the Lipschitz bound in Definition \ref{defn:eps-reg}(A), for all $x\in \spt\|\widetilde{V}\|\cap (\R^k\times B^n_{1/2}(0))$ each tangent cone $\BC_x$ as in Definition \ref{defn:eps-reg}(C) obeys
		$$\dist_\H(\BC_x\cap B^{n+k}_1(0), P_0\cap B^{n+k}_1(0)) \leq C\hat{E}_V.$$
\end{remark}

\textbf{Notation:} Given two varifolds $V,W$ in $B^{n+k}_1(0)$ we use the following shorthand notations:
\begin{itemize}
	\item $\dist_\H(V\cap B_1^{n+k}(0), W\cap B^{n+k}_1(0)) \equiv \dist_\H(\spt\|V\|\cap B^{n+k}_1(0), \spt\|W\|\cap B^{n+k}_1(0))$;
	\item For $x\in \R^{n+k}$, $\dist(x,V) \equiv \dist(x,\spt\|V\|)$ and $x+V \equiv (\eta_{-x,1})_\#V$.
\end{itemize}

\begin{remark}\label{remark:lower-Q}
	One would expect an $\eps$-regularity theorem holding near a multiplicity $Q$ plane to imply that any $\eps$-regularity theorem also holds at multiplicity $Q^\prime<Q$ planes, and so the need in the above to work with all multiplicities $\leq Q$ would reduce to working with just multiplicity $Q$. Indeed, this is certainly the case when the class $\mathcal{V}$ is determined by structural or variational assumptions, as if $V\in\mathcal{V}$ was close to a multiplicity $Q^\prime<Q$ plane $P$, one could consider $V+(Q-Q^\prime)(P+z)$ for a small $z\in P^\perp$ (for which $P+z$ and $V$ are disjoint) and this varifold would also lie in $\mathcal{V}$ and now be close a multiplicity $Q$ plane. However, it is not entirely clear how to deduce this abstractly from a single regularity result near multiplicity $Q$ planes. To phrase our results appropriately (and since one expects that in order to verify an $\eps$-regularity theorem near multiplicity $Q$ planes, one needs to have already verified one near planes of multiplicity $<Q$) we work with Definition \ref{defn:eps-reg} as stated.
\end{remark}

\begin{remark}
    When $Q=2$ in Definition \ref{defn:eps-reg}, as soon as one has the conclusion that $\widetilde{V}$ is represented by a \emph{Lipschitz} $2$-valued graph, all the subsequent conclusions regarding generalised-$C^{1,\alpha}$ regularity and the corresponding estimates follow from \cite{BKMW25}. This is the basis for our application to the class of Lipschitz $2$-valued stationary graphs. It is perhaps reasonable to suspect that the $\eps$-regularity theorem in \cite{BKMW25} generalises to all $Q\in \{2,3,4,\dotsc\}$, at which point not only does the work become applicable to Lipschitz $Q$-valued stationary graphs, but Definition \ref{defn:eps-reg} simplifies, only requiring the conclusion of being represented by a Lipschitz multi-valued graph.
\end{remark}

Our main result is then the following. Here, we write $\mathcal{B}_V$ for the \emph{branch set} of a stationary integral varifold $V$, i.e.~the set of singular points where at least one tangent cone is a supported on a plane (which necessarily has integer multiplicity $>1$ by Allard's regularity theorem).

\begin{thmx}\label{thm:main}
	Let $\mathcal{V}$ be a class of varifolds which is closed under natural geometric operations and weak limits (see the discussion preceding Definition~\ref{defn:eps-reg}). Suppose also that $\mathcal{V}$ obeys the $\eps$-regularity property up to multiplicity $Q$  
    (see Definition~\ref{defn:eps-reg}). Then:
	$$\dim_\H \left(\mathcal{B}_V\cap \{\Theta_V=Q\}\right) \leq n-2.$$
	In fact, by induction we have $\dim_\H\left(\mathcal{B}_V\cap \{\Theta_V\leq Q\}\right) \leq n-2$, and so by standard stratification results
	$$\dim_\H(\sing(V)\cap \{\Theta_V < Q+1\}) \leq n-1.$$
\end{thmx}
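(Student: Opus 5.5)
The plan follows the abstract's strategy. We reduce to the density $Q$ branch set and argue by induction on $Q$. For $Q'<Q$ the class $\mathcal{V}$ also obeys the $\eps$-regularity property up to multiplicity $Q'$, so by induction $\dim_\H(\mathcal{B}_V\cap\{\Theta_V\le Q-1\})\le n-2$. The top-density statement then gives the second claim. The third claim follows from standard Almgren/Federer stratification. Any singular point with $\Theta_V<Q+1$ whose tangent cones are not planes lies in the stratum of points with no tangent cone having $(n-1)$-dimensional spine, which has dimension $\le n-1$, or in $\mathcal{S}_{n-1}$. The remaining singular points are branch points of density $\le Q$, which we have already controlled.

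To bound $\mathcal{B}_V\cap\{\Theta_V=Q\}$, fix a branch point $X_0$ and apply Definition~\ref{defn:eps-reg} after rescaling and rotating. Near $X_0$ we get $V=\mathbf{v}(u)$ with $u$ generalised-$C^{1,\alpha}$. Branch points of density $Q$ correspond to points where $u$ takes a single value with multiplicity $Q$ and the tangent cone is a multiplicity $Q$ plane. The first step is to define the planar frequency $N_{V,Y}(\rho)$ of \cite{KW23a}. It is the frequency of $V$ relative to the optimal plane $P_{Y,\rho}$ at scale $\rho$: a ratio of $\rho^{2-n}\int|\nabla^V\dist(\cdot,P)|^2$ to $\rho^{1-n}\int\dist^2(\cdot,P)$ over a smoothed ball. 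The main analytic step is to prove approximate monotonicity of $N_{V,Y}$ at density $Q$ branch points $Y$, with error terms controlled by the excess decay of Definition~\ref{defn:eps-reg}(C)(2). This requires showing that the optimal planes vary slowly, using (C)(1), together with a first-variation computation. From this we get well-defined frequency values $\mathcal{N}(Y)\ge 1$ that are upper semicontinuous. We also get blow-ups: rescalings $\eta_{Y,\rho}$ normalised by the height excess over $P_{Y,\rho}$ converge to nonzero homogeneous $Q$-valued Dir-minimising-type (or stationary-linearisation) tangent maps of degree $\mathcal{N}(Y)$.

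We then split the set. On $\{\mathcal{N}\neq 2\}$ we run Federer dimension reduction directly with tangent maps. A homogeneous tangent map whose spine has dimension $n-1$ must be, after subtracting its linear part, a function of one variable of degree $\mathcal{N}$. Such a map has to be affine, which forces $\mathcal{N}=1$. But degree $1$ is excluded at branch points, because the optimal plane already absorbs the linear part. Hence spine dimension $\le n-2$, and by upper semicontinuity and the usual covering argument $\dim_\H\le n-2$. The tangent maps are obtained with energy convergence where possible. Where energy convergence fails, we use the Hirsch--Spolaor device \cite{HS24}: we treat limits as multi-valued gradient Young measures. This still gives homogeneity and the translation-invariance needed along the spine.

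The main obstacle is the set $\{\mathcal{N}=2\}$. Here quadratic tangent maps can be $(n-1)$-invariant relative to a plane, since the plane poorly approximates a curved sheet. For a point $Y_0$ of this type, we construct a single Almgren-type center manifold $\mathcal{M}$, a $C^{1,\beta}$ graph over the tangent plane, built from the average of $u$. We show it passes through $Y_0$ and through all nearby density $Q$ branch points with $\mathcal{N}=2$; this uses the excess decay (C)(2) at every such point. We then define the frequency relative to $\mathcal{M}$, establish its monotonicity, and blow up the normal displacement of $V$ from $\mathcal{M}$. The resulting tangent maps have zero average, so they cannot be a function of one variable of the form above unless they vanish. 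The same spine-dimension argument then gives $\dim_\H\le n-2$ for this piece. Combining both pieces with induction proves Theorem~\ref{thm:main}.
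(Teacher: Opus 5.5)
Your outline has the same overall structure as the paper: induction on $Q$, planar frequency, a split at frequency $2$, one center manifold, and Young measures when energy convergence fails. However, the decisive step of the $\{\mathcal{N}\neq 2\}$ case is missing, and your diagnosis of why $\mathcal{N}=2$ is special is wrong.

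\textbf{The $\{\mathcal{N}\neq 2\}$ case.} The planar frequency at a nearby branch point $Z_j$ is computed relative to its own tangent plane $P_j$. So upper semicontinuity of $Y\mapsto\mathcal{N}(Y)$ does not by itself force nearby high-frequency points onto the spine of a tangent map at $Z$. When you blow up at $Z$, the planes $P_j$ (written as graphs of affine maps and rescaled by the excess) converge to the affine Taylor polynomial $\ell_{v_a,z}$ of the harmonic average $v_a$ at the limit point $z$. You then only get $N_{v-\ell_{v_a,z}}(z)\geq N_v(0)$ (Corollary \ref{cor:usc-2}), and to put $z$ in $S(v)$ you must show $\ell_{v_a,z}\equiv 0$. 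This is exactly where $\mathcal{N}\neq 2$ is used.
\begin{itemize}
\item For non-integer $\mathcal{N}$, $v_a\equiv 0$, being harmonic and homogeneous of non-integer degree.
\item For integer $\mathcal{N}\geq 3$, $D^2v_a(z)=0$ together with homogeneity of degree $\geq 3$ forces $Dv_a(z)=0$ and $v_a(z)=0$.
\item At $\mathcal{N}=2$ this fails: e.g.\ for $v_a=x_1^2-x_2^2$, $v_a-\ell_{v_a,z}$ is quadratic at every $z$.
\end{itemize}
That is the real obstruction. Quadratic tangent maps cannot be $(n-1)$-invariant: a one-variable homogeneous blow-up has degree $1$, so $\dim S(v)\leq n-2$ holds for every degree $\geq 1+\alpha$, including $2$.

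Degree $1$ is excluded not because ``the plane absorbs the linear part''; a degree-one map such as $\llbracket x_1\rrbracket+\llbracket -x_1\rrbracket$ is absorbed by no plane. It is excluded by the gap $\mathcal{N}\geq 1+\alpha$ coming from the decay in Definition \ref{defn:eps-reg}(C)(2).

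For the monotonicity itself, use the fixed (unique) tangent plane rather than scale-dependent optimal planes. Also, (C) alone does not control the quartic error on $B_\rho\setminus B_{\rho/2}$, where the weight $\phi(r/\rho)$ in $D(\rho)$ degenerates. The paper needs the doubling estimate of Lemma \ref{lemma:doubling-2}, which comes from energy convergence of coarse blow-ups, together with a stopping-time covering.

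\textbf{The center-manifold step.} As you state it, this would not go through.
\begin{itemize}
\item A $C^{1,\beta}$ graph built from $u_a$ is insufficient. The inner variation differentiates the geodesic radial field and the errors involve $DH_{\mathcal{M}}$, so $\mathcal{M}$ must be $C^3$. But $u_a$ is not known to be even $C^2$.
\item The Whitney construction with stopping conditions (EX) (exponent in $(2/3,1)$) and (HT) is what yields $C^{3,\beta}$ regularity. It also yields the higher-order $L^1$ bound on $N_a$ in Theorem \ref{thm:cm-14}(iii), which controls the linear error terms $\int\phi_r\langle H_{\mathcal{M}},N_a\rangle$. Together with the doubling lemmas for height and excess cubes, this is the real content of ``establish its monotonicity''.
\item Touching at nearby points uses rate-$2$ decay coming from $\mathcal{N}\geq 2$ via Corollary \ref{cor:decay-estimates}, not just (C)(2), which only gives rate $1+\alpha$.
\end{itemize}

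Your closing argument also fails: zero average does not exclude a one-variable tangent map ($\llbracket x_1\rrbracket+\llbracket -x_1\rrbracket$ again). What excludes an $(n-1)$-dimensional spine is $\mathbf{I}(0)\geq 2$, together with Proposition \ref{prop:Y-spine-dimensions}. This bound comes from $\mathbf{H}(r)\leq Cr^4$, which uses planar frequency $\geq 2$ at the base point and $\wp(0)=0$, $D\wp(0)=0$. What makes the stratification work is that one fixed $\mathcal{M}$ touches $V$ at all nearby points of planar frequency $\geq 2$. Their frequencies are then measured against a common reference, and upper semicontinuity along the blow-up sequence forces them onto $S(N^b)$. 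This is precisely the ingredient missing at $\mathcal{N}=2$ in the planar argument.
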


\begin{remark}\label{remark:stratification}
In fact, our proof of Theorem \ref{thm:main} provides a more precise description of the density $Q$ branch set by providing a certain stratification of it. Writing $\mathcal{B}_V^Q:=\mathcal{B}_V\cap \{\Theta_V=Q\}$, we will show that one can associate a (\emph{planar}) \emph{frequency value} to each $z\in \mathcal{B}_V^Q$, which is a number $\mathcal{N}_V(z)\in[1+\alpha,\infty)$, where $\alpha\in (0,1)$ is as in Definition \ref{defn:eps-reg}. We then have:
$$\mathcal{B}_V^Q \cap \{\mathcal{N}_V<2\} = \mathcal{S}^{(1)}_0\cup \cdots \cup \mathcal{S}^{(1)}_{n-2};$$
$$\mathcal{B}_V^Q \cap \{\mathcal{N}_V\geq 2\} = \mathcal{S}^{(2)}_0 \cup \cdots \cup \mathcal{S}^{(2)}_{n-2};$$
where $\dim_\H(\mathcal{S}^{(i)}_{j}) \leq j$ for each $i\in\{1,2\}$ and $j\in \{0,1,\dotsc,n-2\}$. When $n=2$, we therefore have
$$\mathcal{B}_V^Q = \mathcal{S}^{(1)}_0\cup \mathcal{S}^{(2)}_0$$
and furthermore in this case $\mathcal{S}^{(2)}_0$ is a relatively closed subset of $\mathcal{B}_V^Q$ and both $\mathcal{S}^{(1)}_0$, $\mathcal{S}^{(2)}_0$ are \emph{discrete}. This discreteness follows in an analogous manner to how for stratifications arising from a monotonicity formula, the $0^{\text{th}}$-strata is discrete (as for us a limit point of either $\mathcal{S}^{(1)}_0$ or $\mathcal{S}^{(2)}_0$ would create a non-collapsed $2$-dimensional blow-up which has a 1-dimensional touching set, which is impossible). Thus, when $n=2$ the only possible branch points which are not isolated are those of planar frequency $\geq 2$ which are limit points of branch points of planar frequency $<2$. In fact, for any $\eps>0$, we can analogously stratify $\mathcal{B}_V^Q\cap \{\mathcal{N}_V\geq \frac{5}{3}+\eps\} = \tilde{\S}_0^{(2)}\cup \cdots\cup \tilde{\S}_{n-2}^{(2)}$ for arbitrary $n$, meaning that when $n=2$ the only situation in which we can have (distinct) density $Q$ branch points $(Z_j)_j\subset \mathcal{B}^Q_{V}$, $Z\in\mathcal{B}_V^Q$ with $Z_j \to Z$ is when $\mathcal{N}_V(Z)\geq 2$ and\footnote{See Remark \ref{remark:lower-exponent-in-cm} and Remark \ref{remark:lower-frequency-in-cm} for the explanation for why we get this improved upper bound on the frequency value on the $Z_j$.} $\limsup_{j\to\infty}\mathcal{N}_V(Z_j)\leq \frac{5}{3}$, and so there must be a genuine jump in the frequency value (in fact, one must even have $\mathcal{N}_V(Z)\in\{2,3,4,\dotsc\}$, else one can get a contradiction by performing a suitable blow-up off the tangent plane at $Z$).  Conjecturally this should not happen when $n=2$.
\end{remark}

Theorem \ref{thm:main} is directly applicable in several situations. Indeed, consider the following two classes:
\begin{itemize}
	\item For $L\in (0,\infty)$, write $\mathcal{V}_L$ for the set of stationary integral $n$-varifolds in $B^{n+k}_2(0)$ which are represented by the graph of a Lipschitz $2$-valued function (or a single-valued function) over some $n$-dimensional subspace of $\R^{n+k}$ with Lipschitz constant $\leq L$.
	\item For $Q\in\{2,3,\dotsc\}$, write $\mathcal{S}_Q$ for the set of stationary integral $n$-varifolds in $B^{n+1}_2(0)$ which are stable on their regular part and have no classical singularities of density $<Q$.
\end{itemize}
These classes have been studied in \cite{BKMW25, MW24}, respectively, with $\eps$-regularity theorems established for both. As such, $\mathcal{V}_L$ and $\mathcal{S}_Q$ satisfy the assumptions of Theorem \ref{thm:main} (with $Q=2$ in the former case). We therefore have the following.

\begin{corollary}\label{cor:Lip-1}
	Let $V$ be a stationary integral varifold in $\R^k\times B^n_1(0)\subset\R^{n+k}$ such that there is a Lipschitz function $u:B^n_1(0)\to \A_2(\R^k)$ for which $V = \mathbf{v}(u)$. Then
	$$\dim_\H(\mathcal{B}_V) \leq n-2.$$
	Consequently, $\dim_\H(\sing(V))\leq n-1$. Moreover, when $n=2$ we have
	$$\mathcal{B}_V = \S_0^{(1)}\cup \S^{(2)}_0$$
	where $\S^{(1)}_0$ and $\S^{(2)}_0$ are discrete sets as in Remark \ref{remark:stratification} and furthermore $\S^{(2)}_0$ is a relatively closed subset of $\mathcal{B}_V$.
\end{corollary}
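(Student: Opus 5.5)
Corollary~\ref{cor:Lip-1} follows from Theorem~\ref{thm:main} and Remark~\ref{remark:stratification}, applied to the class $\mathcal{V}_L$ with $Q=2$ and $L:=\Lip(u)$. The work is in checking that $\mathcal{V}_L$ (suitably normalised) satisfies the hypotheses of Theorem~\ref{thm:main}, and then localising $V$ to that class.

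\textbf{Step 1 (localisation).} For any $X\in\spt\|V\|$ with $\pi_{P_0}(X)\in B^n_1(0)$, choose a small $\rho>0$. Rescale $V$ via $\eta_{X,\rho}$ and restrict it to $B_2^{n+k}(0)$. The result is still the graph of a $2$-valued function with Lipschitz constant $\leq L$, taken over the fixed plane $P_0$. It is therefore an element of $\mathcal{V}_L$. Since the dimension bounds are local and invariant under homotheties, it suffices to prove them for elements of $\mathcal{V}_L$.

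\textbf{Step 2 (closure properties).} Closure of $\mathcal{V}_L$ under homotheties and rotations is immediate, because the definition allows graphs over an arbitrary $n$-dimensional subspace.

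Closure under weak limits uses compactness. Suppose $V_j=\mathbf{v}(u_j)$ with $u_j$ defined over planes $P_j$ and $\Lip(u_j)\leq L$, and with uniformly bounded mass. Pass to a subsequence along which $P_j\to P$. If $P_j\neq P$, re-express each $V_j$ locally as a graph over $P$. This step needs care: a rotated Lipschitz $2$-valued graph is again a $2$-valued graph over a nearby plane, with a slightly worse Lipschitz constant. To absorb that loss, it is cleaner to fix the plane $P_0$, or to note that the problem is local and only requires a class that is closed up to constants. Next apply Arzelà--Ascoli for $\A_2$-valued maps. Because of the uniform Lipschitz bound, varifold convergence of the graphs matches uniform convergence of the functions, and multiplicities pass to the limit. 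Stationarity is preserved under varifold convergence.

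Closure under disjoint decomposition holds because a $2$-valued graph that splits into two disjointly supported parts splits into two single-valued Lipschitz graphs (or a sub-graph), and each part remains stationary.

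\textbf{Step 3 ($\eps$-regularity).} Definition~\ref{defn:eps-reg} for $Q'\leq 2$ is supplied by \cite{BKMW25}. Once $V$ has small excess relative to $P_0$ and mass below $2+1/2$, the Lipschitz graph structure over $P_0$ gives (A) for a representation over $P_0$ in $B_{1/2}$, with $\sup|u|+\Lip(u)\leq C\hat E_V$. When $Q'=1$, Allard's theorem applies. When $Q'=2$, the generalised-$C^{1,\alpha}$ regularity and the cone estimates (B), (C) follow from \cite{BKMW25}, as noted in the remark preceding Theorem~\ref{thm:main}.

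I expect the main obstacle to be the passage from the given Lipschitz structure to the small Lipschitz bound in (A). A varifold close to $2|P_0|$ that is a Lipschitz graph over a tilted plane must be re-graphed over $P_0$ with a small Lipschitz constant, not merely a constant $\leq L$. Here I would invoke the key $\eps$-regularity theorem of \cite{BKMW25}, which assumes only a structural condition on $\{\Theta<2\}$ that the graph class satisfies, so the small Lipschitz bound is part of its conclusion.

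\textbf{Step 4 (conclusion).} Theorem~\ref{thm:main} with $Q=2$ gives $\dim_\H(\mathcal{B}_V\cap\{\Theta_V\leq 2\})\leq n-2$. A $2$-valued graph has $\Theta_V\leq 2$ everywhere, so this is the bound $\dim_\H(\mathcal{B}_V)\leq n-2$. The bound $\dim_\H\sing(V)\leq n-1$ follows from the final statement of Theorem~\ref{thm:main}, using $\Theta_V<3$. The description for $n=2$ is Remark~\ref{remark:stratification} with $Q=2$. The density-$1$ points are regular by Allard's theorem, so $\mathcal{B}_V=\mathcal{B}^2_V$.
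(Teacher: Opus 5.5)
Your route is the paper's own: the corollary is Theorem~\ref{thm:main} (with Remark~\ref{remark:stratification}) applied with $Q=2$ to the class $\mathcal{V}_L$, with the $\eps$-regularity supplied by \cite{BKMW25}. The paper gives no more detail than that. However, Step~4 rests on a false claim, namely that a stationary $2$-valued Lipschitz graph has $\Theta_V\le 2$ everywhere.

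This fails once $k\geq 3$. Let $C\subset\R^3\times\R^4$ be the Lawson--Osserman cone \cite{LO77}, the graph of $f(x)=\tfrac{\sqrt5}{2}|x|\,\eta(x/|x|)$, where $\eta:S^3\to S^2$ is the Hopf map. Then $V=\mathbf{v}(2\llbracket f\rrbracket)=2|C|$ is a stationary Lipschitz $2$-valued graph with $\Theta_V(0)=2\Theta_C(0)=32/9>3$. Crossing with $\R^{n-4}$ gives a whole family of such points.

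Consequently your derivation of $\dim_\H(\sing(V))\le n-1$ from the last clause of Theorem~\ref{thm:main} does not work, since that clause only controls $\sing(V)\cap\{\Theta_V<3\}$. The paper instead uses the general stratification theorem. A point of $\sing(V)\setminus\mathcal{B}_V$ has no planar tangent cone, so all its tangent cones have spine dimension $\le n-1$. The corresponding stratum has Hausdorff dimension $\le n-1$ for any stationary integral varifold, regardless of density. Together with $\dim_\H(\mathcal{B}_V)\le n-2$ this gives the bound.

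Your identification $\mathcal{B}_V=\mathcal{B}_V\cap\{\Theta_V=2\}$ is true, but it also needs a different reason. If $\theta|P|$ is a tangent cone at $X$, it is a varifold limit of rescalings of $V$ about $X$. These rescalings are $2$-valued graphs over $P_0$ with Lipschitz constant $\le L$, so $P$ is a graph over $P_0$. The identity $\int\psi(\pi_{P_0}(Y))\,J^S\pi_{P_0}\,\ext W(Y,S)=2\int\psi$ is continuous under varifold convergence (it becomes an inequality after a vertical cut-off). Passing it to the limit gives $\theta\le 2$, hence $\theta=2$ by Allard's theorem.

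There are also two smaller issues in Step~2.

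First, fixing the plane $P_0$ would destroy the rotation-closure that Theorem~\ref{thm:main} requires, and it is unnecessary. Write $V_j=(\Gamma_j)_\#\mathbf{v}(u_j)$ with $u_j$ defined over $P_0$ and $\Gamma_j\to\Gamma$ in $SO(n+k)$. Limits of the $\mathbf{v}(u_j)$ are graphs over $P_0$ with constant $\le L$, because the Lipschitz constant is lower semicontinuous under uniform convergence. Then $V_j\weakly\Gamma_\#\bigl(\lim_j\mathbf{v}(u_j)\bigr)$, with no loss in $L$.

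Second, a uniform Lipschitz bound plus uniform convergence does not by itself give varifold convergence of the graphs, since the Jacobians converge only weakly. Instead, take the stationary integral limit from Allard's compactness theorem and observe that its support is the graph of the uniform limit. Then read off the multiplicities from the same projection identity: $1$ off the coincidence set and $2$ on it.
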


\begin{corollary}\label{cor:stable-1}
	Let $Q\in \{2,3,\dotsc\}$ and $V\in \mathcal{S}_Q$. Then
	$$\dim_\H(\mathcal{B}_V\cap \{\Theta_V=Q\}) \leq n-2.$$
	Consequently, $\dim_\H(\sing(V)\cap \{\Theta_V<Q+1\}) \leq n-1$. Moreover, when $n=2$ we have
	$$\mathcal{B}_V\cap \{\Theta_V=Q\} = \S_0^{(1)}\cup \S^{(2)}_0$$
	where $\S^{(1)}_0$ and $\S^{(2)}_0$ are discrete sets as in Remark \ref{remark:stratification}, and furthermore $\S^{(2)}_0$ is a relatively closed subset of $\mathcal{B}_V\cap \{\Theta_V<Q+1\}$.
\end{corollary}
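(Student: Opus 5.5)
This is a direct application of Theorem~\ref{thm:main} to the class $\mathcal{S}_Q$, together with the $n=2$ refinement of Remark~\ref{remark:stratification}. So the work is to check the hypotheses of Theorem~\ref{thm:main} for $\mathcal{V}=\mathcal{S}_Q$ and then read off the conclusions.

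\textbf{Closure under natural operations.} Stationarity, integrality and stability of the regular part are all preserved by homotheties, rotations and restriction to $B_2^{n+1}(0)$. So is the absence of classical singularities of density $<Q$, since these notions are local and scale-invariant.

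\textbf{Disjoint decomposition.} If $V=V_1+V_2$ with disjoint supports, then each $V_i$ is stationary. Its regular part is an open subset of $\reg(V)$, so each $V_i$ is still stable. Its classical singularities are classical singularities of $V$ with the same density, so none of them has density $<Q$.

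\textbf{Weak limits.} Fix a sequence in $\mathcal{S}_Q$ with bounded mass. Allard compactness gives a subsequence converging to a stationary integral varifold $V$. That $V\in\mathcal{S}_Q$ is exactly the compactness theorem accompanying the regularity theory of \cite{MW24}, generalising \cite{Wic14a}. In that theorem, the absence of classical singularities of density $<Q$ passes to the limit via the minimum distance theorem. Stability passes to the limit because the limit's regular part is locally a smooth limit of stable sheets.

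\textbf{$\eps$-regularity.} Definition~\ref{defn:eps-reg} asks for this at every multiplicity $Q'\le Q$. For $Q'<Q$ I would use the device of Remark~\ref{remark:lower-Q}: add a disjoint translate $(Q-Q')(P_0+z)$ of the plane, which keeps the varifold in $\mathcal{S}_Q$. Alternatively, note that \cite{MW24} is itself proved inductively over all multiplicities $\le Q$. With either route, the multiplicity-$Q$ theorem of \cite{MW24} supplies the Lipschitz generalised-$C^{1,\alpha}$ $Q'$-valued representation, the H\"older continuity of tangent cones, and the decay estimates (A)--(C).

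\textbf{Conclusions.} Theorem~\ref{thm:main} now gives $\dim_\H(\mathcal{B}_V\cap\{\Theta_V=Q\})\le n-2$. By induction on $Q$, using $\mathcal{S}_Q\subset\mathcal{S}_{Q'}$ for $Q'\le Q$, we also get $\dim_\H(\mathcal{B}_V\cap\{\Theta_V\le Q\})\le n-2$. The remaining singular points of density $<Q+1$ have no planar tangent cone. By the standard (Almgren/Federer) stratification, the points of density $<Q+1$ whose tangent cones have spine dimension $\le n-1$ form a set of dimension $\le n-1$, which gives $\dim_\H(\sing(V)\cap\{\Theta_V<Q+1\})\le n-1$.

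For $n=2$, the decomposition into the discrete sets $\S^{(1)}_0$ and $\S^{(2)}_0$, with $\S^{(2)}_0$ relatively closed, is exactly the $n=2$ case of the stratification in Remark~\ref{remark:stratification}. That remark is established in the proof of Theorem~\ref{thm:main}, and I would quote it here. Relative closedness in $\mathcal{B}_V\cap\{\Theta_V<Q+1\}$ then follows from relative closedness in $\mathcal{B}_V\cap\{\Theta_V=Q\}$, combined with upper semicontinuity of density and the dimension bound for lower-density branch points.

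\textbf{Main obstacle.} The substantive point is the closure of $\mathcal{S}_Q$ under weak limits, in particular that no classical singularities of density $<Q$ appear in the limit. This is a genuine theorem from \cite{MW24}, not a formality, and I would invoke it there rather than reprove it.
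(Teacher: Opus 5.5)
Your proposal is correct and matches the paper. The paper likewise observes that, by \cite{MW24}, the class $\mathcal{S}_Q$ satisfies the closure and $\eps$-regularity hypotheses of Theorem~\ref{thm:main}, reads off the conclusions, and takes the $n=2$ statement from Remark~\ref{remark:stratification}.

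Two minor points. First, your heuristic that stability passes to the limit via smooth convergence of sheets is not accurate where the approximating varifolds are branched; the closure genuinely has to be quoted from \cite{MW24}, as you in fact do. Second, for the relative closedness of $\S^{(2)}_0$ in $\mathcal{B}_V\cap\{\Theta_V<Q+1\}$, upper semicontinuity of density together with the fact that densities at branch points are integers already forces limit points into $\{\Theta_V=Q\}$, so no dimension bound for lower-density branch points is needed.
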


\begin{remark}
	In Corollary \ref{cor:Lip-1}, the claim on the \emph{full} singular set follows directly from the branch set dimension bound and general stratification theorems for singularities of stationary integral varifolds. This part of Corollary \ref{cor:Lip-1} was recently established by Hirsch--Spolaor \cite{HS24}. In the special case of Corollary \ref{cor:Lip-1} where one knows that $u$ is $C^{1,\alpha}$ rather than just Lipschitz, Corollary \ref{cor:Lip-1} was already known from the work of Simon and the third author \cite{SW16}. The proof in \cite{SW16} uses purely PDE-theoretic methods 
    (in particular without the need for an Almgren-type center manifold construction, as, morally, the average of the two values is used as the center manifold) and in fact subsequent work by the first and third authors \cite{KW21} gives (again without employing the Almgren center manifolds) that the branch set is countably $(n-2)$-rectifiable, with unique tangent maps at $\H^{n-2}$-a.e.~branch point. Notice that the results in \cite{SW16} are not otherwise applicable in Corollary \ref{cor:Lip-1}, since the $\eps$-regularity theorem in \cite{BKMW25} only provides \emph{generalised-}$C^{1,\alpha}$ regularity locally about branch points, not $C^{1,\alpha}$ regularity (although in codimension one it is applicable, as then generalised-$C^{1,\alpha}$ regularity combined with stationarity implies $C^{1,\alpha}$).
\end{remark}

\begin{remark}
	For $p\geq 2$ even, the class $\mathcal{S}_{p/2}$ contains all varifolds associated to currents which are area minimising hypersurfaces modulo $p$, and thus Corollary \ref{cor:stable-1} includes as a special case the main result established in \cite{DLHMSS22} (this is because all branch points have density $p/2$ in this case). Since branch points in area minimising hypersurfaces mod $p$ necessarily\footnote{This follows from the analysis in \cite{DLHMSS22}. It should be noted that this is a significant difference to the stable hypersurface setting analysed in \cite{MW24}, as in the latter case it is possible for branch points to have planar frequency $<2$. The reason for this difference is precisely due to the fact that an area minimising hypersurface mod $p$ must satisfy a special orientation condition at classical singularities, namely that the unit conormals at the boundary must be orientated in the same manner. This is a consequence of the minimising mod $p$ assumption. Nonetheless, if one were to assume such an orientation structure holds locally about a given branch point in a stable minimal hypersurface in $\S_{p/2}$, one could similarly show that the planar frequency of the branch point must be $\geq 2$. We refer the reader to \cite{MS26} for further discussion.} have planar frequency $\geq 2$, it also shows that a two-dimensional surface in a smooth $3$-manifold which is locally area minimising mod $p$ has a discrete branch set (cf.~Remark \ref{remark:stratification}). We therefore have (this has also independently been recently established in \cite{SSS25}):
\end{remark}

\begin{corollary}
	A $2$-dimensional area minimising surface mod $p$ in a closed smooth $3$-dimensional Riemannian manifold has isolated branch points.
\end{corollary}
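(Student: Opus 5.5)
The statement is a direct consequence of Corollary~\ref{cor:stable-1} (with $n=2$, and with $Q=p/2$ when $p$ is even), combined with the fact recorded in the preceding remark that branch points of mod $p$ minimisers have planar frequency $\geq 2$. The plan has four steps.

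\emph{Step 1: localise and flatten.} Since the statement is local and the ambient manifold $M^3$ is smooth, I would work in a small geodesic ball around a given branch point $z$. After rescaling, $M$ there is $C^2$-close to $\R^3$, so I treat it as Euclidean. Strictly, one needs the Riemannian analogue of Theorem~\ref{thm:main}; the error terms from the metric are higher order and are absorbed in the standard way. Write $T$ for the mod $p$ minimiser and $V$ for its associated varifold.

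\emph{Step 2: place $V$ in $\S_{p/2}$ and pin down the density.} If $p$ is odd, branch points do not occur for mod $p$ minimising hypersurfaces by the known structure theory, so I assume $p$ is even. Then $V$ is stationary and stable on its regular part. Its classical singularities, where several half-planes meet, all have density $p/2$; this follows from the mod $p$ boundary structure. Hence $V\in \S_{p/2}$. Moreover, every branch point has density exactly $p/2$, so $\mathcal{B}_V=\mathcal{B}_V\cap\{\Theta_V=p/2\}$. By Corollary~\ref{cor:stable-1} with $n=2$ we get $\mathcal{B}_V = \S^{(1)}_0\cup\S^{(2)}_0$, where both sets are discrete and $\S^{(2)}_0$ is relatively closed.

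\emph{Step 3: rule out low frequency.} By the remark preceding the statement, mod $p$ minimisers satisfy the orientation condition at classical singularities, and this forces $\mathcal{N}_V\geq 2$ at every branch point. Therefore $\mathcal{B}_V\cap\{\mathcal{N}_V<2\}=\emptyset$, which gives $\S^{(1)}_0=\emptyset$ and $\mathcal{B}_V=\S^{(2)}_0$.

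\emph{Step 4: conclude.} A set that is discrete and relatively closed has no accumulation points among branch points. In addition, branch points cannot accumulate at a non-branch point. Regular points are excluded trivially. A classical singularity is excluded because near it $V$ is a $C^{1,\alpha}$ union of half-sheets meeting along a curve, by the uniqueness of tangent cones and Simon's/Allard-type regularity. Hence branch points are isolated.

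\emph{Main obstacle.} The delicate input is Step 3. It requires verifying from \cite{DLHMSS22}, or from a direct blow-up argument, that minimising mod $p$ forces the oriented-conormal condition. One then has to check that any blow-up off the tangent plane at frequency $<2$ would produce a homogeneous degree $<2$ two-valued harmonic-type limit whose classical singular half-planes violate that condition. I would carry this out first, following the argument outlined in \cite{MS26}.
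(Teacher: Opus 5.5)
Your argument is essentially the paper's: for $p$ even, mod $p$ minimisers lie in $\mathcal{S}_{p/2}$ and all their branch points have density $p/2$. The orientation condition at classical singularities (via \cite{DLHMSS22}) forces planar frequency $\geq 2$ at every branch point, so $\mathcal{S}^{(1)}_0=\emptyset$ and $\mathcal{B}_V=\mathcal{S}^{(2)}_0$ is discrete by Corollary~\ref{cor:stable-1} with $n=2$. The extra claim in Step 4, that branch points cannot accumulate at non-branch points, is not needed for isolatedness and is not fully justified as written: your argument does not cover singular points whose tangent cones have $0$-dimensional spine, so you can simply drop it.
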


In the two-valued Lipschitz case as in Corollary~\ref{cor:Lip-1}, the work of Hirsch--Spolaor \cite{HS24} establishes the Hausdorff dimension bound $n-1$ (on the \emph{entire} singular set) by showing that the classical Almgren program in its full extent -- namely, the construction of sequences of center manifolds and intervals of flattening for arbitrary branch points -- can be adapted to this setting. In this argument, as in the classical theory, all branch points are treated equally, since the argument proceeds without the knowledge of the uniqueness of tangent cones (and suitable quantitative estimates).
Consequently, the argument does not seem to readily provide a mechanism for dealing with possible ``frequency $1$'' branch points,  meaning that it only yields the conclusion that the branch set has Hausdorff dimension $\leq n-1$. (This is not as issue in the classical setting of are-minimisers, and the optimal $n-2$ conclusion follows, as the linearisation process produces Dirichlet energy minimisers.)

By contrast, in the present work,  we use the regularity theory of \cite{BKMW25} to establish the optimal result, namely that the branch set has Hausdorff dimension $\leq n-2$. Though arguably the regularity theory in \cite{BKMW25} is a ``heavy tool,'' it leads to the optimal results and also to considerable simplifications is other ways. It  allows us to introduce, to the present not-necessarily-minimising setting, the \emph{planar frequency function} developed by the first and third authors in their work on area minimising currents \cite{KW23a}. The planar frequency function allows one to treat branch points with planar frequency $<2$ \emph{directly} (and even those with planar frequency $>2$), namely by performing a blow-up procedure relative to the (unique) tangent plane; no center manifolds are needed. For the branch points of planar frequency $\geq 2$, we are able to use a significantly simplified version of Almgren's program, building just \emph{one} center manifold, avoiding the need for both changing center manifolds and intervals of flattening -- which are technically demanding foundational ingredients in the classical theory. In situations where energy convergence is not readily available in the blow-up procedure relative to this center manifold (cf.~Remark \ref{remark:energy-convergence}), we use an elegant alternative argument from \cite{HS24} which treats blow-ups as multi-valued gradient Young measures.

\begin{remark}\label{remark:area-min}
	In situations where one can show energy convergence when blowing-up relative to the center manifold (as is the case when the varifold is that associated to an area minimising current, or a current which is area minimising mod $p$, using a competitor argument), the language of multi-valued gradient Young measures is not needed, and one can work with multi-valued $W^{1,2}$ functions instead (indeed, the former then reduces to the latter).
\end{remark}

Combining Corollary \ref{cor:Lip-1} with the regularity theories in \cite{BK17, BKMW25} as well as simple tangent cone analysis, we immediately deduce the following structural result for stationary $2$-valued Lipschitz graphs.

\begin{thmx}\label{thm:main-2}
	There exists $\alpha = \alpha(n,k)\in (0,1)$ such that the following is true. Suppose that $u:B^n_1(0)\to \A_2(\R^k)$ is a Lipschitz $2$-valued function with the property that the varifold $V = \mathbf{v}(u)$ is stationary. Write $U\subset B^n_1(0)$ for the (open) subset of $B^n_1(0)$ consisting of the points locally about which $u$ is generalised-$C^{1,\alpha}$. Then we have
		$$\dim_\H(B^n_1(0)\setminus U)\leq n-3.$$
		In fact, we have the disjoint union
		$$\spt\|V\| = \mathscr{R}\cup \mathscr{B}\cup\mathscr{C}\cup \mathscr{K}$$
		and that $U = B^n_1(0)\setminus\pi(\mathscr{K})$, where $\pi:\R^{k}\times\R^{n}\to \R^n$ is the orthogonal projection map and:
		\begin{enumerate}
			\item [(i)]
			\begin{itemize}
				\item $\mathscr{R}$ is the set of regular points of $V$;
				\item $\mathscr{B}$ is the set of singular points of $V$ where at least one tangent cone is a plane of multiplicity $2$ (this is then the unique tangent cone and locally about the point $V$ has a generalised-$C^{1,\alpha}$ structure by \cite[Theorem A]{BKMW25});
				\item $\mathscr{C}$ is the set of singular points of $V$ where at least one tangent cone consists of either a sum of $2$ distinct planes or $4$ distinct half-planes with a common boundary (this is then the unique tangent cone and locally about the point $V$ has a generalised-$C^{1,\alpha}$ structure by \cite{BK17});
				\item $\mathscr{K}$ is a closed subset which consists of the singular points of $V$ where no tangent cone can be written as the sum of $4$ half-planes (including the possibility of two planes which may or may not coincide).
			\end{itemize}
			\item [(ii)]
			\begin{itemize}
				\item $\dim_\H(\mathscr{B})\leq n-2$;
				\item $\dim_\H(\mathscr{C})\leq n-1$ (in fact, $\mathscr{C}$ is locally contained within an $(n-1)$-dimensional $C^{1,\alpha}$ submanifold by \cite{BK17});
				\item $\dim_\H(\mathscr{K})\leq n-3$.
			\end{itemize}
			In particular, as $\sing(V) = \mathscr{B}\cup\mathscr{C}\cup\mathscr{K}$, we have $\dim_\H(\sing(V))\leq n-1$.
		\end{enumerate}
\end{thmx}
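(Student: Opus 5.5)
The plan is to deduce Theorem~\ref{thm:main-2} from Corollary~\ref{cor:Lip-1}, the $\eps$-regularity theorems of \cite{BKMW25} and \cite{BK17}, and a tangent cone analysis combined with Federer--Almgren dimension reducing. First we classify tangent cones. At any $X\in\spt\|V\|$ with $u$ Lipschitz and $2$-valued, $\Theta_V(X)\in[1,2]$. Every tangent cone $\BC$ is a stationary integral cone which is itself the graph of a $2$-valued Lipschitz (homogeneous) function over $\R^n$; this is obtained by passing the graph representation to the limit under rescaling, using the uniform Lipschitz bound.

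The partition is then defined by the stated tangent cone conditions. $\mathscr{R}$ is the set of regular points, and by Allard these are exactly the density-$1$ points. $\mathscr{B}$ consists of the points with a multiplicity-$2$ planar tangent cone. $\mathscr{C}$ consists of the points with a tangent cone made of two distinct planes or four distinct half-planes meeting along an $(n-1)$-dimensional axis. $\mathscr{K}$ is the rest. Uniqueness of the tangent cone and a generalised-$C^{1,\alpha}$ structure near points of $\mathscr{B}$ follow from \cite[Theorem A]{BKMW25}, since $\mathcal{V}_L$ satisfies Definition~\ref{defn:eps-reg} with $Q=2$. The same conclusions near points of $\mathscr{C}$ follow from \cite{BK17}, which is the multiplicity-one class/transverse-planes regularity near such cones, applicable because of the graphical structure and the absence of density $<2$ classical singularities other than the ones allowed. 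As a consequence the three pieces are disjoint and $\mathscr{B}\cup\mathscr{C}$ is relatively open in $\sing(V)$. Hence $\mathscr{K}$ is closed, and $\pi(\mathscr{K})$ is closed because $\pi$ restricted to $\spt\|V\|$ is proper.

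For the identification $U=B^n_1\setminus\pi(\mathscr{K})$ there are two inclusions. The inclusion $\supseteq$ holds because over a point not in $\pi(\mathscr{K})$, every preimage point is regular or lies in $\mathscr{B}\cup\mathscr{C}$, and the local generalised-$C^{1,\alpha}$ descriptions there patch together. Since $\spt\|V\|$ over a point consists of at most two points, a separation argument lets us treat each sheet locally. For $\subseteq$: if $u$ is generalised-$C^{1,\alpha}$ near $x$, then every tangent cone at a point over $x$ is a union of at most two planes or at most four half-planes, so the point is not in $\mathscr{K}$.

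The dimension bounds come next. $\dim_\H\mathscr{B}\le n-2$ is exactly Corollary~\ref{cor:Lip-1}. For $\mathscr{C}$, \cite{BK17} shows it is locally contained in an $(n-1)$-dimensional $C^{1,\alpha}$ submanifold. For $\mathscr{K}$ we use Almgren stratification: $\sing(V)\cap\{\text{spine dim}\ge n-2\}$ must be analysed, showing that any tangent cone with at least $(n-2)$-dimensional translation invariance lies in $\mathscr{B}\cup\mathscr{C}$-type cones. This is the main obstacle. A cone with $n-1$ dimensions of symmetry is a union of half-planes meeting along an axis, and being a $2$-valued graph forces it to consist of four half-planes, possibly two (coinciding) planes. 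A cone with exactly $(n-2)$-dimensional spine is $\R^{n-2}\times\BC_0$ with $\BC_0$ a $2$-dimensional stationary cone, that is, a union of rays over a geodesic net in $S^{k+1}$, which is a $2$-valued graph over $\R^2$. Here I would argue that graphicality over $\R^2$ and the balancing condition at the vertices of the net force $\BC_0$ to be a union of two planes: each great-circle-like arc pair must project injectively, so at most $4$ arcs leave each direction, and the net of geodesic arcs must then consist of two great circles. Note that two $2$-planes meeting only at the origin in $\R^{2+k}$ have a $0$-dimensional spine in $\BC_0$. Such points, with a tangent cone consisting of a pair of transverse planes, belong to $\mathscr{C}$ by definition, so they are excluded from $\mathscr{K}$ as well.

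Thus every point of $\mathscr{K}$ has all tangent cones with spine dimension $\le n-3$, and standard stratification gives $\dim_\H\mathscr{K}\le n-3$, hence $\dim_\H(B^n_1\setminus U)\le n-3$. Combining the bounds, $\sing(V)=\mathscr{B}\cup\mathscr{C}\cup\mathscr{K}$ has dimension $\le n-1$.
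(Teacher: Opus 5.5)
Your overall route is the paper's. You reduce the bound on $\mathscr{K}$ to showing that every tangent cone whose spine has dimension $\geq n-2$ is of one of the types defining $\mathscr{R}\cup\mathscr{B}\cup\mathscr{C}$. In the $(n-2)$-case you quotient out the spine and classify the $2$-dimensional cone $\BC_0$. Reading off from Allard--Almgren that the link of $\BC_0$ is a geodesic net, so that $\BC_0$ has no branch points away from $0$, is a legitimate substitute for the paper's appeal to Theorem~\ref{thm:main} at that point.

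The gap is in the step you yourself flag as the main obstacle. ``At most four arcs leave each vertex'' together with balancing does not imply that the net is two great circles. That conclusion is in fact false unless you use $S(\BC_0)=\{0\}$, which your sketch never invokes. For $k\geq 2$, take four half-planes with common boundary $\R e_1\subset P_0$ and unit conormals $ce_2\pm rv_1$, $-ce_2\pm rv_2$, where $c,r>0$, $c^2+r^2=1$, and $v_1\perp v_2$ are unit vectors in $\R^k$.
\begin{itemize}
\item The conormals sum to zero, so the cone is stationary.
\item It is the graph of the Lipschitz $2$-valued function $\llbracket g\rrbracket+\llbracket -g\rrbracket$ with $g(x)=\tfrac{r}{c}(x_2^+v_1+x_2^-v_2)$.
\item Each vertex of its link has exactly four arcs.
\item No two conormals are opposite, so it is not a pair of planes.
\end{itemize}
Your sketch also gives no mechanism ruling out nets with several vertices.

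What is needed is the paper's sector argument.
\begin{itemize}
\item At a vertex of the net the two values of the graph coincide, and exactly two distinct arcs leave on each side. A multiplicity-two arc on one side would force, by balancing, a multiplicity-two straight continuation, i.e.\ no vertex at all.
\item Let the vertex rays project to angles $\theta_1<\dots<\theta_N$. Over each sector $\{\theta_i<\theta<\theta_{i+1}\}$ the two sheets are disjoint cones over geodesic arcs. A contact inside the sector is not a vertex, so it would make the sheets coincide on the whole sector, which the description of the bounding vertices excludes.
\item Hence the sheets over each sector are pieces of two distinct $2$-planes, each containing both bounding rays. Two distinct $2$-planes through $0$ meet in a subspace of dimension $\leq 1$, which forces $\theta_{i+1}=\theta_i+\pi$. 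For $N=1$ the planes would also cross along the opposite ray, creating a second vertex. So $N\in\{0,2\}$.
\item If $N=2$, $\BC_0$ is four half-planes along a line, so $\dim S(\BC_0)\geq 1$, a contradiction.
\item If $N=0$, the net consists of closed geodesics, hence two great circles. They are disjoint, since distinct intersecting great circles would create vertices. So $\BC$ is two planes meeting along an $(n-2)$-dimensional subspace, and the point lies in $\mathscr{C}$.
\end{itemize}

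Two smaller inaccuracies, neither used later:
\begin{itemize}
\item Regular points need not have density $1$; a smooth sheet of multiplicity two is regular.
\item $\Theta_V\leq 2$ fails in general; taking a Lawson--Osserman cone with multiplicity two gives density $>2$.
\end{itemize}
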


\begin{proof}
	All the claims except the Hausdorff dimension on $\mathscr{K}$ follow from Theorem \ref{thm:main} and the regularity theories established in \cite{BK17, BKMW25}. To establish the Hausdorff dimension on $\mathscr{K}$, we will analyse the possible tangent cones to show that $\mathscr{K}$ must be a subset of the usual $(n-3)$-stratum, and thus the claimed dimension bound follows.
	
	Indeed, suppose $x\in \sing(V)$ and let $\BC$ be a tangent cone at $x$. If $\dim(S(\BC))\in \{n-1,n\}$, then one may deduce that $\BC$ is the unique tangent cone and $V$ has a $GC^{1,\alpha}$ structure locally about $x$ by applying the regularity theorems in \cite{BK17, BKMW25}. Thus, $x\not\in \mathscr{K}$. Now suppose $\dim(S(\BC)) = n-2$. We claim that $\BC$ must be a sum of two planes: once we have shown this one may apply \cite{BK17} to see that $V$ is smoothly immersed locally about $x$, and thus $x\not\in\mathscr{K}$, which completes the proof.
	
	Since $V$ is a two-valued Lipschitz graph it follows that $\BC$ is also represented by the graph of a Lipschitz $2$-valued function $f:\R^n\to \A_2(\R^k)$. By quotienting out by the spine of $\BC$, we may in fact assume that $\BC$ is a $2$-dimensional cone with $S(\BC) = \{0\}$, so that $f:\R^2\to \A_2(\R^k)$. By the homogeneity of $\BC$ and Theorem \ref{thm:main}, we know that the branch set of $\BC$ is contained within $\{0\}$, and thus the only possible singularities of $\BC$ away from $\{0\}$ must be classical singularities. Each classical singularity determines a ray of classical singularities in the domain of $f$: let $N\in \{0,1,2,\dotsc\}$ be the number of such rays (we allow for $N=0$ where there are no classical singularities). Note that by a simple compactness argument on the link $\BC\cap S^{k+1}$ there must be only finitely many such rays, so $N<\infty$. If $N\geq 1$, denote the rays (through the origin) of the classical singularities by $\ell_1,\dotsc,\ell_N\subset\R^2$. Then $\R^2\setminus \cup_{i=1}^N\ell_i = \cup_{i=1}^NW_i$, where in  (suitable chosen) polar coordinates $(r,\theta)$ in $\R^2$ each $W_i$ takes the form $W_i = \{r>0, \theta_i<\theta<\theta_{i+1}\}$ for some $0 = \theta_1 < \cdots <\theta_{N+1} = 2\pi$. On the regions $W_i\times\R^k$, $\BC$ is a 2-dimensional minimal cone which is the cone over two (smooth) geodesic arcs in $\S^{k+1}$, and thus $\BC$ must be the sum of two planes here, which intersect only along $\theta = \theta_i, \theta_{i+1}$. But two distinct planes can only intersect along a subspace, and thus we need $\theta_i + \pi = \theta_{i+1}$. But since $0\leq \theta_i\leq 2\pi$ for all $i$, this forces $N = 2$ in this situation (with $\theta_1=\pi$), and therefore in general we must have $N \in \{0,2\}$.
    But then in the case $N = 0$, we have that $\BC$ coincides with the sum of two planes, and so we are done. The only remaining case is when $N=2$, in which case $\BC$ is the sum of $4$ half-planes. But then this would force $\Theta_{\BC}(0) = 2$, and since the classical singularities also have density $2$, we would need that the classical singularities belong to the spine, and so $\dim(S(\BC)) = 1$, again giving the contradiction. Thus we need $\BC$ to be the sum of two planes only intersecting along a subspace of dimension $n-2$, completing the proof.
\end{proof}

\begin{remark}
It might be possible to improve the bound on the Hausdorff dimension of $\mathscr{K}$ in Theorem \ref{thm:main-2} (and hence of $B^n_1(0)\setminus U$) to $\leq n-4$, which would be the best possible dimension bound in arbitrary codimension due to the examples constructed by Lawson--Osserman \cite{LO77}. In codimension one, in fact one has $\mathscr{K}=\emptyset$ (and hence has $U = B^n_1(0)$), see \cite[Theorem 5.2]{Hie20}.
\end{remark}

Combining Corollary \ref{cor:stable-1} with the regularity theories in \cite{MW24, Min24}, we immediately deduce the following structural result for $V\in \mathcal{S}_Q$:

\begin{thmx}\label{thm:main-4}
	Suppose $V\in\S_Q$. Then,
	$$\spt\|V\|\cap \{\Theta_V<Q+1\} = \mathcal{R}^{\leq Q}\cup\mathcal{B}^Q\cup\mathcal{C}^Q\cup\mathcal{\tilde{C}}^{Q+1/2}\cup \mathcal{K}\, ,$$
	where
	\begin{itemize}
		\item $\mathcal{R}^{\leq Q}$ is the set of regular points of $V$ with density in $\{1,2,\dotsc,Q\}$, and thus about which $\spt\|V\|$ is a locally smooth submanifold;
		\item $\mathcal{B}^Q\equiv\mathcal{B}_V\cap\{\Theta_V=Q\}$ is the set of density $Q$ flat singular points of $V$; by \cite{MW24}, locally about each point in $\mathcal{B}^Q$ we have that $V$ is a $Q$-valued generalised-$C^{1,\alpha}$ graph, and by Corollary \ref{cor:stable-1} we know $\mathcal{B}^Q$ has Hausdorff dimension $\leq n-2$;
		\item $\mathcal{C}^Q$ is the set of density $Q$ classical singularities of $V$, and locally about each such point $V$ is a smooth classical singularity (see \cite{Kru14});
		\item $\mathcal{\tilde{C}}^{Q+1/2}$ is the set of points where one tangent cone to $V$ is a classical cone of density $Q+1/2$; by \cite{Min24}, it follows that such a tangent cone is unique, and $V$ is locally about each such point a (possibly branched) generalised-$C^{1,\alpha}$ perturbation of its tangent cone;
		\item $\mathcal{K}$ is a relatively closed subset of $\{\Theta_V<Q+1\}$ which has dimension $\leq n-2$; in fact, $\mathcal{K}$ equals the $(n-2)$-strata of $V$ in $\{\Theta_V<Q+1\}$.
	\end{itemize}
	In particular, $\dim_\H(\sing(V)\cap \{\Theta_V<Q+1\})\leq n-1$. Here, $\alpha = \alpha(n,Q)\in (0,1)$.
\end{thmx}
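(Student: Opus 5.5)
The plan is to sort the points of $\spt\|V\|\cap\{\Theta_V<Q+1\}$ by the tangent cones that occur there, in the same way as in the proof of Theorem~\ref{thm:main-2}, and then apply the corresponding regularity theorem to each class. Fix $X\in\spt\|V\|\cap\{\Theta_V<Q+1\}$ and a tangent cone $\BC$ at $X$. Let $S(\BC)$ denote its spine, and split into cases according to $\dim S(\BC)$.

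If $\dim S(\BC)=n$, then $\BC$ is a plane of integer multiplicity $q\leq Q$. When $X$ is a regular point it belongs to $\mathcal{R}^{\leq Q}$. Otherwise $X$ is a branch point. Since $V\in\S_Q$, the class $\S_Q$ is closed under adding disjoint parallel planes as in Remark~\ref{remark:lower-Q}, so \cite{MW24} applies at every multiplicity $q\leq Q$. It gives a generalised-$C^{1,\alpha}$ $q$-valued graph structure near $X$ and the uniqueness of $\BC$. Membership in $\S_Q$ forbids classical singularities of density $<Q$; I expect this to force $q=Q$ at genuine branch points, but I would double check this. Two remarks for this case:
\begin{itemize}
\item Theorem~\ref{thm:main} (via Corollary~\ref{cor:stable-1}) handles all densities $\leq Q$ anyway.
\item So Corollary~\ref{cor:stable-1} yields $\dim_\H\mathcal{B}^Q\leq n-2$, which is the claimed bound for $\mathcal{B}^Q$.
\end{itemize}

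If $\dim S(\BC)=n-1$, then $\BC$ is a stationary union of half-planes meeting along an $(n-1)$-dimensional axis. Its density is a half-integer or an integer, and it is $<Q+1$. The hypothesis of no classical singularities of density $<Q$, combined with upper semicontinuity of density, should leave two possibilities:
\begin{itemize}
\item density exactly $Q$: then \cite{Kru14}/\cite{MW24} give a smooth classical singularity, so $X\in\mathcal{C}^Q$;
\item density $Q+1/2$: then \cite{Min24} gives uniqueness of the tangent cone and a generalised-$C^{1,\alpha}$ perturbation structure, so $X\in\tilde{\mathcal{C}}^{Q+1/2}$.
\end{itemize}
Ruling out every other density is the main obstacle I foresee. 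I would first attempt the following argument. Suppose $\BC$ had density $<Q$. By Allard-type and Simon-type $\eps$-regularity near classical cones (as in \cite{MW24}), $V$ would then have a nearby classical singularity of the same density $<Q$, contradicting $V\in\S_Q$. Densities in $(Q,Q+1/2)$ or $(Q+1/2,Q+1)$ cannot occur because of the quantisation just described.

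All remaining points satisfy $\dim S(\BC)\leq n-2$ for every tangent cone, and I define $\mathcal{K}$ to be this set. Each of the sets $\mathcal{R}^{\leq Q}$, $\mathcal{B}^Q$, $\mathcal{C}^Q$ and $\tilde{\mathcal{C}}^{Q+1/2}$ is relatively open in an appropriate sense, because its local structure propagates to nearby points. It follows that $\mathcal{K}$ is relatively closed in $\{\Theta_V<Q+1\}$. By definition $\mathcal{K}$ is the $(n-2)$-stratum, so Almgren's stratification theorem gives $\dim_\H\mathcal{K}\leq n-2$.

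Disjointness of the five sets follows from the uniqueness of tangent cones in each of the regular and singular classes. Finally, $\sing(V)\cap\{\Theta_V<Q+1\}$ is contained in $\mathcal{B}^Q\cup\mathcal{C}^Q\cup\tilde{\mathcal{C}}^{Q+1/2}\cup\mathcal{K}$. Each of these pieces has dimension $\leq n-1$, since $\mathcal{C}^Q$ and $\tilde{\mathcal{C}}^{Q+1/2}$ lie locally in $(n-1)$-dimensional submanifolds. Hence $\dim_\H\bigl(\sing(V)\cap\{\Theta_V<Q+1\}\bigr)\leq n-1$.
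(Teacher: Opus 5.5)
Your route is what the paper has in mind when it says the theorem follows immediately: classify $X$ by the spine dimension of a tangent cone as in the proof of Theorem~\ref{thm:main-2}, feed each class into Corollary~\ref{cor:stable-1}, \cite{MW24}, \cite{Kru14} or \cite{Min24}, and let $\mathcal{K}$ be the $(n-2)$-stratum. There is, however, a genuine gap at the point you leave unchecked in the planar case.

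\textbf{The missing step.} You must show there is no singular point $X$ with $\Theta_V(X)=q<Q$ and tangent cone $q|P|$. Such a point would lie in none of the five sets: $\mathcal{B}^Q$ contains only density-$Q$ points, and $\mathcal{K}$ excludes points with a planar tangent cone. So without this the decomposition is not exhaustive. Neither the dimension bound from Theorem~\ref{thm:main} nor the generalised-$C^{1,\alpha}$ structure from \cite{MW24} rules such points out, since both allow branching. The argument is short:
\begin{enumerate}
\item By upper semicontinuity of density, $\Theta_V<q+\frac12$ on some ball $B_\rho(X)$.
\item Classical singularities have half-integer density $\geq\frac32$. Any classical singularity in $B_\rho(X)$ would therefore have density $\leq q<Q$, which $V\in\mathcal{S}_Q$ forbids. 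Hence $V\res B_\rho(X)$ has no classical singularities at all.
\item Wickramasekera's sheeting theorem \cite{Wic14a} then writes $V$ near $X$ as $\sum_{j=1}^q|\graph u_j|$, where $u_1\leq\dots\leq u_q$ are smooth solutions of the minimal surface equation.
\item Since $\Theta_V(X)=q$, every $u_j$ passes through $X$. The strong maximum principle applied to $u_{j+1}-u_j\geq0$ gives $u_1\equiv\dots\equiv u_q$, so $X$ is regular.
\end{enumerate}
Thus $\mathcal{B}_V\cap\{\Theta_V<Q\}=\emptyset$, which is what the statement implicitly uses.

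\textbf{Smaller points.}
\begin{enumerate}
\item To exclude classical tangent cones of density $<Q$, quote the minimum distance theorem of \cite{MW24} directly: no $V\in\mathcal{S}_Q$ is close to such a cone, while the rescalings of $V$ at $X$ stay in $\mathcal{S}_Q$ and converge to $\BC$. The mechanism you describe, an $\eps$-regularity theorem producing a nearby classical singularity of the same density, is not a result you can cite for general classical cones (e.g.\ those with half-planes of multiplicity $>1$). The minimum distance theorem is exactly the statement needed, and proving it is a substantial part of \cite{MW24}.
\item For relative closedness of $\mathcal{K}$, what you need is that the \emph{union} of the other four sets is relatively open. Each set separately need not be; for example, $\mathcal{B}^Q$ is not.
\end{enumerate}
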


\subsection{Overview of the proof}

We begin with some technical preliminaries which follow from the $\eps$-regularity property in Definition \ref{defn:eps-reg}. In particular, we establish suitable doubling conditions for the energy of the Lipschitz multi-valued graphs representing $V\in\mathcal{V}$ about a branch point.

The first key step in the proof is to use these preliminaries to prove approximate monotonicity of the planar frequency function, as introduced in \cite{KW23a}. Indeed, we will show that at \emph{every} density $Q$ branch point (which by hypothesis has a unique tangent cone and, again by hypothesis, about which the structure of the varifold is given by the graph of a generalised-$C^{1,\alpha}$ function as in Definition \ref{defn:eps-reg}), the planar frequency function is approximately monotone at \emph{all} scales. Thus, we may associate a planar frequency value at \emph{every} such point, which we then show takes values in $[1+\alpha,\infty)$. In particular, the planar frequency value always has a fixed gap above $1$, thus ruling out ``frequency $1$'' branch points; we stress that this is stronger than in the corresponding situation for area minimisers (cf.~\cite{KW23a}).

We then partition the set of (density $Q$) branch points into two (disjoint) pieces:
\begin{itemize}
	\item $\mathcal{B}^{\neq 2}$, those where the planar frequency is valued in $[1+\alpha,\infty)\setminus\{2\}$;
	\item $\mathcal{B}^2$, those where the planar frequency is equal to $2$.
\end{itemize}
We then show (as observed in \cite{KW26c}) that $\mathcal{B}^{\neq 2}$ has Hausdorff dimension $\leq n-2$ using \emph{only} the planar frequency function. Indeed, we will show that the coarse blow-ups arising from a sequence of vertical rescalings of a given $V\in\mathcal{V}$ about a branch point are always homogeneous, with degree of homogeneity equal to the planar frequency value at the chosen point. At this stage, we can then argue the dimension bound for $\mathcal{B}^{\neq 2}$ using either of the two usual ``dimension reduction'' arguments. We opt for that of Almgren \cite{Alm00} as opposed to Federer \cite{Fed70}, namely we stratify $\mathcal{B}^{\neq 2}$ by the spine dimension of the coarse blow-ups and show that nearby points in $\mathcal{B}^{\neq 2}$ of ``good'' planar frequency value must accumulate around the spine of the coarse blow-up. Due to the planar frequency value lying in $(1,\infty)$, the spine of these coarse blow-ups must be a subspace of dimension $\leq n-2$; we stress that having a frequency gap above $1$ is what ensures that this spine dimension is \emph{not} $n-1$. The desired dimension bound for this subset of the branch set then follows by standard measure-theoretic arguments.

Let us elaborate on this last point that nearby points in $\mathcal{B}^{\neq 2}$ of ``good'' planar frequency value must accumulate around the spine of the coarse blow-up, as it will illustrate why branch points of planar frequency \emph{exactly} $2$ must be treated differently. The planar frequency function at a given branch point measures the frequency of $V$ relative to the tangent plane to $V$ at the point (which is unique, by Definition \ref{defn:eps-reg}). However, the tangent plane may differ from point-to-point, and thus branch points with different tangent planes have different ``frequency functions''. The spine of the coarse blow-up is however determined by the points of maximum frequency for a \emph{specific} choice of plane, and so it is not a priori clear that different branch points should accumulate along the spine. However, we will show (similarly to \cite{KW26c}), for $\mathcal{B}^{\neq 2}$, that \emph{when we perform a coarse blow-up, all these potentially different tangent planes at the relevant points in $\mathcal{B}^{\neq 2}$ coincide in the coarse blow-up}. As such, in the coarse blow-up the frequency functions at the limit points of these branch points are measuring the frequency of the coarse blow-up with respect to the \emph{same} plane, and this is what forces the accumulation of these points around the spine of the coarse blow-up. The reason this argument fails for points where the planar frequency is exactly $2$ is simply because one cannot in general guarantee that the (possibly different) tangent planes at different points of planar frequency $2$ coincide in the coarse blow-up limit\footnote{The reader may find it useful to think of this difference from the viewpoint of a Taylor expansion: measuring the frequency with respect to the tangent plane is morally subtracting off the linear part of the Taylor expansion, leading to the first possibly non-zero term being the quadratic term. Planar frequency $2$ means this part does not vanish, which is not a particularly stringent condition. Planar frequency $>2$ forces this quadratic term to vanish, which is more restrictive for, say, a harmonic function, and this is what allows us to show that the different tangent planes coincide in this case.}.

To show that the set $\mathcal{B}^2$ has Hausdorff dimension $\leq n-2$, we construct a center manifold analogously to Almgren \cite{Alm00} (see also \cite{DLS16a}). This is needed, for instance, when the coarse blow-ups collapse to a single (harmonic, homogeneous of degree $2$) function with multiplicity $Q$. In fact, this argument will also show that the set of branch points with planar frequency $\geq 2$ has Hausdorff dimension $\leq n-2$, although at this stage of the argument we only need to show this for the part of the branch set where the planar frequency is exactly $2$.

The reader who is familiar with Almgren's center manifold construction will then note the benefit of this approach: if one only wishes to study branch points where the decay to the (unique) tangent plane is at least quadratic, the center manifold is \emph{significantly} simpler to use, as one may avoid the need for a sequence of changing center manifolds and intervals of flattening. Consequently, one only needs to build a \emph{single} center manifold, which furthermore will touch the varifold at all nearby branch points of the same density where the decay to the tangent plane is at least quadratic. (This fact follows directly from the construction and was likely known to Almgren in the 1980's, and is noted in the work of S.~Chang \cite{Cha88}.) As such, we only need this simpler construction. This is one of the significant advantages of using the planar frequency function, as already observed in \cite{KW23a}, namely by avoiding the need to use a center manifold at branch points with \emph{sub}quadratic decay to the tangent plane (or with non-unique tangent cone); this is the situation requiring an infinite sequence of center manifolds in the classical theory. Consequently, we will give an essentially self-contained construction of the center manifold, including the proof of the almost monotonicity of the frequency function relative to the center manifold, with certain refinements. A significant portion of this will follow the relevant parts of the works \cite{DLS16a, DLS16b}, which in turn are a technically streamlined version of Almgren's original work.

\begin{remark}
One might wonder, given the Lipschitz multi-valued graph structure about our branch points, whether it is possible to use a simpler approach than Almgren's center manifold to establish the dimension bound for $\mathcal{B}^2$. For instance, one might wonder if it is possible to perform a blow-up relative to the average of the multi-valued graph instead of the center manifold. The issue with this at the moment appears to be a lack of regularity for the average, since if one wishes to show monotonicity of a frequency quantity relative to some submanifold, that submanifold needs to be $C^{3}$ due to error terms involving derivatives of the mean curvature of the submanifold appearing in the first variation computations (which arise from derivatives of the exponential map). Notice that the center manifold is $C^{3,\beta}$, whilst we do not even know if the average-part of a stationary Lipschitz multi-valued graph is $C^2$ in general\footnote{It should be noted in the case of $2$-valued $C^{1,\alpha}$ stationary graphs the work \cite{SW16} provides a way around this problem, utilising PDE-theoretic techniques to show that one can directly work with the average-free part of the function.}. Simply mollifying the average does not help either, as one (ideally) wants the object to touch $V$ at the desired branch points. If however one was in a situation where it was known that the average did have this improved regularity, then it seems reasonable that one could use the average as a replacement for the center manifold (for instance, if the average-part was identically zero, this does work and indeed it reduces to the planar frequency).
\end{remark}

\begin{remark}
One could alternatively attempt a method which falls between the planar frequency and the center manifold, namely by trying to use a modification of the planar frequency via measuring the frequency of $V$ at a given branch point relative to some choice of, say, minimal surface touching $V$ at the branch point, and allowing the surface to vary from point-to-point. For planar frequency, this choice of (minimal) surface is the tangent plane. However, it is unclear how one might choose such surfaces in a manner which ensures both: (a) there is an approximately monotone notion of frequency; and (b) the possibly different choices of surface at nearby points collapse to a fixed one in the coarse blow-up limit (as was the case with planar frequency at points of planar frequency $\neq 2$); this latter condition is needed to know that in the blow-up the frequency functions are the same. We stress that, for branch points of planar frequency $\geq 2$, \emph{both} of these points are satisfied in the present work by taking the surface at different points to be a fixed one, namely the \emph{single} center manifold.
\end{remark}

To aid the reader, we briefly mention that using a center manifold to control the dimension of the branch points with planar frequency $\geq 2$ is morally an argument using quantitative unique continuation. Indeed, one can control the size of the touching set between two smooth minimal surfaces using a frequency function. The geometric viewpoint of this is that one measures the frequency of one surface relative to the other, and stratifies the touching set by looking at the tangent maps, using elliptic estimates to linearise the problem (which in the setting of two minimal graphs gives rise to homogeneous harmonic functions). When controlling the size of the branch set of a given varifold, morally we wish to control the set of singular points where the surface is touching itself (with ``genuine'' branch points being those where the ``sheets'' of the varifold are indistinguishable, and so one cannot reduce the problem to the touching set of two minimal graphs). The idea then is that, if one can find \emph{another} (appropriate\footnote{The surface cannot be chosen completely arbitrarily due to the need for an approximately monotone frequency function.}) surface which touches the varifold along some subset of the branch set, then one can control the size of that subset by controlling this part of the touching set of two geometric objects as before. \emph{This is what the center manifold precisely achieves}, for the subset of the branch points where the planar frequency is $\geq 2$. With this geometric viewpoint, the argument is therefore a natural extension of typical quantitative unique continuation arguments. 

We also stress that, with this viewpoint, the main property one needs regarding the linearisation procedure is that, when $V$ is close to the manifold one is measuring frequency relative to (namely the center manifold, or the tangent plane for planar frequency), it is close \emph{strongly} in $W^{1,2}$ to the linearised problem (after appropriate rescaling). This enables one to pass certain first variation identities from $V$ to the linearised problem (for example, to establish monotonicity of the frequency function at the linearised level), which is all that is needed to conclude the argument. Other than this, the precise form of the linearised problem is essentially irrelevant.

\begin{remark}\label{remark:energy-convergence}
    In more general situations, as in the present paper, this last point regarding \emph{strong} convergence in $W^{1,2}$ to the linearised problem (either relative to a plane or the center manifold) is a non-trivial fact. In Appendix \ref{app:energy-convergence} we will see that under Definition \ref{defn:eps-reg} (as thus in the settings of Corollary \ref{cor:Lip-1} and Corollary \ref{cor:stable-1}) we \emph{do have} strong $W^{1,2}_{\text{loc}}(B^n_1(0))$ convergence when linearising relative to a plane (this uses the energy non-concentration estimate in \cite[$(\mathfrak{B}6)$]{BKMW25}). This will be of great importance in our arguments, namely in establishing approximate monotonicity of the planar frequency function. 
    
    For the linearisation procedure relative to the center manifold, in certain situations strong convergence in 
    $W^{1, 2}_{\rm loc}$ can be directly verified (see Remark \ref{remark:area-min}); for instance, in Almgren's work on area minimisers this is verified using a competitor argument.) In the present work, we use an argument of Hirsch--Spolaor \cite{HS24} which allows us to sidestep this issue by constructing blow-ups as multi-valued gradient Young measures. These measures then form our blow-up class relative to the center manifold, and we can similarly show that one may stratify branch points of planar frequency $\geq 2$ by the spine dimension of these measures. This allows us to conclude the dimension bound again by standard arguments.
\end{remark}

\begin{remark} As a final comment, we note that our approach is rather different to that used by the third author and Simon in \cite{SW16} when studying $2$-valued $C^{1,\alpha}$ stationary graphs. As mentioned, in \cite{SW16} it was possible to avoid the use of Almgren center manifolds completely, using a frequency function for the average-free part of the graph directly. Morally, if one can `subtract off' the average and maintain all other analytic properties, such as an approximately monotone frequency function and energy convergence to the linearised problem, then the above geometric discussion is trivial, as one can simply look at the touching set between the average-free part and a single plane (morally, the problem is then reduced to planar frequency). The reason why this simplification was possible in \cite{SW16} was that they were able to show that in fact the $2$-valued function was $C^{1,1/2}$-regular, meaning that the coefficients of the corresponding (multi-valued) PDE for the average-free part of the function are Lipschitz. This allows one to directly show that a suitable variant of Almgren's frequency function, now for the average-free part, is approximately monotone using ideas from the work of Garofalo--Lin \cite{GL86, GL87}. For us, we do not expect in general to have $C^{1,1/2}$ regularity of our multi-valued graphs and so we seemingly cannot argue like this. Our present argument in particular avoids looking at multi-valued PDEs.
\end{remark}

\textbf{Organisation of paper:} In Section \ref{sec:prelim} we give some preliminaries and results concerning the $\eps$-regularity property. In Section \ref{sec:pff} we introduce the planar frequency function, show its approximate monotonicity at all density $Q$ branch points, and then use it to establish the dimension bound on the set of branch points where the planar frequency is $\neq 2$. In Section \ref{sec:cm}, we then construct a single center manifold and use it to show that the set of branch points where the planar frequency is exactly $2$ (in fact, is $\geq 2$) also has Hausdorff dimension $\leq n-2$, thus completing the proof of Theorem \ref{thm:main}. In Appendix \ref{app:energy-convergence} we discuss the problem of energy convergence for coarse blow-ups, and in Appendix \ref{app:ym} we develop the theory of multi-valued gradient Young measures introduced by Hirsch--Spolaor \cite{HS24}.

\textbf{Acknowledgements:} This research was conducted during the period P.M.~served as a Clay Research Fellow. The authors would like to thank Sidney Stanbury for reading and providing valuable feedback on a preliminary version of this paper.

\section{Preliminaries}\label{sec:prelim}

Throughout we fix $Q\in \{2,3,\dotsc\}$ and a class $\mathcal{V}$ of stationary integral $n$-varifolds in $B^{n+k}_2(0)$ which satisfies:
\begin{itemize}
	\item $\mathcal{V}$ is closed under natural geometric operations and weak limits;
	\item $\mathcal{V}$ obeys the $\eps$-regularity property up to multiplicity $Q$ (Definition \ref{defn:eps-reg}).
\end{itemize}
In the present section, we will deduce consequences of the $\eps$-regularity property holding up to multiplicity $Q$. We refer the reader to \cite{BKMW25, MW24} for further explanations of terminology where needed. For simplicity, when constants depend on both $\mathcal{V}$ and $Q$, we simply write that they depend on $\mathcal{V}$.

For $V\in\mathcal{V}$ and $P$ an $n$-dimensional subspace of $\R^{n+k}$, we define the (\emph{one-sided}) \emph{$L^2$-height excess of $V$ relative to $P$} by
$$\hat{E}^2_{V,P}:= \int_{\pi_P^{-1}(P\cap B^{n+k}_1(0))}\dist^2(x,P)\, \ext\|V\|(x)$$
where $\pi_P:\R^{n+k}\to P$ is the orthogonal projection onto $P$. We abbreviate $\hat{E}_{V,P_0}$ by $\hat{E}_V$, where we recall $P_0 := \{0\}^k\times\R^n$. We also define the $L^2$\emph{-tilt excess of $V$ relative to $P$} by:
$$\Etilt_{V,P}^2 := \frac{1}{2}\int_{\pi_P^{-1}(P\cap B^{n+k}_1(0))}\|\pi_x-\pi_P\|^2\, \ext\|V\|(x)$$
where here $\pi_x:=\pi_{T_xV}$ whenever $T_xV$ exists (which is $\|V\|$-a.e.), and for a linear map $A:\R^{n+k}\to \R^{n+k}$, $\|A\|^2:=\sum_{i,j}A_{ij}^2$ denotes the Hilbert--Schmidt norm of $A$. We abbreviate $\Etilt_{V,P_0}$ by $\Etilt_V$.

Now consider a $Q$-valued Lipschitz function $u:\Omega\to \A_Q(\R^k)$, where $\Omega\subset\R^n$ is open. We write $u = \sum^Q_{\alpha=1}\llbracket u^\alpha\rrbracket$, and for each $\kappa\in \{1,\dotsc,k\}$ we write $u^\kappa\equiv u\cdot e_\kappa = \sum^Q_{\alpha=1}\llbracket u^{\kappa,\alpha}\rrbracket$, where we assume that $u^{\kappa,1}\leq\cdots\leq u^{\kappa,Q}$; here $\{e_1,\dotsc,e_k\}$ denotes the standard basis for $\R^k$. We stress that this does \emph{not} mean that $u^1 = (u^{1,1},u^{2,1},\dotsc,u^{k,1})$, for instance, as the orderings for the different coordinate functions may be different.

We now define what it means for such $u$ to be \emph{generalised-$C^{1,\alpha}$}. We will define this inductively on $Q$: when $Q=1$, generalised-$C^{1,\alpha}$ simply means $C^{1,\alpha}$ in the usual, single-valued, sense. We write
$$\Omega^Q_u := \{x\in \Omega: u(x) = Q\llbracket u_a(x)\rrbracket\}$$
for the $Q$-coincidence set of $u$; here $u_a:= \frac{1}{Q}\sum_{\alpha=1}^Qu^\alpha$ is the average-part of $u$. Notice that $\Omega^Q_u\subset\Omega$ is a relatively closed subset, and that for each $x\in \Omega\setminus \Omega^Q_u$, there is a radius $\rho = \rho(x,u)>0$ such that $u = u^x_1 + u^x_2$, where $u^x_i:B_\rho(x)\to \A_{Q_i}(\R^k)$ are Lipschitz $Q_i$-valued functions, where $Q_1,Q_2\in \{1,\dotsc,Q-1\}$ and $Q_1+Q_2 = Q$.

\begin{defn}\label{defn:gen-C1}
	Fix $\alpha\in (0,1)$. We say that a Lipschitz function $u:\Omega\to \A_Q(\R^k)$ is \emph{generalised-$C^{1,\alpha}$} in $\Omega$, and belongs to $GC^{1,\alpha}(\Omega)$, if the following holds:
	\begin{enumerate}
		\item [(1)] For $x\in \Omega\setminus \Omega^Q_u$, one may choose $\rho = \rho(x, u)$ and functions $u^x_1,u^x_2$ as above such that $u_1^x$ and $u_2^x$ are generalised-$C^{1,\alpha}$ in $B_{\rho}(x)$.
		\item [(2)] To each $x\in \Omega^Q_u$, we can assign a function $A_x:\R^n\to \A_Q(\R^k)$ with $A_x(0) = \llbracket 0\rrbracket$ satisfying
		$$\lim_{h\to 0}|h|^{-1}\G(u(x+h),u_a(x) + A_x(h)) = 0$$
		such that we may write $A_x = \sum^Q_{\alpha=1}\llbracket A^\alpha_x\rrbracket$, where for each $\alpha\in \{1,\dotsc,Q\}$, $A^\alpha_x:\R^n\to \R^k$ is a function of the following form: there is an $(n-1)$-dimensional subspace $S^\alpha\subset \R^n$ such that if $H_1^\alpha,H^\alpha_2$ are the (two, distinct) closed half-planes of $\R^n$ with boundary $S^\alpha$, then there are linear functions $L^\alpha_i:H^\alpha_i\to \R^k$ with $L^\alpha_1|_{S^\alpha} = L^\alpha_2|_{S^\alpha}$ such that $A^\alpha_x(y) = L^\alpha_i(y)$ whenever $y\in H^\alpha_i$.
		
		In particular, the varifold $\BC_x$ associated to the graph of $A_x$ is a cone taking the form
		$$\BC_x = \sum^{N_x}_{i=1}Q_x^i|\widetilde{H}^i_x|$$
		where $N_x\in\{1,2,\dotsc,Q\}$, $\widetilde{H}_i^x$ are distinct half-planes, and $Q_x^i\in \{1,\dotsc,Q\}$ are such that $\sum^{N_x}_{i=1}Q_x^i = 2Q$. Moreover, we assume that if $Q^i_x = Q$ for some $i$, then $\BC_x = Q|P|$ for some plane $P$.
		\item [(3)] The map sending $x\mapsto A_x$ is $\alpha$-Hölder continuous on the set $\Omega_u^Q$, where we define the metric on the image by $d(A_x,A_y) := \sup_{z\in B_1^n(0)}\G(A_x(z),A_y(z))$.
	\end{enumerate}
\end{defn}
Notice that this is a very crude Hölder-type assumption: we are just requiring some uniform Hölder continuity of the tangent cones on the $Q$-coincidence set, and separately local $\alpha$-Hölder regularity of the tangent cones away from the $Q$-coincidence set. This is therefore less restrictive than the definitions given in the special cases seen in \cite{BKMW25, MW24}; however it suffices for our purposes.

Now we recall the (\emph{multiplicity $Q$}) \emph{coarse blow-up class} associated to $\mathcal{V}$, denoted $\mathfrak{B}^Q_{\mathcal{V}}$, which consists of all \emph{coarse blow-ups} $v\in W^{1,2}(B^n_1(0);\A_Q(\R^k))$ generated by $\mathcal{V}$. These are functions generated by performing a vertical (inhomogeneous) blow-up procedure of the sequence of multi-valued Lipschitz approximations corresponding to a sequence $(V_j)_j\subset\mathcal{V}$ which converge, as varifolds in $\R^k\times B^n_1(0)$, to the plane $P_0$ with multiplicity $Q$. The reader can consult \cite[Part 2]{BKMW25} for the general definition of a coarse blow-up, with a ``coarse blow-up generated by $\mathcal{V}$'' being one in which the sequence of varifolds all belong to $\mathcal{V}$.

Our first aim is to establish a modification of the $\eps$-regularity property which guarantees closeness in energy to a coarse blow-up when instead normalising the derivative of the function in Definition \ref{defn:eps-reg} by the \emph{tilt excess}. This readily follows from the property given in Definition \ref{defn:eps-reg}; we give the argument for completeness. Here and throughout we will write $\mathfrak{B}_{\mathcal{V}}$ for the set of $Q$-valued functions given by sums of the form
$$v = \sum^N_{i=1}\sum^{Q_i}_{\alpha=1}\llbracket c_iv^\alpha_i\rrbracket$$
where $N\in \{1,\dotsc,Q\}$ and $Q_1,\dotsc,Q_N \in \{1,\dotsc,Q\}$ are positive integers such that $\sum^N_{i=1}Q_i=Q$, $c_i\in [0,\infty)$ are constants (possibly zero) and $v_i = \sum^{Q_i}_{\alpha=1}\llbracket v^\alpha_i(x)\rrbracket$ belong to $\mathfrak{B}_{\mathcal{V}}^{Q_i}$. As a shorthand, we will write such a sum as $v = \sum^N_{i=1}c_iv_i$, although the reader should note the (significant) abuse of notation here, as when working with Dirac masses certainly $2\llbracket P \rrbracket \neq \llbracket 2P\rrbracket$, however as for us the total number of values will always be fixed this notation shouldn't create any confusion.

\begin{theorem}\label{thm:eps-reg}
	There exists $\eps = \eps(\mathcal{V})\in (0,1)$ such that the following holds. Suppose $V\in \mathcal{V}$ satisfies
	\begin{itemize}
		\item $(\w_n 2^n)^{-1}\|V\|(B^{n+k}_2(0))<Q+1/2$;
		\item $Q-1/2\leq \w_n^{-1}\|V\|(\R^k\times B^n_1(0))\leq Q+1/2$;
		\item $\min\{\hat{E}_V,\Etilt_V\}<\eps$.
	\end{itemize}
	Then, there is a Lipschitz $Q$-valued function $u:B^n_{1/2}(0)\to \A_Q(\R^k)$ such that if $\widetilde{V} = V\res B_{15/8}(0)$, then
	$$\widetilde{V}\res (\R^k\times B^n_{1/2}(0)) = \mathbf{v}(u) \qquad \text{and} \qquad \Lip(u) \leq C\Etilt_V.$$
	Furthermore, for any $\eta>0$, there exists $\eps_0 = \eps_0(\mathcal{V},\eta)\in (0,1)$ such that if additionally $\eps<\eps_0$, then there is $v\in \mathfrak{B}_{\mathcal{V}}$ such that
	$$\int_{B^n_{1/2}(0)}\left||\Etilt_V^{-1}Du|^2 - |Dv|^2\right| < \eta.$$
	Here, $C = C(\mathcal{V})\in (0,\infty)$ and $\alpha = \alpha(\mathcal{V})\in (0,1)$.
\end{theorem}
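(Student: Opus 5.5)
The proof has two parts. First I reduce to the situation of Definition~\ref{defn:eps-reg} and show that the height and tilt excesses are comparable, which gives the Lipschitz bound. Then I obtain the energy approximation by a compactness argument that produces a coarse blow-up.

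\textbf{Step 1: reduction to small height excess.} Suppose $\min\{\hat{E}_V,\Etilt_V\}<\eps$ but only $\Etilt_V$ is small. I argue by compactness using the closure of $\mathcal{V}$ under weak limits. A sequence with $\Etilt_{V_j}\to 0$ and the stated mass bounds subconverges to some $V_\infty\in\mathcal{V}$ whose tangent planes in $\R^k\times B^n_1(0)$ are all $P_0$. By the constancy theorem, $V_\infty$ is there a finite sum of integer-multiplicity translates $P_0+c_i$. The two mass bounds, $\w_n^{-1}\|V\|(\R^k\times B^n_1)\in[Q-1/2,Q+1/2]$ and $(\w_n2^n)^{-1}\|V\|(B_2)<Q+1/2$, together with the monotonicity formula, should force this sum to have total multiplicity $Q$. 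The translates then cannot be too far from $0$: I expect $|c_i|$ to be bounded, though I have not pinned down the argument that rules out, say, a single $Q$-fold translate far away. Granting this bound, I use closure under homotheties and translations, and pass to a slightly smaller cylinder (applying the result to rescaled copies and covering). This places us in the hypotheses of Definition~\ref{defn:eps-reg} for the translate $V-c$, with $c$ chosen so that $\hat{E}_{V-c}$ is small.

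\textbf{Step 2: comparability of the excesses.} Here I need the Poincaré-type inequality
$$\inf_{c\in\R^k}\hat{E}_{V,P_0+c}\leq C\,\Etilt_V$$
(on a slightly enlarged cylinder). Once $V$ is known to be a Lipschitz graph $\mathbf{v}(u)$ with small Lipschitz constant, this is just the Poincaré inequality for $u$ minus its mean height, since $|Du|^2$ is comparable to $\|\pi_x-\pi_{P_0}\|^2$. The reverse inequality $\Etilt\leq C\hat{E}$ on smaller domains is the standard Caccioppoli estimate for stationary varifolds. Applying (A) of Definition~\ref{defn:eps-reg} to the optimally translated varifold, and noting that $\Lip(u)$ is unchanged by vertical translation, gives $\Lip(u)\leq C\hat{E}_{V,P_0+c}\leq C\Etilt_V$.

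\textbf{Step 3: energy approximation by contradiction.} Fix $\eta>0$ and suppose there are $V_j$ with $\eps_j\to 0$ for which no $v\in\mathfrak{B}_{\mathcal{V}}$ satisfies the stated bound. Let $u_j$ be the associated graphs and $c_j$ the optimal translations. Since the height excess $\hat{E}_j$ of $V_j-c_j$ and the tilt excess $\Etilt_j$ are comparable, I may pass to a subsequence with $\Etilt_j/\hat{E}_j\to\lambda\in[C^{-1},C]$. The normalised functions $\hat{E}_j^{-1}(u_j-c_j)$ then converge to a coarse blow-up in the sense of \cite[Part 2]{BKMW25}. If the sheets separate at the scale of the excess, the limit should be a sum $\sum_i c_iv_i$ with $v_i\in\mathfrak{B}^{Q_i}_{\mathcal{V}}$, by decomposing and using closure under disjoint decomposition; this is exactly why the broader class $\mathfrak{B}_{\mathcal{V}}$ is used. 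Rescaling by $\lambda^{-1}$ gives a limit $v\in\mathfrak{B}_{\mathcal{V}}$ of $\Etilt_j^{-1}(u_j-c_j)$.

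\textbf{Main obstacle.} The crux is upgrading this convergence from weak $W^{1,2}$ to convergence of energy densities in $L^1(B_{1/2})$, that is, strong $W^{1,2}_{\rm loc}$. For this I would invoke the energy non-concentration estimate $(\mathfrak{B}6)$ of \cite{BKMW25}, as discussed in Appendix~\ref{app:energy-convergence}, working on a slightly larger ball to get convergence on $B_{1/2}$. That yields $\int_{B_{1/2}}\bigl||\Etilt_j^{-1}Du_j|^2-|Dv|^2\bigr|\to 0$, which contradicts the choice of the $V_j$.
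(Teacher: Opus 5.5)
Your Step~2 rests on the inequality $\inf_{c\in\R^k}\hat{E}_{V,P_0+c}\leq C\Etilt_V$, and for $Q\geq 2$ this is false. The Poincaré inequality for $Q$-valued maps gives $\int\G(u,\xi)^2\leq C\int|Du|^2$ only for some $Q$-point $\xi\in\A_Q(\R^k)$, not for some $\xi=Q\llbracket c\rrbracket$. Subtracting a mean height does nothing when different sheets sit at different heights. For example, $V=|P_0+a|+|P_0-a|$ is a stationary $2$-valued Lipschitz graph satisfying all the hypotheses, with $\Etilt_V=0$, yet its height excess relative to every single plane $P_0+c$ is $\gtrsim|a|$. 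Consequently, whenever the sheets are separated by much more than their tilt, (A) of Definition~\ref{defn:eps-reg} at multiplicity $Q$ only yields $\Lip(u)\lesssim$ separation, not $\Lip(u)\leq C\Etilt_V$. The comparability you want holds only under an extra hypothesis such as $\Theta_V(0)\geq Q$ (Remark~\ref{remark:tilt-height-comparison}), which the theorem does not assume.

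The same defect appears in two other places. In Step~1, a weak limit consisting of several distinct translates $P_0+c_i$ cannot be brought close to any single plane by translation. The bound on $|c_i|$ that worried you is automatic from $\spt\|V\|\subset B^{n+k}_2(0)$ and is not the real issue. In Step~3, $\Etilt_j/\hat{E}_j$ may tend to $0$. Then the blow-up normalised by $\hat{E}_j$ can have $Dv\equiv 0$ on the separated pieces and carries no information about $\Etilt_j^{-1}Du_j$. Nor can Theorem~\ref{thm:energy-convergence} be applied with $E_j\sim\Etilt_j$, since $\sup|u_j-c_j|$ need not be $O(\Etilt_j)$.

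The missing idea is a dichotomy at the level of $V_j$ itself, before any Lipschitz estimate. Take the $Q$-point $\xi_j$ from the multivalued Poincaré inequality, translated so that $(\xi_j)_a=0$, and set $\boldsymbol{\pi}_j=\bigcup_\alpha(\{\xi^\alpha_j\}\times\R^n)$. The two-sided $L^2$ distance between $V_j$ and $\boldsymbol{\pi}_j$ is then at most $C\Etilt_{V_j}^2$.

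\emph{Case 1: this distance is at most $\eta^2\hat{E}_{V_j}^2$ for a small fixed $\eta$.} Then $\xi_j\neq Q\llbracket 0\rrbracket$, and $\spt\|V_j\|$ disconnects in a slightly smaller cylinder. Using closure of $\mathcal{V}$ under disjoint decomposition, you apply the $\eps$-regularity property at multiplicities $Q'<Q$ to each component and argue by induction on $Q$. This is precisely why Definition~\ref{defn:eps-reg} is imposed for all multiplicities $\leq Q$. Each component is normalised by its own tilt excess, so the limit of $\Etilt_{V_j}^{-1}Du_j$ is $\sum_i\tilde{c}_iv_i$ with $\tilde{c}_i=\lim_j\Etilt_{V_{j,i}}/\Etilt_{V_j}\in[0,1]$. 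This is why $\mathfrak{B}_{\mathcal{V}}$ allows arbitrary nonnegative, possibly zero, weights. The same induction handles the multi-plane limit in your Step~1.

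\emph{Case 2: otherwise.} Then $\hat{E}_{V_j}\leq C\eta^{-1}\Etilt_{V_j}$. Only in this case do (A) give $\Lip(u_j)\leq C\Etilt_{V_j}$ and your single-plane blow-up, combined with the energy convergence of Appendix~\ref{app:energy-convergence}, go through as you describe.
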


\begin{proof}
	Suppose that there is a sequence $(\mathcal{V}_j)_j\subset\mathcal{V}$ with $V_j$ satisfying the assumptions of the theorem with
	\begin{equation}\label{E:eps-reg-1}
		\min\{\hat{E}_{V_j},\Etilt_{V_j}\}\to 0.
	\end{equation}
	To prove the theorem, it suffices to show that all the conclusions hold for infinitely many $j$.
	
	We claim that it suffices to assume instead of \eqref{E:eps-reg-1} that
	\begin{equation}\label{E:eps-reg-2}
		\hat{E}_{V_j} + \Etilt_{V_j}\to 0.
	\end{equation}
	Indeed, if from \eqref{E:eps-reg-1} we have $\hat{E}_{V_j}\to 0$, then one can use the reverse Poincaré inequality for stationary integral varifolds to deduce that the tilt excess of $V_j$ on any cylinder $\R^k\times B^n_\sigma(0)$ must $\to 0$, and therefore after a small rescaling we can assume \eqref{E:eps-reg-2}. If instead \eqref{E:eps-reg-1} gives that $\Etilt_{V_j}\to 0$, then since the $V_j$ have bounded supports (being varifolds in $B^{n+k}_2(0)$) we have that the $V_j$ converge as varifolds to at most $Q$ planes parallel to $P_0$. If the limit is supported on more than one plane, then since varifold convergence implies local convergence in Hausdorff distance, for any $\sigma>0$ and all sufficiently large $j$ we must have that $\spt\|V_j\|\cap (B^k_{31/16}(0)\times B^n_{1-\sigma}(0))$ has at least two connected components; the result then follows by induction, applying the $\eps$-regularity property to each component (each of which is close to a plane of multiplicity $<Q$)\footnote{Notice that, if a component $V_{j,i}$ of $V_j$ converges to a plane $P_i$ parallel to $P_0$, after translating we may assume $P_i = \{0\}^k\times\R^n$, and we construct coarse blow-ups by rescaling by $\hat{E}_{V_{j,i},P_i}$, with the correct scaling for the gradient being determined by a limit of $\hat{E}_{V_{j,i},P_i}/\Etilt_{V_{j,i}}\equiv \hat{E}_{V_{j,i},P_i}/\Etilt_{V_{j,i},P_i}\to c_i\in [0,\infty)$. In particular, by induction we would have $\Etilt_{V_{j,i}}^{-1}Du_{j,i}\to Dv_i$ in $L^2(B^n_{1/2}(0))$ for some $v_i$ in the blow-up class. The limit we are then interested in is $v = \sum^N_{i=1}\tilde{c}_iv_i$, where $\tilde{c}_i = \lim_{j\to\infty}\Etilt_{V_{j,i}}/\Etilt_{V_j}\in [0,1]$. This is why we must allow the possibility for distinct, and possibly zero, coefficients weighting each individual coarse blow-up in the definition of $\mathfrak{B}_{\mathcal{V}}$.}; the $Q=1$ case follows from Allard's regularity theorem and elliptic estimates for the minimal surface system. Hence, the only remaining case is when the limit is a single plane (with multiplicity $Q$), and hence the $L^2$ height excess on any $\R^k\times B^n_\sigma(0)$, $\sigma\in (0,1)$, must $\to 0$ as $j\to\infty$. Again, by rescaling $V_j$ by a small amount and a vertical translation, we can then reduce to \eqref{E:eps-reg-2}.
	
	We therefore have $V_j\weakly Q|P_0|$ as varifolds in $\R^k\times B^n_1(0)$. The $\eps$-regularity property from Definition \ref{defn:eps-reg} then establishes the existence of the function $u_j:B^n_{3/4}(0)\to \A_Q(\R^k)$ with 
	$$V_j\res (\R^k\times B^n_{3/4}(0)) = \mathbf{v}(u_j) \qquad \text{and} \qquad \Lip(u_j) \leq C\hat{E}_{V_j}.$$
	We now wish to control $Du$ by $\Etilt_V$ rather than $\hat{E}_V$. For this, notice that the Poincaré inequality for multi-valued functions (see \cite[Proposition 2.12]{DLS11}) gives
	\begin{equation}\label{E:eps-reg-3}
	\int_{B^n_{3/4}(0)}\G(u_j,\xi_j)^2 \leq C\int_{B^n_{3/4}(0)}|Du_j|^2
	\end{equation}
	for some $\xi_j\in \mathcal{A}_Q(\R^k)$. Notice that we can without loss of generality assume that $(\xi_j)_a = 0$, since $|\xi_j|^2 \leq C\int_{B_{3/4}^n(0)}|u_j|^2 + C\int_{B^n_{3/4}(0)}|Du_j|^2$ and the right-hand side of this inequality $\to 0$ as $j\to\infty$, and so translating $V_j$ by $(\xi_j)_a$ the resulting varifolds still converge to $P_0$. Write $\xi_j = \sum^Q_{\alpha=1}\llbracket \xi_j^\alpha\rrbracket$ where $\xi_j^\alpha\in \R^k$, and define $\boldsymbol{\pi}_j:= \cup^Q_{\alpha=1}\pi^\alpha_j$ where $\pi^\alpha_j := \{\xi^\alpha_j\}\times\R^n$. Notice that \eqref{E:eps-reg-3} gives
	\begin{align}
		\int_{\R^k\times B^n_{3/4}(0)}\dist^2(x,\boldsymbol{\pi}_j)\, \ext\|V_j\| + \int_{\R^k\times B^n_{3/4}(0)}\dist^2(x,&\,V_j)\, \ext\|\boldsymbol{\pi}_j\|\nonumber\\
		& \leq C\int_{\R^k\times B^n_{3/4}(0)}\|\pi_x-\pi_{P_0}\|^2\, \ext\|V_j\|.\label{E:eps-reg-4}
	\end{align}
	We now claim that for $\eta = \eta(\mathcal{V})\in (0,1/8)$ sufficiently small, if
	\begin{equation}\label{E:eps-reg-5}
		\int_{\R^k\times B^n_{3/4}}\dist^2(x,\boldsymbol{\pi}_j)\, \ext\|V_j\| + \int_{\R^k\times B^n_{3/4}}\dist^2(x,V_j)\, \ext\|\boldsymbol{\pi}_j\|\leq \eta^2\int_{\R^k\times B^n_{3/4}}\dist^2(x,P_0)\, \ext\|V_j\|,
	\end{equation}
	then $\spt\|V_j\|\cap (\R^k\times B_{5/8}^n(0))$ has two connected components; at this point, we can argue inductively as before to conclude the theorem. Indeed, this can be seen by a contradiction argument via performing a coarse blow-up and using the uniform convergence to the coarse blow-up (which would be a union of at least $2$ distinct planes, using that $\xi_j$ is average-free); notice that \eqref{E:eps-reg-5} implies $\xi_j\neq 0$, else $\boldsymbol{\pi}_j = P_0$.\footnote{One can alternatively argue directly that there must be two components under \eqref{E:eps-reg-5}, using the Lipschitz regularity of $u_j$ and simple measure estimates to show that one has $\spt\|V_j\|\cap (\R^k\times B^n_{5/8}(0))\cap \{\dist(\cdot,\boldsymbol{\pi}_j)\geq \frac{1}{4}\min_{\xi^\alpha,\xi^\beta\ \text{distinct}}|\xi^\alpha-\xi^\beta|\} = \emptyset$ for all $j$ sufficiently large and for $\eta$ sufficiently small. The main point here is that if this latter set were not empty, then by the Lipschitz regularity of $u_j$ there would be a sufficiently large ball where $V_j$ remains $\geq \frac{1}{8}\min_{\xi^\alpha,\xi^\beta\ \text{distinct}}|\xi^\alpha-\xi^\beta|$ from each plane in $\boldsymbol{\pi}_j$, which gives $\min_{\xi^\alpha,\xi^\beta\ \text{distinct}}|\xi^\alpha-\xi^\beta|\leq C\eta\hat{E}_{V_j}$, which when combined with \eqref{E:eps-reg-5} gives a contradiction via the triangle inequality.} Thus, we have established the theorem under \eqref{E:eps-reg-5}. 
	
	We are therefore left with the case when \eqref{E:eps-reg-5} fails. In which case, combining with \eqref{E:eps-reg-4} we have
	$$\int_{\R^k\times B^n_{3/4}(0)}\dist^2(x,P_0)\, \ext\|V_j\| \leq C\eta^{-2}\int_{\R^k\times B^n_{3/4}(0)}\|\pi_x-\pi_{P_0}\|^2.$$
	Hence, if we now apply our $\eps$-regularity property to $(\eta_{0,3/4})_\#V_j$ in place of $V_j$ and use a simple covering argument we get
	\begin{align*}
	\Lip(u_j|_{B^n_{1/2}(0)}) & \leq C\left(\int_{\R^k\times B^n_{3/4}(0)}\dist^2(x,P_0)\, \ext\|V\|\right)^{1/2}\\
	& \leq C\eta^{-1}\left(\int_{\R^k\times B^n_{3/4}(0)}\|\pi_x-\pi_{P_0}\|^2\, \ext\|V_j\|\right)^{1/2} \leq C\eta^{-1}\Etilt_V
	\end{align*}
	The second claim follows similarly, now using Theorem \ref{thm:energy-convergence} in order to deducing the energy convergence.
\end{proof}

\begin{remark}\label{remark:tilt-height-comparison}
	Under the assumptions of Theorem \ref{thm:eps-reg}, if one additionally assumes that $\Theta_V(0)\geq Q$ then one can directly verify the Poincaré-style inequality
	$$\int_{\R^k\times B^n_{\sigma/2}(0)}\dist^2(x,P_0)\, \ext\|V\| \leq C\int_{\R^k\times B^n_\sigma(0)}\|\pi_x-\pi_{P_0}\|^2\, \ext\|V\|$$
	for any given $\sigma\in (0,1)$, providing $\eps$ is small depending on $\sigma$ as well. This follows from the first variation formula and Almgren's (weak) Lipschitz approximation with error terms controlled by the tilt excess (arguing analogously to that seen in \cite[(7.4)]{Wic14a}). Thus in this situation, after rescaling, one can assume that the height excess and tilt excess on different cylinders are comparable.
\end{remark}

We now recall some of the main properties concerning coarse blow-ups $v\in \mathfrak{B}_{\mathcal{V}}$. These are direct consequences of the strong convergence in $W^{1,2}$ to the coarse blow-up guaranteed in Definition \ref{defn:eps-reg} (cf.~Appendix \ref{app:energy-convergence}) and the stationarity of the varifolds involved.

\begin{theorem}\label{thm:blow-up}
	If $v\in\mathfrak{B}_{\mathcal{V}}$ has $v\not\equiv 0$ in $B^n_1(0)$, then for any $y\in B^n_1(0)$ and $\rho\in (0,1-|y|)$ we have that $v\not\equiv 0$ on any ball $B^n_\rho(y)\subset B_1^n(0)$, and that the frequency function
	$$N_{v,y}(\rho):=\frac{\rho^{2-n}\int_{B_\rho(y)}|Dv|^2}{\rho^{1-n}\int_{\del B_\rho(y)}|v|^2}$$
	is well-defined and is a monotonically non-decreasing function of $\rho$. In particular, the frequency $N_v(y):= \lim_{\rho\downarrow 0}N_{v,y}(\rho)\in [0,\infty)$ exists for each $y\in B^n_1(0)$ and is an upper semi-continuous function of $y$.
	
	Furthermore, all of these conclusions also hold for $v_f$, the average-free part of $v$, in place of $v$.
\end{theorem}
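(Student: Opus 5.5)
The plan is to show that every coarse blow-up satisfies the two standard first variation identities of Almgren's theory, the domain (squash/stretch) variation and the outer (range/squeeze) variation. Monotonicity and unique continuation then follow by the classical arguments for Dirichlet minimisers.

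\emph{Deriving the identities.} Let $v\in\mathfrak{B}_{\mathcal{V}}$ arise from $V_j\in\mathcal{V}$ with Lipschitz approximations $u_j$ and normalisation $E_j$ (height or tilt excess). Consider the first variation of $V_j$ along two kinds of vector fields:
\begin{itemize}
\item horizontal fields $X(x,y)=(0,\zeta(x))$ with $\zeta\in C^1_c(B^n_1(0);\R^n)$;
\item vertical fields $X(x,y)=(\phi(x)\,y,0)$, and also $(\phi(x)\,(y-u_{j,a}(x)),0)$ when working with the average-free part.
\end{itemize}
Since $V_j=\mathbf{v}(u_j)$ on the relevant cylinder and $\Lip(u_j)\to0$, the area integrand expands as $\tfrac12|Du_j|^2+O(|Du_j|^4)$. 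Dividing by $E_j^2$ and letting $j\to\infty$ gives
\[
\int |Dv|^2\div\zeta-2\sum_\alpha\langle Dv^\alpha,Dv^\alpha D\zeta\rangle=0,\qquad \int \phi|Dv|^2+\sum_\alpha\langle Dv^\alpha, v^\alpha\otimes D\phi\rangle=0.
\]
Passing the quadratic terms to the limit needs \emph{strong} $W^{1,2}_{\rm loc}$ convergence $E_j^{-1}Du_j\to Dv$. This is the energy convergence of Theorem~\ref{thm:eps-reg}, which rests on Appendix~\ref{app:energy-convergence} and the non-concentration estimate of \cite{BKMW25}. For sums $\sum c_iv_i$ the identities are linear in each summand's energy, so they pass through the weights. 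For $v_f$ the same holds because $v_a$ is harmonic: it is the limit of the averages, which satisfy the linearised minimal surface equation.

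\emph{Monotonicity.} Take $\zeta(x)=\psi(|x-y|)(x-y)$ and $\phi=\psi(|x-y|)$, and let $\psi$ approach the indicator of $[0,\rho)$. This yields, for a.e.\ $\rho$,
\[
D'(\rho)=(n-2)\rho^{-1}D(\rho)+2\int_{\partial B_\rho}|\partial_r v|^2,\qquad D(\rho)=\int_{\partial B_\rho}\sum_\alpha\langle \partial_r v^\alpha,v^\alpha\rangle,
\]
where $D(\rho)=\int_{B_\rho}|Dv|^2$. Together with $H'(\rho)=(n-1)\rho^{-1}H+2D$, where $H(\rho)=\int_{\partial B_\rho}|v|^2$, the Cauchy--Schwarz inequality gives $(\log N_{v,y})'\ge0$ wherever $H>0$. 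Upper semicontinuity of $N_v(y)$ then follows because it is an infimum over $\rho$ of the functions $y\mapsto N_{v,y}(\rho)$, each continuous in $y$.

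\emph{Unique continuation (main obstacle).} We must show $H(\rho)>0$ for all $\rho$. On an interval where $H>0$, monotonicity gives $(\log H)'\le (n-1+2N(\rho_0))/\rho$. Integrating this doubling bound shows that $H$ cannot vanish at the endpoint of the interval. So the set where $H>0$ is open and closed in $(0,1-|y|)$, and it is nonempty because otherwise $v\equiv0$ near $y$. The delicate case is the reverse direction, namely $v$ vanishing on a small ball while $v\not\equiv0$. To exclude it, I would argue via connectedness over centres. Let $\Omega=\{z: v\equiv0 \text{ near } z\}$, and suppose $\partial\Omega\cap B_1^n(0)\neq\emptyset$. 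Pick $z\in\Omega$ closest to a point where $v$ is not identically zero nearby. Centred at $z$, $H$ vanishes for small radii but is positive at larger ones; this contradicts the doubling estimate. Some care is needed to justify the a.e.\ identities on the exceptional set of radii, using that $v\in W^{1,2}$.
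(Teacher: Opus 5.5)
Your proposal is correct and follows essentially the route the paper has in mind. The paper gives no separate proof; it states that the result follows directly from stationarity together with the strong $W^{1,2}_{\rm loc}$ convergence of Theorem~\ref{thm:energy-convergence}, which is exactly how you pass the squash and squeeze identities to the limit before running Almgren's standard frequency argument, and your unique-continuation step (the dichotomy ``$v\equiv 0$ on $B_{1-|y|}(y)$ or $H>0$ at all radii'' followed by a connectedness argument over centres) is the same mechanism as Proposition~\ref{prop:Y-bounds} and Remark~\ref{remark:Y-defined}, which chain balls towards the origin instead.
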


From this we deduce some elementary consequences for $v\in \mathfrak{B}_{\mathcal{V}}$ and the $Q$-valued functions provided by Definition \ref{defn:eps-reg}. These will be used when proving approximate monotonicity of the planar frequency function in Section \ref{sec:pff}. Analogous estimates will also be needed when proving the monotonicity of the frequency function relative to a center manifold.

\begin{lemma}\label{lemma:doubling-1}
	Fix $c\in (0,\infty)$. Then there exists $\beta = \beta(\mathcal{V},c)\in (0,1)$ such that the following is true. If $v\in \mathfrak{B}_{\mathcal{V}}$ obeys
	$$\int_{B_{\rho/2}(z)}|Dv|^2 \geq c^2\int_{B_\rho(z)}|Dv|^2$$
	for some $B_\rho(z)\subset B_{3/4}(0)$, then for any $\zeta\in B_{\rho/5}(z)$ we have
	$$\int_{B_{\rho/20}(\zeta)}|Dv|^2\geq\beta^2\int_{B_\rho(z)}|Dv|^2.$$
\end{lemma}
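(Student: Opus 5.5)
Argue by contradiction and compactness of the class $\mathfrak{B}_{\mathcal{V}}$. Suppose the lemma fails for some fixed $c$. Then there are $v_j\in\mathfrak{B}_{\mathcal{V}}$, balls $B_{\rho_j}(z_j)\subset B_{3/4}(0)$ and points $\zeta_j\in B_{\rho_j/5}(z_j)$ with
$$\int_{B_{\rho_j/2}(z_j)}|Dv_j|^2\geq c^2\int_{B_{\rho_j}(z_j)}|Dv_j|^2, \qquad \int_{B_{\rho_j/20}(\zeta_j)}|Dv_j|^2 < j^{-2}\int_{B_{\rho_j}(z_j)}|Dv_j|^2 .$$
The first step is to normalise. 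I would translate and rescale, setting $\tilde v_j(x):=\lambda_j^{-1}v_j(z_j+\rho_j x)$ with $\lambda_j$ chosen so that $\int_{B_1(0)}|D\tilde v_j|^2=1$. For this I need that $\mathfrak{B}_{\mathcal{V}}$ (in the sense of restrictions to balls) is invariant under translation, dilation and constant rescaling. This follows from the class $\mathcal{V}$ being closed under homotheties, and from the definition of coarse blow-ups: rescaling the generating varifolds and renormalising by the new excess gives a coarse blow-up on the smaller ball, and the positive constants $c_i$ are allowed in the definition. Translating by a constant in $\R^k$ is similarly harmless. After normalising, $\zeta_j\to\zeta\in \overline{B}_{1/5}(0)$ along a subsequence, and $\int_{B_{1/2}}|D\tilde v_j|^2\geq c^2$.

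The second step is compactness. Using the Poincaré inequality (subtracting the average so that the $L^2$ norms are bounded), the $\tilde v_j$ are bounded in $W^{1,2}(B_1)$. So a subsequence converges weakly in $W^{1,2}$ and strongly in $L^2$ to some $v$. The key point is that the convergence is actually \emph{strong} in $W^{1,2}_{\rm loc}(B_1)$ and that $v\in\mathfrak{B}_{\mathcal{V}}$. For this I would use the diagonal argument: each $\tilde v_j$ is $W^{1,2}$-approximated, with the energy closeness of Theorem~\ref{thm:eps-reg} and Appendix~\ref{app:energy-convergence}, by rescaled Lipschitz graphs of varifolds in $\mathcal{V}$. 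A diagonal subsequence of those varifolds then generates $v$ as a coarse blow-up, with strong energy convergence on compact subsets. This is the main obstacle. It needs $\mathfrak{B}_{\mathcal{V}}$ to be closed under strong $W^{1,2}_{\rm loc}$-limits, including the bookkeeping of sums $\sum c_i v_i$ with coefficients in $[0,\infty)$, and it needs the energy non-concentration estimate.

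With strong convergence in hand, we get $\int_{B_{1/2}}|Dv|^2\geq c^2>0$, so $v$ is not constant. On the other hand,
$$\int_{B_{1/20}(\zeta)}|Dv|^2\leq\liminf_j\int_{B_{1/20}(\zeta_j)}|D\tilde v_j|^2=0,$$
since $B_{1/20-\delta}(\zeta)\subset B_{1/20}(\zeta_j)$ for large $j$ and we can let $\delta\to0$. Hence $v$ is constant on $B_{1/20}(\zeta)$. Here a constant value $a$ may be nonzero, so I would apply Theorem~\ref{thm:blow-up} to $v-a_a$, the translate by the average, which is again in the class. If $a$ is not of the form $Q\llbracket p\rrbracket$, then $v$ locally splits into lower-multiplicity pieces, and I would apply the theorem to each piece (or to its average-free part). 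The unique continuation statement of Theorem~\ref{thm:blow-up} ($v\not\equiv0$ on any ball unless $v\equiv0$) then forces $v$ to be constant on $B_1$. This contradicts $\int_{B_{1/2}}|Dv|^2\geq c^2$. The resulting $\beta$ depends only on $\mathcal{V}$ and $c$.
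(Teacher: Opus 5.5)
Your proposal is correct and follows essentially the same route as the paper's proof. You argue by contradiction, renormalise within $\mathfrak{B}_{\mathcal{V}}$ via $(\mathfrak{B}3\text{I})$ so that $\int_{B_1(0)}|D\tilde v_j|^2=1$, use compactness of the class together with the strong $L^2_{\text{loc}}$ convergence of gradients from Theorem~\ref{thm:eps-reg} and Appendix~\ref{app:energy-convergence}, and then contradict $\int_{B_{1/2}(0)}|D\tilde v|^2\geq c^2$ with the vanishing of $D\tilde v$ on a small ball via the unique continuation in Theorem~\ref{thm:blow-up}.

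Two minor remarks. First, your treatment of the case where the limit is a nonzero constant on the small ball is more careful than the paper, which passes over it. However, since the splitting into lower-multiplicity pieces is only local, you need an open--closed continuation argument to carry the conclusion to all of $B_1(0)$. Second, subtracting the average does not bound $\tilde v_j$ in $L^2$ when the sheets are widely separated (e.g.\ $v=\llbracket w+L\rrbracket+\llbracket w-L\rrbracket$ with $L$ large). This is harmless, because, as in the paper, only $D\tilde v_j$ and the compactness property of the class are actually used.
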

\begin{proof}
	We argue by contradiction. Suppose instead that for some fixed $c\in (0,\infty)$ we have functions $v_j\in\mathfrak{B}_{\mathcal{V}}$, balls $B_{\rho_j}(z_j)\subset B_{3/4}(0)$, and points $\zeta_j\in B_{\rho_j/5}(z_j)$ such that
	\begin{equation}\label{E:doubling-1-1}
		\int_{B_{\rho_j/2}(z_j)}|Dv_j|^2\geq c^2\int_{B_{\rho_j}(z_j)}|Dv_j|^2 \qquad \text{yet} \qquad \int_{B_{\rho_j/20}(\zeta_j)}|Dv_j|^2 < \frac{1}{j^2}\int_{B_{\rho_j}(z_j)}|Dv_j|^2
	\end{equation}
	for all $j$. In particular, we must have $\int_{B_{\rho_j}(z_j)}|Dv_j|^2>0$ for all $j$. First, notice that by arguing\footnote{Recalling the definition of $\mathcal{B}_{\mathcal{V}}$, this means arguing as in \cite[$(\mathfrak{B}3\text{I})$]{BKMW25} for each $v_{j,i}$ in the decomposition $v_j = \sum_i c_{j,i}v_{j,i}$.} as in \cite[$(\mathfrak{B}3\text{I})$]{BKMW25} and using Theorem \ref{thm:eps-reg} with Appendix \ref{app:energy-convergence} (which gives the strong convergence in $L^2$ of the derivative along the blow-up sequence when normalising by the tilt excess) we have that there is $\tilde{v}_j\in\mathfrak{B}_{\mathcal{V}}$ for which $D\tilde{v}_j(x) = \left(\rho_j^{2-n}\int_{B_{\rho_j}(z_j)}|Dv_j|^2\right)^{-1/2}\rho_jDv_j(z_j+\rho_j x)$. Combining this with \eqref{E:doubling-1-1} we have
	\begin{equation}\label{E:doubling-1-2}
		\int_{B_{1/2}(0)}|D\tilde{v}_j|^2 \geq c^2 \qquad \text{yet} \qquad \int_{B_{1/20}(\tilde{\zeta}_j)}|D\tilde{v}_j|^2 < \frac{1}{j^2}
	\end{equation}
	where $\tilde{\zeta}_j := \rho_j^{-1}(\zeta_j-z_j)\in B_{1/5}(0)$. Now applying the compactness property for $\mathfrak{B}_{\mathcal{V}}$ (cf.~\cite[$(\mathfrak{B}3\text{IV})$]{BKMW25}) we may pass to a subsequence for which $D\tilde{v}_j\to D\tilde{v}$ for some $\tilde{v}\in\mathfrak{B}_{\mathcal{V}}$, where the convergence (again, due to Theorem \ref{thm:eps-reg} and Appendix \ref{app:energy-convergence}) is now \emph{strongly} in $L^{2}_{\text{loc}}(B_1(0))$. In particular, taking $j\to\infty$ in \eqref{E:doubling-1-2} gives
	$$\int_{B_{1/2}(0)}|D\tilde{v}|^2 \geq c^2 \qquad \text{and} \qquad \int_{B_{1/40}(\tilde{\zeta})}|D\tilde{v}|^2 = 0$$
	where we have also passed to a subsequence so that $\tilde{\zeta}_j\to \tilde{\zeta}\in \overline{B}_{1/5}(0)$; in particular, $|D\tilde{v}| = 0$ on $B_{1/40}(\tilde{\zeta})$. But as $\tilde{v}\in\mathfrak{B}_{\mathcal{V}}$ has a monotone frequency function by Theorem \ref{thm:blow-up}, this would imply that $|D\tilde{v}| = 0$ on $B_{1}(0)$, contradicting $\int_{B_{1/2}(0)}|D\tilde{v}|^2\geq c^2$. This contradiction completes the proof.
\end{proof}

\textbf{Remark:} The restriction on the ball $B_\rho(z)$ in Lemma \ref{lemma:doubling-1} is simply due to the fact that we do not know whether we have energy convergence on the whole of $B_1(0)$ along the blow-up sequence, but only locally on $B_1(0)$.

We now ``de-linearise'' Lemma \ref{lemma:doubling-1}, giving a version for the varifold itself.

\begin{lemma}\label{lemma:doubling-2}
	Fix $c\in (0,\infty)$, and let $\eps_* = \eps_*(\mathcal{V})$ be the constant from Theorem \ref{thm:eps-reg}. Then, there exists $\eps = \eps(\mathcal{V},c)\in (0,\eps_*)$ and $\beta = \beta(\mathcal{V},c)\in (0,1)$ such that the following holds. Suppose $V\in\mathcal{V}$ satisfies
	\begin{itemize}
		\item $Q-1/2 \leq \w_n^{-1}\|V\|(\R^k\times B_1^n(0))<Q+1/2$;
		\item $\Etilt_V<\eps$.
	\end{itemize}
	Then, if $u:B_{1/2}^n(0)\to \A_Q(\R^k)$ is the function given by Theorem \ref{thm:eps-reg}, and if for some $B_{8\sigma}(\xi)\subset B_{7/16}(0)$ we have
	\begin{equation}\label{E:doubling-2-0}
	\int_{B_{\sigma}(\xi)}|Du|^2 \geq c^2\int_{B_{8\sigma}(\xi)}|Du|^2,
	\end{equation}
	then for any $\zeta\in B_{2\sigma/5}(\xi)$ we have
	$$\int_{B_{\sigma/10}(\zeta)}|Du|^2\geq \beta^2\int_{B_{8\sigma}(\xi)}|Du|^2.$$
\end{lemma}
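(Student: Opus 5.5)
We argue by contradiction and compactness, reducing to Lemma \ref{lemma:doubling-1}. Fix $c$ and suppose the lemma fails. Then there are $V_j\in\mathcal{V}$ satisfying the mass bounds with $\Etilt_{V_j}\to 0$, associated Lipschitz functions $u_j$ from Theorem \ref{thm:eps-reg}, balls $B_{8\sigma_j}(\xi_j)\subset B_{7/16}(0)$ satisfying \eqref{E:doubling-2-0}, and points $\zeta_j\in B_{2\sigma_j/5}(\xi_j)$ with
$$\int_{B_{\sigma_j/10}(\zeta_j)}|Du_j|^2<\frac{1}{j^2}\int_{B_{8\sigma_j}(\xi_j)}|Du_j|^2.$$
In particular $\int_{B_{8\sigma_j}(\xi_j)}|Du_j|^2>0$.

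\textbf{Rescaling.} Set $\rho_j:=8\sigma_j$ and $W_j:=(\eta_{(0,\xi_j),\rho_j})_\# V_j$, restricted appropriately. By the closure of $\mathcal{V}$ under homotheties, $W_j\in\mathcal{V}$ and $W_j$ is represented over $B_1^n(0)$ by $w_j(x):=\rho_j^{-1}u_j(\xi_j+\rho_j x)$. Since $\Lip(u_j)\leq C\Etilt_{V_j}\to0$, the tilt excess of $W_j$ over $\R^k\times B_1^n(0)$ is comparable to $\|Dw_j\|_{L^2(B_1)}$ and tends to $0$. Its mass ratio in the unit cylinder tends to $Q$, because $W_j$ is a $Q$-valued graph with small Lipschitz constant. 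Hence Theorem \ref{thm:eps-reg} applies to $W_j$, possibly after shrinking slightly, which is why the $7/16$ margin matters.

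\textbf{Passing to the limit.} Normalise $\tilde w_j:=\|Dw_j\|_{L^2(B_1)}^{-1}w_j$. Apply the energy-closeness part of Theorem \ref{thm:eps-reg}, or directly the compactness of $\mathfrak{B}_{\mathcal{V}}$ together with Appendix \ref{app:energy-convergence}. After passing to a subsequence, this gives $v\in\mathfrak{B}_{\mathcal{V}}$ with $D\tilde w_j\to Dv$ strongly in $L^2_{\rm loc}(B_1)$, or at least in $L^2(B_{3/4})$ after a further fixed rescaling. The normalisation by $\|Dw_j\|_{L^2}$ rather than by the tilt excess is harmless, since the two are uniformly comparable by the Lipschitz approximation. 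In rescaled coordinates the hypotheses read
$$\int_{B_{1/8}(0)}|D\tilde w_j|^2\geq c^2, \qquad \int_{B_{1/80}(\tilde\zeta_j)}|D\tilde w_j|^2<j^{-2},$$
with $\tilde\zeta_j\in B_{1/20}(0)$. Choose a subsequence with $\tilde\zeta_j\to\tilde\zeta$. Strong convergence then gives $\int_{B_{1/8}}|Dv|^2\geq c^2$ and $Dv\equiv0$ on $B_{1/160}(\tilde\zeta)$.

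\textbf{Conclusion.} Apply Lemma \ref{lemma:doubling-1} to $v$ on a ball $B_{\rho}(0)\subset B_{3/4}(0)$ with $\rho=1/4$, so that $B_{\rho/2}=B_{1/8}$. Since $\int_{B_{1/4}}|Dv|^2$ is bounded by a constant from the normalisation, the hypothesis of that lemma holds with some $c'=c'(c)>0$. Its conclusion gives a lower bound on $\int_{B_{1/80}(\tilde\zeta)}|Dv|^2$, since $\tilde\zeta\in B_{1/20}= B_{\rho/5}$. This contradicts $Dv\equiv 0$ there; the radius mismatch is absorbed by slightly adjusting the choice of $\rho$.

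Alternatively, one can use unique continuation directly from Theorem \ref{thm:blow-up}: a vanishing gradient on an open ball forces $Dv\equiv0$ on $B_1$.

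\textbf{Main obstacle.} The key step is justifying strong $L^2$ convergence of the normalised gradients on the relevant balls, that is, energy convergence along this blow-up sequence. One must check that the rescaled varifolds satisfy the hypotheses of Theorem \ref{thm:eps-reg} and Appendix \ref{app:energy-convergence} uniformly. One must also check that the balls stay compactly inside the region where energy convergence is known; the hypothesis $B_{8\sigma}(\xi)\subset B_{7/16}(0)$ is exactly what provides this room.
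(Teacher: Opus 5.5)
Your proposal is correct and is essentially the paper's argument: rescale so that $8\sigma=1$, use the energy-closeness in Theorem \ref{thm:eps-reg} (resting on Appendix \ref{app:energy-convergence}) to replace the normalised $Du$ by some $v\in\mathfrak{B}_{\mathcal{V}}$ on $B_{1/2}(0)$, and apply Lemma \ref{lemma:doubling-1} on $B_{1/4}(0)$; the only difference is packaging, since the paper argues directly with a fixed closeness parameter $\eta$ and transfers the bound back to $u$ by triangle inequalities, where you run a contradiction sequence. Two small fixes are needed: centre the homothety at a point of $\spt\|V_j\|$ above $\xi_j$ rather than at $(0,\xi_j)$, since otherwise the rescaled graph may leave $B_2^{n+k}(0)$; and drop the ``adjust $\rho$'' step, which does not obviously work, because your convergence already gives $Dv\equiv 0$ on all of $B_{1/80}(\tilde\zeta)$ and $\zeta\mapsto\int_{B_{1/80}(\zeta)}|Dv|^2$ is continuous, so $\tilde\zeta\in\overline{B}_{1/20}(0)$ is harmless.
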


\begin{proof}
	Of course, we may assume that $|Du|\not\equiv 0$ on $B_{8\sigma}(\xi)$, else the result is trivial. Then, similarly to as in the proof of Lemma \ref{lemma:doubling-1}, we may use \cite[$(\mathfrak{B}3\text{I})$]{BKMW25} and Theorem \ref{thm:eps-reg} to translate and rescale to assume that $8\sigma=1$ and furthermore that $\Etilt_V^{-1}Du$ is close to $Dv$ in $L^2(B_{1/2}(0))$ for some $v\in\mathfrak{B}_{\mathcal{V}}$ (the important point here is that the normalising factor $\Etilt_V$ is defined on the cylinder with the same radius as the ball in the right-hand side of \eqref{E:doubling-2-0}, and so can be assumed to satisfy $(1-\delta)\|Du\|^2_{L^2(B_1(0))}\leq \Etilt_V^2\leq (1+\delta)\|Du\|^2_{L^2(B_1(0))}$ for a suitable choice of $\delta>0$, say $\delta = 1/2$).
	
	Thus, we may reduce to the case where we are assuming
	$$\int_{B_{1/8}(0)}|Du|^2 \geq c^2\int_{B_1(0)}|Du|^2$$
	and, for a fixed $\eta\in (0,1)$ (to be determined depending on $\mathcal{V}$ and $c$), for $\eps = \eps(\mathcal{V},c)>0$ sufficiently small we have
	$$\int_{B_{1/4}(0)}\left|\Etilt_V^{-1}|Du| - |Dv|\right|^2 \leq \eta$$
	for some $v\in\mathfrak{B}_{\mathcal{V}}$. Thus, the triangle inequality gives
	$$\int_{B_{1/4}(0)}|Dv|^2 \leq 2\int_{B_{1/4}(0)}\Etilt_V^{-2}|Du|^2 + 2\eta \leq 8,$$
	where the last inequality here is provided $\eps$ is sufficiently small. We also have
	$$\int_{B_{1/8}(0)}|Dv|^2 \geq \frac{1}{2}\int_{B_{1/8}(0)}\Etilt_V^{-2}|Du|^2 - \eta \geq \frac{c^2}{2}\Etilt_V^{-2}\int_{B_1(0)}|Du|^2 - \eta \geq \frac{c^2}{4}$$
	provided $\eta = \eta(\mathcal{V},c)\in (0,1)$ is sufficiently small. Combining the above two inequalities we get
	$$\int_{B_{1/8}(0)}|Dv|^2 \geq \frac{c^2}{32}\int_{B_{1/4}(0)}|Dv|^2.$$
	We may therefore apply Lemma \ref{lemma:doubling-1} to deduce the existence of $\beta = \beta(\mathcal{V},c)>0$ such that for any $\zeta\in B_{1/20}(0)$,
	$$\int_{B_{1/80}(\zeta)}|Dv|^2 \geq \beta^2\int_{B_{1/4}(0)}|Dv|^2.$$
	But then we have
	\begin{align*}
		\int_{B_{1/80}(\zeta)}|Du|^2 & \geq \frac{1}{2}\int_{B_{1/80}(\zeta)}\Etilt_V^2|Dv|^2 - \eta\Etilt_V^2\\
		& \geq \frac{\beta^2}{2}\int_{B_{1/4}(0)}\Etilt_V^2|Dv|^2 - \eta\Etilt_V^2\\
		& \geq \frac{\beta^2}{4}\int_{B_{1/4}(0)}|Du|^2 - \eta\left(1+\frac{\beta^2}{2}\right)\Etilt_V^2\\
		& \geq \frac{\beta^2 c^2}{4}\int_{B_1(0)}|Du|^2 - \eta\left(1+\frac{\beta^2}{2}\right)\Etilt_V^2\\
		& \geq \frac{\beta^2 c^2}{8}\int_{B_1(0)}|Du|^2
	\end{align*}
	provided $\eta = \eta(\mathcal{V},c)>0$ is sufficiently small. Applying the inverse homothety to that from the start of the proof completes the argument.
\end{proof}

\section{Planar Frequency}\label{sec:pff}

In this section we introduce the \emph{planar frequency function}, originally used by the first and third authors in their work on analysing the branch set of an area minimising current \cite{KW23a}. In this section we show its approximate monotonicity at density $Q$ branch points of varifolds $V\in\mathcal{V}$, and then subsequently use it to show that the set of (density $Q$) branch points where the planar frequency value is \emph{not} equal to $2$ has Hausdorff dimension $\leq n-2$.

Throughout, we fix $V\in\mathcal{V}$ and suppose $Z\in \spt\|V\|$ is such that $V$ has one tangent cone at $Z$ being a plane $P$ with multiplicity $Q$, and furthermore there is no $\delta>0$ for which $V\res B^{n+k}_\delta(Z) = Q|P+Z|\res B^{n+k}_\delta(Z)$ (e.g.~$Z\in \sing(V)$). Such assumptions hold, for instance, about a density $Q$ branch point of $V$. By Definition \ref{defn:eps-reg}, we know that this tangent cone is unique and, by translating and rotating, we may without loss of generality assume that $Z=0$ and $P = P_0 \equiv \{0\}^k\times\R^n$. Moreover, we know that there is some scale $\rho>0$ for which the conclusions of Definition \ref{defn:eps-reg} apply to $V$ in $B_\rho(0)$; by considering instead $(\eta_{0,\rho})_\#V$ in place of $V$, we may without loss of generality assume that $\rho=1$. In particular, we may assume that there is a generalised-$C^{1,\alpha}$ function $u:B^n_{1/2}(0)\to \A_Q(\R^k)$ with $u(0) = Q\llbracket 0\rrbracket$, $Du(0) = Q\llbracket 0\rrbracket$, which further obeys the estimates and conclusions in Definition \ref{defn:eps-reg} and Theorem \ref{thm:eps-reg} (recall also Remark \ref{remark:tilt-height-comparison}, which gives comparability of the height and tilt excess under the present assumptions; we also replace $V$ with $\widetilde{V}$ as in Theorem \ref{thm:eps-reg}). Thus, from the decay rate of the tilt excess as well as the Lipschitz bound, we have the pointwise bound
\begin{equation}\label{E:Lip-bound}
	|Du(x)| \leq C\Etilt_Vr^\alpha \qquad \text{for a.e.~$x\in B_{1/2}^n(0)$,}
\end{equation}
where $r = |x|$. For notational simplicity, we will write $L = C\Etilt_V$ for this Lipschitz constant. We also remark that one can assume that (cf.~\eqref{E:formula-4}), for $x$ where $Du(x)$ is defined, if $S$ is the tangent space to $V$ at the point $(u^\alpha(x),x)$ then we have
\begin{equation}\label{E:tilt-comparison}
\frac{1}{3}\|\pi_S-\pi_{P_0}\|^2 \leq |Du^\alpha(x)|^2 \leq \frac{2}{3}\|\pi_S-\pi_{P_0}\|^2.
\end{equation}
In particular, we can arrange so that $\frac{1}{2}\Etilt_V \leq \|Du\|_{L^2(B_1(0))}\leq 2\Etilt_V$.

\subsection{Approximate monotonicity of the planar frequency function}

Let $\phi:[0,\infty)\to \R$ be the Lipschitz function defined by
$$\phi(s):=\begin{cases}
	1 & \text{if }s\in [0,1/2);\\
	2-2s & \text{if }s\in [1/2,1);\\
	0 & \text{if }s\in [1,\infty).
\end{cases}$$
Technically, in our arguments we will need to take a smooth approximation of $\phi$ and then take a limit (cf.~\cite[Proof of Lemma 4.5]{KW23a}). For ease of presentation, we shall not comment further on this detail.

For $\rho\in (0,1/2)$ we define:
$$H(\rho):= -\rho^{1-n}\int\dist^2(X,P_0)|\nabla^S r|^2\frac{1}{r}\phi^\prime(r/\rho)\, \ext V(X,S),$$
$$D(\rho):= \frac{1}{2}\rho^{2-n}\int\|\pi_S-\pi_{P_0}\|^2\phi(r/\rho)\, \ext V(X,S).$$
Here, $r = |x|$ and $X = (y,x)\in \R^k\times \R^n\cong \R^{n+k}$, so that $\dist(X,P_0) \equiv |y|$. These are the same intrinsic expressions as defined in \cite{KW23a}. We then define the \emph{planar frequency function} by
$$\mathcal{N}(\rho):=\frac{D(\rho)}{H(\rho)}.$$
Note that this is well-defined. Indeed, if it were not then necessarily $H(\rho)=0$, i.e.~$|u|\equiv 0$ on $B_\rho(0)\setminus\overline{B}_{\rho/2}(0)$. But now take $\zeta\in C^1_c(B^n_1(0);\R)$ with $\zeta\equiv 1$ on $B_{3\rho/4}(0)$ and $\zeta\equiv 0$ outside $B_\rho(0)$. Consider its vertical extension $\tilde{\zeta}(y,x):= \zeta(x)$ for $(y,x)\in \R^k\times B^n_1(0)$, and take any $\bar{\zeta}\in C^1_c(\R^k\times B^n_1(0))$ which agrees with $\tilde{\zeta}$ on a neighbourhood of $\spt\|V\|$. Then, for any $\kappa\in \{1,\dotsc,k\}$, by taking the function $x^\kappa\bar{\zeta}e_\kappa$ in the first variation formula for $V$, where $e_\kappa$ is the unit vector in the $x^\kappa$-direction, we get
$$\int\bar{\zeta}|\nabla^Vx^\kappa|^2\, \ext\|V\| = -\int x^\kappa\nabla^Vx^\kappa\cdot\nabla^V\bar{\zeta}.$$
By choice of $\zeta$, the integrand on the right-hand side is supported on $\R^k\times (B_\rho(0)\setminus \overline{B}_{3\rho/4}(0))\cap \spt\|V\|$, which is where we know $x^\kappa\equiv 0$ (as $|u|\equiv 0$ here). Thus, we have for each $\kappa\in \{1,\dotsc,k\},$
$$\int\bar{\zeta}|\nabla^Vx^\kappa|^2\, \ext\|V\| = 0$$
which in turn implies that $|Du|\equiv 0$ on $B_{3\rho/4}(0)$, i.e.~$u$ is constant in $B_{3\rho/4}(0)$. But as $|u|\equiv 0$ on $B_{3\rho/4}(0)\setminus \overline{B}_{\rho/2}(0)$, this gives that $|u|\equiv 0$ on $B_\rho(0)$. But then this means that $V\res (\R^k\times B_\rho^n(0)) \equiv Q|B^n_\rho(0)|$, giving a contradiction. Thus, $\mathcal{N}(\rho)$ is well-defined for each $\rho\in (0,1/2)$.

\textbf{Notation:} We write $H_{V,P,Z}(\rho)$, $D_{V,P,Z}(\rho)$, and $\mathcal{N}_{V,P,Z}(\rho)$ for the above (intrinsic) quantities centred at a point $Z$ and relative to a plane $P$ passing through $Z$. These are exactly the above expressions when applied to the varifold $(\Gamma\circ\tau_Z)_\#V$, where $\tau_Z(X):= X-Z$ and $\Gamma$ is any rotation of $\R^{n+k}$ sending $P-Z$ to $P_0$.

\begin{remark}
When the codimension is $1$, the above expressions for $H(\rho)$ and $D(\rho)$ take a much simpler looking form in terms of the function $u$. This is due to the normal space to the graph of $u$ being $1$-dimensional. Indeed, by direct computation (similar to those done below) one may check that when $k=1$:
$$H(\rho) = -\rho^{1-n}\int|u|^2r^{-1}\phi^\prime(r/\rho)\cdot\frac{1+|D_\theta u|^2}{\sqrt{1+|Du|^2}},$$
$$D(\rho) = \rho^{2-n}\int|Du|^2\phi(r/\rho)\cdot\frac{1}{\sqrt{1+|Du|^2}}.$$
Here, $|D_\theta u|^2 = |Du|^2 - |D_ru|^2$ is a shorthand notation, where $D_r u := \frac{x}{|x|}\cdot Du$. Here, we are also implicitly summing over the $Q$-values of $u$ meaning, for example, that technically we have
$$D(\rho) = \sum^Q_{\alpha=1}\rho^{2-n}\int|Du^\alpha|^2\phi(r/\rho)\cdot\frac{1}{\sqrt{1+|Du^\alpha|^2}}.$$
This makes very explicit the comparison between the intrinsic expressions for $H(\rho)$ and $D(\rho)$ to those defined for a Dirichlet-minimising function. Here, we have additional factors coming from the non-linear nature of the minimal surface equation. These additional factors (and their counterparts in higher codimension) effectively allow us to deal with error terms which appear in the first variation formula when establishing the approximate monotonicity of the planar frequency function. Notice that each of these additional factors is comparable to $1$, since
$$\frac{1+|D_\theta u|^2}{\sqrt{1+|Du|^2}}\in \left[\frac{1}{\sqrt{1+L^2}},\sqrt{1+L^2}\right] \qquad \text{and} \qquad \frac{1}{\sqrt{1+|Du|^2}} \in \left[\frac{1}{\sqrt{1+L^2}}, 1\right].$$
In particular, as $L = C\Etilt_V$, we see that as we have strong convergence in $W^{1,2}_{\text{loc}}(B_1)$ along coarse blow-up sequences, these expressions will converge to the usual ones at the blow-up level (this will be important later). The same is also true in codimension $>1$, as in general:
$$H(\rho) = -\rho^{1-n}\int\sum^Q_{\alpha=1}\sum^n_{i,j=1}\sqrt{G(Du^\alpha)}G^{ij}(Du^\alpha)\frac{x_ix_j}{r^2}|u^\alpha|^2r^{-1}\phi^\prime(r/\rho),$$
$$D(\rho) = \rho^{2-n}\int\sum^Q_{\alpha=1}\sum^n_{i,j=1}\sqrt{G(Du^\alpha)}G^{ij}(Du^\alpha)D_iu^\alpha\cdot D_ju^\alpha \phi(r/\rho),$$
where here $G_{ij}(Du^\alpha) = \delta_{ij}+D_iu^\alpha\cdot D_ju^\alpha$ and, representing $(G_{ij}(Du^\alpha))_{ij}$ as an $n\times n$ matrix, $(G^{ij}(Du^\alpha))_{ij}$ denotes the inverse matrix of $(G_{ij}(Du^\alpha))_{ij}$ and $G(Du^\alpha)$ denotes the determinant of $(G_{ij}(Du^\alpha))_{ij}$. One can also use the first variation formula for $V$ to similarly write down general inner and outer variational formulas for $u$, certain special cases of which we will use below.
\end{remark}

We now show that the planar frequency function is approximately monotone. We begin by computing the derivatives of $H(\rho)$ and $D(\rho)$: these computations mirror those in \cite{KW23a}, except they are in fact simpler due to $V$ being graphical on the region of interest, along with the estimate provided in \eqref{E:Lip-bound} (this allows us to simplify the arguments needed to control certain error terms which arise).

We first compute the form of some key expressions at a point $(u^\alpha(x),x)$ in the graph of $u$ with tangent space $S$ (which is a plane a.e.). Recall that $X = (y,x)\in \R^k\times\R^n$ and $r=|x|$. Firstly, one may find an orthonormal basis $\{\tau_1,\dotsc,\tau_k\}$ for $S^\perp$ which takes the form
$$\tau_i = \frac{(e_i,-Du^{\alpha,i})}{\sqrt{1+|Du^{\alpha,i}|^2}} + \sum^{i-1}_{j=1}O(|Du|^2)e_j + \sum^{n+k}_{j=k+1}O(|Du|^3)e_j$$
(each coefficient in the two sums may depend on the summand, but are of the order written). For notational simplicity, we suppress the index $\alpha$ and just write $u(x)$. From this it follows that:
\begin{align}
	\pi_{P_0^\perp}(\nabla^S r) & = D_r u + O(|Du|^3);\label{E:formula-1}\\
	|\nabla^S r|^2 & = 1 - |D_r u|^2 + O(|Du|^4);\label{E:formula-2}\\
	|\nabla^{S^\perp}r|^2 & = |D_ru|^2 + O(|Du|^4);\label{E:formula-3}\\
	\frac{1}{2}\|\pi_S-\pi_{P_0}\|^2 & = |Du|^2 + O(|Du|^4).\label{E:formula-4}
\end{align}

\textbf{Step 1: Derivative of $H$.} The derivative of $H$ is (recall $|y| = \dist(X,P_0)$):
\begin{align*}
H^\prime(\rho) = (n-1)\rho^{-n}\int |y|^2|\nabla^S r|^2\frac{1}{r}\phi^\prime(r/\rho)\, \ext V(X,S)  + \rho^{-1-n}\int |y|^2|\nabla^S r|^2\phi^{\prime\prime}(r/\rho)\, \ext V(X,S).
\end{align*}
Now take $\zeta = |y|^2\phi^\prime(r/\rho)\nabla^{\R^{n+k}} r$ in the first variation formula for $V$. We get:
\begin{align*}
	\int 2(y,0)\cdot\pi_{P_0^\perp}(\nabla^S r)&\phi^\prime(r/\rho) + \frac{1}{\rho}|y|^2|\nabla^S r|^2\phi^{\prime\prime}(r/\rho)\\
	& + |y|^2r^{-1}\phi^\prime(r/\rho)\left[n-\frac{1}{2}\|\pi_S-\pi_{P_0}\|^2 - |\nabla^S r|^2\right]\, \ext V(X,S) = 0.
\end{align*}
Multiplying this by $\rho^{-n}$ and rearranging, using the above expression for $H^\prime(\rho)$, we see
\begin{align*}
H^\prime(\rho) = -2\rho^{-n}\int(y,0)\cdot\pi_{P_0^\perp}&(\nabla^S r)\phi^\prime(r/\rho)\, \ext V(X,S)\\
& \underbrace{-\rho^{-n}\int|y|^2 r^{-1}\phi^\prime(r/\rho)\left[n|\nabla^{S^\perp}r|^2 - \frac{1}{2}\|\pi_S-\pi_{P_0}\|^2\right]\, \ext V(X,S)}_{=:\,\text{error}_1}.
\end{align*}
So, we have
\begin{equation}\label{E:H-prime}
	H^\prime(\rho) = -2\rho^{-n}\int(y,0)\cdot\pi_{P_0^\perp}(\nabla^S r)\phi^\prime(r/\rho)\, \ext V(X,S) + \text{error}_1
\end{equation}
where $\text{error}_1$ is as above. Using \eqref{E:formula-3} and \eqref{E:formula-4} along with \eqref{E:Lip-bound}, we see that we can control this error term by
\begin{equation}\label{E:H-prime-error}
	|\text{error}_1| \leq 2\cdot Q(n+1)\cdot L^2\rho^{2\alpha}\cdot \rho^{-1}\cdot H(\rho) = 2Q(n+1)L^2\rho^{2\alpha-1}H(\rho),
\end{equation}
where the factor of $2$ at the front is coming from the $O(|Du|^4)$ terms in \eqref{E:formula-3}, \eqref{E:formula-4} and the error from $1$ in \eqref{E:formula-2}.

\textbf{Step 2: Alternative expression for $D$.} Now let us turn to $D(\rho)$. If one takes $\zeta = \phi(r/\rho)(y,0)$ in the first variation formula for $V$, we get
$$\int\frac{1}{2}\|\pi_S-\pi_{P_0}\|^2\phi(r/\rho) + \frac{1}{\rho}\phi^\prime(r/\rho)(y,0)\cdot \pi_{P^\perp_0}(\nabla^S r)\, \ext V(X,S) = 0.$$
Multiplying by $\rho^{2-n}$ we see this immediately gives
\begin{equation}\label{E:D-alt}
	D(\rho) = -\rho^{1-n}\int(y,0)\cdot\pi_{P_0^\perp}(\nabla^S r)\phi^\prime(r/\rho)\, \ext V(X,S).
\end{equation}
\textbf{Step 3: Derivative of $D$.} Next, note that we have (from the original expression for $D(\rho)$):
\begin{equation}\label{E:pff-D-prime-1}
D^\prime(\rho) = \frac{2-n}{2}\rho^{1-n}\int\|\pi_S-\pi_{P_0}\|^2\phi(r/\rho)\, \ext V(X,S) - \frac{1}{2}\rho^{-n}\int\|\pi_S-\pi_{P_0}\|^2 r\phi^\prime(r/\rho)\, \ext V(X,S).
\end{equation}
Now if we take $\zeta = \phi(r/\rho)(0,x)$ in the first variation formula for $V$, we get
\begin{equation}\label{E:pff-D-prime-2}
\int n\phi(r/\rho) - \frac{1}{2}\|\pi_S-\pi_{P_0}\|^2\phi(r/\rho) + |\nabla^S r|^2 \frac{r}{\rho}\phi^\prime(r/\rho)\, \ext V(X,S) = 0.
\end{equation}
Now by the divergence theorem (or using the above in the special case where $V$ is the multiplicity one plane $P_0$):
$$0 = \int\div(x,\phi(r/\rho))\, \ext\H^n(x) = \int n\phi(r/\rho) + \frac{r}{\rho}\phi^\prime(r/\rho)\, \ext\H^n(x).$$
We know (from the fact $V$ is represented by a multi-valued graph) that $V$ is a $Q$-cover of $P_0$ in this region, and so this and the area formula gives
\begin{equation}\label{E:pff-D-prime-3}
0 = \int \left(n\phi(r/\rho) + \frac{r}{\rho}\phi^\prime(r/\rho)\right) J\pi_{P_0}\, \ext V(X,S)
\end{equation}
where $J\pi_{P_0}$ is the Jacobian of $\pi_{P_0}:\spt\|V\|\to P_0$. If we now consider $\eqref{E:pff-D-prime-1} + 2\rho^{1-n}\cdot\eqref{E:pff-D-prime-2} - 2\rho^{1-n}\cdot\eqref{E:pff-D-prime-3}$, we get, using $|\nabla^S r|^2 = 1-|\nabla^{S^\perp}r|^2$,
\begin{equation}
	D^\prime(\rho) = -2\rho^{-n}\int|\nabla^{S^\perp}r|^2 r\phi^\prime(r/\rho)\, \ext V(X,S) + \text{error}_2
\end{equation}
where
$$\text{error}_2 := 2\rho^{1-n}\int\left(1-\frac{1}{4}\|\pi_S-\pi_{P_0}\|^2 - J\pi_{P_0}\right)\left(n\phi(r/\rho) + \frac{r}{\rho}\phi^\prime(r/\rho)\right)\, \ext V(X,S).$$
We then make one small further modification to the expression for $D^\prime(\rho)$. Notice that (see \cite[(3.61)]{KW23a})
$$\left||\nabla^{S^\perp}r|^2 - \frac{|\pi_{P_0^\perp}(\nabla^S r)|^2}{|\nabla^S r|^2}\right| = O(\|\pi_S-\pi_{P_0}\|^4)$$
(note that $\nabla^S r\neq 0$ here, since $\nabla^S r = 0$ would mean $S$ is orthogonal to $P_0$ which cannot happen due to our estimates in Definition \ref{defn:eps-reg}), and so we actually have
\begin{equation}\label{E:pff-D-prime-4}
	D^\prime(\rho) = -2\rho^{-n}\int\frac{|\pi_{P_0^\perp}(\nabla^S r)|^2}{|\nabla^S r|^2}\cdot r\phi^\prime(r/\rho)\, \ext V(X,S) + \text{error}_2 + \text{error}_3
\end{equation}
where (recall $-\phi^\prime\geq 0$)
$$|\text{error}_3| \leq C\rho^{1-n}\int\|\pi_S-\pi_{P_0}\|^4\cdot\left(-\frac{r}{\rho}\phi^\prime(r/\rho)\right)\, \ext V(X,S).$$
\textbf{Step 4: Control $\text{error}_2$ and $\text{error}_3$.} In order to show the approximate monotonicity of the planar frequency (which is done in Step 5), we will need to control $|\text{error}_2|$ and $|\text{error}_3|$ in terms of $D(\rho)$. Notice first that from \eqref{E:formula-4}, we know that $D(\rho)$ is comparable to
$$\widetilde{D}(\rho) = \rho^{2-n}\int |Du|^2\phi(r/\rho)\, \ext x$$
and that (see \cite[(3.62)]{KW23a})
$$1 - \frac{1}{4}\|\pi_S-\pi_{P_0}\|^2 - J\pi_{P_0} = O(\|\pi_S-\pi_{P_0}\|^4).$$
Thus, in order to control $\text{error}_2$ and $\text{error}_3$, we can instead control
$$\text{error}_{2,1} := \rho^{1-n}\int|Du|^4\phi(r/\rho) \qquad \text{and} \qquad \text{error}_{2,2} := \rho^{1-n}\int_{B_\rho\setminus B_{\rho/2}}|Du|^4$$
in terms of $\widetilde{D}(\rho)$ (here, we have used that $\phi^{\prime}(r/\rho) = -2\one_{\R^k\times (B_\rho\setminus\overline{B_{\rho/2}})}$, and so $r/\rho\in [1/2,1]$ in this part of the integrand). These are fourth order error terms in $Du$. Notice that using the bound $|Du(x)|\leq L r^\alpha$ a.e.~from \eqref{E:Lip-bound}, we get
$$\text{error}_{2,1} \leq L^2\rho^{2\alpha-1}\cdot \rho^{2-n}\int|Du|^2\phi(r/\rho) = L^2\rho^{2\alpha-1}\widetilde{D}(\rho).$$
Thus, we just need to bound $\text{error}_{2,2}$. The reader will then note the difficulty: the integral in $\text{error}_{2,2}$ occurs over $B_\rho\setminus B_{\rho/2}$, whilst the ``main'' part of $\widetilde{D}(\rho)$ (i.e.~coming from that where $\phi(r/\rho)$ is $1$) is $\int_{B_{\rho/2}}|Du|^2$. One therefore wishes to control the energy of $u$ on $B_{\rho}\setminus \overline{B}_{\rho/2}$ by that on $B_{\rho/2}$: this essentially amounts to wanting $Du$ to satisfy a doubling condition. However, even though we know that the coarse blow-ups satisfy a doubling condition, we cannot pass this to the blow-up sequence at every scale $\rho$. Our argument therefore needs to be more refined: for this we argue as in \cite{KW23a}, utilising Lemma \ref{lemma:doubling-2}.

Thus, in order to control $\text{error}_{2,2}$, it suffices to bound
$$\text{error}_2^*:= \rho^{1-n}\int_{B_\rho}|Du|^4$$
in terms of $\widetilde{D}(\rho)$. For this, fix $\rho\in (0,1/4)$. For each $\xi\in B_\rho(0)$ set
$$\overline{\sigma}_\xi := \sup\left\{\sigma\in (0,\rho/2]:\int_{B_\sigma(\xi)}|Du|^2 \geq \w_n L^2 (64\sigma)^{n+2\alpha}\right\}$$
where we take the convention that $\sup\emptyset = 0$. Here, $\w_n = \H^n(B^n_1(0))$. We know from the estimate \eqref{E:Lip-bound} that for all $\xi\in B_\rho(0)$ and $\sigma\in [\rho/32,\rho/2]$,
$$\int_{B_\sigma(\xi)}|Du|^2 \leq \int_{B_{3\rho/2}(0)}|Du|^2 \leq L^2(3\rho/2)^{2\alpha}\cdot \w_n(3\rho/2)^n \leq L^2\w_n (48\sigma)^{n+2\alpha}$$
and thus we see that $\overline{\sigma}_\xi \leq \rho/32$. Now set
$$\S:= \{\xi\in B_\rho(0):\overline{\sigma}_\xi >0\}.$$
Notice that for $\xi\in B_\rho(0)\setminus \S$ we have $|Du(\xi)| = 0$.
By the Vitali covering lemma, there is a countable set of points $\{\xi_i\}_i\subset \S$ such that $\{B_{2\sigma_i/5}(\xi_i)\}_i$ is a pairwise disjoint collection of balls such that $\{B_{2\sigma_i}(\xi_i)\}_i$ covers $\S$, where $\sigma_i := \overline{\sigma}_{\xi_i}$. By definition of $\overline{\sigma}_\xi$ we have
$$\int_{B_{4\sigma_i}(\xi_i)}|Du|^2 \leq \w_nL^2(256\sigma_i)^{n+2\alpha}$$
and so, as $L = C\Etilt_V$ is small, this corresponds to saying that the tilt-excess of $V$ in the cylinder $\R^k\times B_{4\sigma_i}(\xi_i)$ is small. Thus, applying Theorem \ref{thm:eps-reg} in this cylinder, we see that on $B_{2\sigma_i}(\xi_i)$ we have
$$|Du| \leq \sigma_i^{-1}\cdot C\left((4\sigma_i)^{2-n}\int_{B_{4\sigma_i}(\xi_i)}|Du|^2\right)^{1/2} \leq CL\sigma_i^\alpha \qquad \text{a.e.,}$$
where here $C = C(\mathcal{V})\in (0,\infty)$, and hence
\begin{equation}\label{E:D-0}
	\int_{B_{2\sigma_i}(\xi_i)}|Du|^4 \leq CL^4\sigma_i^{n+4\alpha}.
\end{equation}
Since $\xi_i\in B_\rho(0)$ and $\sigma_i\leq \rho/32$, we may find a ball $B_{\sigma_i/5}(\zeta_i)\subset B_{2\sigma_i/5}(\xi_i)\cap B_\rho(0)$. By definition of $\sigma_i$ we have
\begin{equation}\label{E:D-1}
	\int_{B_{\sigma_i}(\xi_i)}|Du|^2\geq \w_n L^2(64\sigma_i)^{n+2\alpha}
\end{equation}
and from the original bound on $|Du|$ within $B_{8\sigma_i}(\xi_i)\subset B_{1/2}(0)$ from the definition of $\overline{\sigma}_{\xi_i}$ we have
$$\int_{B_{8\sigma_i}(\xi_i)}|Du|^2 \leq \w_n L^2(64\cdot 8\sigma_i)^{n+2\alpha}$$
and so, using the fact that $\alpha<1$, we get
$$\int_{B_{\sigma_i}(\xi_i)}|Du|^2 \geq 8^{-n-2}\int_{B_{8\sigma_i}(\xi_i)}|Du|^2.$$
Thus, we are in a situation to use Lemma \ref{lemma:doubling-2}: we get that there exists a constant $\beta = \beta(\mathcal{V})\in (0,1)$ such that
\begin{equation}\label{E:D-2}
	\int_{B_{\sigma_i/10}(\zeta_i)}|Du|^2\geq \beta^2\int_{B_{8\sigma_i}(\xi_i)}|Du|^2.
\end{equation}
Using simple inclusions and \eqref{E:D-1}, \eqref{E:D-2}, we clearly have
\begin{equation}\label{E:D-3}
	\int_{B_{2\sigma_i/5}(\xi_i)\cap B_\rho(0)}|Du|^2 \geq \int_{B_{\sigma_i/5}(\zeta_i)}|Du|^2 \geq \beta^2\int_{B_{8\sigma_i}(\xi_i)}|Du|^2 \geq \beta^2\w_n L^2 (64\sigma_i)^{n+2\alpha}.
\end{equation}
Now since $B_{\sigma_i/5}(\zeta_i)\subset B_\rho(0)$, we have that $r\leq \rho - \sigma_i/10$ on $B_{\sigma_i/10}(\zeta_i)$, and thus $\phi(r/\rho)\geq \sigma_i/(5\rho)$ on $B_{\sigma_i/10}(\zeta_i)$. Thus, from \eqref{E:D-3} and \eqref{E:D-2},
\begin{align*}
	\beta^2\w_n L^2(64\sigma_i)^{n+1+2\alpha}\leq 64\sigma_i \int_{B_{2\sigma_i/5}(\xi_i)}|Du|^2 & \leq 64\beta^{-2}\sigma_i\int_{B_{\sigma_i/10}(\zeta_i)}|Du|^2\\
	& \leq 320\beta^{-2}\rho\int_{B_\rho(0)}|Du|^2\phi(r/\rho)\\
	& \leq 320\beta^{-2}\cdot \rho^{n-1}\widetilde{D}(\rho),
\end{align*}
which gives the bound
\begin{equation}\label{E:D-4}
	\sigma_i \leq CL^{-\frac{2}{n+1+2\alpha}}\widetilde{D}(\rho)^{\frac{1}{n+1+2\alpha}}.
\end{equation}
where $C = C(\mathcal{V})$.

Recalling now that we had a cover of $\S$, we therefore have from \eqref{E:D-0}, \eqref{E:D-3}, and \eqref{E:D-4} (as $|Du| = 0$ on $B_\rho(0)\setminus \S$)
\begin{align*}
	\int_{B_\rho(0)}|Du|^4 = \int_{\S} |Du|^4 & \leq \sum_i \int_{B_{2\sigma_i}(\xi_i)}|Du|^4\\
	& \leq C\sum_i L^4\sigma_i^{n+4\alpha}\\
	& \leq C\sum_i L^2\sigma_i^{2\alpha}\int_{B_{2\sigma_i/5}(\xi_i)\cap B_\rho(0)}|Du|^2\\
	& \leq CL^{2-\frac{4\alpha}{n+1+2\alpha}}\widetilde{D}(\rho)^{\frac{2\alpha}{n+1+2\alpha}}\sum_i\int_{B_{2\sigma_i/5}(\xi_i)\cap B_\rho(0)}|Du|^2\\
	& \leq CL^{2-2\gamma}\widetilde{D}(\rho)^\gamma \int_{B_\rho(0)}|Du|^2,
\end{align*}
where here $C = C(\mathcal{V})\in (0,\infty)$ and $\gamma = \frac{2\alpha}{n+1+2\alpha}$.

Notice that by using \eqref{E:formula-4}, we can then instead change this into a bound depending on terms closer to $D(\rho)$ instead of $\widetilde{D}(\rho)$, and so we have
$$\int_{B_\rho(0)}|Du|^4 \leq CL^{2-2\gamma}D(\rho)^\gamma\int_{\R^k\times B_\rho(0)}\|\pi_S-\pi_{P_0}\|^2\, \ext V(X,S).$$
But now noting that $\phi(r/\rho) - \frac{r}{\rho}\phi^\prime(r/\rho) = 1$ on $\R^k\times B_{\rho/2}(0)$ and $=2$ on $\R^k\times (B_\rho(0)\setminus\overline{B}_{\rho/2}(0))$, we have
\begin{align*}
\int_{\R^k\times B_\rho(0)}\|\pi_S-\pi_{P_0}\|^2\, \ext V(X,S) & \leq \int\|\pi_S-\pi_{P_0}\|^2\left(\phi(r/\rho)-\frac{r}{\rho}\phi^\prime(r/\rho)\right)\\
& = 2(n-1)\rho^{n-2}D(\rho) + 2\rho^{n-1}D^\prime(\rho).
\end{align*}
Thus, we see that
$$\int_{B_\rho(0)}|Du|^4 \leq CL^{2-2\gamma}D(\rho)^\gamma\left((n-1)\rho^{n-2}D(\rho) + \rho^{n-1}D^\prime(\rho)\right)$$
and hence
$$|\text{error}_2^*| \leq CL^{2-2\gamma}\rho^{-1}D(\rho)^{\gamma}\left((n-1)D(\rho) + \rho D^\prime(\rho)\right)$$
which is our desired bound. To summarise, we have now shown
\begin{equation}\label{E:D-prime-error}
|\text{error}_2| + |\text{error}_3| \leq L^2\rho^{2\alpha-1}D(\rho) + CL^{2-2\gamma}\rho^{-1}D(\rho)^{\gamma}\left((n-1)D(\rho) + \rho D^\prime(\rho)\right).
\end{equation}
\textbf{Step 5: Establish the approximate monotonicity of the planar frequency.} Now we are in a position to prove the almost monotonicity of the planar frequency function. We compute, using \eqref{E:pff-D-prime-4}, \eqref{E:D-alt}, \eqref{E:H-prime}:
\begin{align*}
	\frac{\ext}{\ext\rho}\log\mathcal{N}(\rho) & = \frac{D^\prime(\rho) H(\rho) - H^\prime(\rho) D(\rho)}{D(\rho)H(\rho)}\\
	& = \frac{2\rho^{1-2n}}{D(\rho)H(\rho)}\left[\left(\int\frac{|\pi_{P_0^\perp}(\nabla^S r)|^2}{|\nabla^S r|^2}r(-\phi^\prime(r/\rho))\, \ext V(X,S)\right)\left(\int|y|^2|\nabla^Sr|^2\frac{1}{r}(-\phi^\prime(r/\rho))\right)\right.\\
	& \hspace{2em} - \left.\left(\int(y,0)\cdot\pi_{P_0^\perp}(\nabla^S r)(-\phi^\prime(r/\rho))\right)^2\right] + \frac{\text{error}_2}{D(\rho)} + \frac{\text{error}_3}{D(\rho)} - \frac{\text{error}_1}{H(\rho)}.
\end{align*}
By Cauchy--Schwarz (noting that $-\phi^\prime\geq 0$) the non-error part of this expression is $\geq 0$, and thus we have
$$\frac{\ext}{\ext\rho}\log\mathcal{N}(\rho) \geq -\frac{|\text{error}_1|}{H(\rho)} - \frac{|\text{error}_2|}{D(\rho)} - \frac{|\text{error}_3|}{D(\rho)}.$$
Now to control these error terms, we use \eqref{E:H-prime-error} and \eqref{E:D-prime-error}. Indeed, \eqref{E:H-prime-error} gives
$$\frac{|\text{error}_1|}{H(\rho)} \leq 2Q(n+1)L^2\rho^{2\alpha-1}$$
and \eqref{E:D-prime-error} gives
$$\frac{|\text{error}_2|+|\text{error}_3|}{D(\rho)} \leq L^2\rho^{2\alpha-1} + CL^{2-2\gamma}\rho^{-1}D(\rho)^{\gamma-1}\left((n-1)D(\rho) + \rho D^\prime(\rho)\right).$$
Recalling from \eqref{E:Lip-bound} that $|Du(x)|\leq L r^\alpha$ for a.e.~$x$, we therefore have $D(\rho) \leq CL^2\rho^{2+2\alpha}$, and so we get
\begin{align*}
\frac{|\text{error}_2|+|\text{error}_3|}{D(\rho)} & \leq L^2\rho^{2\alpha-1} + CL^2\rho^{2\gamma + 2\gamma\alpha -1} + CL^{2-2\gamma}D(\rho)^{\gamma-1}D^\prime(\rho)\\
& \leq CL^2\rho^{2\gamma(1+\alpha)-1} + CL^{2-2\gamma}D(\rho)^{\gamma-1}D^\prime(\rho)
\end{align*}
since $2\alpha-1\geq 2\gamma+2\gamma\alpha-1$. Thus we see
\begin{align*}
\frac{\ext}{\ext\rho}\log\mathcal{N}(\rho) & \geq - CL^2\rho^{2\alpha-1} - CL^2\rho^{2\gamma(1+\alpha)-1} - CL^{2-2\gamma}D(\rho)^{\gamma-1}D^\prime(\rho)\\
& \geq - CL^2\rho^{2\gamma(1+\alpha)-1} - CL^{2-2\gamma}D(\rho)^{\gamma-1}D^\prime(\rho).
\end{align*}
Integrating this inequality over an interval $[\sigma,\rho]$ we get
$$\log\mathcal{N}(\rho) - \log\mathcal{N}(\sigma) \geq -CL^2\left(\rho^{2\gamma(1+\alpha)}-\sigma^{2\gamma(1+\alpha)}\right) - CL^{2-2\gamma}\left(D(\rho)^\gamma-D(\sigma)^\gamma\right)$$
i.e.
$$\mathcal{N}(\sigma)\leq \mathcal{N}(\rho)\cdot e^{CL^2(\rho^{2\gamma(1+\alpha)}-\sigma^{2\gamma(1+\alpha)})+L^{2-2\gamma}(D(\rho)^\gamma-D(\sigma)^\gamma)}.$$
Bounding again $D(\rho) \leq CL^2\rho^{2+2\alpha}$, we therefore get for any $0<\sigma\leq\rho<1/4$,
$$\mathcal{N}(\sigma)\leq \mathcal{N}(\rho)\cdot e^{CL^2\rho^{2\gamma(1+\alpha)}}$$
where $C = C(\mathcal{V})$ and $\gamma = \gamma(\mathcal{V}) \equiv \frac{2\alpha}{n+1+2\alpha}$. This is the desired almost monotonicity of the planar frequency.

To summarise, we have shown the following:

\begin{theorem}[Approximate Monotonicity of Planar Frequency]\label{thm:monotonicity}
	There exists $\eps = \eps(\mathcal{V})\in (0,1)$ such that the following holds. Suppose $V\in\mathcal{V}$ is such that $0\in \sing(V)$ is a density $Q$ singular point of $V$ with (unique) tangent cone $Q|P_0|$ and $\Etilt_V<\eps$. Then, for all $0<\sigma\leq\rho<1/4$,
	$$e^{C\Etilt_V^2\sigma^{2\gamma(1+\alpha)}+C\Etilt_V^{2-2\gamma}D(\sigma)^\gamma}\mathcal{N}(\sigma) \leq e^{C\Etilt_V^2\rho^{2\gamma(1+\alpha)} + C\Etilt_V^{2-2\gamma}D(\rho)^\gamma}\mathcal{N}(\rho),$$
	and hence
	$$\mathcal{N}(\sigma)\leq \mathcal{N}(\rho)\cdot e^{C\Etilt_V^2\rho^{2\gamma(1+\alpha)}}.$$
	Here, $\alpha = \alpha(\mathcal{V})\in (0,1)$ is the constant from Definition \ref{defn:eps-reg}, $\gamma = \frac{2\alpha}{n+1+2\alpha}$, and $C = C(\mathcal{V})\in (0,\infty)$.
\end{theorem}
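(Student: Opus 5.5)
The plan is to differentiate $\log\mathcal{N}$ and show that its negative part is integrable with the stated bound. Reduce first, as above, to $V$ being the generalised-$C^{1,\alpha}$ graph of $u$ over $B^n_{1/2}(0)$. The pointwise decay \eqref{E:Lip-bound} $|Du(x)|\leq L|x|^\alpha$ with $L=C\Etilt_V$ comes from Definition \ref{defn:eps-reg}(C)(2) at the branch point, Remark \ref{remark:tilt-height-comparison}, and Theorem \ref{thm:eps-reg}. Then compute $H'$, $D$ and $D'$ by testing the first variation formula with three fields:
\begin{itemize}
\item $|y|^2\phi'(r/\rho)\nabla r$, which gives $H'$ as in \eqref{E:H-prime};
\item $\phi(r/\rho)(y,0)$, which gives the alternative formula \eqref{E:D-alt} for $D$;
\item $\phi(r/\rho)(0,x)$, which gives $D'$ after subtracting the divergence identity on $P_0$, transported to $V$ through the $Q$-fold covering and the area formula, as in \eqref{E:pff-D-prime-4}.
\end{itemize}

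The main terms of $\frac{d}{d\rho}\log\mathcal{N}=\frac{D'}{D}-\frac{H'}{H}$ then combine into a Cauchy--Schwarz expression with respect to the nonnegative weight $-\phi'$, so that part is $\geq 0$. What remains is to bound the errors:
\begin{itemize}
\item $\text{error}_1$ is quadratic in $Du$ times the integrand of $H$. By \eqref{E:formula-3}, \eqref{E:formula-4} and the decay it is at most $CL^2\rho^{2\alpha-1}H(\rho)$.
\item $\text{error}_2$ and $\text{error}_3$ are quartic in $Du$, by \eqref{E:formula-4} and the Jacobian expansion. The part weighted by $\phi$ gives $L^2\rho^{2\alpha-1}D(\rho)$ directly.
\end{itemize}

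The hard step is the annular quartic term $\rho^{1-n}\int_{B_\rho\setminus B_{\rho/2}}|Du|^4$. It has to be controlled by $D(\rho)$, whose mass sits mostly on $B_{\rho/2}$, and we have no doubling available at every scale.

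For this I would run a stopping-time covering. For each $\xi$, let $\overline{\sigma}_\xi$ be the largest scale at which $\int_{B_\sigma(\xi)}|Du|^2$ exceeds $\w_n L^2(64\sigma)^{n+2\alpha}$. The decay forces $\overline{\sigma}_\xi\leq\rho/32$, and $Du=0$ wherever $\overline{\sigma}_\xi=0$. Take a Vitali subcover by balls $B_{2\sigma_i}(\xi_i)$. Then:
\begin{itemize}
\item Maximality of $\sigma_i$ gives small tilt at scale $4\sigma_i$. Theorem \ref{thm:eps-reg} then gives $|Du|\leq CL\sigma_i^\alpha$ on $B_{2\sigma_i}(\xi_i)$, hence $\int_{B_{2\sigma_i}}|Du|^4\leq CL^4\sigma_i^{n+4\alpha}$.
\item The definition of $\sigma_i$ also gives the doubling hypothesis \eqref{E:doubling-2-0} with $c=8^{-(n+2)/2}$. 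Lemma \ref{lemma:doubling-2} then places a definite fraction of the energy in a ball $B_{\sigma_i/10}(\zeta_i)$ lying inside $B_\rho$, where $\phi(r/\rho)\gtrsim\sigma_i/\rho$.
\item This yields $\sigma_i\leq CL^{-2/(n+1+2\alpha)}\widetilde D(\rho)^{1/(n+1+2\alpha)}$, together with $L^4\sigma_i^{n+4\alpha}\lesssim L^2\sigma_i^{2\alpha}\int_{B_{2\sigma_i/5}(\xi_i)\cap B_\rho}|Du|^2$.
\end{itemize}

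Summing over the disjoint balls gives
$$\int_{B_\rho}|Du|^4\leq CL^{2-2\gamma}D(\rho)^\gamma\int_{B_\rho}|Du|^2,$$
with $\gamma=2\alpha/(n+1+2\alpha)$. Next bound $\int_{B_\rho}|Du|^2$ by $(n-1)\rho^{n-2}D+\rho^{n-1}D'$, using $\phi-\frac r\rho\phi'\geq 1$.

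Dividing by $D$ and using $D(\rho)\leq CL^2\rho^{2+2\alpha}$, we get
$$\frac{d}{d\rho}\log\mathcal{N}\geq -CL^2\rho^{2\gamma(1+\alpha)-1}-CL^{2-2\gamma}D^{\gamma-1}D'.$$
The last term is an exact derivative of $-CL^{2-2\gamma}\gamma^{-1}D^\gamma$. Integrating over $[\sigma,\rho]$ gives the first inequality of Theorem \ref{thm:monotonicity}. The second follows by dropping the $\sigma$-terms and bounding $D(\rho)^\gamma$ with the decay estimate.
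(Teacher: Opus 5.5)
Your proposal is correct and follows the paper's proof essentially step for step. It uses the same three first-variation test fields, the same Cauchy--Schwarz main term, the same direct bound on the $\phi$-weighted quartic error, and the same stopping-time/Vitali covering with threshold $\w_n L^2(64\sigma)^{n+2\alpha}$, combined with Theorem~\ref{thm:eps-reg} and Lemma~\ref{lemma:doubling-2}, to control the annular term $\rho^{1-n}\int_{B_\rho\setminus B_{\rho/2}}|Du|^4$, before integrating the exact derivative $D^{\gamma-1}D'$ (you track the factor $\gamma^{-1}$ explicitly, which the paper absorbs into $C$).
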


\begin{remark}
In fact, now that one has a monotone frequency function, one can show a simpler version of the doubling property one needed to control $\text{error}_2^*$. Consequently, one can then prove
$$\frac{|\text{error}_2|+|\text{error}_3|}{D(\rho)} \leq CL^2\rho^{2\alpha-1}$$
and consequently that
$$\frac{\ext}{\ext \rho}\log\mathcal{N}(\rho) \geq -CL^2\rho^{2\alpha-1}.$$
This then gives the simpler-looking monotonicity property: for all $0<\sigma\leq\rho<1/4$,
$$e^{C\Etilt_V^2\sigma^{2\alpha}}\mathcal{N}(\sigma)\leq e^{C\Etilt_V^2\rho^{2\alpha}}\mathcal{N}(\rho).$$
This then simplifies the form of some properties later as well. We will not use this however, and stick with the version in Theorem \ref{thm:monotonicity}. We stress that the constant $C$ in this remark would also depend on the frequency value at (say) scale $1$, whilst the corresponding constant $C$ in Theorem \ref{thm:monotonicity} does not.
\end{remark}

In the setting of Theorem \ref{thm:monotonicity}, we see that the limit
$$\mathcal{N}_V(0) := \lim_{\rho\downarrow 0}\mathcal{N}(\rho)$$
always exists; we call $\mathcal{N}_V(0)$ the \emph{planar frequency of $V$ at $0$}. Of course, at any singular point of such a varifold $V$ where the (unique) tangent cone is a plane of multiplicity $Q$, then up to rotation, translation, and scaling, the hypotheses of Theorem \ref{thm:monotonicity} are satisfied, and thus at any such singular point $Z$ we can define the frequency $\mathcal{N}_V(Z)$ as the limit at $0$ of the planar frequency centred at the point $Z$ and rotated so that the tangent plane is $Q|P_0|$.

\textbf{Note:} In the area-minimising case as in \cite{KW23a}, it is not known a priori whether there is a decay rate above $1$ at branch points, and the planar frequency function (among other things) is used to establish that at ${\mathcal H}^{n-2}$ a.e.\ branch point the decay rate must exceed $1 + \alpha$ for some fixed constant $\alpha$ depending only on $n$, $m$ and a mass bound. See \cite{KW23a}, \cite{KW23b}. In contrast to this, here the generalised-$C^{1,\alpha}$ regularity of $u$ guarantees (as just shown) that the planar frequency exists and is approximately monotone at \emph{every} such singular point, and the planar frequency takes values $\geq 1 + \alpha$. 

Next, we deduce some elementary consequences of such a monotonicity result. The first is control on the scale-invariant $L^2$-height in terms of the planar frequency function, which tells us that the planar frequency captures the decay rate of the varifold to its tangent plane. This also provides a uniform lower bound on the planar frequency value $\mathcal{N}_V(0)$, which will be important for our blow-up analysis later.

\begin{corollary}\label{cor:decay-estimates}
	There exists $\eps = \eps(\mathcal{V})\in (0,1)$ such that if $V$ is as in Theorem \ref{thm:monotonicity} with this choice of $\eps$, then:
	\begin{enumerate}
		\item [(i)] For all $0<\sigma\leq\rho<1/4$ we have
		$$e^{-C\Etilt_V^2\rho^{2\alpha}}\left(\frac{\sigma}{\rho}\right)^{2\mathcal{N}(\rho)e^{C\Etilt_V^2\rho^{2\gamma(1+\alpha)}}}H(\rho) \leq H(\sigma) \leq e^{C(\mathcal{N}_V(0)+1)\Etilt_V^2\rho^{2\gamma(1+\alpha)}}\left(\frac{\sigma}{\rho}\right)^{2\mathcal{N}_V(0)}H(\rho);$$
		\item [(ii)] For all $0<\sigma\leq\rho\leq 1/4$ we have
		\begin{align*}
			&\frac{1}{16}e^{-C\Etilt_V^2\rho^{2\alpha}}\left(\frac{\sigma}{\rho}\right)^{2\mathcal{N}(\rho)e^{C\Etilt_V^2\rho^{2\gamma(1+\alpha)}}}\rho^{-n}\int_{B_\rho(0)}|u|^2\\
			& \hspace{13em} \leq \sigma^{-n}\int_{B_\sigma(0)}|u|^2\\
			& \hspace{13em} \leq 16e^{C(\mathcal{N}_V(0) + 2)\Etilt_V^2\rho^{2\gamma(1+\alpha)}}\left(\frac{\sigma}{\rho}\right)^{2\mathcal{N}_V(0)}\rho^{-n}\int_{B_\rho(0)}|u|^2;
		\end{align*}
		\item [(iii)] $\mathcal{N}_V(0)\geq 1+\alpha$.
	\end{enumerate}
	Here, $\alpha = \alpha(\mathcal{V})\in (0,1)$ is as in Theorem \ref{thm:eps-reg}, $\gamma = \frac{2\alpha}{n+1+2\alpha}$, and $C = C(\mathcal{V})\in (0,\infty)$.
\end{corollary}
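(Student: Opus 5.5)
The plan is to derive all three parts from the identity relating $H'$ to $D$, combined with Theorem \ref{thm:monotonicity}. By \eqref{E:H-prime} and \eqref{E:D-alt},
\[
H'(\rho)=\frac{2}{\rho}D(\rho)+\text{error}_1 .
\]
Together with \eqref{E:H-prime-error} this gives
\[
\Big|\frac{\ext}{\ext\rho}\log H(\rho)-\frac{2\mathcal{N}(\rho)}{\rho}\Big|\leq CL^2\rho^{2\alpha-1}.
\]
For (i), integrate this over $[\sigma,\rho]$, which yields
\[
\log\frac{H(\rho)}{H(\sigma)}=\int_\sigma^\rho\frac{2\mathcal{N}(t)}{t}\,\ext t+O\big(L^2\rho^{2\alpha}\big).
\]
For the lower bound on $H(\sigma)$, use Theorem \ref{thm:monotonicity} to get $\mathcal{N}(t)\leq \mathcal{N}(\rho)e^{CL^2\rho^{2\gamma(1+\alpha)}}$ for $t\leq\rho$. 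This gives $\int_\sigma^\rho 2\mathcal{N}/t\leq 2\mathcal{N}(\rho)e^{CL^2\rho^{2\gamma(1+\alpha)}}\log(\rho/\sigma)$, and exponentiating gives the left inequality. For the upper bound on $H(\sigma)$, let $\sigma'\downarrow0$ in the monotonicity inequality to get
\[
\mathcal{N}(t)\geq \mathcal{N}_V(0)e^{-CL^2t^{2\gamma(1+\alpha)}}\geq \mathcal{N}_V(0)\big(1-CL^2t^{2\gamma(1+\alpha)}\big).
\]
Then
\[
\int_\sigma^\rho\frac{2\mathcal{N}(t)}{t}\,\ext t\geq 2\mathcal{N}_V(0)\log(\rho/\sigma)-C\mathcal{N}_V(0)L^2\rho^{2\gamma(1+\alpha)},
\]
the loss being integrable since $t^{2\gamma(1+\alpha)-1}$ is integrable at $0$. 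Combining with the $O(L^2\rho^{2\alpha})\leq O(L^2\rho^{2\gamma(1+\alpha)})$ term gives the right inequality.

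For (ii), compare $H(\rho)$ with $\rho^{-n}\int_{B_\rho}|u|^2$. Since $-\phi'(r/\rho)=2\one_{B_\rho\setminus B_{\rho/2}}$ and the weight $|\nabla^S r|^2$ times the Jacobian is $1+O(L^2)$, we have
\[
H(\rho)\approx 2\rho^{1-n}\sum_\alpha\int_{B_\rho\setminus B_{\rho/2}}|u^\alpha|^2r^{-1},
\]
which is comparable within a factor $2$ to $\rho^{-n}\int_{B_\rho\setminus B_{\rho/2}}|u|^2$. To pass from annuli to balls I would use dyadic decomposition: sum the annular quantities $H(2^{-j}\rho)$, using the lower bound from (i) to see that these form a geometric series. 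Indeed, by (iii) or the crude bound $\mathcal{N}\geq 1$ they decay at least like $2^{-2j}$ relative to $H(\rho)$, so $\rho^{-n}\int_{B_\rho}|u|^2\leq C H(\rho)$. The reverse inequality $H(\rho)\leq 4\rho^{-n}\int_{B_\rho}|u|^2$ is trivial. Inserting these into (i), at scales $\sigma$ and $\rho$, yields (ii) with constant $16$, absorbing the extra $e^{CL^2}$ factor into the exponent $C(\mathcal{N}_V(0)+2)$. Getting the constant $16$ exactly rather than a generic $C$ may require using the upper bound in (i) applied to the dyadic pieces; I would not fuss over it.

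For (iii), combine the upper bound in (ii) with $|u(x)|\leq C L|x|^{1+\alpha}$, which follows by integrating \eqref{E:Lip-bound} from $u(0)=Q\llbracket0\rrbracket$. This gives $\sigma^{-n}\int_{B_\sigma}|u|^2\leq CL^2\sigma^{2+2\alpha}$, an upper bound. To bound $\mathcal{N}_V(0)$ from below, I would instead use the lower bound in (i) run as $\sigma\to0$. The estimate $H(\sigma)\leq CL^2\sigma^{2+2\alpha}$ forces $\liminf\mathcal{N}(\rho)\geq 1+\alpha$: otherwise, if $\mathcal{N}(\rho)e^{CL^2\rho^{\dots}}<1+\alpha-\delta$ for some small $\rho$, then $H(\sigma)\geq c\,\sigma^{2(1+\alpha-\delta)}$, contradicting the decay as $\sigma\to0$ (recall $H(\rho)>0$ by well-definedness). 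Letting $\rho\to0$ gives $\mathcal{N}_V(0)\geq1+\alpha$.

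The main obstacle is the annulus-to-ball comparison in (ii). Its difficulty is that $H$ only sees the annulus, while the statement involves full balls. The dyadic summation with the lower bound from (i) is what I would try first, and it relies on a uniform positive lower bound on $\mathcal{N}$. That bound is available because of the cheap fact that $\mathcal{N}(\rho)\geq c>0$, which can be obtained from the bound $|u|\leq CLr^{1+\alpha}$ together with the argument for (iii), carried out for $H$ before (ii) is proven.
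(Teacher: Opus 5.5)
Your proof is correct. Parts (i) and (iii) are essentially the paper's argument. For (i) you integrate the same inequality $|H'/H-2\mathcal{N}/\rho|\le CL^2\rho^{2\alpha-1}$ against the two sides of Theorem~\ref{thm:monotonicity}. For (iii) you use the same power comparison, but run it on $H$ directly via $H(\sigma)\le CL^2\sigma^{2+2\alpha}$; this is legitimate because $H$ only sees an annulus, whereas the paper runs it on ball integrals after proving (ii).

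The genuine difference is in (ii). The paper never compares a ball integral with $H$ at a single scale. It applies (i) to the pairs of scales $(2^{-k}\sigma,2^{-k}\rho)$, which all have ratio $\sigma/\rho$. It uses $\mathcal{N}(2^{-k}\rho)\le\mathcal{N}(\rho)e^{C\Etilt_V^2\rho^{2\gamma(1+\alpha)}}$ to make the exponent independent of $k$, converts each $H(2^{-k}\,\cdot)$ into the corresponding annular integral, and sums over $k$. The prefactors $\frac{1}{16}$ and $16$ are just products of the two annular comparison constants. You instead prove the single-scale equivalence $\rho^{-n}\int_{B_\rho}|u|^2\simeq H(\rho)$ and apply (i) once. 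This is equally valid and yields a reusable estimate, at the cost of the annulus-to-ball step that the paper's pairing avoids.

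In that step your justification is misattributed, though the conclusion is right. In $\rho^{-n}\int_{B_\rho}|u|^2\lesssim\sum_{j\ge0}2^{-jn}H(2^{-j}\rho)$, summability comes from the volume factor $2^{-jn}$. So you only need $H(2^{-j}\rho)\lesssim H(\rho)$, not any decay. This follows from the \emph{upper} half of (i), or directly from $H'/H\ge 2\mathcal{N}/\rho-CL^2\rho^{2\alpha-1}\ge -CL^2\rho^{2\alpha-1}$ (since $\mathcal{N}\ge 0$), giving $H(2^{-j}\rho)\le e^{CL^2\rho^{2\alpha}}H(\rho)$. The lower half of (i) points the wrong way, and you need neither a positive lower bound on $\mathcal{N}$ nor (iii).

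If you do go through the upper half of (i), note that $e^{C(\mathcal{N}_V(0)+1)\Etilt_V^2\rho^{2\gamma(1+\alpha)}}\,2^{-2j\mathcal{N}_V(0)}\le e^{C\Etilt_V^2\rho^{2\gamma(1+\alpha)}}$ for $j\ge 1$ once $\eps$ is small. Otherwise an $\mathcal{N}_V(0)$-dependent factor would leak into the lower bound in (ii), whose stated prefactor has none. With the sharp annular comparisons $2\rho^{-n}\int_{B_\rho\setminus B_{\rho/2}}|u|^2\lesssim H(\rho)\lesssim 4\rho^{-n}\int_{B_\rho\setminus B_{\rho/2}}|u|^2$, your route in fact produces prefactors inside $[\frac{1}{16},16]$, so the constants are not an issue.
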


\textbf{Remark:} Thus, if $V\in\mathcal{V}$ and $Z$ is a singular point at which the (unique) tangent cone is a plane of multiplicity $Q$, then $\mathcal{N}_V(Z)\geq 1+\alpha$, where $\alpha = \alpha(\mathcal{V})\in (0,1)$ is as in Theorem \ref{thm:eps-reg}.

\begin{proof}
	Fix $0<\sigma\leq\rho<1/4$. From \eqref{E:H-prime} and \eqref{E:D-alt}, we have for any $\tau\in [\sigma,\rho]$,
	$$H^\prime(\tau)-2\tau^{-1}D(\tau) = \text{error}_1,$$
	which by dividing by $H(\tau)$ and using \eqref{E:H-prime-error} gives, for $\eps = \eps(\mathcal{V})$ sufficiently small,
	$$\left|\frac{H^\prime(\tau)}{H(\tau)} - \frac{2\mathcal{N}(\tau)}{\tau}\right| \leq C\Etilt_V^2\tau^{2\alpha-1}$$
	where $C = C(\mathcal{V})$. Using the approximate monotonicity of $\mathcal{N}$ from Theorem \ref{thm:monotonicity} and the simple fact that $e^{-x}\geq 1-x$ for all $x\geq 0$, we get
	$$\frac{2\mathcal{N}_V(0)}{\tau} - C(\mathcal{N}_V(0)+1)\Etilt_V^2\tau^{2\gamma(1+\alpha)-1} \leq \frac{H^\prime(\tau)}{H(\tau)} \leq \frac{2\mathcal{N}(\rho)e^{C\Etilt_V^2\rho^{2\gamma(1+\alpha)}}}{\tau} + C\Etilt_V^2\tau^{2\alpha-1}.$$
	Integrating this over $\tau\in [\sigma,\rho]$ gives exactly (i).
	
	To see (ii), note first that from the definition of $H(\tau)$ and \eqref{E:formula-2}, for $\tau\in (0,1/4)$ we have
	$$H(\tau) \leq 2\tau^{1-n}\int_{B_\tau(0)\setminus B_{\tau/2}(0)}|u|^2 r^{-1}(1+C\Etilt_V^2\tau^{2\alpha}) \leq 4\tau^{-n}(1+C\Etilt_V^2\tau^{2\alpha})\int_{B_\tau(0)\setminus B_{\tau/2}(0)}|u|^2,$$
	and so for $\eps$ sufficiently small,
	$$\frac{1}{8}H(\tau) \leq \tau^{-n}\int_{B_\tau(0)\setminus B_{\tau/2}(0)}|u|^2.$$
	Moreover, again from the definition of $H(\tau)$ and \eqref{E:formula-2},
	$$H(\tau) \geq 2\tau^{1-n}\int_{B_\tau(0)\setminus B_{\tau/2}(0)}|u|^2 r^{-1}(1-C\Etilt_V^2\tau^{2\alpha}) \geq 2\tau^{-n}(1-C\Etilt_V^2\tau^{2\alpha})\int_{B_\tau(0)\setminus B_{\tau/2}(0)}|u|^2.$$
	So combining the above we have
	$$\frac{1}{8}H(\tau)\leq \tau^{-n}\int_{B_\tau(0)\setminus B_{\tau/2}(0)}|u|^2 \leq 2(1+C\Etilt_V^2\tau^{2\alpha})H(\tau).$$
	Apply this now with $\tau = \sigma_k := 2^{-k}\sigma$, $k\in \{0,1,2,\dotsc\}$; noting that $e^x\geq 1+x$ for $x\geq 0$, this gives
	$$\frac{1}{8}H(\sigma_k) \leq \sigma_k^{-n}\int_{B_{\sigma_k}(0)\setminus B_{\sigma_k/2}(0)}|u|^2 \leq 2e^{C\Etilt_V^2\sigma_k^{2\alpha}}H(\sigma_k).$$
	Then, if we set $\rho_k := 2^{-k}\rho$, we may apply the result in (i) of the present lemma with $\sigma = \sigma_k$, $\rho = \rho_k$ to get (noting $\sigma_k/\rho_k = \sigma/\rho$ for all $k$, and also using Theorem \ref{thm:monotonicity})
	\begin{align*}
		&\frac{1}{16}e^{-C\Etilt_V^2\rho^{2\alpha}}\left(\frac{\sigma}{\rho}\right)^{n+2\mathcal{N}(\rho)e^{C\Etilt_V^2\rho^{2\gamma(1+\alpha)}}}\int_{B_{\rho_k}(0)\setminus B_{\rho_k/2}(0)}|u|^2\\
		&\hspace{15em} \leq \int_{B_{\sigma_k}(0)\setminus B_{\sigma_k/2}(0)}|u|^2\\
		&\hspace{15em} \leq 16 e^{C(\mathcal{N}_V(0)+2)\Etilt_V^2\rho^{2\gamma(1+\alpha)}}\left(\frac{\sigma}{\rho}\right)^{n+2\mathcal{N}_V(0)}\int_{B_{\rho_k}(0)\setminus B_{\rho_k/2}(0)}|u|^2.
	\end{align*}
	Summing this over $k\in \{0,1,2,\dotsc\}$ then gives (ii).
	
	Finally, to see (iii) we use (ii), which gives for any $0<\sigma\leq\rho<1/4$ (using as well the estimates from Theorem \ref{thm:eps-reg})
	$$\frac{1}{16}e^{-C\Etilt_V^2\rho^{2\alpha}}\left(\frac{\sigma}{\rho}\right)^{n+2\mathcal{N}(\rho)e^{C\Etilt_V^2\rho^{2\gamma(1+\alpha)}}}\int_{B_\rho(0)}|u|^2 \leq \int_{B_\sigma(0)}|u|^2 \leq C\Etilt_V^2\sigma^{n+2+2\alpha}.$$
	For fixed $\rho$, if we now compare the powers of $\sigma$ here, we see that one needs
	$$e^{C\Etilt_V^2\rho^{2\gamma(1+\alpha)}}\mathcal{N}(\rho) \geq 1+\alpha$$
	(else, dividing both sides by $\sigma^{n+2+2\alpha}$, we would get a contradiction if we took $\sigma\downarrow 0$ for fixed $\rho>0$). Since this inequality must hold for every $\rho>0$, if we now take $\rho\downarrow 0$ we see that $\mathcal{N}_V(0)\geq 1+\alpha$.
\end{proof}

We also have upper semi-continuity of the planar frequency among singular points where the (unique) tangent cone is a plane with multiplicity $Q$.

\begin{corollary}[Upper Semi-Continuity of Planar Frequency]\label{cor:usc-1}
    Suppose $(V_j)_j\subset\mathcal{V}$ and $V\in \mathcal{V}$ are such that $V_j\weakly V$. Suppose also that $Z\in \sing(V)$ is such that $V$ has a (unique) tangent cone $P$ at $Z$ which is a multiplicity $Q$ tangent plane, and $Z_j\in \spt\|V_j\|$ are such that $V_j$ has a (unique) tangent cone $P_j$ at $Z_j$ which is a multiplicity $Q$ tangent plane. Then:
	$$\mathcal{N}_V(Z)\geq \limsup_{j\to\infty}\mathcal{N}_{V_j}(Z_j).$$
    In fact, for any $\rho_j\to \rho$ with $\rho>0$ sufficiently small we have
    $$\lim_{j\to\infty}\mathcal{N}_{V_j,P_j,Z_j}(\rho_j) = \mathcal{N}_{V,P,Z}(\rho).$$
\end{corollary}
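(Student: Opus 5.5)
The plan is to prove the second (continuity) statement first and then deduce the upper semi-continuity from it using the approximate monotonicity in Theorem~\ref{thm:monotonicity}.

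\textbf{Normalisation.} Since $V_j\weakly V$, $Z\in\spt\|V\|$ and $\spt\|V_j\|$ converges locally in Hausdorff distance to $\spt\|V\|$, after passing to the relevant subsequence we may assume $Z_j\to Z$. Upper semi-continuity of density then gives $\Theta_V(Z)\geq Q$, consistent with the hypotheses. Choose rotations $\Gamma_j$ and $\Gamma$ sending $P_j$ and $P$ to $P_0$, and set $W_j:=(\Gamma_j\circ\tau_{Z_j})_\#V_j$ and $W:=(\Gamma\circ\tau_Z)_\#V$.

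The key point is to show $P_j\to P$. Fix a scale $r_0>0$ at which $W$ satisfies the hypotheses of Definition~\ref{defn:eps-reg} and Theorem~\ref{thm:eps-reg} with small excess $\Etilt_{W,r_0}<\eps/2$. For large $j$, weak convergence together with the mass bounds makes $(\tau_{Z_j})_\#V_j$ close to $Q|P+0|$ in $B_{r_0}$. Hence $V_j$ is, near $Z_j$, a $Q$-valued generalised-$C^{1,\alpha}$ graph over $P$, and the estimate (C)(2) controls the distance of its tangent plane at $Z_j$ from $P$ by $C\hat{E}$. Letting the scale $r_0\downarrow0$ with $j\to\infty$ gives $P_j\to P$, so we may take $\Gamma_j\to\Gamma$ and $W_j\weakly W$.

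\textbf{Uniform excess.} Once $W_j\weakly W$, the tilt excesses $\Etilt_{W_j}$ at scale $r_0$ are eventually $<\eps$. Their graph representations $u_j$ over $B_{r_0/2}$ then converge to $u$ uniformly, by the Lipschitz bounds and Arzel\`a--Ascoli. They also converge strongly in $W^{1,2}_{\rm loc}$, and this is the essential ingredient. I would obtain it from the energy convergence of Appendix~\ref{app:energy-convergence}, or more directly: the graphical varifolds converge as varifolds, so $\int f(X,S)\,\ext W_j\to\int f(X,S)\,\ext W$ for continuous $f$ on the Grassmann bundle. The integrands of $H$ and $D$ are of the form $f(X,S)\cdot\phi$ or $f(X,S)\cdot\phi'$ with $f$ continuous. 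Using the smooth approximation of $\phi$ mentioned in the paper, and the uniform bound from the Lipschitz estimate that removes the error from the smoothing, we get $H_{W_j}(\rho_j)\to H_W(\rho)$ and $D_{W_j}(\rho_j)\to D_W(\rho)$ for $\rho_j\to\rho\in(0,r_0/4)$. Continuity in $\rho$ of $H$ and $D$ handles the moving radii. Since $H_W(\rho)>0$, as shown in the paper's well-definedness argument, the ratio converges, giving the second claim.

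\textbf{Upper semi-continuity.} Fix small $\rho$. Theorem~\ref{thm:monotonicity} applied to $W_j$, after rescaling by $r_0$, gives
\[
\mathcal{N}_{V_j}(Z_j)\leq \mathcal{N}_{W_j}(\rho)\,e^{C\Etilt_{W_j}^2\rho^{2\gamma(1+\alpha)}}\leq \mathcal{N}_{W_j}(\rho)\,e^{C\rho^{2\gamma(1+\alpha)}},
\]
with $C$ uniform in $j$. Taking $\limsup_j$ and using the continuity statement yields $\limsup_j\mathcal{N}_{V_j}(Z_j)\leq e^{C\rho^{2\gamma(1+\alpha)}}\mathcal{N}_W(\rho)$. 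Letting $\rho\downarrow0$ gives $\mathcal{N}_V(Z)$.

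\textbf{Main obstacle.} I expect the main difficulty to be justifying $P_j\to P$ and uniform applicability of the $\eps$-regularity at a fixed scale around the moving points $Z_j$. This needs the Hölder continuity of tangent cones from (C)(1), applied in a single graphical chart of $V$ at scale $r_0$ that contains all $Z_j$ for large $j$.
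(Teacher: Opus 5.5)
Your proposal is correct and follows essentially the same route as the paper: normalise so that $Z_j=0$ and $P_j=P_0$, pass $H$ and $D$ to the limit at convergent radii, use $H_{V,P,Z}(\rho)>0$ to get convergence of the ratio, and conclude via Theorem~\ref{thm:monotonicity}. Your explicit proof that $P_j\to P$ (applying the $\eps$-regularity at $Z_j$ relative to $P$ and letting $r_0\downarrow 0$) fills in a step the paper leaves implicit, as does your observation that $H$ and $D$ are integrals of Grassmann-continuous integrands and so converge directly under varifold convergence. There are two small caveats. First, $Z_j\to Z$ cannot be deduced from Hausdorff convergence of supports, since the $Z_j$ are prescribed; it is a tacit hypothesis of the statement, and the paper assumes it too. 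Second, neither Appendix~\ref{app:energy-convergence} (which concerns blow-ups with $E_j\to 0$) nor (C)(1) (which compares points of a single varifold) applies here, but your direct arguments do not need them.
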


\begin{proof}
	We may translate to assume without loss of generality that $Z=0$, and rescale to assume (by Definition \ref{defn:eps-reg}) that $V$ is represented by a Lipschitz $Q$-valued graph in $B_1^{n+k}(0)$. We may also rotate to assume that the (unique) tangent cone to $V$ at $Z$ is $Q|P_0|$. Assuming $j$ is sufficiently large, we may assume that for all $j$ we have $\Etilt_{V_j}<\eps$, for $\eps$ as in Theorem \ref{thm:eps-reg}. We may also translate and rotate the $V_j$ to assume that $Z_j=0$ and that the unique tangent cone to $V_j$ at $Z_j$ is $P_j := P_0$ for all $j$ (this is possible as such operations do not change the value of the frequency function). Thus, $V_j$, $V$ (respectively) are all expressible in $\R^k\times B^n_{1/2}(0)$ as generalised-$C^{1,\alpha}$ functions $u_j, u$ (respectively) $B^n_{1/2}(0)\to \A_Q(\R^k)$, obeying the estimates in Theorem \ref{thm:eps-reg}; note that these estimates have constants which are independent of $j$. In particular, we in fact have local uniform convergence of the $u_j$ to $u$, as well as locally strong convergence in $W^{1,2}$. This readily gives
	$$H_{V_j,P_j,Z_j}(\rho) \to H_{V,P,Z}(\rho) \qquad \text{and} \qquad D_{V_j,P_j,Z_j}(\rho)\to D_{V,P,Z}(\rho)$$
	for all $\rho\in (0,1/4)$. Thus, as $H_{V,P,Z}(\rho)>0$ for all $\rho\in (0,1/4)$ (if not, then as shown before $V$ must agree with $2|P_0|$ in some neighbourhood of $Z$ and so as mentioned the conclusion is automatically true), we have
	$$\mathcal{N}_{V_j,P_j,Z_j}(\rho)\to \mathcal{N}_{V,P,Z}(\rho) \qquad \text{for any }\rho\in (0,1/4).$$
	But then, using the approximate monotonicity of the frequency given by Theorem \ref{thm:monotonicity}, we have for any $\rho\in (0,1/4)$,
	$$\limsup_{j\to\infty}\mathcal{N}_{V_j}(Z_j) \leq\limsup_{j\to\infty}e^{C\Etilt_{V_j}^2\rho^{2\gamma(1+\alpha)}}\mathcal{N}_{V_j,P_j,Z_j}(\rho) = e^{C\Etilt_V^2\rho^{2\gamma(1+\alpha)}}\mathcal{N}_{V,P,Z}(\rho)$$
	(we can ensure that $\Etilt_{V_j}\to \Etilt_V$ using the strong convergence in $W^{1,2}$, up to rescaling the domain). Here, $C = C(\mathcal{V})\in (0,\infty)$, $\alpha = \alpha(\mathcal{V})\in (0,1)$, and $\gamma = \frac{2\alpha}{n+1+2\alpha}$ are independent of $j$. Taking $\rho\downarrow 0$ we then get the result.
\end{proof}

Finally, we establish a form of upper semi-continuity between the planar frequency at points along a blow-up sequence with the Almgren frequency function of the limiting coarse blow-up.

\begin{corollary}[Upper Semi-Continuity along Blow-Up Sequences]\label{cor:usc-2}
	Let $v\in\mathfrak{B}_{\mathcal{V}}$ be the coarse blow-up of a sequence of varifolds $(V_j)_j\subset\mathcal{V}$. Suppose that $Z_j = (\zeta_j,\xi_j)\in \sing(V_j)\cap (\R^k\times B^n_{1/2}(0))$ are such that the (unique) tangent cone to $V_j$ at $Z_j$ is a plane $P_j$ with multiplicity $Q$. Suppose that $\xi_j\to \xi\in \overline{B}^n_{1/2}(0)$, and set $\ell_{v_a,\xi}(x):= v_a(\xi) + (x-\xi)\cdot Dv_a(\xi)$. Finally, assume that $v\not\equiv Q\llbracket \ell_{v_a,\xi}\rrbracket$ in $B^n_{1/2}(0)$. Then:
	$$N_{v-\ell_{v_a,\xi}}(\xi)\geq \limsup_{j\to\infty}\mathcal{N}_{V_j}(Z_j).$$
    In fact, for any $\rho_j\to \rho$ with $\rho>0$ sufficiently small we have
    $$\lim_{j\to\infty}\mathcal{N}_{V_j,P_j,Z_j}(\rho_j) = N_{v-\ell_{v_a,\xi},\xi}(\rho).$$
\end{corollary}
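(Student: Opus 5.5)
We prove the second (stronger) assertion and then read off the first from it, as in the proof of Corollary \ref{cor:usc-1}. Let $u_j:B^n_{1/2}(0)\to\A_Q(\R^k)$ be the generalised-$C^{1,\alpha}$ functions representing $V_j$, and let $E_j$ be the blow-up normalisation (height or tilt excess, comparable by Remark \ref{remark:tilt-height-comparison}). By definition of the coarse blow-up, $E_j^{-1}u_j\to v$ in $L^2_{\rm loc}$ and locally uniformly. By Theorem \ref{thm:eps-reg} and Appendix \ref{app:energy-convergence}, $E_j^{-1}Du_j\to Dv$ strongly in $L^2_{\rm loc}(B^n_1(0))$.

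\textbf{Step 1: the tangent planes converge to the graph of $\ell_{v_a,\xi}$.} Write $P_j$ as the graph over $P_0$ of a linear map $x\mapsto A_j(x-\xi_j)+\zeta_j$. Since $Z_j$ is a density $Q$ point with tangent plane $P_j$, we have $u_j(\xi_j)=Q\llbracket\zeta_j\rrbracket$ and $Du_j(\xi_j)=Q\llbracket A_j\rrbracket$. By Definition \ref{defn:eps-reg}(C)(2) applied at $Z_j$ (after rescaling into a fixed ball around $\xi_j$), for all small $r$
$$r^{-n-2}\int_{B_r(\xi_j)}\G\bigl(u_j,\,Q\llbracket\zeta_j+A_j(\cdot-\xi_j)\rrbracket\bigr)^2\leq CE_j^2r^{2\alpha}.$$
Hence $E_j^{-1}\zeta_j$ and $E_j^{-1}A_j$ are bounded, and along a subsequence they converge to some $\zeta,A$. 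Passing to the limit gives $\int_{B_r(\xi)}\G(v,Q\llbracket\zeta+A(\cdot-\xi)\rrbracket)^2\leq Cr^{n+2+2\alpha}$. This superlinear decay forces $v(\xi)=Q\llbracket\zeta\rrbracket$ and identifies the affine map $\zeta+A(\cdot-\xi)$ as the first-order Taylor part of $v_a$ at $\xi$, so it equals $\ell_{v_a,\xi}$. Since the limit is unique, the whole sequence converges.

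\textbf{Step 2: convergence of $H$ and $D$.} Rotate and translate $V_j$ by $\Gamma_j\circ\tau_{Z_j}$ so that $Z_j\mapsto0$ and $P_j\mapsto P_0$. Because $|A_j|=O(E_j)$, the rotated varifolds are again graphs of functions $\tilde u_j$ over a fixed ball, with the same estimates. We expect $E_j^{-1}\tilde u_j\to (v-\ell_{v_a,\xi})(\cdot+\xi)$, and likewise for derivatives strongly in $L^2_{\rm loc}$. The reason is that the graph of $u_j$ minus the affine map (the "tilted" graph) differs from the rotated graph only by a reparametrisation of the domain, $x\mapsto x+O(E_j|x|\,|Du_j|)$, and a vertical correction of order $O(E_j^2)$; both disappear after dividing by $E_j$. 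Strong $W^{1,2}$ convergence is preserved because the reparametrisations converge to the identity in $C^1$.

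Next use the graph expressions for $H$ and $D$ from the remark after the definition of $\mathcal{N}$. The weights $\sqrt{G}G^{ij}$ tend to $\delta_{ij}$ uniformly, since $\Lip(\tilde u_j)\leq CE_j\to0$. Therefore
$$E_j^{-2}H_{V_j,P_j,Z_j}(\rho_j)\to H_w(\rho):=-\rho^{1-n}\int|w|^2r^{-1}\phi'(r/\rho),$$
and similarly for $D$, with $w=v-\ell_{v_a,\xi}$ centred at $\xi$. Here $\rho_j\to\rho$ is harmless, by continuity in $\rho$ of these integrals for the Lipschitz cutoff.

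\textbf{Step 3: from the smoothed frequency to $N$, and the main obstacle.} The assumption $v\not\equiv Q\llbracket\ell_{v_a,\xi}\rrbracket$ together with the unique continuation in Theorem \ref{thm:blow-up} gives $H_w(\rho)>0$ for every $\rho$. We need $w\in\mathfrak{B}_{\mathcal{V}}$ up to the affine shift; this holds because the class is invariant under subtracting affine functions, via rotations of the $V_j$. The limit $D_w/H_w$ is the $\phi$-smoothed Almgren frequency of $w$ at $\xi$, which we denote $N_{w,\xi}(\rho)$ consistently with the paper's convention. Its monotonicity and the limit $\rho\downarrow0$ agreeing with $N_w(\xi)$ follow from Theorem \ref{thm:blow-up}, since the smoothed and sharp versions share the same limit.

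Finally, Theorem \ref{thm:monotonicity} gives $\mathcal{N}_{V_j}(Z_j)\leq e^{CE_j^2\rho^{2\gamma(1+\alpha)}}\mathcal{N}_{V_j,P_j,Z_j}(\rho)$ with $C$ independent of $j$. Letting $j\to\infty$ and then $\rho\downarrow0$ yields the inequality.

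The main obstacle is Step 2: making the change of graph parametrisation rigorous for multi-valued generalised-$C^{1,\alpha}$ functions while retaining strong $W^{1,2}$ convergence on the relevant annuli. A secondary difficulty is $\xi\in\partial B_{1/2}$, where energy convergence is only local. To handle it, I would first shrink the blow-up domain slightly, as in the proof of Lemma \ref{lemma:doubling-1}, so that all relevant balls lie in a compact subset where strong convergence is available.
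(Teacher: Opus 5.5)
Your proposal is correct and follows essentially the same route as the paper. You identify the rescaled tangent-plane affine maps with $\ell_{v_a,\xi}$ via the decay estimate of Definition \ref{defn:eps-reg}(C)(2) and the smoothness of $v_a$; the paper instead gets the constant term from the Hardt--Simon inequality for coarse blow-ups. You then rotate so that $Z_j=0$ and $P_j=P_0$, pass $H$ and $D$ to the limit by strong $W^{1,2}_{\rm loc}$ convergence, and conclude with Theorem \ref{thm:monotonicity}, exactly as the paper does. The step you flag as the main obstacle is lighter than you fear: the rotated varifolds are still stationary Lipschitz graphs with $\sup|\tilde u_j|+\Lip(\tilde u_j)\leq CE_j$, so once the elementary $C^0$ reparametrisation estimate gives locally uniform convergence of $E_j^{-1}\tilde u_j$, Theorem \ref{thm:energy-convergence} yields the strong $W^{1,2}_{\rm loc}$ convergence directly, with no derivative-level reparametrisation needed.
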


\begin{proof}
	By Theorem \ref{thm:eps-reg} we know that for all $j$ sufficiently large we have $V_j\res (\R^k\times B^n_{3/4}(0))$ is represented by the graph of a generalised-$C^{1,\alpha}$ function $u_j : B^n_{3/4}(0)\to \A_Q(\R^k)$, with an estimate. By translating and rescaling, we may assume that $\xi = 0$. Note that the plane $P_j$ is in fact given by the tangent plane to the average $(u_j)_a$, i.e.~it is the graph of $\ell_j(x):= (u_j)_a(\xi_j) + (x-\xi_j)\cdot D(u_j)_a(\xi_j)$. 
	
	We now claim that $\ell_j/\hat{E}_{V_j}$ converges locally in $C^1$ to $\ell_{v_a,\xi}$; to see this, it suffices to show that $(u_j)_a(\xi_j)/\hat{E}_{V_j}\to v_a(\xi)$ and $D(u_j)_a(\xi_j)/\hat{E}_{V_j}\to Dv_a(\xi)$. The former follows from the proof of the Hardt--Simon inequality for coarse blow-ups (cf.~\cite[$(\mathfrak{B}5)$]{BKMW25}). For the latter, it suffices to show\footnote{Using the standard analysis fact that a necessary condition for a sequence to converge is if every subsequence always has a further subsequence which converges to the given limit.} that one can always find a subsequence for which it is true. For this, notice that from the estimates in Definition \ref{defn:eps-reg} we always have $|D(u_j)_a(\xi_j)|\leq C\hat{E}_{V_j}$, and so up to passing to a subsequence we know $D(u_j)_a(\xi_j)/\hat{E}_{V_j}\to \Lambda$ for some $\Lambda$. But then since for any given $\rho>0$, again from the estimates in Definition \ref{defn:eps-reg}, we have
	$$\rho^{-n-2}\int_{\R^k\times B_\rho^n(\xi_j)}\dist^2(x,P_j)\, \ext\|V_j\|(x) \leq C\rho^{2\alpha}\hat{E}^2_{V_j},$$
	if we divide this by $\hat{E}_{V_j}^2$ and take $j\to\infty$, we get
	$$\rho^{-n-2}\int_{B^n_\rho(\xi)}|v-v_a(\xi) - \Lambda\cdot (x-\xi)|^2 \leq C\rho^{2\alpha}$$
	and thus
	$$\rho^{-n-2}\int_{B^n_\rho(\xi)}|v_a-v_a(\xi)-\Lambda\cdot(x-\xi)|^2 \leq C\rho^{2\alpha}.$$
	Since this is true for all $\rho>0$ and $v_a$ is a smooth (harmonic) function (cf.~\cite[$(\mathfrak{B}2)$]{BKMW25}), we see that we must have $\Lambda = Dv_a(\xi)$. Hence, we have now shown that $D(u_j)_a(\xi_j)/\hat{E}_{V_j}\to Dv_a(\xi)$.
	
	We now know that $\ell_j/\hat{E}_{V_j}$ converges locally in $C^1$ to $\ell_{v_a,\xi}$. In particular, if we translate $V_j$ by $Z_j$ and then perform an appropriate (small) rotation which maps the plane given by the graph of $\ell_j - (u_j)_a(\xi_j)$ (which is a subspace) to $P_0 \equiv \{0\}^k\times\R^n$, then we see that, arguing as in \cite[$(\mathfrak{B}3\text{III})$]{BKMW25}, that the blow-up of the resulting sequence is, up to a translation and rescaling, $v-\ell_{v_a,\xi}$. Thus, it suffices to prove the case where $Z_j\equiv 0$, $P_j\equiv P_0$, $(u_j)_a(0) = 0$, $D(u_j)_a(0) = 0$, and thus $\ell_{v_a,\xi}\equiv \ell_{v_a,0} = 0$.
	
	However in this case, due to the locally strong convergence in $W^{1,2}$ along the blow-up sequence provided by Definition \ref{defn:eps-reg}, for each $\rho\in (0,1/4)$ we have
	$$\frac{1}{\hat{E}_{V_j}^2}H_{V_j,P_j,Z_j}(\rho) \to -\rho^{1-n}\int|v|^2r^{-1}\phi^\prime(r/\rho) \qquad \text{and} \qquad \frac{1}{\hat{E}_{V_j}^2}D_{V_j,P_j,Z_j}(\rho) \to \rho^{2-n}\int|Dv|^2\phi(r/\rho),$$
	i.e.~for every $\rho\in (0,1/4)$ we have
	$$\mathcal{N}_{V_j,P_j,Z_j}(\rho)\to \frac{\rho^{2-n}\int|Dv|^2\phi(r/\rho)}{-\rho^{1-n}\int|v|^2r^{-1}\phi^\prime(r/\rho)} =: N_{v,0}(\rho)$$
	where $N_{v,0}(\rho)$ is the Almgren frequency function of $v$ (with the usual modification incorporating the cut-off function $\phi$). Notice that here we may assume that $N_{v,0}(\rho)>0$ for each such $\rho$ (and thus $H_{V_j,P_j,Z_j}(\rho)>0$ for $j$ sufficiently large), else $|v|\equiv 0$ on $B_\rho(0)$ which contradicts that $v\not\equiv Q\llbracket 0\rrbracket$ on $B^n_{1/2}(0)$. As the above is true for every $\rho>0$, using the approximate monotonicity of the planar frequency function from Theorem \ref{thm:monotonicity}, we see that for every $\rho\in (0,1/4)$,
	$$\limsup_{k\to\infty}\mathcal{N}_{V_j}(Z_j)\leq\limsup_{j\to\infty} e^{C\Etilt_{V_j}^2\rho^{2\gamma(1+\alpha)}}\mathcal{N}_{V_j,P_j,Z_j}(\rho) = N_{v,0}(\rho).$$
	Taking $\rho\downarrow 0$ then completes the proof.
\end{proof}

\textbf{Remark:} The expression for the Almgren frequency function of a coarse blow-up $v\in\mathfrak{B}_{\mathcal{V}}$ appearing in Corollary \ref{cor:usc-2} is slightly different than the one used in Theorem \ref{thm:blow-up}, since in Corollary \ref{cor:usc-2} it includes the cut-off function in the definition. If one takes a more general $\phi$ and lets $\phi\to \one_{[0,1]}$, one can recover the version of the frequency function used in Theorem \ref{thm:blow-up}. The difference is not substantial, however, and in Theorem \ref{thm:blow-up} one could alternatively used the modified version above. We stress however that the difference \emph{is} more substantial for the planar frequency function, as indeed our proof of the estimate controlling $\text{error}_2^*$ depended on the gradient of the cut-off $\phi$, and the constant involved goes to $\infty$ as $\phi\to \one_{[0,1]}$.

\subsection{Dimension bound for branch points with planar frequency $\neq 2$}\label{sec:freq-not-2}
Here we argue similarly to the corresponding part in \cite{KW26c}. Fix $V\in\mathcal{V}$ and write $\mathcal{B}^Q_V$ for the set of singular points $Z$ of $V$ at which $V$ has a (unique) tangent cone which is a plane with multiplicity $Q$. We know from Theorem \ref{thm:monotonicity} that each $Z\in\mathcal{B}^Q_V$ has a well-defined planar frequency value which, from Corollary \ref{cor:decay-estimates}(iii), takes values in $[1+\alpha,\infty)$ for some fixed $\alpha = \alpha(\mathcal{V})\in (0,1)$. We then partition $\mathcal{B}^Q_V$ into three subsets based on the planar frequency value as follows:
$$\mathcal{B}^Q_V = \mathcal{B}_V^{<2}\cup \mathcal{B}_V^2 \cup\mathcal{B}_V^{>2}$$
where for $p\in [1,\infty)$, $\mathcal{B}_V^{<p} = \{Z\in \mathcal{B}^Q_V:\mathcal{N}_V(Z)<p\}$, $\mathcal{B}_{V}^p = \{Z\in \mathcal{B}^Q_V:\mathcal{N}_V(Z)=p\}$, and $\mathcal{B}_V^{>p} = \{Z\in\mathcal{B}^Q_V:\mathcal{N}_V(Z)>p\}$ (we have dropped the $Q$ dependence in the notation for these latter sets for simplicity). 

In this section, we use the properties of the planar frequency previously established to prove a dimension bound on the size of $\mathcal{B}_V^{<2}$ and $\mathcal{B}_V^{>2}$. In Section \ref{sec:cm} we will show the same dimension bound for $\mathcal{B}_V^2$ (and in fact, the argument in Section \ref{sec:cm} also establishes the dimension bound for $\mathcal{B}_V^{>2}$).

Our main result here is therefore:

\begin{prop}\label{prop:dimension-not-equal-2}
	If $V\in\mathcal{V}$, then $\dim_\H(\mathcal{B}_V^{<2}\cup \mathcal{B}_V^{>2}) \leq n-2$.
\end{prop}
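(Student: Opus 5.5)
Following the overview in the introduction, we plan an Almgren-type stratification of $\mathcal{B}^{<2}_V\cup\mathcal{B}^{>2}_V$ by the spine dimension of homogeneous coarse blow-ups, with the planar frequency providing the homogeneity. By countable stability of Hausdorff dimension, it suffices to fix $\delta>0$ and an integer $M$ and bound the dimension of $\mathcal{B}_\delta := \{Z\in\mathcal{B}^Q_V:\mathcal{N}_V(Z)\in[1+\alpha,2-\delta]\cup[2+\delta,M]\}$, localised to a small ball where Definition~\ref{defn:eps-reg} applies uniformly.

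\textbf{Step 1: homogeneity of blow-ups.} Fix $Z\in\mathcal{B}_\delta$, normalised as in Section~\ref{sec:pff} so that $Z=0$ and $P=P_0$. For $\rho_j\downarrow 0$ we rescale $V$ by $\rho_j$ and normalise vertically by $H(\rho_j)^{1/2}$. Corollary~\ref{cor:decay-estimates}(ii) gives doubling of $\int|u|^2$, so a coarse blow-up $v\in\mathfrak{B}_{\mathcal{V}}$ exists and is nonzero. By Corollary~\ref{cor:usc-2}, $N_{v,0}(\rho)=\lim_j\mathcal{N}(\rho\rho_j)=\mathcal{N}_V(0)$ for every small $\rho$. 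The frequency of $v$ is therefore constant, and the equality case of the Cauchy--Schwarz step (available at the linearised level via the strong $W^{1,2}_{\rm loc}$ convergence of Appendix~\ref{app:energy-convergence}) shows that $v$ is homogeneous of degree $\mathcal{N}_V(0)$. We then set $S(v):=\{x: N_v(x)=N_v(0)\}$. Homogeneity and upper semicontinuity (Theorem~\ref{thm:blow-up}) make $S(v)$ a subspace along which $v$ is translation invariant. If $\dim S(v)=n-1$, then $v$ is a homogeneous function of one variable with a frequency-monotone structure and degree $>1$. Such a $v$ should be $c|x_n|^{d}$-type, and stationarity together with the balancing of the coarse blow-up class forces it to be linear, i.e.\ $d=1$. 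This contradicts $d\geq 1+\alpha$, so $\dim S(v)\leq n-2$. We would verify this last exclusion using the classification of one-dimensional elements of $\mathfrak{B}_{\mathcal{V}}$ from \cite{BKMW25}.

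\textbf{Step 2: accumulation on the spine.} We need the key claim: given $Z\in\mathcal{B}_\delta$ and $\eta>0$, there is $\rho_0$ such that for $\rho<\rho_0$, the points of $\mathcal{B}_\delta\cap B_\rho(Z)$ with $\mathcal{N}_V\geq\mathcal{N}_V(Z)-\eta$ lie in the $\eta\rho$-neighbourhood of $Z+S_\rho$, where $S_\rho$ is a subspace of dimension $\leq n-2$. We argue by contradiction, producing points $Z_j$ at scale $\rho_j$ staying a definite distance from every such subspace. After blowing up, Corollary~\ref{cor:usc-2} gives $N_{v-\ell_{v_a,\xi}}(\xi)\geq\limsup\mathcal{N}_{V_j}(Z_j)$. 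This is the main obstacle, since the planar frequencies at $Z_j$ are computed relative to the tangent planes $P_j$, and we must show that $\ell_{v_a,\xi}$ is trivial relative to the structure of $v$. Our plan is to use that $v$ is homogeneous of degree $d\neq 2$. We would first subtract the average (which is harmonic and homogeneous of degree $d$). If $d<2$, homogeneity makes $Dv_a$ vanish along the relevant points, or else forces $v_a\equiv0$. If $d>2$, then $v_a$ has no linear part at scale, and the decay at $\xi$ of order $>1$ combined with degree-$d$ homogeneity about $0$ shows $\ell_{v_a,\xi}$ must vanish; otherwise $v-\ell_{v_a,\xi}$ would have frequency $1$ at $\xi$, contradicting $\limsup\mathcal{N}_{V_j}(Z_j)\geq 1+\alpha$. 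Once $\ell_{v_a,\xi}=0$, we get $N_v(\xi)\geq N_v(0)$, which places $\xi$ in $S(v)$ by the standard homogeneous-function argument. Taking $n-1$ independent such limit points then gives the contradiction.

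\textbf{Step 3: covering.} With the accumulation property uniform over $Z$ in a frequency-pinched subset (split $\mathcal{B}_\delta$ into countably many pieces on which $\mathcal{N}_V$ varies by less than $\eta$, using upper semicontinuity from Corollary~\ref{cor:usc-1}), the standard Almgren/Federer covering lemma gives $\mathcal{H}^{n-2+\eta'}$-measure zero for each piece. Letting $\eta'\downarrow0$ then yields $\dim_\H\leq n-2$.
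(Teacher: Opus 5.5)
Your overall architecture (homogeneous coarse blow-ups via Corollary~\ref{cor:usc-2}, a spine of dimension at most $n-2$, accumulation of good-frequency points on the spine, then the covering lemma) is the paper's. The gap is in the step you yourself flag as the main obstacle, showing $\ell_{v_a,\xi}\equiv 0$: your argument does not establish it when $d=\mathcal{N}_V(Z)>2$, and this is exactly where the hypothesis $d\neq 2$ has to enter.

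The mechanism you propose (``otherwise $v-\ell_{v_a,\xi}$ would have frequency $1$ at $\xi$, contradicting $\limsup_j\mathcal{N}_{V_j}(Z_j)\geq 1+\alpha$'') cannot work. By definition $\ell_{v_a,\xi}$ is the first-order Taylor polynomial of the smooth harmonic function $v_a$ at $\xi$. So the average of $v-\ell_{v_a,\xi}$ vanishes to second order at $\xi$ whether or not $\ell_{v_a,\xi}\equiv 0$, and a frequency lower bound of $1+\alpha$ says nothing about $\ell_{v_a,\xi}$.

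A quick test shows the reasoning is too weak: nothing in it distinguishes $d>2$ from $d=2$, yet for $d=2$ the conclusion is false. For instance, $v_a=x_1^2-x_2^2$ has $\ell_{v_a,\xi}\not\equiv 0$ whenever $(\xi_1,\xi_2)\neq(0,0)$, while $v_a-\ell_{v_a,\xi}=(x_1-\xi_1)^2-(x_2-\xi_2)^2$ vanishes quadratically at every $\xi$ (cf.\ the Note following the proof).

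Similarly, for $d<2$ the operative fact is not that homogeneity makes $Dv_a$ vanish at the relevant points. It is that a harmonic function homogeneous of degree $d$ vanishes identically unless $d\in\Z$. So the right case split is non-integer $d$ versus integer $d\geq 3$. Non-integer $d$ gives $v_a\equiv 0$, hence $\ell_{v_a,\xi}\equiv 0$ for every $\xi$, and covers all of $[1+\alpha,2)$.

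For integer $d\geq 3$, the missing idea is to use the full lower bound coming from your choice of nearby points, not the universal bound $1+\alpha$. In the contradiction argument, let the frequency drop $\eta_j\to 0$, so that Corollary~\ref{cor:usc-2} gives $N_{v-\ell_{v_a,\xi}}(\xi)\geq d\geq 3$.

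By frequency monotonicity this yields $\int_{B_r(\xi)}|v-\ell_{v_a,\xi}|^2\leq Cr^{n+2d}$. Since $|v_a-\ell_{v_a,\xi}|^2\leq Q^{-1}|v-\ell_{v_a,\xi}|^2$, the same decay holds for the smooth function $v_a-\ell_{v_a,\xi}$. As $2d>4$, its second-order Taylor term at $\xi$ must vanish, i.e.\ $D^2v_a(\xi)=0$.

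Euler's identities for the degree-$d$ homogeneous $v_a$ are $D^2v_a(\xi)\,\xi=(d-1)\,Dv_a(\xi)$ and $Dv_a(\xi)\cdot\xi=d\,v_a(\xi)$. They give $Dv_a(\xi)=0$ and then $v_a(\xi)=0$, so $\ell_{v_a,\xi}\equiv 0$ and $N_v(\xi)\geq N_v(0)$, which places $\xi$ in $S(v)$. For $d=2$ the Hessian of $v_a$ is constant and a frequency bound of $2$ does not force it to vanish, which is why $\mathcal{B}^2_V$ requires the center manifold.

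A minor point: your exclusion of $\dim S(v)=n-1$ does not need a classification result from \cite{BKMW25}. If $v=v(x_1)$ is homogeneous of degree $d$, the squash (inner variation) identity passes to coarse blow-ups by the strong $W^{1,2}_{\text{loc}}$ convergence. It forces $|Dv|^2$ to be constant, hence $d=1$, as in Proposition~\ref{prop:Y-spine-dimensions}.
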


\textbf{Remark:} Our argument will utilise Almgren's dimension reduction via stratification \cite{Alm00}. One could also use Federer's dimension reduction, arguing by contradiction assuming that $\H^s(B_1(0)\cap (\mathcal{B}_V^{<2}\cup \mathcal{B}_V^{>2}))>0$ for some $s>n-2$ and then performing a coarse blow-up about a good density point for this set, using the approximate monotonicity of the planar frequency function to ultimately reach a contradiction by producing a coarse blow-up which has a branch set of Hausdorff dimension $>n-2$. The advantage of arguing via Almgren's stratification is that it will detect a subspace which nearby points of ``good frequency'' must accumulate to, providing the natural starting point towards proving that the branch set is countably $(n-2)$-rectifiable (although we do not pursue this question in the present work).

\begin{proof}
	First, fix $Z\in\mathcal{B}^Q_V$. Up to rotation and translation, without loss of generality we may assume that $Z=0$ and that the (unique) tangent cone to $V$ at $Z$ is $Q|P_0|$. Take any sequence $\sigma_j\downarrow 0$, and produce a coarse blow-up $v\in \mathfrak{B}_{\mathcal{V}}$ of the sequence $V_j:= (\eta_{0,\sigma_j})_\#V$. Corollary \ref{cor:usc-2}, with $P_j\equiv P_0$ and $Z_j\equiv 0$ for all $j$, gives that for each $\rho>0$,
	$$\mathcal{N}_{V_j}(\rho) \equiv \mathcal{N}_{V_j,P_0,0}(\rho)\to N_{v,0}(\rho)$$
	(note that all of these quantities are well-defined by the estimates in Corollary \ref{cor:decay-estimates}). However, as the $V_j$ are simply rescalings of $V$ about a fixed point, we know that $\mathcal{N}_{V_j}(\rho) = \mathcal{N}_{V_j,P_0,Z}(\rho\rho_j)\to \mathcal{N}_V(Z)$ for every $\rho>0$, and thus we see that
	$$N_{v,0}(\rho) = \mathcal{N}_V(Z)$$
	for every $\rho>0$. In particular, this tells us that $N_v(0) = \mathcal{N}_V(Z)$. As $N_{v,0}(\rho)$ is constant in $\rho$, we have that $v$ is a homogeneous function\footnote{The reader should note that the estimates in Definition \ref{defn:eps-reg} and Arzelà--Ascoli readily give that the blow-up is $\beta$-Hölder continuous for any $\beta\in (0,1)$. In fact, by Definition \ref{defn:eps-reg} and Corollary \ref{cor:decay-estimates}, for each $\rho\in [1,\infty)$ we have $V_j\res C_\rho(0) = \mathbf{v}(u_j)$ where $\|u_j\|_{C^{0,1}(B_\rho(0))}\leq C_\rho\hat{E}_{V_j}$, provided $j$ is sufficiently large, where $C_\rho\in (0,\infty)$ is a constant depending on $\rho$ but not on $j$. Thus, Arzelà--Ascoli gives that $v_j:=u_j/\hat{E}_{V_j}\to v$ uniformly on compact subsets of $\R^n$. Since $v_j\to v$ locally uniformly, we also know that $\Lip(v|_{B_\rho(0)})\leq\liminf_{j\to\infty}\Lip(v_j|_{B_\rho(0)})\leq C_\rho$. Thus, we may assume that $v$ is defined on all of $\R^n$.} with degree of homogeneity exactly $\mathcal{N}_V(Z)$ (here, as explained in the footnote, we can ensure that $v$ is naturally defined on all of $\R^n$). In particular, $v\not\equiv 0$, since $\|v_j\|_{L^2(B_1(0)}\equiv 1$ and as $v_j\to v$ uniformly on compact subsets of $\R^n$ we therefore have $\|v\|_{L^2(B_1(0))}=1$. Moreover, by standard properties of frequency functions for homogeneous blow-ups, we know that $N_v(z)\leq N_v(0)$ for all $z\in \R^n$, and that if $S(v)$ denotes the spine of $v$ (namely, the set of points under which $v$ is translation invariant, which is a subspace due to the homogeneity of $v$), then
	$$S(v) = \{z\in \R^n: N_v(z) = N_v(0)\}.$$
	From Corollary \ref{cor:decay-estimates}(iii), we know that $N_v(0) = \mathcal{N}_V(Z)\geq 1+\alpha$. Thus, $\dim S(v)\not\in \{n-1,n\}$. Indeed, if $\dim S(v) = n$, then $v$ is constant, yet we know $v(0) = 0$, which means that $v\equiv 0$, which contradicts $v$ not being identically zero. Also, if $\dim S(v) = n-1$, then $\graph(v)$ would be a union of half-planes whose boundaries meet along an $(n-1)$-dimensional subspace which would mean $N_v(0) = 1$, contradicting $N_v(0)\geq 1+\alpha$.
	
	Thus, $\dim S(v) \in \{0,1,2,\dotsc,n-2\}$. We may therefore stratify $\mathcal{B}^Q_V$ as follows. For each $Z\in \mathcal{B}^Q_V$, write $\mathfrak{B}_Z$ for the set of all coarse blow-ups constructed in the above manner by sequences of rescalings of $V$ centred at $Z$. Then, necessarily each $v\in \mathfrak{B}_Z$ is a homogeneous function in $\mathfrak{B}_{\mathcal{V}}$ which has degree of homogeneity $\geq 1+\alpha$ and spine dimension at most $n-2$. For each $j\in \{0,1,\dotsc,n-2\}$, set
	$$\mathcal{S}_j := \{Z\in\mathcal{B}^Q_V:\dim S(v)\leq j\text{ for all }v\in \mathfrak{B}_Z\}.$$
	Clearly by definition $\mathcal{S}_0\subset\mathcal{S}_1\subset\cdots\subset \mathcal{S}_{n-2} = \mathfrak{B}^Q_V$. So far everything we have discussed has been for any $Z\in\mathcal{B}_V^Q$. We now claim that for each $j\in \{0,1,2,\dotsc,n-2\}$,
	\begin{equation}\label{E:not-2-1}
		\dim_\H(\mathcal{S}_j\cap (\mathcal{B}_V^{<2}\cup \mathcal{B}_V^{>2})) \leq j.
	\end{equation}
	Evidently \eqref{E:not-2-1} proves the proposition by taking $j=n-2$. Proving \eqref{E:not-2-1} will follow by standard arguments related to stratifications provided we can show that, for each $Z\in\mathcal{B}_V^{<2}\cup \mathcal{B}_V^{>2}$, nearby points with planar frequency $\geq \mathcal{N}_V(Z) - \eps$ must concentrate around the spine of a blow-up in $\mathfrak{B}_Z$ (for sufficiently small $\eps$). Indeed, without loss of generality we may assume $Z=0$ with the unique tangent plane at $Z$ being $Q|P_0|$, take any sequence $\sigma_j\downarrow 0$ and again set $V_j:= (\eta_{0,\sigma_j})_\#V$. Consider any point $z\in B^n_1(0)$ for which there exist $Z_j = (\zeta_j,\xi_j)$ singular points of $V_j$ at which the (unique) tangent cone is a multiplicity $Q$ plane such that $\xi_j\to z$, and $\mathcal{N}_{V_j}(Z_j)\geq\mathcal{N}_V(Z) -\delta_j$ for some sequence $\delta_j\downarrow 0$. Then, from Corollary \ref{cor:usc-2}, we know that
	$$N_{v-\ell_{v_a,z}}(z) \geq \limsup_{j\to\infty}\mathcal{N}_{V_j}(Z_j) \geq\mathcal{N}_V(Z) = N_v(0).$$
	Thus, to complete the proof we need to show that this implies $z\in S(v)$. To do this, we need to show that $N_v(z) = N_v(0)$, which from the above is equivalent to showing that $\ell_{v_a,z}\equiv 0$; this is where we will need to use that $\mathcal{N}_V(Z)\neq 2$. We split this into two cases (recall here $N_v(0) = \mathcal{N}_V(Z) \in [1+\alpha,\infty)\setminus \{2\}$).
	
	First, suppose that $N_v(0)\in [1+\alpha,\infty)\setminus\Z$, i.e.~$N_v(0)$ is \emph{not} an integer. Then if $v_a\not\equiv 0$, $v_a$ would be a harmonic function (cf.~\cite[$(\mathfrak{B}2)$]{BKMW25}) which is homogeneous of some non-integer degree: this is impossible, and thus we must have $v_a\equiv 0$. This of course then implies that $\ell_{v_a,z}\equiv 0$ for \emph{every} $z\in B^n_1(0)$, and thus $N_{v-\ell_{v_a,z}}(z) = N_{v}(z)\geq N_v(0)$, implying that $N_{v}(z) = N_v(0)$ and so $z\in S(v)$. This shows that $Z_k$ must be concentrating on $S(v)$, which has dimension $\leq k$.
	
	Lastly, we just need to show this claim when $N_v(0)\in \{3,4,\dotsc\}$. Here, we cannot argue that $v_a\equiv 0$. Instead, we will argue that $\ell_{v_a,z}\equiv 0$ for the points $z$ as above. Indeed, notice that here we have, by assumption, that $N_{v-\ell_{v_a,z}}(z)\geq N_v(0) \geq 3$. This tells us that, as $v_a$ is a smooth harmonic function, that $v_a-\ell_{v_a,z}$ vanishes to degree at least $2$. Hence, $0=D_x^2(v_a-\ell_{v_a,z})(z) = D^2_x v_a(z)$, as $\ell_{v_a,z}$ is linear and so $D^2_x\ell_{v_a,z}(x)\equiv 0$. We now argue that this with the homogeneity of $v_a$ implies $Dv_a(z) = 0$. Indeed, consider for any $i\in \{1,\dotsc,n\}$ the function $w(t):= D_iv_a(tz)$ for $t>0$. Since $D^2v_a(z) = 0$, we evidently have $w^\prime(1) = 0$. However, since $v_a$ is homogeneous of degree $\mathcal{N}_v(0)$, we know that $D_iv_a$ is homogeneous of degree $N_v(0)-1$, and hence $w(t) = t^{N_v(0)-1}w(1) = t^{N_v(0)-1}D_iv_a(z)$. But then $w^\prime(t) = (N_v(0)-1)t^{N_v(0)-2}D_iv_a(z)$, and hence $0 = w^\prime(1) = (N_v(0)-1)D_iv_a(z)$. As $N_v(0) - 1\geq 2$, evidently we must have $D_iv_a(z) = 0$, and hence as this is true for each $i\in \{1,\dotsc,n\}$ we see that $Dv_a(z) = 0$. But we can then of course repeat this argument (with $w(t) = v_a(tz)$), as $N_v(0)>2$, to similarly get that $v_a(z) = 0$. In particular, at such $z$ we have $\ell_{v_a,z}\equiv 0$, and so again $N_v(z) = N_{v-\ell_{v_a,z}}(z) = N_v(0)$, and thus $z\in S(v)$.
	
	This therefore shows that the set $\mathcal{S}_k\cap (\mathcal{B}_V^{<2}\cup \mathcal{B}_V^{>2})$ has the desired subspace approximation property in order to show that its Hausdorff dimension is at most $k$, completing the proof (cf. \cite[Section 3.4, Lemma 3]{Sim96}).
\end{proof}

\textbf{Note:} One sees that the above proof does not show that the set $\mathcal{B}_V^2$ of points of planar frequency \emph{exactly} $2$ has Hausdorff dimension $\leq n-2$. Indeed, one cannot show that $\ell_{v,z}\equiv 0$ at the points $z$ in question, as it is possible for $v_a-\ell_{v_a,z}$ to vanish to degree $2$ (i.e.~be of quadratic order) at \emph{every} point, e.g.~if $n=2$ and $v_a(x,y) = x^2-y^2$. Indeed, if $v_a$ is non-zero and homogeneous of degree $2$, then $v_a-\ell_{v_a,z}$ \emph{must} vanish to degree $2$ at every point $z$ (by Taylor's theorem and the homogeneity).

\begin{remark}
Whilst the above proof does not control the size of $\mathcal{B}_V^2$, if for each $n$-dimensional subspace $P\subset\R^{n+k}$ we set
$$\mathcal{B}^{2,P}_V:= \{Z\in\mathcal{B}^2_V:T_Z V = P\}$$
then the proof \emph{does} show that $\dim_\H(\mathcal{B}^{2,P}_V) \leq n-2$, i.e.~points where the planar frequency is $2$ \emph{and} the tangent cone is a given plane has Hausdorff dimension at most $n-2$ (this follows because one can arrange that the linear function $\ell_{v,z}$ arising at the end of the proof of Proposition \ref{prop:dimension-not-equal-2} has vanishing derivative, from which knowing the planar frequency is $2$ is enough to conclude in the analogous manner). In particular, if we knew that the stationary varifold $V$ was represented by a $Q$-valued Lipschitz graph \emph{with zero average}, then the tangent plane at any branch point is $P_0$ (say), and so we can conclude that the \emph{whole} branch set has Hausdorff dimension $\leq n-2$ at this stage with no further argument needed. Thus, the only situation which we do not know how to address as of now is when locally about a frequency $2$ branch point, there is a whole \emph{continuum} of frequency $2$ branch points all with different tangent planes. Note that also for this, the problematic case (if one was arguing by contradiction, blowing-up around a point of positive $\H^s$-upper density for some $s>0$) is when the coarse blow-up coincides with a multiplicity $Q$ single-valued harmonic function which is homogeneous of degree $2$. To deal with this, we need a more complicated argument involving a center manifold (see Section \ref{sec:cm}).
\end{remark}

\begin{remark}
The above proof also has the following interesting consequence. Suppose $V\in\mathcal{V}$ (for instance, we could take $\mathcal{V} = \mathcal{V}_L$ as in Corollary \ref{cor:Lip-1}). Suppose $Z\in \mathcal{B}^Q_V$ is a singular point of $V$ where the (unique) tangent cone is $Q|P_0|$, and the planar frequency of $V$ at $Z$ is $\neq 2$. Then, for all $\delta>0$, there is a radius $r>0$ (depending on $V$ and $\delta$) such that if $\tilde{Z}\in\mathcal{B}_V$ has planar frequency $\mathcal{N}_V(\tilde{Z})\geq \mathcal{N}_V(Z)-\delta$ and $|Z-\tilde{Z}|<r$, then not only must $\tilde{Z}$ be within $\delta$ of a subspace of dimension $\leq n-2$, but also
\begin{equation}\label{E:consequence-lojasiewicz}
\dist(\tilde{Z},P_0) \leq \delta\hat{E}_{V,P_0}(B_{2r}(Z)).
\end{equation}
Indeed, this follows from combining the proofs of Corollary \ref{cor:usc-1} and Proposition \ref{prop:dimension-not-equal-2}, as indeed in the proof of Proposition \ref{prop:dimension-not-equal-2} we showed that the linear part of the average of the blow-up vanishes at the limit of such points. Furthermore, these also give
\begin{equation}\label{E:consequence-lojasiewicz-2}
\dist_\H(T_{\tilde{Z}}V\cap B_1(0), P_0\cap B_1(0)) \leq \delta \Etilt_{V,P_0}(B_{2r}(Z)).
\end{equation}
The inequality \eqref{E:consequence-lojasiewicz} should be compared to the corollary of the Łojasiewicz inequality which says that for a (single-valued) minimal graph $u$, if $Du(x) = 0$ then there is a neighbourhood of $x$ such that any point $y$ in this neighbourhood which obeys $Du(y) = 0$ necessarily also obeys $u(y) = u(x)$. Because points of non-integer planar frequency do not occur in the setting of single-valued minimal graphs, both \eqref{E:consequence-lojasiewicz} and \eqref{E:consequence-lojasiewicz-2} are most directly comparable when the point has planar frequency an integer $\geq 3$. Indeed, for single-valued minimal graphs $u$, the Łojasiewicz inequality gives that at any point with $D^2u(x) = 0$, there is a neighbourhood of $x$ for which any point $y$ in this neighbourhood with $D^2u(y)=0$ necessarily has $Du(x) = Du(y)$, and hence also $u(x) = u(y)$. Both \eqref{E:consequence-lojasiewicz} and \eqref{E:consequence-lojasiewicz-2} are weakened versions of these facts for $V\in\mathcal{V}$, which due to the branching behaviour are not in general known to satisfy a version of the Łojasiewicz inequality.
\end{remark}

\section{Dimension Bound on Planar Frequency $\geq 2$ Branch Points via Center Manifold}\label{sec:cm}

To study the set of points where the planar frequency is $2$ (or even $\geq 2$) we utilise a \emph{center manifold}. We wish to take some time to summarise the idea, in light of planar frequency. The basic philosophy is that we wish to measure the frequency of $V$ relative to a more general manifold $\mathcal{M}$ instead of a plane; indeed, if $\mathcal{M}$ is a plane, our discussion will reduce to the planar frequency. The manifold $\mathcal{M}$ needs to satisfy some conditions for this to be possible:
\begin{enumerate}
	\item [(A)] \emph{$\mathcal{M}$ needs to be $C^3$.} This is necessary to mimic the first variation arguments we used to show the approximate monotonicity of the planar frequency function, namely variations normal to $\mathcal{M}$ and variations within $\mathcal{M}$. The latter involves differentiating a vector field whose flow lines match the geodesic arcs emitting from a point, and for this $\mathcal{M}$ needs to be $C^3$.
	\item [(B)] \emph{$\mathcal{M}$ cannot be arbitrary, but needs to have some ``variational structure'' itself.} This is perhaps expected if one wishes there to be a frequency function with a monotonicity property in the first place, as the monotonicity arises from control on suitable variational identities. This requirement manifests itself analytically when one needs to establish suitable doubling conditions of $V$ relative to $\mathcal{M}$ (cf.~Lemma \ref{lemma:doubling-2}). This doubling property is needed to control certain error terms arising from our variational identities, as we saw in Section \ref{sec:pff} for the planar frequency.
	\item [(C)] \emph{$\mathcal{M}$ should} (ideally\footnote{One could imagine a more complicated argument where the manifold $\mathcal{M}$ is allowed to change with the radius, such that these manifolds converge in some strong way as the radius goes to $0$ to a manifold which is tangent. One could then measure the frequency with respect to these changing manifolds. Such a problem does arise in Almgren's original work when trying to use a center manifold for \emph{all} branch points, which in our language corresponds to including also branch points of planar frequency $<2$. We however do not need to do this, as the planar frequency allows us to deal with such points using blow-ups relative to tangent planes as we saw in Section \ref{sec:pff}. In some sense, the need for changing the center manifold at a given point is replaced by a much simpler situation where one changes the ``center manifold'' at nearby points instead (i.e.~we take the ``center manifold'' at points of planar frequency $<2$ to be the tangent plane at the point).}) \emph{touch $V$ at all points of interest, namely any point of planar frequency $\geq 2$.} This guarantees that the frequency value is non-trivial at points of interest. Morally, one is then able to control the size of the set of points with planar frequency $\geq 2$ by instead controlling the touching set of two surfaces (or at least the relevant part of the touching set), in an analogous spirit to classical quantitative unique continuation problems.
	\item [(D)] A more subtle condition arises for general $\mathcal{M}$: there are certain error terms which arise in the first variation arguments which are \emph{linear} in the \emph{average} of the normal map representing $V$ over $\mathcal{M}$ (assuming such a map exists); this is in addition to other errors which are, in theory, higher order. One needs to control such linear error terms in some manner. If $\mathcal{M}$ were a minimal surface itself, these linear errors would in fact vanish as they also involve the mean curvature of $\mathcal{M}$. Likewise, if the normal map representing $V$ over $\mathcal{M}$ had zero average, then these terms would also vanish. It appears hard however to guarantee either of these situations, and thus for general $\mathcal{M}$ when this term does not vanish, we need to control it suitably in order to establish approximate monotonicity of the associated frequency function relative to $\mathcal{M}$. In order to show the appropriate control, namely that this error is higher order (say, approximately cubic), $\mathcal{M}$ \emph{needs to somehow mirror} (the average of) $V$; for instance, at a point where $V$ is close to a plane with some rate, $\mathcal{M}$ should be also. Whilst this ties in nicely to (C) above, it is crucial that \emph{everywhere} $\mathcal{M}$ is a reflection of the decay rate of the varifold to its average, rather than just at a single point. This is important for certain estimates, which without one cannot control these linear error terms.
\end{enumerate}

Typically in the literature the center manifold is described as being an ``approximation to the average of the sheets'' of the varifold, with its use being to construct a blow-up which is non-collapsed and thus has singularities; this is done by showing that the blow-up relative to the center manifold has zero average, yet the full blow-up is non-zero (the latter being a result of having a bounded frequency function with a monotonicity-type property). We stress that whilst this is loosely the truth, the center manifold needs to be \emph{much} more than just a manifold-approximation to the average of the sheets in order for there to be an approximately monotone frequency function with respect to it. Indeed, what is necessary is for \emph{the center manifold to be close to the average whilst also mirroring the decay of the varifold towards its tangent plane(s)}, as described in (D) above. This is \emph{much} stronger than just being an approximation to the average, and this is consistent with the center manifold needing to be an object which inherently has a variational structure built into it in order for there to be an approximately monotone frequency function. Thus, ``an approximation to the average'' is perhaps more accurately described as ``an approximation to the average \emph{which reflects the decay behaviour between the varifold and its average}''. Again, the reason for this is \emph{not} to get a blow-up with zero average, but instead to show the approximate monotonicity of the frequency relative to the center manifold at points of interest (indeed, in our approach, \emph{it is not the fact that the average of the blow-up is zero which allows us to ultimately conclude our argument}\footnote{The average being zero is useful from the perspective of improved \emph{regularity}, however. Indeed, knowing that you have a non-trivial blow-up with zero average means that the blow-up cannot collapse into a single harmonic function with multiplicity $Q$ in the limit, and so it must `separate'. In principle, this separation then allows one to understand further regularity structures for the varifold about branch points in an inductive fashion, using the reduction of the multiplicity on the sheets of the blow-up. This is a key mechanism in the regularities theories first seen in \cite{Cha88, Wic14a}, for instance.}, but only that the average of the graphical representation of the varifold over the center manifold is suitably small in order to control the error terms needed to show the relevant frequency function is approximately monotone). The size of the relevant singular set is then controlled by controlling an appropriate subset of the touching set of $V$ and the center manifold, in the same spirit as classical quantitative unique continuation arguments. The average is a suitable candidate for building the center manifold for another reason, namely because it is close to harmonic, and the error term in this closeness is \emph{cubic} in $|Du|$ (which is the correct order we wanted to make these linear errors involving the average). It is ultimately this fact that the error between the average and a harmonic function is cubic which enables us to establish the regularity point from (A) above for our center manifold.

\textbf{Remark:} In Almgren's construction \cite{Alm00} (and also in De Lellis--Spadaro \cite{DLS16a}) the center manifold construction is much more complicated than what we need here, as we only need the center manifold to pass through points of planar frequency $\geq 2$, which we will be able to guarantee. If one tries to construct a center manifold which can be used for \emph{every} branch point, it will not pass through those branch points where the decay rate is $<2$. (It also won't necessarily pass through density $Q$ classical singularities, which one may think of as points with ``frequency $1$'').

\textbf{Remark:} It is perhaps instructive to consider how one establishes a dimension bound on flat singular points which are \emph{not} branch points, and why that case is significantly more simple. Indeed, there the varifold is locally given by the sum of minimal graphs, and so one can reduce to bounding the Hausdorff dimension of the touching set of two minimal graphs, say $f$ and $g$, defined over the (common) tangent plane. One can then look at the difference $f-g$, which solves an elliptic PDE, for which one can use methods from \cite{GL86} to establish directly a form of monotonicity of a  frequency function (this is also the philosophy taken by Simon and the third author in \cite{SW16}). Using elliptic estimates, the result then follows simply by blowing up. In the language of center manifolds, one could view this argument in two ways. One could instead look at the two-valued function $\llbracket f\rrbracket + \llbracket g\rrbracket$ and compare it to the average $\frac{1}{2}(f+g)$ of the sheets, which would morally then be looking at the two-valued symmetric function $\pm\frac{1}{2}\llbracket f-g\rrbracket$; notice that this is then the same as looking at the single-valued difference $f-g$. Alternatively, one could use the graph of $g$ as the ``center manifold'', and measure the frequency of $f$ relative to $g$; morally this would be looking at $f-g$. However, even in this very simple situation the center manifold construction will \emph{not} agree with either of these ``natural'' choices, and as a result one has to deal with the extra linear error term mentioned in (D). The center manifold is then built essentially to deal with this error term when there is seemingly no other ``natural'' candidate for a manifold to measure frequency with respect to. It would be interesting to investigate whether in some specialised situations, such as area minimisers coming from calibrated submanifolds, one can find a more natural candidate, such as another calibrated submanifold.

\begin{remark}\label{remark:cm-pulse}
Loosely speaking, the center manifold can be described as follows: for each $x\in B^n_1(0)$, let $r(x)$ denote the largest radius $\sigma>0$ such that both the height and tilt excess of $V$ relative to the tangent plane of $u_{\text{a}}$ at $x$ fail to decay at a rate of $2$ (or even $2-\delta$ for some small $\delta>0$ fixed) in the cylinder $\R^k\times B_\sigma^n(x)$; notice that $r(x) = 0$ if $x$ is (the projection of) a flat singular point of planar frequency $\geq 2$. Then, the center manifold is morally the graph of
$$\wp(x):= (u_{\text{a}}\ast\varrho_{r(x)})(x)$$
where $\varrho$ is a suitable radial bump function and $\varrho_{r(x)}(y):= r(x)^{-n}\varrho(y/r(x))$; here, if $r(x) =0$ we set $\wp(x) := u_{\text{a}}(x)$. Notice that such a function obeys the last three points above: provided $r(x)$ is small enough, $\wp(x)$ is close to the average, which is almost harmonic, which would allow us to establish (B). Point (C) is clear by definition. Finally, certainly this definition reflects the decay rate of $V$ to its average, so (D) holds. The regularity conclusion required in (A) is, however, unclear. In our actual construction of a center manifold, we take a suitable discretisation of this definition, which will enable us to show the regularity whilst also maintaining the other three conditions. We also note that this heuristic definition also shows why one should expect Almgren's observation that when $u$ is actually a single-valued function whose graph is minimal, then the center manifold coincides with the graph of $u$ and so provides an improved regularity estimate from $C^{1,\alpha}$ to $C^{3,\beta}$ (as in the single-valued case, the planar frequency is $\geq 2$ \emph{everywhere}, and so $r(x)\equiv 0$ for all $x$).
\end{remark}

We begin by setting up the notation and assumptions we will need throughout this section.

\textbf{Definition:} Let us write
$$\Etilt^2_{V,P}(B_r(x)) := \frac{1}{2}r^{-n}\int_{B^{n+k}_r(x)}\|\pi_X-\pi_P\|^2\ \ext\|V\|(X)$$
for the tilt-excess of $V$ in the ball $B_r(x)$ with respect to the plane $P$ passing through the origin, and 
$$\Etilt_{V}(B_r(x)):= \inf_{P} \Etilt_{V,P}(B_r(x))$$
for the optimal tilt-excess of $V$ on the ball $B^{n+k}_r(x)$ over planes $P$ through $0$.

\textbf{Definition:} We write the \emph{height of $V$ on a set $A$ relative to a plane $P$} by
$$\mathbf{h}_{V,P}(A):= \sup_{x,y\in \spt\|V\|\cap A}|\pi_{P^\perp}(x) - \pi_{P^\perp}(y)|\, .$$
For a ball $\mathbf{B}$, we then set
$$\mathbf{h}_{V}(\mathbf{B}) := \inf_{P\in \mathcal{P}_{\mathbf{B}}}\mathbf{h}_{V,P}(\mathbf{B})$$
where here $\mathcal{P}_{\mathbf{B}}$ is the set of all hyperplanes $P$ attaining the infimum for $\Etilt_V(\mathbf{B})$. Throughout, we will write $P_{\mathbf{B}}$ for a choice of hyperplane in $\mathcal{P}_{\mathbf{B}}$ which achieves $\mathbf{h}_V(\mathbf{B})$; in particular, this choice also achieves the infimum in $\Etilt_V(\mathbf{B})$.

For a plane $P$ and radius $r>0$ we define the cylinder over $P$ of radius $r$ and center $x$ by $C_r(x,P):= \pi_P^{-1}(P\cap B_r^{n+k}(\pi_P(x)))$. For cylinders we can similarly define a tilt-excess, except now $\Etilt_V(C_r(x,P))$ denotes $\Etilt_{V,P}(C_r(x,P))\equiv \Etilt_{(\eta_{x,r})_\#V,P}$ and $\mathbf{h}_{V}(C_r(x,P)) \equiv \mathbf{h}_{V,P}(C_r(x,P))$, i.e.~the plane in the tilt-excess integrand is always taken to be the one defining the cylinder.

Recall that $P_0 := \{0\}^k\times\R^n$. Throughout this section we will assume the following:

\textbf{Assumptions:} Let $\eps_2\in (0,1]$ be a constant to be specified, and let $V$ be such that $(\eta_{0,3\sqrt{n}})_\# V\in\mathcal{V}$. We assume that
\begin{itemize}
	\item $0$ is a branch point of $V$ with $\Theta_V(0) = Q$ and $\mathcal{N}_V(0)\geq 2$;
	\item $\left((6\sqrt{n})^n\w_n\right)^{-1}\|V\|(B_{6\sqrt{n}}^{n+k}(0))\leq Q+\eps_2$;
	\item $\Etilt_V\equiv \Etilt_{V}(B^{n+k}_{6\sqrt{n}}(0)) = \Etilt_{V,P_0}(B_{6\sqrt{n}}^{n+k}(0))\leq \eps_2$.
\end{itemize}
Note that, by working on a slightly smaller ball and rescaling, Remark \ref{remark:tilt-height-comparison} regarding the Poincaré inequality means that we can also assume without loss of generality that the tilt-excess of $V$ relative to $P_0$ controls the height excess of $V$ relative to $P_0$. Moreover, from Theorem \ref{thm:eps-reg}, if $\eps_2$ is sufficiently small we may assume that $V$ is represented by the graph of a generalised-$C^{1,\alpha}$ function $u:P_0\cap B^{n+k}_{6\sqrt{n}}(0)\to \A_Q(\R^k)$. The constant $\alpha = \alpha(\mathcal{V})\in (0,1)$ from Theorem \ref{thm:eps-reg} will be fixed throughout.

\subsection{Construction of a center manifold}

We break this down into several steps.

\textbf{Step 1: Initial Set-Up and Discretisation.} For $j\in \Z_{\geq 1}$, write $\mathcal{C}^j$ for the collection of (closed) cubes $L$ of $P_0$ of the form
$$L = [a_1,a_1+2\ell]\times\cdots\times[a_n,a_n+2\ell]\subset P_0$$
where $2\ell = 2^{1-j}=:2\ell(L)$ is the side-length of the cube $L$, $a_i\in 2^{1-j}\Z$ with $a_i\in [-4,4-2\ell]$. The \emph{center} of $L$ is $x_l:= (a_1+\ell,\dotsc,a_n+\ell)\in\R^n$. We also write $\mathcal{C}:= \bigcup_{j=1}^\infty\mathcal{C}^j$. If $H,L\in\mathcal{C}$ are such that $H\subset L$, we call $L$ an \emph{ancestor} of $H$ and $H$ a \emph{descendant} of $L$. If in addition $\ell(L) = 2\ell(H)$, we call $H$ a \emph{child} of $L$ and $L$ the \emph{parent} of $H$. Furthermore, if $L\in\mathcal{C}^j$ we say $j$ is the \emph{level} of $L$, and write $\text{level}(L):=j$.

We now build a Whitney--style decomposition of $[-4,4]^n$ which reflects the decay behaviour of $V$ in each region. Cubes will be chosen depending on the failure of decay of either the tilt-excess or the height (in $L^\infty$) at suitable scales. We will have numerous choices in our construction, determined by choices of constants. These constants are:
\begin{enumerate}
	\item [(i)] $M_0\geq 4$, determining the scale of the balls determined by our Whitney-style decomposition, and thus how much overlap different balls have (thus $M_0$ is a parameter determining the level of `discretisation');
	\item [(ii)] $N_0\geq 1$, determining how small the cubes we are looking at are by controlling the maximal cube size (morally, this gives an upper bound on the convolution scale by $2^{-N_0}$ in Remark \ref{remark:cm-pulse}, thus forcing a certain degree of closeness to the average);
	\item [(iii)] $C_e\geq 1$ determining the failure of tilt-excess decay;
	\item [(iv)] $C_h\geq 1$ determining the failure of $L^\infty$ height decay.
\end{enumerate}	
We will always be choosing $M_0$ first depending on $\mathcal{V}$ (in particular, we take this to include the dependence on $n,k,Q$, for simplicity), followed by $N_0$, then followed by $C_e$, then $C_h$, and finally $\eps_2$. In this ordering of constant choice, later constants are therefore allowed to depend on earlier ones, e.g.~$C_e$ is allowed to depend on $M_0$ and $N_0$, as in the end these will only depend on $\mathcal{V}$. As an initial requirement, we impose that $M_0$, $N_0$ obey
\begin{equation}\label{E:M_0-N_0-condition}
M_0\sqrt{n}2^{7-N_0}\leq 1.
\end{equation}
For each $L\in \mathcal{C}$, set $p_L\equiv (y_L,x_L):= (u_1(x_L),x_L)$ for some choice of $u_1$ (e.g.~order the values of $u$ by the natural ordering of the function $e_1\cdot u :B^n_1(0)\to \A_Q(\R)$ and take the smallest one in this ordering). Set $r_L:= M_0\sqrt{n}\ell(L)$ and $\mathbf{B}_L:= B^{n+k}_{64r_L}(p_L)$. Write $P_L:= P_{\mathbf{B}_L}$ for the choice of plane which achieves the infimums in both $\Etilt_V(\mathbf{B}_L)$ and $\mathbf{h}_V(\mathbf{B}_L)$, as defined above. We then define families of cubes $\mathcal{R}$, $\S\subset\mathcal{C}$, with $\mathcal{S} = \mathcal{S}_e\cup\mathcal{S}_h\cup\S_n$, $\mathcal{R} = \cup_j\mathcal{R}^j$, $\mathcal{S}_\star = \cup_j \S^j_\star$ for $\star\in\{e,h,n\}$\footnote{This is a slight abuse of notation, as $n$ does not denote a dimension here but rather indicates `neighbouring' cubes.}, and $\S^j = \S^j_e\cup\S^j_h\cup\S^j_n$, as follows. For $j<N_0$ we set $\mathcal{R}^j = \S^j = \emptyset$. For $j\geq N_0$, we proceed inductively. If no ancestor of $L\in \mathcal{C}^j$ is in $\S$, then
\begin{enumerate}
	\item [(EX)] $L\in\S^j_e$ if $\Etilt_V(\mathbf{B}_L)> C_e\Etilt_V\ell(L)^{1-\tfrac{1}{12}}$;
	\item [(HT)] $L\in\S^j_h$ if $L\not\in \S^j_e$ and $\mathbf{h}_V(\mathbf{B}_L)>C_h\Etilt_V\ell(L)$;
	\item [(NN)] $L\in\S_n^j$ if $L\not\in \S^j_e\cup\S^j_h$ but $L$ intersects an element of $\S^{j-1}$.
\end{enumerate}
If none of (EX), (HT), or (NN) occur, then $L\in \mathcal{R}^j$. (Notice that we never consider $L\in \CC^j$ which has an ancestor in $\S$.) Finally, after the inductive construction is complete, set $\Gamma:= [-4,4]^n\setminus\bigcup_{L\in\S}L$.

\begin{remark}\label{remark:lower-exponent-in-cm}
	One might wonder why we chose a power of $1-\frac{1}{12}$ in condition (EX), as opposed to the (seemingly) more natural power of $1$. It is actually very important that this power is $<1$, with the reason being because one cannot distinguish between regular points and singular points when the power is $1$ (naturally, smooth functions come into their tangent planes at a rate of at least $2$ by Taylor's theorem). As such we do not want to see failure of condition (EX) because the sheets locally coincide, and the rate of decay to the tangent plane is exactly $2$; morally at such points the center manifold should just be the sheet itself, but this would not necessarily happen if (EX) failed at such points\footnote{Compare this to the single-valued case. One would want to construct a center manifold which coincides with the graph itself (indeed, it was already noted by Almgren that his construction achieves this). In this situation, (HT) always fails. However, if the power in (EX) was $1$, there could be points where (EX) holds -- even though the graph is smooth! To get around this, we decrease the power slightly, and this issue disappears.}. This technicality manifests itself when we try to prove the doubling condition for cubes in (EX) (see Lemma \ref{lemma:cm-17}) analogously to what we did in Lemma \ref{lemma:doubling-2}, except now we will want to control the whole energy by that of the average-free part. But one cannot do this if the sheets all coincide, as the average-free part will vanish! And the issue is exactly because the average could have decay rate exactly $2$. The choice of power $\frac{11}{12}$ is, however, essentially arbitrary: any power in $(2/3,1)$ would suffice, and we choose $11/12$ simply because it makes some of the constants nicer; we will make it clear in our statements where this choice appears. The upper bound of $1$ here is exactly for the above reason, whilst the lower bound of $2/3$ is the needed to prove that the center manifold is $C^{3,\beta}$ for some $\beta>0$. As a final comment, one might think that if one takes an exponent of $1$ in (EX), one could rule out coincidence with a single-valued function in an alternative manner by taking the constant $C_e$ large enough (as a homogeneous degree $2$ harmonic function with energy $1$ at unit scale cannot have proportionally large energy at a smaller scale). However, the problem is more a result of the relative change in the tilt-excess from one scale to the next, which is what one needs for the doubling condition in Lemma \ref{lemma:cm-17} (which is unchanged by changing $C_e$), rather than the size of the constant.
\end{remark}

Morally, at each stage condition (NN) forces all neighbouring cubes to those included at the previous stage in $\S$ to ``spawn'' and be included in $\S$; they might not lie in $\S_n^j$ (as they might lie in $\S_e^j$ or $\S_h^j$), but they will lie in $\S^j$. It is this inductive spawning, combined with possibly smaller cubes appearing from (EX) and (HT), which gives rise to $\S$. The complement of this is then $\Gamma$. Note also that for each $j$, $\mathcal{R}^j\cup\bigcup_{i\leq j}\S^i = [-4,4]^n$.

Our first observation is that the pair $(\Gamma,\S)$ forms a (\emph{weak}) \emph{Whitney decomposition} of $[-4,4]^n$, in the sense that:
\begin{enumerate}
	\item [(W1)] $\Gamma$ is closed, $\Gamma\cup\bigcup_{L\in\S}L = [-4,4]^n$, and $\Gamma\cap L = \emptyset$ for all $L\in\S$;
	\item [(W2)] if $L_1,L_2\in\S$ are distinct, then $\text{int}(L_1)\cap \text{int}(L_2)=\emptyset$, where $\text{int}(L)$ denotes the interior of the (closed) cube $L$;
	\item [(W3)] if $L_1,L_2\in \S$ have non-empty intersection, then $|\text{level}(L_1)-\text{level}(L_2)|\leq 1$; in particular, $\ell(L_1)/\ell(L_2)\in \{\tfrac{1}{2},1,2\}$.
\end{enumerate}
We leave the reader to verify these simple properties. It can also be seen that (W1) -- (W3) imply that for every $L\in\S$ we have $\dist(\Gamma,L)\geq 2\ell(L)$ (we stress that here for two sets $A,B\subset\R^{n}$ we define $\dist(A,B):=\inf_{a\in A,\,b\in B}|a-b|$, which is different from the Hausdorff distance $\dist_\H(A,B)$). However, we do not necessarily have any bound of the form $\dist(\Gamma,L)\leq C\ell(L)$ (such a bound would mean that $(\Gamma,\mathcal{S})$ is a Whitney decomposition of $[-4,4]^n$ in the more traditional sense, hence why we called the above a `weak' Whitney decomposition).

\textbf{Step 2: Initial Comparisons between Decay Regions.} The decomposition $(\Gamma,\S)$ is the decomposition we want which reflects the decay behaviour of $V$. Let us now prove some basic properties regarding it.

\begin{lemma}\label{lemma:cm-1}
	There exists $\eps = \eps(\mathcal{V})\in (0,1)$ and $C^* = C^*(\mathcal{V},M_0,N_0)\in (0,\infty)$ such that if $\eps_2<\eps$, $C_e\geq C^*$, and $C_h\geq C^*C_e$, then $\S^j = \emptyset$ for all $j\leq N_0+6$.
\end{lemma}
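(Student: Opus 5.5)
The plan is to observe that for $j\le N_0+6$ there are only finitely many levels, namely $N_0\le j\le N_0+6$. For each such cube $L$ the side-length satisfies $\ell(L)\ge 2^{-N_0-6}$, so the ratio $\ell(L)^{-1}$ is bounded by a constant depending only on $N_0$. Recall that $\mathcal{R}^j=\S^j=\emptyset$ for $j<N_0$. So the task is to show that for $N_0\le j\le N_0+6$ neither (EX) nor (HT) can occur. Condition (NN) then also cannot occur, inductively: at level $N_0$ there is no previous $\S^{j-1}$, and at each later level $\S^{j-1}=\emptyset$ by the induction.

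\textbf{Ruling out (EX).} By \eqref{E:M_0-N_0-condition}, $64r_L = 64M_0\sqrt{n}\,\ell(L)\le 64M_0\sqrt n\,2^{-N_0}\le 1/2$, and $|p_L|\le |x_L|+|u(x_L)|\le 4\sqrt n + C\eps_2$. Hence $\mathbf{B}_L\subset B^{n+k}_{6\sqrt n}(0)$ for $\eps_2$ small. Taking $P=P_0$ as a competitor in the infimum defining $\Etilt_V(\mathbf{B}_L)$ gives
$$\Etilt_V(\mathbf{B}_L)^2\le \tfrac12(64r_L)^{-n}\int_{B_{6\sqrt n}}\|\pi_X-\pi_{P_0}\|^2\,\ext\|V\|\le C(n)(64r_L)^{-n}\Etilt_V^2 .$$
Since $r_L\ge M_0\sqrt n\,2^{-N_0-6}$, this yields $\Etilt_V(\mathbf{B}_L)\le C(n,M_0,N_0)\Etilt_V$. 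Moreover $\ell(L)^{1-1/12}\ge 2^{-(N_0+6)}$. Choosing $C^*\ge C(n,M_0,N_0)2^{N_0+6}$ and $C_e\ge C^*$ therefore excludes (EX).

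\textbf{Ruling out (HT).} This is the main step, since $\mathbf{h}_V$ is an $L^\infty$ quantity relative to the optimal plane $P_L$ rather than $P_0$. The approach is as follows.

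First, by the $\eps$-regularity (Theorem \ref{thm:eps-reg}, together with Remark \ref{remark:tilt-height-comparison}), $V$ is the graph of $u$ over $P_0\cap B_{6\sqrt n}$ with $\sup|u|+\Lip(u)\le C\Etilt_V$. Hence $\mathbf{h}_{V,P_0}(\mathbf{B}_L)\le C\Etilt_V$.

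Second, we must compare $P_L$ with $P_0$. Since $P_L$ minimises the tilt-excess on $\mathbf{B}_L$, we have
$$|\pi_{P_L}-\pi_{P_0}|\le C\bigl(\Etilt_{V,P_L}(\mathbf{B}_L)+\Etilt_{V,P_0}(\mathbf{B}_L)\bigr)\le C(M_0,N_0)\Etilt_V .$$
This follows by averaging $\|\pi_{P_L}-\pi_{P_0}\|^2\le 2\|\pi_X-\pi_{P_L}\|^2+2\|\pi_X-\pi_{P_0}\|^2$ over $\|V\|\res\mathbf{B}_L$. The mass of $\mathbf{B}_L$ is bounded below by monotonicity, because $\mathbf{B}_L$ is centred at $p_L\in\spt\|V\|$.

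Third, the height relative to $P_L$ of a set of diameter at most $128r_L\le 1$ differs from the height relative to $P_0$ by at most $C|\pi_{P_L}-\pi_{P_0}|\cdot\operatorname{diam}$. This gives $\mathbf{h}_V(\mathbf{B}_L)\le C(\mathcal V,M_0,N_0)\Etilt_V$.

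Finally, since $\ell(L)\ge 2^{-N_0-6}$, we get $\mathbf{h}_V(\mathbf{B}_L)\le C'\Etilt_V\ell(L)$ with $C'=C'(\mathcal V,M_0,N_0)$. Enlarging $C^*$ so that $C^*\ge C'$ ensures $C_h\ge C^*C_e\ge C^*\ge C'$, which excludes (HT).

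The only delicate points are the choice of $\eps$ small enough that the graphical representation and the tilt–height comparison hold, and bookkeeping of the constant dependencies. All of these depend only on $\mathcal V,M_0,N_0$, consistent with the stated dependence of $C^*$.
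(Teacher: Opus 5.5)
Your proposal is correct and follows essentially the same route as the paper. You exclude (EX) by comparing with $P_0$ on the larger ball and using $\ell(L)\geq 2^{-N_0-6}$, and you exclude (HT) by bounding $\|\pi_{P_L}-\pi_{P_0}\|$ through averaging over $\mathbf{B}_L$ with the monotonicity mass lower bound, then transferring the $P_0$-height bound to $P_L$. The only small differences are these: you bound $\Etilt_{V,P_L}(\mathbf{B}_L)$ by minimality of $P_L$, so your height constant is independent of $C_e$, whereas the paper uses the failure of (EX) and picks up a factor $C_e$ (hence its $C_h\geq C^*C_e$). You also make explicit the induction ruling out (NN), which the paper leaves implicit.
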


\begin{proof}
	The proof is simply playing around with the constants and inequalities in the definitions.

	Fix $N_0\leq j\leq N_0+6$, and take $L\in \mathcal{C}^j$. By \eqref{E:M_0-N_0-condition}, we know that $r_L \leq 2^{-7}$, and so $64r_L\leq \tfrac{1}{2}$. Moreover, we know that $|p_L|\leq 4\sqrt{n}+C\Etilt_V$ from Theorem \ref{thm:eps-reg} (combined with the Poincaré inequality to control the height excess by the tilt excess), where $C = C(\mathcal{V})$. In particular, this shows that for $\eps = \eps(\mathcal{V})$ sufficiently small, we have $\mathbf{B}_L\subset B^{n+k}_{5\sqrt{n}}(0)$. Thus,
	\begin{equation}\label{E:cm-1-1}
	\Etilt_V(\mathbf{B}_L)\leq \Etilt_{V,P_0}(\mathbf{B}_L)\leq \frac{(6\sqrt{n})^{\tfrac{n}{2}}}{\left(64M_0\sqrt{n}2^{-N_0-6}\right)^{\tfrac{n}{2}}}\Etilt_{V,P_0}(B^{n+k}_{6\sqrt{n}}(0)).
	\end{equation}
	Thus, as $\ell(L)\geq 2^{-N_0-6}$, for suitable $C^* = C^*(n,k,M_0,N_0)$, if $C_e\geq C^*$ this implies
	$$\Etilt_V(\mathbf{B}_L)\leq C_e\Etilt_V\ell(L)$$
	i.e.~(EX) does not hold.
	
	Now, as $p_L\in\spt\|V\|$, we know (e.g.~from the Lipschitz graph structure or the monotonicity formula) that $\|V\|(\mathbf{B}_L)\geq c_0r_L^n$, for some $c_0 = c_0(n)$. Thus, using again \eqref{E:cm-1-1},
	$$\|\pi_{P_L}-\pi_{P_0}\|\leq C_0\left(\Etilt_{V,P_0}(\mathbf{B}_L) + \Etilt_{V,P_L}(\mathbf{B}_L)\right) \leq 2C_0 C_e\Etilt_V\ell(L)$$
	where $C_0 = C_0(n,k,Q,M_0,N_0)$. So, we have
	\begin{align*}
	\mathbf{h}_V(\mathbf{B}_L)\leq \widetilde{C}_0\ell(L)\cdot\|\pi_{P_L}-\pi_{P_0}\| + \mathbf{h}_{V,P_0}(\mathbf{B}_L) & \leq 2\widetilde{C}_0C_0C_e\Etilt_V\ell(L)^2 + \widetilde{C}_0\ell(L)\cdot\mathbf{h}_V(\R^k\times B^n_{5\sqrt{n}}(0))\\
	& \leq \bar{C}_0(C_e+1)\Etilt_V\cdot\ell(L)
	\end{align*}
	where $\widetilde{C}_0 = \widetilde{C}_0(n,k,Q,M_0,N_0)$ and $\bar{C}_0 = \bar{C}_0(n,k,Q,M_0,N_0)$. Thus, if $C^* = C^*(n,k,Q,M_0,N_0)$ is chosen sufficiently large and $C_h\geq C^*C_e$, then
	$$\mathbf{h}_V(\mathbf{B}_L)\leq C_h\Etilt_V\ell(L).$$
	Thus, (HT) also fails for $L$. As this applies for every such $L$, this shows that $\mathcal{S}^j=\emptyset$ for every $j\leq N_0+6$.
\end{proof}

\textbf{Note:} The same proof shows that, for any fixed $K_0$, we can ensure $\S^j=\emptyset$ for all $j\leq N_0+K_0$, provided we allow $C^*$ to depend on $K_0$ also.

The next lemma gives comparisons between the planes $P_L$ realising $\Etilt_V(\mathbf{B}_L)$ for different cubes $L\in \mathcal{R}\cup\mathcal{S}$. What this lemma is effectively doing is controlling the planes across neighbouring cubes, which will allow us to ``glue'' the planes together when forming the center manifold to establish its existence and regularity. It also says that $V$ remains close to the relevant plane in the relevant region.

\begin{lemma}\label{lemma:cm-3}
	Let $C^* = C^*(\mathcal{V},M_0,N_0)\in (0,\infty)$ be as in Lemma \ref{lemma:cm-1}. Then, there exists $\eps = \eps(\mathcal{V},M_0,N_0,C_e,C_h)\in (0,1)$ such that if $\eps_2<\eps$, $C_e\geq C^*$, and $C_h\geq C^*C_e$, then:
	\begin{equation}\label{lemma:cm-2}
		\mathbf{B}_H\subset\mathbf{B}_L\subset B^{n+k}_{5\sqrt{n}}(0) \qquad \text{for any }H,L\in\mathcal{R}\cup\mathcal{S} \text{ with }H\subset L.
	\end{equation}
	Moreover, suppose $H,L\in\mathcal{R}\cup\mathcal{S}$ are such that either \textnormal{(a)} $H\subset L$, or \textnormal{(b)} $H\cap L\neq\emptyset$ and $\textnormal{level}(L)\leq\textnormal{level}(H)\leq\textnormal{level}(L)+1$. Then:
	\begin{enumerate}
		\item [(i)] $\|\pi_{P_L}-\pi_{P_H}\|\leq C_1\Etilt_V\ell(L)^{1-\frac{1}{12}}$;
		\item [(ii)] $\|\pi_{H}-\pi_{P_0}\|\leq C_1\Etilt_V$;
		\item [(iii)] $\mathbf{h}_V(C_{32r_H}(p_H,P_0)) \leq C_2\Etilt_V\ell(H)$ and $\spt\|V\|\cap C_{32r_H}(p_H,P_0)\subset\mathbf{B}_H$;
		\item [(iv)] $\mathbf{h}_V(C_{32r_L}(p_L,P_H)) \leq C_2\Etilt_V \ell(L)$ and $\spt\|V\|\cap C_{32r_L}(p_L,P_H)\subset\mathbf{B}_L$.
	\end{enumerate}
	Here, $C_1 = C_1(\mathcal{V},M_0,N_0,C_e)$ and $C_2 = C_2(\mathcal{V},M_0,N_0,C_e,C_h)$.
\end{lemma}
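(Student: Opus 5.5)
The plan follows the standard pattern of \cite[Proposition 4.1]{DLS16a}, using only the definitions of (EX), (HT), (NN) and elementary estimates for tilt excess and height.

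\textbf{Step 1: the inclusion \eqref{lemma:cm-2}.} Let $H\subset L$, so $\ell(H)\leq\ell(L)$ and $|x_H-x_L|\leq\sqrt{n}\ell(L)$. The vertical offset $|y_H-y_L|$ is bounded by $\Lip(u)\sqrt{n}\ell(L)\leq C\Etilt_V\ell(L)$, using Theorem \ref{thm:eps-reg} (or the height bound coming from the (HT) failure at $L$ and its ancestors). Hence
$$|p_H-p_L|\leq 2\sqrt{n}\ell(L)\leq r_L$$
once $M_0\geq 4$ and $\eps_2$ is small. Since $64r_H\leq 32r_L$, we get $\mathbf{B}_H\subset\mathbf{B}_L$. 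The inclusion $\mathbf{B}_L\subset B_{5\sqrt{n}}$ holds for the top-level cubes as in Lemma \ref{lemma:cm-1}, and therefore for all cubes.

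\textbf{Step 2: tilt comparisons (i), (ii).} For any cube $J\in\mathcal{R}\cup\S$, every ancestor of $J$ (and $J$ itself, unless $J\in\S_e$) fails (EX). For $J\in\S_e$ we use its parent together with $\mathbf{B}_J\subset\mathbf{B}_{\text{parent}}$ and the comparable radii. This gives
$$\Etilt_V(\mathbf{B}_J)\leq C C_e\Etilt_V\ell(J)^{11/12}.$$
Now take $H\subset L$ in case (a), or $H$ and $L$ intersecting with comparable level in case (b). In case (b), $\mathbf{B}_H\subset B_{128 r_L}(p_L)$, which is contained in the ball of the parent of $L$. In either case both balls lie in a common ball of comparable radius, and each contains mass $\geq c r_H^n$ by monotonicity. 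The triangle inequality on the tilt integrand then yields
$$\|\pi_{P_L}-\pi_{P_H}\|\leq C\bigl(\Etilt_V(\mathbf{B}_H)+\Etilt_V(\mathbf{B}_{L'})\bigr)\leq C_1\Etilt_V\ell(L)^{11/12},$$
where $L'$ is $L$ or its parent, which is (i). Summing (i) along the chain of ancestors of $H$ up to level $N_0$ gives a geometric series $\sum_j 2^{-j\cdot 11/12}$. For the top cube we compare directly with $P_0$ as in Lemma \ref{lemma:cm-1}. Together these give (ii).

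\textbf{Step 3: height statements (iii), (iv).} This is the main obstacle. The difficulty is converting the height bound in the ball $\mathbf{B}_H$ relative to the tilted plane $P_H$ into a bound in the cylinder $C_{32r_H}(p_H,P_0)$ (respectively $C_{32r_L}(p_L,P_H)$), while showing that no part of $\spt\|V\|$ in the cylinder escapes the ball. The approach is as follows.

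First, (HT) fails for $H$ or for its parent, so
$$\mathbf{h}_{V,P_H}(\mathbf{B}_H)\leq C_h\Etilt_V\ell(H).$$
Next, by the Lipschitz graph structure with $\Lip(u)\leq C\Etilt_V$, each sheet is connected over $B^n_{32r_H}(x_H)$, and the $Q$ sheets pass within $C\Etilt_V r_H$ of $p_H$, using the height bound in $\mathbf{B}_H$ near $p_H$. It follows that every point of $\spt\|V\|\cap C_{32r_H}(p_H,P_0)$ lies at distance $\leq 32r_H+C\Etilt_V r_H<64 r_H$ from $p_H$, which gives the inclusion in (iii). Within $\mathbf{B}_H$, the heights relative to $P_0$ and $P_H$ differ by at most $C r_H\|\pi_{P_H}-\pi_{P_0}\|$. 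By (ii) this is only $O(\Etilt_V\ell(H))$, which suffices, with $C_2$ depending on $C_h$, $C_1$ and $M_0$.

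For (iv) the argument is the same but uses the cylinder over $P_H$. By (i), $P_H$ and $P_L$ differ by $O(\Etilt_V\ell(L)^{11/12})$, which is small. Hence the cylinder $C_{32r_L}(p_L,P_H)$ intersected with $\spt\|V\|$ still sits inside $\mathbf{B}_L$, by the same Lipschitz and connectivity argument. The height relative to $P_H$ is then bounded by the height relative to $P_L$ plus an error of order $r_L\,C_1\Etilt_V\ell(L)^{11/12}$. This error must be absorbed into $C_2\Etilt_V\ell(L)$. That absorption is the delicate point: it uses the smallness of $\eps_2$, and it requires an improved plane comparison at height level, which I would obtain from the (HT) bound on the common ball rather than from (i) alone.
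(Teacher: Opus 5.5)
Your ingredients are the right ones: failure of (EX)/(HT) at ancestors, the triangle inequality for tilts with the monotonicity mass bound, and the cost $r\|\pi_P-\pi_{P'}\|$ of changing planes. But the argument as organised is circular, and the step you flag as the main obstacle is not overcome.

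\textbf{The gap in Step~3.} You claim all $Q$ values over $B_{32r_H}(x_H)$ pass within $C\Etilt_V r_H$ of $p_H$ ``using the height bound in $\mathbf{B}_H$''. The failure of (HT) at $H$ (or its parent) only controls $\spt\|V\|\cap\mathbf{B}_H$. A value of $u$ over $x_H$ at vertical distance $\geq 128r_H$ from $y_H$ never enters $\mathbf{B}_H$, and by the Lipschitz bound neither does the nearby part of the graph over $B_{32r_H}(x_H)$. So nothing at scale $H$ excludes it. The only a priori control on the spread of the values is the global height $C\Etilt_V$, which exceeds $r_H$ by an arbitrarily large factor for deep cubes, and connectivity does not help.

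\textbf{The same problem in Step~1.} Here $p_L=(u_1(x_L),x_L)$ uses an arbitrary selection of one of the $Q$ values. For $k\geq 2$ this selection is not Lipschitz: $\mathcal{G}(u(x),u(x'))\leq C\Etilt_V|x-x'|$ only controls the distance from $y_H$ to the \emph{nearest} value over $x_L$. So $|y_H-y_L|$ can only be bounded by $\mathbf{h}_V(C_{32r_L}(p_L,P_0))$, i.e.\ by (iii) for the coarser cube (your parenthetical alternative). But you prove (iii) in Step~3 using (ii), which uses the ball inclusion from Step~1.

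\textbf{The missing idea.} Prove the inclusion, (i), (ii) and (iii) \emph{simultaneously by induction on} $\level(H)$, propagating $\spt\|V\|\cap C_{32r_H}(p_H,P_0)\subset\mathbf{B}_H$ downwards.
At level $N_0$ this follows from $\mathbf{h}_V(C_{5\sqrt n}(0,P_0))\leq C\Etilt_V$, since $r_H=M_0\sqrt n\,2^{-N_0}$ and $\eps_2$ is small depending on $M_0,N_0$.
If it holds for the parent $H'\in\mathcal{R}$, note that $C_{32r_H}(p_H,P_0)\subset C_{32r_{H'}}(p_{H'},P_0)$ is a purely horizontal fact. Then the failure of (HT) at $H'$ and (ii) for $H'$ give
$$\mathbf{h}_V(C_{32r_H}(p_H,P_0))\leq\mathbf{h}_V(\mathbf{B}_{H'})+Cr_{H'}\|\pi_{P_{H'}}-\pi_{P_0}\|\leq (C_h+CM_0C_1)\Etilt_V\ell(H').$$
The constant does not accumulate along the induction. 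This yields (iii) and the cylinder-in-ball property for $H$, and it is also what bounds $|y_H-y_{H'}|$ for $\mathbf{B}_H\subset\mathbf{B}_{H'}$. Item (iv) is handled the same way, by induction on $\level(L)$ via the parent of $L$.

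\textbf{Two smaller corrections.}
In case (a) with $\level(H)\gg\level(L)$, a single triangle inequality over $\mathbf{B}_H$ loses a factor $(r_L/r_H)^{n/2}$, since the two balls do not have comparable radii. So (i) must also be obtained by telescoping along the chain of intermediate ancestors, all of which lie in $\mathcal{R}$, exactly as you do for (ii).
The ``delicate absorption'' in (iv) is not delicate: $r_L\,C_1\Etilt_V\ell(L)^{11/12}=M_0\sqrt n\,C_1\Etilt_V\ell(L)^{23/12}\leq M_0\sqrt n\,C_1\Etilt_V\ell(L)$, and $C_2$ may depend on $M_0$ and $C_1$. No improved plane comparison is needed.
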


\begin{proof}
	The proof is a bit long but is effectively bookkeeping. The idea is that, since our cube refinement only stops when the decay condition stops, we still have control on each cube as they become finer and finer.
	
	Notice first that the second inclusion in \eqref{lemma:cm-2} we have already shown in the proof of Lemma \ref{lemma:cm-1}; note that the same proof there works for any $L\in \mathcal{R}\cup\mathcal{S}$. Notice also that the second claims in (iii) and (iv), respectively, follow from the first claims in (iii) and (iv), respectively, by choosing $C_2\eps_2<1$, i.e.~$\eps_2 = \eps_2(C_2)$ is sufficiently small. Thus, to prove \eqref{lemma:cm-2} we only need to prove the first inclusion in \eqref{lemma:cm-2}, and to prove (iii) and (iv) we just need to prove the first inequality of each.
	
	Consider first situation (a), i.e.~when $H\subset L$ (this includes \eqref{lemma:cm-2}). We start by proving claims \eqref{lemma:cm-2}, (i), (ii), and (iii) by induction on the level of $H$ (we will return to (iv) afterwards). The base case is when $\text{level}(H) = N_0$; this forces $H=L$ and so $\text{level}(L) = N_0$ also. In particular, as $H = L$, the first inclusion in \eqref{lemma:cm-2} is trivial, as is claim (i). To see (ii), as $\ell(H) = 2^{-N_0}$, the argument from the proof of Lemma \ref{lemma:cm-1} will give that $\|\pi_{P_H}-\pi_{P_0}\|\leq \bar{C}\Etilt_V$ for some $\bar{C} = \bar{C}(\mathcal{V},M_0,N_0,C_e)$, which proves (ii). For the first claim in (iii), notice that
	$$C_{32r_H}(p_H,P_0)\subset C_{5\sqrt{n}}(0,P_0)$$
	(indeed, $|x_H| + 32r_H \leq 5\sqrt{n}$ holds because $|x_H| \leq 4\sqrt{n}$ and $r_H \leq 2^{-7}$). Thus, as by assumption $\mathbf{h}_V(C_{5\sqrt{n}}(0,P_0))\leq C(\mathcal{V})\Etilt_V$, we have
	$$\mathbf{h}_V(C_{32r_H}(p_H,P_0)) \leq \mathbf{h}_{V,P_0}(C_{5\sqrt{n}}(0,P_0)) \leq C(\mathcal{V})\Etilt_V = 2^{N_0}C(\mathcal{V})\cdot \Etilt_V\ell(H)$$
	as $\ell(H) = 2^{-N_0}$. This proves (iii) when $\text{level}(H) = N_0$.
	
	Thus, under the assumption of (a), we have verified \eqref{lemma:cm-2} and (i) -- (iii) when $\text{level}(H) = N_0$. Now we move to the inductive step.
	
	First consider \eqref{lemma:cm-2}. Notice again that when $H=L$, the first inclusion in \eqref{lemma:cm-2} is trivial, and so we may assume $H\subsetneq L$. We can therefore find $H \equiv H^i\subset H^{i-1}\subset\cdots\subset H^j\equiv L$ such that $i>j$ and $H^k\in \mathcal{R}^k$ for all $k<i$ (we make no claim on whether $H^i\equiv H$ is in $\mathcal{R}^i$ or $\mathcal{S}^i$). It therefore suffices to show $\mathbf{B}_{H^i} \subset \mathbf{B}_{H^{i-1}}$, as then we would be done by induction. By our inductive assumption, we know that (iii) holds for $H^{i-1}$, and thus $|y_{H^i}-y_{H^{i-1}}| \leq C_2\Etilt_V\ell(H^{i-1})$. Since also $|x_{H^i}-x_{H^{i-1}}|\leq \sqrt{n}\ell(H^{i-1})$, we see that
	$$|p_{H^i}-p_{H^{i-1}}| \leq (\sqrt{n} + C_2\Etilt_V)\ell(H^{i-1}).$$
	In particular, as $\ell(H^{i-1}) = 2\ell(H^i)$, if we choose $\eps_2$ small enough we get $|p_{H^i}-p_{H^{i-1}}| \leq 3\sqrt{n}\ell(H^i)$. Now, to show $\mathbf{B}_{H^i}\subset\mathbf{B}_{H^{i-1}}$, we need
	$$64r_{H^i} + |p_{H^i}-p_{H^{i-1}}| \leq 64r_{H^{i-1}}.$$
	As $\ell(H^{i-1}) = 2\ell(H^i)$, it therefore suffices to have
	$$64M_0\sqrt{n}\ell(H^i) + 3\sqrt{n}\ell(H^i) \leq 128M_0\sqrt{n}\ell(H^i)$$
	i.e.~$64M_0 + 3 \leq 128M_0$, which is clearly true as $M_0\geq 1$. This therefore proves \eqref{lemma:cm-2} by induction.
	
	Now we prove (i); again the case $H=L$ is trivial, and so we can assume $H\subsetneq L$. Again, we may find $H\equiv H^i\subset H^{i-1}\subset\cdots \subset H^{j}\equiv L$ such that $i>j$ and $H^k\in\mathcal{R}^k$ for all $k<i$ (we make no claim on whether $H^i\equiv H$ is in $\mathcal{R}^i$ or $\mathcal{S}^i$). Then, for $a\in \{j+1,\dotsc,i\}$, we know from the monotonicity formula that $\|V\|(\mathbf{B}_{H^a})\geq c_0(64r_{H^a})^n$ for some $c_0 = c_0(n)$. Thus,
	$$\|\pi_{P_{H^a}}-\pi_{P_{H^{a-1}}}\|^2 \leq \frac{4(64r_{H^a})^n}{\|V\|(\mathbf{B}_{H^a})}\left(\Etilt_{V,P_{H^a}}^2(\mathbf{B}_{H^a}) + \Etilt_{V,P_{H^{a-1}}}^2(\mathbf{B}_{H^a})\right).$$
	Now as $\Etilt_{V,P_{H^a}}(\mathbf{B}_{H^a}) = \Etilt_V(\mathbf{B}_{H^a}) \leq \Etilt_{V,P_{H^{a-1}}}(\mathbf{B}_{H^a})$ by definition of $P_{H^a}$, and since $\mathbf{B}_{H^a}\subset\mathbf{B}_{H^{a-1}}$ by \eqref{lemma:cm-2} (which we know is true by induction, assuming $\eps_2$ is sufficiently small) we see that
	$$\|\pi_{P_{H^a}}-\pi_{P_{H^{a-1}}}\|^2 \leq \frac{8\cdot 2^n}{c_0}\Etilt_{V,P_{H^{a-1}}}^2(\mathbf{B}_{H^{a-1}}) = \frac{2^{n+3}}{c_0}\Etilt_V^2(\mathbf{B}_{H^{a-1}}).$$
	But as $H^{a-1}\in \mathcal{R}^{a-1}$, we know that $\Etilt_V(\mathbf{B}_{H^{a-1}}) \leq C_e\Etilt_V\ell(H^{a-1})^{1-\frac{1}{12}}$. So, we get
	$$\|\pi_{P_{H^a}}-\pi_{P_{H^{a-1}}}\| \leq \left(\frac{2^{n+3}}{c_0}\right)^{1/2}C_e\Etilt_V\ell(H^{a-1})^{1-\frac{1}{12}}.$$
	Now using the triangle inequality, we see that
	$$\|\pi_{P_H}-\pi_{P_L}\| \leq \sum^i_{a=j+1}\|\pi_{P_{H^a}}-\pi_{P_{H^{a-1}}}\| \leq \left(\frac{2^{n+3}}{c_0}\right)^{1/2}C_e\Etilt_V\sum^i_{a=j+1}\ell(H^{a-1})^{1-\frac{1}{12}}.$$
	But noting that this sum is bounded by $\sum^\infty_{a=j}(2^{11/12})^{-a} \leq C(2^{-j})^{11/12} \equiv C\ell(L)^{11/12}$, we see that
	$$\|\pi_{P_H}-\pi_{P_L}\|\leq \bar{C}_1\Etilt_V\ell(L)^{1-\frac{1}{12}}$$
	where $\bar{C}_1 = \bar{C}_1(n,C_e)$. This proves (i) in the case of (a).
	
	 To see (ii), this follows readily from (i). Indeed, take any $L\in \mathcal{S}^{N_0}$ with $H\subset L$ (note such $L$ exists by Lemma \ref{lemma:cm-1}) and apply (i) to get, as $\ell(L) = 2^{-N_0}$,
	 $$\|\pi_{P_H} - \pi_{P_L}\| \leq \bar{C}_1\Etilt_V\cdot (2^{1-\frac{1}{12}})^{-N_0}.$$
	 But as (ii) also holds for $L$ (this was the base case of the induction), we know $\|\pi_{P_L}-\pi_{P_0}\|\leq \bar{C}\Etilt_V$ for some $\bar{C} = \bar{C}(\mathcal{V},M_0,N_0,C_e)$. The triangle inequality then gives
	 $$\|\pi_{P_H}-\pi_{P_0}\|\leq C_1\Etilt_V$$
	 for some $C_1 = C_1(\mathcal{V},M_0,N_0,C_e)$, which shows (ii).
	 
	 Next we prove (iii); as already mentioned, we just need to prove the first claim of (iii). Suppose $\text{level}(H) = i+1$, and choose $H^i\supset H\equiv H^{i+1}$ with $H^i\in \mathcal{R}^i$. By induction, we know that (iii) holds for $H^i$, and so we know in particular that $\spt\|V\|\cap C_{32r_{H^i}}(p_{H^i},P_0)\subset \mathbf{B}_{H^i}$.
	 
	 But we know by construction that $|x_{H^{i+1}}-x_{H^i}| \leq 2\sqrt{n}\ell(H^{i+1}) \leq 2r_{H^{i+1}}$ and $r_{H^{i+1}} = \frac{1}{2}r_{H^i}$, meaning that $|x_{H^{i+1}}-x_{H^i}| + 32r_{H^{i+1}} \leq 17r_{H^i}$. This gives
	 $$C_{32r_{H^{i+1}}}(p_{H^{i+1}},P_0)\subset C_{17r_{H^i}}(p_{H^i},P_0)\subset C_{32r_{H^i}}(p_{H^i},P_0).$$
	 Hence we have
	 \begin{align*}
	 	\mathbf{h}_V(C_{32r_{H^{i+1}}}(p_{H^{i+1}},P_0)) & \leq \mathbf{h}_V(C_{32r_{H^i}}(p_{H^i},P_0))\\
		& \leq \mathbf{h}_{V,P_0}(\mathbf{B}_{H^i})\\
		& \leq \mathbf{h}_V(\mathbf{B}_{H^i}) + C(n,k)r_{H^i}\|\pi_{P_{H^i}}-\pi_{P_0}\|\\
		& \leq C_h\Etilt_V\ell(H^i) + C(\mathcal{V},M_0,N_0,C_e)\Etilt_V\ell(H^i)\\
		& \equiv \frac{1}{2}C_2\Etilt_V\ell(H^i) \equiv C_2\Etilt_V\ell(H^{i+1}),
	 \end{align*}
	 where the first line follows from the inclusion of cylinders, the second line follows from the induction hypothesis, the third is the triangle inequality, and the fourth follows from the fact that $H^i\in\mathcal{R}^i$ and so (HT) fails, as well as using (ii) (which is true inductively as shown). As $\ell(H^i) = 2\ell(H^{i+1})$, this completes the proof of (iii).
	 
	 Thus, now we have completed the proof of \eqref{lemma:cm-2}, as well as that of (i), (ii), and (iii) in case (a). To complete case (a), we just need to verify (iv).
	 
	To prove (iv) under assumption (a), we will work by induction on $\text{level}(L)\in \{N_0,N_0+1,\dotsc\}$. Consider the base case, so that $L\in\mathcal{R}^{N_0}$. We claim that
	$$C_{32r_L}(p_L,P_H)\cap B^{n+k}_{6\sqrt{n}}(0)\subset C_{5\sqrt{n}}(0,P_0).$$
	To see this, recall that $|p_L| \leq 4\sqrt{n} + C(\mathcal{V})\Etilt_V$ and $r_L\leq 2^{-7}$, and so a simple computation shows that this inclusion is valid provided $\|\pi_{P_H}-\pi_{P_0}\|$ is sufficiently small, which is guaranteed by (ii) if $\eps_2 = \eps_2(\mathcal{V},M_0,N_0,C_e)$ is sufficiently small; this proves the claimed inclusion. But then this gives
	$$\spt\|V\|\cap C_{32r_L}(p_L,P_H)\subset C_{5\sqrt{n}}(0,P_0),$$
	and hence
	$$\mathbf{h}_V(C_{32r_L}(p_L,P_H))\leq \mathbf{h}_V(C_{5\sqrt{n}}(0,P_0)) + C(n)r_L\|\pi_{P_H}-\pi_{P_0}\| \leq \bar{C}\Etilt_V \ell(L)$$
	where $\bar{C} = \bar{C}(\mathcal{V},M_0,N_0,C_e)$; here we have used (ii), our a priori height bound (from Theorem \ref{thm:eps-reg} and the relevant Poincaré inequality to control the height excess by the tilt excess as already explained there), and that $\ell(L) = 2^{-N_0}$. This proves the first claim of (iv) when $\text{level}(L) = N_0$ (from which the second claim of (iv) follows by taking $\eps_2 = \eps_2(\bar{C})$ sufficiently small, as already explained).
	
	Now suppose by induction that we have established (iv) in case (a) when $\text{level}(L)\in \{N_0,N_0+1,\dotsc,i\}$, and we wish to prove it for $\text{level}(L) = i+1$. We may therefore choose $L^i\in\mathcal{R}^i$ with $L^{i+1}\equiv L\subset L^i$. We claim:
	$$C_{32r_{L^{i+1}}}(p_{L^{i+1}},P_H)\subset C_{32r_{L^i}}(p_{L^i},P_H).$$
	This follows in essentially the same way as in the base case. Indeed, from (iii) we know $|y_{L^{i+1}}-y_{L^i}| \leq C_2\Etilt_V\ell(L^i)$, and so as $|x_{L^{i+1}}-x_{L^i}| \leq \sqrt{n}\ell(L^i)$, provided $\eps_2$ is small enough, we get $|p_{L^{i+1}}-p_{L^i}| \leq 3\sqrt{n}\ell(L^{i+1})$. To prove the inclusion, we need $|p_{L^{i+1}}-p_{L^i}| + 32r_{L^{i+1}} \leq 32r_{L^i}$, which is true as $r_{L^i} = 2r_{L^{i+1}}$ and $M_0\geq 1$. But then by our induction assumption, we know that
	$$\spt\|V\|\cap C_{32r_{L^{i+1}}}(p_{L^{i+1}},P_H)\subset \spt\|V\|\cap C_{32r_{L^i}}(p_{L^i},P_H)\subset \mathbf{B}_{L^i}$$
	and therefore
	\begin{align*}
		\mathbf{h}_V(C_{32r_{L^{i+1}}}(p_{L^{i+1}},P_H)) & \leq \mathbf{h}_V(\mathbf{B}_{L^i}) + C(n,k)r_{L^i}\|\pi_{P_{L^i}}-\pi_{P_H}\|\\
		& \leq C_h\Etilt_V\ell(L^i) + C(\mathcal{V},M_0,N_0,C_e)\Etilt_V\cdot\ell(L_i)^{1+1-\frac{1}{12}}\\
		& \leq C_2\Etilt_V\ell(L^{i+1}),
	\end{align*}
	where in the second inequality we have used that $L^i\in\mathcal{R}^i$ and so (HT) fails, as well as the fact that $H\subset L^i$ and so (i) holds, as well as $\ell(L^i) = 2\ell(L^{i+1})$; here, $C_2 = C_2(\mathcal{V},M_0,N_0,C_e,C_h)$. This establishes the first claim in (iv) by induction (from which the second follows in the same manner as before). This therefore completes the proof of (iv), and thus completes the proof of the lemma under the assumption that (a) holds.
	
	Now we turn our attention to the case where (b) holds. We only need to prove (i) and (iv), as (ii) and (iii) follow from the case of (a).

    For (i), choose a cube $\widetilde{L}\in \mathcal{R}$ such that if $L\in\mathcal{R}^{N_0}$ then $\widetilde{L}=L$ and otherwise $\widetilde{L}$ is the parent of $L$. Then, as we have argued before, we have $|p_{\widetilde{L}}-p_L|\leq 2\sqrt{n}\ell(L)$. Similarly, using that $L$ and $H$ are adjacent, we have $|p_H-p_L|\leq 3\sqrt{n}\ell(L)$. So if $\widetilde{L}$ is the parent of $L$ then $|p_H-p_{\widetilde{L}}|\leq 6\sqrt{n}\ell(L) = 3\sqrt{n}\ell(\widetilde{L})$. Notice that the same bound $|p_H-p_{\widetilde{L}}|\leq 3\sqrt{n}\ell(\widetilde{L})$ is still true if $\widetilde{L} = L$. Thus, we see that $\mathbf{B}_H\cup \mathbf{B}_L\subset \mathbf{B}_{\widetilde{L}}$. Hence, arguing similarly to as we have before,
	\begin{equation}\label{E:lemma-cm-3-1}
		\|\pi_{P_H}-\pi_{P_{\widetilde{L}}}\| \leq C(n)\left(\Etilt_V(\mathbf{B}_H) + \Etilt_V(\mathbf{B}_{\widetilde{L}})\right) \leq C(n)\Etilt_V(\mathbf{B}_{\widetilde{L}})
	\end{equation}
	and similarly
	$$\|\pi_{P_L}-\pi_{P_{\widetilde{L}}}\| \leq C(n)\left(\Etilt_V(\mathbf{B}_L) + \Etilt_V(\mathbf{B}_{\widetilde{L}})\right) \leq C(n)\Etilt_{V}(\mathbf{B}_{\widetilde{L}}).$$
	Using the triangle inequality we therefore have
	$$\|\pi_{P_H}-\pi_{P_L}\|\leq C(n)\Etilt_V(\mathbf{B}_{\widetilde{L}}).$$
	Thus, if $\widetilde{L}\neq L$, then $\widetilde{L}\in \mathcal{R}$ and so (EX) fails for $\widetilde{L}$, meaning that $\Etilt_V(\mathbf{B}_{\widetilde{L}})\leq C_e\Etilt_V\ell(\widetilde{L})^{1-\frac{1}{12}} \leq 2C_e\Etilt_V\ell(L)^{1-\frac{1}{12}}$, and so the result follows. Otherwise, $L = \widetilde{L}$, and then $L\in\mathcal{R}^{N_0}$; but then one can control $\Etilt_V(B_{\widetilde{L}})$ by $\Etilt_V(B^{n+k}_{5\sqrt{n}}(0))$ up to constants depending on $N_0$ (using \eqref{lemma:cm-2}), and so the result follows. Thus, (i) is established.

    Finally, we come to (iv) in the case of (b). In fact, we claim that the conclusion holds not only for $L$ but also for any ancestor $\widetilde{L}$ of $L$. We prove this by induction on the level of $L$, and for a given level we work inductively on the level of the ancestor. Indeed, one sees that the argument under the assumption of (a) goes through in this case, except at the end when one must apply (i) to bound $\|\pi_{P_{\widetilde{L}}}-\pi_{P_H}\|$, where $\widetilde{L}\in\mathcal{R}$ is an ancestor of $L$. However, one does have the desired bound in this case, namely that as in (i), because one can verify \eqref{E:lemma-cm-3-1} analogously (by induction on the level) for all ancestors $\widetilde{L}$ of $L$ in the present case. Hence the argument passes through in an analogous manner, giving the result.
\end{proof}

We may now prove from relatively simple parts of the above lemmas that both the tilt-excess and height remain controlled on \emph{all} the balls $\mathbf{B}_L$, for $L\in \mathcal{R}\cup\mathcal{S}$. This is perhaps expected since as soon as either decay condition fails we stop refining the cube.

\begin{lemma}\label{lemma:cm-4}
	Let $C^*$ be as in Lemma \ref{lemma:cm-1} and $C_e\geq C^*$, $C_h\geq C^*C_e$. Then, for sufficiently small $\eps_2 = \eps_2(\mathcal{V},M_0,N_0,C_e,C_h)\in (0,1)$ we have for any $L\in\mathcal{R}\cup\mathcal{S}$,
	$$\Etilt_V(\mathbf{B}_L)\leq C\Etilt_V\ell(L)^{1-\frac{1}{12}}\ \ \ \text{and}\ \ \ \mathbf{h}_V(\mathbf{B}_L)\leq C\Etilt_V\ell(L).$$
	Here, $C = C(\mathcal{V},M_0,N_0,C_e,C_h)\in (0,\infty)$.
\end{lemma}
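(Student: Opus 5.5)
We argue by cases, according to whether $L$ lies in $\mathcal{R}$, in $\S_e\cup\S_h$, or in $\S_n$, reducing each case to the failure of (EX)/(HT) for a suitable ancestor or neighbour, together with the comparison estimates of Lemma \ref{lemma:cm-3}.

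\textbf{Case 1: $L\in\mathcal{R}$, or $L\in \S^{N_0}$.} If $L\in\mathcal{R}$, neither (EX) nor (HT) holds for $L$, and the two bounds are immediate with $C=\max\{C_e,C_h\}$. If $L$ has level $N_0$, Lemma \ref{lemma:cm-1} gives $\S^{N_0}=\emptyset$, so $L\in\mathcal{R}$.

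\textbf{Case 2: $L\in\S^j$ with $j>N_0$.} Here $L$ has a parent $\widetilde{L}$. Since no ancestor of $L$ lies in $\S$ and $\mathcal{R}^{j-1}\cup\bigcup_{i\leq j-1}\S^i$ covers $[-4,4]^n$, we have $\widetilde{L}\in\mathcal{R}^{j-1}$. By \eqref{lemma:cm-2}, $\mathbf{B}_L\subset\mathbf{B}_{\widetilde{L}}$, and $r_{\widetilde{L}}=2r_L$. Minimality of $P_L$ then gives
$$\Etilt_V^2(\mathbf{B}_L)\leq \Etilt_{V,P_{\widetilde{L}}}^2(\mathbf{B}_L)\leq 2^n\Etilt_V^2(\mathbf{B}_{\widetilde{L}})\leq 2^nC_e^2\Etilt_V^2\ell(\widetilde{L})^{2-\frac{1}{6}},$$
and since $\ell(\widetilde{L})=2\ell(L)$ this yields the tilt bound. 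This argument applies to all of $\S_e,\S_h,\S_n$ at once, so no separate treatment of (NN) cubes is needed.

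\textbf{Height bound in Case 2.} This is the main obstacle: $\mathbf{h}_V(\mathbf{B}_L)$ is defined using a plane $P_{\mathbf{B}_L}$ attaining the \emph{tilt} infimum, chosen among minimisers to minimise height. Heights relative to different planes on $\mathbf{B}_L$ differ by at most $C r_L\|\pi_P-\pi_{P'}\|$. We therefore bound
$$\mathbf{h}_V(\mathbf{B}_L)\leq \mathbf{h}_{V,P_L}(\mathbf{B}_L)\leq \mathbf{h}_{V,P_{\widetilde{L}}}(\mathbf{B}_{\widetilde{L}})+C(n,k)\,r_{\widetilde{L}}\,\|\pi_{P_L}-\pi_{P_{\widetilde{L}}}\|,$$
using $\mathbf{B}_L\subset\mathbf{B}_{\widetilde{L}}$, which is the monotonicity of $\mathbf{h}_{V,P}$ in the set. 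The first term is at most $C_h\Etilt_V\ell(\widetilde{L})$ because (HT) fails for $\widetilde{L}$. The second term is at most $C\,r_{\widetilde{L}}\,C_1\Etilt_V\ell(\widetilde{L})^{11/12}\leq C\Etilt_V\ell(L)^{2-\frac{1}{12}}$ by Lemma \ref{lemma:cm-3}(i) in case (a), with $H=L\subset \widetilde{L}$, and because $r_{\widetilde{L}}=M_0\sqrt{n}\ell(\widetilde{L})$. Since $\ell(L)\leq 1$, this is at most $C\Etilt_V\ell(L)$.

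If one instead worried that $\mathcal{P}_{\mathbf{B}_L}$ is awkward to control, Lemma \ref{lemma:cm-3}(iv) offers an alternative. There the cylinder $C_{32r_{\widetilde{L}}}(p_{\widetilde{L}},P_L)$ contains $\spt\|V\|\cap\mathbf{B}_L$, since $\mathbf{B}_L\subset C_{32r_{\widetilde{L}}}(p_{\widetilde{L}},P_L)$ by $|p_L-p_{\widetilde{L}}|\leq 3\sqrt{n}\ell(L)$ and $64r_L=32r_{\widetilde{L}}$. Statement (iv) bounds the $P_L$-height on that cylinder by $C_2\Etilt_V\ell(\widetilde{L})$ directly.

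Collecting the two cases, with constants depending on $\mathcal{V},M_0,N_0,C_e,C_h$ and with $\eps_2$ small as required by Lemma \ref{lemma:cm-3}, completes the proof.
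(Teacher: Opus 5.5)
Your main argument is correct and is essentially the paper's proof. The $\mathcal{R}$ case follows from the failure of (EX) and (HT); for $L\in\mathcal{S}$ you pass to the parent $\widetilde{L}\in\mathcal{R}$, bound the tilt using $\mathbf{B}_L\subset\mathbf{B}_{\widetilde{L}}$ with the $2^{n/2}$ rescaling, and bound the height by comparing it relative to $P_L$ and $P_{\widetilde{L}}$ via Lemma \ref{lemma:cm-3}(i). The only slip is in your optional alternative: the inclusion $\mathbf{B}_L\subset C_{32r_{\widetilde{L}}}(p_{\widetilde{L}},P_L)$ fails as stated, since both sets have radius $64r_L$ but different centres (the grandparent's cylinder would work), though your main argument does not use it.
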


\begin{proof}
	The claim for $L\in \mathcal{R}$ follows from the construction, namely because (EX) and (HT) fail. So we only need to prove the claim for $L\in\mathcal{S}$. From Lemma \ref{lemma:cm-1} we know $\text{level}(L)\geq N_0+7$, and so it has a parent $\tilde{L}\in \mathcal{R}$. But by \eqref{lemma:cm-2} from Lemma \ref{lemma:cm-3} we know $\mathbf{B}_L\subset\mathbf{B}_{\tilde{L}}$, and so
	$$\Etilt_V(\mathbf{B}_L)\leq 2^{n/2}\Etilt_V(\mathbf{B}_{\tilde{L}}) \leq 2^{n/2}\cdot C_e\Etilt_V\ell(\tilde{L})^{11/12} = \left(2^{\frac{n}{2}+11/12}C_e\right)\Etilt_V\ell(L)^{11/12}$$
	which shows the first claim. For the second, we have from Lemma \ref{lemma:cm-3}(a)(i),
\begin{align*}
	\mathbf{h}_V(\mathbf{B}_L) & \leq \mathbf{h}_V(\mathbf{B}_{\tilde{L}}) + C(n,k)r_L\|\pi_{P_L}-\pi_{P_{\tilde{L}}}\|\\
	& \leq C_h\Etilt_V\ell(\tilde{L}) + C(n,k)r_L\cdot C_1\Etilt_V\ell(\tilde{L})^{11/12}\\
	& \leq C(\mathcal{V},M_0,N_0,C_e,C_h)\cdot\Etilt_V\ell(L)
\end{align*}
using $\ell(\tilde{L}) = 2\ell(L)$; this proves the claim.
\end{proof}

\textbf{Step 3: Graphical Representation in Decay Regions and Regularity Estimates.} Since we have control over the tilt-excess and the height over cubes in our Whitney-style decomposition, for suitable choices of parameter we will be able to apply our $\eps$-regularity theorem (Theorem \ref{thm:eps-reg}) to deduce a certain graphical representation holds over the cube regions, with estimates reflecting the decay of $V$ on the given cube to a plane. We will then use these graphs to construct the center manifold. In fact, we also prove more to allow us to compare the graphs over neighbouring cubes.

\begin{lemma}\label{lemma:cm-5}
	Let $C^*$ be as in Lemma \ref{lemma:cm-1} and $C_e\geq C^*$, $C_h\geq C^*C_e$. Then, for sufficiently small $\eps_2 = \eps_2(\mathcal{V},M_0,N_0,C_e,C_h)\in (0,1)$, the following holds. Let $H,L\in\mathcal{R}\cup\mathcal{S}$ be such that either: 
	\begin{enumerate}
		\item [(a)] $H\subset L$; or
		\item [(b)] $H\cap L\neq\emptyset$ and $\level(L) \leq \level(H) \leq \level(L)+1$.
	\end{enumerate}
	Then, $V\res C_{32r_L}(p_L,P_H)$ satisfies the assumptions (suitably rescaled) of Theorem \ref{thm:eps-reg} in the cylinder $C_{32r_L}(p_L,P_H)$.
\end{lemma}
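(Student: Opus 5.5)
The plan is to check the three hypotheses of Theorem \ref{thm:eps-reg} for the rescaled varifold $\widetilde{V}:=(\Gamma\circ\eta_{p_L,16r_L})_\#V$, where $\Gamma$ is a rotation taking $P_H$ to $P_0$. With this normalisation, $C_{32r_L}(p_L,P_H)$ becomes $\R^k\times B^n_2(0)$. Since $\eta_{p_L,16r_L}$ is a homothety centred at a point of $\spt\|V\|$, and $64r_L\leq \tfrac12$ with $\mathbf{B}_L\subset B_{5\sqrt n}^{n+k}(0)$ by \eqref{lemma:cm-2}, the closure of $\mathcal{V}$ under homotheties and rotations gives $\widetilde V\in\mathcal{V}$. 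It then remains to establish three things: (1) the mass bound on $B_2$; (2) $Q-\tfrac12\leq \omega_n^{-1}\|\widetilde V\|(\R^k\times B_1^n(0))\leq Q+\tfrac12$; and (3) $\min\{\hat E_{\widetilde V},\Etilt_{\widetilde V}\}<\eps$.

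\textbf{Smallness of the excess.} Lemma \ref{lemma:cm-3}(iv) gives $\mathbf{h}_V(C_{32r_L}(p_L,P_H))\leq C_2\Etilt_V\ell(L)$, and also shows that $\spt\|V\|\cap C_{32r_L}(p_L,P_H)\subset\mathbf{B}_L$. After rescaling by $r_L\sim M_0\sqrt n\ell(L)$, the height of $\widetilde V$ over $P_0$ is at most $C\Etilt_V\leq C\eps_2$. The height excess $\hat E_{\widetilde V}$ is controlled by this $L^\infty$ height times the mass, so $\hat E_{\widetilde V}\leq C\eps_2$. Choosing $\eps_2$ small, depending on $\mathcal{V},M_0,N_0,C_e,C_h$, yields (3). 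One can alternatively use the tilt: Lemma \ref{lemma:cm-4} gives $\Etilt_V(\mathbf{B}_L)\leq C\Etilt_V\ell(L)^{11/12}$, and Lemma \ref{lemma:cm-3}(i) gives $\|\pi_{P_L}-\pi_{P_H}\|\leq C_1\Etilt_V\ell(L)^{11/12}$, so the tilt relative to $P_H$ on $\mathbf{B}_L$ is also small. Either route suffices.

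\textbf{Mass bounds.} Since $V$ is the graph of the Lipschitz $Q$-valued $u$ over $P_0$ with $\Lip(u)\leq C\eps_2$, and $\|\pi_{P_H}-\pi_{P_0}\|\leq C_1\Etilt_V$ by Lemma \ref{lemma:cm-3}(ii), the varifold $V$ near $p_L$ is also a $Q$-fold graph over $P_H$ with small Lipschitz constant. To see this, I tilt the graph by a small rotation; a Lipschitz multi-valued graph with small slope remains a multi-valued graph over a nearby plane. The area formula then gives
$$\|V\|(C_{32r_L}(p_L,P_H))=Q\,\omega_n(32r_L)^n(1+O(\eps_2^2)),$$
using that the whole cylinder portion of $\spt\|V\|$ lies in $\mathbf{B}_L$ and hence inside the graphical region. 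This is the place where the height bound (iv) is used essentially, because it rules out contributions from far-away sheets. The same reasoning gives (2) for the unit cylinder. For the ball $B_2$, i.e.\ $B_{32r_L}(p_L)\subset C_{32r_L}(p_L,P_H)$, the mass is at most $Q\omega_n 2^n(1+C\eps_2)<(Q+\tfrac12)\omega_n2^n$, which is (1).

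\textbf{Main obstacle.} I expect the main obstacle to be justifying that, in the (b) case and at levels where $P_H$ differs from $P_L$, the portion of $V$ in the tilted cylinder is exactly the $Q$-valued graph. In particular, no part of the graph should be lost near the lateral boundary. Lemma \ref{lemma:cm-3}(iv) is designed to supply exactly this: the containment in $\mathbf{B}_L$, together with $|p_L|+64r_L<5\sqrt n$, places everything inside the domain of $u$. With that in hand, the remaining steps are bookkeeping of constants.
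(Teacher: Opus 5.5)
Your proposal is correct and uses the same ingredients as the paper. These are the containment $\spt\|V\|\cap C_{32r_L}(p_L,P_H)\subset\mathbf{B}_L\subset B^{n+k}_{5\sqrt{n}}(0)$ from Lemma~\ref{lemma:cm-3}(iv) and \eqref{lemma:cm-2}, together with graphicality over $P_H$ with Lipschitz constant $\leq C_1\Etilt_V$ (via Lemma~\ref{lemma:cm-3}(ii)) for the mass hypotheses.

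The real difference is which term of $\min\{\hat{E},\Etilt\}$ you make small. Your main route uses the $L^\infty$ height bound of Lemma~\ref{lemma:cm-3}(iv) to get $\hat{E}\leq C\Etilt_V$ after rescaling, which is the quickest way to verify the hypothesis. The paper takes what you list as the alternative:
\[
\Etilt_V^2(C_{32r_L}(p_L,P_H))\leq 2^n\Etilt^2_{V,P_H}(\mathbf{B}_L)\leq C\bigl(\Etilt_V^2(\mathbf{B}_L)+\|\pi_{P_H}-\pi_{P_L}\|^2\bigr)\leq \tilde{C}^2\Etilt_V^2\ell(L)^{2(1-\frac{1}{12})},
\]
using Lemma~\ref{lemma:cm-4} and Lemma~\ref{lemma:cm-3}(i). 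The tilt version is what the paper actually needs next. Theorem~\ref{thm:eps-reg} bounds the Lipschitz constant of $u_{HL}$ by the tilt excess of the cylinder, so this estimate is the source of the gradient decay $\tilde{C}\Etilt_V\ell(L)^{1-\frac{1}{12}}$ in (II). The rescaled height, by contrast, is only of order $\Etilt_V$, with no gain in $\ell(L)$.

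One normalisation point: the paper treats $C_{32r_L}(p_L,P_H)$ as the \emph{unit} cylinder. Thus $\mathbf{B}_L$ plays the role of $B_2$, and $u_{HL}$ is obtained over $B_{16r_L}(p_L,P_H)$. Your rescaling by $16r_L$ would only give a representation over $B_{8r_L}(p_L,P_H)$, which is too small for Lemmas~\ref{lemma:cm-6}--\ref{lemma:cm-8}. Rescaling by $32r_L$ works with the same ingredients; the mass bound on $\mathbf{B}_L$ follows from the graph structure of $V$ over $P_0$.
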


\begin{proof}
	From Lemma \ref{lemma:cm-3}(iv) we know that in either case
	$$\spt\|V\|\cap C_{32r_L}(p_L,P_H)\subset\mathbf{B}_L$$
	(moreover from \eqref{lemma:cm-2} we know that $\mathbf{B}_L\subset B^{n+k}_{5\sqrt{n}}(0)$). Moreover, as by assumption $V$ is represented by a multi-valued graph over $B^n_{6\sqrt{n}}(0)$ in $\R^k\times B^n_{6\sqrt{n}}(0)$, with Lipschitz constant $\leq C(\mathcal{V})\Etilt_V$ (by Theorem \ref{thm:eps-reg}), Lemma \ref{lemma:cm-3}(ii) tells us that $V\res C_{32r_L}(p_L,P_H)$ is still a $Q$-valued Lipschitz graph over $P_H$ with Lipschitz constant controlled by $C_1\Etilt_V$, where $C_1 = C_1(\mathcal{V},M_0,N_0,C_e)$. In particular, the mass assumption in Theorem \ref{thm:eps-reg} holds for $V\res C_{32r_L}(p_L,P_H)$. But then from Lemma \ref{lemma:cm-3}(iv) we know
	\begin{align*}
		\Etilt_V^2(C_{32r_L}(p_L,P_H)) & \leq 2^n\Etilt_{V,P_H}^2(\mathbf{B}_L)\\
		& \leq 2^{n+1}\Etilt_V^2(\mathbf{B}_L) + 2^{n+1}\cdot (Q+1)\w_n\|\pi_{P_H}-\pi_{P_L}\|^2\\
		& \leq 2^{n+1}\cdot C^2\Etilt_V^2 \ell(L)^{2(1-\frac{1}{12})} + 2^{n+1}\cdot (Q+1)\w_n\cdot C_1^2\Etilt_V^2 \ell(L)^{2(1-\frac{1}{12})}
	\end{align*}
	where in the last inequality we have used Lemma \ref{lemma:cm-4} and Lemma \ref{lemma:cm-3}(i). To summarise, we have
	$$\Etilt_V(C_{32r_L}(p_L,P_H))\leq \tilde{C}\Etilt_V\ell(L)^{1-\frac{1}{12}}$$
	where $\tilde{C} = \tilde{C}(\mathcal{V},M_0,N_0,C_e,C_h)\in (0,\infty)$. Thus, if $\eps_2 = \eps_2(\mathcal{V},M_0,N_0,C_e,C_h)$ is such that $\tilde{C}\eps_2<\eps$, where $\eps = \eps(\mathcal{V})$ is as in Theorem \ref{thm:eps-reg}, we see that Theorem \ref{thm:eps-reg} is applicable here.
\end{proof}

From Lemma \ref{lemma:cm-5} we therefore see that one can control $V$ in the relevant regions by the size of the cube of interest. Thus, for cubes $H,L$ as in Lemma \ref{lemma:cm-5}, we may apply Theorem \ref{thm:eps-reg} to $V\res C_{32r_L}(p_L,P_H)$ to find a function $u_{HL}: B_{16r_L}(p_L,P_H)\to \A_Q(P_H^\perp)$ which is $GC^{1,\alpha}$ and obeys:
\begin{enumerate}
	\item [(I)] $V\res C_{16r_L}(p_L,P_H) = \mathbf{v}(u_{HL})$;
	\item [(II)] $\|u_{HL}\|_{C^0} \leq \widetilde{C}\Etilt_V\ell(L)$ and $\|Du_{HL}\|_{GC^{0,\alpha}}\leq \widetilde{C}\Etilt_V\ell(L)^{1-\frac{1}{12}}$
\end{enumerate}
where $\widetilde{C} = \widetilde{C}(\mathcal{V},M_0,N_0,C_e,C_h)$. Indeed, the bounds in (II) come from the bounds on the tilt-excess and height on the relevant balls provided by Lemma \ref{lemma:cm-4}, using also that $p_L\in \spt\|V\|$ as well as Lemma \ref{lemma:cm-3}(iv) so that we know $\spt\|V\|\cap C_{32r_L}(p_L,P_H)\subset\mathbf{B}_L$ (conclusion (I) above also uses this).

These functions will form the building blocks of our center manifold construction as well as for the comparison between $V$ and the center manifold. We call $u_{HL}$ the $(H,L)$\emph{-representation of $V$}.

We now begin building the center manifold from these building blocks. First, fix $\varrho\in C^\infty_c(B^n_1(0))$ a non-negative radial function with $\int_{\R^n}\varrho = 1$. For $\lambda>0$, we write $\varrho_\lambda(x):= \lambda^{-n}\varrho(x/\lambda)$.

For each $H,L\in \mathcal{R}\cup\mathcal{S}$ as in Lemma \ref{lemma:cm-5}, given the corresponding $(H,L)$-representation $u_{HL}$ of $V$, write
$$h_{HL}:= (u_{HL})_{\text{a}}\ast\varrho_{\ell(L)}$$
for the smoothed average, i.e.~we convolute by an amount proportional to the side-length of the cube the function was constructed relative to (cf.~Remark \ref{remark:cm-pulse}). We call $h_{HL}$ the \emph{tilted} $(H,L)$\emph{-convoluted average} (``tilted'' because it is defined over a subset of $P_H$, not of $P_0$).

Now, as the Lipschitz constant of $h_{HL}$ is bounded by $\tilde{C}\Etilt_V\ell(L)$ (from Lemma \ref{lemma:cm-5}), and moreover since $\|\pi_{P_H}-\pi_{P_0}\|\leq C_1\Etilt_V$ (from Lemma \ref{lemma:cm-3}(ii)), we may find a (smooth) function $g_{HL}: B^n_{8r_L}(x_L,P_0)\to \R^k$ such that
$$\graph(g_{HL}) = \graph(h_{HL})\cap (\R^k\times B^n_{8r_L}(p_L,P_H)).$$
We call $g_{HL}$ the $(H,L)$\emph{-convoluted average}.

\textbf{Remark:} In the special case $H=L$, we will call $u_L\equiv u_{LL}$ the $L$\emph{-representation of $V$}, $h_L\equiv h_{LL}$ the \emph{tilted $L$-convoluted average}, and $g_L\equiv g_{LL}$ the $L$\emph{-convoluted average}.

It is the functions $g_L$ which we will glue together to form the center manifold. In order to deduce (1) the relevant regularity of the gluing; and (2) that the center manifold is close to the average of $V$ in a very strong way, we will prove some estimates regarding the $g_{HL}$. The key fact for all this is that a multi-valued minimal graph representing a stationary integral varifold close to a plane has an average which is close to being harmonic, where the error term is \emph{cubic} (it is this cubic nature of the error which will give us the correct estimates for both (1) and (2) here). More precisely:

\begin{lemma}\label{lemma:cm-6}
	Let $C^*$ be as in Lemma \ref{lemma:cm-1} and $C_e\geq C^*$, $C_h\geq C^*C_e$. Assume also that $\eps_2 = \eps_2(\mathcal{V},M_0,N_0,C_e,C_h)$ is small enough so that the conclusions of Lemmas \ref{lemma:cm-3}, \ref{lemma:cm-4}, \ref{lemma:cm-5} hold. Let $H,L\in \mathcal{R}\cup\S$ be as in Lemma \ref{lemma:cm-5}. Then,
	$$\left|\int D(u_{HL})_{\textnormal{a}}\cdot D\zeta\right| \leq C\Etilt_V^3 r_L^{3(1-\frac{1}{12})}\|D\zeta\|_{L^1}$$
	for all $\zeta\in C^\infty_c(B_{16r_L}(p_L,P_H);P_H^\perp)$; here $C = C(\mathcal{V},M_0,N_0,C_e,C_h)$.
\end{lemma}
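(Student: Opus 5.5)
We will test the first variation of $V$ against vertical vector fields $X(y,x):=\zeta(x)$, where $x$ is the coordinate in $P_H$ and $\zeta\in C^\infty_c(B_{16r_L}(p_L,P_H);P_H^\perp)$ (modified away from $\spt\|V\|$ to have compact support in the cylinder). By conclusion (I) after Lemma \ref{lemma:cm-5}, we have $V\res C_{16r_L}(p_L,P_H)=\mathbf{v}(u_{HL})$, so the first variation can be written through the area formula as a sum over the $Q$ values. Writing $u=u_{HL}=\sum_\alpha\llbracket u^\alpha\rrbracket$, stationarity gives
$$0=\int\sum_{\alpha=1}^Q\sqrt{G(Du^\alpha)}\,G^{ij}(Du^\alpha)\,D_ju^\alpha\cdot D_i\zeta,$$
with $G_{ij}(Du^\alpha)=\delta_{ij}+D_iu^\alpha\cdot D_ju^\alpha$ as in the remark following the definition of the planar frequency. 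This identity holds for a.e. $x$, since $u$ is Lipschitz and $\zeta$ depends only on $x$. The computation is the same as that giving the general expressions for $H$ and $D$ in that remark.

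Next we Taylor expand the integrand. We have $\sqrt{G}G^{ij}=\delta_{ij}+R_{ij}(Du^\alpha)$, where $R_{ij}$ is smooth, vanishes to second order at $0$, and satisfies $|R_{ij}(A)|\le C|A|^2$ for $|A|\le 1$. The zeroth-order term, summed over $\alpha$, gives exactly $\sum_\alpha Du^\alpha\cdot D\zeta=Q\,Du_{\rm a}\cdot D\zeta$. Rearranging, we obtain
$$\Big|\int D u_{\rm a}\cdot D\zeta\Big|\le \frac1Q\int\sum_\alpha |R(Du^\alpha)||Du^\alpha||D\zeta|\le C\|Du\|_{L^\infty}^3\|D\zeta\|_{L^1}.$$
No linear term survives, because the integrand of the area functional is even in $Du$. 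This gives the cubic structure directly.

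It remains to bound $\|Du_{HL}\|_{L^\infty(B_{16r_L}(p_L,P_H))}\le \widetilde C\Etilt_V\ell(L)^{1-\frac1{12}}$. This is the sup part of the estimate (II) after Lemma \ref{lemma:cm-5}, which comes from Theorem \ref{thm:eps-reg} applied on $C_{32r_L}(p_L,P_H)$ together with the tilt bound $\Etilt_V(C_{32r_L}(p_L,P_H))\le\tilde C\Etilt_V\ell(L)^{11/12}$ from the proof of Lemma \ref{lemma:cm-5}. Since $r_L=M_0\sqrt n\,\ell(L)$, we have $\ell(L)\le r_L$ and so $\ell(L)^{3(11/12)}\le r_L^{3(11/12)}$, which yields the stated bound with $C=C(\mathcal V,M_0,N_0,C_e,C_h)$. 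We also need $\|Du\|_\infty\le1$ so that the Taylor bound on $R$ applies; this follows once $\eps_2$ is small.

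The only delicate point is the justification of the first step. We must make sure that the multi-valued first variation identity holds with the a.e. defined gradients, including at points where sheets cross or coincide. We must also make sure that the extension of $\zeta$ to a compactly supported ambient field does not create boundary contributions. For the extension we use Lemma \ref{lemma:cm-3}(iv): $\spt\|V\|\cap C_{32r_L}(p_L,P_H)\subset\mathbf{B}_L$ has bounded height, so a vertical cutoff equal to $1$ near $\spt\|V\|$ changes nothing. For the first point, the identity follows from the area formula applied to the Lipschitz $Q$-valued graph, exactly as in the remark, and we expect this to be routine.
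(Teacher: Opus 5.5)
Your proposal is correct and follows essentially the same route as the paper. The paper also tests the first variation of $V$ with a vertical extension of $\zeta$ that agrees with it near $\spt\|V\|$. Via the area formula for the graph of $u_{HL}$, it writes the discrepancy from $Q\int D(u_{HL})_{\textnormal{a}}\cdot D\zeta$ as a term cubic in $|Du_{HL}|$. It then applies the sup bound $\|Du_{HL}\|_{C^0}\leq \widetilde{C}\Etilt_V\ell(L)^{1-\frac{1}{12}}$ from (II) together with $\ell(L)\leq r_L$.

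Your explicit expansion $\sqrt{G}\,G^{ij}=\delta_{ij}+O(|Du|^2)$ just spells out the step the paper compresses into the single inequality $\left|\sum_j\int Du^j_{HL}\cdot D\zeta\right|\leq C\sum_j\int|Du^j_{HL}\cdot D\zeta||Du_{HL}|^2$.
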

\begin{proof}
	Fix $\zeta\in C^\infty_c(B_{16r_L}(p_L,P_H);P_H^\perp)$. Extend $\zeta$ vertically to a function $\tilde{\zeta}$ via $\tilde{\zeta}(\tilde{y},\tilde{x}):= \tilde{\zeta}(\tilde{x})$, where $\tilde{y}\in P_H^\perp$ and $\tilde{x}\in p_L+P_H$. Then, take $\bar{\zeta}$ to be any function $C^1_c(\R^{n+k};P_H^\perp)$ which agrees with $\tilde{\zeta}$ in a neighbourhood of $\spt\|V\|$. Then, if we take the vector field $\chi = \bar{\zeta}$ in the first variation formula for $V$, we get (using the area formula)
	$$\left|\sum_j \int D u^j_{HL}\cdot D\zeta\right| \leq C \sum_j\int |Du^j_{HL}\cdot D\zeta| |Du_{HL}|^2.$$
	Estimating the integral on the right-hand side, we have
	\begin{align*}
		\left|\int D(u_{HL})_{\text{a}}\cdot D\zeta\right| & \leq C\sup_{B_{16r_L}(p_L,P_H)}|Du_{HL}|^3\int|D\zeta|\\
		& \leq C\Etilt_V^3\ell(L)^{11/4}\|D\zeta\|_{L^1}\\
		& \equiv C\Etilt_V^3r_L^{11/4}\|D\zeta\|_{L^1}
	\end{align*}
	where here $C = C(\mathcal{V},M_0,N_0,C_e,C_h)$; in the second inequality here we used the pointwise bound on $|Du_{HL}|$ provided by the application of Theorem \ref{thm:eps-reg} enabled by Lemma \ref{lemma:cm-5}.
\end{proof}

The simple estimate in Lemma \ref{lemma:cm-6} will be crucial for numerous reasons. First, we use it to show that the functions $h_{HL}$ stay close to the original average $(u_{HL})_{\text{a}}$ in a very strong sense, with a better estimate than one might a priori expect. This will be crucial for controlling the ``bad'' error terms later.

\begin{lemma}\label{lemma:cm-7}
	Assume the assumptions and set-up of Lemma \ref{lemma:cm-6}. Then,
	$$\|h_{HL}-(u_{HL})_{\textnormal{a}}\|_{L^1(B_{15r_L}(p_L,P_H))}\leq C\Etilt_V^3r_L^{n+3(1-\frac{1}{12})+1},$$
	where $C = C(\mathcal{V},M_0,N_0,C_e,C_h)$.
\end{lemma}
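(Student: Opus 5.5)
The plan is the standard estimate for the difference between an almost-harmonic function and its mollification, via duality. Write $f:=(u_{HL})_{\textnormal{a}}$ on $B_{16r_L}(p_L,P_H)$ and $\lambda:=\ell(L)$, so that $\lambda\le r_L/(M_0\sqrt n)$ and $h_{HL}=f\ast\varrho_\lambda$ is defined on $B_{15r_L}(p_L,P_H)$. The basic observation is that, since $\varrho$ is radial, $\varrho_\lambda-\delta_0=\Delta\Phi_\lambda$, where $\Phi_\lambda(x)=\lambda^{2}\Phi(x/\lambda)$ and $\Phi$ is the radial solution of $\Delta\Phi=\varrho-\delta_0$. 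Because $\varrho$ has unit mass, Newton's theorem makes $\Phi$ vanish outside $B_1$, so $\Phi_\lambda$ is compactly supported in $B_\lambda$. It is also integrable, with $\|\Phi_\lambda\|_{L^1}\le C\lambda^{n+2}$ and $\|D\Phi_\lambda\|_{L^1}\le C\lambda^{n+1}$; the singularity at $0$ is of fundamental-solution type, so $D\Phi$ is still locally integrable.

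Hence for $x\in B_{15r_L}(p_L,P_H)$ we get
$$h_{HL}(x)-f(x)=\int f(y)\,\Delta\Phi_\lambda(x-y)\,\ext y=-\int Df(y)\cdot D_y\big[\Phi_\lambda(x-y)\big]\,\ext y$$
(up to sign), where the integration by parts is legitimate because $f$ is Lipschitz and $\Phi_\lambda(x-\cdot)$ is compactly supported in $B_\lambda(x)\subset B_{16r_L}(p_L,P_H)$.

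To get the $L^1$ bound, I would test against $\psi\in L^\infty(B_{15r_L};P_H^\perp)$ with $\|\psi\|_\infty\le1$. Fubini gives
$$\int(h_{HL}-f)\cdot\psi=-\int Df\cdot D\zeta,\qquad \zeta:=(\psi\one_{B_{15r_L}})\ast\check\Phi_\lambda .$$
This $\zeta$ is supported in $B_{16r_L}(p_L,P_H)$ and satisfies $\|D\zeta\|_{L^1}\le\|\psi\|_{L^1}\|D\Phi_\lambda\|_{L^1}\le Cr_L^n\lambda^{n+1}$. It is not smooth, but it is $W^{1,1}$ and compactly supported, so I would mollify it slightly to apply Lemma~\ref{lemma:cm-6}. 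Lemma~\ref{lemma:cm-6} then yields
$$\Big|\int(h_{HL}-f)\cdot\psi\Big|\le C\Etilt_V^3r_L^{3(1-\frac1{12})}\,r_L^{n}\lambda^{n+1}.$$
Taking the supremum over $\psi$ bounds the $L^1$ norm.

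The step I expect to need the most care is matching the powers. Written as above, the bound carries the factor $r_L^n\lambda^{n+1}$, while the claim only needs $r_L^{n+1}$. This is where the $L^1$ estimate on $D\Phi_\lambda$ must be the normalized one. Since $\varrho_\lambda$ is an $L^1$-normalized kernel, the correct scaling is $\|D\Phi_\lambda\|_{L^1}=\lambda\|D\Phi\|_{L^1}$, because $\Phi_\lambda(x)=\lambda^{2-n}\Phi(x/\lambda)$ under that normalization. That gives $\|D\zeta\|_{L^1}\le Cr_L^n\lambda\le Cr_L^{n+1}$, with $\lambda\le r_L$ absorbing constants depending on $M_0,n$.

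The remaining work is verifying the support inclusions using $\lambda\le r_L$, and checking that the pointwise gradient bound used in Lemma~\ref{lemma:cm-6} holds on all of $B_{16r_L}$. Both follow from the $(H,L)$-representation estimates (II).
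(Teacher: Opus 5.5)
Your proof is correct and is essentially the paper's argument. The paper also writes $h_{HL}(x)-(u_{HL})_{\textnormal{a}}(x)=\int D(u_{HL})_{\textnormal{a}}(x-y)\cdot D\zeta(y)\,\ext y$ for an explicit radial $W^{1,1}$ potential $\zeta$ supported in $B_{\ell(L)}(0)$ with $\|D\zeta\|_{L^1}\le C\ell(L)$; this is exactly your $\Phi_\lambda$, built there by the fundamental theorem of calculus rather than Newton's theorem. The paper then applies Lemma~\ref{lemma:cm-6} to $\zeta(x-\cdot)$ for each fixed $x$ and integrates over $B_{15r_L}(p_L,P_H)$, which is equivalent to your duality step.

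The only fix needed is presentational. The normalisation $\Phi_\lambda(x)=\lambda^{2}\Phi(x/\lambda)$, the bounds $C\lambda^{n+2}$ and $C\lambda^{n+1}$ in your first paragraph, and the intermediate display built on them are wrong as written. Replace them throughout with your corrected $\Phi_\lambda(x)=\lambda^{2-n}\Phi(x/\lambda)$, which gives $\|\Phi_\lambda\|_{L^1}\le C\lambda^2$ and $\|D\Phi_\lambda\|_{L^1}\le C\lambda$.
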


\begin{proof}
	This is a simple fact regarding convolutions using the estimate in Lemma \ref{lemma:cm-6}. For simplicity, let us write $h\equiv h_{HL}$, $u\equiv u_{HL}$, and $\ell = \ell(L)$. Then
	\begin{align*}
		h(x)-u_{\text{a}}(x) & = \int\varrho_{\ell}(y)(u_{\text{a}}(x-y)-u_{\text{a}}(x))\ \ext y\\
		& = \int\varrho_{\ell}(y)\left(\int^1_0Du_{\text{a}}(x-\sigma y)\cdot (-y)\ \ext\sigma\right)\ext y\\
		& = \int\int^1_0\varrho_{\ell}(y/\sigma)Du_{\text{a}}(x-y)\cdot\frac{-y}{\sigma^{n+1}}\ \ext\sigma\ext y\\
		& = \int Du_{\text{a}}(x-y)\cdot\underbrace{\left((-y)\int^1_0\varrho_{\ell}(y/\sigma)\sigma^{-n-1}\ \ext\sigma\right)}_{=:\,\Gamma(y)}\ext y.
	\end{align*}
	But note that if we set
	$$\zeta(y) := \int^\infty_{|y|}\tau\int^1_0\varrho_{\ell}\left(\frac{y\tau}{|y|\sigma}\right)\sigma^{-n-1}\ \ext\sigma\ext\tau$$
	then, because $\varrho_\ell$ is radial, we have $D\zeta = \Gamma$. As $\varrho$ has compact support we know that $\zeta$ has compact support, and moreover that $\spt(\zeta)\subset B_{\ell}(0)$. Note however that, whilst $\Gamma$ is smooth on $\R^n\setminus\{0\}$, it is unbounded on any neighbourhood of $0$ (as $n>1$), and so $\zeta$ is not $C^1$. We will instead show that $\zeta\in W^{1,1}$; this is enough for its applicability in Lemma \ref{lemma:cm-6} by an approximation argument. So, notice that
	$$\zeta(y) = |y|^2\int^\infty_1 t\int^1_0\varrho_{\ell}\left(\frac{yt}{\sigma}\right)\sigma^{-n-1}\ \ext\sigma\ext t$$
	and hence
	\begin{align*}
		\|\zeta\|_{L^1}&\leq\int\int^\infty_1\int^1_0 t|y|^2\varrho\left(\frac{yt}{\ell\sigma}\right)\ell^{-n}\sigma^{-n-1}\ \ext\sigma\ext t\ext y\\
		& = \ell^2 \left(\int|u|^2\varrho(u)\ext u\right)\left(\int^1_0\sigma\ext\sigma\right)\left(\int^\infty_1 t^{-n-1}\ext t\right)\\
		& \equiv C\ell^2<\infty
	\end{align*}
	and
	$$\|D\zeta\|_{L^1} = \|\Gamma\|_{L^1} \leq \int\int^1_0|y|\varrho\left(\frac{y}{\ell\sigma}\right)\ell^{-n}\sigma^{-n-1}\ext\sigma\ext y = \ell\left(\int |y|\varrho(y)\ext y\right)\left(\int^1_0\ext\sigma\right) \equiv C\ell<\infty$$
	which shows $\zeta\in W^{1,1}$. Thus, we have
	$$h(x)-u_{\text{a}}(x) = \int Du_{\text{a}}(x-y)\cdot D\zeta(y)\ \ext y = \int Du_{\text{a}}(y)\cdot D\zeta(x-y)\ \ext y$$
	and thus, using Lemma \ref{lemma:cm-6},
	\begin{align*}
		\|h-u_{\text{a}}\|_{L^1(B_{15r_L}(p_L,P_H)} & = \int_{B_{15r_L}(p_L,P_H)}\left|\int Du_{\text{a}}(y)\cdot D\zeta(x-y)\ \ext y\right|\ext x\\
		& \leq C\Etilt_V^3 r_L^{11/4}\int_{B_{15r_L}(p_L,P_H)}\int|D\zeta(x-y)|\ \ext y\ext x\\
		& \leq C\Etilt_V^3r_L^{11/4}\cdot C(n)r_L^n\cdot\|D\zeta\|_{L^1}\\
		& \equiv C\Etilt^3_V r_L^{n+15/4}
	\end{align*}
	as desired.
\end{proof}

The $L^1$ estimate in Lemma \ref{lemma:cm-7} will be used in two crucial ways. One, as already mentioned, is to control the ``bad'' error terms in the frequency computations later. Another way is to prove $C^{3,\beta}$-estimates for the functions $h_{HL}$, $g_{HL}$, for different $H,L$ when the domains overlap. This is done via certain interior Schauder estimates in the following lemma. We begin by comparing functions defined on subsets of the same plane.

\begin{lemma}\label{lemma:cm-8}
	Assume the assumptions and set-up of Lemma \ref{lemma:cm-6}. Then, for each $j\in \{0,1,2,3\}$,
	$$\|h_{HL}-h_H\|_{C^j(B_{6r_H}(p_H,P_H))} + \|g_{HL}-g_H\|_{C^j(B_{4r_H}(p_H,P_0))}\leq C\Etilt_V^3\ell(L)^{3(1-\frac{1}{12})+1-j},$$
	where $C = C(\mathcal{V},M_0,N_0,C_e,C_h)$. Moreover, for any $\beta\in (0,3/4)$
	$$\|h_{HL}-h_H\|_{C^{3,\beta}(B_{6r_H}(p_H,P_H))} + \|g_{HL}-g_H\|_{C^{3,\beta}(B_{4r_H}(p_H,P_0))} \leq C_\beta \Etilt_V^3\ell(L)^{3(1-\frac{1}{12})-2-\beta}$$
	where $C_\beta = C_\beta(\mathcal{V},M_0,N_0,C_e,C_h,\beta)$.
\end{lemma}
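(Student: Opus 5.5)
The plan is to compare the two smoothed averages through their common relationship with the graphical representations of $V$ over $P_H$, and then to use the near-harmonicity from Lemma \ref{lemma:cm-6} to upgrade $L^1$ closeness to $C^j$ closeness.

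\emph{Matching the unsmoothed averages.} Both $u_{HL}$ and $u_H\equiv u_{HH}$ are defined over subsets of the same plane $P_H$, and both represent $V$ there. Since $H\subset L$ or the two are neighbours with $\ell(H)\le\ell(L)$, and $r_H\le r_L$, the domains overlap on $B_{16r_H}(p_H,P_H)$. By (I), the two graphs coincide with $V$ on that region, so $(u_{HL})_{\rm a}=(u_H)_{\rm a}$ there. The difference between $h_{HL}$ and $h_H$ therefore comes only from the two convolution scales, $\ell(L)$ versus $\ell(H)$. Writing $w:=(u_H)_{\rm a}$ on $B_{15r_H}(p_H,P_H)$, I get
$$h_{HL}-h_H=(h_{HL}-w)-(h_H-w).$$
Lemma \ref{lemma:cm-7} gives $L^1$ bounds $C\Etilt_V^3 r_L^{n+15/4}$ and $C\Etilt_V^3r_H^{n+15/4}$ for the two pieces. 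Since $\ell(L)/\ell(H)\le 2^{\level(H)-\level(L)}$, I must make sure the constants in the case $H\subset L$ do not degrade. There the powers only improve, because $r_H\le r_L$.

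\emph{Upgrading $L^1$ closeness to $C^j$.} This step uses that each $h$ is almost harmonic. Take a standard mollified approximation: $\Delta h_{HL}=(\Delta(u_{HL})_{\rm a})\ast\varrho_{\ell(L)}$ in the distributional sense. Lemma \ref{lemma:cm-6} says that $D(u_{HL})_{\rm a}$ tested against $D\zeta$ is bounded by $C\Etilt_V^3 r_L^{11/4}\|D\zeta\|_{L^1}$. Testing against $\zeta=\varrho_{\ell}(x-\cdot)$ and its derivatives then gives
$$\|D^m\Delta h_{HL}\|_{C^0}\le C\Etilt_V^3 r_L^{11/4}\,\ell(L)^{-1-m}.$$
The same bound holds for $h_H$ with $\ell(H)\ge\ell(L)/2$ in case (b), and with the better scale in case (a). I then decompose the difference $f:=h_{HL}-h_H$ on $B_{15r_H}$ as a harmonic part plus a Newtonian-potential part. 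The potential part solves $\Delta f_1=\Delta f$ and is controlled in $C^{3,\beta}$ by the bound above, using $r_H\sim\ell(H)$ up to the constant $M_0$. The harmonic part is controlled in $C^j$ on $B_{6r_H}$ by its $L^1$ norm via the mean value property: it is at most $r_H^{-n-j}\|f\|_{L^1}+\dots$, which gives $\Etilt_V^3\ell^{15/4-j}$, matching the claimed exponent $3\cdot\frac{11}{12}+1-j$. The $C^{3,\beta}$ bound follows by interpolating between $j=3$ and $j=4$; the constraint $\beta<3/4$ arises only from the exponent bookkeeping.

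\emph{Transferring to $g$.} The functions $g_{HL}$ and $g_H$ are the reparametrizations of $h_{HL}$ and $h_H$ over $P_0$, via the same small rotation from $P_H$ to $P_0$ with $\|\pi_{P_H}-\pi_{P_0}\|\le C_1\Etilt_V$. A standard implicit-function estimate for graphs under a fixed rotation then gives $\|g_{HL}-g_H\|_{C^j}\le C\|h_{HL}-h_H\|_{C^j}$ on the slightly smaller ball $B_{4r_H}$, with $C$ depending on the $C^{3,\beta}$ norms of $h_H$. Those norms are themselves bounded, by the same potential estimate, by a small multiple of $\Etilt_V$.

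\emph{Main obstacle.} The delicate point is the claim in Step 2 that the $L^1$ estimate controls the high derivatives with the correct power. The argument has to treat $\Delta f$ honestly, since it is only small rather than zero. In case (b), where the convolution scales differ by a factor of 2, I also need the estimates to be uniform across the two representations.
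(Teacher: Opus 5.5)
Your single-step comparison is essentially the paper's basic step, and it works in case (b), where $\ell(L)/\ell(H)\in\{1,2\}$. It fails in case (a) when $L$ is a distant ancestor of $H$. The sentence ``there the powers only improve, because $r_H\le r_L$'' is where it breaks. $h_H$, and hence $f=h_{HL}-h_H$, is only available on a ball of radius comparable to $r_H$. So the interior (mean-value or Schauder) estimate costs a factor $r_H^{-n-j}$ acting on $\|f\|_{L^1}$. Your control of $h_{HL}-w$, however, is at scale $r_L$: Lemma \ref{lemma:cm-7} gives $C\Etilt_V^3r_L^{n+15/4}$, and even the pointwise form of its proof only gives $|h_{HL}-w|\le C\Etilt_V^3r_L^{15/4}$. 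This yields at best $C\Etilt_V^3r_L^{15/4}r_H^{-j}$. That loses a factor $(\ell(L)/\ell(H))^j=2^{j(\level(H)-\level(L))}$ against the claim, which is unbounded. The same defect affects your assertion that the $C^{3,\beta}$ norm of $h_H$ is $\le C\Etilt_V$ ``by the same potential estimate''. On $B_{15r_H}$ alone you only get bounds that blow up as $\ell(H)\to 0$, yet this uniform bound is exactly what the change of coordinates to $P_0$ needs.

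The missing ingredient is multiscale. $(u_{HL})_{\textnormal{a}}$ is almost harmonic with the improved constant $C\Etilt_V^3r_{L^k}^{11/4}$ on balls of radius $\sim r_{L^k}$, for every intermediate ancestor $L^k$. This is Lemma \ref{lemma:cm-6} for the pair $(H,L^k)$, which uses the sharper gradient bound on $u_{HL^k}$. The paper exploits this by telescoping along the chain $H=L^i\subset L^{i-1}\subset\cdots\subset L^{i_0}=L$. Consecutive $h_{HL^k}$ and $h_{HL^{k-1}}$ differ only in the convolution scale (ratio $2$). So on $B_{9r_{L^k}}(p_{L^k},P_H)$ their difference has $L^1$ norm $\le C\Etilt_V^3r_{L^k}^{n+15/4}$ and satisfies $\|D^m\Delta(\cdot)\|_{C^0}\le C\Etilt_V^3r_{L^k}^{7/4-m}$. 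Schauder estimates on a ball of radius comparable to $r_{L^k}$ then give $C^j$ bounds $C\Etilt_V^3r_{L^k}^{15/4-j}$ and $C^{3,\beta}$ bounds $C_\beta\Etilt_V^3r_{L^k}^{3/4-\beta}$. These hold on $B_{8r_{L^k}}(p_{L^k},P_H)$, which contains $B_{6r_H}(p_H,P_H)$. Summing the geometric series gives the claim in terms of $\ell(L)$. This summation is precisely where $\beta<3/4$ is used; it is not mere exponent bookkeeping. Running the same telescoping up to a level-$N_0$ ancestor also supplies the uniform $C^{3,\beta}$ bounds on $h_H$ and $h_{HL}$ needed to pass to $g_{HL}-g_H$.
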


\textbf{Remark:} Notice that $h_{HL}$ is defined on $B_{16r_L}(p_L,P_H)$ and $h_H$ is defined on $B_{16r_H}(p_H,P_H)$, and so since $|p_L-p_H|\leq 2\sqrt{n}\ell(L)$ both these domains contain $B_{6r_H}(p_H,P_H)$.

\textbf{Note:} The range of $\beta$ is $(0,3/4) \equiv (0,3(1-\frac{1}{12})-2)$; this indicates the dependence of $\beta$ on the choice of exponent in (EX), and why it needs to be $>2/3$.

\begin{proof}
	We have $H\in \mathcal{R}^j\cup \S^j$ and $L$ is either (a) an ancestor of $H$, or (b) $H\cap L\neq\emptyset$ with\footnote{In fact, going forward we will only need the case of (b) where $\text{level}(L) = \text{level}(H)$.} $\text{level}(L)-\text{level}(H)\in \{0,1\}$. Consider also another cube, $J$, where in the case of (a) $J$ is the parent of $L$ and in the case of (b) $J$ is the parent of $H$.
	
	We start by comparing $h_{HL}$ and $h_{HJ}$; the general case will follow by iteration and the triangle inequality. Notice that, where defined, the only difference between $h_{HL}$ and $h_{HJ}$ is the scaling of the convolution factor (this is simply because the average-part of $u_{HL}$ and $u_{HJ}$ coincide, since both functions coincide with $V$ on their domains of definition). So, in fact, where both are defined,
	$$h_{HL} - h_{HJ} = (u_{HL})_{\text{a}}\ast (\varrho_{\ell(L)}-\varrho_{\ell(J)}).$$
	Fix any $\zeta\in C^\infty_c(B_{9r_L}(p_L,P_H))$. Notice that
	$$\spt(\zeta\ast\varrho_{\ell(L)}),\, \spt(\zeta\ast\varrho_{\ell(J)})\subset B_{9r_L + r_J}(p_L,P_H) \subset B_{11r_L}(p_L,P_H)\subset B_{16r_J}(p_J,P_H)$$
	which guarantees the applicability of Lemma \ref{lemma:cm-6} with both $\zeta\ast\varrho_{\ell(L)}$ and $\zeta\ast\varrho_{\ell(J)}$ in place of $\zeta$. Again, on $\spt(\zeta)$ we have $(u_{HL})_{\text{a}} = (u_{HJ})_{\text{a}}$. So,
	$$\int\zeta\cdot\Delta(h_{HL}-h_{HJ}) = -\int D(h_{HL}-h_{HJ})\cdot D\zeta = -\int D(u_{HL})_{\text{a}}\cdot D(\zeta\ast(\varrho_{\ell(L)}-\varrho_{\ell(J)}))$$
	and so applying Lemma \ref{lemma:cm-6} we see (noting $\ell(J)/\ell(L)\in \{1,2\}$)
	$$\left|\int\zeta\cdot\Delta(h_{HL}-h_{HJ})\right|\leq C\Etilt_V^3 r_L^{11/4}\|D(\zeta\ast(\varrho_{\ell(L)}-\varrho_{\ell(J)}))\|_{L^1}.$$
	So, bounding
	\begin{align*}
	\|D(\zeta\ast(\varrho_{\ell(L)}-\varrho_{\ell(J)}))\|_{L^1} = \|\zeta\ast(D\varrho_{\ell(L)}-D\varrho_{\ell(J)})\|_{L^1} & \leq \|\zeta\||_{L^1}\|D\varrho_{\ell(L)}-D\varrho_{\ell(J)}\|_{L^1}\\
	& \leq C\ell(L)^{-1}\|\zeta\|_{L^1}.
	\end{align*}
	Combining the above inequalities, we get
	$$\left|\int\zeta\cdot\Delta(h_{HL}-h_{HJ})\right|\leq C\Etilt_V^3 r_L^{7/4}\|\zeta\|_{L^1}.$$
	As this is true for all $\zeta\in C^\infty_c(B_{9r_L}(p_L,P_H))$, this immediately gives
	$$\|\Delta(h_{HL}-h_{HJ})\|_{C^0(B_{9r_L}(p_L,P_H))}\leq C\Etilt_V^3r_L^{7/4}.$$
	Notice also that an identical argument gives for any $k\geq 0$ (simply integrating by parts to move the extra derivatives on the $\varrho_{\ell(L)}$ and $\varrho_{\ell(J)}$ terms, which ultimately gives an extra factor of $\ell(L)^{-k}$ in the bound)
	$$\left|\int\zeta\cdot \Delta(D^k(h_{HL}-h_{HJ}))\right|\leq C\Etilt_V^3r_L^{7/4-k}\|\zeta\|_{L^1}$$
	and thus
	$$\|\Delta(D^k(h_{HL}-h_{HJ}))\|_{C^0(B_{9r_L}(p_L,P_H))}\leq C\Etilt_V^3 r_L^{7/4-k}.$$
	Also, from Lemma \ref{lemma:cm-7} and the fact that $(u_{HL})_{\text{a}} = (u_{HJ})_{\text{a}}$ on $B_{9r_L}(p_L,P_H)$, we immediately get from the triangle inequality
	$$\|h_{HL}-h_{HJ}\|_{L^1(B_{9r_L}(p_L,P_H))}\leq C\Etilt_V^3r_L^{n+15/4}.$$
	To summarise, setting for ease of notation $\xi:= h_{HL}-h_{HJ}$, we currently have estimates
	$$\|\xi\|_{L^1(B_{9r_L}(p_L,P_H))}\leq C\Etilt_V^3 r_L^{n+15/4}$$
	and for each $k\geq 0$,
	$$\|\Delta(D^k\xi)\|_{C^0(B_{9r_L}(p_L,P_H))}\leq C\Etilt_V^3 r_L^{7/4-k}.$$
	In particular, we have for any distinct $x,y\in B_{9r_L}(p_L,P_H)$ and $\beta\in (0,3/4)$,
	\begin{align*}
		\frac{|\Delta(D^k\xi)(x)-\Delta(D^k\xi)(y)|}{|x-y|^\beta} & = \frac{|\Delta(D^k\xi)(x) - \Delta(D^k\xi)(y)|}{|x-y|}\cdot |x-y|^{1-\beta}\\
		& \leq 18^{1-\beta}\|\Delta(D^{k+1}\xi)\|_{C^0(B_{9r_L}(p_L,P_H))}\cdot r_L^{1-\beta}\\
		& \leq 18^{1-\beta}\cdot C\Etilt_V^3 r^{7/4-k-\beta}.
	\end{align*}
	Hence,
	$$[\Delta(D^k\xi)]_{C^{0,\beta}(B_{9r_L}(p_L,P_H))}\leq C\Etilt_V^3r_L^{7/4-k-\beta}.$$
	Using these bounds, the interior $C^{2,\beta}$ Schauder estimates give (applied with the operator $\Delta$ and the $L^1$ norm on the right-hand side of the Schauder estimate)
	$$\|\xi\|_{C^k(B_{17r_L/2}(p_L,P_L))}\leq C\Etilt_V^3 r_L^{15/4-k} \qquad \text{for }k\in \{0,1,2\}.$$
	In turn, using this supremum bound for $D\xi$ as well as the previous supremum bound on $\Delta(D\xi)$, applying the interior Schauder estimates again but now to $D\xi$, we get
	$$\|\xi\|_{C^k(B_{8r_L}(p_L,P_H))}\leq C\Etilt_V^3r_L^{15/4-k} \qquad \text{for }k\in\{0,1,2,3\}$$
	and
	$$\|\xi\|_{C^{3,\beta}(B_{8r_L}(p_L,P_H))}\leq C\Etilt_V^3r_L^{3/4-\beta}.$$
	This proves the desired estimates, except that for $h_{HL} - h_{HJ}$.
	
	We now deduce the general form of the first estimate in both claims of the lemma by iterating the above estimates. We split into the two different cases. Suppose first that $H\cap L\neq\emptyset$ and $\level(L)-\level(H)\in\{0,1\}$. Then, let $J$ be the parent of $H$. The above argument allows us to bound $h_{HL}-h_{HJ}$. But it also allows us to bound $h_H - h_{HJ}$ (if we apply it with $L=H$); the triangle inequality then gives the desired bounds on $h_{HL}-h_H$, since $B_{6r_H}(p_H,P_H)\subset B_{8r_L}(p_L,P_H)$ (again, as $|p_H-p_L|\leq 2\sqrt{n}\ell(L)$).
	
	In the second case, we have that $L$ is an ancestor of $H$. So take a chain $H\equiv L^j\subset L^{j-1}\subset\cdots\subset L^i\equiv L$, where $L^k\in \mathcal{R}^k\cup\S^k$ for all $k$. By the above (with $L^k$ in place of $L$ and $L^{k-1}$ in place of $J$) we know how to bound $h_{HL^k}-h_{HL^{k-1}}$ on the ball $B_{8r_{L^k}}(p_{L^k},P_H)$. But we know $B_{6r_H}(p_H,P_H)\subset B_{8r_{L^k}}(p_{L^k},P_H)$ for all such $k$, and so we may bound
	\begin{align*}
	\|h_H-h_{HL}\|_{C^{3,\beta}(B_{6r_H}(p_H,P_H))} & \leq \sum^j_{k=i+1}\|h_{HL^k}-h_{HL^{k-1}}\|_{C^{3,\beta}(B_{6r_H}(p_H,P_H))}\\
	& \leq C\Etilt_V^3\sum^j_{k=i+1}(2^{3/4-\beta})^{-k}\\
	& \leq C\Etilt_V^3\ell(L)^{3/4-\beta}
	\end{align*}
	which proves the first bound in the second claimed estimate of the lemma. The first bound in first claimed estimate of the lemma follows in an analogous fashion. This proves all the desired estimates on $h_{HL}-h_H$. The corresponding estimates for $g_{HL}-g_H$ then follow by simple facts regarding changing coordinates (see \cite[Lemma B.1]{DLS16a} for a precise statement).
\end{proof}

Whilst Lemma \ref{lemma:cm-8} controlled convoluted averages which are defined on the same plane, we also need to compare the convoluted averages which are defined on planes corresponding to adjacent cubes. This is needed to understand the regularity of the center manifold when we move from cube-to-cube. This is achieved in the following lemma.

\begin{lemma}\label{lemma:cm-9}
	Assume the assumptions and set-up of Lemma \ref{lemma:cm-6}, except those on $H,L$. Fix cubes $H,L\in \mathcal{R}\cup\S$ and suppose that $H$ has an ancestor $J$ (which we allow to be $H$ itself) such that $J\cap L\neq\emptyset$ and $\level(J) = \level(L)$. Then, there is a map $\hat{h}_L:B_{8r_J}(p_J,P_H)\to P_H^\perp$ such that:
	\begin{enumerate}
		\item [(a)] $\graph(\hat{h}_L) = \graph(h_L)\cap C_{8r_J}(p_J,P_H)$;
		\item [(b)] $\|h_{HJ}-\hat{h}_L\|_{L^1(B_{8r_J}(p_J,P_H))}\leq C\Etilt_V^3 \ell(J)^{n+3(1-\frac{1}{12})+1}$.
	\end{enumerate}
	Here $C = C(\mathcal{V},M_0,N_0,C_e,C_h)$.
\end{lemma}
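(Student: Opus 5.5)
The plan is to compare the two smoothed averages through the varifold $V$ itself. Both $u_{HJ}$ and $u_L$ represent $V$, over $P_H$ and over $P_L$ respectively, on overlapping cylinders. The key estimate will be Lemma \ref{lemma:cm-7}, applied to each representation.

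\textbf{Step 1: the reparametrisation $\hat{h}_L$.} First I will set up the geometry. Since $J\cap L\neq\emptyset$ and $\level(J)=\level(L)$, we have $|x_J-x_L|\le 2\sqrt{n}\ell(L)$. Lemma \ref{lemma:cm-3}(iii) gives $|p_J-p_L|\le 3\sqrt{n}\ell(L)$. Lemma \ref{lemma:cm-3}(i) applies to the pairs $(H,J)$ in case (a) and $(J,L)$ in case (b). With the triangle inequality it gives $\|\pi_{P_H}-\pi_{P_L}\|\le C\Etilt_V\ell(J)^{11/12}$.

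The function $h_L$ is smooth on $B_{16r_L}(p_L,P_L)$. By Lemma \ref{lemma:cm-8} with $H=L$ and the estimates (II), its Lipschitz constant is at most $C\Etilt_V\ell(L)^{11/12}$. Because $P_H$ and $P_L$ are this close, the graph of $h_L$ can be rewritten as a graph over $P_H$ on $B_{8r_J}(p_J,P_H)$. This is a standard small tilt of a Lipschitz graph (cf.~\cite[Lemma B.1]{DLS16a}), and it defines $\hat{h}_L$ with property (a).

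\textbf{Step 2: an intermediate tilted average.} The same tilting applied to $V$ gives $u_{HL}$, the $(H,L)$-representation from Lemma \ref{lemma:cm-5}. Note that $(H,L)$ need not be an admissible pair there. I would instead work directly with the representation of $V\res C_{16r_L}(p_L,P_H)$ over $P_H$ given by Theorem \ref{thm:eps-reg}. This is justified by the same bounds as in Lemma \ref{lemma:cm-5}, using the plane comparison from Step 1.

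On the common domain, $(u_{HJ})_{\rm a}$ and the $P_H$-average of $V$ coincide, since both are the average of the $Q$-valued graph of $V$ over $P_H$. By Lemma \ref{lemma:cm-7}, $h_{HJ}$ is $L^1$-close to this common average, with error $C\Etilt_V^3 r_J^{n+15/4}$. It therefore suffices to show the following: the $P_H$-average of $V$ is $L^1$-close to $\hat{h}_L$, with the same order of error.

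\textbf{Step 3: comparing averages across planes (main obstacle).} The average of a $Q$-valued graph is not exactly geometrically invariant under tilting the base plane. The average over $P_H$ differs from the tilted version of the average over $P_L$. The discrepancy is controlled pointwise by $C\|\pi_{P_H}-\pi_{P_L}\|\cdot\sup|u_L-(u_L)_{\rm a}|\cdot\mathrm{Lip}(u_L)$, up to higher order terms. This is a quadratic interaction between the tilt and the spread of the sheets.

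For this term I will use the bounds $\|u_L\|_{C^0}\le C\Etilt_V\ell(L)$ and $\mathrm{Lip}\le C\Etilt_V\ell(L)^{11/12}$. Together with the tilt bound $C\Etilt_V\ell^{11/12}$, this gives a pointwise error of order $\Etilt_V^3\ell^{1+22/12}$, which is better than $\ell^{3\cdot 11/12+1}$. Integrating over a ball of radius of order $r_J$ then gives the required $L^1$ error $\Etilt_V^3\ell(J)^{n+15/4}$.

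The remaining difference is between the tilted $L$-average and $\hat{h}_L$. This is handled by Lemma \ref{lemma:cm-7} applied to $u_L$, transported by the graph change of coordinates, which is bi-Lipschitz with constant close to $1$. I expect the careful pointwise cubic estimate in Step 3 for the non-commutation of averaging and tilting to be the delicate point. I would try to do it by writing each sheet explicitly as a graph and expanding in the tilt to first order.
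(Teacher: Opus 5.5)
Your decomposition is the same as the paper's. You pass through $(u_{HJ})_{\textnormal{a}}$ and the reparametrisation $\hat u$ of $(u_L)_{\textnormal{a}}$ over $P_H$, and you use Lemma~\ref{lemma:cm-7} for the two smoothing errors.

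\textbf{The gap is in Step~3.} This is the only non-routine term, and there your exponent comparison is backwards. Your three factors give $\|\pi_{P_H}-\pi_{P_L}\|\cdot\mathrm{Lip}(u_L)\cdot\sup|u_L-(u_L)_{\textnormal{a}}|\le C\Etilt_V^3\ell^{\frac{11}{12}+\frac{11}{12}+1}=C\Etilt_V^3\ell^{17/6}$. Since $\frac{17}{6}=\frac{15}{4}-\frac{11}{12}$ and $\ell\le 1$, this is \emph{larger} than the required $\Etilt_V^3\ell^{15/4}$ by a factor $\ell^{-11/12}$. Integrating over $B_{8r_J}(p_J,P_H)$ therefore yields only $\|(u_{HJ})_{\textnormal{a}}-\hat u\|_{L^1}\le C\Etilt_V^3\ell(J)^{n+17/6}$, not (b).

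This loss matters. The point of (b) is an exponent above $n+3$, which is what produces the $C^3$ and $C^{3,\beta}$ bounds in Lemma~\ref{lemma:cm-10}(ii) and Theorem~\ref{thm:cm}. An $L^1$ bound of order $\ell(J)^{n+17/6}$ controls no third derivatives.

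A more careful first-order expansion will not fix it. The leading discrepancy is $\frac{1}{Q}\sum_i(Du^i-Du_{\textnormal{a}})A^T(u^i-u_{\textnormal{a}})$, with $|A|\approx\|\pi_{P_H}-\pi_{P_L}\|$. None of the bounds you have for its three factors supplies the missing $\ell^{11/12}$:
\begin{itemize}
\item the tilt is controlled only by Lemma~\ref{lemma:cm-3}(i);
\item the gradient spread is controlled only by the excess bound, even in $L^2$;
\item the height bound is only linear in $\ell$, because (HT) is linear.
\end{itemize}

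\textbf{How the paper handles this term.} The paper does not obtain it from such a product either. Its proof reduces (b) to the same $L^1$ bound on $(u_{HJ})_{\textnormal{a}}-\hat u$, and then defers that bound to \cite[Lemma 5.6]{DLS16a}. The paper does use exactly your product estimate in the proof of Theorem~\ref{thm:cm-14}(iii). There it records the weaker exponent $n+2(1-\frac{1}{12})+1=n+\frac{17}{6}$, which suffices for the frequency error terms but not here.

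So your proposal is missing a genuinely better estimate for the non-commutation of averaging and tilting. Be aware that this step cannot simply be imported from \cite{DLS16a} either. There the (HT) threshold is superlinear, $\ell(L)^{1+\beta_2}$, against a tilt rate $\ell^{1-\delta_2}$ with $\beta_2=4\delta_2$. With the gradient factor controlled in $L^2$ by the excess, this pushes the analogous product to $\ell^{3+\beta_2-2\delta_2}$, above $\ell^3$. With the linear (HT) threshold used in this paper, the stated bounds give no such margin.
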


\begin{proof}
	Note first from Lemma \ref{lemma:cm-3} that
	$$\|\pi_{P_H}-\pi_{P_L}\|\leq \|\pi_{P_H}-\pi_{P_J}\| + \|\pi_{P_J}-\pi_{P_L}\|\leq 2C_1\Etilt_V\ell(J)^{11/12}.$$
	Note also that, by Lemma \ref{lemma:cm-7},
	$$\|h_{HJ}-(u_{HJ})_{\text{a}}\|_{L^1(B_{15r_J}(p_J,P_H))}\leq C\Etilt_V^3 r_J^{n+15/4}$$
	and (as $r_L = r_J$)
	$$\|h_L-(u_L)_{\text{a}}\|_{L^1(B_{15r_J}(p_L,P_L))}\leq C\Etilt_V^3r_J^{n+15/4}.$$
	We may find functions $\hat{h}_L,\hat{u}:B_{8r_J}(p_J,P_H)\to P_H^\perp$ such that $\graph(\hat{h}_L) = \graph(h_L)\cap C_{8r_J}(p_J,P_H)$ and $\graph(\hat{u})=\graph((u_L)_{\text{a}})\cap C_{8r_J}(p_J,P_H)$, and moreover from the above $L^1$ bound (using \cite[Lemma B.1]{DLS16a}),
	$$\|\hat{h}_L-\hat{u}\|_{L^1(B_{8r_J}(p_J,P_H))}\leq C\Etilt_V^3 r_J^{n+15/4}.$$
	Thus by the triangle inequality, writing $B = B_{8r_J}(p_J,P_H)$ for notational simplicity,
	\begin{align*}
		\|h_{HJ}-\hat{h}_L\|_{L^1(B)} & \leq \|h_{HJ}-(u_{HJ})_{\text{a}}\|_{L^1(B)} + \|(u_{HJ})_{\text{a}}-\hat{u}\|_{L^1(B)} + \|\hat{u}-\hat{h}_L\|_{L^1(B)}\\
		& \leq C\Etilt_V^3r_J^{n+15/4} + \|(u_{HJ})_{\text{a}}-\hat{u}\|_{L^1(B)}
	\end{align*}
	and so it suffices to estimate $\|(u_{HJ})_{\text{a}}-\hat{u}\|_{L^1(B)}$. This follows again from simple facts regarding changing coordinates (indeed, one can write down an expression for $\hat{u}$ in terms of $(u_{L})_{\text{a}}$, the inverse of the map $x\mapsto x+(u_L)_{\text{a}}(x)$, and the projection maps to the planes in question). We refer the reader to \cite[Lemma 5.6]{DLS16a} for the details.
\end{proof}

\textbf{Step 4: Gluing and completion of the construction.} We are now in a position to prove the main estimates on the convoluted averages $g_H$ in order to glue them together to form the center manifold. First, we introduce some further notation which we will need for this. For each $j$ set
$$\mathcal{P}^j:=\mathcal{R}^j\cup\bigcup^j_{i=N_0}\mathcal{S}^i.$$
We also fix a function $\vartheta\in C^\infty_c(\left[-\tfrac{17}{16},\tfrac{17}{16}\right]^n;[0,1])$ with $\vartheta\equiv 1$ on $[-1,1]^n$ and $|D\vartheta|\leq C(n)$. For a cube $L\in \mathcal{R}\cup \S$ we set $\vartheta_L(y):=\vartheta\left(\frac{y-x_L}{\ell(L)}\right)$. We then define the \emph{glued interpolation} at step $j$, $\wp_j:[-4,4]^n\to \R^k$, by
$$\wp_j:= \frac{\sum_{L\in\mathcal{P}^j}\vartheta_Lg_L}{\sum_{L\in\mathcal{P}^j}\vartheta_L}.$$
We will show that the $\wp_j$ converge (in $C^3$) to a function $\wp$, the graph of which will be our center manifold. First, we need some final estimates relating the $g_H$ to one another and to $\wp_j$.

\begin{lemma}\label{lemma:cm-10}
	Assume the assumptions and set-up of Lemma \ref{lemma:cm-6}, except those on $H,L$. Then for any $H,L\in\mathcal{P}^j$ we have:
	\begin{enumerate}
		\item [(i)] For any $\beta\in (0,3/4)$, $\|g_H\|_{C^{3,\beta}(B_{4r_H}(x_H,P_0))}\leq C_\beta\Etilt_V$;
		\item [(ii)] If $H\cap L\neq\emptyset$, then $\|g_H-g_L\|_{C^j(B_{r_H}(x_H))}\leq C_\beta\Etilt_V\ell(H)^{3(1-\frac{1}{12})+1-j-\beta}$ for any $j\in \{0,1,2,3\}$ and $\beta\in (0,3/4)$;
		\item [(iii)] For any $\beta\in (0,3/4)$, $|D^3g_H(x_H)-D^3g_L(x_L)|\leq C_\beta\Etilt_V|x_H-x_L|^{3(1-\frac{1}{12})-2-\beta}$;
		\item [(iv)] $\|g_H-y_H\|_{C^0(B_{4r_H}(x_H,P_0))}\leq C\Etilt_V\ell(H)$ and, if $P_x^{g_H}$ is the tangent plane to $\graph(g_H)$ at $x$, then $\|\pi_{P_H}-\pi_{P_x^{g_H}}\|\leq C\Etilt_V\ell(H)^{1-\frac{1}{12}}$ for all $x\in H$;
		\item [(v)] If $\tilde{L}$ is the cube concentric to $L\in\S^j$ with $\ell(\tilde{L}) = \frac{9}{8}\ell(L)$, then for all $i\geq j$,
		$$\|\wp_i-g_L\|_{L^1(\tilde{L})}\leq C\Etilt_V^3 \ell(L)^{n+3(1-\frac{1}{12})+1}.$$
	\end{enumerate}
	Here, $C$ is a constant depending on $\mathcal{V},M_0,N_0,C_e,C_h$, and $C_\beta$ is a constant allowed to also depend on $\beta$.
\end{lemma}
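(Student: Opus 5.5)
The plan is to follow the corresponding argument in \cite[Proposition 4.4]{DLS16a}, using Lemmas \ref{lemma:cm-7}, \ref{lemma:cm-8}, \ref{lemma:cm-9} as the three inputs. The unifying tool is an elliptic-regularity principle for functions $\xi$ on a ball $B_{cr}$ that are harmonic up to a small error. We assume two bounds:
$$\|\xi\|_{L^1(B_{cr})}\leq A\,r^{n+s} \qquad \text{and} \qquad \|\Delta D^k\xi\|_{C^0}\leq A\,r^{s-2-k}.$$
Then interior Schauder estimates, as in the proof of Lemma \ref{lemma:cm-8}, give
$$\|\xi\|_{C^j(B_r)}\leq C A\,r^{s-j} \qquad \text{and} \qquad [D^3\xi]_{\beta}\leq CA\,r^{s-3-\beta}.$$
Each item reduces to checking these two hypotheses, with $s=3\cdot\frac{11}{12}+1=\frac{15}{4}$ and $A=C\Etilt_V^3$.

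For (i), apply Lemma \ref{lemma:cm-8} with $L$ an ancestor of level $N_0$. This gives $\|g_H-g_{HL}\|_{C^{3,\beta}}\leq C\Etilt_V^3$. In turn, $g_{HL}$ is a mollification at a fixed scale $2^{-N_0}$ of an average with Lipschitz constant $\leq C\Etilt_V$, so $\|g_{HL}\|_{C^{3,\beta}}\leq C(N_0)\Etilt_V$. Then use $\Etilt_V^3\leq\Etilt_V$.

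For (ii), by (W3) and the construction we may take a common ancestor $J$ of level equal to $\level(L)$ (or its parent). Lemma \ref{lemma:cm-8} compares $g_H$ with $g_{HJ}$. Lemma \ref{lemma:cm-9} transfers $h_L$ onto the plane $P_H$ with an $L^1$ estimate of order $\Etilt_V^3\ell^{n+15/4}$. Both $h_{HJ}$ and $\hat h_L$ are almost harmonic. I would verify this by reusing the integration-by-parts argument of Lemma \ref{lemma:cm-8} together with Lemma \ref{lemma:cm-6}, now with the tilt between $P_H$ and $P_L$ controlled by Lemma \ref{lemma:cm-3}(i). That control is what forces the loss of $\beta$ in the exponent. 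The Schauder principle then upgrades the $L^1$ bound to $C^j$ bounds, and changing coordinates to $P_0$ via \cite[Lemma B.1]{DLS16a} gives (ii).

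For (iii), I would reduce to (ii) along a chain of neighbouring cubes. Take $H,L$ with $|x_H-x_L|\sim d$. If $d\lesssim\ell(H)$, then (ii) with $j=3$ combined with the Hölder seminorm in (i) (sharpened via Lemma \ref{lemma:cm-8} to a seminorm of order $\ell^{-\beta}$ relative to the coarser cube) suffices. Otherwise, pass to ancestors $H',L'$ of size $\sim d$, which are adjacent or equal, and telescope:
$$D^3g_H(x_H)-D^3g_{H'}(x_H)+D^3g_{H'}(x_H)-D^3g_{L'}(x_L)+\cdots$$
Each term is bounded by Lemma \ref{lemma:cm-8} or (ii) by $C\Etilt_V d^{3/4-\beta}$. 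The ancestors exist in $\mathcal{R}$ except near level $N_0$, where $d\gtrsim 2^{-N_0}$ and (i) suffices.

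For (iv), the $C^0$ bound follows from Lemma \ref{lemma:cm-4}, the height bounds in Lemma \ref{lemma:cm-3}(iii), and $|g_H-(u_H)_a|$ small. The tangent-plane bound follows from (II) after the representation of $u_H$ and convolution preserve the $C^1$ estimate, together with \cite[Lemma B.1]{DLS16a}.

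For (v), on $\tilde L$ the function $\wp_i$ is a convex combination of the $g_{H}$ with $H\in\mathcal{P}^i$ meeting $\tilde L$. Since $L\in\S^j$ is never refined, these are $L$ and its neighbours of comparable level, by (W3) and (NN). Hence
$$|\wp_i-g_L|\leq\sum_H\vartheta_H|g_H-g_L|\Big/\sum\vartheta_H.$$
The $L^1$ estimate then follows from (ii) with $j=0$. To avoid the $\beta$ loss, I would use the sharper $L^1$ estimate from Lemma \ref{lemma:cm-9}(b) together with Lemma \ref{lemma:cm-8} directly, rather than (ii).

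The main obstacle is (ii) across tilted planes, namely establishing the almost-harmonicity of $\hat h_L$ in $P_H$-coordinates.
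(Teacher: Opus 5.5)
Your arguments for (i), (iii), (iv) and (v) essentially follow the paper's:
\begin{itemize}
\item (i) uses Lemma~\ref{lemma:cm-8} against a level-$N_0$ ancestor;
\item (iii) telescopes up to the first same-level intersecting ancestors $\tilde H,\tilde L$ and uses $|x_H-x_L|\geq c(n)\ell(\tilde L)$;
\item (iv) comes from the estimates (II) plus a change of coordinates;
\item (v) uses the direct $L^1$ comparison (Lemma~\ref{lemma:cm-9}(b) with Lemma~\ref{lemma:cm-8}) rather than (ii).
\end{itemize}
Bypassing (ii) in (v) is indeed necessary, since (ii) with $j=0$ would lose both $\beta$ and two powers of $\Etilt_V$.

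The gap is in (ii). Your route needs $\hat h_L$ to be almost harmonic in $P_H$-coordinates, i.e.\ $\|\Delta D^k(h_{HJ}-\hat h_L)\|_{C^0}\lesssim\Etilt_V^3\ell(L)^{\frac74-k}$, and you leave this open as the main obstacle. It is not a reuse of the integration by parts in Lemma~\ref{lemma:cm-8}. There, both functions are mollifications over the same plane of the same average, so $\int\zeta\,\Delta(\cdot)$ can be rewritten and bounded by Lemma~\ref{lemma:cm-6}. Here $h_L$ is mollified in $P_L$-coordinates, and neither the mollification nor the Laplacian commutes with re-graphing over $P_H$. Proving your claim would require a genuine nonlinear change-of-variables computation for $\Delta D^k\hat h_L$. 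It tracks all derivatives of $h_L$ up to order $k+2$, with $\|D^m h_L\|\lesssim\Etilt_V\ell(L)^{\frac{11}{12}+1-m}$, against the tilt $\lesssim\Etilt_V\ell(L)^{11/12}$, and the top-order terms land exactly at order $\Etilt_V^3\ell(L)^{\frac74-k}$ with no room to spare. The same issue reaches (iii). Comparing $g_H$ with an ancestor's $g_{H'}$ is a cross-plane comparison, and Lemma~\ref{lemma:cm-8} only compares $g_{HH'}$ with $g_H$.

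None of this is needed. Transporting to $P_0$ the $L^1$ estimate you already use for (v) gives $\|g_H-g_L\|_{L^1(B_{3r_H/2}(x_H,P_0))}\le C\Etilt_V^3\ell(H)^{n+15/4}$. Part (i) gives $\|g_H-g_L\|_{C^{3,\beta'}}\le C_{\beta'}\Etilt_V$. Now apply the elementary interpolation inequality
$$\|D^jf\|_{C^0(B_\rho)}\le C\rho^{-n-j}\|f\|_{L^1(B_{2\rho})}+C\rho^{3+\beta'-j}[D^3f]_{\beta',B_{2\rho}}$$
on balls of radius comparable to $r_H$, with $\beta'=\frac34-\beta$; this yields (ii) directly. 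The same argument for a cube and its parent supplies what (iii) needs.

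This also corrects your attribution of the $\beta$-loss. A Schauder argument with the error orders you state would give $\Etilt_V^3\ell(H)^{\frac{15}{4}-j}$ with no loss at all, as in Lemma~\ref{lemma:cm-8}. The loss of $\beta$ in (ii), and the prefactor $\Etilt_V$ rather than $\Etilt_V^3$, are the price of interpolating against the merely $O(\Etilt_V)$ bound from (i).
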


\begin{proof}
	Claim (i) follows immediately from Lemma \ref{lemma:cm-8}: simply take $L^{N_0}\in \mathcal{R}^{N_0}$ to be an ancestor of $H$ in Lemma \ref{lemma:cm-8} and use the triangle inequality, since the estimates on $\|g_{HL^{N_0}}\|_{C^{3,\beta}}$ follow immediately from those on $h_{HL^{N_0}}$ (which in turn follow from Lemma \ref{lemma:cm-8} using estimates for $u_{HL^{N_0}}$ which follow from the discussion after Lemma \ref{lemma:cm-5}).
	
	Regarding (ii), fix $H,L$ such that $H\cap L \neq \emptyset$. From our construction of $\mathcal{R}\cup\mathcal{S}$, this means that we either have $\ell(H) = \ell(L)$ or (without loss of generality by swapping $H$ and $L$) $\ell(H) = \frac{1}{2}\ell(L)$. We claim that in either case we have
	\begin{equation}\label{E:cm-lemma-10-1}
		\|h_H-\hat{h}_L\|_{L^1(B_{2r_H}(p_H,P_H))}\leq C\Etilt_V^3\ell(H)^{n+15/4}.
	\end{equation}
	Indeed, in the case $\ell(H) = \ell(L)$, we can just apply Lemma \ref{lemma:cm-9} (take $J=H$) to get this (in fact, we get control on the $L^1$ norm on the larger set $B_{8r_H}(p_H,P_H)$). In the case $\ell(H) = \frac{1}{2}\ell(L)$, let $J$ be the parent of $H$; obviously $J\cap L\neq\emptyset$. So apply Lemma \ref{lemma:cm-9} to get $\|h_{HJ}-\hat{h}_L\|_{L^1(B_{8r_J}(p_J,P_H))}\leq C\Etilt_V^3\ell(J)^{n+15/4}$. But from Lemma \ref{lemma:cm-8} we know $\|h_H - h_{HJ}\|_{L^1(B_{6r_H}(p_H,P_H))}\leq C\Etilt_V^3\ell(J)^{n+15/4}$, and so from the triangle inequality we get $\|h_H-\hat{h}_L\|_{L^1(B_{2r_H}(p_H,P_H))}\leq C\Etilt_V^3\ell(H)^{n+15/4}$, giving \eqref{E:cm-lemma-10-1}. Thus, \eqref{E:cm-lemma-10-1} is established. But this estimate then passes to $g_H-g_L$ on a smaller ball by changing coordinates (see \cite[Lemma B.1]{DLS16a}), giving
	$$\|g_H-g_L\|_{L^1(B_{3r_H/2}(p_H,P_0))}\leq C\Etilt_V^3\ell(H)^{n+15/4}.$$
	Finally, since we know $\|g_H-g_L\|_{C^{3,\beta}(B_{3r_H/2}(p_H,P_0))}\leq C_\beta\Etilt_V$ from (i), we can apply the interpolation inequalities for Hölder spaces (or interior Schauder estimates) to get (ii).

    For (iii), notice that if we applied the argument for (ii) instead assuming that $L$ is the parent of $H$, we would deduce that for any $0<\gamma<\beta<3/4$ we have
        $$[D^3g_H-D^3g_L]_{\gamma,B_{r_H}(x_H)}\leq C_{\beta,\gamma}\Etilt_V\ell(H)^{\beta-\gamma}.$$
        Hence (taking e.g.~$\gamma = \beta/2$)
        \begin{align*}
            |D^3g_H(x_H)-D^3g_L(x_L)| & \leq |D^3g_H(x_H)-D^3g_H(x_L)| + |D^3g_H(x_L)-D^3g_L(x_L)|\\
            & \leq \ell(H)^\gamma [D^3g_H-D^3g_L]_{\gamma,B_{r_H}(x_H)} + |D^3g_H(x_L)-D^3g_L(x_L)|\\
            & \leq \ell(H)^\gamma\cdot C_{\beta,\beta/2}\Etilt_V\ell(H)^{\beta-\gamma} + C_{3/4-\beta}\Etilt_V\ell(H)^{\beta}\\
            & \equiv C_{\beta}\Etilt_V\ell(L)^{\beta}.
        \end{align*}
	If we iterate this and use the triangle inequality, we also get that for any ancestor $L$ of $H$, for any $\beta\in (0,3/4)$,
	$$|D^3g_H(x_H)-D^3g_L(x_L)|\leq C_\beta\Etilt_V\ell(L)^{\beta}.$$
	But now given any $H,L\in\mathcal{P}^j$, let $\tilde{H}$ and $\tilde{L}$ be the first ancestors of $H$ and $L$ with the same level and non-empty intersection. The above bound combined with (ii) then gives
	\begin{align*}
		|D^3g_H(x_H)-D^3g_L(x_L)| & \leq |D^3g_H(x_H)-D^3g_{\tilde{H}}(x_{\tilde{H}})|\\
		&\hspace{3em} + |D^3g_{\tilde{H}}(x_{\tilde{H}})-D^3g_{\tilde{L}}(x_{\tilde{L}})| + |D^3g_{\tilde{L}}(x_{\tilde{L}}) - D^3g_L(x_L)|\\
		& \leq C_\beta\Etilt_V\ell(\tilde{L})^{\beta}.
	\end{align*}
	But one can check from the cube construction that $|x_L-x_H|\geq c(n)\ell(\tilde{L})$, which therefore completes the proof of (iii) (one changes $\beta$ to $3/4-\beta$ to get the exact form of (iii)).
	
	To see (iv), the first bound follows simply from the height bound in the application of Theorem \ref{thm:eps-reg} which gave $h_H$ (see Lemma \ref{lemma:cm-5} and the subsequent discussion) and then changing coordinates to $g_H$ (see \cite[Lemma B.1]{DLS16a}). Moreover, also from the application of Theorem \ref{thm:eps-reg} in Lemma \ref{lemma:cm-5}, we know that
	$$\|Dh_H\|_{C^{0}(B_{6r_H}(p_H,P_H))}\leq C\Etilt_V\ell(H)^{11/12}$$
	and thus the second inequality in (iv) follows immediately, with $\pi_{P_x^{h_H}}$ in place of $\pi_{P_x^{g_H}}$. But then the graph of $g_H$ over $B_{4r_H}(x_H,P_0)$ is a subset of the graph of $h_H$ over $B_{5r_H}(p_H,P_H)$, and so the tangent planes coincide on this region; thus, the result with $\pi_{P_x^{g_H}}$ follows.
	
	Finally we prove (v). Fix $L\in\mathcal{S}^j$. For $i\geq j$, let $\mathcal{P}^i(L)$ denote the set of all cubes in $\mathcal{P}^i$ which intersect $L$. If $\tilde{L}$ is as in (v), then noting that if $H\in\mathcal{P}^i$ intersects $\tilde{L}$ it must be in $\mathcal{P}^i(L)$, we have by definition of $\wp_i$
	$$\|\wp_i-g_L\|_{L^1(\tilde{L})}\leq C(n)\sum_{H\in\mathcal{P}^i(L)}\|g_H-g_L\|_{L^1(B_{r_L}(p_L,P_0))}\leq C\Etilt_V^3\ell(L)^{n+15/4}$$
	where we have used the fact that the number of cubes in $\mathcal{P}^i(L)$ is bounded by some constant depending only on $n$.
\end{proof}

Finally, we can construct our center manifold from the $\wp_j$.

\begin{theorem}[Existence of Center Manifold]\label{thm:cm}
	Let $C^*$ be as in Lemma \ref{lemma:cm-1} and $C_e\geq C^*$, $C_h\geq C^*C_e$. Then, if $\eps_2 = \eps_2(\mathcal{V},M_0,N_0,C_e,C_h)\in (0,1)$ is sufficiently small, we have:
	\begin{enumerate}
		\item [(i)] For any $\beta\in (0,3/4)$, $\|\wp_j\|_{C^{3,\beta}([-4,4]^n)}\leq C_\beta\Etilt_V$;
		\item [(ii)] if $L\in\mathcal{S}^i$ and $H$ is a cube concentric to $L$ with $\ell(H) = \frac{9}{8}\ell(L)$, then $\wp_j = \wp_k$ on $H$ for any $j,k\geq i+2$;
		\item [(iii)] $\wp_j$ converges in $C^{3,\beta}$, for any $\beta\in (0,3/4)$, to a function $\wp:[-4,4]^n\to\R$ which is $C^{3,\gamma}$ for any $\gamma\in (0,3/4)$.
	\end{enumerate}
	Here, $C_\beta = C_\beta(\mathcal{V},M_0,N_0,C_e,C_h,\beta)\in (0,\infty)$.
\end{theorem}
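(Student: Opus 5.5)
The plan follows the standard De~Lellis--Spadaro gluing argument, using the estimates of Lemma~\ref{lemma:cm-10}. The main analytic input is that the partition of unity $\{\vartheta_L\}$ is locally finite with bounded overlap. Fix $x\in[-4,4]^n$ and $j$. The cubes $L\in\mathcal{P}^j$ with $\vartheta_L(x)\neq 0$ all meet a fixed cube $H\in\mathcal{P}^j$ containing $x$. By (W3) and the construction, they have side lengths comparable to $\ell(H)$, and there are at most $C(n)$ of them.

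For (i), write $\wp_j = g_H + \sum_L \tilde\vartheta_L (g_L - g_H)$, where $\tilde\vartheta_L := \vartheta_L/\sum_{L'}\vartheta_{L'}$. The normalised functions $\tilde\vartheta_L$ satisfy $\|D^m\tilde\vartheta_L\|_{C^0}\le C\ell(H)^{-m}$ and $[D^3\tilde\vartheta_L]_\beta\le C\ell(H)^{-3-\beta}$. The bound on $g_H$ comes from Lemma~\ref{lemma:cm-10}(i). For the sum, apply the Leibniz rule: each term in $D^3$ has the form $D^m\tilde\vartheta_L\, D^{3-m}(g_L-g_H)$, and by Lemma~\ref{lemma:cm-10}(ii) it is bounded by $C\ell(H)^{-m}\Etilt_V\ell(H)^{15/4-(3-m)-\beta'}$, which is bounded since the exponent $3/4-\beta'$ is positive. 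The local $C^{3,\beta}$ seminorm is handled the same way, after choosing $\beta'<3/4-\beta$ and interpolating.

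Passing from local to global Hölder control of $D^3\wp_j$ is the delicate step. For points $x,y$ far apart relative to the cubes containing them, I would use Lemma~\ref{lemma:cm-10}(iii) to compare $D^3g_H(x_H)$ with $D^3g_L(x_L)$. Combined with the local estimate, this gives $|D^3\wp_j(x)-D^3\wp_j(y)|\le C_\beta\Etilt_V|x-y|^\beta$ uniformly in $j$. I expect this to be the main obstacle: it needs careful bookkeeping of the cube sizes near $\Gamma$, where they shrink to $0$.

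For (ii), let $L\in\mathcal{S}^i$ and let $H$ be the concentric cube with $\ell(H)=\frac98\ell(L)$. Any cube of $\mathcal{P}^j$ whose $\vartheta$-support meets $H$ must intersect $L$ or a neighbour of $L$. For $j\ge i+2$, such cubes belong to $\mathcal{S}$ at level $\le i+1$: cubes in $\mathcal{S}$ are never refined, and by (NN) the neighbours of $L$ lie in $\mathcal{S}^{i+1}$ or earlier. The set of contributing cubes, and hence $\wp_j|_H$, is therefore independent of $j\ge i+2$.

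For (iii), points of $\bigcup_{L\in\mathcal{S}}L$ stabilise by (ii). Now take $x\in\Gamma$. Every cube in $\mathcal{P}^j$ near $x$ belongs to $\mathcal{R}^j$, with side $2^{-j}$, so $\wp_{j+1}-\wp_j$ is a combination of differences $g_H-g_L$ of adjacent cubes of size $\sim 2^{-j}$. By Lemma~\ref{lemma:cm-10}(ii) and the Leibniz estimate above, $\|\wp_{j+1}-\wp_j\|_{C^3}\le C\Etilt_V 2^{-j(3/4-\beta')}$. The sequence is therefore Cauchy in $C^3$. Together with the uniform $C^{3,\beta}$ bound from (i), interpolation gives convergence in $C^{3,\beta}$ for every $\beta<3/4$, and Arzel\`a--Ascoli identifies a limit $\wp$ that is $C^{3,\gamma}$ for all $\gamma<3/4$.
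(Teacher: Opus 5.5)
Your proofs of (i) and (ii) follow the paper. On a cube $H\in\mathcal{P}^j$ you write $\wp_j=g_H+\sum_L\chi_L(g_L-g_H)$ and use Leibniz with Lemma~\ref{lemma:cm-10}(i)--(ii) for the local $C^{3,\beta}$ bound. You handle separated points via Lemma~\ref{lemma:cm-10}(iii), which implicitly uses $\wp_j\equiv g_H$ near $x_H$. You get (ii) because the cubes of $\mathcal{P}^j$ near $L$ stop changing once $j\geq i+2$.

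For (iii) your route is different. The paper proves only the $C^0$ estimate $\|\wp_j-\wp_{j+1}\|_{C^0}\leq C\Etilt_V\,2^{-j}$. Take $x\in L\cap H$ with $L\in\mathcal{P}^j$, $H\in\mathcal{P}^{j+1}$. Either $\level(L)\leq j-2$, and the two functions agree by (ii). Or one compares through the centres, using $\wp_j(x_L)=g_L(x_L)$, the Lipschitz bound from (i), the height bound of Lemma~\ref{lemma:cm-10}(iv), and the bound on $|y_H-y_L|$ from Lemma~\ref{lemma:cm-3}. The uniform $C^{3,\beta}$ bounds of (i) and Arzel\`a--Ascoli then upgrade this to $C^{3,\beta}$ convergence. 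That argument uses only zeroth- and first-order information and is automatically uniform on $[-4,4]^n$.

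Your $C^3$ Cauchy estimate instead needs third-order comparisons of $g_H$ and $g_L$ for $H\in\mathcal{P}^{j+1}$, $L\in\mathcal{P}^j$, either parent and child or intersecting with level difference one. This goes slightly beyond what Lemma~\ref{lemma:cm-10}(ii) states. Its proof does cover such pairs, as does the proof of Lemma~\ref{lemma:cm-10}(iii), which treats the parent case explicitly. In exchange you get an explicit rate $2^{-j(3/4-\beta')}$ in $C^3$.

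Two small points in your (iii).

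First, near $x\in\Gamma$ the cubes of $\mathcal{P}^j$ need not lie in $\mathcal{R}^j$: neighbours of the $\mathcal{R}^j$-cube containing $x$ may belong to $\mathcal{S}^j$. What (NN) does give is that they all have level $j$, and that is all you use.

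Second, and more substantively, the stabilisation index $i+2$ in (ii) is unbounded as the $\mathcal{S}$-cubes shrink towards $\Gamma$. So (ii) plus your estimate at points of $\Gamma$ does not by itself make $(\wp_j)$ Cauchy in $C^3$ on the whole cube. There are two easy repairs:
\begin{itemize}
\item At points of an $\mathcal{S}$-cube of level $i\geq j-1$, all cubes of $\mathcal{P}^j\cup\mathcal{P}^{j+1}$ whose $\vartheta$-supports contain the point still have side comparable to $2^{-j}$, so your estimate applies there. For $i\leq j-2$ you have equality by (ii).
\item Alternatively, pointwise convergence of $\wp_j$ together with the uniform $C^{3,\beta}$ bounds from (i) already gives convergence in $C^{3,\beta'}$ for every $\beta'<3/4$. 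Every subsequence has a further convergent subsequence, and its limit is fixed by the pointwise limit.
\end{itemize}
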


The submanifold of $\R^{n+k}$ given by
$$\mathcal{M}:= \graph(\wp)$$
is called a \emph{center manifold of $V$ relative to $P_0$}. It is $C^{3,\gamma}$ for any $\gamma\in (0,3/4)$. If we write $\Phi:[-4,4]^n\to \R^{n+k}$ for the graphing function $\Phi(y) := (\wp(y),y)$, to each $L\in\S$ we associate a corresponding \emph{Whitney region} $\mathcal{L}$ on $\mathcal{M}$ via:
$$\mathcal{L}:= \Phi\left(\tilde{L}\cap \left[-\tfrac{7}{2},\tfrac{7}{2}\right]^n\right)$$
where $\tilde{L}$ is the cube concentric to $L$ with $\ell(\tilde{L}) = \frac{17}{16}\ell(L)$.

\begin{proof}[Proof of Theorem \ref{thm:cm}]
	For $H\in\mathcal{P}^j$, write $\chi_H := \vartheta_H/\left(\sum_{L\in\mathcal{P}^j}\vartheta_L\right)$. Clearly we have
	\begin{itemize}
		\item $\sum_{H\in\mathcal{P}^j}\chi_H = 1$ on $[-4,4]^n$;
		\item $\|\chi_H\|_{C^i}\leq C(n)\ell(H)^{-i}$ for each $i\geq 0$.
	\end{itemize}
	For such $H$, set $\mathcal{P}^j(H):=\{L\in\mathcal{P}^j:L\cap H\neq\emptyset\}\setminus\{H\}$ for the set of adjacent cubes to $H$ in $\mathcal{P}^j$. By construction of $\mathcal{P}^j$, we know $|\level(L)-\level(H)|\leq 1$ for all $L\in\mathcal{P}^j(H)$, and moreover $|\mathcal{P}^j(H)|\leq C(n)$. Since for each $y\in H$ we have
	$$\wp_j(y) = \sum_{L\in\mathcal{P}^j(H)\cup\{H\}}\chi_L(y)g_L(y)$$
	the bound $\|\wp_j\|_{C^0}\leq C\Etilt_V$ immediately follows from the above facts and Lemma \ref{lemma:cm-10}(i). Moreover, for such $y$ we also may write
	$$\wp_j(y) = g_H(y) + \sum_{L\in\mathcal{P}^j(H)}(g_L-g_H)\chi_L(y).$$
	Therefore, using Lemma \ref{lemma:cm-10}(i)--(ii), for $k\in \{1,2,3\}$,
	\begin{align*}
		\|D^k\wp_j\|_{C^0(H)} & \leq \|D^kg_H\|_{C^0(H)} + C(n)\sum^k_{i=0}\sum_{L\in\mathcal{P}^j(H)}\|g_L-g_H\|_{C^i(H)}\ell(L)^{-(k-i)}\\
		& \leq C\Etilt_V\left(1+\ell(H)^{15/4-k}\right)
	\end{align*}
	(here, we have used that $H\subset B_{r_L}(x_L)$ for $L\in\mathcal{P}^j(H)$, which is true as $M_0\geq 4$). We also obtain by direct computation, for any $\beta\in (0,3/4)$,
	\begin{align*}
		[D^3\wp_j&]_{C^{0,\beta}(H)} \leq [D^3g_H]_{C^{0,\beta}(H)}\\
		& \hspace{2em} + C(n)\sum^3_{i=0}\sum_{L\in\mathcal{P}^j(H)}\ell(H)^{-(3-i)}\left(\ell(H)^{-\beta}\|D^i(g_L-g_H)\|_{C^0(H)} + [D^i(g_L-g_H)]_{C^{0,\beta}(H)}\right)\\
		& \leq C_\beta\Etilt_V\ell(H)^{3/4-\beta}
	\end{align*}
	again using Lemma \ref{lemma:cm-10}(i)--(ii). This completes the proof of the bound on $\|\wp_j\|_{C^{3,\beta}(H)}$, which in particular shows that $\|\wp_j\|_{C^3}\leq C\Etilt_V$. We now need to extend the bound on the H\"older semi-norm of $D^3\wp_j$ across cubes. For this, fix $x,y\in [-4,4]^n$, and let $H,L\in\mathcal{P}^j$ be such that $x\in H$ and $y\in L$. If $H\cap L\neq\emptyset$, then clearly by taking a point in the intersection $H\cap L$ and using the triangle inequality, we get
	$$|D^3\wp_j(x)-D^3\wp_j(y)|\leq \left([D^3\wp_j]_{C^{0,\beta}(H)} + [D^3\wp_j]_{C^{0,\beta}(L)}\right)|x-y|^\beta \leq C_\beta\Etilt_V|x-y|^{\beta}.$$
	So we may now assume that $H\cap L=\emptyset$; without loss of generality we can assume $\ell(H)\leq\ell(L)$. Observe that
	$$\max\{|x-x_H|,|y-x_L|\}\leq \sqrt{n}\ell(L) \leq 2\sqrt{n}|x-y|.$$
	Moreover, by construction we know $\wp_j$ is identically equal to $g_H$ in a neighbourhood of its center $x_H$; in particular, $D^3\wp_j(x_H) = D^3g_H(x_H)$. Thus, we get
	\begin{align*}
		|D^3\wp_j(x) - &D^3\wp_j(y)|\\
		& \leq |D^3\wp_j(x)-D^3\wp_j(x_H)| + |D^3g_H(x_H)-D^3g_L(x_L)| + |D^3\wp_j(x_L)-D^3\wp_j(y)|\\
		& \leq C_\beta\Etilt_V(|x-x_H|^{\beta} + |x_H-x_L|^\beta + |y-x_L|^\beta)\\
		& \leq C\Etilt_V|x-y|^\beta
	\end{align*}
	where we have used the first bound for intersecting cubes and Lemma \ref{lemma:cm-10}(iii) in the second inequality. Thus, this establishes (i).
	
	Regarding (ii), let $L\in\mathcal{S}^i$. Observe that, by construction, $\mathcal{P}^j(L) = \mathcal{P}^{i+2}(L)$ for all $j\geq i+2$. Thus, if $H$ is the cube concentric with $L$ with $\ell(H) = \frac{9}{8}\ell(L)$, we have $\spt(\vartheta_M)\cap H=\emptyset$ for all $M\not\in\mathcal{P}^j(L)$ for $j\geq i+2$. So,
	$$\left.\wp_j\right|_H = \frac{\sum_{M\in\mathcal{P}^j(L)}\vartheta_Mg_M}{\sum_{M\in\mathcal{P}^j(L)}\vartheta_M} = \frac{\sum_{M\in\mathcal{P}^{i+2}(L)}\vartheta_Mg_M}{\sum_{M\in\mathcal{P}^{i+2}(L)}\vartheta_M} = \left.\wp_{i+2}\right|_{H}$$
	which proves (ii).
	
	Finally, to prove (iii) we will show that $\|\wp_j-\wp_{j+1}\|_{C^0([-4,4]^n)}\leq C2^{-j}$ for some choice of $C = C(\mathcal{V},M_0,N_0,C_e,C_h)$; this shows that $\wp_j$ converges uniformly to some function $\wp$ which is continuous. The bounds in (i), combined with Arzelà--Ascoli, show that in fact this convergence is in $C^{3,\beta}$ for all $\beta\in (0,3/4)$, and the limiting function $\wp$ is in $C^{3,\gamma}$ for all $\gamma\in (0,3/4)$, hence proving (iii).
	
	So fix $x\in [-4,4]^n$. We then have $x\in L\cap H$, where $L\in \mathcal{P}^j$ and $H\in\mathcal{P}^{j+1}$, and moreover such that either $H=L$ or $H$ is a child of $L$. Now, if $\level(L)\leq j-2$, we know from (ii) that $\wp_j(x) = \wp_{j+1}(x)$ and so there is nothing to prove. So we may assume $\level(L) \in\{j-1,j\}$. But then from (i) and Lemma \ref{lemma:cm-10}(iv) we have
	\begin{align*}
		|\wp_j(x)-\wp_{j+1}(x)| & \leq |\wp_j(x)-\wp_j(x_L)| + |g_L(x_L) - g_H(x_H)| + |\wp_{j+1}(x_H)-\wp_{j+1}(x)|\\
		& \leq C(\|D\wp_j\|_{C^0} + \|D\wp_{j+1}\|_{C^0})\cdot 2^{-j} + \|g_H-y_H\|_{C^0} + \|g_L-y_L\|_{C^0} + |y_H-y_L|\\
		& \leq C\Etilt_V\cdot 2^{-j} + C\Etilt_V\cdot 2^{-j}\\
		& \equiv C\Etilt_V\cdot 2^{-j}
	\end{align*}
	where we have bounded $|y_H-y_L|$ using Lemma \ref{lemma:cm-3}. Hence, taking the supremum over all $x$ we get $\|\wp_j-\wp_{j+1}\|_{C^0}\leq C\Etilt_V 2^{-j}$, as desired. This completes the proof.
\end{proof}

This completes the construction of the center manifold. We stress that we have built into the construction of the center manifold the decay structure of $V$ towards its tangent planes (which should be compared with the method used to control $\text{error}_2$ in the proof of Theorem \ref{thm:monotonicity}). This will be crucial for later estimates regarding frequency. Our next aim is to write $V$ as a graph over the center manifold, and use this decay structure of $V$ to deduce several estimates for this graph. We will always assume the assumptions of Theorem \ref{thm:cm} from now on.

\subsection{Construction of normal graph and its decay properties}

Let $\mathcal{M}$ be the center manifold given by Theorem \ref{thm:cm}. Since $\mathcal{M}$ is $C^2$, we have the standard differential-geometric fact that there exists a tubular neighbourhood $\mathbf{U}$ of $\mathcal{M}$ on which there exists an orthogonal projection map. More precisely, there exists an open set $\mathbf{U}\supset \mathcal{M}$ of $\R^{n+k}$ such that for each $x\in\mathbf{U}$ there is a unique point $y\in\mathcal{M}$ with $|x-y|<1$ and $x-y\perp T_y\mathcal{M}$ (of course $\mathcal{M}$ currently is only a subset of the cylinder $\R^k\times[-4,4]^n$ and we are only interested in the orthogonal projection restricted to this cylinder). We write $\mathbf{p}(x):=y$ for this uniquely determined point, and thus $\mathbf{p}:\mathbf{U}\to \mathcal{M}$ is the orthogonal projection map. Moreover, since $\mathcal{M}$ is in fact $C^{3,1/2}$ with its $C^{3,1/2}$ norm (i.e.~that of the graphing function $\wp$) controlled by $C\Etilt_V$, provided $\eps_2$ is sufficiently small we may assume that $\mathbf{p}$ is a $C^{2,1/2}$ map, and moreover that $\spt\|V\|\subset \mathbf{U}$.

Before constructing the graph over $\mathcal{M}$ representing $V$, we first give a lemma regarding the touching set of $V$ and $\mathcal{M}$. We show that $\mathcal{M}$ touches $V$ at every point of $\Phi(\Gamma)$, i.e.~the image of the set $\Gamma$ in our Whitney decomposition used to form $\mathcal{M}$, and moreover every flat singular point of $V$ with planar frequency $\geq 2$ lies in $\Phi(\Gamma)$. In particular, $\mathcal{M}$ touches $V$ at \emph{every} flat singular point of planar frequency $\geq 2$. Recall that as $V$ is represented by a $GC^{1,\alpha}$ function $u$ over $P_0$, we have $\mathcal{B}_u^{\geq 2} = \pi_{P_0}(\mathcal{B}_V^{\geq 2})$ (here $\mathcal{B}^{\geq 2}_u$ means the obvious thing, namely branch points of $u$ where the decay rate to the tangent plane is $\geq 2$).

\begin{lemma}\label{lemma:cm-12}
	Assume the assumptions of Theorem \ref{thm:cm}. Then, if $p\in \Phi(\Gamma)$, then $\spt\|V\|\cap \mathbf{p}^{-1}(p) = {p}$, $\Theta_V(p) = Q$, and $T_p\mathcal{M}$ is the unique tangent plane to $\spt\|V\|$ at $p$. Moreover,
	$$\mathcal{B}^{\geq 2}_u\cap [-4,4]^n\subset\Gamma$$
	or equivalently, $\mathcal{B}_V^{\geq 2}\cap (\R^k\times [-4,4]^n)\subset\Phi(\Gamma)$. Furthermore, we even have the more quantitative conclusion that if $L\in\S$ then $\ell(L)<\frac{1}{64\sqrt{n}}\dist(\mathcal{B}_u^{\geq 2},L)$.
\end{lemma}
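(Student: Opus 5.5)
The plan is to prove the three assertions in the order: first the quantitative separation $\ell(L)<\frac{1}{64\sqrt n}\dist(\mathcal{B}_u^{\geq 2},L)$ for $L\in\S$, from which the inclusion $\mathcal{B}^{\geq 2}_u\cap[-4,4]^n\subset\Gamma$ follows immediately (a point of $\mathcal{B}_u^{\geq2}$ lying in some $L\in\S$ would give $\dist=0$). Then handle the touching statement at points of $\Phi(\Gamma)$ separately, using only that such points lie in infinitely many nested cubes of $\mathcal{R}$.

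\textbf{Touching on $\Phi(\Gamma)$.} Fix $x\in\Gamma$. Then $x$ lies in a nested chain $L^{N_0}\supset L^{N_0+1}\supset\cdots$ with $L^j\in\mathcal{R}^j$. By Lemma \ref{lemma:cm-4} and Lemma \ref{lemma:cm-3}(i),(iii), the planes $P_{L^j}$ form a Cauchy sequence converging to some plane $\pi_x$ with $\|\pi_{P_{L^j}}-\pi_x\|\leq C\Etilt_V\ell(L^j)^{11/12}$. Moreover $\mathbf{h}_V(\mathbf{B}_{L^j})\leq C\Etilt_V\ell(L^j)$ with $\mathbf{B}_{L^j}$ shrinking to a point $p_x=(y,x)$. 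Hence in the cylinders over $B_{r_{L^j}}(x)$ all $Q$ values of $u$ lie within $C\Etilt_V\ell(L^j)$ of each other, so $u(x)=Q\llbracket u_a(x)\rrbracket$ and $\Theta_V(p_x)=Q$. The decay of the tilt excess at a rate $\ell^{11/12}\to0$ then shows that every tangent cone is $Q|\pi_x|$; uniqueness also follows from Definition \ref{defn:eps-reg}. Next, Theorem \ref{thm:cm}(iii) with Lemma \ref{lemma:cm-10}(iv),(v) gives $|\wp(x)-y_{L^j}|\leq C\Etilt_V\ell(L^j)$ and $\|\pi_{T\mathcal{M}}-\pi_{P_{L^j}}\|\leq C\Etilt_V\ell(L^j)^{11/12}$ near $x$. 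Letting $j\to\infty$ yields $\Phi(x)=p_x$ and $T_{p_x}\mathcal{M}=\pi_x$. For $\spt\|V\|\cap\mathbf{p}^{-1}(p_x)=\{p_x\}$: $V$ is a $Q$-valued graph over $P_0$ with small Lipschitz constant and $\mathcal{M}$ is $C^{3}$-close to flat, so the fiber $\mathbf{p}^{-1}(p_x)$ is a $k$-plane transverse to $P_0$ meeting $\spt\|V\|$ only over a neighbourhood of $x$ of size comparable to the height. The height bound at every scale $\ell(L^j)$ then forces the intersection to be the single point $p_x$.

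\textbf{Quantitative separation.} Suppose $L\in\S$ and $z\in\mathcal{B}^{\geq2}_u$ with $\dist(z,L)\leq 64\sqrt n\,\ell(L)$. By the (NN) spawning mechanism and (W3), I reduce to the case $L\in\S_e\cup\S_h$. For an (NN) cube, trace back the chain of neighbouring stopped cubes of levels $\ell(L)/2^{-m}$ nearby; since each step moves a distance comparable to the side length, this costs only a constant factor, which is absorbed by choosing $M_0$ large relative to the $64\sqrt n$. So $\mathbf{B}_L$ (radius $64M_0\sqrt n\,\ell(L)$) contains a ball $B_{c M_0\ell(L)}(Z)$ around $Z=(u_a(z),z)$, and $\mathbf{B}_L\subset B_{C M_0\ell(L)}(Z)$. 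Apply Corollary \ref{cor:decay-estimates} at $Z$, rotated so that $T_ZV=P_Z$, together with the monotonicity of Theorem \ref{thm:monotonicity}. Since $\mathcal{N}_V(Z)\geq2$, this gives the height decay $\rho^{-n}\int_{B_\rho}|u-\ell_Z|^2\leq C\Etilt_V^2\rho^{4-\delta}$ for $\rho\leq1/4$, with an arbitrarily small loss $\delta$ from the $e^{C\Etilt_V^2\rho^{\cdot}}$ factor. The Lipschitz/$\eps$-regularity estimates of Theorem \ref{thm:eps-reg} upgrade this to $\mathbf{h}_{V,P_Z}(B_\rho(Z))\leq C\Etilt_V\rho^{2-\delta}$ and $\Etilt_{V,P_Z}(B_\rho(Z))\leq C\Etilt_V\rho^{1-\delta}$. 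With $\rho=CM_0\ell(L)$ this yields $\Etilt_V(\mathbf{B}_L)\leq C\Etilt_V\ell(L)^{1-\delta}\ll C_e\Etilt_V\ell(L)^{11/12}$, contradicting (EX). It also yields $\mathbf{h}_V(\mathbf{B}_L)\leq C\Etilt_V\ell(L)^{2-\delta}<C_h\Etilt_V\ell(L)$, contradicting (HT), once $C_e$, $C_h$ are large and $\ell(L)\leq2^{-N_0}$ is small.

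\textbf{Main obstacle.} The hard part is making the frequency-based decay uniform in the base point $Z$: the constants must not depend on $\mathcal{N}_V(Z)$ or on the scale at which the $\eps$-regularity applies at $Z$. I would handle this using the assumptions at scale $6\sqrt n$. These give, through the Lipschitz bound \eqref{E:Lip-bound} at every point of $\mathcal{B}_u$, a uniform starting scale. Combined with the lower bound $\mathcal{N}\geq2$ and the upper bound $H(\rho)\leq C\Etilt_V^2\rho^{2+2\alpha}$, this gives $H(\sigma)\leq C\Etilt_V^2\sigma^{4-\delta}$ with $C$ independent of $Z$ via Corollary \ref{cor:decay-estimates}(i). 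Only the \emph{upper} bound direction of that corollary is needed, and its constant depends only on $\mathcal{N}_V(0)+1$ times a small quantity. A secondary technicality is bookkeeping the reduction of (NN) cubes to (EX)/(HT) cubes with the factor $64\sqrt n$.
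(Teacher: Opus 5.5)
Your proposal is correct and follows essentially the paper's proof. The nested chain of $\mathcal{R}$-cubes gives the touching statement on $\Phi(\Gamma)$. For the quantitative bound, uniform decay $\Etilt_{V,P_Z}(B_r(Z))\lesssim\Etilt_V r$ and $\mathbf{h}_{V,P_Z}(B_r(Z))\lesssim\Etilt_V r^2$ at points of planar frequency $\geq 2$ rules out (EX) and (HT) for any cube within a fixed multiple of its side length, and (NN) cubes are reduced to these by tracing back the spawning chain; the paper proves the inclusion first and the quantitative bound second, which is only a reordering.

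A few small points, none of them gaps. The $Z$-uniformity you flag, which the paper glosses, is cleanest if you integrate $H'/H\geq 2\mathcal{N}(\tau)/\tau-C\Etilt_V^2\tau^{2\alpha-1}$ using $\mathcal{N}(\tau)\geq 2e^{-C\Etilt_V^2\tau^{2\gamma(1+\alpha)}}$. This gives no $\delta$-loss and no dependence on $\mathcal{N}_V(Z)$. Passing from $\mathbf{h}_{V,P_Z}$ to $\mathbf{h}_V(\mathbf{B}_L)$ needs the plane comparison $\|\pi_{P_L}-\pi_{P_Z}\|\lesssim\Etilt_V(\mathbf{B}_L)+\Etilt_{V,P_Z}(\mathbf{B}_L)$, as in the paper. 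Finally, the geometric-series cost of the (NN) chain is absorbed by the $128\sqrt n$ threshold (equivalently into $C_e,C_h$), not by enlarging $M_0$.
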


\begin{proof}
	To prove the first claims, fix $p\in\Phi(\Gamma)$ and let $\tilde{p}\in \Gamma$ be such that $\Phi(\tilde{p}) = p$. By construction, there is an infinite chain $L_{N_0}\supset L_{N_0+1}\supset\cdots$ of cubes with $L_j\in \mathcal{R}^j$ for all $j$ such that $\{\tilde{p}\} = \bigcap_j L_j$. From Lemma \ref{lemma:cm-3}(i) we know that $(P_{L_j})_j$ converge to a plane $P$ with $\|\pi_{P_{L_j}}-\pi_P\|\leq C\Etilt_V (2^{11/12})^{-j}$. We also know by construction that $|p-p_{L_j}|\leq 2\sqrt{n}2^{-j}$. Combined with Lemma \ref{lemma:cm-3}(iv), we then have
    \begin{equation}\label{E:cm-12-height-decay}
    \mathbf{h}_V(C_{16r_{L_j}}(p,P)) \leq C\Etilt_V 2^{-j}.
    \end{equation}
    In particular, by Theorem \ref{thm:eps-reg} we know $V\res C_{2^{3-N_0}}(p,P)$ is the graph of a $Q$-valued $GC^{1,\alpha}$ function over $P$. Hence by \eqref{E:cm-12-height-decay}, $\spt\|V\|\cap \mathbf{p}^{-1}(p) = \{p\}$ and $\Theta_V(p) = Q$. Now, from the failure of (EX), we know that $(\eta_{p_{L_j},2^{-j}})_\# V$ converges to $Q|P|$; hence $(\eta_{p,2^{-j}})_\#V = (\eta_{(p-p_{L_j})/2^{-j},1}\circ\eta_{p_{L_j},2^{-j}})_\#V \weakly Q|P|$ so that $P$ is the approximate tangent space to $\spt\|V\|$ at $p$. By $\|\pi_{L_j}-\pi_P\|\leq C\Etilt_V (2^{11/12})^{-j}$ and Lemma \ref{lemma:cm-10}(iv) (cf.~Theorem \ref{thm:cm}), we have $T_p\mathcal{M}=P$. This establishes the first claims.
	
	Now let us turn to the second claims regarding flat singular points with planar frequency $\geq 2$. We know in general from Corollary \ref{cor:decay-estimates} and Theorem \ref{thm:monotonicity} that at any $x_0\in\mathcal{B}_u^{\geq 2}$, if $P_{q_0}$ is the corresponding tangent plane to $V$ at $q_0 = (y_0,x_0)\in\spt\|V\|$ (note that there is exactly one such $y_0$ as this point has density $Q$), then we have for $r\in (0,\sqrt{n})$,
	$$\Etilt_V(B_r(q_0),P_{q_0})\leq C(\mathcal{V})\Etilt_Vr,$$
	$$\mathbf{h}_V(B_r(q_0),P_{q_0})\leq C(\mathcal{V})\Etilt_V r^2.$$
	We also know from the Lipschitz regularity and the bound provided by Theorem \ref{thm:eps-reg} that for any $(y,x)\in \spt\|V\|$ we have $|y-y_0|\leq C(\mathcal{V})\Etilt_V|x-x_0|$ and also $\|\pi_{P_q}-\pi_{P_0}\|\leq C(\mathcal{V})\Etilt_V$ (this claim involving the Lipschitz regularity is using that $(y_0,x_0)$ is a $Q$-coincidence point of $V$).
	
	Thus, now suppose that $L\in\mathcal{C}^j$, $j\geq N_0$, is such that there is $q_0 = (y_0,x_0)$ with $q_0\in\mathcal{B}_V^{\geq 2}$ and $|x_0-x_L|\leq 128\sqrt{n}\ell(L)$. Then, by the above inequalities we know that $|y_L-y_0|\leq C(\mathcal{V})\Etilt_V|x_L-x_0| \leq C(\mathcal{V})\Etilt_V\ell(L)$, and thus $\mathbf{B}_L\subset B^{n+k}_{C(\mathcal{V},M_0)\ell(L)}(q_0)$. Thus,
	$$\Etilt_V(\mathbf{B}_L)\leq C(\mathcal{V},M_0)^n\Etilt_V(B^{n+k}_{C(\mathcal{V},M_0)\ell(L)}(q_0)) \leq \tilde{C}(\mathcal{V},M_0)\Etilt_V\ell(L).$$
	We get from this also that (arguing similarly to that in Lemma \ref{lemma:cm-3})
	$$\|\pi_{P_L}-\pi_{P_{q_0}}\|\leq C\left(\Etilt_V(\mathbf{B}_L) + \Etilt_V(B^{n+k}_{C(\mathcal{V},M_0)\ell(L)}(q_0),P_{q_0})\right) \leq C\Etilt_V\ell(L)$$
	from which it follows that
	$$\mathbf{h}_V(\mathbf{B}_L)\leq \mathbf{h}_V(B_{C(\mathcal{V},M_0)\ell(L)}(q_0),P_{q_0}) + C\ell(L)\|\pi_{P_L}-\pi_{P_{q_0}}\| \leq C\Etilt_V^2\ell(L)^2.$$
	We stress here that the constants above only depend on $\mathcal{V},M_0$. In particular, this shows that whenever $L\in \mathcal{C}^j$, $j\geq N_0$, is such that there exists $q_0 = (y_0,x_0)$ a point of planar frequency $\geq 2$ in $V$ with $|x_0-x_L|\leq 128\sqrt{n}\ell(L)$, then $L\not\in \mathcal{S}^j_e\cup\S^j_h$ (provided $C_e,C_h$ are sufficiently large depending only on $\mathcal{V},M_0$).
	
	We now use this to prove the lemma. Suppose for contradiction that there is $q_0 = (y_0,x_0)$ with $x_0\in \mathcal{B}_u^{\geq 2}\cap [-4,4]^n$ yet $x_0\not\in\Gamma$. Then, by definition of $\Gamma$, there must be $j\geq N_0$ and $L\in \S^j$ with $|x_L-x_0|\leq \sqrt{n}\ell(L)$. By the above, we must have $L\in \mathcal{S}^j_n$. But then there must be a cube $\tilde{L}\in \mathcal{S}^k_e\cup\mathcal{S}^k_h$, where $k\in \{N_0,N_0+1,\dotsc,j-1\}$, and cubes $L\equiv L^j, L^{j-1},\dotsc,L^k\equiv\tilde{L}$, with $L^t\in \mathcal{S}^t_n$ for each $j\leq t\leq k+1$, and $L^t\cap L^{t-1}\neq\emptyset$. But then
	$$|x_{\tilde{L}}-x_0|\leq 2\sqrt{n}\sum^j_{i=k}2^{-i} \leq 4\sqrt{n}\ell(\tilde{L})$$
	and so $\tilde{L}$ satisfies the above assumptions, which gives the desired contradiction.
	
	Finally, to prove the last claim of the lemma, suppose for contradiction that there is $L\in \mathcal{S}^j$ with $\ell(L)>\frac{1}{64\sqrt{n}}\dist(x_0,L)$, for some such $q_0 = (y_0,x_0)\in\mathcal{B}_V^{\geq 2}$. Then
	$$|x_L-x_0|\leq \dist(x_0,L)+\sqrt{n}\ell(L)\leq 65\sqrt{n}\ell(L).$$
	But then we can argue just as above to get a contradiction.
\end{proof}

\begin{remark}\label{remark:lower-frequency-in-cm}
Due to our choice of exponent in (EX) the above proof shows that points of planar frequency $\geq 1+(1-\frac{1}{12})$ also lie in $\Phi(\Gamma)$. By changing the exponent in (EX) we can guarantee points of planar frequency $\in [5/3+\eta,\infty)$ belong to $\Gamma$ for any choice of $\eta\in (0,1/3]$.
\end{remark}

Now we begin giving the properties of our normal graph. Clearly the above lemma tells us that on $\Phi(\Gamma)$, the normal graph will vanish, as $\mathcal{M}$ touches $V$ on this set. We now give some properties about the rest.

\begin{lemma}\label{lemma:cm-13}
	Assume the assumptions of Theorem \ref{thm:cm}. Then we have:
	\begin{enumerate}
		\item [(i)] $\sum_{x\in\mathbf{p}^{-1}(y)}\Theta_V(x) = Q$ for all $y\in\mathcal{M}$;
		\item [(ii)] if $L\in\mathcal{S}$, then for each $x\in L$ we have
		$$\spt\|V\|\cap \mathbf{p}^{-1}(\Phi(x))\subset B^{n+k}_{C\Etilt_V\ell(L)}(\Phi(x)),$$
		where $C = C(\mathcal{V},M_0,N_0,C_e,C_h)$.
	\end{enumerate}
\end{lemma}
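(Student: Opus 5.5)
The plan is to prove (ii) first, since it feeds into (i), and then get (i) from a topological/degree argument for $\mathbf{p}$ restricted to the multi-valued graph.

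For (ii), fix $L\in\S$ and $x\in L$. The work splits into three steps.
\begin{enumerate}
\item[(1)] Since $L\in\S$, the parent of $L$ (or $L$ itself, by Lemma \ref{lemma:cm-1} it has a parent in $\mathcal{R}$) controls the geometry. By Lemma \ref{lemma:cm-3}(iii)--(iv) and Lemma \ref{lemma:cm-4}, $\spt\|V\|\cap C_{32r_L}(p_L,P_L)$ lies in $\mathbf{B}_L$ and has height $\leq C\Etilt_V\ell(L)$ relative to $P_L$.
\item[(2)] By Lemma \ref{lemma:cm-10}(iv) and Theorem \ref{thm:cm}(ii), the center manifold over $\tilde L$ satisfies $|\wp-g_L|\leq C\Etilt_V^3\ell(L)^{\ldots}$. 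Here I would use the $L^1$ bound of Lemma \ref{lemma:cm-10}(v) combined with the $C^{3,\beta}$ bounds and interpolation to upgrade it to $C^0$. Also $|g_L-y_L|\leq C\Etilt_V\ell(L)$, and the tangent planes of $\mathcal{M}$ over $L$ are within $C\Etilt_V\ell(L)^{11/12}$ of $P_L$.
\item[(3)] Consequently, $\mathcal{M}$ over $\tilde L$ lies in the slab of width $C\Etilt_V\ell(L)$ around $p_L+P_L$, and its normal spaces are nearly $P_L^\perp$. A point $X\in\spt\|V\|$ with $\mathbf{p}(X)=\Phi(x)$ satisfies $X-\Phi(x)\in (T_{\Phi(x)}\mathcal{M})^\perp$. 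So $X$ lies in the cylinder $C_{32r_L}(p_L,P_L)$, provided we first rule out $X$ being far away. That follows because $V$ is globally a Lipschitz graph with small constant over $P_0$ with height $\leq C\Etilt_V$, and the normal fibre is a nearly vertical affine $k$-plane. Since both $X$ and $\Phi(x)$ lie in the slab, $|X-\Phi(x)|\leq C\Etilt_V\ell(L)$.
\end{enumerate}

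For (i), I would consider the map $F:=\mathbf{p}\circ(\text{graph map of }u)$ from the $Q$-valued graph onto $\mathcal{M}$. Equivalently, I would reparametrise: for each $y\in\mathcal{M}$ the fibre $\mathbf{p}^{-1}(y)$ is an affine $k$-plane nearly parallel to $\{0\}^k\times\ldots$, i.e.\ nearly vertical, since $\|D\wp\|\leq C\Etilt_V$. A Lipschitz graph with small Lipschitz constant meets each nearly vertical $k$-plane in exactly $Q$ points counted with multiplicity. I would prove this on each locally decomposed piece via Definition \ref{defn:gen-C1}(1): away from the $Q$-coincidence set, $u$ splits into graphs of lower multiplicity, and I argue by induction on $Q$. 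A single-valued Lipschitz graph with small constant meets a transverse nearly orthogonal $k$-plane exactly once by a contraction/implicit function argument. At coincidence points the values agree, and the density $\Theta_V$ counts them. Making "counted with multiplicity" precise uses that $\Theta_V(X)$ equals the number of sheets of $u$ passing through $X$ (a consequence of the multi-valued graph representation and area formula).

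Concretely, I would write $\mathbf{p}^{-1}(\Phi(z))=\{\Phi(z)+\nu:\nu\perp T_{\Phi(z)}\mathcal{M}\}$ as a graph over $\R^k$ of a map $\R^k\to\R^n$ with Lipschitz constant $C\Etilt_V$. The intersection with each sheet $y=u^\alpha(x)$ is then a fixed point of a map of Lipschitz constant $C\Etilt_V^2<1$, restricted to the domain given by the tubular neighbourhood; the composition of two small-Lipschitz maps is a contraction. To avoid ordering issues for multi-valued functions, I would apply this with a Lipschitz selection only locally and use the $Q$-valued contraction principle via the decomposition above.

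The main obstacle is the multiplicity bookkeeping in (i) at branch points, where no global selection of sheets exists. The fallback is a degree argument: $\mathbf{p}$ restricted to $\spt\|V\|$ pushes $V$ forward (as a current/varifold mass count) to $Q\llbracket\mathcal{M}\rrbracket$. This is done by homotoping $\mathbf{p}$ to $\pi_{P_0}$ through projections along nearly vertical fibres, and using constancy of the count along the homotopy, since fibres never escape the region where (ii) holds.
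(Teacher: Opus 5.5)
Your treatment of (i) is fine and is essentially the paper's one-line argument: $V$ is a $Q$-valued graph with Lipschitz constant $C\Etilt_V$, and the fibres $\mathbf{p}^{-1}(y)$ are normal planes of a graph with $\|\wp\|_{C^3}\le C\Etilt_V$. The cleanest way to do the multiplicity count is to reparametrise $V$ as a $Q$-valued Lipschitz graph over $T_y\mathcal{M}$, over which the fibre is exactly vertical, rather than selecting sheets. Steps (1)--(2) of (ii) also match the paper: compare $\wp$ with $g_L$ (whose graph is that of $h_L$) and use the height bound for $u_L$ over $P_L$.

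\emph{The gap in (ii).} The problem is the sentence ruling out points of $V$ on the fibre far from $\Phi(x)$. The global facts you cite give only $|X-\Phi(x)|\le C\Etilt_V$. Since $(T_{\Phi(x)}\mathcal{M})^\perp$ is within $C\Etilt_V\ell(L)^{11/12}$ of $P_L^\perp$, the $P_L$-component of $X-\Phi(x)$ is then only bounded by $C\Etilt_V^2\ell(L)^{11/12}$. Landing in $C_{32r_L}(p_L,P_L)$ requires this to be $\lesssim r_L=M_0\sqrt{n}\,\ell(L)$, which fails once $\ell(L)^{1/12}\ll \Etilt_V^2$. Cubes of $\mathcal{S}$ near $\Gamma$ are arbitrarily small, since $\ell(L)\le\frac12\dist(\Gamma,L)$, so this regime genuinely occurs. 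Nothing in your argument then excludes a point of $V$ on $\mathbf{p}^{-1}(\Phi(x))$ at distance $\gg\Etilt_V\ell(L)$, lying over some much larger cube. You also cannot fall back on the count from (i), because your ordering (and your fallback homotopy argument) derives (i) from (ii).

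\emph{The fix.} The missing idea is that $\mathbf{p}$ is the nearest-point projection.
\begin{enumerate}
\item[(a)] For such $X$, put $x':=\pi_{P_0}(X)$. Then $|X-\Phi(x)|=\dist(X,\mathcal{M})\le |X-\Phi(x')|$.
\item[(b)] If $x'\in\Gamma$, Lemma~\ref{lemma:cm-12} forces all values of $u$ at $x'$ to equal $\wp(x')$. So $X=\Phi(x')\in\mathcal{M}$, hence $\mathbf{p}(X)=\Phi(x')$ and $x=x'\in\Gamma$, which is impossible.
\item[(c)] Hence $x'\in L'$ for some $L'\in\mathcal{S}$. Lemma~\ref{lemma:cm-3}(iii) (height over $P_0$ in $C_{32r_{L'}}(p_{L'},P_0)$, which contains $p_{L'}\in\spt\|V\|$), Lemma~\ref{lemma:cm-10}(iv), and $\|\wp-g_{L'}\|_{C^0(L')}\le C\Etilt_V\ell(L')$ together give $|X-\Phi(x')|\le C\Etilt_V\ell(L')$.
\item[(d)] Consequently $|x-x'|\le C\Etilt_V|X-\Phi(x)|\le C\Etilt_V^2\ell(L')\le c(n)\ell(L')$. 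The Whitney properties (W2)--(W3), together with $\dist(\Gamma,L')\ge 2\ell(L')$, force every cube of $\mathcal{S}$ within distance $c(n)\ell(L')$ of $L'$ to have side comparable to $\ell(L')$.
\item[(e)] So $\ell(L')\le C\ell(L)$, and therefore $|X-\Phi(x)|\le C\Etilt_V\ell(L)$.
\end{enumerate}

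Alternatively, prove (i) first, which does not need (ii). Then observe that the part of the fibre within distance $c\,\ell(L)^{1/12}/\Etilt_V$ of $\Phi(x)$ stays inside $C_{32r_L}(p_L,P_L)$ and crosses the whole slab. It therefore already meets $\mathbf{v}(u_L)$ in $Q$ points counted with multiplicity, and (i) leaves no intersections elsewhere.
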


\begin{proof}
	Claim (i) is a simple consequence of the fact that $V$ is a $Q$-valued Lipschitz graph with Lipschitz constant controlled by $C\Etilt_V$, as well as the fact $\|\wp\|_{C^3}\leq C\Etilt_V$.
	
	To see (ii), fix $L\in\mathcal{S}$ and $x\in L$. Let $J$ be the cube concentric with $L$ with $\ell(J) = \frac{17}{16}\ell(L)$. By definition of $\wp$, Theorem \ref{thm:cm}(ii), and Lemma \ref{lemma:cm-10}(ii), we have
	$$\|\wp-g_L\|_{C^0(J)}\leq \sum_{H\in\S:H\cap L\neq\emptyset}\|g_L-g_H\|_{C^0(J)}\leq C\Etilt_V\ell(L)^{15/4}.$$
	But we know that the graph of $g_L$ coincides with that of $h_L$ and moreover that $\|u_L\|_{C^0}\leq \tilde{C}\Etilt_V\ell(L)$. So, if $p_L = (z_L,w_L)\in P_L\times P_L^\perp$, since $p_L \in \spt\|V\|$ we immediately get $\|(u_L)_{\text{a}}-w_L\|_{C^0} \leq C\Etilt_V\ell(L)$. Standard estimates with convolutions then give $\|h_L-w_L\|_{C^0}\leq C\Etilt_V\ell(L)$. These in turn imply that for any point of $\spt\|V\|$ in $C_{15r_L}(p_L,P_L)$, we have that the distance to the graph of $h_L$ is at most $C\Etilt_V\ell(L)$, which proves the claim because $\spt\|V\|\cap \mathbf{p}^{-1}(\Phi(x))\subset C_{15r_L}(p_L,P_L)$ simply because $V$ is represented exactly by $u_L$ here.
\end{proof}

Notice that Lemma \ref{lemma:cm-13}(i) already tells us that $V$ must be a $Q$-valued graph over $\mathcal{M}$, which moreover must be $GC^{1,\alpha}$ due to Theorem \ref{thm:eps-reg} and the $C^3$ regularity of $\mathcal{M}$. Moreover, Lemma \ref{lemma:cm-13}(ii) gives us an a priori bound on the $C^0$-norm of this function. The key property, however, of our construction is that on each cube we can control the average of the normal graph, and moreover the average of the normal graph is \emph{higher order} (namely, it is higher order than quadratic) when compared to the overall graph. This is made precise in the following theorem and is one of the crucial aspects of building the center manifold the way we did (as this will be needed to control certain error terms later on; the other crucial aspects are the doubling-style conditions coming from Lemma \ref{lemma:cm-15} and Lemma \ref{lemma:cm-17}).

\begin{theorem}\label{thm:cm-14}
	Assume the assumptions of Theorem \ref{thm:cm}. Then if $\eps_2 = \eps_2(\mathcal{V},M_0,N_0,C_e,C_h)\in (0,1)$ is sufficiently small, there is a $GC^{1,\alpha}$ function $F:\mathcal{M}\to \A_Q(\mathbf{U})$ taking the form $F(x) = \sum_{i=1}^Q \llbracket x+N_i(x)\rrbracket$, where $N \equiv \sum^Q_{i=1}\llbracket N_i\rrbracket:\mathcal{M}\to \A_Q(\R^{n+k})$ is a $GC^{1,\alpha}$ function with $N_i(x)\perp T_x\mathcal{M}$ for all $x\in\mathcal{M}$, such that $V\res \mathbf{U}$ is the integral varifold associated to the image of $F$. Moreover, on every Whitney region $\mathcal{L}$ (associated to a cube $L\in\S$) we have:
	\begin{enumerate}
		\item [(i)] $\|N|_{\mathcal{L}}\|_{C^0}\leq C\Etilt_V\ell(L)$;
		\item [(ii)] $\|DN|_{\mathcal{L}}\|_{GC^{0,\alpha}}\leq C\Etilt_V\ell(L)^{1-\frac{1}{12}}$;
		\item [(iii)] $\int_{\mathcal{L}}|N_{\textnormal{a}}|\leq C\Etilt_V^3\ell(L)^{n+2(1-\frac{1}{12})+1}$.
	\end{enumerate}
    Here, $C = C(\mathcal{V},M_0,N_0,C_e,C_h)$, and moreover $\alpha = \alpha(\mathcal{V})\in (0,1)$ is as in Theorem \ref{thm:eps-reg}.
\end{theorem}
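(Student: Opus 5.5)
The plan is to construct $N$ locally, cube by cube, and then check that the local pieces agree. Lemma \ref{lemma:cm-13}(i) shows that each fibre $\mathbf{p}^{-1}(y)$ meets $\spt\|V\|$ in points whose densities sum to $Q$. So I would define $N(y):=\sum_{x\in\mathbf{p}^{-1}(y)}\Theta_V(x)\llbracket x-y\rrbracket$, which is well defined and normal to $T_y\mathcal{M}$ by construction.

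On $\Phi(\Gamma)$, Lemma \ref{lemma:cm-12} gives $N=Q\llbracket 0\rrbracket$ together with matching tangent planes. Over a Whitney region $\mathcal{L}$ I would use the representation $u_L$ over $P_L$ on $C_{16r_L}(p_L,P_L)$. Composing with the inverse of the $C^{2,1/2}$ diffeomorphism between $\mathcal{M}$ and $P_L$ induced by $\mathbf{p}$ identifies $N$ with a reparametrisation of $u_L-h_L$. The reparametrisation is $C^{2}$-close to the identity because $\|\wp-g_L\|_{C^3}$ is small by Lemma \ref{lemma:cm-10}(ii) and Theorem \ref{thm:cm}(ii). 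The $GC^{1,\alpha}$ structure of $u_L$ (Lemma \ref{lemma:cm-5} and (II)) is preserved under such $C^2$ changes of coordinates, since they map half-plane cones to half-plane cones and preserve Hölder dependence. Hence $N$ is $GC^{1,\alpha}$ on each $\mathcal{L}$. Near $\Phi(\Gamma)$ the same conclusion follows from the global representation $u$ over $P_0$, and $F=\mathrm{id}+N$ represents $V\res\mathbf{U}$.

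The estimates come next. Estimate (i) is Lemma \ref{lemma:cm-13}(ii). For (ii) I would combine $\|Du_L\|_{GC^{0,\alpha}}\le \widetilde C\Etilt_V\ell(L)^{11/12}$ with $\|Dh_L\|\le C\Etilt_V\ell(L)^{11/12}$ (Lemma \ref{lemma:cm-10}(iv)) and $\|D(\wp-g_L)\|\le C\Etilt_V\ell(L)^{11/4}$. I would also use that the Jacobian of the reparametrisation deviates from the identity by $O(\Etilt_V\ell(L)^{11/12})$, which is the tilt between $T\mathcal{M}$ and $P_L$ from Lemma \ref{lemma:cm-10}(iv).

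The main point is (iii). Writing $x=\Phi(\cdot)$, the average $N_{\textnormal{a}}(y)$ at first order is $\frac1Q\sum_i u_L^i(z_i)-g$, evaluated at the base points $z_i$ of the fibre. These base points differ from a common point $z$ by $O(|u_L^i-h_L|\cdot\|\pi_{P_L}-\pi_{T_y\mathcal{M}}\|)$. This shift produces an error of order $|Du_L|\cdot|N|\cdot\Etilt_V\ell^{11/12}\lesssim \Etilt_V^3\ell^{1+2\cdot 11/12}$ pointwise, which is the source of the exponent $2(1-\frac1{12})+1$. Following \cite[Lemma B.1]{DLS16a}, one then gets
\[|N_{\textnormal{a}}|\le C\bigl(|(u_L)_{\textnormal{a}}-h_L|+|h_L-\hat\wp|+\Etilt_V^3\ell(L)^{1+2(1-\frac1{12})}\bigr),\]
where $\hat\wp$ is the graph of $\wp$ written over $P_L$. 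Integrating over $\mathcal{L}$, the first term is controlled in $L^1$ by Lemma \ref{lemma:cm-7}, giving $C\Etilt_V^3\ell^{n+15/4}$. The second term is controlled by Lemma \ref{lemma:cm-10}(v) and Theorem \ref{thm:cm}(ii) after a change of coordinates. The quadratic error integrates to $C\Etilt_V^3\ell^{n+17/6}$.

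The step I expect to be the obstacle is this careful first-order expansion of the fibre-average under non-linear reprojection. Averaging does not commute with the orthogonal projection $\mathbf{p}$, and one must check that no term linear in $|Du|$ times $\ell$ appears without an accompanying extra factor of the tilt $\Etilt_V\ell^{11/12}$. I would try to handle this by Taylor-expanding the map $z\mapsto$ (fibre base point) at the common point $z$, using the Lipschitz bound on $u_L$ and the $C^2$ bound on $\mathbf{p}$.
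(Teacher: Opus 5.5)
Your proposal is correct and follows essentially the paper's proof. The existence of $N$ comes from Lemma~\ref{lemma:cm-13}(i) and Theorem~\ref{thm:eps-reg}, (i) from Lemma~\ref{lemma:cm-13}(ii), and (ii) from the bound on $Du_L$ combined with the $O(\Etilt_V\ell(L)^{11/12})$ tilt of $\mathcal{M}$ relative to $P_L$. Estimate (iii) comes from the pointwise estimate $|N_{\textnormal{a}}|\le C|(u_L)_{\textnormal{a}}-\wp_L| + C\,\Lip(N)\,\|\pi_{T\mathcal{M}}-\pi_{P_L}\|\,|N|$, integrated via Lemma~\ref{lemma:cm-7} and Lemma~\ref{lemma:cm-10}(v).

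The only difference is that the paper takes this reparametrisation estimate directly from \cite[Theorem 5.1]{DLS13}, whereas you derive it from the base-point expansion you sketch. Also, strictly speaking $N$ reparametrises $u_L$ minus the graphing function $\wp_L$ of $\mathcal{M}$ over $P_L$, not $u_L-h_L$; your estimates already account for this.
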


\textbf{Definition:} We call the map $N:\mathcal{M}\to \A_Q(\R^{n+k})$ given by Theorem \ref{thm:cm-14} the \emph{normal map}; it represents $V$ as a $Q$-valued graph over $\mathcal{M}$.

\begin{proof}
	We already know from Lemma \ref{lemma:cm-13} and Theorem \ref{thm:eps-reg} that a $GC^{1,\alpha}$ map $F$ of the given form exists whose associated varifold coincides with $V\res\mathbf{U}$. We only need to justify the other claims. By Lemma \ref{lemma:cm-13}(ii) the $C^0$ bound on $N$ follows, proving (i). For (ii), this follows also from the corresponding bound on $u_L$, and the fact that by Lemma \ref{lemma:cm-10}(iv) and the $C^2$ estimate on $\wp$ given by Theorem \ref{thm:cm}, $\mathcal{M}\cap C_{r_L}(p_L,P_L)$ is the graph of a function $\wp_L:B_{r_L}(p_L,P_L)\to P_L^\perp$ obeying $|D\wp_L|\leq C\Etilt_V\ell(L)^{11/12}$ at the point $(\wp(x_L),x_L)$ and so $\|D\wp_L\|_{C^0}\leq C\Etilt_V\ell(L)^{11/12}$, as $\|D^2\wp_L\|_{C^0}\leq C\Etilt_V$ (we refer the reader to \cite[Theorem 5.1]{DLS13} for the precise details on reparameterising multi-valued graphs). This shows (ii).
	
	For (iii), let $p\in\mathcal{L}$. We then have by reparameterising (see again \cite[Theorem 5.1]{DLS13}),
	\begin{align}
	|N_{\text{a}}(p)| & \leq C|(u_L)_{\text{a}}(\pi_{P_L}(p))-\pi^\perp_{P_L}(p)| + C\text{Lip}(N|_{\mathcal{L}})\|\pi_{T_p\mathcal{M}}-\pi_{P_L}\|\cdot|N(p)|\nonumber\\
	& \leq C|(u_L)_{\text{a}}(\pi_{P_L}(p))-\pi^\perp_{P_L}(p)| + C\Etilt_V^3\ell(L)^{17/6}\label{E:cm-14-1}
	\end{align}
	using the bounds from (i) and (ii). Now, if we let $\wp_L:P_L\to P_L^\perp$ be such that $\graph(\wp_L) = \mathcal{M}$, we see that
	\begin{align*}
		\int_{\mathcal{L}}|(u_L)_{\text{a}}(\pi_{P_L}(p))-\pi^\perp_{P_L}(p)| & \leq C\int_{\pi_{P_L}(\mathcal{L})}|(u_L)_{\text{a}}-\wp_L|\\
		& \leq C\|(u_L)_{\text{a}}-h_L\|_{L^1(\pi_{P_L}(\mathcal{L}))} + C\|g_L-\wp\|_{L^1(\tilde{L})}\\
		& \leq C\Etilt_V^3\ell(L)^{n+15/4}
	\end{align*}
	where in this last line we have used Lemma \ref{lemma:cm-7} and Lemma \ref{lemma:cm-10}(v) (here, $\tilde{L}$ is the cube concentric with $L$ and $\ell(\tilde{L}) = \frac{9}{8}\ell(L)$). So, integrating \eqref{E:cm-14-1} gives the result.
\end{proof}

This concludes the ``new'' estimates we will need to use the center manifold in the first place, namely the smallness of the average in a very controlled way. Next, we need to establish more ``standard'' estimates, which mirror those seen in Section \ref{sec:pff} when dealing with the error terms from the planar frequency function. In Section \ref{sec:pff}, we found regions where certain decay conditions stopped and used an appropriate Vitali cover and a doubling condition to control the error term $\text{error}^*_2$ therein. Here, the regions where the decay stops \emph{have already been built into our center manifold construction}: these are the cubes in $\S$. Since the center manifold was not constructed ``randomly'', but is built from $u_{\text{a}}$, which itself is close to a harmonic function, we can therefore establish the equivalent estimates and doubling conditions for the normal map $N$. The argument is somewhat more technical, as $\mathcal{M}$ is not flat and moreover there are three types of cubes to consider, but the reader should note the similarity in this part of the argument to that used for planar frequency.

Let us start with the simplest case, namely the height cubes. The main goal is to control in every height cube the size of the average-free part of $N$ from below by the length of the respective cube; this is a stronger form of a doubling-type condition as it is an $L^\infty$ statement rather than an $L^2$ statement. This is achieved in (ii) of the following lemma:

\begin{lemma}[Height Cube Control]\label{lemma:cm-15}
	There is a constant $\bar{C}^* = \bar{C}^*(M_0)\in (0,\infty)$ such that the following holds. Suppose that $C_h\geq \bar{C}^*C_e$. Then, if $\eps_2 = \eps_2(\mathcal{V},M_0,N_0,C_e,C_h)\in (0,1)$ is sufficiently small, then the following hold for every $L\in \S_h$:
	\begin{enumerate}
		\item [(i)] $\Theta_V(p)\leq Q-1$ for every $p\in B_{16r_L}^{n+k}(p_L)$;
		\item [(ii)] $|N_f(x)|\geq \frac{1}{4}C_h\Etilt_V\ell(L)$ for every $x\in \Phi(B_{2\sqrt{n}\ell(L)}^n(x_L,P_0))$;					\item [(iii)] $L\cap H=\emptyset$ for every $H\in\mathcal{S}_n$ with $\ell(H)\leq\frac{1}{2}\ell(L)$.
	\end{enumerate}
\end{lemma}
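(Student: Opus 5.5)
The plan is to play the two defining properties of $L\in\S_h$ against each other. By construction, (EX) fails for $L$ while (HT) holds, so $\mathbf{h}_V(\mathbf{B}_L)>C_h\Etilt_V\ell(L)$. Moreover, by Lemma \ref{lemma:cm-4} applied to the parent $J\in\mathcal{R}$ of $L$, we also have $\Etilt_V(\mathbf{B}_L)\leq C\Etilt_V\ell(L)^{11/12}$ with $C$ depending on $C_e$.

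By Lemma \ref{lemma:cm-5}, $V\res C_{32r_L}(p_L,P_L)$ is the graph of $u_L$ over $P_L$. The small tilt excess, via Theorem \ref{thm:eps-reg} and the Poincar\'e inequality, controls the oscillation of $u_L$ around a constant $Q$-point. Concretely, the Lipschitz constant of $u_L$ on $B_{16r_L}(p_L,P_L)$ is at most $C(M_0)C_e\Etilt_V\ell(L)^{11/12}$. Since the domain has radius $\sim M_0\ell(L)$, each single sheet varies in height by at most $C(M_0)C_e\Etilt_V\ell(L)^{1+11/12}$, which is much smaller than $C_h\Etilt_V\ell(L)$ once $\ell(L)\le 2^{-N_0}$ and $C_h\ge \bar C^*C_e$.

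The large height in $\mathbf{B}_L$ must therefore come from separation between the sheets, not from the tilt of any single sheet. Combining this with a Poincar\'e/$L^\infty$ estimate and a pigeonhole argument on the $Q$ values, one gets a splitting of the values of $u_L$ into two groups whose heights differ by at least roughly $\frac{1}{2Q}C_h\Etilt_V\ell(L)$ throughout the enlarged cylinder. I expect this step to be the main obstacle: it requires a quantitative statement that sheets which are far apart at one point of $\mathbf{B}_L$ stay far apart on the whole cylinder. I would obtain it from the Lipschitz bound, since the gap between two sheets changes at most by $\mathrm{Lip}\cdot\mathrm{diam}$, which is $\ll C_h\Etilt_V\ell(L)$. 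One must choose the ordering correctly, taking the gap in the sorted height coordinate realising the height. I would also hedge by tracking the constants only up to factors $\frac14$, absorbing $Q$ into $\bar C^*$ or taking $\eps_2$ small.

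Granting the separation, (i) follows: every point of $\spt\|V\|\cap B_{16r_L}(p_L)$ lies on a graph of at most $Q-1$ sheets, so its density is $\le Q-1$. For (ii), $\mathcal{M}$ is within $C\Etilt_V\ell(L)^{15/4}$ of $g_L$ there, by Theorem \ref{thm:cm} and Lemma \ref{lemma:cm-10}, and $N$ is the reparametrisation of $u_L$ over $\mathcal{M}$ (Theorem \ref{thm:cm-14}). Reparametrisation changes the values only by lower-order amounts, of size $C\Etilt_V^2\ell(L)^{2-1/12}$. Separation of two groups of values by at least $\frac12 C_h\Etilt_V\ell(L)$ forces $|N_f|\ge\frac14C_h\Etilt_V\ell(L)$ on $\Phi(B_{2\sqrt n\ell(L)}(x_L))$. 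For (iii), suppose $H\in\S_n$ touches $L$ with $\ell(H)\le\frac12\ell(L)$. Then $H$ is a descendant of a neighbour of $L$, and that neighbour lies in $\mathcal{R}$, since by (W3) and (NN) neighbours of $\S$-cubes at the next level are absorbed into $\S$. This would force $H$'s ancestor adjacent to $L$ at level $\level(L)+1$ to itself be in $\S$, contradicting the fact that $H$ was refined further. This is a combinatorial check using (NN) and the constraint $|\level|$-difference $\le1$ from (W3).
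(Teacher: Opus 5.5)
Your arguments for (i) and (ii) follow the paper's route. (HT) on $L$, together with the Lipschitz bound $\lesssim C_e\Etilt_V\ell(L)^{11/12}$ for $u_L$, forces the large height in $\mathbf{B}_L$ to come from a separation between sheets that persists over the whole cylinder. This excludes density-$Q$ points and, after passing to $N$, bounds $|N_f|$ from below.

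One small repair is needed in (ii). Do not go through the pigeonhole gap of size $\frac{1}{2Q}C_h\Etilt_V\ell(L)$: a factor of $Q$ cannot be absorbed into $\bar C^*$ when $C_h$ appears on both sides of the desired inequality. Use instead that at every point two values differ by at least $D:=C_h\Etilt_V\ell(L)-C\Etilt_V\ell(L)^{1+11/12}$. Then $|N_f|\ge D/\sqrt2$, up to the negligible reparametrisation error, whenever two values of $N$ differ by $D$.

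The argument for (iii), however, has a genuine gap. By (W3), an $H\in\S$ meeting $L$ with $\ell(H)\le\frac12\ell(L)$ must have $\ell(H)=\frac12\ell(L)$ exactly. So your contradiction (``$H$ was refined further'') only disposes of the case $\ell(H)\le\frac14\ell(L)$, which is vacuous. In the remaining case $H$ \emph{is} the level-$(\level(L)+1)$ cube touching $L$, and there is no combinatorial contradiction at all. Rule (NN) puts every such child of a neighbouring $\mathcal{R}$-cube into $\S^{\level(L)+1}$, and into $\S_n$ precisely when neither (EX) nor (HT) holds for it. So (iii) is an analytic statement: for $L\in\S_h$, such an $H$ must itself satisfy (EX) or (HT). This is exactly what the paper uses afterwards to conclude that every cube of $\S_n$ is spawned by a cube of $\S_e$.

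The missing step is to propagate the separation from $L$ to $H$:
\begin{itemize}
\item Since $|x_H-x_L|\le\frac32\sqrt n\,\ell(L)\ll r_L$ and $64r_H=32r_L$, the points of $\spt\|V\|$ lying over $x_H$ belong to $\mathbf{B}_H$, and two of them have $P_L^\perp$-separation at least $D$.
\item Since $\|\pi_{P_H}-\pi_{P_L}\|\le C_1\Etilt_V\ell(H)^{11/12}$ by Lemma \ref{lemma:cm-3}, their $P_H^\perp$-separation is still at least $\frac34C_h\Etilt_V\ell(L)=\frac32C_h\Etilt_V\ell(H)$ for $\eps_2$ small.
\item Since $P_H$ realises $\mathbf{h}_V(\mathbf{B}_H)$, this gives $\mathbf{h}_V(\mathbf{B}_H)>C_h\Etilt_V\ell(H)$. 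Hence $H\in\S_e\cup\S_h$, so $H\notin\S_n$.
\end{itemize}
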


\begin{proof}
	(i) is a simple consequence of the $GC^{1,\alpha}$ regularity of $V$: if we had a point of density $Q$ in $B_{16r_L}^n(p_L)$, then we would get that\footnote{Indeed, one can consider the father $\widetilde{L}$ of $L$ to see this, as $V\res C_{48r_{\widetilde{L}}}(p_{\widetilde{L}})$ is the graph of a $GC^{1,\alpha}$ function with Lipschitz constant $\leq C(\mathcal{V})\Etilt_V\ell(L)$ and that $\mathbf{B}_L\subset C_{48r_{\widetilde{L}}}(p_{\widetilde{L}})$.} the separation of $V$ in $\mathbf{B}_L$ is $\leq C(\mathcal{V})\Etilt_V\ell(L)$, and so (HT) would not fail for $L$, i.e.~$L\not\in \mathcal{S}_h$. Thus, there are no points of density $Q$ in $B^n_{16r_L}(p_L)$ and thus as $\Theta_V\in \{1,2,\dotsc,Q\}$ (again from the graph structure) we see it must be $\leq Q-1$ everywhere.
	
	Claim (ii) also follows from the $GC^{1,\alpha}$ regularity of $V$. Since by definition of $\mathcal{S}_h$ there are two points in $\mathbf{B}_L$ which have separation $\geq C_h\Etilt_V\ell(L)$, and since $V$ is a multi-valued graph with Lipschitz constant $\leq C(\mathcal{V})C_e\Etilt_V \ell(L)^{11/12}$, on a region of radius proportional to $\ell(L)$ each value of $V$ can only change by an amount at most $C(\mathcal{V})C_h\Etilt_V\ell(L)^{1+11/12}$ (as $C_e\leq C_h$). Thus, if $N_0 = N_0(\mathcal{V})$ is sufficiently large, this is $<\frac{1}{16}C_h\Etilt_V\ell(L)$, which combined with the other estimates on $\mathcal{M}$ proves (ii). (Alternatively, one can ensure this using $C_h\geq C^*C_e$ and choosing $C^*$ sufficiently large.)
	
	Finally, (iii) follows in essentially the same way as (ii), by showing that if (HT) holds on $L$, then on every $H\in\mathcal{C}^j$ with $j\geq\level(L)+1$ and $H\cap L \neq\emptyset$, then (HT) still holds on $H$. Thus, it can never be the case that $H\in\mathcal{S}_n$. Indeed, suppose there is $H\in\mathcal{S}$ with $\level(H)\geq\level(L)+1$ and $H\cap L\neq\emptyset$. From our construction of $\S$ it follows that $\level(H) = \level(L)+1$ and so $\ell(H) = \frac{1}{2}\ell(L)$. Moreover, from Lemma \ref{lemma:cm-3} we know $\|\pi_{P_H}-\pi_{P_L}\|\leq C_1\Etilt_V\ell(H)^{11/12}$. Arguing as in (ii), since (HT) holds on $L$, we see that for $\eps_2$ sufficiently small,
$$|\pi_{P_H^\perp}(x-y)|\geq\frac{3}{4}C_h\Etilt_V\ell(L) = \frac{3}{2}C_h\Etilt_V\ell(H)$$
for \emph{some} $x,y\in\spt\|V\|\cap C_{32r_L}(p_L,P_L)$. But then this gives that $\mathbf{B}_H$ must contain two points $x,y$ with $|\pi_{P_H^\perp}(x-y)|\geq\frac{3}{2}C_h\Etilt_V\ell(H)$, showing that $H$ satisfies (HT) and so $H\not\in \S_n$. This completes the proof.
\end{proof}

The next case we analyse is that of the neighbouring cubes, i.e.~$\mathcal{S}_n$. Notice that Lemma \ref{lemma:cm-15}(iii) tells us that if $H\in\mathcal{S}_n$ then $H$ must in fact spawn from a cube in $\mathcal{S}_e$, as it cannot be adjacent to any larger cube in $\mathcal{S}_h$. From this, we can deduce the following fact, morally saying that any cube in $\mathcal{S}_n$ is ``covered'' by a cube in $\mathcal{S}_e$. Indeed, Lemma \ref{lemma:cm-15}(iii) gives that if $H\in\mathcal{S}_n$, then we can find a chain $L_0,L_1,\dotsc,L_j\equiv H$ such that $L_0\in\mathcal{S}_e$, $L_i\in \S_n$ for all $i\geq 1$, and $L_i\cap L_{i-1}\neq\emptyset$ with $\level(L_i) = \level(L_{i-1})+1$ for all $i\geq 1$. In particular, for such a chain we have
$$H\subset B_{3\sqrt{n}\ell(L_0)}(x_{L_0},P_0).$$
Now fix an ordering on the (countable) set $\S_e$ such that the level is increasing along the ordering. For $H\in\S_n$, we then choose $L_0\in \S_e$ \emph{minimal} in this ordering for which there is a chain as just described starting at $L_0$ and ending at $H$. We call $L_0$ the \emph{spawner} of $H$. We may therefore partition $\S_n$ into disjoint sets $\S_n(J)$, for $J\in \S_e$, where $\S_n(J)$ is the set of all $H\in \S_n$ whose spawner is $J$. We call $\S_n(J)$ the \emph{spawners of $J$}.

The following lemma regarding the spawner of $H\in\S_n$ is a simple consequence of the analysis so far. Here, for $q\in \mathcal{M}$ we write $\mathcal{B}_r(q)$ for the geodesic ball in $\mathcal{M}$ of radius $r$ centred at $q$. The reason we need such a lemma is because currently we do not know any decay property for the cubes in $\S_n$, and so we will use the spawner to control them, and so we need to know the relation between $H$ and its spawner.

\begin{lemma}[Spawners of Neighbouring Cubes $\S_n$]\label{lemma:cm-16}
	If $\eps_2 = \eps_2(\mathcal{V},M_0,N_0,C_e,C_h)\in (0,1)$ is sufficiently small, then the following holds. Fix $r>0$ and suppose $H\in \S_n$ intersects $B:=\pi_{P_0}(\mathcal{B}_r(0))$. Then, if $J\in\S_e$ is the spawner of $H$, we have
	$$\ell(J)\leq\frac{3r}{64\sqrt{n}} \qquad \text{and}\qquad \textnormal{dist}(J,B),\, \textnormal{dist}(H,J)\leq 3\sqrt{n}\ell(J).$$
\end{lemma}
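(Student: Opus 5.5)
The plan is to use the chain structure described just before the statement, together with the fact that $0$ is a flat singular point of planar frequency $\geq 2$. Fix $H\in\S_n$ meeting $B$, and let $L_0\equiv J, L_1,\dotsc,L_j\equiv H$ be the chain defining the spawner. Here $J\in\S_e$ and $L_i\in\S_n$ for $i\geq 1$. Consecutive cubes intersect, and $\level(L_i)=\level(L_{i-1})+1$, so $\ell(L_i)=2^{-i}\ell(J)$.

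\textbf{Step 1: $\dist(H,J)\leq 3\sqrt{n}\ell(J)$.} Pick points $z_i\in L_i\cap L_{i-1}$. Each $L_i$ has diameter $\sqrt{n}\ell(L_i)$. Walking along the chain, every point of $H$ is within $\sum_{i\geq 0}\sqrt{n}\,2^{-i}\ell(J)\leq 2\sqrt{n}\ell(J)$ of a point of $J$. This is the inclusion $H\subset B_{3\sqrt{n}\ell(J)}(x_{J},P_0)$ recorded before the lemma, and it gives the claimed bound on $\dist(H,J)$ with room to spare.

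\textbf{Step 2: $\dist(J,B)\leq 3\sqrt{n}\ell(J)$.} Since $H\cap B\neq\emptyset$, this follows from Step 1 and the triangle inequality, using $\operatorname{diam}(H)\leq\sqrt{n}\ell(H)\leq\tfrac12\sqrt{n}\ell(J)$. Concretely, a point of $H\cap B$ lies within $\dist(H,J)+\operatorname{diam}(H)\leq 3\sqrt{n}\ell(J)$ of $J$ if one tracks the chain sums carefully. Alternatively, one can use directly that $H\subset B_{3\sqrt{n}\ell(J)}(x_J,P_0)$.

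\textbf{Step 3: the size bound $\ell(J)\leq 3r/(64\sqrt{n})$.} Here the key input is the quantitative last claim of Lemma \ref{lemma:cm-12}. By the standing assumptions, $0\in\mathcal{B}_V^{\geq 2}$, so $0\in\mathcal{B}_u^{\geq2}$, and hence $\ell(J)<\frac{1}{64\sqrt{n}}\dist(0,J)$. It then remains to bound $\dist(0,J)$ by roughly $r+3\sqrt{n}\ell(J)$, since every point of $B$ lies within about $r$ of $0$. This uses that $\mathcal{B}_r(0)\subset\mathcal{M}$ is a geodesic ball and $\pi_{P_0}$ is $1$-Lipschitz, so $B\subset B^n_r(0)$. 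Combining, $64\sqrt{n}\ell(J)<r+3\sqrt{n}\ell(J)$, which gives $61\sqrt{n}\ell(J)<r$. That is stronger than the claimed $\ell(J)\leq 3r/(64\sqrt{n})$.

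\textbf{Main obstacle.} The step needing care is the bookkeeping of constants. First, whether $\dist(0,J)$ should be estimated via $\dist(0,H)+\dist(H,J)+\operatorname{diam}$ terms without losing the factor $64$. Second, ensuring that the chain really terminates at an $\S_e$ cube, rather than an $\S_h$ cube; this is guaranteed by Lemma \ref{lemma:cm-15}(iii). Both are routine once the chain and Lemma \ref{lemma:cm-12} are in hand.
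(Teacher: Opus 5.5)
Your proposal is correct and is essentially the paper's proof. The chain gives $\dist(H,J)\le 2\sqrt{n}\ell(J)$, the triangle inequality through a point of $H\cap B$ gives $\dist(J,B)\le 3\sqrt{n}\ell(J)$, and the quantitative last claim of Lemma~\ref{lemma:cm-12} at $0\in\mathcal{B}_V^{\geq 2}$ yields $64\sqrt{n}\ell(J)\le r+3\sqrt{n}\ell(J)$. Your inclusion $B\subset B^n_r(0)$ holds directly (Euclidean distance is at most geodesic distance), so you do not need the paper's separate case $r\geq 1/4$ or any smallness of $\eps_2$.

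One small slip in the bookkeeping: in the paper $\ell(L)$ is half the side-length, so $\operatorname{diam}(L)=2\sqrt{n}\ell(L)$, not $\sqrt{n}\ell(L)$. Summing the correct diameters over $i\geq 1$ still gives $2\sqrt{n}\ell(J)$, so none of your bounds change.
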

\textbf{Notation:} Here, recall $\dist(A,B):= \inf\{|a-b|:a\in A, b\in B\}$.

\begin{proof}
	We know by definition of the spawner that $\dist(H,J) \leq 2\sqrt{n}\ell(J)$, and thus
	$$\dist(J,B)\leq \dist(H,J) + 2\sqrt{n}\ell(H) \leq 3\sqrt{n}\ell(J)$$
	which proves the second claim. If $r\geq 1/4$, the first claim is trivial given that $\ell(J)\leq 2^{-N_0-6}$ for all cubes in $\S$ (cf.~Lemma \ref{lemma:cm-1}) and moreover that $N_0$ is assumed to obey $\sqrt{n}2^{7-N_0}\leq 1$. Assume therefore that $r\leq 1/4$. Since $\|D\wp\|_{C^{2,1/2}}\leq C\eps_2$ from Theorem \ref{thm:cm}, if $\eps_2$ is sufficiently small we know that $B_{r/2}^n(0)\subset B\subset B_r(0)$ for any $r$. But as $H$ intersects $B\subset B_r(0)$, this tells us (with the fact $\dist(H,J)\leq 2\sqrt{n}\ell(J)$) that $J$ intersects $B_{r+3\sqrt{n}\ell(J)}(0)$, and so $\dist(0,J)\leq r+3\sqrt{n}\ell(J)$. But then from Lemma \ref{lemma:cm-12}, we have (as $0\in \mathcal{B}^{\geq 2}_V$)
	$$\ell(J) \leq \frac{1}{64\sqrt{n}}\dist(0,L) \leq \frac{r+3\sqrt{n}\ell(J)}{64\sqrt{n}}$$
	which if we rearrange for $\ell(J)$ gives $\ell(J)\leq \frac{r}{61\sqrt{n}}\leq \frac{3r}{64\sqrt{n}}$, which is the first inequality.
\end{proof}

Finally, we give the key doubling condition for the excess cubes $\S_e$. This is most similar to that seen in the argument for planar frequency seen in Section \ref{sec:pff}, namely when we used Lemma \ref{lemma:doubling-2}. Just like for planar frequency, we need to move the center-point inward by an amount comparable to the radius $\ell(L)$, so that we can introduce the cut-off function $\phi$ from Section \ref{sec:pff} when controlling error terms by those involved in the definition of the frequency function.

\begin{lemma}[Excess Cube Doubling Condition]\label{lemma:cm-17}
	There exist constants $\bar{M}_0 = \bar{M}_0(n,k)$ and $\bar{C}_e = \bar{C}_e(\mathcal{V},M_0)$ such that if $M_0\geq \bar{M}_0$ and $C_e\geq \bar{C}_e$, and if $\eps_2 = \eps_2(\mathcal{V},M_0,N_0,C_e,C_h)$ is sufficiently small, then the following holds. If $L\in \S_e$ and $q\in P_0$ obeys $\dist(L,q)\leq 6\sqrt{n}\ell(L)$, then
	$$C\Etilt_V^2\ell(L)^{n+2(1-\frac{1}{12})}\leq \int_{\Phi(B_{\ell(L)/4}(q,P_0))}|DN|^2.$$
	Here, $C = C(\mathcal{V},M_0,N_0,C_e,C_h)\in (0,\infty)$.
\end{lemma}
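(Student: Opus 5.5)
The plan is to argue by contradiction through a blow-up, in the spirit of Lemma \ref{lemma:doubling-1} and Lemma \ref{lemma:doubling-2}. The point is to turn the failure of (EX) on $L$ into a lower bound on the energy of the average-free part of $V$ in a small ball near $q$. Two facts about $L\in\S_e$ drive this. Since $L$ has no ancestor in $\S$, its parent $\tilde L$ belongs to $\mathcal{R}$ (using Lemma \ref{lemma:cm-1}), so $\Etilt_V(\mathbf{B}_{\tilde L})\le C_e\Etilt_V\ell(\tilde L)^{11/12}$. On the other hand $\Etilt_V(\mathbf{B}_L)>C_e\Etilt_V\ell(L)^{11/12}$. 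Since $\mathbf{B}_L\subset\mathbf{B}_{\tilde L}$, these combine into the doubling-type comparison
$$\Etilt_V(\mathbf{B}_L)\ge 2^{-11/12}\,2^{-n/2}\,\Etilt_V(\mathbf{B}_{\tilde L}) \quad\text{(up to dimensional factors)},$$
which is the analogue of hypothesis \eqref{E:doubling-2-0}.

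First I rescale $\mathbf{B}_{\tilde L}$ to unit size and rotate so that $P_{\tilde L}=P_0$. Lemma \ref{lemma:cm-5} and Theorem \ref{thm:eps-reg} then represent $V$ over this region by a $GC^{1,\alpha}$ function $u$. Its normalised gradient $\Etilt_V(\mathbf{B}_{\tilde L})^{-1}Du$ is $L^2$-close to $Dv$ for some $v\in\mathfrak{B}_{\mathcal{V}}$, and $\int|Dv|^2$ on the ball corresponding to $\mathbf{B}_L$ is bounded below by a fixed fraction of its value on the larger ball. This is where $M_0\ge\bar M_0$ enters: it makes the rescaled ball $B_{\ell(L)/4}(q)$ a definite but small subball of the unit region (of relative radius $\sim 1/M_0$). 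Lemma \ref{lemma:doubling-1}, applied as in Lemma \ref{lemma:doubling-2}, then shows that the energy of $v$, and hence of $u$, in the ball corresponding to $B_{\ell(L)/4}(q)$ is at least $\beta^2\Etilt_V^2(\mathbf{B}_L)\ell(L)^n\gtrsim C_e^2\Etilt_V^2\ell(L)^{n+11/6}$.

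The main obstacle is that the lemma controls $|DN|^2$, not $|Du|^2$. Since $\mathcal{M}$ is built from the smoothed average, $DN$ essentially captures the average-free part $Du_f$ together with the error $D(u_{\rm a}-\wp_L)$. This is where the exponent $11/12<1$ and the choice $C_e\ge\bar C_e$ are essential (cf.~Remark \ref{remark:lower-exponent-in-cm}).

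To handle it I would split $v=v_{\rm a}+v_f$, with $v_{\rm a}$ harmonic. The tangent-plane part is negligible: the affine term of $v_{\rm a}$ at the centre can be absorbed, since $P_L$ is optimal. I then need to exclude the possibility that all the energy sits in $D^2v_{\rm a}$, i.e.\ that the sheets coincide and decay quadratically. If $v_f\equiv0$ in the limit, the tilt excess on $\mathbf{B}_L$ relative to the optimal plane would be bounded by $C\Etilt_V(\mathbf{B}_{\tilde L})\ell(L)$ with $C=C(\mathcal{V},M_0)$, since $v_{\rm a}$ is harmonic with bounded energy, so its gradient oscillates linearly in the radius. Combined with $\Etilt_V(\mathbf{B}_{\tilde L})\le C_e\Etilt_V\ell(L)^{11/12}$ and the background bound $\Etilt_V(\mathbf{B}_{\tilde L})\lesssim\Etilt_V$, this gives $\Etilt_V(\mathbf{B}_L)\le C\Etilt_V\ell(L)$. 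That contradicts (EX) once $C_e\ge\bar C_e$ and $\ell(L)^{1/12}$ is small, which holds after choosing $N_0$ large. Hence the energy of $v_f$ is a fixed fraction of the total. The unique continuation property of $v_f$ from Theorem \ref{thm:blow-up} then propagates this lower bound to the small ball around $q$, just as in Lemma \ref{lemma:doubling-1}.

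Finally I de-linearise. The normal map $N$ differs from $u-\wp_L$ by reparametrisation errors of higher order. By Lemma \ref{lemma:cm-10}(v) and Lemma \ref{lemma:cm-7}, $\wp$ agrees with $u_{\rm a}$ up to $O(\Etilt_V^3\ell(L)^{n+15/4})$ in $L^1$, and elliptic estimates upgrade this to an $L^2$-gradient error. Consequently
$$\int_{\Phi(B_{\ell(L)/4}(q,P_0))}|DN|^2\ge\int|Du_f|^2-o\bigl(\Etilt_V^2\ell(L)^{n+11/6}\bigr)\ge C\Etilt_V^2\ell(L)^{n+2(1-\frac1{12})},$$
provided $\eps_2$ is small.
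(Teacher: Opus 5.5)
The gap is in how you rule out that the energy concentrates in $v_{\rm a}$ (the coinciding-sheets case). Your claim that $v_f\equiv0$ would give $\Etilt_V(\mathbf{B}_L)\le C\,\Etilt_V(\mathbf{B}_{\tilde L})\,\ell(L)$ is false. After you rescale $\mathbf{B}_{\tilde L}$ to unit size, $\mathbf{B}_L$ is a ball of \emph{fixed} relative radius (about $\tfrac12$, off-centre by $O(1/M_0)$). Bounded energy of the harmonic $v_{\rm a}$ at that scale controls $D^2v_{\rm a}$ only by $\Etilt_V(\mathbf{B}_{\tilde L})/\ell(\tilde L)$ in original units. So you gain a fixed factor (essentially $\tfrac12$), never a factor $\ell(L)$. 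A bound $\Etilt_V(\mathbf{B}_L)\le C\Etilt_V\ell(L)$ would need quadratic decay of the average across all scales between $2^{-N_0}$ and $\ell(L)$, and a single blow-up cannot see that. Hence no choice of $C_e$ or $N_0$ closes your argument. Remark~\ref{remark:lower-exponent-in-cm} makes exactly this point: the relevant quantity is the one-step change in tilt excess, which is insensitive to $C_e$.

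The missing idea is a sharp one-step comparison. Since $\tilde L\in\mathcal{R}$, the scale-invariant excesses satisfy $\Etilt_V(\mathbf{B}_L)>C_e\Etilt_V\ell(L)^{11/12}\ge 2^{-11/12}\,\Etilt_V(\mathbf{B}_{\tilde L})$, with no dimensional loss. For harmonic $h$, on the other hand, $\int_{B_s}|Dh-Dh(0)|^2\le s^{n+2}\int_{B_1}|Dh|^2$, so the normalised excess drops by at least a factor $2^{-1}$ from one dyadic scale to the next. The whole contradiction lives in the gap between $2^{-11/12}$ and $2^{-1}$, so writing the comparison ``up to dimensional factors'' already destroys it.

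The paper proceeds as follows:
\begin{itemize}
\item It fixes $\lambda(n)$ with $(1+\lambda)^{n+2}<2^{1/12}$.
\item It uses $M_0\ge\bar M_0$ so that the offset $|p_L-p_H|\le 2\sqrt n\,\ell(H)=2r_H/M_0$ is absorbed in the enlargement factor $(1+\lambda)$. This is the actual role of $M_0$, not making $B_{\ell(L)/4}(q)$ a small subball.
\item It compares with the plane given by the mean of $D(u_{HJ})_{\rm a}$, which yields in the blow-up $\int_{B_{(1+\lambda)\rho}}\mathcal{G}(Dv,Q\llbracket Dv_{\rm a}(\text{centre})\rrbracket)^2\ge 2^{1/12}2^{-n-2}\int_{B_{2\rho}}|Dv|^2$.
\item It shows by compactness that this forces $\int_{B_s(q)}|Dv_f|^2\ge\beta^2\int_{B_{(1+\lambda)\rho}}|Dv|^2$ on every ball with $s\ge\delta\rho$. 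If this failed, unique continuation would give $v_f\equiv0$, and the inequality for $v_{\rm a}$ would contradict the sharp harmonic decay.
\end{itemize}
Your de-linearisation also works harder than needed. The pointwise identity $\mathcal{G}(Du,Q\llbracket b\rrbracket)^2=|Du_f|^2+Q|Du_{\rm a}-b|^2$, with $b$ the gradient of the graphing function of $\mathcal{M}$, gives the lower bound on $|DN|^2$ directly, without any closeness of $\wp$ to $u_{\rm a}$.
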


\begin{proof}
	The proof is a modification of that seen in Lemma \ref{lemma:doubling-2}, where now we need to prove the doubling condition after ``subtracting the center manifold''. As the center manifold is close to the average $(u_L)_{\text{a}}$, and the average is close to being harmonic, one expects to argue in a similar fashion.
	
	Throughout, we fix a constant $\lambda = \lambda(n)\in (0,1)$ obeying\footnote{Here, the exponent $1/12 \equiv 1 - (1-\frac{1}{12})$ comes from the choice of exponent in (EX). This proof illustrates why the exponent in (EX) cannot be taken to be $1$.}
 $(1+\lambda)^{n+2}<2^{1/12}$.
		
	\textbf{Step 1: Linear setting.} We first start with a variant of Lemma \ref{lemma:doubling-1}. Fix $\delta>0$. We claim that there is $\beta = \beta(\mathcal{V},\delta)\in (0,\infty)$ with the following property: if $v\in \mathfrak{B}_{\mathcal{V}}$ obeys (for $r<1/2$)
	$$\int_{B_{(1+\lambda) r}(0)}|Dv-Dv_{\text{a}}(0)|^2 \geq 2^{1/12}\cdot2^{-n-2}\int_{B_{2r}(0)}|Dv|^2$$
	then, for all $B_s(q)\subset B_{2r}(0)$ with $s\geq\delta r$ we have
	$$\int_{B_s(q)}|Dv_{f}|^2\geq\beta^2\int_{B_{(1+\lambda)r}(0)}|Dv|^2.$$
	To see this we argue by contradiction. If it were false, after rescaling to assume $2r=1$ and $\int_{B_1(0)}|Dv|^2=1$ (which we can do in a similar manner to that seen in Lemma \ref{lemma:doubling-1}) we can find a sequence $v_j\in \mathfrak{B}_{\mathcal{V}}$ with $\int_{B_1(0)}|Dv_j|^2=1$ for all $k$ and
	$$\int_{B_{(1+\lambda)/2}(0)}|Dv_j-D(v_j)_{\text{a}}(0)|^2\geq 2^{1/12}\cdot 2^{-n-2}$$
	yet
	$$\int_{B_{s_j}(q_j)}|D(v_j)_f|^2<\frac{1}{j}\int_{B_{(1+\lambda)/2}(0)}|Dv_j|^2 \leq \frac{1}{j}$$
	for some $B_{s_j}(q_j)\subset B_{1}(0)$ with $s_j\geq\delta/2$. Passing to a subsequence so that $s_j\to s\geq\delta/2$, $q_j\to q$, with $B_{s/2}(q)\subset \overline{B}_{1-s/4}(0)$, and $v_j\to v\in\mathfrak{B}_{\mathcal{V}}$ strongly in $W^{1,2}_{\text{loc}}(B_1(0))$, we get
	$$ \int_{B_{s/2}(q)}|Dv_f|^2 = 0.$$
	Hence, as a result of the monotonicity from Theorem \ref{thm:blow-up}, we see that $|Dv_f|\equiv 0$ on $B_1(0)$; in particular $\int_{B_{(1+\lambda)/2}(0)}|D(v_j)_f|^2\to 0$.
	
	We then have, as $(v_j)_{\text{a}}$ is harmonic for each $k$, from the mean-value property of harmonic functions,
	$$Q\int_{B_{(1+\lambda)/2}(0)}|D(v_j)_{\text{a}}-D(v_j)_{\text{a}}(0)|^2 = \int_{B_{(1+\lambda)/2}(0)}|Dv_j - D(v_j)_{\text{a}}(0)|^2 - |D(v_j)_f|^2$$
	and thus
	$$Q\int_{B_{(1+\lambda)/2}(0)}|D(v_j)_{\text{a}} - D(v_j)_{\text{a}}(0)|^2 \geq 2^{1/12}\cdot 2^{-n-2}-\int_{B_{(1+\lambda)/2}(0)}|D(v_j)_f|^2.$$
	Taking $j\to\infty$, we therefore get (noting also that $1 = \int_{B_1(0)}|Dv_j|^2 = \int Q|D(v_j)_{\text{a}}|^2 + |D(v_j)_f|^2$)
	$$\int_{B_{(1+\lambda)/2}(0)}|Dv_{\text{a}}-Dv_{\text{a}}(0)|^2\geq Q^{-1}\cdot 2^{1/12}\cdot 2^{-n-2}\geq 2^{1/12}\cdot2^{-n-2}\int_{B_1(0)}|Dv_{\text{a}}|^2.$$
	But as $v_{\text{a}}$ is harmonic, it should satisfy the decay estimate
	$$\int_{B_{(1+\lambda)/2}(0)}|Dv_{\text{a}}-Dv_{\text{a}}(0)|^2 \leq (1+\lambda)^{n+2}\cdot 2^{-n-2}\int_{B_1(0)}|Dv_{\text{a}}|^2$$
	which by choice of $\lambda$ therefore gives the desired contradiction.
	
	\textbf{Step 2: Pass linear estimate to non-linear setting.} Now we want to use Step 1 to prove a form of Lemma \ref{lemma:doubling-2} for the present setting, which is the claim of the lemma.
	
	First pick $j$ with $L\in \S_e^j$. Then let $H\in \mathcal{R}^{j-1}$ and $J\in \mathcal{R}^{j-6}$ be ancestors of $L$. From Lemma \ref{lemma:cm-5} we know that there is a function $u_{HJ}:B_{16r_J}(p_J,P_H)\to \A_Q(P_H^\perp)$ representing $V$ in the cylinder $C_{16r_J}(p_J,P_H)$. We also know that $\mathbf{B}_L\subset\mathbf{B}_H\subset C_{16r_J}(p_J,P_H)$ (c.f. \eqref{lemma:cm-2}). By Lemma \ref{lemma:cm-3} we also have
	$$\Etilt_V(C_{32r_J}(p_J,P_H))\leq C\Etilt_V\ell(L)^{11/12}\ \ \ \text{and}\ \ \ \mathbf{h}_V(C_{32r_J}(p_J,P_H))\leq C\Etilt_V\ell(L)$$
	where $C = C(\mathcal{V},M_0,N_0,C_e,C_h)$. Moreover, as $L\in \mathcal{S}_e$, $\mathbf{B}_L\subset C_{16r_J}(p_J,P_H)$, and $r_L/r_J = 2^{-6}$, we know that
	$$C_e\Etilt_V \ell(L)^{11/12}\leq \Etilt_V(\mathbf{B}_L) \leq  c(n)\Etilt_V(C_{32r_J}(p_J,P_H)).$$
	Now write $\rho:= 32r_H - \tilde{C}\Etilt_V\ell(L)$, for suitably chosen $\tilde{C} = \tilde{C}(\mathcal{V},M_0,N_0,C_e,C_h)$. From the above bound on $\mathbf{h}_V(C_{32r_J}(p_J,P_H))$, since $p_H\in\spt\|V\|$ it evidently follows that (for suitable $\tilde{C}$)
	$$\spt\|V\|\cap C_{2\rho}(p_H,P_H)\subset\mathbf{B}_H.$$
	Thus, expressing the tilt-excess in terms of $u_{HJ}$, we get
	\begin{align*}
    (2\rho)^{-n}\int_{B_{2\rho}(\pi_{P_H}(p_H),P_H)}|Du_{HJ}|^2 & \leq (1+C\Etilt_V^2)\cdot\Etilt_V^2(C_{2\rho}(p_H,P_H))\\
    &\leq \Etilt_V^2(\mathbf{B}_H) + C\Etilt^2_V\cdot \Etilt_V^2(C_{2\rho}(p_H,P_H)).
    \end{align*}
	Now, since $|p_H-p_L|\leq 2\sqrt{n}\ell(H)$, if we set $\sigma:= 64r_L+2\sqrt{n}\ell(H)$, we have
	$$\sigma = 32r_H + \frac{2}{M_0}r_H \leq \left(\frac{1}{2}+\frac{\lambda}{4}\right)64r_H\leq \left(1+\frac{\lambda}{2}\right)\rho + 2\tilde{C}\Etilt_V\ell(L).$$
	This string of inequalities is possible if $1/M_0\leq 8\lambda$, i.e.~if $M_0 = M_0(n)$ is sufficiently large. Hence, if $\eps_2$ is sufficiently small, we see that $\sigma\leq (1+\lambda)\rho$. Hence, as $|\pi_{P_H}(p_L)-\pi_{P_H}(p_H)|\leq |p_L-p_H|\leq 2\sqrt{n}\ell(L)$, we see that
	$$\mathbf{B}_L\subset C_{64r_L}(p_L,P_H)\subset C_{(1+\lambda)\rho}(p_H,P_H).$$
	Next, let $A:= \frac{1}{\w_n[(1+\lambda)\rho]^n}\int_{B_{(1+\lambda)\rho}(\pi_{P_H}(p_H),P_H)}D(u_{HJ})_a$ be the average of $D(u_{HJ})_a$ on this ball, and let $\tau$ be the plane whose graph is that of the function $P_H\to P_H^\perp$ which sends $x\mapsto A\cdot x$. Expressing the tilt-excess in terms of $u_{HJ}$, we then get
	\begin{align*}
		(1+C\Etilt_V^2)[(1+\lambda)\rho]^{-n}\int_{B_{(1+\lambda)\rho}(\pi_{P_H}(p_H))}\G(Du_{HJ},Q\llbracket A\rrbracket)^2 & \geq \Etilt_V^2(C_{(1+\lambda)\rho}(p_H,P_H),\tau)\\
		& \geq \Etilt_V^2(\mathbf{B}_L,\tau)\\
		& \geq \Etilt_V^2(\mathbf{B}_L).
	\end{align*}
	Now, we know from the application of Theorem \ref{thm:eps-reg} in $C_{32r_J}(p_J,P_H)$ (enabled by Lemma \ref{lemma:cm-5}) that for any $\eta>0$, provided $\eps_2$ is sufficiently small (also now depending on $\eta$) there is a function $v\in \mathfrak{B}_{\mathcal{V}}$ with (up to rotation and translation) $v:B_{16r_J}(\pi_{P_H}(p_J))\to \A_Q(P_H^\perp)$ such that (writing $\mathcal{E}:= \Etilt_V(C_{2\rho}(p_H,P_H))$ for ease of notation)
	$$(2\rho)^{-n}\int_{B_{2\rho}(\pi_{P_H}(p_H), P_H)}\left|\mathcal{E}^{-1}|Du_{HJ}|- |Dv|\right|^2\leq\eta.$$
	This in particular implies the same smallness condition but for the averages:
	$$(2\rho)^{-n}\int_{B_{2\rho}(\pi_{P_H}(p_H),P_H)}|\mathcal{E}^{-1}D(u_{HJ})_{\text{a}}-Dv_{\text{a}}|^2\leq Q^{-1}\eta \leq \eta.$$
	Since $v_{\text{a}}$ is harmonic, we know from the mean-value property of harmonic functions that we have $Dv_{\text{a}}(\pi_{P_H}(p_H)) = \frac{1}{\w_n[(1+\lambda)\rho]^n}\int_{B_{(1+\lambda)\rho}(\pi_{P_H}(p_H),P_H)}Dv_{\text{a}}$, and hence combining this with the above inequalities we get\footnote{Here we are also using that $\inf_{a\in\R^k}\int_B\G(v,Q\llbracket a\rrbracket)^2 = \int_B\G(v,Q\llbracket\bar{v}\rrbracket)^2$, where $\bar{v} = \frac{1}{|B|}\int_Bv_{\text{a}}$.}
	$$(1+C\Etilt_V^2)\mathcal{E}^2\int_{B_{(1+\lambda)\rho}(\pi_{P_H}(p_H),P_H)}\G(Dv,Q\llbracket Dv_{\text{a}}(\pi_{P_H}(p_H))\rrbracket)^2\geq [(1+\lambda)\rho]^{n}\Etilt_V^2(\mathbf{B}_L) - C\rho^{n}\mathcal{E}^2\eta$$
	and
	$$\mathcal{E}^2\int_{B_{2\rho}(\pi_{P_H}(p_H),P_H)}|Dv|^2 \leq (2\rho)^n\Etilt_V^2(\mathbf{B}_H) + C\rho^n\mathcal{E}^2\eta + C\rho^n\Etilt^2_V\mathcal{E}^2.$$
	Now since $\Etilt_V(\mathbf{B}_L) \geq C_e\Etilt_V\ell(L)^{11/12}\geq 2^{-11/12}\Etilt_V(\mathbf{B}_H)$, we therefore have (as $\lambda>0$)
	\begin{align*}
        (1+C\Etilt_V^2)\int_{B_{(1+\lambda)\rho}(\pi_{P_H}(p_H),P_H)}\G(Dv,\,&Q\llbracket Dv_{\text{a}}(\pi_{P_H}(p_H))\rrbracket)^2\\
        &\geq 2^{2/12}\cdot 2^{-n-2}\int_{B_{2\rho}(\pi_{P_H}(p_H),P_H)}|Dv|^2 - C\rho^n\eta - C\rho^n\Etilt^2_V.
    \end{align*}
	Furthermore, notice that since $(2\rho)^{-n}\int_{B_{2\rho}(\pi_{P_H}(p_H),P_H)}\mathcal{E}^{-2}|Du_{HJ}|^2$ can be made arbitrarily close to $1$, and therefore we know that for suitably small $\eta$ that $(2\rho)^{-n}\int_{B_{2\rho}(\pi_{P_H}(p_H),P_H)}|Dv|^2\geq 1/2$, we see that if we choose $\eta$ and $\eps_2$ sufficiently small we get
	$$\int_{B_{(1+\lambda)\rho}(\pi_{P_H}(p_H),P_H)}\G(Dv,Q\llbracket Dv_{\text{a}}(\pi_{P_H}(p_H))\rrbracket)^2 \geq 2^{1/12}\cdot 2^{-n-2}\int_{B_{2\rho}(\pi_{P_H}(p_H),P_H)}|Dv|^2.$$
	Therefore, now we can apply the claim from Step 1 (with an appropriate choice of $\delta = \delta(n,M_0)$) to conclude that there is a constant $\beta = \beta(\mathcal{V},M_0)\in (0,1)$ such that
	$$\beta^2\int_{B_{(1+\lambda)\rho}(\pi_{P_H}(p_H),P_H)}|Dv|^2\leq \int_{B_{\ell(L)/8}(q,P_H)}|Dv_f|^2$$
	for any ball $B_{\ell(L)/8}(q,P_H)\subset B_{2\rho}(\pi_{P_H}(p_H),P_H)$. In particular, passing this estimate back to $u_{HJ}$ using the estimates above and absorbing error terms, we get
	\begin{equation*}
		C\ell(L)^n\Etilt_V^2(\mathbf{B}_L)\leq \int_{B_{\ell(L)/8}(q,P_H)}|D(u_{HJ})_f|^2
	\end{equation*}
	i.e.~using the lower bound $\Etilt_V(\mathbf{B}_L)\geq C_e\Etilt_V\ell(L)^{11/12}$, we have
	\begin{equation*}\tag{$\ddagger$}\label{E:cm-doubling}
		C\Etilt_V^2\ell(L)^{n+11/6}\leq\int_{B_{\ell(L)/8}(q,P_H)}|D(u_{HJ})_f|^2.
	\end{equation*}
	All that remains is to pass from this inequality to the final estimate by changing from $u_{HJ}$ to $N$. So consider any ball $B_{\ell(L)/4}(q,P_0)$ where $\dist(L,q)\leq 6\sqrt{n}\ell(L)$. For simplicity, write $\Omega:=\Phi(B_{\ell(L)/4}(q,P_0))$. Note that $\pi_{P_H}(\Omega)$ must contain a ball $B_{\ell(L)/8}(\tilde{q},P_H)$ of the form above due to the estimates on $\wp$ (from Theorem \ref{thm:cm}) and $\|\pi_{P_H}-\pi_{P_0}\|$ (from Lemma \ref{lemma:cm-3}); choose any such ball, and note that we have the estimate \eqref{E:cm-doubling} with $\tilde{q}$ in place of $q$.
	
	Now, noting that, for each $x$, $\inf_{a}\G(Du_{HJ}(x),Q\llbracket a\rrbracket) = \G(Du_{HJ}(x),Q\llbracket D(u_{HJ})_{\text{a}}(x)\rrbracket)$, we get that $|D(u_{HJ})_f|\leq \mathcal{G}(Du_{HJ},Q\llbracket D\wp^\prime\rrbracket)$ everywhere, where $\wp^\prime$ is the function defined on $B_{16r_J}(\pi_{P_H}(p_J),P_H)$ whose graph coincides with $\mathcal{M}$. Thus we have
	$$\int_{B_{\ell(L)/8}(\tilde{q},P_H)}|D(u_{HJ})_f|^2\leq \int_{B_{\ell(L)/8}(\tilde{q},P_H)}\G(Du_{HJ},Q\llbracket D\wp^\prime\rrbracket)^2.$$
	The right-hand side of this inequality is then controlled by
	$$\int_{B_{\ell(L)/8}(\tilde{q},P_H)}\G(Du_{HJ},Q\llbracket D\wp^\prime\rrbracket)^2 \leq C(\mathcal{V})\int_{C_{\ell(L)/8}(\tilde{q},P_H)}\|\pi_{T_XV}-\pi_{T_{x + \wp^\prime(x)}\mathcal{M}}\|^2\ \ext\|V\|(X)$$
	where here we have written $x = \pi_{P_H}(X)$ (note that $x+\wp^\prime(x)\in \mathcal{M}$ by definition). Now, because we have $|\mathbf{p}(X)-(x+\wp^\prime(x))|\leq C\Etilt_V^2\ell(L)^{1+11/12}$ (which follows from the graph structure as the left-hand side is controlled by $C\Etilt_V\ell(L)^{11/12}|X-\mathbf{p}(X)|$ from our estimates in Theorem \ref{thm:cm}), we get
    $$\|\pi_{T_{\mathbf{p}(X)}\mathcal{M}}-\pi_{T_{x+\wp^\prime(x)}\mathcal{M}}\| \leq C\|D^2\wp^\prime\|_{C^0}|\mathbf{p}(X)-(x+\wp^\prime(x))| \leq C\Etilt_V\cdot \Etilt_V^2\ell(L)^{1+11/12}$$
    and thus
	$$\int_{B_{\ell(L)/8}(\tilde{q},P_H)}\G(Du_{HJ},Q\llbracket\wp^\prime\rrbracket)^2 \leq C\int_{\mathbf{p}^{-1}(\Omega)}\|\pi_{T_XV}-\pi_{T_{\mathbf{p}(X)}\mathcal{M}}\|^2\ \ext\|V\|(X) + C\Etilt_V^6\ell(L)^{n+2+11/6}.$$
	But now expanding this latter tilt-excess term in terms of the normal map $N$, we have (see \cite[Proposition 3.4]{DLS13})
	$$\int_{\mathbf{p}^{-1}(\Omega)}\|\pi_{T_XV}-\pi_{T_{\mathbf{p}(X)}\mathcal{M}}\|^2\leq (1+C\Etilt_V^2\ell(L)^{11/6})\int_{\Omega}|DN|^2 + C\Etilt_V^4\ell(L)^{n+2}.$$
	Putting all of these estimates together, including with \eqref{E:cm-doubling}, we get
	$$C\Etilt_V^2\ell(L)^{n+11/6}\leq C(1+\Etilt_V^2\ell(L)^{11/6})\int_{\Omega}|DN|^2 + C\Etilt_V^4\ell(L)^{n+2}.$$
	From this we evidently see that, for $\eps_2$ sufficiently small, we have
	$$C\Etilt_V^2\ell(L)^{n+11/6}\leq \int_{\Omega}|DN|^2$$
	which concludes the proof.
\end{proof}

\subsection{Frequency relative to center manifold}\label{sec:cm-frequency}

We now combine all the estimates from the previous subsection to show that the normal map $N$ has an associated frequency function (namely, the frequency of $V$ relative to $\mathcal{M}$) which is approximately monotone. We now fix choices of $M_0,N_0,C_e,C_h$, depending only on $\mathcal{V}$, so that the results of the previous sections hold.

Let $\phi:[0,\infty]\to[0,1]$ be the same function from Section \ref{sec:pff}. For each $r\in (0,3]$ we then define
$$\mathbf{D}(r):= r^{2-n}\int_{\mathcal{M}}\phi\left(\frac{d(p)}{r}\right)|DN(p)|^2\ \ext p$$
and
$$\mathbf{H}(r):= -r^{1-n}\int_{\mathcal{M}}\phi^\prime\left(\frac{d(p)}{r}\right)d(p)^{-1}|N(p)|^2\ \ext p,$$
where $d(p)$ denotes the geodesic distance in $\mathcal{M}$ between $p$ and $0\in\mathcal{M}$. Here, $N$ is the normal map to $\mathcal{M}$ given by Theorem \ref{thm:cm}. Note that these are the corresponding expressions to those defined in Section \ref{sec:pff} (up to controlled factors) except relative to $\mathcal{M}$ rather than a plane. When $\mathbf{H}(r)>0$, we define the \emph{center manifold frequency function} by
$$\mathbf{I}(r):=\frac{\mathbf{D}(r)}{\mathbf{H}(r)}.$$
\textbf{Remark:} We will show later in Remark \ref{remark:cm-H-non-zero} that indeed $\mathbf{H}(r)>0$ for all $r>0$: this will follow in an identical fashion to that seen in Section \ref{sec:pff}, once we are able to control certain extra error terms which arise in the equivalent choice of test function in the first variation.

Just like with the planar frequency, where we wanted to rule out an infinite order of contact between $V$ and the plane, here we want to rule out that $V$ has an infinite order of contact with $\mathcal{M}$, for which we will establish an approximate monotonicity of the frequency function $\mathbf{I}(r)$. The argument will mirror that of the planar case, except with more error terms due to $\mathcal{M}$ being potentially curved.

Throughout, we will write $\del_{\hat{r}}$ for the derivative with respect to arc-length along geodesics in $\mathcal{M}$ starting at $0$. Let $\exp$ denote the exponential map $T_0\mathcal{M}\supset B_3\to \mathcal{M}$. We will also write $\mathcal{B}_r(0)$ for the geodesic ball in $\mathcal{M}$ of radius $r$ and centred at $0$.

By the area formula, we can write $\mathbf{H}(r)$ as
$$\mathbf{H}(r)= - \int_{T_0\mathcal{M}}\phi^\prime(|z|)\cdot|z|^{-1}\cdot|N(\exp(rz))|^2\cdot\mathbf{J}_{\exp}(rz)\ \ext z$$
where $\mathbf{J}_{\exp}$ is the Jacobian of $\exp$. Therefore, if we differentiate under the integral sign, we get
\begin{align*}
	\mathbf{H}^\prime(r) = -2\int_{T_0\mathcal{M}}&\phi^\prime(|z|)\cdot|z|^{-1}\sum_i\langle N_i(\exp(rz)),\del_{\hat{r}}N_i(\exp(rz))\rangle\cdot\mathbf{J}_{\exp}(rz)\ \ext z\\
	& -\int_{T_0\mathcal{M}}\phi^\prime(|z|)\cdot|z|^{-1}|N(\exp(rz))|^2\cdot\frac{\ext}{\ext r}\mathbf{J}_{\exp}(rz)\ \ext z.
\end{align*}
Now, as $\mathcal{M}$ is a $C^{3,1/2}$ manifold, $\exp$ is a $C^{2,1/2}$ map, and so certainly $\frac{\ext}{\ext r}\mathbf{J}_{\exp}(rz)$ is bounded by $\|\wp\|_{C^{3,1/2}}$, i.e.~by $C(\mathcal{V})\Etilt_V$. Hence, as $\ext \exp_0(y) = |y|$ for every $y\in B^n_1(0)$, we see that if we change coordinates back via the area formula,
\begin{equation}\label{E:H-prime-1}
	\mathbf{H}^\prime(r) = -2r^{-n}\int_{\mathcal{M}}\phi^\prime\left(\frac{d(p)}{r}\right)\sum^Q_{i=1}\langle N_i(p),\del_{\hat{r}}N_i(p)\rangle\ \ext p + O(1)\Etilt_V\mathbf{H}(r)
\end{equation}
where by $O(1)$ we mean a term which is bounded (in terms of $\mathcal{V}$). For ease of notation we will write
$$\mathbf{E}(r):= -r^{1-n}\int_{\mathcal{M}}\phi^\prime\left(\frac{d(p)}{r}\right)\sum^Q_{i=1}\langle N_i(p),\del_{\hat{r}}N_i(p)\rangle\ \ext p$$
and so \eqref{E:H-prime-1} is equivalent to
\begin{equation}\label{E:H-prime-2}
	|\mathbf{H}^\prime(r)-2r^{-1}\mathbf{E}(r)|\leq C\Etilt_V\mathbf{H}(r)
\end{equation}
where $C = C(\mathcal{V})$.

Recall that the normal projection map $\mathbf{p}:\mathbf{U}\to\mathcal{M}$ is a $C^{2,1/2}$ function. We now consider two types of variation, entirely analogous to those used in Section \ref{sec:pff} for planar frequency (namely, variations in the normal and tangential directions to $\mathcal{M}$) to derive the key variational identities needed to show the approximate monotonicity of $\mathbf{I}(r)$.

First, we consider the \emph{outer variation} (i.e.~\emph{normal variation}) $X_{\text{o}}(p):=\phi\left(\frac{d(\mathbf{p}(p))}{r}\right)\mathbf{p}^\perp(p)$ where $\mathbf{p}^\perp(p):= p-\mathbf{p}(p)$. Observe that $X_{\text{o}}$ is supported in $\mathbf{p}^{-1}(\mathcal{B}_r(0))$ (we should also cut $X_{\text{o}}$ off vertically, as the support of $V$ is in $\mathbf{U}$, to assume $X_{\text{o}}$ has compact support in $\R^{n+k}$ without changing the computations, but we shall omit this detail). To ease notation, for $p\in\mathcal{M}$ we will write $\phi_r(p):=\phi\left(\frac{d(p)}{r}\right)$ and $\phi^\prime_r(p):= \phi^\prime\left(\frac{d(p)}{r}\right)$. So, applying the first variation formula for $V$ and writing everything in terms of the normal map $N$ (which represents $V$ \emph{everywhere}) we get (see \cite[Theorem 4.2]{DLS13}):
\begin{equation}\label{E:cm-outer-variation}
\int_{\mathcal{M}}\phi_r|DN|^2 + \sum^Q_{i=1}N_i\otimes \nabla\phi_r:DN_i = \sum^3_{j=1}\text{Err}^{\text{o}}_j
\end{equation}
where
$$\text{Err}_1^{\text{o}} = Q\int_{\mathcal{M}}\phi_r\langle H_{\mathcal{M}},N_{\text{a}}\rangle,$$
$$|\text{Err}_2^{\text{o}}|\leq C_0\int_{\mathcal{M}}\phi_r|A|^2|N|^2,$$
$$|\text{Err}_3^{\text{o}}|\leq C_0\int_{\mathcal{M}}(|N||A|+|DN|^2)(\phi_r|DN|^2+|D\phi_r||DN||N|)$$
where $H_{\mathcal{M}}$ is the mean curvature vector of $\mathcal{M}$ and $A$ is the second fundamental form of $\mathcal{M}$ (technically, \cite[Theorem 4.2]{DLS13} requires $\phi$ to be $C^1$, but we can get around this by an approximation argument). Multiplying this by $r^{2-n}$ we get
$$\mathbf{D}(r) = -r^{1-n}\int_{\mathcal{M}}\phi_r^\prime\sum_i\langle N_i(p),\del_{\hat{r}}N_i(p)\rangle\ \ext p + r^{2-n}\sum^3_{j=1}\text{Err}_j^{\text{o}}$$
i.e.
\begin{equation}\label{E:D-E}
	\mathbf{D}(r) - \mathbf{E}(r) = r^{2-n}\sum^3_{j=1}\text{Err}_j^{\text{o}}.
\end{equation}
Second, consider the \emph{inner variation} (i.e.~\emph{tangential variation}) given by $X_{\text{in}}(p):= Y(\mathbf{p}(p))$, where for $p\in\mathcal{M}$,
$$Y(p):= \frac{d(p)}{r}\cdot\phi\left(\frac{d(p)}{r}\right)\cdot\frac{\del}{\del\hat{r}}.$$
If we take $X_{\text{in}}$ in the first variation formula for $V$, we get (see \cite[Theorem 4.3]{DLS13})
\begin{equation}\label{E:cm-inner-variation}
\int_{\mathcal{M}}\frac{1}{2}|DN|^2\div_{\mathcal{M}}(Y) - \sum^Q_{i=1}\langle DN_i:(DN_i\cdot D_{\mathcal{M}}Y)\rangle = \sum^3_{j=1}\text{Err}_j^{\text{in}}
\end{equation}
where
$$\text{Err}_1^{\text{in}} = Q\int_{\mathcal{M}}\langle H_{\mathcal{M}},N_{\text{a}}\rangle\div_{\mathcal{M}}(Y) + \langle D_YH_{\mathcal{M}},N_{\text{a}}\rangle,$$
$$|\text{Err}_2^{\text{in}}|\leq C_0\int_{\mathcal{M}}|A|^2(|DY||N|^2 + |Y||N||DN|),$$
$$|\text{Err}_3^{\text{in}}|\leq C_0\int_{\mathcal{M}}|Y||A||DN|^2(|N|+|DN|) + |DY|(|A||N|^2|DN| + |DN|^4).$$
We may compute
$$D_{\mathcal{M}}Y(p) = \phi^\prime\left(\frac{d(p)}{r}\right)\frac{d(p)}{r^2}\cdot\frac{\del}{\del \hat{r}}\otimes\frac{\del}{\del\hat{r}} + \phi\left(\frac{d(p)}{r}\right)\left(\frac{\text{Id}}{r} + O(1)\Etilt_V\right),$$
$$\div_{\mathcal{M}}Y(p) = \phi^\prime\left(\frac{d(p)}{r}\right)\cdot\frac{d(p)}{r^2} + \phi\left(\frac{d(p)}{r}\right)\left(\frac{n}{r} + O(1)\Etilt_V\right)$$
and so therefore we get, after multiplying by $2r^{2-n}$,
\begin{equation}\label{E:D-prime-1}
	\mathbf{D}^\prime(r) = -2r^{-n}\int_{\mathcal{M}}\phi^\prime_r(p)d(p)|\del_{\hat{r}}N(p)|^2\ \ext p + O(1)\Etilt_V\mathbf{D}(r) + 2r^{2-n}\sum^3_{j=1}\text{Err}_j^{\text{in}},
\end{equation}
which, if we simplify notation by writing
$$\mathbf{G}(r) := -r^{-n} \int_{\mathcal{M}}\phi_r^\prime(p)d(p)|\del_{\hat{r}}N(p)|^2\ \ext p,$$
is equivalent to
\begin{equation}\label{E:D-prime-2}
	\mathbf{D}^\prime(r) - 2\mathbf{G}(r) = O(1)\Etilt_V\mathbf{D}(r) + 2r^{2-n}\sum^3_{j=1}\text{Err}^{\text{in}}_j.
\end{equation}
The main work that remains is to bound the error terms $\text{Err}_j^{\text{o}}$ and $\text{Err}_j^{\text{in}}$. Note that ``error'' is only a reasonable name for these terms if they are actually smaller than the main terms, i.e.~those involving $\mathbf{D}(r)$ and $\mathbf{H}(r)$. As $\mathbf{D}(r)$ and $\mathbf{H}(r)$ are quadratic in $N$ or $DN$, we see that the worse error terms a priori are $\text{Err}_1^{\text{o}}$ and $\text{Err}_1^{\text{in}}$, as these are only linear in $N$ (notice that if $\mathcal{M}$ were minimal, as was the case for planar frequency when $\mathcal{M}$ was taken to be a plane, these terms would vanish). However, since they are in fact linear in $N_{\text{a}}$, we hope to control them by making $N_{\text{a}}$ small. \emph{This is exactly the reason for the center manifold construction}. We will bound the errors in terms of $\mathbf{D}(r)$ and $\mathbf{H}(r)$, which is why we needed to control both the tilt and the height in the center manifold construction, and moreover why we had to be delicate with ensuring that $N_{\text{a}}$ is always of a higher order than $\mathbf{D}(r)$ and $\mathbf{H}(r)$ (cf.~Theorem \ref{thm:cm-14}), which we did by building the center manifold over regions where decay stops.

When estimating the error terms, we will need to pass from the domain of integration, namely $\mathcal{B}_r(0)$, to the region in the plane $B:=\pi_{P_0}(\mathcal{B}_r(0))$ where we had our Whitney decomposition forming $\mathcal{M}$. Notice that as $\|\wp\|_{C^{3,1/2}}\leq C\eps_2$ (see Theorem \ref{thm:cm}) we know that $B$ is a $C^2$ convex set which moreover has the property that, for all $z\in \del B$ there is a ball $B_{r/2}(y,P_0)\subset B$ with $\overline{B}_{r/2}(y,P_0)\cap \del B = \{z\}$.

We therefore need to look at the Whitney regions that intersect $B$ and control the error terms across them. Analogously to the situation in Section \ref{sec:pff}, in order to reintroduce the function $\phi_r$ in our estimates, we need to be able to choose smaller balls which stay away from $\del B$ by a definite amount compared to the size of the Whitney region: for this we use our doubling-style conditions, such as Lemma \ref{lemma:cm-17} in the case of excess cubes $\S_e$. Morally, in Section \ref{sec:pff} we only had to do this for excess cubes $\S_e$, whereas here we now need to also look at $\S_h$ and $\S_n$. The argument is essentially analogous for $\S_h$ to that of $\S_e$ (easier, in fact, as there is always a definite amount of separation), and so $\S_h$ does not cause any significant complication. For $\S_n$, however, we do not have an a priori decay-change, and so we need to take the additional step at looking at its spawner, which has a decay-change, and use this with Lemma \ref{lemma:cm-17}. The complication is that, whilst the spawner is in $\S_e$, it might not lie inside $B$, and so we need to be careful choosing the ball for the doubling condition. For such cubes we will need a slight modification, as we shall see.

So, we define a family of cubes $\mathcal{T}\subset \S_e\cup\S_h$ corresponding to each Whitney region intersecting $B$ as follows:
\begin{itemize}
	\item [(i)] $\mathcal{T}$ contains all $L\in\S_e\cup\S_h$ which intersect $B$;
	\item [(ii)] if $L^\prime\in \S_n$ intersects $B$, then $L\in \mathcal{T}$, where $L\in\S_e$ is the spawner of $H$.
\end{itemize}
By Lemma \ref{lemma:cm-16} and Lemma \ref{lemma:cm-12} we know that $\ell(L)\leq \frac{3r}{64\sqrt{n}}$ and $\dist(L,B)\leq 3\sqrt{n}\ell(L)$ if $L\in\mathcal{T}$. For each $L\in\mathcal{T}$, recall that $x_L$ is the center of $L$. We then define an associated ball by
$$B^L:= B_{s(L)}(x_L,P_0)\qquad \text{where}\qquad s(L):=\sqrt{n}\ell(L) + \dist(x_L,\overline{B}).$$
Necessarily $B^L\cap B\neq\emptyset$ for each $L\in\mathcal{T}$ (and thus the ball $B^L$ always ``comes inside'' $B$ slightly whilst being centred at $x_L$).

\begin{remark}\label{remark:cover-1}
Note that for $L\in\mathcal{T}$ we have $s(L)\leq 5\sqrt{n}\ell(L)$. Indeed, if $L\cap B\neq\emptyset$, then $\dist(x_L,\overline{B})\leq \sqrt{n}\ell(L)$ and so $s(L)\leq 2\sqrt{n}\ell(L)$. Otherwise, $L$ must be the spawner of some $L^\prime\in\S_n$ which intersects $B$. By Lemma \ref{lemma:cm-16} we know $\dist(L,B)\leq 3\sqrt{n}\ell(L)$, and so $\dist(x_L,\overline{B})\leq 4\sqrt{n}\ell(L)$. Hence, $s(L)\leq 5\sqrt{n}\ell(L)$.
\end{remark}

We now define a countable family $\mathcal{F}\subset \mathcal{T}$ of cubes such that $\{B^L\}_{L\in\mathcal{F}}$ is a \emph{maximal} pairwise disjoint collection of balls. For this, first write $S:=\sup_{L\in\mathcal{T}}s(L)$ and select a maximal pairwise disjoint subcollection $\mathcal{F}_1\subset\mathcal{T}$ of cubes with $s(L)\geq S/2$; clearly $\mathcal{F}_1$ is finite. Then, inductively at stage $k\geq 2$, select a maximal subcollection $\mathcal{F}_k\subset\mathcal{T}$ such that $\{B^L\}_{L\in\mathcal{F}_k}$ are pairwise disjoint, these balls do not intersect any of the balls $B^{\tilde{L}}$ with $\tilde{L}\in \mathcal{F}_1\cup\cdots\cup\mathcal{F}_{k-1}$, and $s(L)\in [2^{-k}S,2^{1-k}S)$ for $L\in \mathcal{F}_k$. We then set $\mathcal{F} := \bigcup_k\mathcal{F}_k$. As $\mathcal{F}$ is a countable union of finite sets, we may fix an ordering $\mathcal{F} = \{L_i\}_{i\in \N}$ of $\mathcal{F}$ such that $s(L_i)$ is decreasing.

Of course, we may then partition the cubes of $\S$ which intersect $B$ into disjoint families $\{\S(L_i)\}_{L_i\in\mathcal{F}}$ depending on the associated ball which is intersected. Indeed, let $H\in\S$ have $B\cap H\neq\emptyset$. Then:
\begin{enumerate}
	\item [(a)] if $H\in\S_e\cup\S_h$, then $H\in\mathcal{T}$, and so there is $i\in \N$ with $B^H\cap B^{L_i}\neq\emptyset$. Choose $i$ minimal with this property and set $H\in\S(L_i)$.
	\item [(b)] if $H\in\S_n$, then its spawner $J$ obeys $J\in \S_e\cap\mathcal{T}$. Hence, there is $i\in \N$ with $B^J\cap B^{L_i}\neq\emptyset$. Choose $i$ minimal with this property and set $H\in \S(L_i)$.
\end{enumerate}

\begin{remark}\label{remark:cover-2}
Notice that for (a) we must have $s(L_i)\geq\frac{s(H)}{2}$ and in (b) we must have $s(L_i)\geq \frac{s(J)}{2}$. This follows simply because if $H\in\mathcal{T}$ had $B^H\cap B^{L_i}=\emptyset$ for all $L_i$ with $s(L_i)\geq\frac{s(H)}{2}$, then, by construction of $\mathcal{F}$, if we choose $k$ such that $s(H)\in [2^{-k}S,2^{1-k}S)$ then the collection $\mathcal{F}_k$ could be increased by including $H$ in it, contradicting the maximality of $\mathcal{F}_k$.
\end{remark}

\textbf{Remark:} We have $\S(L_i)\neq\emptyset$ for each $i$. Indeed, if $L_i\cap B\neq\emptyset$ then $L_i\in\S(L_i)$ (as all the $\{B^{L_j}\}_{j\in \N}$ are pairwise disjoint). Similarly, if $L_i\cap B=\emptyset$, then there is $H\in \S_n$ with $H\cap B\neq\emptyset$ for which $L_i$ is the spawner of $H$, and again $H\in\S(L_i)$.

\begin{remark}\label{remark:cover-4}
We claim that if $H\in \S(L_i)$ then
$$H\subset B_{65\sqrt{n}\ell(L_i)}(x_{L_i}).$$
To see this, first suppose $H\in \S_n$. Let $J\in\mathcal{T}$ be the spawner of $H$. We know from Lemma \ref{lemma:cm-16} that $\dist(H,J)\leq 3\sqrt{n}\ell(J)$, and so $H\subset B_{3\sqrt{n}\ell(J) + 2\sqrt{n}\ell(H) + \sqrt{n}\ell(J)}(x_J)\subset B_{5\sqrt{n}\ell(J)}(x_J)$. As $B^{L_i}\cap B^J\neq\emptyset$, we know $|x_J-x_{L_i}|\leq s(L_i) + s(J) \leq 3s(L_i)$, where we have used $s(J)\leq 2s(L_i)$ from Remark \ref{remark:cover-2}. We also know from Remark \ref{remark:cover-1} that $s(L_i)\leq 5\sqrt{n}\ell(L_i)$, and so $|x_J-x_{L_i}|\leq 15\sqrt{n}\ell(L_i)$. Hence
$$H\subset B_{5\sqrt{n}\ell(J)}(x_J)\subset B_{5\sqrt{n}\ell(J)+15\sqrt{n}\ell(L_i)}(x_{L_i}).$$
But as $\sqrt{n}\ell(J)\leq s(J)\leq 2s(L_i)\leq 10\sqrt{n}\ell(L_i)$, we therefore get
$$H\subset B_{65\sqrt{n}\ell(L_i)}(x_{L_i})$$
which is the claimed inclusion. In the other case, we have $H\in \S_e\cup \S_h$ and so $B^{L_i}\cap B^J\neq\emptyset$. As above, this gives $|x_H-x_{L_i}|\leq 3s(L_i)\leq 18\sqrt{n}\ell(L_i)$, and hence
$$H\subset B_{\sqrt{n}\ell(H)}(x_H)\subset B_{\sqrt{n}\ell(H)+18\sqrt{n}\ell(L_i)}(x_{L_i})\subset B_{24\sqrt{n}\ell(L_i)}(x_{L_i})$$
which thus proves the claim.
\end{remark}

\begin{remark}\label{remark:cover-5}
The inclusion in Remark \ref{remark:cover-4} immediately gives that if $H\in \S(L_i)$ then $\ell(H)\leq 65\ell(L_i)$.
\end{remark}

For each $L_i\in\mathcal{T}$, by definition of $B^{L_i}$ we can clearly choose a ball $B_{\ell(L_i)/4}(\tilde{x},P_0)\subset B^{L_i}\cap B$ which lies at least a distance $\ell(L_i)/4$ from $\del B$, where $\tilde{x}\in B^{L_i}\cap B$ is some point. (This was essentially the point of the definition of $s(L_i)$, to come into $B$ a definite amount: we can choose $\tilde{x}$ on the line segment connecting $x_{L_i}$ to $0$.) Choose one such ball for each cube $L_i\in\mathcal{F}$, and we will denote this choice of ball by $B(L_i)$. To summarise, we have the following properties of $\mathcal{F} = \{L_i\}_{i\in \N}$:
\begin{enumerate}
	\item [(I)] if $L_i\in\mathcal{F}$, then $L_i\in \S_e\cup\S_h$, and $L_i$ has an associated ball $B(L_i)$ of radius $\ell(L_i)/4$ which obeys $B(L_i)\subset B^{L_i}\cap B$, $\dist(B(L_i),\del B)\geq\ell(L_i)/4$, and whose center lies on the line segment connecting $x_{L_i}$ to $0$;
	\item [(II)] if $i\neq j$, then $B^{L_i}\cap B^{L_j}=\emptyset$, and so in particular $B(L_i)\cap B(L_j) = \emptyset$ and $L_i,L_j$ are distinct;
	\item [(III)] if $H\in\S$ intersects $B$, then $H\in\S(L_i)$ for some $i$, and $H\subset B_{65\sqrt{n}\ell(L_i)}(x_{L_i})$. Moreover, the families $\S(L_i)$ are disjoint, i.e.~$\S(L_i)\cap \S(L_j) = \emptyset$ for $i\neq j$.
\end{enumerate}
Now write
$$\mathcal{B}_{L_i}:= \Phi(B(L_i))\qquad \text{and}\qquad \mathcal{U}_{L_i} := \bigcup_{H\in\S(L_i)}\Phi(H)\cap \mathcal{B}_r(0).$$
These are the analogous balls from those in Section \ref{sec:pff} used to cover via the doubling condition. Observe that, as $\del B = \pi_{P_0}(\del\mathcal{B}_r(0))$, from simple properties of projections we have $\dist(\mathcal{B}_{L_i},\del\mathcal{B}_r(0))\geq \dist(B(L_i),\del B)$, and so from (I) above we have
$$\dist(\mathcal{B}_{L_i},\del\mathcal{B}_r(0))\geq\ell(L_i)/4.$$
Thus, we have
$$\inf_{\mathcal{B}_{L_i}}\phi_r\geq\frac{\ell(L_i)}{4r}.$$
Noting by (III) that $\mathcal{U}_{L_i}\subset\Phi(B_{65\sqrt{n}\ell(L_i)}(x_{L_i}))$, we see that
$$\sup_{\mathcal{U}_{L_i}}\phi_r - \inf_{\mathcal{U}_{L_i}}\phi_r \leq C(n)\text{Lip}(\phi_r)\ell(L_i)\leq \frac{C}{r}\ell(L_i)\leq C\inf_{\mathcal{B}_{L_i}}\phi_r.$$
But now one can check\footnote{Indeed, if $L_i\cap B\neq\emptyset$, then $L_i\in \S(L_i)$ and so the claim follows simply because the center point of $B(L_i)$ is closer to $0$ than that of $x_{L_i}$ (cf.~(I) above) and the radius of $B(L_i)$ is smaller than the side-length of $L_i$. If $L_i\cap B=\emptyset$, then $L_i$ must be the spawner of some cube $H\in\S_n$ with $H\cap B\neq\emptyset$, which moreover we know obeys $H\in \S(L_i)$. From Lemma \ref{lemma:cm-16} we know that $\dist(H,L_i)\leq 3\sqrt{n}\ell(L_i)$, and thus as $L_i\cap B=\emptyset$, there is a point $x$ in $H$ obeying $\dist(x,\del B)\leq 3\sqrt{n}\ell(L_i)$, and thus $\inf_{\mathcal{U}_{L_i}}\phi_r \leq \frac{7\sqrt{n}\ell(L_i)}{r}\leq 28\sqrt{n}\inf_{\mathcal{B}_{L_i}}\phi_r$.} that $\inf_{\mathcal{U}_{L_i}}\phi_r \leq C(n)\inf_{\mathcal{B}_{L_i}}\phi_r$, and so we therefore get
\begin{equation}\label{E:phi-sup-inf}
	\sup_{\mathcal{U}_{L_i}}\phi_r\leq C\inf_{\mathcal{B}_{L_i}}\phi_r.
\end{equation}
Finally, note that from Theorem \ref{thm:cm-14} we have the following estimates:
\begin{equation}\label{E:cm-N-bound}
\|N\|_{C^0(\mathcal{U}_{L_i})}\leq C\Etilt_V\ell(L_i);
\end{equation}
\begin{equation}\label{E:cm-DN-bound}
\|DN\|_{GC^{0,\alpha}(\mathcal{U}_{L_i})}\leq C\Etilt_V\ell(L_i)^{\frac{11}{12}};
\end{equation}
\begin{equation}\label{E:cm-average-bound}
\int_{\mathcal{U}_{L_i}}|N_{\text{a}}|\leq C\Etilt_V^3\ell(L_i)^{n+\frac{17}{6}}.
\end{equation}
Indeed, the first two follow directly from Theorem \ref{thm:cm-14}, the definition of $\mathcal{U}_{L_i}$ as a union over $H\in \S(L_i)$, and the fact that $\ell(H)\leq 65\ell(L_i)$ (see Remark \ref{remark:cover-5}) for $H\in \S(L_i)$. For the third, this also follows in the same manner, namely summing the corresponding estimates from Theorem \ref{thm:cm-14} over each $H\in\S(L_i)$, just additionally noting that $\sum_{H\in\S(L_i)}\ell(H)^n\leq C(n)\ell(L_i)^n$, because all $H\in \S(L_i)$ are disjoint (except for sets of measure zero) and are contained in the ball $B_{65\sqrt{n}\ell(L_i)}(x_{L_i})$ (so, one can compare the measures). This also then implies that $\sum_{H\in \S(L_i)}\ell(H)^{n+\eps}\leq 65^\eps C(n)\ell(L_i)^{n+\eps}$ for every $\eps>0$, simply using the fact $\ell(H)\leq 65\ell(L_i)$ for each $H\in \S(L_i)$ (see Remark \ref{remark:cover-5}).

We can now begin controlling the error terms. First, we note the following simple lemma. Here, we write
$$\boldsymbol{\Sigma}(r):= r^{-n}\int_{\mathcal{M}}\phi_r|N|^2.$$

\begin{lemma}\label{lemma:cm-18}
	There exists a constant $C = C(n)\in (0,\infty)$ such that
	$$\boldsymbol{\Sigma}(r)\leq C\mathbf{D}(r) + C\mathbf{H}(r) \qquad \text{and} \qquad r\boldsymbol{\Sigma}^\prime(r) \leq - n\boldsymbol{\Sigma}(r) + \mathbf{H}(r),$$
	$$r^{-n}\int_{\mathcal{B}_r(0)}|N|^2\leq\boldsymbol{\Sigma}(r)+\mathbf{H}(r),$$
	$$r^{2-n}\int_{\mathcal{B}_r(0)}|DN|^2\leq (n-1)\mathbf{D}(r)+r\mathbf{D}^\prime(r).$$
	Moreover, if there is a constant $\delta>0$ such that $\mathbf{I}(r)\geq\delta$, then we have $\boldsymbol{\Sigma}(r)$, $r\boldsymbol{\Sigma}^\prime(r)\leq C\mathbf{D}(r)$, and
	$$r^{-n}\int_{\mathcal{B}_r(0)}|N|^2\leq C\mathbf{D}(r)$$
	where now $C$ is allowed to depend also on $\delta$.
\end{lemma}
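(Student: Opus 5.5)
The plan is to work directly from the definitions of $\phi$, $\mathbf{H}$, $\mathbf{D}$ and $\boldsymbol{\Sigma}$. Recall that $\phi\equiv 1$ on $[0,1/2)$, $\phi = 2-2s$ on $[1/2,1)$, and that $-\phi^\prime = 2\one_{[1/2,1)}$. On the annulus $\{r/2\le d(p)<r\}$ we therefore have $-\phi_r^\prime\, d(p)^{-1}\ge 2/r$. This gives
\[
r^{-n}\int_{\mathcal{B}_r\setminus\mathcal{B}_{r/2}}|N|^2\le \tfrac12\mathbf{H}(r),
\]
and since $\phi_r = 1$ on $\mathcal{B}_{r/2}$ and $\phi_r\le 1$, it follows that $r^{-n}\int_{\mathcal{B}_r}|N|^2\le \boldsymbol{\Sigma}(r)+\mathbf{H}(r)$. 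For the $DN$ estimate, compute $\mathbf{D}^\prime(r)$ by differentiating $r^{2-n}\phi(d/r)$. This gives
\[
r\mathbf{D}^\prime(r) = (2-n)\mathbf{D}(r) + r^{2-n}\int(-\phi_r^\prime)\tfrac{d}{r}|DN|^2 .
\]
On the annulus $-\phi^\prime_r\, d/r\ge 1$, so $r^{2-n}\int_{\mathcal{B}_r\setminus\mathcal{B}_{r/2}}|DN|^2\le r\mathbf{D}^\prime+(n-2)\mathbf{D}$. Adding the inner part, which is at most $\mathbf{D}$, yields the bound $(n-1)\mathbf{D}+r\mathbf{D}^\prime$. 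This is exactly the analogue of the identity $\phi - \frac{r}{\rho}\phi^\prime\in\{1,2\}$ used in Section \ref{sec:pff}.

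For $r\boldsymbol{\Sigma}^\prime$, differentiate in the same way to get
\[
r\boldsymbol{\Sigma}^\prime = -n\boldsymbol{\Sigma} + r^{-n}\int(-\phi^\prime_r)\tfrac{d}{r}|N|^2 .
\]
Since $d/r\le 1\le r/d$ on the support of $\phi^\prime_r$, the last term is at most $\mathbf{H}(r)$. (Any Jacobian/exponential-map factors do not enter, because the integrals are already intrinsic on $\mathcal{M}$ with $d$ the geodesic distance, which is Lipschitz with $|\nabla d|=1$.)

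The main step is the Poincaré-type bound $\boldsymbol{\Sigma}\le C(\mathbf{D}+\mathbf{H})$. I would prove it as a weighted Poincaré inequality along geodesic rays. Pass to normal coordinates via $\exp$, with Jacobian $1+O(\Etilt_V)$. For $z$ in the unit sphere of $T_0\mathcal{M}$ and $t<r$, write $|N(\exp(tz))|\le |N(\exp(sz))|+\int_t^s|DN|$ for $s\in[r/2,r)$, and then average over $s$. Squaring and integrating against $\phi(t/r)t^{n-1}\,dt$ controls the contribution of the annulus, and hence $\mathbf{H}$, plus a Hardy/Cauchy--Schwarz term in $\int\phi_r|DN|^2$, where the weight $\phi$ is monotone decreasing. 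Equivalently, one can use the divergence identity obtained by integrating $\div_{\mathcal{M}}(\phi_r|N|^2\, d\,\partial_{\hat r})$ over $\mathcal{M}$:
\[
n\int\phi_r|N|^2+\int\phi^\prime_r\tfrac dr|N|^2+2\int\phi_r d\langle N,\partial_{\hat r}N\rangle = O(\Etilt_V)\int\phi_r d|N|^2 .
\]
Using $d\le r$ and Young's inequality $2\phi_r d|N||DN|\le \tfrac n2\phi_r|N|^2+\tfrac{2}{n}r^2\phi_r|DN|^2$, then absorbing, gives $\boldsymbol{\Sigma}\le C(n)(\mathbf{H}+\mathbf{D})$ once $\Etilt_V$ is small. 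I expect this to be the only step needing care, and the divergence route is the one I would try first, since it mirrors the first-variation computations already in the paper.

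Finally, if $\mathbf{I}(r)\ge\delta$, then $\mathbf{H}\le \delta^{-1}\mathbf{D}$. Substituting into the three bounds gives $\boldsymbol{\Sigma},\ r\boldsymbol{\Sigma}^\prime,\ r^{-n}\int_{\mathcal{B}_r}|N|^2\le C(n,\delta)\mathbf{D}(r)$.
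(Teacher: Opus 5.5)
Your proof is correct and follows the paper's argument. The paper obtains the bounds on $r\boldsymbol{\Sigma}^\prime$, $r^{-n}\int_{\mathcal{B}_r(0)}|N|^2$, $r^{2-n}\int_{\mathcal{B}_r(0)}|DN|^2$, and the consequences under $\mathbf{I}(r)\geq\delta$ exactly as you do, by splitting into $\mathcal{B}_{r/2}(0)$ and the annulus and differentiating $\phi(d/r)$. For $\boldsymbol{\Sigma}\leq C(\mathbf{D}+\mathbf{H})$, the paper invokes the $L^1$ Poincaré inequality for the compactly supported function $\phi_r|N|^2$, then Cauchy--Schwarz and absorption; your identity from $\div_{\mathcal{M}}(\phi_r|N|^2 d\,\partial_{\hat r})$ is the standard proof of that inequality done by hand, with the curvature error $O(\Etilt_V)$ made explicit and absorbed by smallness of $\Etilt_V$.
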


\textbf{Remark:} We will subsequently show that in fact we unconditionally have $\mathbf{I}(r)\geq\delta$ for some fixed constant $\delta$ (which could depend on $V$); see Corollary \ref{cor:cm-21}. With this, the above statements hold with no assumption on $\mathbf{I}(r)$, but allowing the constants to depend on this lower bound on $\mathbf{I}(r)$. However, one still needs the above lemma as an intermediate step in the proof of this frequency lower bound.

\begin{proof}
	Note that $\phi_r|N|^2$ is a Lipschitz function of compact support in $\mathcal{B}_r(0)$, and so using the Poincaré inequality followed by Cauchy--Schwarz we get
	\begin{align*}
		\boldsymbol{\Sigma}(r)\leq r^{-n}\cdot Cr\int_{\mathcal{M}}|D(\phi_r|N|^2)| & \leq Cr^{-n}\int_{\mathcal{M}}|\phi_r^\prime||N|^2 + r^{1-n}\int_{\mathcal{M}}\phi_r|N||DN|\\
		& \leq C\mathbf{H}(r) + C\boldsymbol{\Sigma}(r)^{1/2}\mathbf{D}(r)^{1/2}\\
		& \leq C\mathbf{H}(r) + \frac{1}{2}\boldsymbol{\Sigma}(r) + C\mathbf{D}(r),
	\end{align*}
	where in the last inequality we have used $2ab\leq a^2+b^2$ for suitable $a,b\in \R$. Rearranging clearly gives the first claimed inequality. We also have
	$$\boldsymbol{\Sigma}^\prime(r) = -nr^{-n-1}\int_{\mathcal{M}}\phi_r|N|^2 - r^{-n}\int_{\mathcal{M}}\frac{d(p)}{r^2}\phi_r^\prime|N|^2 \leq -nr^{-1}\boldsymbol{\Sigma}(r) + r^{-1}\mathbf{H}(r)$$
	giving the second inequality. For the third, as $\phi^\prime\equiv -2$ on $(1/2,1)$ and $\phi^\prime\equiv0$ on $(0,1/2)$, we easily have
	$$\int_{\mathcal{B}_r(0)\setminus\mathcal{B}_{r/2}(0)}|N|^2\leq \frac{1}{2}r^n\mathbf{H}(r)$$
	and as $\phi\equiv 1$ on $(0,1/2)$ we have
	$$\int_{\mathcal{B}_{r/2}(0)}|N|^2\leq r^n\boldsymbol{\Sigma}(r).$$
	Summing these inequalities gives the third inequality. For the fourth, note that
	\begin{align*}
		\mathbf{D}^\prime(r) & = (2-n)r^{1-n}\int_{\mathcal{M}}\phi_r|DN|^2 - r^{2-n}\int_{\mathcal{M}}\frac{d(p)}{r^2}\phi_r^\prime|DN|^2\\
		& \geq (2-n)r^{-1}\mathbf{D}(r) + r^{1-n}\int_{\mathcal{B}_{r}(0)\setminus\mathcal{B}_{r/2}(0)}|DN|^2
	\end{align*}
	and again, as $\phi\equiv 1$ on $(0,1/2)$,
	$$r^{2-n}\int_{\mathcal{B}_{r/2}(0)}|DN|^2\leq \mathbf{D}(r).$$
	Multiplying the first inequality here by $r$ and summing with the second, we get
	$$r^{2-n}\int_{\mathcal{B}_r(0)}|DN|^2\leq (n-1)\mathbf{D}(r) + r\mathbf{D}^\prime(r).$$
	Finally, if $\mathbf{I}(r)\geq\delta$, this gives $\mathbf{H}(r)\leq \delta^{-1}\mathbf{D}(r)$. Hence the first inequality of the lemma gives $\boldsymbol{\Sigma}(r)\leq C(1+\delta^{-1})\mathbf{D}(r)$, whilst the second gives (as $\boldsymbol{\Sigma}(r)\geq 0$) $r\boldsymbol{\Sigma}^\prime(r)\leq\mathbf{D}(r)$. For the final inequality, this now follows from the third inequality of the lemma.
\end{proof}

The last ingredient we need in order to bound the error terms correctly is to provide a way to control the different error terms. For this, we will control the scales $\ell(L_i)$ for the family of cubes $\mathcal{F} = \{L_i\}_{i\in \N}$ described previously. As we can relate all the relevant quantities to these scales, this will provide us with a convenient way to control the error terms in terms of the quantities $\mathbf{D}(r)$ (and $\mathbf{H}(r)$). We stress again, this works because we have built into our center manifold the failure of the decay of tilt and height.

\begin{lemma}\label{lemma:cm-19}
	Fix $\delta>0$ and suppose $\mathbf{I}(r)\geq\delta$. Then,
	$$\Etilt_V^2\sum_i\ell(L_i)^{n+2}\inf_{\mathcal{B}_{L_i}}\phi_r \leq Cr^{n-2}\mathbf{D}(r),$$
	$$\Etilt_V^2\sum_i\ell(L_i)^{n+2}\leq Cr^{n-2}\left((n-1)\mathbf{D}(r) + r\mathbf{D}^\prime(r)\right).$$
	Moreover,
	$$\Etilt_V^{\frac{2}{n+3}}\sup_i \ell(L_i) \leq C\left(r^{n-2}\mathbf{D}(r)\right)^{\frac{1}{n+3}}.$$
	Here, $C = C(\mathcal{V},\delta)\in (0,\infty)$.
\end{lemma}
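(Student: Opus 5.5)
The plan is to prove, for each $L_i\in\mathcal{F}$, the lower bound
$$\Etilt_V^2\ell(L_i)^{n+2}\leq C\int_{\mathcal{B}_{L_i}}|DN|^2 \qquad\text{or}\qquad \Etilt_V^2\ell(L_i)^{n+2}\leq C\int_{\mathcal{B}_{L_i}}|N|^2\ \text{(suitably normalised)},$$
and then sum over $i$, using that the balls $\mathcal{B}_{L_i}$ are pairwise disjoint by (II) and contained in $\mathcal{B}_r(0)$ by (I). The argument splits according to whether $L_i\in\S_e$ or $L_i\in\S_h$.

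\textbf{Case $L_i\in\S_e$.} By (I), $B(L_i)$ is a ball of radius $\ell(L_i)/4$ whose center $q$ lies within $6\sqrt{n}\ell(L_i)$ of $L_i$; this holds because $s(L_i)\le 5\sqrt{n}\ell(L_i)$ by Remark \ref{remark:cover-1}. Lemma \ref{lemma:cm-17} then gives
$$\Etilt_V^2\ell(L_i)^{n+11/6}\le C\int_{\mathcal{B}_{L_i}}|DN|^2.$$
Since $\ell(L_i)\le 1$, this dominates $\Etilt_V^2\ell(L_i)^{n+2}$.

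\textbf{Case $L_i\in\S_h$.} Lemma \ref{lemma:cm-15}(ii) gives $|N_f|\ge \tfrac14 C_h\Etilt_V\ell(L_i)$ on $\Phi(B_{2\sqrt{n}\ell(L_i)}(x_{L_i}))$. I would check that $B(L_i)\subset B^{L_i}$ sits inside this region, at least up to shrinking the ball by a dimensional factor. If it does not, I would adjust the choice of $B(L_i)$ so that it stays near $x_{L_i}$; this is possible because when $L_i\cap B\ne\emptyset$ we have $s(L_i)\le 2\sqrt{n}\ell(L_i)$. Using $|N|\ge|N_f|$ this yields
$$\Etilt_V^2\ell(L_i)^{n+2}\le C\int_{\mathcal{B}_{L_i}}|N|^2.$$

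\textbf{Summation.} Multiply each bound by $\inf_{\mathcal{B}_{L_i}}\phi_r\le\phi_r$ and sum over $i$. The $|DN|^2$ terms contribute at most $r^{n-2}\mathbf{D}(r)$. The $|N|^2$ terms contribute at most $r^n\boldsymbol{\Sigma}(r)\le Cr^n\mathbf{D}(r)$ by Lemma \ref{lemma:cm-18}, which uses $\mathbf{I}(r)\ge\delta$; since $r\le 3$, $r^n\le Cr^{n-2}$. This gives the first claimed bound.

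For the second bound, drop the $\phi_r$ weight and instead use
$$r^{2-n}\int_{\mathcal{B}_r(0)}|DN|^2\le (n-1)\mathbf{D}(r)+r\mathbf{D}'(r)$$
from Lemma \ref{lemma:cm-18}. For the $|N|^2$ terms use
$$r^{-n}\int_{\mathcal{B}_r(0)}|N|^2\le C\mathbf{D}(r)\le C\bigl((n-1)\mathbf{D}(r)+r\mathbf{D}'(r)\bigr),$$
which holds since $n\ge2$ and $\mathbf{D}'(r)\ge 0$ is not needed once the second inequality is rearranged. A small check may be needed to absorb this term, but the inequality $n-1\ge1$ together with $\mathbf{D}(r)\le (n-1)\mathbf{D}(r)+r\mathbf{D}'(r)$ follows from the fourth inequality of Lemma \ref{lemma:cm-18}.

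\textbf{The supremum bound.} Here the weight $\inf\phi_r\ge \ell(L_i)/(4r)$ from (I) is used. For a single $i$, the first bound gives
$$\Etilt_V^2\ell(L_i)^{n+3}\le Cr\,r^{n-2}\mathbf{D}(r)\le C r^{n-2}\mathbf{D}(r)$$
(as $r\le3$). Taking the $(n+3)$-th root yields the claim.

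\textbf{Main obstacle.} I expect the hard part to be the geometry in the $\S_h$ case: ensuring $\mathcal{B}_{L_i}$ lies inside the region where Lemma \ref{lemma:cm-15}(ii) applies, especially when $L_i$ is only a spawner lying partly outside $B$. Since spawners always lie in $\S_e$, however, $\S_h$ elements of $\mathcal{F}$ do intersect $B$, so $s(L_i)\le2\sqrt{n}\ell(L_i)$ and the inclusion should follow.
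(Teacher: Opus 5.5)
Your proposal is correct and follows the paper's proof essentially step for step. It uses the $\S_e$/$\S_h$ split via Lemma~\ref{lemma:cm-17} and Lemma~\ref{lemma:cm-15}(ii), where $s(L_i)\le 2\sqrt{n}\,\ell(L_i)$ for $L_i\in\S_h$ because spawners lie in $\S_e$; it then weights by $\inf_{\mathcal{B}_{L_i}}\phi_r$, sums over the disjoint $\mathcal{B}_{L_i}$ using Lemma~\ref{lemma:cm-18}, and uses $\inf_{\mathcal{B}_{L_i}}\phi_r\ge \ell(L_i)/(4r)$ for the supremum bound. Your absorption of the $|N|^2$ term in the second bound is also right, but the operative fact is not $n-1\ge 1$: it is $\mathbf{D}(r)\le r^{2-n}\int_{\mathcal{B}_r(0)}|DN|^2$, which holds since $0\le\phi_r\le \mathbf{1}_{\mathcal{B}_r(0)}$, combined with the fourth inequality of Lemma~\ref{lemma:cm-18}.
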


\begin{proof}
	First note that if $L_i\in\S_h$ then necessarily $L_i\cap B\neq\emptyset$, and so $s(L_i)\leq 2\sqrt{n}\ell(L)$. Hence, $B(L_i)\subset B^{L_i}\subset B_{2\sqrt{n}\ell(L_i)}(x_{L_i},P_0)$. In particular, Lemma \ref{lemma:cm-15}(ii) gives that $|N_f|\geq\frac{1}{4}C_h\Etilt_V\ell(L_i)$ on $\mathcal{B}_{L_i}$. Hence, if $L_i\in\S_h$, we have (as we may assume $C_h\geq 1$)
	$$\int_{\mathcal{B}_{L_i}}\phi_r|N|^2\geq C\inf_{\mathcal{B}_{L_i}}\phi_r\cdot \Etilt_V^2\ell(L_i)^{n+2}.$$
	Moreover, if $L_i\in \S_e$, then since we know $s(L_i)\leq 6\sqrt{n}\ell(L_i)$, we know that the center of $B(L_i)$ is at a distance at most $6\sqrt{n}\ell(L_i)$ from $L_i$, and hence we may apply Lemma \ref{lemma:cm-17} with the ball $B(L_i)$. This therefore gives, for any $L_i\in \S_e$,
	$$\int_{\mathcal{B}_{L_i}}\phi_r|DN|^2\geq C\inf_{\mathcal{B}_{L_i}}\phi_r\cdot \Etilt_V^2\ell(L_i)^{n+\frac{11}{6}}.$$
	In particular, we have for every $i$,
	\begin{equation}\label{E:cm-scale-bound}
		C\Etilt_V^2\ell(L_i)^{n+2}\inf_{\mathcal{B}_{L_i}}\phi_r \leq \max\left\{\int_{\mathcal{B}_{L_i}}\phi_r|DN|^2,\, \int_{\mathcal{B}_{L_i}}\phi_r|N|^2\right\}.
	\end{equation}
	If we now sum the above estimate over $i$, using the fact that $\mathcal{B}_{L_i}$ are pairwise disjoint, we therefore get
    $$C\Etilt_V^2 \sum_i\ell(L_i)^{n+2} \inf_{\mathcal{B}_{L_i}}\phi_r \leq \int\phi_r|DN|^2 + \int\phi_r|N|^2$$
	and so if $\mathbf{I}(r)\geq\delta$, then from Lemma \ref{lemma:cm-18} we get
	$$\Etilt_V^2\sum_i\ell(L_i)^{n+2}\inf_{\mathcal{B}_{L_i}}\phi_r\leq Cr^{n-2}\mathbf{D}(r)$$
	which is the first claimed inequality. For the second, note that by the same argument as above (just without $\phi_r$) we get for each $L_i\in \S_e\cup\S_h$,
	$$C\Etilt_V^2\ell(L_i)^{n+2}\leq\max\left\{\int_{\mathcal{B}_{L_i}}|DN|^2,\, \int_{\mathcal{B}_{L_i}}|N|^2\right\}.$$
	Summing this over $i$ and using Lemma \ref{lemma:cm-18} (using again $\mathbf{I}(r)\geq\delta$) we get
	$$C\Etilt_V^2\sum_i\ell(L_i)^{n+2}\leq\int_{\mathcal{B}_r(0)}|DN|^2 + \int_{\mathcal{B}_r(0)}|N|^2 \leq Cr^{n-2}\left((n-1)\mathbf{D}(r) + r\mathbf{D}^\prime(r)\right)$$
	giving the second claimed inequality. For the final one, return to \eqref{E:cm-scale-bound} which gives, using Lemma \ref{lemma:cm-18} with $\mathbf{I}(r)\geq\delta$ for the right-hand side and using $\inf_{\mathcal{B}_{L_i}}\phi_r\geq \frac{\ell(L_i)}{4r}$ on the left-hand side:
	$$\Etilt_V^2\ell(L_i)^{n+3}\leq Cr\cdot r^{n-2}\mathbf{D}(r).$$
	Raising this inequality to the power of $\frac{1}{n+3}$ and taking the supremum over $i$ gives the result.
\end{proof}

We can now prove the desired error bounds. Throughout, we will assume that $\mathbf{I}(r)\geq\delta$ for some constant $\delta>0$.

Notice first that, since $\|D\wp\|_{C^{2,1/2}} \leq C\Etilt_V$, we have that $\|H_{\mathcal{M}}\|_\infty + \|DH_{\mathcal{M}}\|_\infty\leq C\Etilt_V$. Therefore, as $\bigcup_i \mathcal{U}_{L_i} = \mathbf{B}_r(0)$ and the $\mathcal{U}_{L_i}$ are disjoint, using the smallness in $L^1$ of $N_{\text{a}}$ from \eqref{E:cm-average-bound}, and the bound \eqref{E:phi-sup-inf}, as well as Lemma \ref{lemma:cm-19}, we get
\begin{align*}
	|\text{Err}_1^{\text{o}}| & \leq Q\int_{\mathcal{M}}\phi_r|H_{\mathcal{M}}||N_{\text{a}}|\\
	& \leq C\Etilt_V\sum_i\int_{\mathcal{U}_{L_i}}\phi_r|N_{\text{a}}|\\
	& \leq C\Etilt_V\sum_i \sup_{\mathcal{U}_{L_i}}\phi_r \cdot \Etilt_V^3\ell(L_i)^{n+2+\frac{5}{6}}\\
	& \leq C\Etilt_V^4\sum_i\inf_{\mathcal{B}_{L_i}}\phi_r \cdot \ell(L_i)^{n+2+\frac{5}{6}}\\
	& \leq C\Etilt_V^2\cdot r^{n-2}\mathbf{D}(r)\cdot\sup_i\ell(L_i)^{\frac{5}{6}}\\
	& \leq C\Etilt_V\cdot\left(r^{n-2}\mathbf{D}(r)\right)^{1+\frac{5}{6(n+3)}}.
\end{align*}
Analogously, we have
\begin{align*}
	|\text{Err}_1^{\text{in}}| & \leq C(n)Qr^{-1}\int_{\mathcal{M}}\left(|H_{\mathcal{M}}| + |D_YH_{\mathcal{M}}|\right)|N_{\text{a}}|\\
	& \leq C\Etilt_Vr^{-1}\sum_i \Etilt_V^3\ell(L_i)^{n+2+\frac{5}{6}}\\
	& \leq C\Etilt_V\cdot r^{-1}\cdot r^{n-2}\left((n-1)\mathbf{D}(r)+r\mathbf{D}^\prime(r)\right)\cdot\left(r^{n-2}\mathbf{D}(r)\right)^{\frac{5}{6(n+3)}}.
\end{align*}
These are the desired bounds on the first error terms; again, we stress that controlling these ``linear'' error terms was precisely the reason we had to construct a center manifold in the first place. The other error terms are in some sense ``higher order'' and are in theory easier to deal with, or the same order and can be incorporated into the frequency argument.

For the second error terms, note that $\|A\|_\infty\leq C\|D\wp\|_{C^2}\leq C\Etilt_V$. It follows immediately that
$$|\text{Err}_2^{\text{o}}|\leq C\Etilt_V^2\cdot r^n\boldsymbol{\Sigma}(r).$$
Moreover, as $|DX_{\text{in}}|\leq Cr^{-1}\one_{\mathcal{B}_r(0)\setminus\mathcal{B}_{r/2}(0)}$, we have (using Cauchy--Schwarz and Lemma \ref{lemma:cm-18})
$$|\text{Err}_2^{\text{in}}|\leq C\Etilt_V^2r^{-1}\int_{\mathcal{B}_r(0)}|N|^2 + C\Etilt_V^2\int\phi_r|N||DN|\leq C\Etilt_V^2r^{n-1}\mathbf{D}(r).$$
Finally we bound the errors of the third type. We have:
$$|\text{Err}_3^{\text{o}}|\leq C\underbrace{\int\phi_r\left(\Etilt_V|DN|^2|N| + |DN|^4\right)}_{=:\,I_1} + C\underbrace{r^{-1}\int_{\mathcal{B}_r(0)}|DN|^3|N|}_{=:\,I_2} + C\underbrace{r^{-1}\int_{\mathcal{B}_r(0)}\Etilt_V|N|^2|DN|}_{=:\,I_3}.$$
We estimate each term separately using \eqref{E:cm-N-bound}, \eqref{E:cm-DN-bound}, \eqref{E:phi-sup-inf}, and Lemma \ref{lemma:cm-18}, similarly to those for the errors of the first type:
$$I_1\leq C\Etilt_V^4\sum_i\sup_{\mathcal{U}_{L_i}}\phi_r\cdot\ell(L_i)^{n+2+\frac{5}{6}} \leq C\Etilt_V\left(r^{n-2}\mathbf{D}(r)\right)^{1+\frac{5}{6(n+3)}};$$
$$I_2\leq Cr^{-1}\sum_i\Etilt_V^4\ell(L_i)^{n+3+\frac{3}{4}} \leq C\Etilt_V^4\sum_i\ell(L_i)^{n+2+\frac{3}{4}}\inf_{\mathcal{B}_{L_i}}\phi_r \leq C\Etilt_V\left(r^{n-2}\mathbf{D}(r)\right)^{1+\frac{3}{4(n+3)}};$$
\begin{align*}
I_3\leq Cr^{-1}\sum_i\Etilt_V^2\ell(L_i)^{\frac{11}{12}}\int_{\mathcal{U}_{L_i}}|N|^2 & \leq C\Etilt_V r^{-1}\left(r^{n-2}\mathbf{D}(r)\right)^{\frac{11}{12(n+3)}}\int_{\mathcal{B}_r(0)}|N|^2\\
& \leq C\Etilt_V\left(r^{n-2}\mathbf{D}(r)\right)^{1+\frac{11}{12(n+3)}}.
\end{align*}
Thus combining these estimates, we see that
$$|\text{Err}_3^{\text{o}}|\leq C\left(r^{n-2}\mathbf{D}(r)\right)^{1+\frac{3}{4(n+3)}}.$$
Finally, for the inner variations we have
$$|\text{Err}_3^{\text{in}}|\leq C\Etilt_V\int\phi_r|N||DN|^2 + C\Etilt_V\int\phi_r|DN|^3 + C\Etilt_Vr^{-1}\int_{\mathcal{B}_r(0)}|N|^2|DN| + Cr^{-1}\int_{\mathcal{B}_r(0)}|DN|^4.$$
The first term here is exactly the first term in $I_1$ above, and the third term is exactly $I_3$ above; thus, they are both controlled by $C\left(r^{n-2}\mathbf{D}(r)\right)^{1+\frac{3}{4(n+3)}}$. We then have
$$\Etilt_V\int\phi_r|DN|^3\leq C\Etilt_V^4\sum_i\sup_{\mathcal{U}_{L_i}}\phi_r\cdot\ell(L_i)^{n+2+\frac{3}{4}}\leq C\Etilt_V\left(r^{n-2}\mathbf{D}(r)\right)^{1+\frac{3}{4(n+3)}}$$
bounding as above, and
\begin{align*}
r^{-1}\int_{\mathcal{B}_r(0)}|DN|^4 \leq C\Etilt_V^4r^{-1}\sum_i\ell(L_i)^{n+3+\frac{2}{3}} & \leq C\Etilt_V^4\sum_i\ell(L_i)^{n+2+\frac{2}{3}}\inf_{\mathcal{B}_{L_i}}\phi_r\\
& \leq C\Etilt_V\left(r^{n-2}\mathbf{D}(r)\right)^{1+\frac{2}{3(n+3)}}
\end{align*}
bounding as we did in $I_2$. So, we see that
$$|\text{Err}_3^{\text{in}}|\leq C\Etilt_V\left(r^{n-2}\mathbf{D}(r)\right)^{1+\frac{2}{3(n+3)}}.$$
To conclude, we have:
\begin{equation}\label{E:cm-err-0-bounds}
r^{2-n}\sum^3_{j=1}|\text{Err}_j^{\text{o}}|\leq C\Etilt_V\mathbf{D}(r)^{1+\frac{2}{3(n+3)}} + C\Etilt_V^2r^2\boldsymbol{\Sigma}(r),
\end{equation}
$$r^{2-n}\sum^3_{j=1}|\text{Err}_j^{\text{in}}|\leq C\Etilt_V\mathbf{D}(r)^{\frac{2}{3(n+3)}}\mathbf{D}^\prime(r) + C\Etilt_Vr^{-1}\mathbf{D}(r)^{1+\frac{2}{3(n+3)}} + C\Etilt_V^2 r\mathbf{D}(r).$$
(Note that we do not know the sign of $\mathbf{D}^\prime(r)$ meaning we have been imprecise here with the constants, however we omit this detail as it is a minor point.)

To summarise, we have now shown:
\begin{equation}\label{E:cm-H-prime-3}
	|\mathbf{H}^\prime(r)-2r^{-1}\mathbf{E}(r)|\leq C\Etilt_V\mathbf{H}(r);
\end{equation}
\begin{equation}\label{E:cm-D-E-3}
	|\mathbf{D}(r)-\mathbf{E}(r)|\leq C\Etilt_V\mathbf{D}(r)^{1+\frac{2}{3(n+3)}} + C\Etilt_V^2r^2\boldsymbol{\Sigma}(r);
\end{equation}
\begin{equation}\label{E:cm-D-prime-3}
	|\mathbf{D}^\prime(r)-2\mathbf{G}(r)|\leq C\Etilt_V\mathbf{D}(r) + C\Etilt_Vr^{-1}\mathbf{D}(r)^{1+\frac{2}{3(n+3)}} + C\Etilt_V\mathbf{D}(r)^{\frac{2}{3(n+3)}}\mathbf{D}^\prime(r).
\end{equation}
We stress that these bounds are currently under the assumption that $\mathbf{I}(r)\geq\delta$, with the constant $C = C(\mathcal{V},\delta)$. We will remove this assumption shortly with an appropriate choice of $\delta$.

\begin{remark}\label{remark:cm-H-non-zero}
	We can now show that $\mathbf{H}(r)>0$ is always true. Indeed, if $\mathbf{H}(r)=0$ for some $r$, then $|N|\equiv 0$ on $\mathcal{B}_r(0)\setminus\mathcal{B}_{r/2}(0)$, which in turn gives that $\mathbf{E}(r)=0$. In fact, if $\mathbf{H}(r)=0$, then \emph{all} conclusions of Lemma \ref{lemma:cm-18} go through without any assumption on a lower bound for $\mathbf{I}(r)$, and thus all the above computations hold. Thus, \eqref{E:cm-D-E-3} still holds (which we note comes from the analogous variation as that used in the usual `squash inequality', which gives the equivalent conclusion for the planar frequency) and hence we would have
$$\mathbf{D}(r)\leq C\Etilt_V\mathbf{D}(r)^{1+\frac{2}{3(n+3)}} + C\Etilt_V^2r^2\boldsymbol{\Sigma}(r).$$
But we also have from the first inequality in Lemma \ref{lemma:cm-18} (as $\mathbf{H}(r)=0$) that $\boldsymbol{\Sigma}(r)\leq C\mathbf{D}(r)$, and hence we get
$$\mathbf{D}(r)\leq C\Etilt_V\mathbf{D}(r)^{1+\frac{2}{3(n+3)}} + C\Etilt_V^2r^2\mathbf{D}(r).$$
Thus, if $\Etilt_V$ is sufficiently small, rearranging this would give that $\mathbf{D}(r)\leq 0$, i.e. $\mathbf{D}(r)=0$, and so $|DN|=0$ on $\mathcal{B}_r(0)$. But as $|N|\equiv 0$ on $\mathcal{B}_r(0)\setminus\mathcal{B}_{r/2}(0)$, this would mean that $|N|\equiv 0$ on $\mathcal{B}_{r}(0)$, i.e.~$V$ coincides with $Q\llbracket \mathcal{B}_r(0)\rrbracket$ here. But as $\mathcal{M}$ was a $C^{3,1/2}$ manifold, this would contradict $0\in\sing(V)$. Thus we have reached the desired contradiction, and so $\mathbf{H}(r)>0$ for all $r>0$. In particular, $\mathbf{I}(r)$ is well-defined for all $r>0$ (and again we stress this is true without assuming any lower bound on $\mathbf{I}(r)$).
\end{remark}

We can now prove the first fact about the center manifold frequency, which is a growth-bound. In particular, this gives that $\sup_{r}\mathbf{I}(r)<\infty$. We will use this in turn to establish a uniform lower bound on $\mathbf{I}(r)$, i.e.~$\inf_r\mathbf{I}(r)>0$, giving our choice of $\delta$ above, and then finally we will use this to show that $\mathbf{I}(r)$ is approximately monotone in $r$.

\begin{theorem}[Frequency Growth Bound]\label{thm:cm-20}
	If $\eps_2 = \eps_2(\mathcal{V})\in (0,1)$ is sufficiently small, then for all $0<a\leq b<1$,
	$$\mathbf{I}(a)\leq e^{C\Etilt_Vb^{\frac{4}{3(n+3)}}}(1+\mathbf{I}(b)).$$
	Here, $C = C(\mathcal{V})$.
\end{theorem}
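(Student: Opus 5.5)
The bound will come from a differential inequality for $\log(1+\mathbf{I}(r))$ on intervals where $\mathbf{I}\geq 1$. The error estimates \eqref{E:cm-H-prime-3}--\eqref{E:cm-D-prime-3} were proved only under $\mathbf{I}(r)\geq\delta$. I will therefore fix $\delta=1$, so that all constants depend only on $\mathcal{V}$. The statement is trivial at any $a$ with $\mathbf{I}(a)<1$. Otherwise, let $b'\in[a,b]$ be the supremum of radii $s\leq b$ such that $\mathbf{I}\geq 1$ on $[a,s]$. Either $b'=b$, or $\mathbf{I}(b')=1\leq 1+\mathbf{I}(b)$. In both cases it suffices to prove
$$\log(1+\mathbf{I}(a))\leq \log(1+\mathbf{I}(b'))+C\Etilt_V b'^{\frac{4}{3(n+3)}}$$
on an interval where $\mathbf{I}\geq 1$. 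Here $\mathbf{H}>0$ throughout by Remark \ref{remark:cm-H-non-zero}, so $\mathbf{I}$ is well defined.

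On such an interval I will compute $(\log\mathbf{I})'=\mathbf{D}'/\mathbf{D}-\mathbf{H}'/\mathbf{H}$ and substitute $\mathbf{D}'\approx 2\mathbf{G}$, $\mathbf{H}'\approx 2r^{-1}\mathbf{E}$ and $\mathbf{D}\approx\mathbf{E}$. This gives a main term
$$\frac{2\mathbf{G}}{\mathbf{E}}-\frac{2\mathbf{E}}{r\mathbf{H}}=\frac{2(r\mathbf{G}\mathbf{H}-\mathbf{E}^2)}{r\mathbf{E}\mathbf{H}},$$
which is $\geq 0$ by Cauchy--Schwarz, exactly as in Step 5 of Section \ref{sec:pff}. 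The remaining error terms are to be bounded by
$$-C\Etilt_V\Big(1+r^{-1}\mathbf{D}^{\frac{2}{3(n+3)}}+\mathbf{D}^{\frac{2}{3(n+3)}-1}\mathbf{D}'\Big).$$
For this I will use $\boldsymbol{\Sigma}\leq C\mathbf{D}$ from Lemma \ref{lemma:cm-18}, and $\mathbf{H}\leq\mathbf{D}$. The $\mathbf{D}'$ term in \eqref{E:cm-D-prime-3} is absorbed into the left-hand side because $C\Etilt_V\mathbf{D}^{\frac{2}{3(n+3)}}\ll 1$. The sign of $\mathbf{D}'$ needs a little care. Where $\mathbf{D}'<0$ the corresponding error actually helps, so I can replace $\mathbf{D}'$ by $|\mathbf{D}'|$, or argue via the monotone quantity $r\mathbf{D}'+(n-1)\mathbf{D}\geq 0$ given by Lemma \ref{lemma:cm-18}.

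To integrate, I need the a priori bound $\mathbf{D}(r)\leq C\Etilt_V^2 r^{2}$. This holds because $|DN|\leq C\Etilt_V$ from Theorem \ref{thm:cm-14} together with the $r^{2-n}$ normalisation; if more is needed, one can use $|DN|\leq C\Etilt_V r^{11/12}$ from the Whitney regions. With this bound, $r^{-1}\mathbf{D}^{\frac{2}{3(n+3)}}\leq C\Etilt_V^{\frac{4}{3(n+3)}}r^{\frac{4}{3(n+3)}-1}$, which integrates to $C b'^{\frac{4}{3(n+3)}}$. The term $\mathbf{D}^{\frac{2}{3(n+3)}-1}\mathbf{D}'$ is an exact derivative of $c\,\mathbf{D}^{\frac{2}{3(n+3)}}$, so it integrates to at most $C\mathbf{D}(b')^{\frac{2}{3(n+3)}}\leq Cb'^{\frac{4}{3(n+3)}}$. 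Exponentiating then gives the claimed inequality, with the $1+$ coming from the case split above.

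The main obstacle I expect is bounding the error ratio $\mathbf{E}-\mathbf{D}$ relative to $\mathbf{E}$. I need $\mathbf{E}\geq \frac12\mathbf{D}>0$, which follows from \eqref{E:cm-D-E-3} once $\eps_2$ is small. I also have to handle the $\Etilt_V^2r^2\boldsymbol{\Sigma}$ error in \eqref{E:cm-D-E-3}, which divided by $\mathbf{D}$ contributes $O(\Etilt_V^2 r)$ and is integrable. The sign issue of $\mathbf{D}'$ discussed above is the other delicate point.
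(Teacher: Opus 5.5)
Your scheme is the paper's: reduce to an interval where $\mathbf{I}\geq 1$, so that Lemma \ref{lemma:cm-18} holds with constants depending only on $\mathcal{V}$; get $\frac12\mathbf{D}\leq\mathbf{E}\leq 2\mathbf{D}$; extract the Cauchy--Schwarz main term; and integrate the exact derivative $\mathbf{D}^{\frac{2}{3(n+3)}-1}\mathbf{D}'$ using $\mathbf{D}\leq C\Etilt_V^2r^2$. Your use of $(n-1)\mathbf{D}+r\mathbf{D}'\geq 0$ to handle the sign of $\mathbf{D}'$ is more careful than the paper. There is, however, a gap in the one non-routine step: the $C\Etilt_V^2 r^2\boldsymbol{\Sigma}$ part of \eqref{E:cm-D-E-3}. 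It does not enter $(\log\mathbf{I})'$ merely ``divided by $\mathbf{D}$''. It enters when you replace $\mathbf{D}$ by $\mathbf{E}$ in the denominator of $\mathbf{D}'/\mathbf{D}$, i.e.\ through $\mathbf{D}'(\mathbf{E}-\mathbf{D})/(\mathbf{E}\mathbf{D})$. Its size is therefore $C\Etilt_V^2 r^2\boldsymbol{\Sigma}\,|\mathbf{D}'|/\mathbf{D}^2$. Claiming this is $O(\Etilt_V^2 r)$ amounts to assuming $r|\mathbf{D}'|\lesssim\mathbf{D}$, an upper doubling bound on the energy in the annulus $\mathcal{B}_r(0)\setminus\mathcal{B}_{r/2}(0)$. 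That bound is not available: Lemma \ref{lemma:cm-18} only gives $r\mathbf{D}'\geq-(n-1)\mathbf{D}$, and a doubling bound is essentially what the theorem produces. The term is also not an exact derivative. If you first use $\boldsymbol{\Sigma}\leq C\mathbf{D}$, you are left with $\int r^2\mathbf{D}'/\mathbf{D}$, which is only controlled by something like $b^2\log(\mathbf{D}(b)/\mathbf{D}(a))$. That bound depends on the very frequency you are trying to bound.

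The missing idea is to keep $\boldsymbol{\Sigma}$, write the term as $-r^2\boldsymbol{\Sigma}\,(\mathbf{D}^{-1})'$, and integrate by parts:
\[
\int_a^{b'}\frac{r^2\boldsymbol{\Sigma}\,\mathbf{D}'}{\mathbf{D}^2}\,\ext r=\frac{a^2\boldsymbol{\Sigma}(a)}{\mathbf{D}(a)}-\frac{b'^2\boldsymbol{\Sigma}(b')}{\mathbf{D}(b')}+\int_a^{b'}\frac{2r\boldsymbol{\Sigma}+r^2\boldsymbol{\Sigma}'}{\mathbf{D}}\,\ext r\leq Cb'^2 .
\]
This uses $\boldsymbol{\Sigma}\leq C\mathbf{D}$ together with the one-sided bound $r\boldsymbol{\Sigma}'\leq -n\boldsymbol{\Sigma}+\mathbf{H}\leq\mathbf{D}$ from Lemma \ref{lemma:cm-18}. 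That second inequality of the lemma is recorded precisely for this purpose, and only an upper bound on $\boldsymbol{\Sigma}'$ is needed because of the sign. This is what the paper does. Combine it with your estimate $|\mathbf{D}'|\leq\mathbf{D}'+2(n-1)r^{-1}\mathbf{D}$. The extra piece is then $2(n-1)\Etilt_V^2 r\boldsymbol{\Sigma}/\mathbf{D}\leq C\Etilt_V^2 r$; this is the only place an $O(\Etilt_V^2 r)$ contribution legitimately arises, namely from the negative part of $\mathbf{D}'$. Since $b'<1$ gives $b'^2\leq b'^{\frac{4}{3(n+3)}}$, the argument then closes as you intended.
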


\textbf{Remark:} Clearly Theorem \ref{thm:cm-20} gives $\sup_{r\in (0,1)}\mathbf{I}(r)\leq e^{C\Etilt_V}(1+\mathbf{I}(1))<\infty$.

\begin{proof}
	Set $\boldsymbol{\Omega}(r):=\log(\max\{\mathbf{I}(r),1\})$. To prove the theorem, it suffices to show that
	\begin{equation*}\tag{$\star$}\label{E:Omega}
		\boldsymbol{\Omega}(a)\leq C\Etilt_Vb^{\frac{4}{3(n+3)}}+\boldsymbol{\Omega}(b)
	\end{equation*}
	for some $C = C(\mathcal{V})$. Indeed, once we have this we can take exponentials of both sides to get
	$$\max\{\mathbf{I}(a),1\}\leq e^{C\Etilt_Vb^{\frac{4}{3(n+3)}}}\max\{\mathbf{I}(b),1\}\ \ \Longrightarrow\ \ \mathbf{I}(a)\leq e^{C\Etilt_Vb^{\frac{4}{3(n+3)}}}(1+\mathbf{I}(b))$$
	as desired.
	
	To prove \eqref{E:Omega}, first note that if $\boldsymbol{\Omega}(a)=0$ then there is nothing to prove, and so we may assume that $\boldsymbol{\Omega}(a)>0$. Then set $\tilde{b}:=\sup\{t\in (a,b]:\boldsymbol{\Omega}>0\text{ on }(a,t)\}$, i.e.~$\tilde{b}$ is the first zero of $\boldsymbol{\Omega}$ greater than $a$ and at most $b$; if this set is non-empty and $\tilde{b}<b$, then $\boldsymbol{\Omega}(\tilde{b}) = 0\leq\boldsymbol{\Omega}(b)$, and so if we can prove $\boldsymbol{\Omega}(a)\leq C\Etilt_V\tilde{b}^{\frac{4}{3(n+3)}}+\boldsymbol{\Omega}(\tilde{b})$, the result follows. In particular, it suffices to work on $(a,\tilde{b})$ in this case, where $\boldsymbol{\Omega}>0$. In the other cases, we know that $\boldsymbol{\Omega}>0$ on $(a,b)$. Thus, we may without loss of generality assume that $\boldsymbol{\Omega}>0$ on $(a,b)$, i.e. $\mathbf{I}(r)>1$, as this case will imply the full result.
	
	Fix $r\in (a,b)$. As $\mathbf{I}(r)>1$, all of our previous estimates where we assumed a uniform lower bound $\mathbf{I}(r)\geq\delta$ now hold with $\delta=1$; in particular, the constants only depend on $\mathcal{V}$. From \eqref{E:cm-D-E-3} and the fact that $\boldsymbol{\Sigma}(r)\leq C\mathbf{D}(r)$ from Lemma \ref{lemma:cm-18}, we know
	$$|\mathbf{D}(r)-\mathbf{E}(r)|\leq C\Etilt_V\left(\mathbf{D}(r)^{\frac{2}{3(n+3)}} + C\Etilt_V r^2\right)\mathbf{D}(r)$$
	and so if $\Etilt_V$ is sufficiently small depending only on $\mathcal{V}$, this gives
	$$|\mathbf{D}(r)-\mathbf{E}(r)|\leq\frac{1}{2}\mathbf{D}(r)$$
	and hence
	\begin{equation}\label{E:D-E-comparison}
		\frac{1}{2}\mathbf{D}(r)\leq\mathbf{E}(r)\leq 2\mathbf{D}(r).
	\end{equation}
	From this we conclude, as $\mathbf{I}(r)>1>0$ (and thus $\mathbf{D}(r)>0$) that $\mathbf{E}(r)>0$ for $r\in (a,b)$. Now, as $\boldsymbol{\Omega}(r) = \log\mathbf{I}(r) = \log\mathbf{D}(r) - \log\mathbf{H}(r)$ here, we can compute
	\begin{align*}
		\boldsymbol{\Omega}^\prime(r) & = \frac{\mathbf{D}^\prime(r)}{\mathbf{D}(r)} - \frac{\mathbf{H}^\prime(r)}{\mathbf{H}(r)}\\
		& = \frac{\mathbf{D}^\prime(r)}{\mathbf{E}(r)} - \frac{\mathbf{H}^\prime(r)}{\mathbf{H}(r)} + \frac{\mathbf{D}^\prime(r)(\mathbf{E}(r)-\mathbf{D}(r))}{\mathbf{E}(r)\mathbf{D}(r)}.
	\end{align*}
	From \eqref{E:cm-H-prime-3} we know
	$$\frac{\mathbf{H}^\prime(r)}{\mathbf{H}(r)}\leq \frac{2r^{-1}\mathbf{E}(r)}{\mathbf{H}(r)} + C\Etilt_V$$
	and from \eqref{E:cm-D-prime-3} we know
	\begin{align*}
		\frac{\mathbf{D}^\prime(r)}{\mathbf{E}(r)} & \geq\frac{2\mathbf{G}(r)}{\mathbf{E}(r)} - C\Etilt_V\frac{\mathbf{D}(r)}{\mathbf{E}(r)} - C\Etilt_Vr^{-1}\frac{\mathbf{D}(r)^{1+\frac{2}{3(n+3)}}}{\mathbf{E}(r)} - C\Etilt_V\frac{\mathbf{D}(r)^{\frac{2}{3(n+3)}}}{\mathbf{E}(r)}\mathbf{D}^\prime(r)\\
		& \geq \frac{2\mathbf{G}(r)}{\mathbf{E}(r)} - C\Etilt_V - C\Etilt_Vr^{-1}\mathbf{D}(r)^{\frac{2}{3(n+3)}} - C\Etilt_V\mathbf{D}(r)^{\frac{2}{3(n+3)}-1}\mathbf{D}^\prime(r)
	\end{align*}
	where in the second inequality here we have used \eqref{E:D-E-comparison}. Finally, since from \eqref{E:cm-D-E-3}
	$$|\mathbf{D}(r)-\mathbf{E}(r)|\leq C\Etilt_V\mathbf{D}(r)^{1+\frac{2}{3(n+3)}} + C\Etilt_V^2r^2\boldsymbol{\Sigma}(r)$$
	we have, again using \eqref{E:D-E-comparison} in the denominator,
	\begin{align*}
		\frac{\mathbf{D}^\prime(r)(\mathbf{E}(r)-\mathbf{D}(r))}{\mathbf{E}(r)\mathbf{D}(r)} & \geq -C\Etilt_V\frac{\mathbf{D}^\prime(r)(\mathbf{D}(r)^{1+\frac{2}{3(n+3)}}+r^2\boldsymbol{\Sigma}(r))}{\mathbf{D}(r)^2}\\
		& \geq -C\Etilt_V\mathbf{D}(r)^{\frac{2}{3(n+3)}-1}\mathbf{D}^\prime(r) - Cr^2\Etilt_V\frac{\mathbf{D}^\prime(r)\boldsymbol{\Sigma}(r)}{\mathbf{D}(r)^2}.
	\end{align*}
	Combining the above inequalities, we get
	\begin{align*}
		\boldsymbol{\Omega}^\prime(r) \geq 2\cdot\frac{\mathbf{G}(r)\mathbf{H}(r)-r^{-1}\mathbf{E}(r)^2}{\mathbf{E}(r)\mathbf{H}(r)} - C\Etilt_V - C&\Etilt_Vr^{-1}\mathbf{D}(r)^{\frac{2}{3(n+3)}}\\
		& - C\Etilt_V\mathbf{D}(r)^{\frac{2}{3(n+3)}-1}\mathbf{D}^\prime(r) - Cr^2\Etilt_V\frac{\mathbf{D}^\prime(r)\boldsymbol{\Sigma}(r)}{\mathbf{D}(r)^2}.
	\end{align*}
	By Cauchy--Schwarz, we have $\mathbf{G}(r)\mathbf{H}(r)\geq r^{-1}\mathbf{E}(r)^2$, and hence, noting that $\mathbf{D}(r)\leq C\Etilt_V^2r^2$ (as $0$ is a point of planar frequency $\geq 2$)
	$$\boldsymbol{\Omega}^\prime(r) \geq -C\Etilt_V-C\Etilt_Vr^{\frac{4}{3(n+3)}-1} - C\Etilt_V\mathbf{D}(r)^{\frac{2}{3(n+3)}-1}\mathbf{D}^\prime(r) - C\Etilt_V\frac{\mathbf{D}^\prime(r)\cdot r^2\boldsymbol{\Sigma}(r)}{\mathbf{D}(r)^2}.$$
	Integrating this over $(a,b)$ (and integrating by parts in the last term), we get
	\begin{align*}
		\boldsymbol{\Omega}(b)-\boldsymbol{\Omega}(a)\geq -C\Etilt_V(b-a)-C&\Etilt_V\left(b^{\frac{4}{3(n+3)}} - a^{\frac{4}{3(n+3)}}\right) - C\Etilt_V\left(\mathbf{D}(b)^{\frac{2}{3(n+3)}} - \mathbf{D}(a)^{\frac{2}{3(n+3)}}\right)\\
		& - C\Etilt_V\left[-\frac{b^2\boldsymbol{\Sigma}(b)}{\mathbf{D}(b)} + \frac{a^2\boldsymbol{\Sigma}(a)}{\mathbf{D}(a)} + \int^b_a\frac{r^2\boldsymbol{\Sigma}^\prime(r) + 2r\boldsymbol{\Sigma}(r)}{\mathbf{D}(r)}\ \ext r\right].
	\end{align*}
	But now, recalling from Lemma \ref{lemma:cm-18} that $\boldsymbol{\Sigma}(r)\leq C\mathbf{D}(r)$ and $r\boldsymbol{\Sigma}^\prime(r)\leq C\mathbf{D}(r)$, this gives
	$$\boldsymbol{\Omega}(b)-\boldsymbol{\Omega}(a)\geq -C\Etilt_Vb^{\frac{4}{3(n+3)}}$$
	(where again we have used $\mathbf{D}(r)\leq C\Etilt_V^2r^2$); here, $C = C(\mathcal{V})$. But this is exactly \eqref{E:Omega}, and hence we have completed the proof.
\end{proof}

Next, we use Theorem \ref{thm:cm-20} to establish a fixed lower bound on the center manifold frequency. In fact, we will be able to show that if $\eps_2 = \eps_2(\mathcal{V})$ is sufficiently small, then $\mathbf{I}(r)\geq 1/2$ for all $r$ (the choice of a lower bound is $1/2$ is arbitrary, and a lower bound of $1-\eta$ could be achieved provided $\eps_2$ is allowed to depend on $\eta\in (0,1)$ as well).  

\begin{corollary}[Uniform Frequency Lower Bound]\label{cor:cm-21}
	Provided $\eps_2 = \eps_2(\mathcal{V})$ is sufficiently small, we have $\mathbf{I}(r)\geq 1/2$ for all $r>0$.
\end{corollary}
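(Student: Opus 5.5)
The plan is to argue by contradiction, in the spirit of the proof of Corollary \ref{cor:decay-estimates}(iii), which gave the planar frequency lower bound $\mathcal{N}_V(0)\geq 1+\alpha$. The key point is that an upper bound on the frequency, which we now have, forces $\mathbf{H}$ to decay at most polynomially. Meanwhile a frequency below $1/2$ at some scale forces the normal map $N$ to be nearly zero in $L^2$ relative to its energy, and this contradicts the Poincaré-type control from Lemma \ref{lemma:cm-18}.

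\textbf{Step 1.} Suppose $\mathbf{I}(r_0)<1/2$ for some $r_0$. Remark \ref{remark:cm-H-non-zero} shows $\mathbf{H}(r)>0$ for all $r$. I would first establish a conditional version of the estimates \eqref{E:cm-D-E-3}. The estimates of Lemmas \ref{lemma:cm-18} and \ref{lemma:cm-19} only used $\mathbf{I}(r)\geq\delta$ through the bound $\mathbf{H}\leq\delta^{-1}\mathbf{D}$. When $\mathbf{I}(r)<1/2$ the roles reverse: $\mathbf{D}(r)<\tfrac12\mathbf{H}(r)$. In that case $\boldsymbol{\Sigma}(r)\leq C\mathbf{H}(r)$ and $r^{-n}\int_{\mathcal{B}_r}|N|^2\leq C\mathbf{H}(r)$, so every error term can be bounded with $\mathbf{D}+\mathbf{H}\leq C\mathbf{H}$ in place of $C\mathbf{D}$. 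The error bounds for $\mathrm{Err}^{\mathrm{o}}_j$ and $\mathrm{Err}^{\mathrm{in}}_j$ then hold with $\mathbf{H}$ on the right, using $\mathbf{H}(r)\leq C\Etilt_V^2 r^{2}$, which holds because $0$ has planar frequency $\geq2$. I would then run the Cauchy--Schwarz computation from the proof of Theorem \ref{thm:cm-20} with $\log\mathbf{I}$ in place of $\boldsymbol{\Omega}$. The errors enter multiplied by $C\Etilt_V$ and are small.

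\textbf{Step 2.} The same computation yields an almost-monotonicity of $\log\mathbf{I}$ on any interval where $\mathbf{I}\leq 1$. More precisely, I expect
\[
\frac{d}{dr}\log\mathbf{I}(r)\geq -C\Etilt_V r^{\frac{4}{3(n+3)}-1}-C\Etilt_V\frac{d}{dr}\Big(\text{bounded quantity}\Big).
\]
Hence $\mathbf{I}(s)\leq e^{C\Etilt_V}\mathbf{I}(r_0)<1/2+\eta$ for all $s<r_0$, provided $\eps_2$ is small. Combined with $\mathbf{H}'/\mathbf{H}\leq 2r^{-1}\mathbf{E}/\mathbf{H}+C\Etilt_V$ and $\mathbf{E}\leq \mathbf{D}+C\Etilt_V\mathbf{H}$, this gives
\[
\frac{\mathbf{H}'(s)}{\mathbf{H}(s)}\leq\frac{1+2\eta}{s}+C\Etilt_V s^{\gamma'-1}.
\]
Integrating, $\mathbf{H}(s)\geq c\,s^{1+2\eta}\mathbf{H}(r_0)r_0^{-1-2\eta}$ for $s<r_0$.

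\textbf{Step 3.} This lower bound contradicts the a priori decay $\mathbf{H}(s)\leq C\Etilt_V^2 s^{2}$ as $s\downarrow0$. That decay holds because $V$ decays quadratically to $T_0\mathcal{M}=P_0$, since $\mathcal{N}_V(0)\geq2$, and $\mathcal{M}$ touches $V$ at $0$ to second order by Lemma \ref{lemma:cm-12} and the $C^2$ bounds on $\wp$. Since $1+2\eta<2$, the contradiction follows, so $\mathbf{I}\geq1/2$ everywhere.

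The main obstacle is Step 1: re-deriving the error estimates without the hypothesis $\mathbf{I}\geq\delta$. In particular, the inner-variation term involving $\mathbf{D}'$ must be controlled when $\mathbf{D}$ may be much smaller than $\mathbf{H}$. I would try absorbing that term by writing $\mathbf{D}^{\frac{2}{3(n+3)}-1}\mathbf{D}'$ as an exact derivative, exactly as in Theorem \ref{thm:cm-20}.
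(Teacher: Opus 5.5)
Your skeleton matches the paper's: argue by contradiction, integrate $\mathbf{H}'/\mathbf{H}$, and compare with the a priori decay of $\mathbf{H}$ at the planar-frequency-$\geq 2$ point $0$. However, Step 2 is a genuine gap. The computation in the proof of Theorem \ref{thm:cm-20} divides by $\mathbf{E}$ and $\mathbf{D}$. It only closes because of \eqref{E:D-E-comparison} and the bounds $\boldsymbol{\Sigma}\leq C\mathbf{D}$, $r\boldsymbol{\Sigma}'\leq C\mathbf{D}$ from Lemma \ref{lemma:cm-18}, and all of these need a lower bound on $\mathbf{I}$. When $\mathbf{I}<1/2$ you only have $\boldsymbol{\Sigma}\leq C\mathbf{H}=C\mathbf{D}/\mathbf{I}$. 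So the term $C\Etilt_V^2r^2\boldsymbol{\Sigma}$ in \eqref{E:cm-D-E-3} is of size $\Etilt_V^2r^2\mathbf{D}/\mathbf{I}$. Nothing you have excludes $\mathbf{I}(r)\ll\Etilt_V^2r^2$. In that case $\mathbf{E}$ need not be comparable to $\mathbf{D}$ (nor even positive), and the boundary term $r^2\boldsymbol{\Sigma}/\mathbf{D}\sim r^2/\mathbf{I}$ from the integration by parts is not bounded. So the almost-monotonicity of $\log\mathbf{I}$ on $\{\mathbf{I}\leq 1\}$ that you ``expect'' does not follow from rerunning that argument. Your contradiction rests on it, because with only $\mathbf{H}(s)\lesssim s^2$ you need $\mathbf{I}$ to stay below $1$ at all small scales.

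The detour is unnecessary, and the paper avoids it. Theorem \ref{thm:cm-20} holds with no frequency lower bound, and it gives $\mathbf{I}(s)\leq e^{C\Etilt_V}(1+\mathbf{I}(r_0))<7/4$ for all $s\leq r_0$. The a priori decay is also stronger than you used. The quadratic contact you cite in Step 3 ($\mathcal{N}_V(0)\geq 2$ together with $|\wp(x)|\leq C\Etilt_V|x|^2$) gives $\int_{\mathcal{B}_s(0)}|N|^2\lesssim s^{n+4}$, hence $\mathbf{H}(s)\leq C\Etilt_V^2s^4$, not merely $s^2$. The paper then estimates only the outer-variation errors, with no frequency lower bound: it replaces $r^{n-2}\mathbf{D}$ by $r^{n-2}\mathbf{D}+r^n\mathbf{H}$ in Lemma \ref{lemma:cm-19} and uses $\boldsymbol{\Sigma}\leq C(\mathbf{D}+\mathbf{H})$. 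This gives $\mathbf{H}'/\mathbf{H}\leq C+(2+3\eps_*)\mathbf{I}(s)/s$ for small $s$. Integrating with $\mathbf{I}<7/4$ yields $\mathbf{H}(s)\geq cs^{\frac{7}{4}(2+3\eps_*)}$, which contradicts $\mathbf{H}\lesssim s^4$ once $\eps_*=1/21$. No monotonicity statement in the low-frequency regime is needed.
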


\begin{proof}
    Firstly, note that $\mathbf{I}(r)>0$ for all $r>0$: indeed, we have already shown that $\mathbf{H}(r)>0$ always, and by a similar argument\footnote{To see that $\mathbf{D}(r)>0$ for all $r>0$, suppose for some $r>0$ we had $\mathbf{D}(r)=0$. Then we would have $|DN|=0$ on $\mathcal{B}_r(0)$. But then as $0\in\mathcal{M}\cap\spt\|V\|$ (from Lemma \ref{lemma:cm-12}) we know $|N(0)|=0$, and thus we would need to have $|N|\equiv 0$ on $\mathcal{B}_r(0)$, i.e.~$V$ agrees with $Q\llbracket \mathcal{B}_r(0)\rrbracket$ here. Hence, as $\mathcal{M}$ was $C^{3,1/2}$, we see that $0\in\reg(V)$, a contradiction.} we have that $\mathbf{D}(r)>0$. Moreover, $\mathbf{I}(r)$ is clearly continuous in $r>0$.

    Now suppose for contradiction that there was a radius $\tilde{r}>0$ such that $\mathbf{I}(\tilde{r})<1/2$. Then provided $\eps_2 = \eps_2(\mathcal{V})\in (0,1)$ is sufficiently small, Theorem \ref{thm:cm-20} gives that for all $0<r\leq \tilde{r}<1$,
    $$\mathbf{I}(r) \leq e^{C\Etilt_V \tilde{r}^{\frac{4}{3(n+3)}}}(1+\mathbf{I}(\tilde{r})) < e^{C\Etilt_V}\left(1+\frac{1}{2}\right) < \frac{7}{4}.$$
    We just need to contradict this to complete the proof. To this end, we return to our first variation estimates (the reader should note the similarity of the present argument with that in Corollary \ref{cor:decay-estimates} for planar frequency). From \eqref{E:H-prime-1} and \eqref{E:D-E} we see
	$$\mathbf{H}^\prime(r) - 2r^{-1}\mathbf{D}(r) = O(1)\Etilt_V\mathbf{H}(r) - 2r^{-1}\sum^3_{j=1}r^{2-n}\text{Err}_j^{\text{o}}.$$
	Dividing this by $\mathbf{H}(r)$ $(>0)$ we get
	$$\left|\frac{\mathbf{H}^\prime(r)}{\mathbf{H}(r)} - \frac{2\mathbf{I}(r)}{r}\right| \leq C\Etilt_V + 2r^{-1}\sum^3_{j=1}\frac{r^{2-n}|\text{Err}_j^{\text{o}}|}{\mathbf{H}(r)}.$$
	We now want to estimate the error terms $\text{Err}_j^{\text{o}}$ in exactly the same way as before, \emph{but without using any assumption regarding a lower bound on }$\mathbf{I}(r)$. For example, in Lemma \ref{lemma:cm-19}, the best bound we can get is
	$$\Etilt_V^2\sum_i\ell(L_i)^{n+2}\inf_{\mathcal{B}_{L_i}}\phi_r\leq Cr^{n-2}\mathbf{D}(r) + Cr^n\boldsymbol{\Sigma}(r).$$
	Thus, using the first inequality in Lemma \ref{lemma:cm-18}, we get
	$$\Etilt_V^2\sum_i\ell(L_i)^{n+2}\inf_{\mathcal{B}_{L_i}}\phi_r\leq Cr^{n-2}\mathbf{D}(r) + Cr^n\mathbf{H}(r).$$
	Similarly, we get
	$$\Etilt_V^{\frac{2}{n+3}}\sup_i\ell(L_i) \leq C\left(r^{n-2}\mathbf{D}(r)\right)^{\frac{1}{n+3}} + C\left(r^n\mathbf{H}(r)\right)^{\frac{1}{n+3}}.$$
	Here, $C = C(\mathcal{V})$. Using these bounds instead in our previous estimates on $\text{Err}_j^{\text{o}}$, we get
	$$\sum^3_{j=1}r^{2-n}|\text{Err}_j^{\text{o}}|\leq C\Etilt_Vr^2(\mathbf{D}(r) + \mathbf{H}(r)) + C\Etilt_V\left(\mathbf{D}(r)^{\frac{3}{4(n+3)}}+\mathbf{H}(r)^{\frac{3}{4(n+3)}}\right)(\mathbf{D}(r)+r^2\mathbf{H}(r))$$
	and thus we get
	$$\sum^3_{j=1}\frac{r^{2-n}|\text{Err}_j^{\text{o}}|}{\mathbf{H}(r)} \leq C\Etilt_Vr^2\mathbf{I}(r) + C\Etilt_Vr^2 + C\Etilt_V\left(\mathbf{D}(r)^{\frac{3}{4(n+3)}} + \mathbf{H}(r)^{\frac{3}{4(n+3)}}\right)(\mathbf{I}(r)+r^2)$$
	i.e.
	$$2r^{-1}\sum^3_{j=1}\frac{r^{2-n}|\text{Err}_j^{\text{o}}|}{\mathbf{H}(r)} \leq C\Etilt_V r\mathbf{I}(r) + C\Etilt_V r + C\Etilt_V\left(\mathbf{D}(r)^{\frac{3}{4(n+3)}} + \mathbf{H}(r)^{\frac{3}{4(n+3)}}\right)\left(\frac{\mathbf{I}(r)}{r} + r\right).$$
    Now fix $\eps_*>0$ to be chosen. Then, choose $r_* = r_*(\mathcal{V},\eps_*)\in (0,\tilde{r})$ sufficiently small so that for all $r\in (0,r_*)$ we have $C\mathbf{D}(r)^{\frac{3}{4(n+3)}}<\eps_*$, $C\mathbf{H}(r)^{\frac{3}{4(n+3)}}<\eps_*$, and $Cr_*<\eps_*$, where $C = C(\mathcal{V})$ is the constant from the above inequality (note that this is possible because $\mathbf{D}(r)\leq C\Etilt_V^2 r^2$ and $\mathbf{H}(r)\leq C\Etilt^2_Vr^4$, as $0$ is a point of planar frequency $\geq 2$, using also our estimates on $\wp$). The above bound therefore becomes
	$$2r^{-1}\sum^3_{j=1}\frac{r^{2-n}|\text{Err}_j^{\text{o}}|}{\mathbf{H}(r)} \leq 3\eps_*\cdot\frac{\mathbf{I}(r)}{r} + 3\eps_*$$
	and hence we see
	$$\left|\frac{\mathbf{H}^\prime(r)}{\mathbf{H}(r)} - \frac{2\mathbf{I}(r)}{r}\right|\leq C + 3\eps_*\cdot\frac{\mathbf{I}(r)}{r}.$$
	Thus, we get
	$$\frac{\mathbf{H}^\prime(r)}{\mathbf{H}(r)}\leq C + (2+3\eps_*)\cdot\frac{\mathbf{I}(r)}{r}.$$
	Now, recall that we are supposing (as a result of our contradiction assumption) that $\mathbf{I}(r)<7/4$ for all $r\in (0,r_*)$ (as $r_*<\tilde{r}$). Hence, we know
	$$\frac{\mathbf{H}^\prime(r)}{\mathbf{H}(r)} \leq C + \frac{\frac{7}{4}(2+3\eps_*)}{r} \qquad \text{for all }r\in (0,r_*].$$
    Integrating this over $(r,r_*)$, we get
	$$\log\left(\frac{\mathbf{H}(r_*)}{\mathbf{H}(r)}\right)\leq Cr_* + \frac{7}{4}(2+3\eps_*)\log\left(\frac{r_*}{r}\right)$$
	i.e.
	$$\mathbf{H}(r_*)\leq e^{Cr_*}\mathbf{H}(r)\cdot\left(\frac{r_*}{r}\right)^{\frac{7}{4}(2+3\eps_*)} \qquad \text{for all }r\in (0,r_*).$$
	In particular, we must have
	$$\frac{\mathbf{H}(r)}{r^{\frac{7}{4}(2+3\eps_*)}}\geq e^{-Cr_*}\mathbf{H}(r_*)r_*^{-\frac{7}{4}(2+3\eps_*)}>0 \qquad \text{for all }r\in (0,r_*)$$
	i.e. $\liminf_{r\downarrow 0}\frac{\mathbf{H}(r)}{r^{\frac{7}{4}(2+3\eps_*)}}>0$. However, we know that $\mathbf{H}(r)\leq C\Etilt_V^2r^{4}$ for all $r\in (0,1)$ (again, as the origin is a point of planar frequency $\geq 2$, and moreover $|\wp|_{C^3}\leq C\Etilt_V$ with $\wp(0) = 0$, $D\wp(0) = 0$, by Lemma \ref{lemma:cm-12}, and so $|\wp(x)|\leq C\Etilt_V|x|^2$). Thus, if we choose $\frac{7}{4}(2+3\eps_*)<4$, e.g. $\eps_* = 1/21$, we get the desired contradiction and so complete the proof.
\end{proof}

\begin{remark}
	Currently, all our arguments have been based at the origin, i.e.~defining the center manifold frequency at $0$. We can also base oursevles at different points of planar frequency $\geq 2$ and the above arguments will also work, which allows us to define the center manifold frequency function $\mathbf{I}_y(r)$ at different points of planar frequency $\geq 2$. Corollary \ref{cor:cm-21} applies at these other base points as well, provided $\eps_2 = \eps_2(\mathcal{V})\in (0,1)$ is sufficiently small, to provide a uniform lower bound.
\end{remark}

Now note that one can rerun the entire argument from Theorem \ref{thm:cm-20} using the uniform lower bound $1/2$ provided by Corollary \ref{cor:cm-21}. Indeed, in the proof of Theorem \ref{thm:cm-20} instead define $\boldsymbol{\Omega}(r):=\log(\max\{\mathbf{I}(r),1/2\})\equiv\log\mathbf{I}(r)$, and then the whole argument leading to \eqref{E:Omega} goes through, which in turn gives $\mathbf{I}(a)\leq e^{C\Etilt_Vb^{\frac{4}{3(n+3)}}}\mathbf{I}(b)$ for any $0<a\leq b<1$. Thus, we have now established:

\begin{theorem}[Approximate Monotonicity of Center Manifold Frequency]\label{thm:cm-22}
	If $\eps_2 = \eps_2(\mathcal{V})\in (0,1)$ is sufficiently small, then for any $0<a\leq b<1$,
	$$\mathbf{I}(a)\leq e^{C\Etilt_Vb^{\frac{4}{3(n+3)}}}\mathbf{I}(b).$$
	Here, $C = C(\mathcal{V})\in (0,\infty)$.
\end{theorem}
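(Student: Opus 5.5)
The plan is to rerun the proof of Theorem \ref{thm:cm-20} essentially verbatim, replacing the truncation level $1$ by the lower bound $1/2$ from Corollary \ref{cor:cm-21}. Concretely, set $\boldsymbol{\Omega}(r):=\log\mathbf{I}(r)$. This is well defined and finite for all $r\in(0,1)$, since $\mathbf{H}(r)>0$ by Remark \ref{remark:cm-H-non-zero}, $\mathbf{D}(r)>0$ by the footnote argument in Corollary \ref{cor:cm-21}, and $\mathbf{I}(r)\geq 1/2$ by Corollary \ref{cor:cm-21}. The goal is
$$\boldsymbol{\Omega}(a)\leq \boldsymbol{\Omega}(b)+C\Etilt_V b^{\frac{4}{3(n+3)}}$$
for all $0<a\leq b<1$; exponentiating then gives the statement. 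Unlike in Theorem \ref{thm:cm-20}, there is no need to localise to a maximal subinterval on which $\mathbf{I}>1$. The lower bound now holds on all of $(0,1)$, so we may work directly on $(a,b)$.

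First, with $\delta=1/2$ fixed, all the error estimates \eqref{E:cm-H-prime-3}, \eqref{E:cm-D-E-3} and \eqref{E:cm-D-prime-3} hold on $(0,1)$ with constants depending only on $\mathcal{V}$. So do the consequences of Lemma \ref{lemma:cm-18}, namely $\boldsymbol{\Sigma}(r)\leq C\mathbf{D}(r)$ and $r\boldsymbol{\Sigma}'(r)\leq C\mathbf{D}(r)$. Combining \eqref{E:cm-D-E-3} with the bound $\mathbf{D}(r)\leq C\Etilt_V^2r^2$ (which holds since $0$ has planar frequency $\geq 2$) gives \eqref{E:D-E-comparison}, that is $\tfrac12\mathbf{D}\leq\mathbf{E}\leq2\mathbf{D}$, once $\eps_2$ is small. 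In particular $\mathbf{E}>0$.

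Next, I will differentiate $\boldsymbol{\Omega}=\log\mathbf{D}-\log\mathbf{H}$ and split it exactly as before: $\boldsymbol{\Omega}'=\mathbf{D}'/\mathbf{E}-\mathbf{H}'/\mathbf{H}+\mathbf{D}'(\mathbf{E}-\mathbf{D})/(\mathbf{E}\mathbf{D})$. Inserting the error bounds, the main term becomes $2(\mathbf{G}\mathbf{H}-r^{-1}\mathbf{E}^2)/(\mathbf{E}\mathbf{H})$. This is nonnegative by Cauchy--Schwarz, using the definitions of $\mathbf{G}$, $\mathbf{H}$ and $\mathbf{E}$ and the fact that $-\phi'\geq0$. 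What remains is a lower bound of the form
$$\boldsymbol{\Omega}'\geq -C\Etilt_V - C\Etilt_V r^{\frac{4}{3(n+3)}-1} - C\Etilt_V\mathbf{D}^{\frac{2}{3(n+3)}-1}\mathbf{D}' - C\Etilt_V\frac{r^2\boldsymbol{\Sigma}\,\mathbf{D}'}{\mathbf{D}^2}.$$

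Finally, integrate over $(a,b)$. The third term integrates to a multiple of $\mathbf{D}(b)^{\frac{2}{3(n+3)}}-\mathbf{D}(a)^{\frac{2}{3(n+3)}}\leq C\Etilt_V b^{\frac{4}{3(n+3)}}$. For the last term I will integrate by parts and control the boundary and bulk contributions via $\boldsymbol{\Sigma}\leq C\mathbf{D}$ and $r\boldsymbol{\Sigma}'\leq C\mathbf{D}$.

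I expect this last term to be the main obstacle, since it is the only place where the sign of $\mathbf{D}'$ is unknown. The boundary terms are of the form $r^2\boldsymbol{\Sigma}/\mathbf{D}\leq Cr^2$. The bulk term is
$$\int_a^b\frac{r^2\boldsymbol{\Sigma}'+2r\boldsymbol{\Sigma}}{\mathbf{D}}\,\mathrm{d}r .$$
Here $r\boldsymbol{\Sigma}'$ is only bounded above, so the integrand is bounded by $Cr$. This is the right direction after the integration by parts, matching the argument in Theorem \ref{thm:cm-20}. The bulk and boundary terms together therefore contribute at most $C\Etilt_V b^2\leq C\Etilt_V b^{\frac{4}{3(n+3)}}$, and this completes the estimate.
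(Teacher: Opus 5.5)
Your proposal is correct and is precisely the paper's argument: the paper reruns the proof of Theorem~\ref{thm:cm-20} with $\boldsymbol{\Omega}(r):=\log(\max\{\mathbf{I}(r),1/2\})\equiv\log\mathbf{I}(r)$, using Corollary~\ref{cor:cm-21} so that all estimates hold with $\delta=1/2$ on the whole of $(a,b)$ and no localisation is needed. (One small correction: the unknown sign of $\mathbf{D}'$ also enters the $\mathbf{D}^{\frac{2}{3(n+3)}-1}\mathbf{D}'$ term, not only the last one. This is harmless, and the paper glosses over it too, because $(n-1)\mathbf{D}+r\mathbf{D}'\geq 0$ by Lemma~\ref{lemma:cm-18}, so $r\mathbf{D}'$ can be replaced by this nonnegative quantity at the cost of integrable errors.)
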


In particular, we now get that the limit $\mathbf{I}(0):=\lim_{r\downarrow 0}\mathbf{I}(r)$ must exist (and is $\geq 1/2>0$). In fact, the argument in Corollary \ref{cor:cm-21} now gives, for any $\eps_*>0$,
$$\liminf_{r\downarrow 0}\frac{\mathbf{H}(r)}{r^{2\mathbf{I}(0)+\eps_*}}>0.$$
But then since $\mathbf{H}(r)\leq C\Etilt_V^2r^4$, we get that we need $\mathbf{I}(0)\geq 2$. We therefore have:
\begin{corollary}\label{cor:cm-23}
	If $\eps_2 = \eps_2(\mathcal{V})\in (0,1)$ is sufficiently small, then $\mathbf{I}(0)\geq 2$.
\end{corollary}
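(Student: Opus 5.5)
The plan is to compare two growth rates for $\mathbf{H}$ at $r\downarrow 0$. Theorem \ref{thm:cm-22} supplies a lower bound on $\mathbf{H}(r)$ in terms of $\mathbf{I}(0)$. Planar frequency $\geq 2$ at $0$ supplies the upper bound $\mathbf{H}(r)\leq C\Etilt_V^2 r^4$. Compatibility of the two forces $\mathbf{I}(0)\geq 2$.

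First, by Theorem \ref{thm:cm-22} the limit $\mathbf{I}(0)=\lim_{r\downarrow 0}\mathbf{I}(r)$ exists, and Corollary \ref{cor:cm-21} gives $\mathbf{I}(0)\geq 1/2$. For any $\eta>0$, approximate monotonicity then gives $r_\eta>0$ with $\mathbf{I}(r)\leq \mathbf{I}(0)+\eta$ for all $r\in(0,r_\eta)$.

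Next, I would rerun the differential inequality from the proof of Corollary \ref{cor:cm-21}. Combining \eqref{E:H-prime-1} with \eqref{E:D-E}, and using the unconditional error bounds for $\text{Err}^{\text{o}}_j$ obtained there (those not relying on a lower frequency bound), gives
$$\frac{\mathbf{H}^\prime(r)}{\mathbf{H}(r)}\leq C+(2+3\eps_*)\frac{\mathbf{I}(r)}{r}$$
for $r<r_*(\eps_*)$. Since $\mathbf{I}$ is now known to be bounded, the error estimates of the previous subsection are also available directly. Inserting $\mathbf{I}(r)\leq \mathbf{I}(0)+\eta$ and integrating over $(r,r_0)$ with $r_0=\min\{r_*,r_\eta\}$ gives
$$\mathbf{H}(r)\geq e^{-Cr_0}\mathbf{H}(r_0)\,(r/r_0)^{(2+3\eps_*)(\mathbf{I}(0)+\eta)}.$$
Here $\mathbf{H}(r_0)>0$ by Remark \ref{remark:cm-H-non-zero}. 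So $\liminf_{r\downarrow0}\mathbf{H}(r)/r^{2\mathbf{I}(0)+\eps'}>0$, where $\eps'$ can be made arbitrarily small by choosing $\eps_*,\eta$ small.

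Finally, I would establish $\mathbf{H}(r)\leq C\Etilt_V^2 r^4$, which is the step that needs the most care. Since $\mathcal{N}_V(0)\geq 2$, Corollary \ref{cor:decay-estimates}(ii) gives $\sigma^{-n}\int_{B_\sigma}|u|^2\leq C\Etilt_V^2\sigma^{4}$ up to an arbitrarily small loss in the exponent. If the loss is not removable, one can take $\sigma^{4-\eta'}$ instead, which still suffices. By Lemma \ref{lemma:cm-12}, $\wp(0)=0$ and $D\wp(0)=0$, and together with $\|\wp\|_{C^3}\leq C\Etilt_V$ this gives $|\wp(x)|\leq C\Etilt_V|x|^2$. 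Hence $|N|\leq C(\mathcal{G}(u,Q\llbracket\wp\rrbracket))\leq C(|u|+Q^{1/2}|\wp|)$ pointwise after reparametrisation. Integrating over the annulus $\mathcal{B}_r\setminus\mathcal{B}_{r/2}$ with weight $r^{-n}$ then gives $\mathbf{H}(r)\leq C\Etilt_V^2r^{4-\eta'}$.

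Comparing this with the lower bound, $2\mathbf{I}(0)+\eps'\geq 4-\eta'$ for all small $\eps',\eta'$. Therefore $\mathbf{I}(0)\geq 2$.
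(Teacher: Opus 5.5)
Your proposal is correct and is essentially the paper's argument: you rerun the integration of $\mathbf{H}'/\mathbf{H}$ from the proof of Corollary \ref{cor:cm-21}, with $\mathbf{I}(r)$ close to $\mathbf{I}(0)$, to get $\liminf_{r\downarrow 0}\mathbf{H}(r)/r^{2\mathbf{I}(0)+\eps'}>0$, and compare this with $\mathbf{H}(r)\leq C\Etilt_V^2r^4$, which comes from planar frequency $\geq 2$ together with $|\wp(x)|\leq C\Etilt_V|x|^2$. Your hedge about a loss in the exponent is unnecessary: since $\mathcal{N}_V(0)\geq 2$, Corollary \ref{cor:decay-estimates}(ii) gives $(\sigma/\rho)^{2\mathcal{N}_V(0)}\leq(\sigma/\rho)^4$ directly, so the $r^4$ bound holds with no loss.
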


\begin{remark}\label{remark:cm-planar-lower-bounds-cm}
    In fact, one has $\mathbf{I}(0)\geq\mathcal{N}(0)$, i.e.~the center manifold frequency is at least as large as the planar frequency at the point. See Remark \ref{remark:cm+planar-frequency} for the justification. We note that it is entirely possible that $\mathbf{I}(0) = \mathcal{N}(0)$, as is the case at any point where the planar frequency is not an integer.
\end{remark}

\begin{remark}\label{remark:cm-height-decay}
    A more precise re-run of the proof of Corollary \ref{cor:cm-21}, using the lower bound provided by Corollary \ref{cor:cm-21} as well as Lemma \ref{lemma:cm-18} and \eqref{E:cm-err-0-bounds} gives that, provided $\eps_2 = \eps_2(\mathcal{V})\in (0,1)$ is sufficiently small, for any $0<r<R<1$,
    $$\left(\frac{R}{r}\right)^{2(1-o(1))\mathbf{I}(r)} \leq \frac{\mathbf{H}(R)}{\mathbf{H}(r)} \leq \left(\frac{R}{r}\right)^{2(1+o(1))\mathbf{I}(R)},$$
    where $o(1)$ denotes a term which $\to 0$ as $R\to 0$. Moreover, for any $\eps_*>0$, for all $r\in (0,1)$ sufficiently small (depending on $\eps_*$) we also get:
    $$cr^{2\mathbf{I}(0)+2\eps_*}\leq \mathbf{H}(r) \leq Cr^{2\mathbf{I}(0)}$$
    where $c = c(\mathcal{V},\mathbf{I}(1))$ and $C = C(\mathcal{V},\mathbf{I}(1))$.
\end{remark}

\textbf{Note:} Theorem \ref{thm:cm-22} gives that for all $r = r(\mathcal{V})$ sufficiently small, we have (for instance) $\mathbf{I}(r)\geq 3/2$.

These are the key facts we need to know regarding the center manifold frequency function in order to conclude our argument. In particular, the approximate monotonicity of frequency provided by Theorem \ref{thm:cm-22}, rather than the growth control given by Theorem \ref{thm:cm-20}, will allow us to show that blow-ups relative to the center manifold are homogeneous of degree $\mathbf{I}(0)$.

We now prove that a certain reverse Poincaré inequality holds for the normal map $N$ at appropriate scales, thus controlling its full $W^{1,2}$ norm by its $L^2$ norm at those scales.

\begin{corollary}[Center Manifold Reverse Poincaré Inequality]\label{cor:cm-24}
	If $\eps_2 = \eps_2(\mathcal{V})\in (0,1)$ is sufficiently small, then for every $r\in (0,1]$ there is $s\in (\frac{3}{2}r,3r)$ such that
	$$\int_{\mathcal{B}_s(0)}|DN|^2\leq\frac{C}{s^2}\int_{\mathcal{B}_s(0)}|N|^2.$$
	Here, $C = C(\mathcal{V},\mathbf{I}(4))\in (0,\infty)$.
\end{corollary}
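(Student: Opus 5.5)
The plan is to combine the frequency upper bound with the elementary relations between $\mathbf{D}$, $\mathbf{H}$, $\boldsymbol{\Sigma}$ from Lemma \ref{lemma:cm-18}, and to choose $s$ by an averaging (pigeonhole) argument over $(\tfrac32 r,3r)$.

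First, Theorem \ref{thm:cm-22} gives, for all $t\in(0,3]$ (after a harmless rescaling of the range if needed), $\mathbf{I}(t)\leq e^{C\Etilt_V}\mathbf{I}(4)\leq C_0=C_0(\mathcal{V},\mathbf{I}(4))$. Hence $\mathbf{D}(t)\leq C_0\mathbf{H}(t)$. Corollary \ref{cor:cm-21} gives $\mathbf{I}\geq 1/2$, so Lemma \ref{lemma:cm-18} applies with $\delta=1/2$. We then get $\boldsymbol{\Sigma}(t)\leq C\mathbf{D}(t)$ and $t^{-n}\int_{\mathcal{B}_t}|N|^2\leq C\mathbf{D}(t)$. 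Since $\mathbf{H}(t)\leq 2t^{-n}\int_{\mathcal{B}_t\setminus\mathcal{B}_{t/2}}|N|^2$ (from $\phi'=-2$ on $(1/2,1)$ and $d(p)\geq t/2$ there), we also have $\mathbf{D}(t)\leq C t^{-n}\int_{\mathcal{B}_t}|N|^2$.

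The remaining step is to pass from $\mathbf{D}(t)$, which only controls $|DN|^2$ on $\mathcal{B}_{t/2}$ fully and the annulus with weight, to the unweighted $\int_{\mathcal{B}_s}|DN|^2$. By Lemma \ref{lemma:cm-18},
\[
s^{2-n}\int_{\mathcal{B}_s}|DN|^2\leq (n-1)\mathbf{D}(s)+s\mathbf{D}'(s),
\]
so it suffices to find $s\in(\tfrac32 r,3r)$ with $s\mathbf{D}'(s)\leq C\mathbf{D}(s)$. This is the pigeonhole step. The function $\log\mathbf{D}$ is absolutely continuous and $\mathbf{D}>0$, so
\[
\int_{3r/2}^{3r}\frac{\mathbf{D}'(t)}{\mathbf{D}(t)}\,\ext t=\log\frac{\mathbf{D}(3r)}{\mathbf{D}(3r/2)}.
\]
We bound this ratio by the frequency: $\mathbf{D}(3r)\leq C_0\mathbf{H}(3r)$ and $\mathbf{D}(3r/2)\geq\tfrac12\mathbf{H}(3r/2)$. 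Remark \ref{remark:cm-height-decay}, or directly integrating $\mathbf{H}'/\mathbf{H}\leq C+(2+o(1))\mathbf{I}/t$ as in Corollary \ref{cor:cm-21}, gives $\mathbf{H}(3r)\leq C(\mathcal{V},\mathbf{I}(4))\mathbf{H}(3r/2)$. Hence the integral is bounded by a constant $K$, so the set of $t$ with $t\mathbf{D}'(t)/\mathbf{D}(t)>4K$ has measure less than $\tfrac34 r$, and a good $s$ exists.

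For that $s$, $\int_{\mathcal{B}_s}|DN|^2\leq Cs^{n-2}\mathbf{D}(s)\leq Cs^{-2}\int_{\mathcal{B}_s}|N|^2$, by the bound $\mathbf{D}(s)\leq Cs^{-n}\int_{\mathcal{B}_s}|N|^2$ above.

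The main obstacle is the doubling bound $\mathbf{H}(3r)\leq C\mathbf{H}(3r/2)$ uniformly in $r\leq1$. This needs the upper bound on $\mathbf{I}$ on the whole range, so that the error terms in $\mathbf{H}'/\mathbf{H}$ are controlled; this is where the dependence on $\mathbf{I}(4)$ enters, and where one must check that the estimates \eqref{E:cm-H-prime-3}, \eqref{E:cm-D-E-3} hold up to radius $3$ (rescaling if necessary). The possibly non-signed $\mathbf{D}'$ is harmless, since only an upper bound on $s\mathbf{D}'(s)$ is used.
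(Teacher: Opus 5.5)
Your argument is correct and takes a different route from the paper's. There is one imprecision in the pigeonhole step. The claim that $\{t: t\mathbf{D}'(t)/\mathbf{D}(t)>4K\}$ has measure less than $\tfrac34 r$ needs the integrand to be bounded below. This is exactly where the sign of $\mathbf{D}'$ matters, so your remark that it is harmless does not cover this step. There are two easy repairs:
\begin{itemize}
\item Lemma \ref{lemma:cm-18} itself gives $(n-1)\mathbf{D}(t)+t\mathbf{D}'(t)\geq 0$, which supplies the lower bound.
\item Alternatively, skip the measure estimate and argue by contradiction. If $t\mathbf{D}'(t)/\mathbf{D}(t)>\Lambda$ for a.e.\ $t\in(\tfrac32 r,3r)$, then $\log(\mathbf{D}(3r)/\mathbf{D}(3r/2))>\Lambda\log 2$. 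This is impossible once $\Lambda\log 2\geq K$, so a positive-measure set of good $s$ exists.
\end{itemize}

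The paper never works with $\mathbf{D}'$. It uses Fubini and the coarea formula to write $\int_{3r/2}^{3r}\int_{\mathcal{B}_t(0)}|DN|^2\,\ext t=\tfrac12(3r)^{n-1}\mathbf{D}(3r)$ and $(3r)^{n-1}\mathbf{H}(3r)=2\int_{3r/2}^{3r}t^{-1}\int_{\partial\mathcal{B}_t(0)}|N|^2\,\ext t$. The single bound $\mathbf{D}(3r)\leq C^*\mathbf{H}(3r)$ then yields some $s$ with $\int_{\mathcal{B}_s(0)}|DN|^2\leq 4C^*s^{-1}\int_{\partial\mathcal{B}_s(0)}|N|^2$. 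The boundary term is traded for $\int_{\mathcal{B}_s(0)\setminus\mathcal{B}_{s/2}(0)}|N|^2$ by integrating $|N|^2$ along radial geodesics of $\mathcal{M}$ and absorbing the cross term $\int|DN||N|$. That route needs only the frequency upper bound at one scale plus curvature control of $\mathcal{M}$.

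Your route avoids the geodesic trace estimate and the absorption, but it uses more of the frequency machinery:
\begin{itemize}
\item the frequency upper bound at two scales;
\item the lower bound $\mathbf{I}\geq\tfrac12$ from Corollary \ref{cor:cm-21}, to bound $\mathbf{D}(3r/2)$ from below;
\item the doubling $\mathbf{H}(3r)\leq C\mathbf{H}(3r/2)$, which rests on the first-variation estimates \eqref{E:cm-H-prime-3} and \eqref{E:cm-D-E-3} holding up to radius $3$.
\end{itemize}
Both arguments end with the same annular bound $\int_{\mathcal{B}_s(0)}|DN|^2\leq Cs^{-2}\int_{\mathcal{B}_s(0)\setminus\mathcal{B}_{s/2}(0)}|N|^2$. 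In your version the relevant inequality is $\mathbf{H}(t)\leq 4t^{-n}\int_{\mathcal{B}_t(0)\setminus\mathcal{B}_{t/2}(0)}|N|^2$, with constant $4$ rather than $2$ since $d^{-1}\leq 2/t$ on the annulus; this is immaterial.
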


\begin{proof}
	From the coarea formula we have
	$$(3r)^{n-1}\mathbf{H}(3r) \equiv \int_{\mathcal{B}_{3r}(0)\setminus\mathcal{B}_{3r/2}(0)}2d(p)^{-1}|N|^2 = 2\int^{3r}_{3r/2}\frac{1}{t}\int_{\del\mathcal{B}_t(0)}|N|^2$$
	whereas, using Fubini's theorem, we have
	$$\int^{3r}_{3r/2}\int_{\mathcal{B}_t(0)}|DN|^2 = \int_{\mathcal{M}}|DN(x)|^2\int^{3r}_{3r/2}\one_{(|x|,\infty)}(t)\ \ext t\ext x = \frac{3r}{2}\cdot (3r)^{n-2}\mathbf{D}(3r) \equiv \frac{1}{2}(3r)^{n-1}\mathbf{D}(3r).$$
	Now, the frequency upper bound $\mathbf{I}(r)\leq C^*$ provided by Theorem \ref{thm:cm-20} gives $\mathbf{D}(3r)\leq C^*\mathbf{H}(3r)$, where $C^* = C^*(\mathcal{V},\mathbf{I}(4))$. Hence, from the above equalities we see
	$$\int^{3r}_{3r/2}\int_{\mathcal{B}_t(0)}|DN|^2\leq 4C^*\int^{3r}_{3r/2}\frac{1}{t}\int_{\del\mathcal{B}_t(0)}|N|^2.$$
	Thus, basic measure theory tells us that there must exist $s\in (\frac{3}{2}r,3r)$ with
	\begin{equation}\label{E:cm-rpi-1}
		\int_{\mathcal{B}_s(0)}|DN|^2\leq \frac{4C^*}{s}\int_{\del\mathcal{B}_s(0)}|N|^2.
	\end{equation}
	Now, fix any $\sigma\in (s/2,s)$ and any point $x\in\del\mathcal{B}_s(0)$. Consider the geodesic line $\gamma\subset\mathcal{M}$ passing through $0$ and $x$, and let $\tilde{\gamma}\subset\gamma$ be the arc having one endpoint on $\del\mathcal{B}_\sigma(0)$ (call this endpoint $\tilde{x}$) and the other endpoint $x$. By the fundamental theorem of calculus (applied to $|N|^2$) we have
	$$|N(x)|^2\leq |N(\tilde{x})|^2 + 2\int_{\tilde{\gamma}}|DN||N|.$$
	Integrating this over $x\in\del\mathcal{B}_s(0)$ we get
	$$\int_{\del\mathcal{B}_s(0)}|N|^2\leq C\int_{\del\mathcal{B}_\sigma(0)}|N|^2 + C\int_{\mathcal{B}_s(0)\setminus\mathcal{B}_{s/2}(0)}|DN||N|$$
	where we have used that $\sigma>s/2$ and so $\mathcal{B}_s(0)\setminus\mathcal{B}_\sigma(0)\subset \mathcal{B}_{s}(0)\setminus\mathcal{B}_{s/2}(0)$ in the last term; here, $C = C(\mathcal{V})$ only depends on the curvature of $\mathcal{M}$ and so is controlled by $\|D\wp\|_{C^2}\leq C(\mathcal{V})$. If we further integrate this over $\sigma\in (s/2,s)$ we get
	$$\frac{s}{2}\int_{\del\mathcal{B}_s(0)}|N|^2\leq C\int_{\mathcal{B}_s(0)\setminus\mathcal{B}_{s/2}(0)}|N|^2 + Cs\int_{\mathcal{B}_s(0)\setminus\mathcal{B}_{s/2}(0)}|DN||N|.$$
	Using $2ab\leq a^2+b^2$ for suitable $a,b\in\R$, we get
	$$\frac{s}{2}\int_{\del \mathcal{B}_s(0)}|N|^2\leq C\int_{\mathcal{B}_s(0)\setminus\mathcal{B}_{s/2}(0)}|N|^2 + \frac{s^2}{16C^*}\int_{\mathcal{B}_s(0)\setminus\mathcal{B}_{s/2}(0)}|DN|^2$$
	where $C^*$ is as in \eqref{E:cm-rpi-1}. Substituting this into \eqref{E:cm-rpi-1} and rearranging, we get
	$$\int_{\mathcal{B}_s(0)}|DN|^2\leq \frac{C}{s^2}\int_{\mathcal{B}_s(0)\setminus\mathcal{B}_{s/2}(0)}|N|^2$$
	which proves the result.
\end{proof}

\begin{remark}\label{remark:cm-general-test}
	We note that if one takes in place of $\phi_r$ a more general test function in the outer and inner variations relative to the center manifold (\eqref{E:cm-outer-variation}, \eqref{E:cm-inner-variation}) we can analogously control the error terms to show:
	$$\int \phi|DN|^2 + \sum^Q_{i=1}N_i  DN_i\cdot D\phi = o(1)\|\phi\|_{C^1}\|N\|^2_{W^{1,2}} \qquad \text{for }\phi\in C^\infty_c(B^n_1(0));$$
	$$\int \left(|DN|^2 - 2D_iND_jN\right)D_i\phi^j = o(1)\|\phi\|_{C^1}\|N\|^2_{W^{1,2}} \qquad \text{for }\phi\in C^\infty_c(B^n_1(0);\R^n);$$
	where $o(1)$ denotes a term which $\to 0$ as $\Etilt_V\to 0$.
\end{remark}

\subsection{Center manifold blow-up and conclusion}\label{sec:cm-blow-up}

We are now in a position to perform our blow-up/linearisation procedure relative to the center manifold $\mathcal{M}$ from Theorem \ref{thm:cm}. Again, we have fixed constants $M_0 = M_0(\mathcal{V})$, $N_0 = N_0(\mathcal{V},M_0)$, $C_e = C_e(\mathcal{V},M_0,N_0)$, $C_h = C_h(\mathcal{V},M_0,N_0,C_e)$, and $\eps_2 = \eps_2(\mathcal{V},M_0,N_0,C_e,C_h)$ such that all the previous results of the section hold. In particular, $\eps_2 = \eps_2(\mathcal{V})$ only depends on $\mathcal{V}$.

Recall that $V\in\mathcal{V}$ is such that $0\in\mathcal{B}_V^{\geq 2}$ is a density $Q$ branch point with planar frequency $\geq 2$ and whose (unique) tangent cone is $Q|P_0|$; this is of course without loss of generality, by translating and rotating $V$. Moreover, by rescaling $V$ we may also assume that at scale $6\sqrt{n}$ the conclusions of the previous sections hold. Let $\mathcal{M}$ be the corresponding center manifold given by Theorem \ref{thm:cm}, with $N$ the associated normal map representing $V$ over $\mathcal{M}$, given by Theorem \ref{thm:cm-14}.

Now, take any sequence $r_j\downarrow 0$, and let $s_j\in (\frac{3}{2}r_j,3r_j)$ be the radius from Corollary \ref{cor:cm-24} corresponding to $r_j$. Consider the rescaled sequences
$$V_j:= (\eta_{0,s_j})_\#V,\qquad \mathcal{M}_j:= \eta_{0,s_j}(\mathcal{M}),$$
and $N_j:\mathcal{M}_j\to \A_Q(\R^{n+k})$ given by $N_j(p):= s_j^{-1}N(s_jp)$. Clearly from the uniqueness of the tangent cone to $V$ at $0$ we have $V_j\weakly Q|P_0|$, and moreover from the bounds in Theorem \ref{thm:cm} (cf.~Lemma \ref{lemma:cm-12}) we know that $\mathcal{M}_j\to P_0$ in $C^{3,1/2}$. Now, we define a sequence of \emph{blow-up maps} $N^b_j:B^n_1(0)\to\A_Q(\R^{n+k})$ by
$$N^b_j(x):=\|N_j\|^{-1}_{L^2(\mathcal{B}_1(0))}\cdot N_j(\exp_j(x))$$
where $\exp_j: B^n_1(0)\to \mathcal{M}_j$ is the exponential map at $0\in \mathcal{M}_j$ (here we have identified the domain $B^n_1(0)$ with the unit ball in $T_0\mathcal{M}_j\cong \R^n$). We also know from the estimates in Theorem \ref{thm:cm} that $\exp_j$ converges in $C^{2,1/2}$ to the identity map $B^n_1(0)\to B^n_1(0)$. 

From the center manifold reverse Poincaré inequality (Corollary \ref{cor:cm-24}) we know that
$$\|N_j^b\|_{W^{1,2}(B_1)}\leq C$$
i.e.~$N_j^b$ is uniformly bounded in $W^{1,2}$. In particular, up to passing to a subsequence, we know that $N^b_j$ converge to a $W^{1,2}$ function $N^b:B^n_1(0)\to \A_Q(\R^k)$, where the convergence is strongly in $L^2_{\text{loc}}(B^n_1(0))$ and weakly in $W^{1,2}_{\text{loc}}(B^n_1(0))$\footnote{By this we mean that if $\boldsymbol{\xi}:\A_Q(\R^k)\to \R^L$ is Almgren's bi-Lipschitz embedding \cite{Alm00} (here, $L = L(n,k,Q)\in\Z_{\geq 1}$) then $\boldsymbol{\xi}\circ N^b_j\to \boldsymbol{\xi}\circ N^b$ weakly in $W^{1,2}_{\text{loc}}(B^n_1(0);\R^L)$, where $N^b$ is the strong $L^2_{\text{loc}}(B^n_1(0);\A_Q(\R^k))$-limit of $N^b_j$.}. Notice that the limit $N^b$ is valued a.e.~in $\A_Q(\R^k)$, i.e.~taking values in $P_0^\perp$ rather than just $\R^{n+k}$. This is readily seen from Theorem \ref{thm:cm} and Lemma \ref{lemma:cm-12}, since $T_0\mathcal{M} = P_0$ and thus $\mathcal{M}_j\to P_0$ in $C^3$ (as a graph) in $B^{n+k}_1(0)$, meaning that the normal bundle to $\mathcal{M}_j$ converges to uniformly to $\{0\}^n\times\R^k$ in $B^{n+k}_1(0)$.

\begin{remark}\label{remark:cm-N-lower-bound}
In particular, from the approximate monotonicity of the frequency function over $\mathcal{M}$ as shown in Theorem \ref{thm:cm-22}, we get $\|N^b\|_{L^2(B^n_\rho(0))}\geq C(\mathcal{V},V,\rho)>0$ for each $\rho\in (0,1)$ (this is just blowing-up the equivalent bounds one gets from having a monotone frequency function equivalent to those seen in Corollary \ref{cor:decay-estimates} for planar frequency).
\end{remark}

We now give two methods of how to conclude Theorem \ref{thm:main}, depending on whether one can upgrade the above convergence of $N^b_j$ to $N^b$ to strong convergence in $W^{1,2}_{\text{loc}}(B^n_1(0))$ or not. We note that strong convergence is known when $V$ is the varifold corresponding to an area minimising current, or an area minimising current mod $p$ (this follows from the arguments in \cite[Proof of Theorem 6.2]{DLS16b} and \cite[Proof of Theorem 28.2]{DLHMS20} respectively).

\begin{remark}
	We do not rule out the possibility that one can unconditionally prove strong convergence in $W^{1,2}_{\text{loc}}(B^n_1(0))$ in our setting. One might be able to do this using our $\eps$-regularity property and estimates coming from the first variation (adapted to $\mathcal{M}$) in an analogous manner to that seen in Appendix \ref{app:energy-convergence}.
\end{remark}

\subsubsection{When there is energy convergence to center manifold blow-up}\label{sec:when-energy-convergence-known}

Suppose we are in a situation where we can in fact show that $N^b_j\to N^b$ strongly in $W^{1,2}_{\text{loc}}(B_1^n(0))$. We then claim that the frequency function associated to $N^b$, namely
$$\mathcal{I}(\rho):=\frac{\mathcal{D}(\rho)}{\mathcal{H}(\rho)},$$
is monotone increasing in $\rho$, where
$$\mathcal{D}(\rho):=\rho^{2-n}\int\phi(r/\rho)|DN^b|^2 \qquad \text{and} \qquad \mathcal{H}(\rho):= \rho^{1-n}\int\phi^\prime(r/\rho)r^{-1}|N^b|^2;$$
here, $\phi$ is the Lipschitz cut-off function from Section \ref{sec:pff}. To see this, if we write $\mathbf{D}_j(\rho)$, $\mathbf{H}_j(\rho)$, $\mathbf{E}_j(\rho)$, $\boldsymbol{\Sigma}_j(\rho)$, and $\mathbf{G}_j(\rho)$ for the relevant frequency quantities of $N^b_j$ as in Section \ref{sec:cm-frequency}, we have (cf.~\eqref{E:cm-H-prime-3}, \eqref{E:cm-D-E-3}, \eqref{E:cm-D-prime-3})
\begin{equation}\label{E:H-prime-blow-up-1}
|\mathbf{H}^\prime_j(\rho) - 2\rho^{-1}\mathbf{E}_j(\rho)| \leq C\Etilt_{V_j}\mathbf{H}_j(\rho);
\end{equation}
\begin{equation}\label{E:D-alt-blow-up-1}
|\mathbf{D}_j(\rho)-\mathbf{E}_j(\rho)| \leq C\Etilt_{V_j}\mathbf{D}_j(\rho)^{1+\frac{2}{3(n+3)}} + C\Etilt_{V_j}^2\rho^2\boldsymbol{\Sigma}_j(\rho);
\end{equation}
\begin{equation}\label{E:D-prime-blow-up-1}
|\mathbf{D}_j^\prime(\rho)-2\mathbf{G}_j(\rho)|\leq C\Etilt_{V_j}\mathbf{D}_j(\rho) + C\Etilt_{V_j}\rho^{-1}\mathbf{D}_j(\rho)^{1+\frac{2}{3(n+3)}} + C\Etilt_{V_j}\mathbf{D}_j^{\frac{2}{3(n+3)}}\mathbf{D}_j^\prime(\rho)
\end{equation}
where here $C = C(\mathcal{V},V)$ (all the estimates on the center manifold scale appropriately to give bounds in terms of $\Etilt_{V_j}$). Thus, if we divide these quantities by $\|N_j\|_{L^2(\mathcal{B}_1(0))}^2$ and take $j\to\infty$, we get, using the strong $W^{1,2}_{\text{loc}}(B_1^n(0))$ convergence $N_j^b\to N^b$ and that $\Etilt_{V_j}\to 0$,
$$\mathcal{H}^\prime(\rho) = 2\rho^{-1}\mathcal{E}(\rho) \equiv -2\rho^{-n}\int\phi^\prime(r/\rho)N^b D_r N^b;$$
$$\mathcal{D}(\rho) = \mathcal{E}(\rho) \equiv -\rho^{1-n}\int\phi^\prime(r/\rho)N^bD_rN^b;$$
$$\mathcal{D}^\prime(\rho) = 2\mathcal{G}(\rho) \equiv -2\rho^{-n}\int r\phi^\prime(r/\rho)|D_rN^b|^2.$$
In particular, we see that $\mathcal{H}(\rho)>0$ for all $\rho>0$, else we would get $\phi^\prime(r/\rho)N^b\equiv Q\llbracket 0\rrbracket $, giving that $\mathcal{D}(\rho) = \mathcal{E}(\rho) = 0$, and so $N^b$ is constant on $B_\rho(0)$. But as $N^b\equiv Q\llbracket 0\rrbracket$ on $B_{\rho}(0)\setminus B_{\rho/2}(0)$, we therefore have that $N^b\equiv Q\llbracket 0\rrbracket$ on $B_\rho(0)$. But then this would contradict $\|N^b\|_{L^2(B_\rho(0))}\geq C(\mathcal{V},V,\rho)>0$ from Remark \ref{remark:cm-N-lower-bound} and the strong $L^2$ convergence.

From the above identities, we easily compute
\begin{equation*}
	\frac{\ext}{\ext\rho}\log\mathcal{I}(\rho) = \frac{\mathcal{D}^\prime(\rho)\mathcal{H}(\rho) - \mathcal{H}^\prime(\rho)\mathcal{D}(\rho)}{\mathcal{D}(\rho)\mathcal{H}(\rho)} =2\cdot\frac{\mathcal{G}(\rho)\mathcal{H}(\rho) - \rho^{-1}\mathcal{E}(\rho)^2}{\mathcal{D}(\rho)\mathcal{H}(\rho)}\geq 0
\end{equation*}
where in the last inequality we have used Cauchy--Schwarz. Thus, we get $\mathcal{I}(\sigma)\leq\mathcal{I}(\rho)$ for every $0<\sigma\leq\rho<1$.

Notice also that, even though the above computations are based at $0$, they also hold at any point $x$ which is a limit point in $B^n_1(0)$ of density $Q$ branch points (or flat singular points) $x_i$ of planar frequency $\geq 2$ in $V_{j_i}$ (as the corresponding computations over $\mathcal{M}$ also hold); this allows us to define the frequency function $\mathcal{I}_x(\rho)$ at any such point $x$. We then define $\mathcal{I}_x(0):=\lim_{\rho\downarrow 0}\mathcal{I}_x(\rho)$, which is well-defined at such $x$ by the above monotonicity. We can then also deduce from the above computation of monotonicity all the usual facts regarding $\mathcal{I}_x(\rho)$, such as:
\begin{enumerate}
	\item [(i)] if $\mathcal{I}_0(\rho)$ is constant on an interval $[a,b]$, then $N^b$ is homogeneous on $[a,b]$ of degree the constant value of $\mathcal{I}(\rho)$ on $[a,b]$;
	\item [(ii)] if $N^b$ is homogeneous of degree $\alpha$, then its \emph{spine} $S(N^b)$ (i.e. the set of points under which $N^b$ is translation invariant) is a subspace, which moreover is given by
	$$S(N^b) = \{x:\mathcal{I}_x(0) = \mathcal{I}_0(0)\}.$$
\end{enumerate}
Furthermore, because we are blowing-up a sequence of rescalings of a given $V$ about the fixed point $0$, it is easy to verify from the strong convergence in $W^{1,2}_{\text{loc}}(B^n_1(0))$ as well as the approximate almost monotonicity of $\mathbf{I}(\rho)$ (Theorem \ref{thm:cm-22}), in the same manner as that seen in Section \ref{sec:pff}, that
$$\mathcal{I}_0(\rho) = \mathbf{I}(0) \qquad \text{for every }\rho\in (0,1),$$
i.e.~the frequency value of $N^b$ at $0$ always agrees with the center manifold frequency of $V$ at $0$, and moreover $\mathcal{I}_0(\rho)$ is constant in $\rho$. Hence, $N^b$ must be a homogeneous function of degree $\mathbf{I}(0)\geq 2$ (from Corollary \ref{cor:cm-23}). In particular, its spine $S(N^b)$ is a subspace, and $\dim(S(N^b))\not\in\{n-1,n\}$ as $\mathcal{I}_0(0)\in [2,\infty)$, and so $\dim(S(N^b))\leq n-2$.

\begin{remark}\label{remark:cm-frequency-independence}
We have shown that, even if the blow-up depends on the subsequence we passed to, its degree of homogeneity does not. At present this all depends on the choice of center manifold: we can in fact show that both the degree of homogeneity, as well as the blow-up itself, are independent of the choice of center manifold. Indeed, let $\mathcal{M}$ be a choice of center manifold for $V$ (at $0$). Let $N$ denote the normal map of $V$ relative to $\mathcal{M}$ and $\mathbf{I}(0)$ the frequency of $V$ relative to $\mathcal{M}$ at $0$. Recalling Remark \ref{remark:cm-height-decay}, for every $\eps_*>0$ there exists $r_*>0$ such that for all $r\in (0,r_*)$,
$$cr^{2\mathbf{I}(0) + 2\eps_*} \leq \mathbf{H}(r) \leq Cr^{2\mathbf{I}(0)}$$
for some constants $c, C\in (0,\infty)$ independent of $\eps_*$ and $r$. Thus, after summing over suitable dyadic-like scales (cf.~the proof of Corollary \ref{cor:decay-estimates}):
\begin{equation}\label{E:cm-comparison-1}
cr^{n+2\mathbf{I}(0)+2\eps_*} \leq \int_{\mathcal{B}_r(0)}|N|^2 \leq Cr^{n+2\mathbf{I}(0)}
\end{equation}
for all $r\in (0,r_*)$. Now, suppose $\widetilde{\mathcal{M}}$ is a different choice of center manifold. Let $\widetilde{\mathbf{I}}(0)$ denote the frequency value of $V$ relative to $\widetilde{\mathcal{M}}$, and suppose for contradiction that $\mathbf{I}(0)<\widetilde{\mathbf{I}}(0)$. Since the Hausdorff distance of $\spt\|V\|$ to both $\mathcal{M}$ and $\widetilde{\mathcal{M}}$ is small, we can express $\widetilde{\mathcal{M}}$ as a normal graph of a single-valued function $\psi$ over $\mathcal{M}$. It follows from \eqref{E:cm-comparison-1} with $\widetilde{\mathcal{M}}$ in place of $\mathcal{M}$ as well as that $\G(N,Q\llbracket b\rrbracket)$ is minimised at $b = N_{a}$ that:
\begin{equation}\label{E:cm-comparison-2}
\int_{\mathcal{B}_r(0)}|N_f|^2 \leq \int_{\mathcal{B}_r(0)}\G(N,Q\llbracket\psi\rrbracket)^2 \leq Cr^{n+2\widetilde{\mathbf{I}}(0)}
\end{equation}
for all $r\in (0,r_*)$, where $N_f$ is the average-free part of the normal map of $N$, and $\mathcal{B}_r(0)$ denote geodesic balls in $\mathcal{M}$. Now let $r_j\to 0^+$ and $s_j\in (3r_j/2,3r_j)$ be the corresponding radii from Corollary \ref{cor:cm-24} for the center manifold $\mathcal{M}$. After passing to a subsequence, let $N^b$ be the blow-up of $(\eta_{0,s_j})_\#V$ relative to $\eta_{0,s_j}(\mathcal{M})$. Dividing both sides of \eqref{E:cm-comparison-2} with $r = s_j$ by $s_j^{-n-2}\|N\|^2_{L^2(\mathcal{B}_{s_j}(0))}$ and letting $j\to\infty$, we deduce that the average-free part of $N^b$ must vanish, contradicting the fact that $N^b$ is non-zero and average-free. Thus, we must have $\mathbf{I}(0)\geq\widetilde{\mathbf{I}}(0)$, and so by symmetry that $\mathbf{I}(0) = \widetilde{\mathbf{I}}(0)$. This shows that the frequency value is independent of the center manifold.

To see further that the blow-up itself is independent of the center manifold\footnote{Note that this is \emph{not} saying that the blow-up itself is unique, i.e.~independent of the choice of scales, but only that for a given choice of scales, the blow-up does not depend on the choice of center manifold.}, let $\widetilde{N}^b$ be the blow-up of $(\eta_{0,s_j})_\#V$ relative to $\eta_{0,s_j}(\widetilde{M})$. By similar reasoning to the above, $\widetilde{N}^b$ arises as the strong $L^2$ limit
$$\frac{N(\exp(s_j x))-\psi(\exp(s_j x))}{s_j^{-n/2}\|N-\psi\|_{L^2(\mathcal{B}_{s_j}(0))}}\to \widetilde{N}^b(x).$$
Taking the average-free part in this limit, and noting that both $N^b$ and $\widetilde{N}^b$ are non-zero and average-free, we see that $\gamma N^b = \widetilde{N}^b$, where $\gamma = \lim_{j\to\infty}\|N\|_{L^2(\mathcal{B}_{s_j}(0))}/\|N-\psi\|_{L^2(\mathcal{B}_{s_j}(0))}\in (0,\infty)$. But since $\|N^b\|_{L^2(B^n_1(0))} = \|\widetilde{N}^b\|_{L^2(B_1^n(0))}=1$ it follows that $\gamma=1$, and thus $\widetilde{N}^b = N^b$, as claimed.
\end{remark}

\begin{remark}\label{remark:cm+planar-frequency}
    The proof that the center manifold frequency value $\mathbf{I}(0)$ is independent of the choice of center manifold can also be used to verify Remark \ref{remark:cm-planar-lower-bounds-cm} that $\mathbf{I}(0)\geq\mathcal{N}(0)$. Indeed, given a center manifold $\mathcal{M}$, express the plane $P_0$ as a normal graph of a single-valued function $\psi$ over $\mathcal{M}$. Then the argument leading to \eqref{E:cm-comparison-2} now gives
    $$\int_{\mathcal{B}_r(0)}|N_f|^2 \leq \int_{\mathcal{B}_r(0)}\G(N,Q\llbracket\psi\rrbracket)^2 \leq Cr^{n+2\mathcal{N}(0)}$$
    using Corollary \ref{cor:decay-estimates}. Now if $\mathcal{N}(0)>\mathbf{I}(0)$ one performs the analogous blow-up as in Remark \ref{remark:cm-frequency-independence} to see that again $N^b$ must vanish, giving the same contradiction as before. Hence $\mathbf{I}(0)\geq\mathcal{N}(0)$. Notice that this argument cannot be used to show the other inequality, as all the corresponding argument would give is that if $\mathbf{I}(0)>\mathcal{N}(0)$ then the blow-up off the plane must have vanishing average-free part, which is not necessarily a contradiction as it is entirely possible that the blow-up off the plane would coincide with its average-part (with multiplicity $Q$). Notice that this can only happen when $\mathcal{N}(0)$ is an integer, and so if $\mathcal{N}(0)$ is not an integer then we do get a contradiction and thus need to have $\mathbf{I}(0) = \mathcal{N}(0)$. In fact, the argument in Remark \ref{remark:cm-frequency-independence} (showing that the blow-up itself is independent of the choice of center manifold) also shows that whenever the coarse blow-up has vanishing average, then the coarse blow-up off the plane \emph{agrees} with the blow-up off the center manifold. In particular, this happens at every point where the planar frequency is non-integer, showing that no extra information at the level of tangent maps is gained at such points from blowing-up relative to the center manifold compared to the tangent plane (cf.~Proposition \ref{prop:dimension-not-equal-2}).
\end{remark}

Finally, in an analogous manner to that seen in Corollary \ref{cor:usc-2} we have upper semi-continuity of frequency along the blow-up sequence, in the following sense: suppose $x\in \{0\}^k\times B^n_1(0)$ is such that there exist points $X_j\in \mathcal{B}_{V_j}^{\geq 2}$ with $X_j\to x$ (in $\R^{n+k}$), then
$$\mathcal{I}_x(0)\geq\limsup_{j\to\infty}\mathbf{I}_{X_j}(0).$$
Again, here we using Lemma \ref{lemma:cm-12} to know that $V_j$ and $\mathcal{M}_j$ touch at such $X_j$.

We are now ready to conclude the proof of Theorem \ref{thm:main} by showing that $\dim_{\H}(\mathcal{B}_V^{\geq 2}\cap\{\Theta_V=Q\})\leq n-2$. Notice first that, as $\mathcal{B}_{V}^{\geq 2}$ is a closed set, by a simple covering argument it suffices to show that at every point $X\in\mathcal{B}_V^{\geq 2}$ with $\Theta_V(X)=Q$, there is a radius $\rho_X>0$ such that $\dim_{\H}(\mathcal{B}_V^{\geq 2}\cap \{\Theta_V=Q\}\cap B_{\rho_X}(X))\leq n-2$.

Thus, fix a point $X\in\mathcal{B}_{V}^{\geq 2}$ with $\Theta_V(X) = Q$: by translating and rotating, we can without loss of generality assume that $X=0$ and the (unique) tangent cone to $V$ at $X$ is $Q|\{0\}\times\R^n|$. We may also rescale about $X=0$ to assume that, on $B^{n+k}_{6\sqrt{n}}(0)$, all the prior results in Section \ref{sec:cm} hold. Thus, from Theorem \ref{thm:cm} we may \emph{choose} a center manifold $\mathcal{M}$ which touches $V$ at all points in $\mathcal{B}_{V}^{\geq 2}\cap B^{n+k}_{1}(0)$; we fix such a choice $\mathcal{M}$ from now on. At every $Y\in \mathcal{B}_V^{\geq 2}\cap B^{n+k}_{1}(0)$, there is then an associated frequency value of $V$ relative to $\mathcal{M}$, which we denote by $\mathbf{I}^{\mathcal{M}}_V(Y)$.

The blow-up analysis of $V$ relative to $\mathcal{M}$ above then applies at \emph{every} such point $Y$. As such, we may stratify $\mathcal{B}_V^{\geq 2}\cap B^{n+k}_{1}(0)$ based on the maximal spine dimension of the blow-ups we get in this manner. So, set for $\ell\in \{0,1,\dotsc,n-2\}$,
$$\mathcal{S}^{\mathcal{M}}_\ell:=\{Y\in \mathcal{B}_V^{\geq 2}\cap B^{n+k}_1(0):\dim(S(N^b))\leq \ell\text{ for every blow-up $N^b$ of $V$ relative to $\mathcal{M}$ at $Y$}\}.$$
Here, we have included the superscript $\mathcal{M}$ to emphasise that this stratification could depend on the choice of $\mathcal{M}$. Clearly we have
$$\mathcal{S}_0^{\mathcal{M}}\subset \mathcal{S}^{\mathcal{M}}_1\subset\cdots\subset\mathcal{S}^{\mathcal{M}}_{n-2} = \mathcal{B}_V^{\geq 2}\cap B^{n+k}_1(0)$$
where the last equality follows from the discussion above.

It therefore suffices to prove that $\dim_\H(\mathcal{S}^{\mathcal{M}}_j)\leq j$ for each $j=0,1,\dotsc,n-2$. But because we have upper semi-continuity of the frequency along the blow-up sequence, this forces points of high frequency compared to the base point to accumulate around the spine of the blow-up, which has dimension at most $j$ if the base point is in $\S_j$. Thus, we may conclude $\dim_\H(\S^{\mathcal{M}}_j)\leq j$ in an analogous manner to that seen in Section \ref{sec:pff} using the weak linear approximation property of Almgren. This therefore completes the proof of Theorem \ref{thm:main} in the case where we have local energy convergence during the center manifold blow-up procedure.

\subsubsection{When energy convergence to the center manifold blow-up is not readily available}

In this situation, we instead use the theory of multi-valued gradient Young measures introduced by Hirsch--Spolaor \cite{HS24}. We summarise the theory, following \cite{HS24}, in Appendix \ref{app:ym}. Here, we shall use the results in Appendix \ref{app:ym} to prove Theorem \ref{thm:main}.

As in Section \ref{sec:when-energy-convergence-known}, we still have the identities \eqref{E:H-prime-blow-up-1}, \eqref{E:D-alt-blow-up-1}, and \eqref{E:D-prime-blow-up-1}. Thus, consider $\mathcal{E}_{j} := \mathcal{E}_{N_j^b}$, the elementary Young measure associated to $N_j^b$. Since $(N_j^b)_j$ is uniformly bounded in $W^{1,2}(B_1)$, the $(\mathcal{E}_j)_j$ are uniformly bounded in $\mathcal{Y}^Q$, and thus by Proposition \ref{prop:Y-compactness} we can find $\mathcal{F}\in \mathcal{Y}^Q$ and a subsequence such that $\mathcal{E}_{j}\weakly\mathcal{F}$ as $Q$-Young measures; in particular, $\mathcal{F}\in \grad\,\mathcal{Y}^Q$. Dividing again \eqref{E:H-prime-blow-up-1}, \eqref{E:D-alt-blow-up-1}, and \eqref{E:D-prime-blow-up-1} by $\|N_j\|_{L^2(\mathcal{B}_1(0))}^2$, Proposition \ref{prop:Y-compactness} combined with Remark \ref{remark:cm-general-test} tells us that $\mathcal{F}$ is stationary. Remark \ref{remark:cm-N-lower-bound} again gives that the associated map $f\in W^{1,2}(B_1(0);\A_Q(\R^k))$ to $\mathcal{F}$ given by Proposition \ref{prop:grad-Young} obeys $f\not\equiv 0$ on $B_1(0)$ (in fact, the proof of Proposition \ref{prop:grad-Young} gives that $f = N^b$).

Just as in Section \ref{sec:when-energy-convergence-known}, since the blow-up was constructed by a sequence of rescalings of a given $V$ about a fixed point, it is easy to verify that
$$\mathcal{I}(0,\rho) := \mathbf{I}(0) \qquad \text{for every }\rho\in (0,1),$$
and thus by Proposition \ref{prop:Y-constant-frequency}, we see that $\mathcal{F}$ must be homogeneous. Proposition \ref{prop:Y-spine} then guarantees that the points of maximal frequency in $\mathcal{F}$ belong to a subspace, which as $\mathcal{I}(0,\rho) = \mathbf{I}(0)\geq 2$, must be a subspace of dimension $\in \{0,1,\dotsc,n-2\}$ by Proposition \ref{prop:Y-spine-dimensions}. But now one can conclude in an analogous manner to that in Section \ref{sec:when-energy-convergence-known}, namely to stratify $\mathcal{B}_V^{\geq 2}\cap B^{n+k}_1(0)$ by the maximal spine dimension of the blow-up one achieves through this procedure, and using upper semi-continuity of the frequency along the blow-up sequence to show that nearby points of good frequency relative to the chosen base point must accumulate along the spine of the blow-up, from which standard measure theory again allows us to control the size. This therefore completes the proof of Theorem \ref{thm:main} in this case also, thereby also completing its proof.
\qed

\appendix

\section{Energy Convergence for Coarse Blow-Ups}\label{app:energy-convergence}

In this appendix we verify that Definition \ref{defn:eps-reg} guarantees energy convergence to coarse blow-ups, and thus strong convergence in $W^{1,2}_{\text{loc}}(B^n_1(0))$. In fact, we prove the following:
\begin{theorem}\label{thm:energy-convergence}
    Let $(V_j)_{j=1}^\infty$ be a sequence of stationary integral $n$-varifolds and $E_j\downarrow 0$ be such that for some Lipschitz $Q$-valued functions $u_j:B^n_1(0)\to \A_Q(\R^k)$ we have
    $$V_j = \mathbf{v}(u_j) \qquad \text{and} \qquad \sup|u_j| + \Lip(u_j)\leq E_j.$$
    Let $v:B^n_1(0)\to \A_Q(\R^k)$ be a Lipschitz $Q$-valued function such that $E_j^{-1}u_j\to v$ locally uniformly in $B^n_1(0)$. Then, for each $\sigma\in (0,1)$,
    $$\lim_{j\to\infty} \int_{B^n_\sigma(0)}\left|E_j^{-2}|Du_j|^2 - |Dv|^2\right| = 0.$$
    In particular, $E_j^{-1}u_j\to v$ strongly in $W^{1,2}_{\text{loc}}(B^n_{1}(0))$.
\end{theorem}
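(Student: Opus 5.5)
The plan is to identify the weak limit of the energy densities and show that it has no defect. Since $\Lip(u_j)\leq E_j$, the measures $\mu_j:=E_j^{-2}|Du_j|^2\,\ext x$ have densities bounded by a constant. After passing to a subsequence, $\mu_j\weakly g\,\ext x$ with $g\in L^\infty$, and by lower semicontinuity of the Dirichlet energy we have $g\geq |Dv|^2$ a.e. The whole statement, including the strong $W^{1,2}_{\text{loc}}$ convergence (applied, via Almgren's embedding $\boldsymbol{\xi}$, to $\boldsymbol{\xi}\circ(E_j^{-1}u_j)$), therefore reduces to proving $g=|Dv|^2$ a.e. The point of the Lipschitz bound is that no concentration can occur; the only possible failure of energy convergence is oscillation.

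\textbf{Step 1: outer variation identity.} For each $\kappa$, test the first variation of $V_j$ with $\zeta(x)\,x^\kappa e_\kappa$, extended vertically as in the proof that $\mathcal{N}(\rho)$ is well defined. Using the area formula and the expansions \eqref{E:formula-1}--\eqref{E:formula-4}, this gives
$$\int\zeta|Du_j|^2=-\tfrac12\int D|u_j|^2\cdot D\zeta+O\Big(\int|\zeta||Du_j|^4+|u_j||Du_j|^3|D\zeta|\Big)=\tfrac12\int|u_j|^2\Delta\zeta+O(E_j^4).$$
Divide by $E_j^2$ and use $|u_j|^2/E_j^2\to|v|^2$ locally uniformly. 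This yields $\int\zeta g=\tfrac12\int|v|^2\Delta\zeta$, so $g=\tfrac12\Delta|v|^2$ in the sense of distributions. The same computation applied to the average-free parts, where the average is almost harmonic with a cubic error as in Lemma~\ref{lemma:cm-6}, gives the analogous identity $g_f=\tfrac12\Delta|v_f|^2$ for the weak limit $g_f$ of $E_j^{-2}|D(u_j)_f|^2$. Similarly, $(u_j)_a/E_j\to v_a$ strongly in $W^{1,2}_{\text{loc}}$, since the $(u_j)_a/E_j$ are $O(E_j^2)$-almost harmonic.

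\textbf{Step 2: induction on $Q$ off the coincidence set.} The case $Q=1$ follows from the fact that a single-valued minimal graph with small Lipschitz constant satisfies $C^{1,\alpha}$ estimates (Allard's theorem plus elliptic estimates), which give $C^1$ convergence. For general $Q$, let $K:=\{v=Q\llbracket v_a\rrbracket\}$. Near any point of $B_1\setminus K$, uniform convergence forces $u_j$ to split into two Lipschitz graphs of fewer values, each still stationary, with a corresponding splitting of $v$. The inductive hypothesis then gives $g=|Dv|^2$ a.e. on $B_1\setminus K$.

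\textbf{Step 3: the coincidence set (main obstacle).} By Step 2 and the strong convergence of the averages, the defect $g-|Dv|^2=g_f-|Dv_f|^2$ is a nonnegative $L^\infty$ function supported on $K$, where $w:=|v_f|^2$ vanishes. Since $v_f$ is Lipschitz, $|Dv_f|^2\in L^\infty$. By Step 1, $\Delta w=2g_f\in L^\infty_{\text{loc}}$, so $w\in W^{2,p}_{\text{loc}}$ for all $p<\infty$ by Calder\'on--Zygmund.

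On $K=\{w=0\}$ we apply the standard Sobolev fact twice: $Dw=0$ a.e. on $\{w=0\}$, and then $D^2w=0$ a.e. on $\{Dw=0\}$. Hence $\Delta w=0$ a.e. on $K$, i.e.\ $g_f=0$ a.e. on $K$. Since $|Dv_f|^2\leq g_f$, we get $g_f=|Dv_f|^2=0$ a.e. on $K$, and so the defect vanishes.

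The delicate points I expect here are:
\begin{itemize}
\item making the error bounds in Step 1 genuinely $O(E_j^4)$ in the multi-valued, higher-codimension setting;
\item justifying the splitting in Step 2 uniformly in $j$, in particular that each piece remains a stationary Lipschitz graph;
\item the regularity bootstrap $\Delta w\in L^\infty\Rightarrow w\in W^{2,p}$ for the possibly irregular set $K$.
\end{itemize}
If the last of these fails, I would instead invoke the energy non-concentration estimate \cite[$(\mathfrak{B}6)$]{BKMW25} to rule out the defect on $K$. Finally, $g=|Dv|^2$ together with weak convergence of the densities gives the stated $L^1$ convergence of the energy densities on each $B_\sigma(0)$.
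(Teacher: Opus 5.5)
Your proof is correct, and it differs from the paper's at the only nontrivial point: the coincidence set $K=\{v=Q\llbracket v_a\rrbracket\}$. Both arguments induct on $Q$ by splitting $u_j$ near points where $v$ separates. The paper, however, never identifies the limiting energy density. It imports the quantitative non-concentration estimate \cite[$(\mathfrak{B}6)$]{BKMW25}, which bounds $\sum_\alpha\int_{B_{1/2}\cap\{|v^\alpha-v_a|<\delta\}}|E_j^{-1}Du_j-Dv_a|^2$ by $C\delta$ for all large $j$. It then applies the induction on the compact set $\overline{B}_{1/2}\cap\{|v^\alpha-v_a|\geq\delta\}$ and lets $\delta\downarrow 0$ to obtain the $\limsup$ inequality. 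You instead use the Lipschitz bound to make the defect an $L^\infty$ density, so it suffices to show it vanishes a.e.\ on $K$ itself. The squash identity $g_f=\tfrac12\Delta|v_f|^2$, combined with Calder\'on--Zygmund and the Stampacchia lemma (applied first to $w$, then to each $\partial_i w$), does exactly this, and no neighbourhood of $K$ is involved. Your route buys self-containment: only the first variation of $V_j$ is used, with no appeal to \cite{BKMW25}. It also gives the by-product $\Delta|v_f|^2=2|Dv_f|^2$ for the limit. The paper's route buys brevity given \cite{BKMW25}, together with control of the energy in a $\delta$-neighbourhood of $K$ that is uniform in $j$, rather than only the vanishing of the limit defect on $K$. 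Finally, the third point you flag is not actually delicate. The $W^{2,p}$ estimate is interior on balls, and the Stampacchia lemma holds on arbitrary measurable level sets, so no regularity of $K$ is needed and the fallback to $(\mathfrak{B}6)$ is unnecessary.
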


\textbf{Note:} We do not require (for instance) that $E_j = \hat{E}_{V_j}$.

\begin{proof}
    We shall proceed by induction on $Q$. By a standard covering argument, we may assume that $\sigma = 1/2$. In the base case $Q=1$, by standard elliptic estimates we have $\|u_j\|_{C^3(B^n_{3/4}(0))}\leq C(n,k)E_j$, and thus we get the stronger conclusion that $E_j^{-1}Du_j\to Dv$ uniformly in $B^n_{1/2}(0)$.

    Let us now suppose that the theorem holds true whenever we replace $Q$ with $Q^\prime\in \{1,2,\dotsc,Q-1\}$. Let $V_j$ and $u_j$ be as in the statement of the lemma. By the Rellich compactness theorem, $E_j^{-1}u_j^{\kappa,\alpha}\to v^{\kappa,\alpha}$ weakly in $W^{1,2}_{\text{loc}}(B^n_1(0))$, implying that
    $$\|Dv\|_{L^2(B^n_{1/2}(0))} \leq \liminf_{j\to\infty}E_j^{-1}\|Du_j\|_{L^2(B^n_{1/2}(0))}.$$
    Hence it suffices to prove
    $$\limsup_{j\to\infty}E_j^{-1}\|Du_j\|_{L^2(B^n_{1/2}(0))} \leq \|Dv\|_{L^2(B^n_{1/2}(0))}.$$
    Now, arguing as in \cite[$(\mathfrak{B}6)$]{BKMW25}, for any $\delta>0$ we get that for all $j$ sufficiently large,
    \begin{equation}\label{E:app-2}
        \sum^Q_{\alpha=1}\int_{B^n_{1/2}(0)\cap \{|v^\alpha-v_a|<\delta\}}|E_j^{-1}Du_j-Dv_a|^2 \leq C\delta
    \end{equation}
    where $C = C(n,k,Q)\in (0,\infty)$ is a constant.

    Now let $\xi\in \overline{B}^n_{1/2}(0)\cap \{|v^\alpha-v_a|\geq\delta\}$. Then
    $$v(\xi) = \sum^N_{\alpha=1}Q_\alpha \llbracket v^\alpha(\xi)\rrbracket$$
    where $N\geq 2$, $Q_1,\dotsc,Q_N\geq 1$ are integers such that $\sum^N_{\alpha=1}Q_\alpha=Q$, and $v^1(\xi),\dotsc,v^N(\xi)\in \R^k$ are distinct. Let
    $$s:=\min\{|v^\alpha(\xi)-v^\beta(\xi)|:1\leq\alpha<\beta\leq N\}.$$
    Recall that $\Lip\left(v|_{B^n_{1/2}(\xi)}\right) \leq 1$. Let $\rho:= \min\left\{\frac{1}{4},\frac{s}{6\sqrt{Q}}\right\}$. Then
    $$v(x) = \sum^N_{\alpha=1}\tilde{v}^\alpha(x)$$
    on $B^n_\rho(\xi)$ where $\tilde{v}^\alpha:B^n_\rho(\xi)\to \A_{Q_\alpha}(\R^k)$ are Lipschitz $Q_\alpha$-valued functions such that
    $$\G(\tilde{v}^\alpha(x),Q_\alpha\llbracket v^\alpha(\xi)\rrbracket)<s/6.$$
    Since $E^{-1}_{j}u_j\to v$ uniformly on $B^n_{7/8}(0)$, for all sufficiently large $j$ we can write
    $$u_j(x) = \sum^N_{\alpha=1}\tilde{u}^\alpha_j(x)$$
    on $B_\rho^n(\xi)$, where $\tilde{u}^\alpha_j:B^n_\rho(\xi)\to \A_{Q_\alpha}(\R^k)$ is a Lipschitz $Q_\alpha$-valued function such that
    $$\G(E_{j}^{-1}\tilde{u}^\alpha_j(x), Q_\alpha\llbracket v^\alpha(\xi)\rrbracket)<s/3.$$
    It follows that $\sup|\tilde{u}_j^\alpha| + \Lip(\tilde{u}_j^\alpha)\leq E_{j}$. By the Arzelà--Ascoli theorem, $E_j^{-1}\tilde{u}_j^\alpha\to \tilde{v}^\alpha$ uniformly on $B^n_\rho(\xi)$. Since $Q_\alpha<Q$, by the induction hypothesis we have
    $$\lim_{j\to\infty}\int_{B^n_\rho(\xi)} \left|E_j^{-2}|D\tilde{u}_j^\alpha|^2 - |D\tilde{v}^\alpha|^2\right| = 0$$
    for each $\alpha\in \{1,2,\dotsc,N\}$, and therefore
    $$\lim_{j\to\infty}\int_{B_\rho^n(\xi)} \left|E_j^{-2}|Du_j|^2-|Dv|^2\right| = 0.$$
    By a standard covering argument based on the compactness of $\overline{B}^n_{1/2}(0)\cap \{|v^\alpha-v_a|\geq\delta\}$, we therefore get
    \begin{equation}\label{E:app-3}
    \lim_{j\to\infty}\sum^Q_{\alpha=1}\int_{B^n_{1/2}(0)\cap \{|v^\alpha-v_a|\geq\delta\}}\left|E_j^{-2}|Du_j|^2 - |Dv|^2\right| = 0.
    \end{equation}
    Since $E_j^{-1}u^{\kappa,\alpha}_j\to v^{\kappa,\alpha}$ weakly in $W^{1,2}_{\text{loc}}(B^n_1(0))$ and \eqref{E:app-3} implies that $E_j^{-2}|Du_j^{\kappa,\alpha}|^2 \to |Dv^{\kappa,\alpha}|^2$ strongly in $L^1(B^n_{1/2}(0)\cap \{|v^\alpha-v_a|\geq\delta\})$, we thus get
    \begin{equation}\label{E:app-4}
    \lim_{j\to\infty}\sum^Q_{\alpha=1}\int_{B^n_{1/2}(0)\cap \{|v^\alpha-v_a|\geq\delta\}}|E_j^{-1}Du_j^{\kappa,\alpha}-Dv_a^\kappa|^2 = \sum^Q_{\alpha=1}\int_{B^n_{1/2}(0)\cap \{|v^\alpha-v_a|\geq\delta\}}|Dv^{\kappa,\alpha}-Dv_a^\kappa|^2
    \end{equation}
    for each $\kappa\in \{1,\dotsc,k\}$. Therefore, for all sufficiently large $j$ we get by \eqref{E:app-2} and \eqref{E:app-4}:
    \begin{align*}
        \sum^Q_{\alpha=1}\int_{B^n_{1/2}(0)}|E_j^{-1}Du_j^{\kappa,\alpha}-Dv_a^{\kappa}|^2 & \leq \sum^Q_{\alpha=1}\int_{B^n_{1/2}(0)\cap\{|v^\alpha-v_a|<\delta\}}|E_j^{-1}Du_j^{\kappa,\alpha}-Dv_a^\kappa|^2\\
        & \hspace{3em} + \sum^Q_{\alpha=1}\int_{B^n_{1/2}(0)\cap \{|v^\alpha-v_a|\geq\delta\}}|E_j^{-1}Du_j^{\kappa,\alpha}-Dv^\kappa_a|^2\\
        & \leq C\delta + \sum^Q_{\alpha=1}\int_{B^n_{1/2}(0)\cap\{|v^\alpha-v_a|\geq\delta\}}|Dv^{\kappa,\alpha}-Dv_a^\kappa|^2\\
        & \leq C\delta + \sum^Q_{\alpha=1}\int_{B^n_{1/2}(0)}|Dv^{\kappa,\alpha}-Dv_a^\kappa|^2
    \end{align*}
    for each $\kappa\in\{1,2,\dotsc,k\}$, where $C = C(n,k,Q)\in (0,\infty)$ is a constant. Hence, taking $\limsup_{j\to\infty}$ of both sides and letting $\delta\downarrow 0$, we get
    $$\limsup_{j\to\infty}\sum^Q_{\alpha=1}\int_{B^n_{1/2}(0)}|E_j^{-1}Du_j^{\kappa,\alpha} - Dv^\kappa_a|^2 \leq \sum^Q_{\alpha=1}\int_{B^n_{1/2}(0)}|Dv^{\kappa,\alpha}-Dv^\kappa_a|^2$$
    for each $\kappa\in \{1,2,\dotsc,k\}$. Since $E_j^{-1}u^{\kappa,\alpha}_j\to v^{\kappa,\alpha}$ weakly in $W^{1,2}(B^n_1(0))$, this then gives that (summing over $\kappa\in \{1,2,\dotsc,k\}$ also)
    $$\limsup_{j\to\infty} E_{j}^{-2}\sum^k_{\kappa=1}\sum^Q_{\alpha=1}\int_{B^n_{1/2}(0)}|Du_j^{\kappa,\alpha}|^2 \leq \sum^k_{\kappa=1}\sum^Q_{\alpha=1}\int_{B^n_{1/2}(0)}|Dv^{\kappa,\alpha}|^2.$$
    This gives us the desired upper semi-continuity, which completes the proof.
\end{proof}

\section{Multi-Valued Gradient Young Measures}\label{app:ym}

Here we provide an exposition of the multi-valued gradient Young measures introduced by Hirsch--Spolaor \cite{HS24}. We also provide further structural information regarding homogeneous multi-valued gradient Young measures. Much of this appendix is included within \cite{HS24} although there are some differences as, for instance, we avoid the need to discuss slicings of homogeneous gradient Young measures.

The classical theory of Young measures provides a way of associating a limit to a bounded sequence of functions $(f_j)_j\subset L^\infty(B^n_1(0);\R^k)$ where the limit of $\phi(f_j)$ is determined for every continuous function $\phi:\R^k\to \R$. For instance, if the $f_j$ oscillate a lot, the limiting Young measure represents the asymptotic statistical distribution of the values of the $f_j$ near each point.

For us, we are interested in limits of sequences $(f_j)_j$ of $W^{1,2}(B^n_1(0);\A_Q(\R^k))$ functions as (Young) measures which keep track of the action of \emph{certain} functions of the form $\phi(x,f_j,Df_j,Df_j\otimes Df_j)$, in particular those determining the inner and outer variations needed to prove the monotonicity of a frequency function (keeping track of every such continuous $\phi$ is not possible now as we only have $W^{1,2}$ control on the sequence). The aim is that whilst the usual weak $W^{1,2}$ limit may or may not have a monotone frequency function, the associated Young measure (which is related to the weak $W^{1,2}$ limit but contains more information) will have a notion of frequency which is monotone that we can make use of.

To this end, we introduce the vector space
$$\V:= \R^k\times \R^{k\times n}\times \R^{(k\times n)^2} \equiv \V_1\times \V_2\times \V_3.$$
Write $(y,p,M)\in \V$ for the corresponding coordinates on $\V$ adapted to $\V_1,\V_2,\V_3$. One should think of $y,p,M$ as playing the roles of $f(x),Df(x),Df(x)\otimes Df(x)$, respectively, for a function $f:B^n_1(0)\to \R^k$.

Let us begin by introducing a class of test functions which $W^{1,2}(B^n_1(0);\A_Q(\R^k))$ naturally acts on. Write $\widetilde{\CC}$ for the set of continuous functions $\phi:B^n_1(0)\times\V\to \R$ which obey:
\begin{itemize}
	\item $\spt(\phi)\subset K\times\V$ for some compact set $K\subset B^n_1(0)$;
	\item $\|\phi\|_*<\infty$, where
	$$\|\phi\|_* := \sup_{x,y,p,M}\frac{|\phi(x,y,p,M)|}{1+|y|^{2^*}+|p|^2+|M|}.$$
\end{itemize}
Here, for $n>2$ we take $2^*:= \frac{2n}{n-2}$ to be the Sobolev conjugate of $2$, and if $n=2$ we can take $2^*$ to be any (fixed) number we wish in $(2,\infty)$, e.g.~$2^* = 3$. The important properties that we will need $2^*$ to satisfy is that $2^*>2$ and we have the compact Sobolev embedding $W^{1,2}(B^n_1(0);\A_Q(\R^k))\hookrightarrow L^{2^*}(B^n_1(0))$ (which in our context follows from, e.g.~\cite[Proposition 2.11]{DLS11}).

The reader should note that $(\widetilde{\CC},\|\cdot\|_*)$ is a \emph{non-separable} normed vector space (due to the non-compactness of the support in the $\V$-variables). Nonetheless, a function $f\in W^{1,2}(B^n_1(0);\A_Q(\R^k))$ naturally gives rise to a linear map $\CE_f: \widetilde{\CC}\to \R$, called the \emph{elementary Young measure associated to $f$}, by
$$\CE_f(\phi):= \int_{B^n_1(0)}\sum^Q_{\alpha=1}\phi(x,f^\alpha(x),Df^\alpha(x),Df^\alpha(x)\otimes Df^\alpha(x))\, \ext x.$$
This is well-defined by definition of $\widetilde{\CC}$ and Sobolev embedding. In general, for a linear map $\CF:\widetilde{\CC}\to \R$ we define its corresponding operator norm by
$$\|\CF\|_* := \sup\{\CF(\phi):\phi\in\widetilde{\CC} \text{ such that }\|\phi\|_*\leq 1\}.$$
Notice then that for $f\in W^{1,2}(B^n_1(0);\A_Q(\R^k))$ we have (using the Sobolev embedding $W^{1,2}\hookrightarrow L^{2^*}$)
$$\|f\|^2_{W^{1,2}(B^n_1(0))}\leq \|\CE_f\|_* \leq Q\w_n + \|f\|^{2^*}_{W^{1,2}(B_1^n(0))}+2\|f\|^2_{W^{1,2}(B^n_1(0))}.$$
Thus, $(f_j)_j\subset W^{1,2}(B^n_1(0);\A_Q(\R^k))$ is a uniformly bounded sequence if and only if $(\CE_{f_j})_j$ is uniformly bounded. However, since $(\widetilde{\CC},\|\cdot\|_*)$ is non-separable, we cannot apply the sequential Banach--Alaoglu theorem in this space to always find a subsequential limit of a uniformly bounded sequence. In order to resolve this issue, we will restrict the functions in $\widetilde{\CC}$ we act on so that the vector space of functions we act on \emph{is} separable. Indeed, ultimately we are most interested in the limits coming from the inner and outer variations, which correspond to the choices
$$\phi_1(x,y,p,M) := p_i^{\kappa} y^{\kappa}\del_i\phi + M_{ii}^{\kappa\kappa}\phi \qquad \text{for }\phi\in C^\infty_c(B^n_1(0)),$$
$$\phi_2(x,y,p,M) := (2M^{\kappa\kappa}_{ij} - \delta_{ij}M^{\kappa\kappa}_{\ell\ell})\del_i\phi^j \qquad \text{for }\phi\in C^\infty_c(B^n_1(0);\R^n).$$
Here, $i,j,\ell\in \{1,\dotsc,n\}$ and $\kappa\in \{1,\dotsc,k\}$. Note that both $\phi_1,\phi_2\in \widetilde{\CC}$, but even more so they have the property that, as $(y,p,M)\to \infty$, the limiting behaviour of these functions (up to lower order terms determined by the scaling in the norm $\|\cdot\|_*$) is determined entirely by functions of $(x,M)$ in a uniform manner. More precisely, if we set
$$\phi_1^\infty(x,M):= \phi_1(x,0,0,M) \qquad \text{and} \qquad \phi_2^\infty(x,M) := \phi_2(x,0,0,M),$$
then we have
$$\lim_{(y,p,M)\to\infty}\sup_{x\in B^n_1(0)}\frac{|\phi_i(x,y,p,M)-\phi_i^\infty(x,M)|}{1+|y|^{2^*}+|p|^2+|M|} = 0 \qquad \text{for }i=1,2.$$
The reader should note that this is why we require the exponent of $2^*>2$ in the power of $|y|$ in the definition of $\|\cdot\|_*$, to ensure this limiting behaviour is true for the inner variation $\phi_1$.

This offers a subclass of functions in $\widetilde{\CC}$ which \emph{is} separable (as we have essentially compactified the behaviour in the $\V$-variables) and contains the functions we care about. We therefore define:
\begin{defn}
	Write $\CC(B^n_1(0)\times\V)$ for the set of functions $\phi\in \widetilde{\CC}$ for which there is a continuous function $\phi^\infty = \phi^\infty(x,M):B^n_1(0)\times \V_3\to \R$ which is $1$-homogeneous in the $M$-variable and obeys:
	$$\lim_{(y,p,M)\to\infty}\sup_{x\in B^n_1(0)}\frac{|\phi(x,y,p,M)-\phi^\infty(x,M)|}{1+|y|^{2^*}+|p|^2+|M|} = 0.$$
	We call $\phi^\infty$ the \emph{recession function} of $\phi$.
\end{defn}
Since $\CC(B^n_1(0)\times\V)\subset \widetilde{\CC}$, the elementary Young measures still act on $\CC(B^n_1(0)\times\V)$. The added benefit is that $\CC(B^n_1(0)\times\V)$ \emph{is} a separable normed vector space, and so we can apply separable Banach--Alaoglu to pass to a weak limit for a bounded sequence of elementary Young measures. We will prove this compactness property directly, however one could also see it by applying a suitable ``spherical compactification map", which is the (linear, isometric) map $T:\widetilde{\CC}\to C^0(B^n_1(0)\times B^{\V}_1(0))$ given by:
$$T(\phi)(x,y,p,M):= R\phi\left(x,\,\frac{y}{R^{1/2^*}},\,\frac{p}{R^{1/2}},\,\frac{M}{R}\right)$$
where $R:= 1-|y|^{2^*}-|p|^2-|M|$ and $B^{\V}_1(0) := \{(y,p,M)\in \V: |y|^{2^*}+|p|^2+|M|<1\}$.  The maps $\phi \in \CC(B^n_1(0)\times\V)$ are those ones such that $T(\phi)$ has $\spt(T(\phi))\subset K\times B^{\V}_1(0)$ for some compact $K\subset B^n_1(0)$, and have a continuous extension to $\overline{B^n_1(0)\times B^{\V}_1(0)}$ such that on $B^n_1(0)\times \del B^{\V}_1(0)$ the extension is a function of only $x$ and $M$; the pull-back under $T$ of the function on the ``boundary'' $B^n_1(0)\times \del B_1^{\V}(0)$ is then the recession function. In the language of spherical compactification, the Young measure is then a measure on $\overline{B^n_1(0)\times B^{\V_3}_1(0)}$: the part of the measure restricted to $B^n_1(0)\times \del B^{\V_3}_1(0)$ is then the part of the measure which measures the energy loss.

To make this precise, let us write $C^0_{\text{rec}}(\V)$ for the set of functions $\phi\in C^0(\V)$ for which there is a continuous $1$-homogeneous function $\phi^\infty = \phi^\infty(M):\V_3\to \R$ such that
$$\lim_{(y,p,M)\to \infty}\frac{|\phi(y,p,M)-\phi^\infty(M)|}{1+|y|^{2^*}+|p|^2+|M|} = 0.$$
For such a function we slightly abuse notation and write
$$\|\phi\|_* := \sup_{y,p,M}\frac{|\phi(y,p,M)|}{1+|y|^{2^*}+|p|^2+|M|}.$$
A linear map $\CF:C^0_{\text{rec}}(\V)\to \R$ then has its \emph{mass} defined through its operator norm, i.e.
$$\mathbf{M}_*(\CF) \equiv \|\CF\|_{*} := \sup\{\CF(\phi): \phi\in C^0_{\text{rec}}(\V)\text{ obeys }\|\phi\|_*\leq 1\}.$$
We stress this is different to the `usual' mass, which would be defined by
$$\mathbf{M}(\CF) := \sup\{\CF(\phi): \phi\in C^0(\V)\text{ obeys }\sup|\phi|\leq 1\}.$$
Notice also that if $\phi\in C^0(\V)$ has $\sup|\phi|<\infty$, then $\phi\in C^0_{\text{rec}}(\V)$ with $\phi^\infty\equiv 0$.
\begin{defn}\label{defn:Young}
	We call a linear map $\mathcal{F}:\CC(B^n_1(0)\times\V)\to \R$ a $Q$\emph{-Young measure} if there is a measurable function $F:B^n_1(0)\to C^0_{\textnormal{rec}}(\V)^*$ and $F^\infty\in C_c^0(B^n_1(0)\times \del B^{\V_3}_1(0))^*$ such that:
	\begin{enumerate}
		\item [(a)] for $\phi \in \CC(B^n_1(0)\times\V)$ we have
			$$\mathcal{F}(\phi) := \int_{B^n_1(0)}F_x(\phi_x)\, \ext x + F^\infty(\phi^\infty),$$
			where $F_x \equiv F(x)$ and $\phi_x\equiv \phi(x,\cdot)$;
		\item [(b)] $F_x(\id_{\V}) = \mathbf{M}(F_x) = Q$ for a.e.~$x\in B^n_1(0)$, where $\id_{\V}\equiv 1$ on $\V$;
		\item [(c)] $\int_{B^n_1(0)}\mathbf{M}_*(F_x)\, \ext x + \mathbf{M}(F^\infty)<\infty$, where
		$$\mathbf{M}(F^\infty):=\sup\{F^\infty(\phi):\phi\in C^0_c(B^n_1(0)\times\del B^{\V_3}_1(0))\text{ obeys }\sup|\phi|\leq 1\}.$$
	\end{enumerate}
	In this case we write $\CF:= (F\otimes\ext x,F^\infty)$. We write $\mathcal{Y}^Q$ for the set of all $Q$-Young measures.
\end{defn}
Here, for a vector space $W$ we have written $W^*$ for its dual space.

\textbf{Note:} 
\begin{enumerate}
	\item [(1)] In \cite{HS24}, $Q$-Young measures are referred to as \emph{$\A_Q$-generalised Young measures}.
	\item [(2)] Technically in the above definition $\phi^\infty\in C^0(B^n_1(0)\times \V_3)$ whilst $F^\infty$ acts on functions in $C^0_c(B^n_1(0)\times \del B^{\V_3}_1(0))$. As recession functions are always $1$-homogeneous in $M$, this requires identifying the set of such functions with $C^0_c(B^n_1(0)\times \del B^{\V_3}_1(0))$ via restriction.
	\item [(3)] One can readily check that the integral $\int_{B^n_1(0)}F_x(\phi_x)\, \ext x$ in Definition \ref{defn:Young} is well-defined. Indeed, since $|F_x(\phi_x)|\leq \mathbf{M}_*(F_x)\|\phi_x\|_*$, by condition (c) in Definition \ref{defn:Young} it suffices to show that $\|\phi_x\|_*$ is uniformly bounded in $x$ for $\phi\in \CC(B^n_1(0)\times\V)$, which follows as $\phi\in\widetilde{\CC}$.
	\item [(4)] The condition in Definition \ref{defn:Young}(b) in particular implies that $F_x$ must be a positive Radon measure for a.e.~$x\in B^n_1(0)$, i.e. if $\phi\geq 0$, then $F_x(\phi)\geq 0$.
\end{enumerate}
For $\CF\in \mathcal{Y}^Q$ we then define
$$\|\CF\|_{\mathcal{Y}^Q} := \int_{B^n_1(0)}\mathbf{M}_*(F_x)\, \ext x + \mathbf{M}(F^\infty).$$
\begin{defn}
	We say that $(\CF_j)_j\subset\mathcal{Y}^Q$ \emph{converges} to $\CF\in \mathcal{Y}^Q$, and write $\CF_j\weakly \CF$, if
	$$\lim_{j\to\infty}\CF_j(\phi) = \CF(\phi) \qquad \text{for every }\phi\in \CC(B^n_1(0)\times\V).$$
\end{defn}
We start by proving the weak compactness of $\mathcal{Y}^Q$.
\begin{prop}[{\cite[Proposition 2.2]{HS24}}]\label{prop:Y-compactness}
	Suppose $(\CF_j)_j\subset\mathcal{Y}^Q$ is a sequence with $\sup_j\|\CF_j\|_{\mathcal{Y}^Q}<\infty$. Then, there exists $\CF\in \mathcal{Y}^Q$ and a subsequence $(j^\prime)$ such that:
	\begin{enumerate}
		\item [(i)] $\|\CF\|_{\mathcal{Y}^Q}\leq\liminf_j\|\CF_j\|_{\mathcal{Y}^Q}$;
		\item [(ii)] $\CF_{j^\prime}\weakly\CF$.
	\end{enumerate}
\end{prop}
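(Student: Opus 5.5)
The plan is to realise each $\CF_j$ through its two pieces $(F_j\otimes \ext x, F^\infty_j)$, extract weak limits of each piece separately, and then reassemble them into a $Q$-Young measure. The natural tool is the spherical compactification $T$ introduced above. Under $T$, a function $\phi\in\CC(B^n_1(0)\times\V)$ becomes a continuous function on $K\times\overline{B^{\V}_1(0)}$ whose boundary values depend only on $(x,M)$. Since $\CC(B^n_1(0)\times\V)$ is separable, the bounded family of linear functionals $\CF_j$ satisfies $|\CF_j(\phi)|\leq \|\CF_j\|_{\mathcal{Y}^Q}\|\phi\|_*$. A diagonal argument over a countable dense subset, combined with this uniform bound, then yields a subsequence $(j')$ along which $\CF_{j'}(\phi)\to \Lambda(\phi)$ for every $\phi$. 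The limit $\Lambda$ is linear and satisfies $\|\Lambda\|\leq \liminf_j\|\CF_j\|_{\mathcal{Y}^Q}$.

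The remaining task is to show that $\Lambda$ has the structure required by Definition \ref{defn:Young}; this is the main obstacle.

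\textbf{Step 1: the limit is a measure.} Pulling back via $T$, $\Lambda$ is a positive bounded linear functional on continuous functions on the compactified space $B^n_1(0)\times \overline{B^{\V}_1(0)}$, modulo the boundary identification. Positivity comes from Note (4) together with the fact that $F^\infty_j$ is positive, the latter following from the recession functions of nonnegative $\phi$ being nonnegative. By Riesz, $\Lambda$ is a Radon measure $\mu$ on $B^n_1(0)\times\big(\V\sqcup \del B^{\V_3}_1(0)\big)$, weighted appropriately.

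\textbf{Step 2: splitting the measure.} Write $\mu=\mu^{\rm int}+\mu^{\partial}$, where $\mu^{\rm int}$ lives on the interior and $\mu^\partial$ on $B^n_1(0)\times\partial B^{\V_3}_1(0)$. Set $F^\infty:=\mu^\partial$.

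\textbf{Step 3: disintegration of the interior part.} Test $\CF_j$ on functions $\psi(x)\cdot 1$. Each such function is bounded, so its recession function is $0$, and condition (b) gives $\CF_j(\psi)=Q\int\psi$. Hence the $x$-marginal of $\mu^{\rm int}$ is $Q\,\ext x$, and no mass from bounded test functions escapes to the boundary. Disintegrating $\mu^{\rm int}$ with respect to Lebesgue measure produces a measurable family $F_x$ with $\mathbf{M}(F_x)=F_x(\id_\V)=Q$ for a.e.\ $x$. Since $\mu^{\rm int}$ has finite weighted mass, we also get $\int \mathbf{M}_*(F_x)\,\ext x<\infty$, where the weight $1+|y|^{2^*}+|p|^2+|M|$ is integrated against $F_x$. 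Each $F_x$ then acts on $C^0_{\rm rec}(\V)$ by integration, defined by monotone convergence and the finiteness of this weighted integral.

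\textbf{Step 4: verifying the representation.} Check that
$$\Lambda(\phi)=\int F_x(\phi_x)\,\ext x+F^\infty(\phi^\infty)$$
holds for all $\phi\in\CC(B^n_1(0)\times\V)$. This follows because $T(\phi)$ extends continuously to the boundary with value $\phi^\infty$ there, so integrating $T(\phi)$ against $\mu$ splits exactly into the two terms. Property (i) is the lower semicontinuity of the total weighted mass under weak convergence, which follows by testing against $\phi$ with $\|\phi\|_*\leq1$ and the definition of the norm.

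The delicate points are the following. The boundary part must depend only on $(x,M)$, which is arranged by the quotient that makes $\phi^\infty$ independent of $(y,p)$. Any escape of $y$- or $p$-mass at infinity has to be absorbed there; this is consistent because recession functions see only $M$. Finally, the disintegrated $F_x$ must have unit-normalised total mass $Q$ exactly, which Step 3 secures.
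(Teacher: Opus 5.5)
Your overall route is a legitimate alternative to the paper's argument: compactify all of $\V$ through $T$, extract a limit $\Lambda$ on the separable space $\CC(B^n_1(0)\times\V)$ using $|\CF_j(\phi)|\leq\|\CF_j\|_{\mathcal{Y}^Q}\|\phi\|_*$, then represent and disintegrate. Your Steps 3--4 are fine once a suitable representing measure is available. But Step 1, which is exactly the obstacle you single out, fails as written, for two reasons.

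\textbf{First: $\Lambda$ need not be positive.} Definition~\ref{defn:Young} only requires $F^\infty\in C^0_c(B^n_1(0)\times\del B^{\V_3}_1(0))^*$. Positivity of $F^\infty$ is a \emph{conclusion} for gradient Young measures (Proposition~\ref{prop:grad-Young}(c)), not part of the definition. The fact that recession functions of nonnegative $\phi$ are nonnegative says nothing about the sign of $F^\infty_j$. For example, the constant sequence $\CF_j=(Q\delta_0\otimes\ext x,\,-\lambda)$, with $\lambda\geq 0$ a finite measure, lies in $\mathcal{Y}^Q$. Yet for $\phi=\psi(x)|M|$ with $\psi\geq 0$ it gives $\CF_j(\phi)=-\int\psi\,\ext\lambda<0$. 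What does hold, and is all Step 3 needs to get $\mathbf{M}(F_x)=F_x(\id_\V)=Q$, is that $\Lambda(\phi)\geq 0$ for $\phi\geq 0$ compactly supported in $B^n_1(0)\times\V$: there $\phi^\infty=0$ and Note (4) applies.

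\textbf{Second, and more seriously: Riesz cannot be invoked.} $\Lambda$ is only defined on $T(\CC(B^n_1(0)\times\V))$, and this is not the space of continuous functions on any quotient of $B^n_1(0)\times\overline{B^{\V}_1(0)}$. On $\del B^{\V}_1(0)$ one has $T(\phi)=\phi^\infty(x,M)=|M|\,\phi^\infty(x,M/|M|)$. So boundary values are forced to be $1$-homogeneous in $M$ and to vanish on $\{M=0\}$, and this class is not closed under products. Because every $T(\phi)$ vanishes on $\{M=0\}\cap\del B^{\V}_1(0)$, $T(\CC)$ does not even majorize $C_c$, so no positive-extension theorem can rescue the step. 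This constraint is what ``weighted appropriately'' hides. Limiting boundary mass must be transported to $\del B^{\V_3}_1(0)$ along $M\mapsto M/|M|$ with density $|M|\leq 1$, and whatever sits on $\{M=0\}$ is discarded, not ``absorbed''.

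\textbf{The repair: form measures before taking limits.}
\begin{itemize}
\item Set $w:=1+|y|^{2^*}+|p|^2+|M|$. The factor $R$ in $T$ equals $1/w$ evaluated at $(y/R^{1/2^*},p/R^{1/2},M/R)$, so $T(\phi)$ is just $\phi/w$ written in compactified coordinates.
\item Let $\mu_j$ be the push-forward of $w\,(F_j)_x\otimes\ext x$ to $B^n_1(0)\times B^{\V}_1(0)$ under the inverse of this coordinate change, plus $\iota_\#F^\infty_j$ with $\iota(x,\theta):=(x,0,0,\theta)$.
\item Then $\CF_j(\phi)=\int T(\phi)\,\ext\mu_j$, and the total variation of $\mu_j$ is at most $\|\CF_j\|_{\mathcal{Y}^Q}$. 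Hence a subsequence converges weakly-$*$ against $C_c(B^n_1(0)\times\overline{B^{\V}_1(0)})$ to some $\mu$.
\item The $\iota_\#F^\infty_j$ live on the closed set $B^n_1(0)\times\del B^{\V}_1(0)$, so $\mu$ restricted to the open interior is positive. Unweighting and disintegrating that part gives $F_x$ exactly as in your Step 3.
\item Set $F^\infty:=\Pi_\#\big(|M|\,\mu\res(B^n_1(0)\times\del B^{\V}_1(0))\big)$ with $\Pi(x,y,p,M):=(x,M/|M|)$.
\item Property (i) then follows from lower semicontinuity of total variation together with $|M|\leq 1$ on $\del B^{\V}_1(0)$.
\end{itemize}
A Hahn--Banach \emph{signed} extension of $\Lambda$, combined with the interior positivity above, also works.

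\textbf{Comparison with the paper.} The paper avoids both issues by never merging the two parts. It takes separate limits of $F_j\otimes\ext x$ and $F^\infty_j$, and it compactifies only the $M$-variable, only on $1$-homogeneous test functions (the maps $\widetilde F_j$). So the homogeneity constraint is built in and the sign of $F^\infty_j$ never matters. Your repaired version replaces the paper's staged identification (compact support, then vanishing recession, then the homogeneous part) with a single compactness step plus a weighted push-forward.
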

\begin{proof}
	Write $\CF_j = (F_j\otimes \ext x, F_j^\infty)$ for each $j$. Separable Banach--Alaoglu provides the existence of \emph{some} weak limit $\mathcal{F}$ satisfying the conclusions of the proposition, but we need to verify that $\CF\in \mathcal{Y}^Q$, i.e.~that $\CF = (F\otimes \ext x,F^\infty)$ for appropriate $F$ and $F^\infty$.
	
	Note first that $F_j\otimes \ext x$ defines a linear map $\CC(B^n_1(0)\times\V)\to \R$ by
	$$(F_j\otimes \ext x)(\phi):=\int_{B^n_1(0)}F_x(\phi_x)\, \ext x.$$
	Clearly $\mathbf{M}_*(F_j\otimes\ext x)\leq\|\mathcal{F}_j\|_{\mathcal{Y}^Q}$ for each $j$ (here $\mathbf{M}_*$ is defined analogously for linear functionals $\CC(B^n_1(0)\times\V)\to \R$) and thus $(F_j\otimes\ext x)_j$ is bounded, Similarly, $F_j^\infty$ gives a bounded sequence of linear maps $C^0_c(B^n_1(0)\times \del B^{\V_3}_1(0))\to \R$. We may therefore apply separable Banach--Alaoglu to both giving, up to passing to a subsequence,
	\begin{align*}
	F_j\otimes\ext x \weakly\hat{F} \qquad &\text{in }\CC(B^n_1(0)\times\V)^*,\\
	F^\infty_j\weakly \hat{F}^\infty \qquad &\text{in }C^0_c(B^n_1(0)\times\del B^{\V_3}_1(0))^*.
	\end{align*}
	Clearly we have $\mathcal{F}(\phi) = \hat{F}(\phi) + \hat{F}^\infty(\phi^\infty)$ for $\phi\in \CC(B^n_1(0)\times\V)$. We now claim that in fact we have
	$$\hat{F}(\phi) = \int_{B^n_1(0)}(F_x)(\phi_x)\, \ext x + \widetilde{F}^\infty(\phi^\infty) \qquad \text{for }\phi\in \CC(B^n_1(0)\times\V),$$
	for some measurable $F:B^n_1(0)\to C^0_{\text{rec}}(\V)^*$ with $\mathbf{M}(F_x) = Q$ for a.e.~$x\in B^n_1(0)$ and $\widetilde{F}^\infty\in C^0_c(B^n_1(0)\times\del B^{\V_3}_1(0))^*$; once we have shown this the proof is complete, since $\CF = (F\otimes \ext x, F^\infty)$ with $F^\infty := \hat{F}^\infty + \widetilde{F}^\infty$.
	
	To show this, we wish to understand more the action $F_j\otimes\ext x$. For this, we build up its limiting behaviour on progressively more general functions in $\CC(B^n_1(0)\times\V)$. To begin with, applying sequential Banach--Alaoglu on $ C^0_c(B^n_1(0)\times\V)\subset \CC(B^n_1(0)\times\V)$, followed by the Riesz representation theorem and the standard disintegration theorem \cite[Theorem 2.28]{AFP00}, we see that there is a measurable function $F:B^n_1(0)\to C^0_c(\V)^*$ such that for any $\phi\in C^0_c(B^n_1(0)\times\V)$,
	\begin{equation}\label{E:compactness-1}
	(F_j\otimes\ext x)(\phi) \to (F\otimes \ext x)(\phi)
	\end{equation}
	i.e.~for $\phi\in C^0_c(B^n_1(0)\times\V)$ we have $\hat{F}(\phi) = (F\otimes \ext x)(\phi)$ (note that for such $\phi$ the corresponding recession function vanishes).
	
	Next, let us consider a general $\phi\in \CC(B^n_1(0)\times\V)$ with $\phi^\infty\equiv 0$. This means that
	$$\lim_{(y,p,M)\to\infty}\sup_{x\in B^n_1(0)}\frac{|\phi(x,y,p,M)|}{1+|y|^{2^*}+|p|^2+|M|} = 0,$$
	which implies that for every $\delta>0$ there exists $R_\delta>0$ such that
	$$\sup_{x\in B^n_1(0)}|\phi(x,y,p,M)| \leq \delta(1+|y|^{2^*}+|p|^2+|M|) \qquad \text{whenever }|y|^{2^*}+|p|^2+|M|>R_\delta.$$
	Now, fix $\delta>0$ and let $\chi_\delta:\V\to \R$ be a smooth, decreasing, radial function which is identically $1$ on $B^{\V}_{R_{\delta}}(0)$ and vanishes outside $B^{\V}_{2R_\delta}(0)$. Then $\chi_\delta\phi\in C^0_c(B^n_1(0)\times\V)$, and so we know from \eqref{E:compactness-1},
	$$(F_j\otimes\ext x)(\chi_\delta\phi) \to (F\otimes \ext x)(\chi_\delta\phi) = \hat{F}(\chi_\delta\phi).$$
	On the other hand, $\|(1-\chi_\delta)\phi\|_* \leq \delta$, and so
	$$|(F_j\otimes \ext x)((1-\chi_\delta)\phi)| \leq \|\mathcal{F}_j\|_{\mathcal{Y}^Q}\cdot\delta$$
	and similarly
	$$|\hat{F}((1-\chi_\delta)\phi)| \leq\mathbf{M}_*(\hat{F})\cdot\delta \leq \sup_i\|\mathcal{F}_i\|_{\mathcal{Y}^Q}\cdot\delta$$
	(using the lower semi-continuity of $\|\cdot\|_{\mathcal{Y}^Q}$ along the convergent subsequence). Hence, for any fixed $\delta>0$, for all sufficiently large $j$ we have
	$$|(F_j\otimes \ext x)(\phi) - (F\otimes\ext x)(\chi_\delta \phi)|\leq \delta + \delta\sup_i\|\CF_i\|_{\mathcal{Y}^Q}.$$
	Taking $j\to\infty$ and then $\delta\downarrow 0$ (using dominated convergence) we therefore see that
	\begin{equation}\label{E:compactness-2}
	\lim_{j\to\infty}(F_j\otimes \ext x)(\phi) = (F\otimes\ext x)(\phi)
	\end{equation}
	for any $\phi\in \CC(B^n_1(0)\times\V)$ whose recession function vanishes; in particular, the action of $F\otimes \ext x$ on such functions is well-defined (and agrees with $\hat{F}$). This in particular implies that $\mathbf{M}(F_x) = Q$ for a.e.~$x\in B^n_1(0)$ (the upper bound of $Q$ comes form mass lower semi-continuity along the weakly convergent subsequence, and the lower bound comes from taking $\phi = \psi(x)\one_{\V}$ in the above convergence for any $\psi\in C^\infty_c(B^n_1(0))$).
	
	Finally, we are left with a general $\phi\in \CC(B^n_1(0)\times\V)$. First notice that $\phi - \phi^\infty\in\CC(B^n_1(0)\times\V)$ has vanishing recession function, and so by \eqref{E:compactness-2}
	$$(F_j\otimes\ext x)(\phi-\phi^\infty)\to (F\otimes \ext x)(\phi-\phi^\infty).$$
	We are therefore left with evaluating $\lim_{j\to\infty}(F_j\otimes\ext x)(\phi^\infty)$: if we can show that this is equal to $(F\otimes\ext x)(\phi^\infty) + \widetilde{F}^\infty(\phi^\infty)$ for a suitable linear map $\widetilde{F}^\infty:C^0_c(B^n_1(0)\times\del B^{\V_3}_1(0))\to \R$, we will be done.
	
	For this, for each $j$ define a linear map $\widetilde{F}_j$ which sends $\psi\in C^0_c(B^n_1(0)\times \overline{B}^{\V_3}_1(0))$ to
	$$\widetilde{F}_j(\psi):=\int_{B^n_1(0)}(F_j)_x(\widetilde{\psi}_x)\, \ext x$$
	where $\widetilde{\psi}(x,M):= \psi(x,\frac{M}{1+|M|})(1+|M|)$; this is well-defined because $\widetilde{\psi}\in \CC(B^n_1(0)\times\V)$. Notice that the recession function of $\widetilde{\psi}$ is $\widetilde{\psi}^\infty(x,M):=|M|\psi(x,M/|M|)$. Note that for such $\psi$,
	$$\widetilde{F}_j(\psi) \leq \|\CF_j\|_{\mathcal{Y}^Q}\cdot\sup|\psi|$$
	and thus $(\widetilde{F}_j)_j$ is a bounded sequence in $C^0_c(B^n_1(0)\times \overline{B}^{\V_3}_1(0))^*$. Applying sequential Banach--Alaoglu, we can therefore pass to a subsequence and find $\widetilde{F}\in C^0_c(B^n_1(0)\times \overline{B}^{\V_3}_1(0))^*$ such that
	$$\widetilde{F}_j(\psi)\to \widetilde{F}(\psi) \qquad \text{for all }\psi\in C^0_c(B^n_1(0)\times\overline{B}^{\V_3}_1(0)).$$
	Thus, if $\phi = \phi(x,M)\in \CC(B^n_1(0)\times\V)$ is homogeneous degree one in the $M$-variable, then
	$$(F_j\otimes\ext x)(\phi) = \int_{B^n_1(0)}(F_j)_x(\phi(x,M))\, \ext x = \int_{B^n_1(0)}(F_j)_x\left(\phi\left(x,\frac{M}{1+|M|}\right)(1+|M|)\right)\, \ext x \equiv \widetilde{F}_j(\phi_*)$$
	where $\phi_* := \phi|_{B^n_1(0)\times\overline{B}_1^{\V_3}(0)}$. So,
	$$\lim_{j\to\infty}(F_j\otimes \ext x)(\phi) = \widetilde{F}(\phi_*).$$
	Now fix $\eps>0$ and let $\eta_\eps:[0,1]\to [0,1]$ be a smooth, decreasing function, obeying $\eta_\eps|_{[0,1-\eps]} \equiv 1$ and $\eta_\eps\equiv 0$ on a neighbourhood of $1$. Write
	$$\phi_*(x,M) = \eta_\eps(|M|)\phi_*(x,M) + (1-\eta_\eps(|M|))\phi_*(x,M).$$
	Notice that $\eta_\eps(|M|)\phi_*\in C^0_c(B^n_1(0)\times\overline{B}^{\V_3}_1(0))$ vanishes on a neighbourhood of $\del B^{\V_3}_1(0)$. We therefore have:
	\begin{align*}
		\widetilde{F}(\eta_\eps(|M|)\phi_*) & = \lim_{j\to\infty}\widetilde{F}_j(\eta_\eps(|M|)\phi_*)\\
		& = \lim_{j\to\infty}\int_{B^n_1(0)}(F_j)_x\left(\eta_\eps\left(\frac{|M|}{1+|M|}\right)\phi_*\left(x,\frac{M}{1+|M|}\right)(1+|M|)\right)\, \ext x\\
		& = \lim_{j\to\infty}\int_{B^n_1(0)}(F_j)_x\left(\eta_\eps\left(\frac{|M|}{1+|M|}\right)\phi(x,M)\right)\, \ext x\\
		& = \lim_{j\to\infty}(F_j\otimes \ext x)\left(\eta_{\eps}\left(\frac{|M|}{1+|M|}\right)\phi(x,M)\right)\\
		& = (F\otimes \ext x)\left(\eta_{\eps}\left(\frac{|M|}{1+|M|}\right)\phi(x,M)\right),
	\end{align*}
	where in the last line we have used the previous case, since $\eta_\eps\left(\frac{|M|}{1+|M|}\right)\phi(x,M)\in \CC(B^n_1(0)\times\V)$ has vanishing recession function. In particular, taking $\eps\to 0$ and using the form of $F\otimes \ext x$ (noting that $\eta_\eps\left(\frac{|M|}{1+|M|}\right)\phi(x,M)$ converges pointwise to $\phi(x,M)$ as $\eps\to 0$ and so we may apply the dominated convergence theorem), we get
	$$\lim_{\eps\to 0}\widetilde{F}(\eta_\eps(|M|)\phi_*) = (F\otimes \ext x)(\phi(x,M)).$$
	In particular, this tells us that $F\otimes \ext x$ is now well-defined acting on $\CC(B^n_1(0)\times\V)$. The remaining part left to analyse is $\widetilde{F}((1-\eta_\eps(|M|))\phi_*(x,M))$. Note that pointwise, as $\eps\to 0$,
	$$(1-\eta_\eps(|M|))\phi_*(x,M) \to \phi_*(x,M)\one_{B^n_1(0)\times\del B^{\V_3}_1(0)}(x,M).$$
	Recall that as $\widetilde{F}$ is an element of the dual space of $C^0_c(B^n_1(0)\times\overline{B}^{\V_3}_1(0))$, the Riesz representation theorem identifies it with a Borel measure on $B^n_1(0)\times \overline{B}^{\V_3}_1(0)$. We may therefore consider the measure-restriction $\widetilde{F}^\infty := \widetilde{F}\res (B^n_1(0)\times\del B^{\V_3}_1(0))$. The dominated convergence theorem combined with the above pointwise convergence therefore gives
	$$\lim_{\eps\to 0}\widetilde{F}((1-\eta_\eps(|M|))\phi_*) = \widetilde{F}^\infty(\phi_*|_{B^n_1(0)\times\del B^{\V_3}_1(0)}) \equiv \widetilde{F}^\infty(\phi).$$
	Thus, combining everything we have shown, we have for any $\phi\in \CC(B^n_1(0)\times\V)$ which is homogeneous degree one in the $M$-variables, for any $\eps>0$,
	\begin{align*}
		\lim_{j\to\infty}(F_j\otimes \ext x)(\phi) & = \widetilde{F}(\phi_*) = \widetilde{F}(\eta_\eps(|M|)\phi_*) + \widetilde{F}((1-\eta_\eps(|M|))\phi_*)
	\end{align*}
	and then taking $\eps\to 0$ we get
	$$\lim_{j\to\infty}(F_j\otimes\ext x)(\phi) = (F\otimes \ext x)(\phi) + \widetilde{F}^\infty(\phi)$$
	which is exactly what we had left to show. This therefore completes the proof.
\end{proof}
We are most interested in the limits of elementary Young measures. In terms of the language of $\mathcal{Y}^Q$, these are:
\begin{defn}
	The \emph{elementary Young measure} $\mathcal{E}_f$ associated to $f\in W^{1,2}(B^n_1(0);\A_Q(\R^k))$ is the $Q$-Young measure $\mathcal{E}_f = (E_f\otimes\ext x,0)$, where
	$$(E_f)_x(\phi) = \sum^Q_{\alpha=1}\phi(f^\alpha(x),Df^\alpha(x),Df^\alpha(x)\otimes Df^\alpha(x)) \qquad \text{for }\phi\in C^0_{\text{rec}}(\V).$$
\end{defn}
\textbf{Note:} If $f\in W^{1,2}(B^n_1(0);\A_Q(\R^k))$, its elementary Young measure evidently satisfies $\mathbf{M}((E_f)_x) = Q$ and
$$\sum^Q_{\alpha=1}\left(|f^\alpha(x)|^{2} + |Df^\alpha(x)|^2\right) \leq \mathbf{M}_*((E_f)_x) \leq Q + \sum^Q_{\alpha=1}(|f^\alpha(x)|^{2^*} + 2|Df^\alpha(x)|^2)$$
for a.e.~$x\in B^n_1(0)$. Thus, $\mathcal{E}_f\in \mathcal{Y}^Q$ and, as noted before,
\begin{equation}\label{E:Young-bounds-function}
\|f\|^2_{W^{1,2}(B^n_1(0))}\leq \|\CE_f\|_{\mathcal{Y}^Q} \leq Q\w_n + \|f\|^{2^*}_{W^{1,2}(B_1^n(0))}+2\|f\|^2_{W^{1,2}(B^n_1(0))}.
\end{equation}
\begin{defn}
	We say $\mathcal{F}\in\mathcal{Y}^Q$ is a \emph{gradient $Q$-Young measure}, and write $\mathcal{F}\in \grad\,\mathcal{Y}^Q$, if there exists a sequence of elementary Young measures $\mathcal{E}_{f_j}$ associated to $f_j\in W^{1,2}(B^n_1(0);\A_Q(\R^k))$ such that $\mathcal{E}_{f_j}\weakly\mathcal{F}$.
\end{defn}
\textbf{Note:} In \cite{HS24} gradient $Q$-Young measures are called \emph{$\mathcal{A}_Q$-generalised gradient Young measures}.

The following proposition provides a characterisation of gradient $Q$-Young measures.

\begin{prop}[{\cite[Proposition 2.5]{HS24}}]\label{prop:grad-Young}
	Let $\mathcal{F} = (F\otimes \ext x, F^\infty)\in \grad\, \mathcal{Y}^Q$. Then, there exists $f\in W^{1,2}(B^n_1(0);\A_Q(\R^k))$ and a family of probability measures $(\nu_{x,y})_{(x,y)\in B^n_1(0)\times\V_1}$ on $\V_2\times\V_3$ such that:
	\begin{enumerate}
		\item [(a)] For a.e.~$x\in B^n_1(0)$ we have, for $\phi\in C^0_{\textnormal{rec}}(\V)$,
		\begin{equation}\label{E:grad-Young-1}
			F_x(\phi) = \int_{\V_2\times\V_3}\sum^Q_{\alpha=1}\phi(f^\alpha(x),p,M)\, \ext\nu_{x,f^\alpha(x)};
		\end{equation}
		\item [(b)] For a.e.~$x\in B^n_1(0)$ we have, for any $i,j\in \{1,\dotsc,n\}$, $\kappa,\kappa^\prime\in\{1,\dotsc,k\}$, and $\alpha \in \{1,\dotsc,Q\}$,
		\begin{equation}\label{E:grad-Young-2}
			\int_{\V_2\times\V_3}p^\kappa_i\, \ext\nu_{x,f^\alpha(x)} = \del_i f^{\kappa,\alpha};
		\end{equation}
		\begin{equation}\label{E:grad-Young-3}
			\int_{\V_2\times\V_3} M_{ij}^{\kappa\kappa^\prime}\, \ext\nu_{x,f^\alpha(x)} = \int_{\V_2\times\V_3}p_i^\kappa p_j^{\kappa^\prime}\, \ext\nu_{x,f^\alpha(x)} \geq \del_i f^{\kappa,\alpha}\del_j f^{\kappa^\prime,\alpha};
		\end{equation}
		\item [(c)] $\spt(F^\infty)\subset B^n_1(0)\times \{M\in \del B_{1}^{\V_3}(0): M\geq 0\}$, and furthermore $F^\infty$ is positive, i.e.~if $\phi\geq 0$, then $F^\infty(\phi)\geq 0$.
	\end{enumerate}
	Here, $f^{\kappa,\alpha} \equiv e_\kappa\cdot f^\alpha$, where $e_1,\dotsc,e_k$ is the standard basis for $\R^k$.
\end{prop}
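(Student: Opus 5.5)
The plan is to disintegrate each $F_x$ with respect to its $\V_1$-marginal, then identify that marginal with the values of the weak $W^{1,2}$ limit of the generating sequence. Let $\CE_{f_j}\weakly\CF$ with $f_j\in W^{1,2}(B^n_1(0);\A_Q(\R^k))$.

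\textbf{Step 1: the map $f$ and the $\V_1$-marginal.} Testing against $\phi=\psi(x)(1+|y|^2+|p|^2)$, which has zero recession function, shows that $\sup_j\|f_j\|_{W^{1,2}}<\infty$; this also follows from \eqref{E:Young-bounds-function} together with $\|\CE_{f_j}\|_{\mathcal{Y}^Q}$ being bounded along a weakly convergent sequence, via Banach--Steinhaus on the separable space. By compact Sobolev embedding we pass to a subsequence with $f_j\to f$ strongly in $L^{2^*-\delta}$ and a.e., and $\boldsymbol{\xi}\circ f_j\weakly\boldsymbol{\xi}\circ f$ in $W^{1,2}$. Now take test functions $\phi(x,y)=\psi(x)g(y)$ with $g\in C^0_c(\V_1)$. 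For these,
$$\CE_{f_j}(\phi)=\int\psi\sum_\alpha g(f_j^\alpha)\,\ext x\to\int\psi\sum_\alpha g(f^\alpha)\,\ext x$$
by a.e.\ convergence and dominated convergence. Hence the $\V_1$-marginal of $F_x$ is $\sum_\alpha\delta_{f^\alpha(x)}$ (with multiplicity) for a.e.\ $x$, and $F^\infty$ does not see these functions. Disintegrating the finite measure $F_x$ on $\V$ with respect to this atomic marginal gives probability measures $\nu_{x,y}$ on $\V_2\times\V_3$ satisfying \eqref{E:grad-Young-1}. The integrability needed to extend from $C^0_c$ to $C^0_{\rm rec}$ comes from $\mathbf{M}_*(F_x)<\infty$ a.e., and the mass normalisation comes from $\mathbf{M}(F_x)=Q$. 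When several $f^\alpha(x)$ coincide, the same $\nu_{x,y}$ is used for each, which is consistent with \eqref{E:grad-Young-1}.

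\textbf{Step 2: the first moment, \eqref{E:grad-Young-2}.} The difficulty is that $p$ is not a valid test variable for identifying $Df$, because for multi-valued maps $\sum_\alpha Df^\alpha_j$ does not converge to anything with a pointwise meaning. Instead I would test with $\phi=\psi(x)g(y)p_i^\kappa$, where $g\in C^1_c(\V_1)$; this grows only linearly in $p$ and has zero recession function. The key identity is the chain rule for $Q$-valued Sobolev maps (De Lellis--Spadaro): $\sum_\alpha g(f_j^\alpha)\del_if_j^{\kappa,\alpha}=\del_i\big(\sum_\alpha G(f_j^\alpha)\big)$ whenever $g=\del_{y^\kappa}G$. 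After integrating by parts against $\psi$, the limit passes by strong $L^2$ convergence and gives
$$\int\psi\sum_\alpha g(f^\alpha)\int p_i^\kappa\,\ext\nu_{x,f^\alpha}\,\ext x=\int\psi\sum_\alpha g(f^\alpha)\del_if^{\kappa,\alpha}\,\ext x .$$
To reach general $g$ rather than only $y^\kappa$-derivatives, I would use all components $\kappa$ at once together with the compact support of $g$ in each variable. Localising $g$ near distinct values, using the a.e.\ separation of distinct values, then yields \eqref{E:grad-Young-2} value-by-value.

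\textbf{Step 3: the second moment, \eqref{E:grad-Young-3}, and part (c).} For \eqref{E:grad-Young-3}, the equality holds because $M=p\otimes p$ on the support of each elementary measure. Precisely, $\phi=\psi g(y)\,\chi\big(M-p\otimes p\big)$ with bounded $\chi$ vanishing at $0$ has zero recession function and vanishes on $\CE_{f_j}$, so $\nu_{x,y}$ is supported on $\{M=p\otimes p\}$. The inequality is then Jensen's inequality applied to the convex function $p\mapsto(p\cdot a)^2$, using \eqref{E:grad-Young-2}; it should be read as a matrix inequality in the sense of quadratic forms. For (c), positivity of $F^\infty$ follows from positivity of the $\widetilde F_j$ in the proof of Proposition~\ref{prop:Y-compactness}. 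The support statement holds because $\CE_{f_j}$ only charges $M=p\otimes p\geq0$, so $\widetilde F_j$ is supported on $\{M\ge0\}$, which is a closed cone and hence passes to the limit.

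\textbf{Main obstacle.} I expect Step 2 to be the main obstacle: identifying the barycenter value-by-value when values of $f$ collide, so that the constraint only holds for the sum over coinciding indices. My first attempt would be to argue on the sets $\{x: f(x)\text{ has exactly }m\text{ distinct values}\}$ and use that $Df^\alpha$ agree a.e.\ on coincidence sets.
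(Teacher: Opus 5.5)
Your proposal is sound in substance and, for part (b), takes a different and shorter route than the paper. Two details need repair.

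\emph{The boundedness test function.} $\psi(x)(1+|y|^2+|p|^2)$ is not admissible. The term $\psi|p|^2$ has no recession function depending only on $(x,M)$: along $p\to\infty$ with $y=0$, $M=0$ the normalised quotient tends to $|\psi(x)|$, not $0$. This is exactly why the variable $M$ is introduced. Test instead with $\psi(x)(1+|y|^2+M^{\kappa\kappa}_{ii})$, whose recession function is $\psi(x)M^{\kappa\kappa}_{ii}$ because $2^*>2$.

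\emph{The chain rule.} The identity you wrote with a single index $\kappa$ is false for $k>1$. The correct $Q$-valued chain rule is $\del_i\sum_\alpha G(f_j^\alpha)=\sum_\alpha\nabla G(f_j^\alpha)\cdot\del_i f_j^\alpha$. So test with $\psi(x)\,\nabla G(y)\cdot p_i$, $G\in C^1_c(\V_1)$, which still has zero recession function. Take $G(y)=\eta(y)\,y^\kappa$, with $\eta\equiv1$ near one value $y_l$ of $f(x)$ and $\eta\equiv0$ near the others, drawn from a countable family of such cut-offs. This gives, for a.e.\ $x$,
\[
Q_l\int_{\V_2\times\V_3}p_i^\kappa\,\ext\nu_{x,y_l}=\sum_{\alpha:\,f^\alpha(x)=y_l}\del_i f^{\kappa,\alpha}(x),
\]
where $Q_l$ is the multiplicity of $y_l$.

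\emph{The collision issue.} What you call the main obstacle then disappears. The approximate differential of a $Q$-valued Sobolev map satisfies $Df^\alpha(x)=Df^\beta(x)$ whenever $f^\alpha(x)=f^\beta(x)$, for a.e.\ $x$. This compatibility is built into the notion of differentiability behind \cite[Corollary 2.7]{DLS11}. Hence the right-hand side equals $Q_l\,\del_if^{\kappa,\alpha}(x)$ for any $\alpha$ in the cluster, which is \eqref{E:grad-Young-2}.

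\emph{Comparison with the paper.} The paper proves (b) by testing only with $\psi(x)p_i^\kappa$, i.e.\ your $G=y^\kappa$ without a cut-off. That sees only the average $f_a$, so it identifies the barycentre only on the full coincidence set $\{f=Q\llbracket f_a\rrbracket\}$. The general case then needs two further ingredients:
\begin{itemize}
\item a localisation lemma: generating sequences that agree on sets of almost full measure give the same $F_x$;
\item an induction on $Q$, using Lipschitz retractions to split $f_j$ near values that are not fully collapsed.
\end{itemize}
Your target-localised test functions handle all clusters at once and avoid the induction, at the cost of invoking the $Q$-valued chain rule.

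For the equality in \eqref{E:grad-Young-3} you also get more than the paper. The bounded test function $\psi g(y)\chi(M-p\otimes p)$ shows that each $\nu_{x,f^\alpha(x)}$ is concentrated on $\{M=p\otimes p\}$; the needed integrability comes from $F_x(|M|)\le\mathbf{M}_*(F_x)<\infty$. The paper instead compares the truncations $\chi(M/R)M^{\kappa\kappa'}_{ij}$ and $\chi(p\otimes p/R)p^\kappa_ip^{\kappa'}_j$ and lets $R\to\infty$.

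Your arguments for (a), for the Jensen inequality, and for (c) are essentially the paper's.
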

\textbf{Note:} In Proposition \ref{prop:grad-Young}, the inequalities in \eqref{E:grad-Young-3} and (c) are understood in the sense of matrices, i.e.~for real $m\times m$ matrices $A,B$, we write $A\geq B$ if and only if $x^T Ax\geq x^TBx$ for all $x\in \R^m$. In particular, $A\geq B$ if and only if $A-B$ is non-negative definite.

\begin{proof}
	We first prove (a). Fix a generating sequence $f_j\in W^{1,2}(B^n_1(0);\A_Q(\R^k))$ with $\CE_{f_j}\weakly\mathcal{F}$. First note that, by setting $\phi(x,y,p,M):= |y|^2 + M^{\kappa\kappa}_{ii}\in \mathcal{C}(B^n_1(0)\times\mathbb{V})$, we have
    $$\lim_{j\to\infty}\|f_j\|_{W^{1,2}(B^n_1(0);\A_Q(\R^k))} \equiv \lim_{j\to\infty}\mathcal{E}_{f_j}(\phi) = \mathcal{F}(\phi)$$
    and thus the sequence $\|f_j\|_{W^{1,2}(B^n_1(0);\A_Q(\R^k))}$ is bounded\footnote{This can also be seen by combining \eqref{E:Young-bounds-function} and the uniform boundedness principle, which gives $\limsup_j\|\mathcal{E}_{f_j}\|_{\mathcal{Y}^Q}<\infty$.}. We may therefore apply the Sobolev embedding theorem (for multi-valued functions, see e.g.~\cite[Proposition 2.11]{DLS11}) to find $f\in W^{1,2}(B^n_1(0);\A_Q(\R^k))$ such that, up to passing to a subsequence, $f_j\to f$ strongly in $L^p$ for all $p<2^*$ and $\|Df\|_{L^2}\leq\liminf_{j}\|Df_j\|_{L^2}$.
	
	Now let $\phi = \phi(x,y)\in C^1_c(B^n_1(0)\times\V_1)$. We have
	\begin{align*}
		|\mathcal{E}_{f_j}(\phi) - \mathcal{E}_f(\phi)| & = \left|\int_{B^n_1(0)}\sum^Q_{\alpha=1}\phi(x,f^\alpha_j(x)) - \sum^Q_{\beta=1}\phi(x,f^\beta(x))\, \ext x\right|\\
		& \leq \int_{B^n_1(0)}\inf_{\pi}\sum^Q_{\alpha=1}\left|\phi(x,f_j^\alpha(x))-\phi(x,f^{\pi(\alpha)}(x))\right|\, \ext x\\
		& \leq \int_{B^n_1(0)}\|D_y\phi\|_\infty\cdot\mathcal{G}(f_j(x),f(x))\, \ext x\\
		& = \|D_y\phi\|_\infty\|\mathcal{G}(f_j,f)\|_{L^1}\\
		& \to 0 \qquad \text{as }j\to\infty
	\end{align*}
	where in the second line the infimum is over all permutations $\pi:\{1,\dotsc,Q\}\to \{1,\dotsc,Q\}$. Since we also know $\mathcal{E}_{f_j}(\phi)\to \mathcal{F}(\phi)$, we see that we must have $\mathcal{F}(\phi) = \mathcal{E}_f(\phi)$ for such $\phi\in C^1_c(B^n_1(0)\times\mathbb{V}_1)$; by an approximation argument we then have $\mathcal{F}(\phi) = \mathcal{E}_f(\phi)$ for any $\phi\in C^0_c(B^n_1(0)\times\mathbb{V}_1)$. Furthermore, since $\phi$ only depends on $x,y$ here we have $\mathcal{F}(\phi) = (F\otimes \ext x)(\phi)$, where $F$ is as in Definition \ref{defn:Young} for $\mathcal{F}$. In particular, this is saying that
	$$(\pi_0\otimes\pi_1)_\#\mathcal{F} = \sum^Q_{\alpha=1}\llbracket f^\alpha\rrbracket\otimes\ext x.$$
	In particular, the classical disintegration theorem \cite[Theorem 2.28]{AFP00} implies (a).
	
	Next we prove \eqref{E:grad-Young-2}. We will divide this into three claims.
	
	\textbf{Claim 1:} Suppose there is a sequence of measurable sets $E_j\subset E$, where $E\subset B^n_1(0)$ is measurable, with $\H^n(E\setminus E_j)\to 0$, as well as two uniformly bounded sequences $f_j,g_j\in W^{1,2}(B^n_1(0);\A_Q(\R^k))$ with $\mathcal{E}_{f_j}\weakly \mathcal{F}$ and $\mathcal{E}_{g_j}\weakly \widetilde{\mathcal{F}}$ such that $f_j = g_j$ on $E_j$. Then, if $\mathcal{F} = (F\otimes \ext x, F^\infty)$ and $\widetilde{\mathcal{F}} = (\widetilde{F}\otimes \ext x, \widetilde{F}^\infty)$, we have $F_x = \widetilde{F}_x$ for a.e.~$x\in E$.
	
	\begin{proof}[Proof of Claim 1]
		This is a simple check. Indeed, approximate differentiability of $Q$-valued functions \cite[Corollary 2.7]{DLS11} gives that
		$$T_{x_0}f_j = T_{x_0}g_j$$
		for a.e.~$x\in E_j$, where $T_{x_0}f_j$ and $T_{x_0}g_j$ denote the first order approximations of $f_j$ and $g_j$ (respectively) at $x_0$. In particular, for a.e.~$x\in E_j$ we have $(E_{f_j})_x = (E_{g_j})_x$. Now, suppose that $\phi\in C^0_c(B^n_1(0)\times\V)$; then
		$$A:= \sup_{x,y,p,M}\frac{|\phi(x,y,p,M)|}{(1+|y|^{2^*}+|p|^2+|M|)^{1/2}} <\infty.$$
		Thus, we have
		\begin{align*}
			\int_{E\setminus E_j}|(E_{f_j})_x(\phi_x)\, +\, & |(E_{g_j})_x(\phi_x)|\, \ext x\\
			& \leq A\sum^Q_{\alpha=1}\int_{E\setminus E_j}(1+|f^\alpha_j(x)|^{2^*} + 2|Df^\alpha_j(x)|^2 + |g_j^\alpha(x)|^{2^*} + 2|Dg_j^\alpha(x)|^2)^{1/2}\\
			& = C(n)A\left(\|f_j\|_{W^{1,2}} + \|g_j\|_{W^{1,2}}\right)^{1/2}\H^n(E\setminus E_j)^{1/2}\\
		\end{align*}
		which therefore $\to 0$ as $j\to\infty$. As $(E_{f_j})_x = (E_{g_j})_x$ on $E_j$, we therefore see that
		$$\int_E F_x(\phi_x)\, \ext x = \int_{E} \widetilde{F}_x(\phi_x)\, \ext x.$$
		As $\phi\in C^0_c(B^n_1(0)\times \V)$ was arbitrary, we then conclude that $F_x = \widetilde{F}_x$ for a.e.~$x\in E$.
	\end{proof}
	
	\textbf{Claim 2:} \eqref{E:grad-Young-2} holds a.e.~on $E_0:= \{f = Q\llbracket f_a\rrbracket\}$.\footnote{Recall that $f_a \equiv \frac{1}{Q}\sum^Q_{\alpha=1}f$ denotes the average of $f$, and so $a$ is not a varying index.}
	
	\begin{proof}[Proof of Claim 2]
		Note first that $(f_j)_a\weakly f_a$ in $W^{1,2}(B^n_1(0);\R^k)$. Therefore, for any $\phi\in C^0_c(B^n_1(0))$, if we set $\psi(x,p):= \phi(x)p^\kappa_i$ for some $i\in\{1,\dotsc,n\}$ and $\kappa\in\{1,\dotsc,k\}$, we have
		$$\mathcal{F}(\psi) = \lim_{j\to\infty}\mathcal{E}_{f_j}(\psi) = \lim_{j\to\infty}\int_{B^n_1(0)}\phi(x)\sum^Q_{\alpha=1} \del_i f^{\kappa,\alpha} = Q\lim_{j\to\infty}\int_{B^n_1(0)}\phi(x)\del_i (f^{\kappa}_j)_a = Q\int_{B^n_1(0)}\phi(x)\del_i f^\kappa_a.$$
		But since such $\psi$ have vanishing recession function, we have $\mathcal{F}(\psi) = (F\otimes\ext x)(\psi)$, and thus using \eqref{E:grad-Young-1} we have
		$$\int_{B^n_1(0)}\phi(x)\int_{\V_2\times\V_3}\sum^Q_{\alpha=1}p^\kappa_i\, \ext\nu_{x,f^\alpha(x)}\, \ext x = Q\int_{B^n_1(0)}\phi(x)\del_if^{\kappa}_a.$$
	Since $\phi\in C^0_c(B^n_1(0))$ was arbitrary, this therefore implies for a.e.~$x\in B^n_1(0)$,
	\begin{equation}\label{E:grad-Young-2-1}
	\sum^Q_{\alpha=1}\int_{\V_2\times\V_3}p^{\kappa}_i\, \ext\nu_{x,f^\alpha(x)} = Q\del_i f^{\kappa}_a.
	\end{equation}
    Hence since on $\{f = Q\llbracket f_a\rrbracket\}$ we have $f^\alpha(x) = f_a$ for $\alpha=1,2,\dotsc,Q$, we see that for a.e.~$x\in E_0$ the above gives
	$$\int_{\V_2\times\V_3}p^\kappa_i\, \ext\nu_{x,f_a(x)} = \del_i f^\kappa_a(x).$$
	This proves \eqref{E:grad-Young-2} for a.e.~$x\in E_0$, proving Claim 2.
	\end{proof}
	
	\textbf{Claim 3:} \eqref{E:grad-Young-2} holds.
	
	\begin{proof}
		We work by induction on $Q$. The case $Q=1$ is just a statement about weak convergence of $W^{1,2}$ functions, and so is true. We may therefore assume that \eqref{E:grad-Young-2} holds for all $Q^\prime<Q$. By Claim 2, we know that \eqref{E:grad-Young-2} holds for a.e.~$x\in E_0:= \{f = Q\llbracket f_a\rrbracket\}$.
		
		Now fix any $S\in \A_Q(\R^k)$ with $S\neq Q\llbracket s\rrbracket$ for any $s\in \R^k$. We may therefore find $\eps_S>0$ and (globally defined) Lipschitz retractions $\chi_i:\A_Q(\R^k)\to \A_{Q_i}(\R^k)$, $i=1,2$, such that whenever $\G(T,S)<2\eps_S$ we may write
		$$T = \chi_1(T) + \chi_2(T).$$
		Now consider $E := \{x:\G(f(x),S)<\eps_S\}$, and define the sequences
		$$f_j^{(i)} := \chi_i\circ f_j\in W^{1,2}(B^n_1(0);\A_{Q_i}(\R^k)) \qquad \text{for }i=1,2.$$
		Also write $E_j := \{x\in E:\G(f_j(x),S)<\eps_S\}$ for each $j$. Clearly we have $\H^n(E\setminus E_j)\to 0$ since $f_j\to f$ a.e.~$x\in B^n_1(0)$. Furthermore, for a.e.~$x\in E_j$,
		$$(E_{f_j^{(1)}})_x + (E_{f_j^{(2)}})_x = (E_{f_j})_x$$
		by the almost everywhere differentiability of multi-valued $W^{1,2}$ functions. We may also assume, passing to a subsequence if necessary, that for $i=1,2$ we have $\mathcal{E}_{f_j^{(i)}}\weakly\mathcal{F}^{(i)} \equiv (F^{(i)}\otimes\ext x,(F^{(i)})^\infty)$ as $j\to\infty$. By Claim 1, we see that for a.e.~$x\in E$,
		$$F\otimes \ext x = F^{(1)}\otimes \ext x + F^{(2)}\otimes \ext x.$$
		Hence, by applying our inductive hypothesis to $f_j^{(1)}$ and $f_j^{(2)}$, we deduce that \eqref{E:grad-Young-2} holds for a.e.~$x\in E$. Since $S\in \A_Q(\R^k)\setminus \{Q\llbracket s\rrbracket: s\in \R^k\}$ was arbitrary, by covering $B^n_1(0)\setminus E_0$ by sets of the above type, and combining with Claim 2, we conclude the proof of \eqref{E:grad-Young-2}.
	\end{proof}
	
	Next we prove \eqref{E:grad-Young-3}. Fix $\chi\in C^\infty_c(\V_3)$ with $\chi\equiv 1$ on $B_1^{\V_3}(0)$ and let $\psi\in C^\infty_c(B^n_1(0)\times\V_1)$ be arbitrary. Now consider, for $R>0$,
	$$\phi(x,y,M):=\psi(x,y)\chi(M/R)M_{ij}^{\kappa\kappa^\prime} \qquad \text{and} \qquad \widetilde{\phi}(x,y,p):=\psi(x,y)\chi\left(\frac{p\otimes p}{R}\right)p^\kappa_i p^{\kappa^\prime}_j.$$
	Then, since $\phi$ and $\widetilde{\phi}$ have vanishing recession functions, and since $\mathcal{E}_{f_j}(\phi) = \mathcal{E}_{f_j}(\widetilde{\phi})$, we have
	\begin{align*}
		\int\sum^Q_{\alpha=1}\psi(x,f^\alpha(x))\int\chi\left(\frac{M}{R}\right)M^{\kappa\kappa^\prime}_{ij}\, \ext\nu_{x,f^\alpha(x)}\, \ext x & = (F\otimes \ext x)(\phi)\\
		& = \lim_{j\to\infty}\mathcal{E}_{f_j}(\phi)\\
		& = \lim_{j\to\infty}\mathcal{E}_{f_j}(\widetilde{\phi})\\
		& = \int\sum^Q_{\alpha=1}\psi(x,f^\alpha(x))\int\chi\left(\frac{p\otimes p}{R}\right)p^\kappa_i p^{\kappa^\prime}_j\, \ext\nu_{x,f^\alpha(x)}\, \ext x.
	\end{align*}
	Taking $R\to\infty$ in this equality and using the dominated convergence theorem, followed by the arbitrariness of $\psi\in C^\infty_c(B^n_1(0)\times\V_1)$, we see that for a.e.~$x\in B^n_1(0)$ the first equality in \eqref{E:grad-Young-3} holds. The inequality part of \eqref{E:grad-Young-3} then follows from Jensen's inequality applied to the convex function $G(p):= p^\kappa_i p^{\kappa^\prime}_j \xi^i\xi^j \eta_\kappa\eta_{\kappa^\prime}$, for fixed $\xi\in \R^n$ and $\eta\in \R^k$ (here we are using summation convention). Indeed, Jensen's inequality gives
	\begin{align*}
		\xi^i\xi^j\eta_k\eta_{\kappa^\prime}\int p^{\kappa}_ip^{\kappa^\prime}_j\, \ext\nu_{x,f^\alpha(x)} = \int G(p)\, \ext\nu_{x,f^\alpha(x)} & \geq G\left(\int p\, \ext\nu_{x,f^\alpha(x)}\right)\\
		& = \xi^i\xi^j\eta_{\kappa}\eta_{\kappa^\prime}\left(\int p^{\kappa}_i\, \ext\nu_{x,f^\alpha(x)}\right)\left(\int p^{\kappa^\prime}_j\, \ext\nu_{x,f^\alpha(x)}\right)\\
		& = \xi^i\xi^j\eta_{\kappa}\eta_{\kappa^\prime}\del_i f^{\kappa,\alpha}\del_j f^{\kappa^\prime,\alpha}\\
		& \equiv (\eta\cdot\del_\xi f^\alpha(x))^2
	\end{align*}
	which is the claimed inequality; here, we have used \eqref{E:grad-Young-2} in the second-to-last equality.
	
	Finally we prove (c). Let $M_0\in \del B^{\V_3}_1(0)$ be such that $\dist(M_0,\{M\geq0\}) = 2\delta>0$. Suppose $\phi \in C^0_c(B^n_1(0)\times \del B_{1}^{\V_3}(0))$ has $\spt(\phi)\subset B^n_1(0)\times (B_{\delta}(M_0)\cap \del B^{\V_3}_1(0))$. Let $\phi^\infty(M):= |M|\phi(x,M/|M|)$ be its $1$-homogeneous extension in the $M$-variable. Then consider:
	$$\phi_R(x,M) := \rho(|M|/R)\phi^\infty(x,M),$$
	where $\rho:[0,\infty)\to \R$ is a non-decreasing function with $\rho(t)\equiv 0$ for $t<1/2$ and $\rho(t)\equiv 1$ for $t\geq 1$. Note that $\phi_R^\infty := \phi^\infty$ is the recession function of $\phi_R$. Thus, by dominated convergence, we have
	\begin{align*}
		F^\infty(\phi) \equiv F^\infty(\phi^\infty) = \lim_{R\to \infty}\mathcal{F}(\phi_R) = \lim_{R\to\infty}\lim_{j\to\infty}\mathcal{E}_{f_j}(\phi_R) = 0,
	\end{align*}
	where the last equality follows because $\mathcal{E}_{f_j}(\phi_R) = 0$ for any $R>0$ and $j\geq 1$, as $Df(x)\otimes Df(x)$ is always a non-negative definite matrix. This therefore proves the first part of (c). The second part of (c) follows immediately from the proof of Proposition \ref{prop:Y-compactness} and how the concentration part of the $Q$-Young measure arises. This therefore completes the proof of Proposition \ref{prop:grad-Young}.
\end{proof}

\subsection{Stationary Gradient $Q$-Young Measures}

Throughout this section, we fix $\mathcal{F} = (F\otimes \ext x, F^\infty)\in \grad\,\mathcal{Y}^Q$. We consider the action of $\mathcal{F}$ on two specific types of functions in $\CC(B^n_1(0)\times \V)$, namely:
\begin{itemize}
	\item the \emph{outer variation}
	$$\mathfrak{O}(\phi) := \mathcal{F}(p_i^\kappa y^\kappa \del_i \phi + M_{ii}^{\kappa\kappa}\phi) \qquad \text{for }\phi\in C^\infty_c(B^n_1(0));$$
	\item the \emph{inner variation}
	$$\mathfrak{I}(\phi) := \mathcal{F}((2M^{\kappa\kappa}_{ij} - \delta_{ij}M^{\kappa\kappa}_{\ell\ell})\del_i\phi^j) \qquad \text{for }\phi\in C^\infty_c(B^n_1(0);\V_1).$$ 
\end{itemize}
Here, our convention is that a repeated index is summed over.

We will assume throughout this section that:
\begin{itemize}
	\item $\mathcal{F}$ is \emph{stationary}, meaning that $\mathfrak{O}\equiv 0$ and $\mathfrak{I}\equiv 0$;
	\item $f\not\equiv 0$ on $B^n_1(0)$, where $f\in W^{1,2}(B^n_1(0);\A_Q(\R^k))$ is as in Proposition \ref{prop:grad-Young}.
\end{itemize}

Fix $\phi$ the Lipschitz cut-off function from Section \ref{sec:pff}. For $x_0\in B^n_1(0)$ and $r\in (0,1-|x_0|)$, we define
$$\mathcal{D}(x_0,r) := r^{2-n}\mathcal{F}\left(\phi\left(\frac{|x-x_0|}{r}\right)M_{ii}^{\kappa\kappa}\right),$$
$$\mathcal{H}(x_0,r) := r^{1-n}\mathcal{F}\left(-\phi^\prime\left(\frac{|x-x_0|}{r}\right)\frac{1}{|x-x_0|}|y|^2\right).$$
We stress that whilst $-\phi^\prime$ is not strictly a continuous function, one can still make sense of $\mathcal{H}(x_0,r)$ by approximation, and indeed using Proposition \ref{prop:grad-Young} we have
\begin{equation}\label{E:Y-H-alt}
\mathcal{H}(x_0,r) = -r^{1-n}\int \phi^\prime\left(\frac{|x-x_0|}{r}\right)\frac{1}{|x-x_0|}|f|^2.
\end{equation}
Then, provided $\mathcal{H}(x_0,r)>0$, we define the \emph{frequency function} by
$$\mathcal{I}(x_0,r) := \frac{\mathcal{D}(x_0,r)}{\mathcal{H}(x_0,r)}.$$

\begin{remark}\label{remark:Y-approximation}
    At various points we will need to evaluate $\mathcal{F}$ on certain functions arising from $\phi^\prime$ which are not continuous, as well as differentiating $\mathcal{D}(r)$. All of this can be justified by Proposition \ref{prop:grad-Young}, since we can regard $F_x$ as corresponding to the probability measures $\nu_{x,f^\alpha(x)}$ and we can regard $F^\infty$ as a positive Radon measure (and so $\|F^\infty\|(\{|x|=r\}) = 0$ for a.e.~$r\in (0,1)$). Thus, by standard approximation arguments and the Dominated Convergence Theorem, $\mathcal{D}(r)$ is absolutely continuous on $(0,1)$, and furthermore equations such as \eqref{E:Y-freq-1} and the formula for $\mathcal{D}^\prime(r)$ below both hold true for a.e.~$r\in (0,1)$.
\end{remark}

\begin{prop}[{\cite[Proposition 3.7(4)]{HS24}}]\label{prop:Y-frequency}
	Let $\mathcal{F}\in\grad\,\mathcal{Y}^Q$ be as above, and suppose $x_0\in B^n_1(0)$ is such that $f\not\equiv 0$ on $B_{r}(x_0)$ for all $r\in (0,1-|x_0|)$. Then, $r\mapsto \mathcal{I}(x_0,r)$ is monotone non-decreasing on $(0,1-|x_0|)$.
\end{prop}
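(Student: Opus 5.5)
We mimic the proof of monotonicity for the blow-up frequency in Section \ref{sec:when-energy-convergence-known}, replacing the strong $W^{1,2}$ limit by the stationarity identities $\mathfrak{O}\equiv 0$, $\mathfrak{I}\equiv 0$ of $\mathcal{F}$. By translation we take $x_0=0$ and write $r=|x|$, $\phi_\rho:=\phi(r/\rho)$. All test functions below are only Lipschitz, so each step is justified via Remark \ref{remark:Y-approximation}: approximate by smooth cut-offs, then use the disintegration from Proposition \ref{prop:grad-Young} together with dominated convergence. This yields the identities for a.e.\ $\rho$, and absolute continuity of $\mathcal{D}$ and $\mathcal{H}$.

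\textbf{Step 1 (derivative of $\mathcal{H}$).} By \eqref{E:Y-H-alt}, $\mathcal{H}$ is expressed through $f$ alone. Differentiating under the integral as for a classical $W^{1,2}$ function gives
\[
\mathcal{H}'(\rho)=2\rho^{-1}\mathcal{E}(\rho),\qquad \mathcal{E}(\rho):=-\rho^{1-n}\int\phi'(r/\rho)\sum_\alpha f^\alpha\cdot D_rf^\alpha .
\]
The computation is in polar coordinates, exactly as at the blow-up level.

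\textbf{Step 2 ($\mathcal{D}=\mathcal{E}$).} Test the outer variation with $\phi_\rho$, which gives
\[
\mathcal{D}(\rho)=-\rho^{2-n}\mathcal{F}\bigl(p_i^\kappa y^\kappa\partial_i\phi_\rho\bigr).
\]
This test function has vanishing recession function and is linear in $p$. By \eqref{E:grad-Young-1} and \eqref{E:grad-Young-2} its value is therefore $\int\sum_\alpha f^\alpha\cdot Df^\alpha\,\partial_i\phi_\rho$, so $\mathcal{D}=\mathcal{E}$. In particular $\mathcal{H}(\rho)>0$: otherwise $f\equiv 0$ on the annulus, so $\mathcal{D}(\rho)=0$, which forces $Df=0$ on $B_\rho$ via \eqref{E:grad-Young-3}, hence $f\equiv 0$ on $B_\rho$, contradicting the hypothesis.

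\textbf{Step 3 (derivative of $\mathcal{D}$).} Test the inner variation with $Y(x)=x\,\phi_\rho$. Using $\partial_iY^j=\phi_\rho\delta_{ij}+\phi'(r/\rho)\frac{x_ix_j}{r\rho}$, this gives
\[
(n-2)\mathcal{F}\bigl(\phi_\rho M^{\kappa\kappa}_{\ell\ell}\bigr)+\mathcal{F}\Bigl(\phi'(r/\rho)\tfrac{r}{\rho}\bigl(M^{\kappa\kappa}_{\ell\ell}-2M^{\kappa\kappa}_{ij}\tfrac{x_ix_j}{r^2}\bigr)\Bigr)=0 .
\]
Combined with the direct formula for $\mathcal{D}'$, this yields
\[
\mathcal{D}'(\rho)=2\mathcal{G}(\rho),\qquad \mathcal{G}(\rho):=-\rho^{-n}\mathcal{F}\Bigl(r\,\phi'(r/\rho)\,M^{\kappa\kappa}_{ij}\tfrac{x_ix_j}{r^2}\Bigr).
\]
Here $\mathcal{G}$ includes both the $F\otimes\ext x$ part and the concentration part $F^\infty$.

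\textbf{Step 4 (key inequality, the main obstacle).} Classical Cauchy--Schwarz needs $\mathcal{G}\ge -\rho^{-n}\int r\phi'(r/\rho)\sum_\alpha|D_rf^\alpha|^2$. Two facts give this, since $-\phi'\ge 0$:
\begin{itemize}
\item By \eqref{E:grad-Young-3}, the diffuse part satisfies $\int M_{ij}\tfrac{x_ix_j}{r^2}\,\ext\nu_{x,f^\alpha}\ge |D_rf^\alpha|^2$.
\item By Proposition \ref{prop:grad-Young}(c), $F^\infty$ is positive and supported on $\{M\ge 0\}$, so its contribution is nonnegative.
\end{itemize}
Then, with $\mathcal{E}^2\le \rho\,\mathcal{H}\cdot\widetilde{\mathcal{G}}\le\rho\,\mathcal{H}\mathcal{G}$ (Cauchy--Schwarz in $f$ and $D_rf$),
\[
\frac{\ext}{\ext\rho}\log\mathcal{I}=\frac{2\bigl(\mathcal{G}\mathcal{H}-\rho^{-1}\mathcal{E}^2\bigr)}{\mathcal{D}\mathcal{H}}\ge 0 .
\]
The case $\mathcal{D}(\rho)=0$ is trivial: then $\mathcal{I}=0$ there and the argument of Step 2 excludes it locally.

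The delicate point is precisely that $\mathcal{G}$ is only a lower-semicontinuous surrogate for the radial energy of $f$, with a possible energy defect, whereas $\mathcal{D}$ and $\mathcal{E}$ are computed through $\mathcal{F}$ and $f$ respectively. One must check that the defect enters only with the favourable sign. This uses that the same $M$-functional defines both $\mathcal{D}$ and $\mathcal{G}$ through the inner variation, while $\mathcal{E}$ and $\mathcal{H}$ see only $f$ by \eqref{E:grad-Young-2}. The justification of the a.e.\ differentiation under Remark \ref{remark:Y-approximation} is routine but must be carried out carefully for the $F^\infty$ term on spheres $\{|x|=\rho\}$.
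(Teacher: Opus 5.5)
Your proposal is correct and follows essentially the same route as the paper. You use the outer variation with $\phi(|x|/\rho)$ to get $\mathcal{H}'=2\mathcal{D}/\rho$ and the inner variation with $x\,\phi(|x|/\rho)$ to get $\mathcal{D}'=2\mathcal{G}$, then give the same argument for $\mathcal{H}>0$ and close with Cauchy--Schwarz via \eqref{E:grad-Young-3} and Proposition \ref{prop:grad-Young}(c).

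One small correction: Step 2 does not exclude $\mathcal{D}(\rho)=0$, since a nonzero constant $f$ is stationary with $\mathcal{D}\equiv 0$. This is harmless, because you can avoid dividing by $\mathcal{D}$ by differentiating $\mathcal{I}=\mathcal{D}/\mathcal{H}$ directly as the paper does, which only needs $\mathcal{H}>0$.
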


\textbf{Remark:} In Remark \ref{remark:Y-defined}, we will see that one can remove the assumption that $f\not\equiv 0$ on $B_r(x_0)$ for all $r\in (0,1-|x_0|)$, as this will be implied by the assumption that $f\not\equiv 0$ on $B_1(0)$.

\begin{proof}
	We can without loss of generality assume that $x_0 = 0$, else consider $(\tilde{\eta}_{x_0,1-|x_0|})_\#\mathcal{F}$, where $\tilde{\eta}_{x_0,r}(x,y,p,M) := (\frac{x-x_0}{r},y,rp,r^2M)$. We will write $\mathcal{D}(r)\equiv \mathcal{D}(0,r)$ and so forth.
	
	For $\phi$ the cut-off function as in Section \ref{sec:pff}, we know by assumption that $\mathfrak{I}(\phi(|x|/r)x) = 0$ and $\mathfrak{O}(\phi(|x|/r)) = 0$ (cf.~Remark \ref{remark:Y-approximation}). These give the identities:
	\begin{equation}\label{E:Y-freq-1}
		\mathcal{F}\left((2-n)\phi(|x|/r)M_{ii}^{\kappa\kappa}-\phi^\prime(|x|/r)\frac{|x|}{r}M_{ii}^{\kappa\kappa}\right) + 2\mathcal{F}\left(\phi^\prime\left(\frac{|x|}{r}\right)\frac{|x|}{r}\frac{x^T}{|x|}M^{\kappa\kappa}\frac{x}{|x|}\right) = 0;
	\end{equation}
	\begin{equation}\label{E:Y-freq-2}
		\mathcal{F}(\phi(|x|/r)M_{ii}^{\kappa\kappa}) + \int\frac{\phi^\prime(|x|/r)}{|x|r}\sum^Q_{\alpha=1}f^\alpha(x)\cdot Df^\alpha(x) x\, \ext x = 0;
	\end{equation}
	where we have used \eqref{E:grad-Young-2} in the outer variation computation. We may then compute:
	\begin{align*}
		\mathcal{D}^\prime(r) & = r^{1-n}\mathcal{F}\left((2-n)\phi(|x|/r)M_{ii}^{\kappa\kappa}-\phi^\prime(|x|/r)\frac{|x|}{r}M_{ii}^{\kappa\kappa}\right)\\
		& = -2r^{1-n}\mathcal{F}\left(\phi^\prime(|x|/r)\frac{|x|}{r}\frac{x^T}{|x|}M^{\kappa\kappa}\frac{x}{|x|}\right),
	\end{align*}
	where the second equality comes from \eqref{E:Y-freq-1}. To compute the derivative of $\mathcal{H}(r)$, note that by \eqref{E:Y-H-alt} and a change of variables $w = x/r$ we have
	$$\mathcal{H}(r) = -\int\phi^\prime(w)\frac{1}{|w|}|f(rw)|^2\, \ext w$$
	and so, differentiating under the integral sign and changing variables back to $x$, we get
	$$\mathcal{H}^\prime(r) = -2r^{1-n}\int\frac{\phi^\prime(|x|/r)}{r|x|}\sum^Q_{\alpha=1}f^\alpha(x)\cdot Df^\alpha(x)x\, \ext x.$$
	Thus, \eqref{E:Y-freq-2} gives
	\begin{equation}\label{E:Y-freq-3}
	\mathcal{H}^\prime(r) = \frac{2}{r}\mathcal{D}(r).
	\end{equation}
	In particular, Proposition \ref{prop:grad-Young}(c) gives that both $\mathcal{D}^\prime(r)\geq 0$ and $\mathcal{H}^\prime(r)\geq 0$ (recall that $-\phi^\prime\geq 0$), and thus both $\mathcal{H}(r)$ and $\mathcal{D}(r)$ are non-decreasing.
	
	Let us note that the given condition gives that $\mathcal{H}(r)>0$ for all $r\in (0,1)$. Indeed, if $\mathcal{H}(r) = 0$ for some $r$, then $f\equiv 0$ on $B_r(0)\setminus\overline{B}_{r/2}(0)$. But then from \eqref{E:Y-freq-2} (combined with \eqref{E:grad-Young-3}), we would need $|Df|\equiv 0$ on $B_r(0)$, and thus that $f$ is constant on $B_r(0)$. As it is zero on $B_r(0)\setminus\overline{B}_{r/2}(0)$, it would therefore need $f\equiv 0$ on $B_r(0)$, contradicting our assumption.
	
	Thus, $\mathcal{H}(r)>0$ for all $r\in (0,1)$, and so $\mathcal{I}(r)$ is well-defined for all $r\in (0,1$). Using the above expressions we can then compute:
	\begin{align*}
		\mathcal{I}^\prime(r) & = \frac{\mathcal{D}^\prime(r)\mathcal{H}(r) - \mathcal{D}(r)\mathcal{H}^\prime(r)}{\mathcal{H}(r)^2}\\
		& = \frac{2r^{1-2n}}{\mathcal{H}(r)^2}\left[\mathcal{F}\left(-\phi^\prime(|x|/r)\frac{|x|}{r}\frac{x^T}{|x|}M^{\kappa\kappa}\frac{x}{|x|}\right)\left(\int-\phi^\prime(|x|/r)\frac{r}{|x|}|f|^2\right)\right.\\
		& \hspace{18em}\left.-\left(\int\phi^\prime(|x|/r)\sum^Q_{\alpha=1}f^\alpha(x)\cdot Df^\alpha(x)\frac{x}{|x|}\, \ext x\right)^2\right].
	\end{align*}
	But now using Cauchy--Schwartz followed by \eqref{E:grad-Young-1}, \eqref{E:grad-Young-3}, and (c) of Proposition \ref{prop:grad-Young} (as then the function inside $\mathcal{F}$ in the above is non-negative due to the restriction on the support of $\mathcal{F}$), we see that $\mathcal{I}^\prime(r)\geq 0$, completing the proof.
\end{proof}

In particular, Proposition \ref{prop:Y-frequency} tells us that about any point $x_0\in B^n_1(0)$ for which $f\not\equiv 0$ about $x_0$, we have that the \emph{frequency} of $f$ at $x_0$, namely
$$\mathcal{I}(x_0) := \lim_{r\to 0}\mathcal{I}(x_0,r)$$
is well-defined as a number in $[0,\infty)$.

We get several immediate corollaries from Proposition \ref{prop:Y-frequency}.

\begin{prop}[{\cite[Proposition 3.7(5)]{HS24}}]\label{prop:Y-bounds}
	Let $\mathcal{F}$ be as above. Then for a given $x_0\in B^n_1(0)$, we either have that $f\equiv 0$ on $B_{1-|x_0|}(x_0)$, or $\mathcal{H}(x_0,r)>0$ for all $r\in (0,1-|x_0|)$.
	
	In fact, for any $0<r\leq R<1-|x_0|$ we always have
	$$\left(\frac{R}{r}\right)^{2\mathcal{I}(x_0,r)}\mathcal{H}(x_0,r) \leq \mathcal{H}(x_0,R) \leq \left(\frac{R}{r}\right)^{2\mathcal{I}(x_0,R)}\mathcal{H}(x_0,r).$$
\end{prop}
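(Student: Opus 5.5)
The plan is to integrate the logarithmic derivative identity \eqref{E:Y-freq-3}, namely $\mathcal{H}^\prime(x_0,r) = \frac{2}{r}\mathcal{D}(x_0,r)$, established in the proof of Proposition \ref{prop:Y-frequency}, and then use the monotonicity of $\mathcal{I}(x_0,\cdot)$. As there, I translate and rescale via $\tilde{\eta}_{x_0,1-|x_0|}$ so that $x_0=0$. The identity \eqref{E:Y-freq-3} was derived from the outer variation $\mathfrak{O}(\phi(|x|/r))=0$ together with \eqref{E:Y-H-alt} and \eqref{E:grad-Young-2}. It did not use $\mathcal{H}>0$, so it holds for a.e.~$r$ (cf.~Remark \ref{remark:Y-approximation}), and $\mathcal{H}$ is absolutely continuous in $r$.

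\emph{Step 1 (dichotomy).} Suppose $\mathcal{H}(r_0)=0$ for some $r_0\in(0,1)$. Since $\mathcal{H}^\prime = \frac{2}{r}\mathcal{D}\geq 0$ by Proposition \ref{prop:grad-Young}(c) and \eqref{E:grad-Young-3}, $\mathcal{H}$ is non-decreasing and nonnegative. Hence $\mathcal{H}\equiv 0$ on $(0,r_0]$, and then $\mathcal{D}\equiv 0$ on $(0,r_0)$. Since $\mathcal{D}(r)\geq r^{2-n}\int_{B_{r/2}}|Df|^2$ by \eqref{E:grad-Young-3}, this gives $Df\equiv 0$, and with $f\equiv 0$ on annuli we get $f\equiv 0$ on $B_{r_0}$.

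The main obstacle is propagating this vanishing outward to all of $B_1$. The set $\{r:\mathcal{H}(r)=0\}$ is an interval $(0,\bar r]$, and I want $\bar r = 1$. Suppose instead $\bar r<1$, so that $\mathcal{H}>0$ on $(\bar r,1)$. Then on $(\bar r,1)$ the frequency $\mathcal{I}$ is defined and non-decreasing by the computation in Proposition \ref{prop:Y-frequency}, which only used $\mathcal{H}>0$ on the interval considered. So $\mathcal{I}\le \mathcal{I}(R)=:\Lambda$ on $(\bar r,R]$. From $(\log\mathcal{H})^\prime = 2\mathcal{I}/r\le 2\Lambda/r$ we get $\mathcal{H}(r)\ge (r/R)^{2\Lambda}\mathcal{H}(R)$ for $r\in(\bar r,R]$. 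Letting $r\downarrow\bar r$ and using continuity of $\mathcal{H}$ gives $\mathcal{H}(\bar r)>0$, a contradiction. Hence either $\mathcal{H}>0$ on $(0,1)$, or $\mathcal{H}\equiv0$ throughout, in which case $f\equiv0$ on $B_1$ by the argument above.

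\emph{Step 2 (bounds).} In the case $\mathcal{H}>0$, for a.e.~$s$ we have $\frac{\ext}{\ext s}\log\mathcal{H}(s) = \frac{2\mathcal{I}(s)}{s}$. Integrating over $[r,R]$ and using $\mathcal{I}(r)\le\mathcal{I}(s)\le\mathcal{I}(R)$ from Proposition \ref{prop:Y-frequency} gives
$$2\mathcal{I}(r)\log\frac{R}{r}\le\log\frac{\mathcal{H}(R)}{\mathcal{H}(r)}\le 2\mathcal{I}(R)\log\frac{R}{r}.$$
Exponentiating yields the claimed inequalities. In the case $f\equiv0$, both sides vanish, so the inequalities hold trivially under any convention for $\mathcal{I}$.
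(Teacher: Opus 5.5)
Your proof is correct and takes essentially the same route as the paper. You use that $\mathcal{H}$ is non-decreasing by \eqref{E:Y-freq-3}, so its zero set is an initial segment (the complement of the paper's maximal positivity interval $(a,1)$); the lower bound $\mathcal{H}(r)\ge (r/R)^{2\mathcal{I}(R)}\mathcal{H}(R)$ from frequency monotonicity, together with continuity, rules out a positive endpoint; and integrating $(\log\mathcal{H})'=2\mathcal{I}/r$ gives the bounds. The detour through $\mathcal{D}\equiv 0$ to get $f\equiv 0$ on $B_{r_0}$ is harmless but unnecessary, since $\mathcal{H}\equiv 0$ on $(0,r_0]$ already forces $f=0$ on every annulus $B_r\setminus B_{r/2}$ with $r\le r_0$.
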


\begin{remark}\label{remark:Y-defined}
An immediate consequence of Proposition \ref{prop:Y-bounds} is that either $f\equiv 0$, or $\mathcal{I}(x_0,r)$ is well-defined for all $x_0\in B^n_1(0)$ and $r\in (0,1-|x_0|)$. Indeed, if $\mathcal{H}(x_0,r) = 0$ for some $x_0\in B^n_1(0)$ and $r\in (0,1-|x_0|)$, then Proposition \ref{prop:Y-bounds} gives that $f\equiv 0$ on $B^n_{1-|x_0|}(x_0)$. But then for all $y\in B^n_{1-|x_0|}(x_0)$ and sufficiently small $r$ we have $\mathcal{H}(y,r) = 0$, and so again $f\equiv 0$ on $B^n_{1-|y|}(y)$ for all $y\in B^n_{1-|x_0|}(x_0)$. Repeating this eventually we get that we can take $y = 0$, and so $f\equiv 0$ on $B^n_1(0)$.
\end{remark}

In particular, since we assumed that $f\not\equiv 0$ on $B_1^n(0)$, we see that $\mathcal{H}(x_0,r)>0$ for all $x_0\in B^n_1(0)$ and $r\in (0,1-|x_0|)$, meaning that $\mathcal{I}(x_0,r)$ is well-defined always.

\begin{proof}
	Again by translating and rescaling we can without loss of generality assume that $x_0 = 0$.
	
	If $\mathcal{H}(r_0)>0$, then $\mathcal{H}(r)>0$ for all $r\in (r_0-\eps,r_0+\eps)$ for some $\eps>0$; let us therefore suppose $(a,b)$ is the largest interval containing $r_0$ on which $\mathcal{H}>0$ (we will show that in fact $(a,b) = (0,1)$). We can then compute for $r\in (a,b)$:
	$$\frac{\ext}{\ext r}\log \mathcal{H}(r) = \frac{\mathcal{H}^\prime(r)}{\mathcal{H}(r)} = \frac{2}{r}\mathcal{I}(r)$$
	where we have used \eqref{E:Y-freq-3}. Integrating we therefore get for $r,R\in (a,b)$, $r<R$,
	$$\frac{\mathcal{H}(R)}{\mathcal{H}(r)} = e^{\int^R_r\frac{2}{\rho}\mathcal{I}(\rho)\, \ext\rho}.$$
	Using the monotonicity of $\mathcal{I}$ from Proposition \ref{prop:Y-frequency} we therefore get
	$$\left(\frac{R}{r}\right)^{2\mathcal{I}(r)}\mathcal{H}(r) \leq\mathcal{H}(R) \leq \left(\frac{R}{r}\right)^{2\mathcal{I}(R)}\mathcal{H}(r).$$
	In particular, since $\mathcal{H}$ is increasing (from \eqref{E:Y-freq-3}), as $\mathcal{H}(r_0)>0$ we know $\mathcal{H}>0$ on $(a,1)$, i.e.~$b=1$. Thus, for $r\in (a,1)$ we have
	$$\mathcal{H}(r) \geq r^{2\mathcal{I}(1)}\mathcal{H}(1).$$
	This provides a constant lower bound on $\mathcal{H}(r)$, and thus we see that we must in fact have $\mathcal{H}(r)>0$ for all $r\in (0,1)$, i.e.~$a=0$. This completes the proof.
\end{proof}

\begin{prop}\label{prop:Y-usc}
	Let $\mathcal{F}$ be as above. Then, the function $x\mapsto \mathcal{I}(x)$ is upper semi-continuous.
\end{prop}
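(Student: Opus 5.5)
The argument follows the usual route for frequency functions: $\mathcal{I}(x)$ is an infimum of continuous functions of $x$. Concretely, I will show two things. First, for each fixed $r>0$ the map $x\mapsto\mathcal{I}(x,r)$ is continuous on $\{x: r<1-|x|\}$. Second, $\mathcal{I}(x)=\inf_{r}\mathcal{I}(x,r)$ by the monotonicity of Proposition \ref{prop:Y-frequency}. Granting these, take $x_j\to x$. For any fixed small $r>0$ we have $\mathcal{I}(x_j)\leq \mathcal{I}(x_j,r)\to\mathcal{I}(x,r)$, so $\limsup_j\mathcal{I}(x_j)\leq\mathcal{I}(x,r)$. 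Letting $r\downarrow 0$ gives $\limsup_j\mathcal{I}(x_j)\leq\mathcal{I}(x)$. By Remark \ref{remark:Y-defined}, $\mathcal{H}(x,r)>0$ everywhere, since we assume $f\not\equiv 0$, so every quantity here is well-defined.

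For continuity of $\mathcal{H}(\cdot,r)$, I use the representation \eqref{E:Y-H-alt}. It expresses $\mathcal{H}(x,r)$ as the integral of $|f|^2$ against the kernel $-\phi^\prime(|w-x|/r)|w-x|^{-1}r^{1-n}$. This kernel is bounded (it equals $2r^{-n}|w-x|^{-1}$ on the annulus $r/2\leq|w-x|\leq r$) and varies continuously in $L^1$ under translation in $x$. Since $|f|^2\in L^1$, the map $x\mapsto\mathcal{H}(x,r)$ is continuous, and it is positive.

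For $\mathcal{D}(\cdot,r)$ there is a genuine difficulty, because $\mathcal{F}$ contains the concentration part $F^\infty$. Write
\[
\mathcal{D}(x,r)=r^{2-n}\Big(\int F_w\big(\phi(|w-x|/r)M^{\kappa\kappa}_{ii}\big)\,\ext w + F^\infty\big(\phi(|w-x|/r)M^{\kappa\kappa}_{ii}\big)\Big).
\]
The absolutely continuous part is handled exactly as for $\mathcal{H}$: by \eqref{E:grad-Young-3}, $w\mapsto F_w(M^{\kappa\kappa}_{ii})$ is an $L^1$ function, and $\phi$ is Lipschitz, so this term is continuous in $x$. For the concentration part, Proposition \ref{prop:grad-Young}(c) says $F^\infty$ is a positive Radon measure on $B^n_1(0)\times\del B^{\V_3}_1(0)$. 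The integrand $\phi(|w-x|/r)\,\mathrm{tr}(M)$ is continuous in $(w,M)$, bounded, compactly supported in $w$, and depends continuously on $x$ in sup norm, because $\phi$ is Lipschitz. Dominated convergence then gives continuity in $x$ of this term too.

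The main obstacle is exactly this concentration term. With a sharp cut-off $\one_{B_r}$, atoms of $\|F^\infty\|$ on spheres would destroy continuity; the Lipschitz cut-off $\phi$ is what avoids this, since it makes the integrand continuous on the compactified space. One also has to justify that Proposition \ref{prop:Y-frequency} applies with this specific $\phi$, which is handled by the approximation in Remark \ref{remark:Y-approximation}. If needed, I would simply restrict to radii $r$ with $\|F^\infty\|(\{|w-x|=r\}\times\del B^{\V_3}_1(0))=0$, and such radii exist for a.e.\ $r$. This restriction suffices, because the upper semicontinuity argument only needs a sequence $r\downarrow0$.
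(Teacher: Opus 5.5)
Your argument is correct and matches the paper's: for fixed $r$, continuity of $\mathcal{D}(\cdot,r)$ and $\mathcal{H}(\cdot,r)$ gives continuity of $x\mapsto \mathcal{I}(x,r)$, and monotonicity in $r$ followed by $r\downarrow 0$ finishes; your treatment of the concentration part $F^\infty$ via the Lipschitz cut-off just fills in what the paper calls ``clear''. One small fix: for continuity of $\mathcal{H}(\cdot,r)$, translating the kernel in $L^1$ against $|f|^2\in L^1$ is not enough by itself, so use dominated convergence instead (the bounded kernel converges off two null spheres), or translate $|f|^2$ in $L^1$ against the bounded kernel.
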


\begin{proof}
	Suppose we have $x_i\to x\in B^n_1(0)$ with $\mathcal{I}(x_i)$ defined for all $i$; in particular, $\mathcal{I}(x)$ must be well-defined, else $f\equiv 0$ on a neighbourhood of $x$ which implies that $f\equiv 0$ on a neighbourhood of $x_i$ for all $i$ sufficiently large, a contradiction. Now fix $r>0$. We clearly have $\mathcal{D}(x_i,r)\to \mathcal{D}(x,r)$ and $\mathcal{H}(x_i,r)\to \mathcal{H}(x,r)$, and so $\mathcal{I}(x_i,r)\to \mathcal{I}(x,r)$. Hence from the monotonicity of $\mathcal{I}$,
	$$\limsup_{i\to\infty}\mathcal{I}(x_i) \leq \limsup_{i\to\infty} \mathcal{I}(x_i,r) = \mathcal{I}(x,r).$$
	Taking $r\downarrow 0$ then gives the result.
\end{proof}

From the proof of Proposition \ref{prop:Y-frequency}, we get a characterisation of constant frequency via \emph{homogeneity}, i.e.:

\begin{defn}
	For $I\in [0,\infty)$, we say that $\mathcal{F}\in \grad\,\mathcal{Y}^Q$ is $I$\emph{-homogeneous} if:
	\begin{itemize}
		\item $\frac{\del}{\del |x|}\left(\frac{f(x)}{|x|^I}\right) = 0$ for a.e.~$x\in B^n_1(0)$;
		\item $\int x^TM^{\kappa\kappa}x\, \ext\nu_{x,f^\alpha(x)} = I^2|f^{\alpha}(x)|^2$ for a.e.~$x\in B^n_1(0)$;
		\item $F^\infty(x^TM^{\kappa\kappa}x) = 0$.
	\end{itemize}
	Here, $f$ is as in Proposition \ref{prop:grad-Young} for $\mathcal{F}$.
\end{defn}

\begin{prop}[{\cite[Corollary 3.8]{HS24}}]\label{prop:Y-constant-frequency}
	Let $\mathcal{F}$ be as above. Then, $\mathcal{I}(0,r)\equiv I$ for $r\in (0,1)$ if and only if $\mathcal{F}$ is $I$-homogeneous.
\end{prop}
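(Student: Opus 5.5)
The plan is to read off both directions from the computation in the proof of Proposition \ref{prop:Y-frequency}. Working at $x_0=0$, that proof gives $\mathcal{H}'(r)=\frac{2}{r}\mathcal{D}(r)$ from \eqref{E:Y-freq-3}, and $\mathcal{D}'(r)=-2r^{1-n}\mathcal{F}(\phi'(|x|/r)\frac{|x|}{r}\frac{x^T}{|x|}M^{\kappa\kappa}\frac{x}{|x|})$ from \eqref{E:Y-freq-1}. Combining these with \eqref{E:Y-freq-2} writes $\mathcal{I}'(r)$ as $\frac{2r^{1-2n}}{\mathcal{H}(r)^2}[\,A(r)B(r)-C(r)^2\,]$. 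Here $A$ is the $\mathcal{F}$-term, $B=\int -\phi'\frac{r}{|x|}|f|^2$, and $C=\int\phi'\sum_\alpha f^\alpha\cdot Df^\alpha\frac{x}{|x|}$. The key point is that the nonnegativity of $AB-C^2$ came from a chain of three inequalities. Using Proposition \ref{prop:grad-Young}, I would split
\[
A=\int(-\phi')\frac{|x|}{r}\sum_\alpha\int\frac{x^TM^{\kappa\kappa}x}{|x|^2}\,\ext\nu_{x,f^\alpha(x)}\,\ext x+F^\infty\Big((-\phi')\tfrac{|x|}{r}\tfrac{x^TM^{\kappa\kappa}x}{|x|^2}\Big)=:A_1+A_\infty .
\]
The three inequalities are then as follows.
\begin{itemize}
\item[(i)] $A_\infty\ge0$, by Proposition \ref{prop:grad-Young}(c).
\item[(ii)] $A_1\ge A_2:=\int(-\phi')\frac{|x|}{r}\sum_\alpha|D_rf^\alpha|^2$. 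This holds by Jensen, i.e.\ \eqref{E:grad-Young-3} together with \eqref{E:grad-Young-2}.
\item[(iii)] $A_2B\ge C^2$, by Cauchy--Schwarz for the measure $(-\phi')\,\ext x$.
\end{itemize}

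For the direction "$\mathcal{I}\equiv I$ implies $I$-homogeneous", constancy gives $\mathcal{I}'=0$ a.e. This forces equality in all three inequalities for a.e.\ $r$.
\begin{itemize}
\item[(i)] Equality in (i) for a.e.\ $r$, together with positivity of $F^\infty$ and the support restriction, gives $F^\infty(x^TM^{\kappa\kappa}x)=0$. Here I would integrate over $r$ to replace $-\phi'(|x|/r)$ by a positive weight on every annulus.
\item[(ii)] Equality in Jensen gives $\int x^TM^{\kappa\kappa}x\,\ext\nu_{x,f^\alpha(x)}=|x|^2|D_rf^\alpha|^2$ a.e.
\item[(iii)] Equality in Cauchy--Schwarz gives $D_rf^\alpha=\lambda(r)f^\alpha/|x|$ on the annulus $\{r/2<|x|<r\}$, with $\lambda(r)$ constant in $x$.
\end{itemize}
To identify $\lambda$, I would use $\mathcal{H}'=\frac2r\mathcal{D}$, $\mathcal{D}=\frac{1}{r}C$ (from \eqref{E:Y-freq-2}, up to sign) and $\mathcal{I}=I$. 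Plugging in gives $\lambda\equiv I$. Since the annuli overlap for varying $r$, this yields $D_rf^\alpha=If^\alpha/|x|$ a.e., i.e.\ $\partial_{|x|}(f/|x|^I)=0$. It also gives $|x|^2|D_rf^\alpha|^2=I^2|f^\alpha|^2$. These are precisely the three conditions in the definition.

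For the converse, assume $\mathcal{F}$ is $I$-homogeneous. Then (i)–(iii) hold with equality at every $r$, so $\mathcal{I}'(r)=0$ a.e., and $\mathcal{I}$ is constant by absolute continuity (Remark \ref{remark:Y-approximation}). It remains to see that the constant is $I$. For this I would compute $\mathcal{D}(r)=\frac1r C(r)$ via \eqref{E:Y-freq-2}. Using $f^\alpha\cdot D_rf^\alpha=I|f^\alpha|^2/|x|$, this gives $\mathcal{D}=I\mathcal{H}$.

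The main obstacle is the equality case of (i). The concentration measure $F^\infty$ is only known to vanish against test functions weighted by $-\phi'(|x|/r)$, which is supported on annuli. One must check, using Remark \ref{remark:Y-approximation} to ignore the null set of $r$ with $\|F^\infty\|(\{|x|=r\})>0$, that a.e.\ vanishing in $r$ upgrades to $F^\infty(x^TM^{\kappa\kappa}x)=0$ on all of $B_1$. The same care is needed to make the Cauchy--Schwarz equality produce an $x$-independent $\lambda$.
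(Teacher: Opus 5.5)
Your proposal is correct and is essentially the paper's argument: the paper also reads both directions off the proof of Proposition~\ref{prop:Y-frequency}, using equality in Cauchy--Schwarz together with equality in the Jensen and positivity inequalities of Proposition~\ref{prop:grad-Young}. The only cosmetic difference is that the paper rewrites $\mathcal{I}'(r)$ with the completed square $\sum_\alpha|Df^\alpha x-\mathcal{I}(r)f^\alpha|^2$, so the proportionality factor is identified as $\mathcal{I}(r)$ directly, whereas you identify it afterwards via $\mathcal{D}=\lambda\mathcal{H}$.

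One small slip in that identification: \eqref{E:Y-freq-2} gives $\mathcal{D}(r)=-r^{1-n}C(r)$, not $\frac1r C(r)$ (the latter is, up to sign, the unnormalised $\mathcal{F}(\phi(|x|/r)M^{\kappa\kappa}_{ii})$). With the correct normalisation, $C=-\lambda r^{n-1}\mathcal{H}$ does yield $\lambda=\mathcal{I}(r)=I$. The same computation already gives $\mathcal{D}\equiv I\mathcal{H}$ in the converse direction, so proving $\mathcal{I}'=0$ there first is redundant.
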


\begin{proof}
	This follows immediately from the proof of Proposition \ref{prop:Y-frequency} via the equality case in the Cauchy--Schwarz inequality as well as the inequalities used from Proposition \ref{prop:grad-Young}. To see the homogeneity is determined by the (constant) frequency value, we note that the expression for $\mathcal{I}^\prime(r)$ can also be written as
    \begin{align*}
        \mathcal{I}^\prime(r) & = \frac{2r^{1-n}}{\mathcal{H}(r)}\left[\mathcal{F}\left(-\phi^\prime(|x|/r)\frac{|x|}{r}\frac{x^T}{|x|}M^{\kappa\kappa}\frac{x}{|x|}\right) + \int\phi^\prime(|x|/r)\frac{1}{|x|r}\sum^Q_{\alpha=1}|Df^\alpha(x)x|^2\, \ext x\right.\\
        &\hspace{15em}\left. -\int\phi^\prime(|x|/r)\frac{1}{|x|r}\sum^Q_{\alpha=1}|Df^\alpha(x)x-\mathcal{I}(r)f^\alpha(x)|^2\, \ext x\right].
    \end{align*}
\end{proof}

It is only at this point, in our final two propositions, that we deviate from \cite{HS24}. Nonetheless, we use arguments similar to those seen in \cite[Proposition 3.7 \& Lemma 3.11]{HS24}.

If $\mathcal{F}$ as above is $I$-homogeneous for some $I>0$, we know that the associated function $f\in W^{1,2}(B^n_1(0);\A_Q(\R^k))$ is $I$-homogeneous; in such a situation, we will abuse notation and also write $f\in W^{1,2}(\R^n;\A_Q(\R^k))$ for its $I$-homogeneous extension to $\R^n$. In particular, standard arguments imply that its \emph{spine}
$$S_f := \{z\in\R^n: f(x+z) = f(x) \text{ for a.e. }x\in \R^n\}$$
is a subspace of $\R^n$. In this situation, we claim:
\begin{prop}\label{prop:Y-spine}
	Suppose $\mathcal{F}$ as above is $I$-homogeneous for some $I>0$. Then:
	$$S_f\cap B^n_1(0) = \{x\in B^n_1(0): \mathcal{I}(x) = \mathcal{I}(0)\}.$$
	Furthermore, $\mathcal{F}$ is $I$-homogeneous centered at any $x\in S_f\cap B^n_1(0)$.
\end{prop}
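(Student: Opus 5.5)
We prove the two inclusions separately; the second claim of the proposition (homogeneity about spine points) comes out of the proof of the inclusion ``$\supset$''.

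\emph{Inclusion ``$\subset$''.} Fix $z\in S_f\cap B^n_1(0)$. First we show that translation by $z$ leaves $\mathcal{F}$, and not only $f$, invariant, at least in the sense relevant for $\mathcal{D}$ and $\mathcal{H}$.

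Since $f(x+z)=f(x)$, formula \eqref{E:Y-H-alt} gives $\mathcal{H}(z,r)=\mathcal{H}(0,r)$ directly. For $\mathcal{D}$ the argument is less direct, because $\mathcal{F}$ contains the extra data $\nu_{x,y}$ and $F^\infty$, which are not determined by $f$ alone. Here we use $I$-homogeneity about $0$ together with the inner variation. Take the test field $\phi(|x-z|/r)(x-z)$ in $\mathfrak{I}$ and split $x-z=x-z$, i.e.\ write the radial field about $z$ as the radial field about $0$ minus the constant field $z$. Stationarity under constant-direction fields $\psi(x)z$ is a conservation-law identity: the $z$-derivative of the ``energy density'' $\mathcal{F}(\psi M_{ii}^{\kappa\kappa})$ is controlled by terms of the form $z^TM^{\kappa\kappa}x$. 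By the homogeneity identities $\int x^TM^{\kappa\kappa}x\,\ext\nu=I^2|f|^2$ and $F^\infty(x^TM^{\kappa\kappa}x)=0$, combined with Proposition \ref{prop:grad-Young}(c) and Cauchy--Schwarz for the non-negative quadratic form $M$, the concentration part $F^\infty$ and the defect $\int M\,\ext\nu-Df\otimes Df$ vanish in the direction $x$. Since the functional $\mathcal{D}$ is built from the $z$-translates of these quantities, we aim to conclude that $\mathcal{D}(z,r)=\mathcal{D}(0,r)$, hence $\mathcal{I}(z,r)=\mathcal{I}(0,r)=I$ for all $r$ and $\mathcal{I}(z)=\mathcal{I}(0)$.

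We expect this step to be the main obstacle: one must show that the measure data ($\nu$ and $F^\infty$) are translation invariant along $S_f$, not merely $f$. If the above does not close, the fallback is to run the same homogeneity-via-constant-frequency argument centred at $z$.

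\emph{Inclusion ``$\supset$''.} Suppose $\mathcal{I}(x)=\mathcal{I}(0)=I$. By Proposition \ref{prop:Y-frequency}, $\mathcal{I}(x,r)\ge I$ for all $r$. For $r$ near $1-|x|$ and large scales we compare with $0$: by $I$-homogeneity about $0$ and Proposition \ref{prop:Y-bounds}, $\mathcal{H}(x,R)\sim R^{2I}$ as $R$ grows (working with the homogeneous extension to $\R^n$, where the same arguments apply on every ball). Together with the lower growth bound from Proposition \ref{prop:Y-bounds}, this forces $\mathcal{I}(x,r)\le I$ for all $r$. Hence $\mathcal{I}(x,\cdot)\equiv I$, and Proposition \ref{prop:Y-constant-frequency} shows that $\mathcal{F}$ is $I$-homogeneous about $x$ as well.

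It then follows that $f$ is $I$-homogeneous about both $0$ and $x$. The standard argument, namely $f(x+\lambda w)$ computed two ways and letting $\lambda\to\infty$, gives $f(w+x)=f(w)$, so $x\in S_f$.

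Finally, the last claim of the proposition follows from the first inclusion: for $z\in S_f$ we have $\mathcal{I}(z,\cdot)\equiv I$, and Proposition \ref{prop:Y-constant-frequency} applied at $z$ gives $I$-homogeneity of $\mathcal{F}$ centred at $z$.
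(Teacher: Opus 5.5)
You have a genuine gap in the first inclusion. The step $\mathcal{D}(z,r)=\mathcal{D}(0,r)$ is stated as an aim rather than proved. Your inner-variation/conservation-law sketch never produces an identity. The homogeneity facts you invoke only say that $F^\infty$ and the defect $\int M\,\ext\nu_{x,f^\alpha(x)}-Df^\alpha\otimes Df^\alpha$ annihilate the radial direction about $0$. But $\mathcal{D}(z,r)$ integrates the full trace $M^{\kappa\kappa}_{ii}$ against a cut-off centred at $z$, and nothing in the sketch turns radial annihilation into translation invariance. Your fallback is circular: applying Proposition \ref{prop:Y-constant-frequency} at $z$ presupposes that $\mathcal{I}(z,\cdot)$ is constant, which is exactly what is in question. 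Since you derive the final claim (homogeneity about spine points) from this step, it inherits the gap.

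The missing observation is that your concern about $\nu$ and $F^\infty$ does not apply to $\mathcal{D}$. Test the outer variation $\mathfrak{O}\equiv 0$ with the cut-off centred at $z$ and combine it with \eqref{E:grad-Young-2}. Exactly as in \eqref{E:Y-freq-2}, this gives
\[
\mathcal{D}(z,r)=-r^{1-n}\int\phi^\prime\Big(\frac{|x-z|}{r}\Big)\frac{1}{|x-z|}\sum^Q_{\alpha=1}f^\alpha(x)\cdot Df^\alpha(x)(x-z)\,\ext x .
\]
So $\mathcal{D}$ is a functional of $f$ alone. For $z\in S_f$, the same change of variables you used for $\mathcal{H}$ gives $\mathcal{D}(z,r)=\mathcal{D}(0,r)$, hence $\mathcal{I}(z,\cdot)\equiv I$, which yields both the inclusion and the final claim.

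The paper takes an even shorter route that never touches $\mathcal{D}$. It feeds $\mathcal{H}(z,\cdot)=\mathcal{H}(0,\cdot)$ into the two-sided bounds of Proposition \ref{prop:Y-bounds} to get $\mathcal{I}(0,r)\le\mathcal{I}(z,R)$ and $\mathcal{I}(z,r)\le\mathcal{I}(0,R)$ for $0<r<R$, so $\mathcal{I}(z)=\mathcal{I}(0)$. Homogeneity at $z$ then comes from the argument for the reverse inclusion.

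Your reverse inclusion mirrors the paper's argument with outward rather than inward scaling; the paper uses $\mathcal{I}(z,r)=\mathcal{I}(tz,tr)\le\mathcal{I}(tz,r)\to\mathcal{I}(0,r)=I$ as $t\downarrow 0$. The logic is sound, but your claim that the same arguments apply on every ball hides a step. $\mathcal{F}$, and with it monotonicity and Proposition \ref{prop:Y-bounds}, is only available on $B^n_1(0)$. Moreover, $I$-homogeneity as defined does not obviously make $\nu$ and $F^\infty$ scale-invariant, so you cannot simply extend $\mathcal{F}$ to $\R^n$.

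What legitimises radii $R>1-|x|$ is the exact scaling $\mathcal{H}(tx,tR)=t^{2I}\mathcal{H}(x,R)$ and $\mathcal{D}(tx,tR)=t^{2I}\mathcal{D}(x,R)$; for $\mathcal{D}$ this again uses the displayed formula. This lets you evaluate $\mathcal{I}(x,R)=\mathcal{I}(tx,tR)$ inside the unit ball. Your closing step, that homogeneity about both $0$ and $x$ forces $x\in S_f$, is fine.
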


\begin{proof}
	First notice that the inclusion $S_f\cap B^n_1(0)\subset\{x \in B^n_1(0):\mathcal{I}(x) = \mathcal{I}(0)\}$ is clear. Indeed, if $z\in S_f$, then
	\begin{align*}
	\mathcal{H}(z,r) & = -r^{1-n}\int\phi^\prime\left(\frac{|x-z|}{r}\right)\frac{1}{|x-z|}|f(x)|^2\, \ext x\\
	& = -r^{1-n}\int\phi\left(\frac{|x|}{r}\right)\frac{1}{|x|}|f(x+z)|^2\, \ext x\\
	& = -r^{1-n}\int\phi\left(\frac{|x|}{r}\right)\frac{1}{|x|}|f(x)|^2\, \ext x\\
	& = \mathcal{H}(0,r),
	\end{align*}
	and so by Proposition \ref{prop:Y-bounds}, gives for any $0<r<R<1-|z|$,
	\begin{align*}
		\mathcal{H}(z,R) \leq \left(\frac{R}{r}\right)^{2\mathcal{I}(z,R)}\mathcal{H}(z,r) & = \left(\frac{R}{r}\right)^{2\mathcal{I}(z,R)}\mathcal{H}(0,r)\\
		& \leq \left(\frac{R}{r}\right)^{2(\mathcal{I}(z,R)-\mathcal{I}(0,r))}\mathcal{H}(0,R) =  \left(\frac{R}{r}\right)^{2(\mathcal{I}(z,R)-\mathcal{I}(0,r))}\mathcal{H}(z,R) 
	\end{align*}
	i.e.
	$$\left(\frac{R}{r}\right)^{\mathcal{I}(z,R)-\mathcal{I}(0,r)} \geq 1,$$
	and hence $\mathcal{I}(0,r)\leq \mathcal{I}(z,R)$ for all $0<r<R<1-|z|$. Swapping the roles of $0$ and $z$ in the above argument also gives $\mathcal{I}(z,r) \leq \mathcal{I}(0,R)$ for all $0<r<R<1-|z|$, and hence taking $r\downarrow 0$ followed by $R\downarrow 0$ we see that we need $\mathcal{I}(0) = \mathcal{I}(z)$. This proves the first inclusion.
	
	Now we wish to show that if $z\in B^n_1(0)$ has $\mathcal{I}(z) = \mathcal{I}(0)$, then $z\in S_f$. For this, we will show that $\mathcal{I}(z,r)$ is constant with $r$ (where defined), giving that we can apply Proposition \ref{prop:Y-constant-frequency} to give that $f$ is homogeneous of degree $I \equiv \mathcal{I}(0)$ based at $z$\footnote{Notice that, as $f$ is defined on $\R^n$ from a homogeneous extension on $B^n_1(0)$, it suffices to show this homogeneity within $B^n_1(0)$ in order to get it on $\R^n$.}, from which a standard argument gives $z\in S_f$. Indeed, if we know that, for any $\lambda>0$, $f(\lambda x) = \lambda^I f(x)$ and $f(z+\lambda x) = \lambda^If(z+x)$ for a.e.~$x\in\R^n$, then
	$$f(x+z) = 2^I f\left(\frac{x+z}{2}\right) = 2^If\left(z + \frac{x-z}{2}\right) = f(z+ (x-z)) = f(x)$$
	for a.e.~$x\in \R^n$, and thus $z\in S_f$. So, it suffices to show that $\mathcal{I}(z,r)$ is constant in $r\in (0,1-|z|)$.

    To see this, fix $z\in S_f$ and $r\in (0,1-|z|)$. By the homogeneity of $f$ based at $0$ and \eqref{E:Y-freq-2} (or rather, the analogous formula centered at an arbitrary point rather than $0$) one can easily check that for all $t\in (0,1)$ we have $\mathcal{H}(tz,tr) = t^{2I}\mathcal{H}(z,r)$ and $\mathcal{D}(tz,tr) = t^{2I}\mathcal{D}(z,r)$, and hence $\mathcal{I}(tz,tr) = \mathcal{I}(z,r)$ for any such $z,r,t$. But then by the monotonicity formula
    $$\mathcal{I}(z,r) = \lim_{t\downarrow 0}\mathcal{I}(tz,tr) \leq \lim_{t\to 0}\mathcal{I}(tz,r).$$
    Now, for $r>0$ we have $\mathcal{D}(tz,r)\to \mathcal{D}(0,r)$ and $\mathcal{H}(tz,r)\to \mathcal{H}(0,r)$, and so $\lim_{t\to 0}\mathcal{I}(tz,r) = \mathcal{I}(0,r)$. Therefore,
    $$\mathcal{I}(z,r) \leq \lim_{t\to 0}\mathcal{I}(tz,r) = \mathcal{I}(0,r) = I,$$
    where the last equality is from the $I$-homogeneity. But then since $\mathcal{I}(z,r)$ is increasing and $\lim_{r\to 0}\mathcal{I}(z,r) = I$ by assumption this means that $\mathcal{I}(z,r)$ is constant for $r\in (0,1-|z|)$, which is what we needed to prove in order to complete the proof.
\end{proof}

Finally, we have:

\begin{prop}\label{prop:Y-spine-dimensions}
	Suppose $\mathcal{F}$ as above is $I$-homogeneous for some $I>0$ with $f\not\equiv 0$. Then $\dim S_f \in \{0,1,\dotsc,n-1\}$. Moreover, $\dim S_f = n-1$ only if $I = 1$.
\end{prop}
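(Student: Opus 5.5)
We first rule out $\dim S_f = n$, and then show that $\dim S_f = n-1$ forces $I=1$.

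\textbf{The case $\dim S_f = n$.} If $S_f = \R^n$, then $f$ is invariant under all translations, so $f$ is a.e.\ equal to a constant $c\in \A_Q(\R^k)$. By $I$-homogeneity with $I>0$, $f(\lambda x) = \lambda^I f(x)$ for all $\lambda>0$. Letting $\lambda\to 0$ forces $c = Q\llbracket 0\rrbracket$, so $f\equiv 0$, contradicting the hypothesis. Hence $\dim S_f\leq n-1$.

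\textbf{The case $\dim S_f = n-1$.} After a rotation we may take $S_f = \{0\}\times\R^{n-1}$. Then $f(x) = g(x_1)$ for a one-variable function $g$ with $g(\lambda t) = \lambda^I g(t)$ for $\lambda>0$. So on each half-line $f$ is of the form $|x_1|^I a_\pm$, for fixed $a_+,a_-\in\A_Q(\R^k)$ on $\{x_1>0\}$ and $\{x_1<0\}$, not both $Q\llbracket 0\rrbracket$.

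\begin{enumerate}
\item By Proposition \ref{prop:Y-spine}, $\mathcal{F}$ is $I$-homogeneous about every point of $S_f$. Together with translation invariance, this should make the Young measure data (the $\nu_{x,f^\alpha(x)}$ and $F^\infty$) invariant in the $x'$-directions. We would justify this via a blow-down/translation argument using the homogeneity identities.
\item Test stationarity with the inner variation $\mathfrak{I}(\phi)=0$, taking $\phi = \psi(x_1)\chi(x')e_1$ with $\chi$ a slowly varying cutoff, and likewise with the outer variation. Using Proposition \ref{prop:grad-Young}, $M$ is nonnegative definite with mean $\ge p\otimes p$. Write $m_{ij} := \sum_\kappa M^{\kappa\kappa}_{ij}$, averaged over $\nu$ and $F^\infty$. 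The inner variation in direction $e_1$ then says $2m_{11} - \mathrm{tr}\, m$ is constant in $x_1$ in the distributional sense, including the singular part.
\item The $I$-homogeneity gives $\int x^T M^{\kappa\kappa} x\,\ext\nu = I^2|f^\alpha|^2$ and $F^\infty(x^TM^{\kappa\kappa}x)=0$. This means the $x_1x_1$ entries are $\sim I^2|a_\pm|^2|x_1|^{2I-2}$ and $F^\infty$ carries no $m_{11}$ mass. The transverse entries $m_{jj}$, $j\ge 2$, should vanish, since $f$ does not depend on $x'$. Then $m_{11}$ equals $|x_1|^{2I-2}$ times a constant, and this must be constant in $x_1$ by the previous step. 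Hence $2I-2=0$, i.e.\ $I=1$.
\end{enumerate}

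\textbf{Alternative via the frequency identity.} If the inner-variation route is messy, we would instead compute the frequency directly. By \eqref{E:Y-freq-3} and homogeneity, $\mathcal{D}(0,r)=I\mathcal{H}(0,r)$. The outer variation gives $\mathcal{D} = \int \phi(|x|/r)|Df|^2$ plus possible defect, and homogeneity forces the defect to vanish. Computing $\mathcal{D}$ and $\mathcal{H}$ for $f=|x_1|^I a_\pm$, and using the inner-variation Pohozaev identity \eqref{E:Y-freq-1} with constancy in $x'$, should yield $(2-n)+2I = \ldots$, a relation that holds only when $I=1$.

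\textbf{Main obstacle.} The hardest part is step 1 together with the vanishing of the transverse entries $m_{jj}$, $j\geq 2$. Translation invariance of $f$ does not by itself give translation invariance of the defect measures $\nu$ and $F^\infty$. We would try to deduce it from the equality case in the spine proposition: homogeneity about two distinct points of $S_f$ forces the $\nu$-moments of $p\cdot z$ to vanish for $z\in S_f$, by the same Cauchy--Schwarz equality argument as in Proposition \ref{prop:Y-constant-frequency}.
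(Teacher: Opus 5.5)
Your outline matches the paper's: remove all energy in the spine directions, use the inner variation to make what remains translation invariant, and read off $I=1$. But the step everything hinges on is not established, and the version you sketch would not close.

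\textbf{Why the sketch does not close.}
\begin{itemize}
\item Independence of $f$ from $x'$ does not force the transverse entries $M^{\kappa\kappa}_{jj}$, $j\ge 2$, to vanish. Proposition~\ref{prop:grad-Young} only gives $\int M^{\kappa\kappa}_{jj}\,\ext\nu\ge|\partial_jf|^2=0$. Oscillation and concentration in those directions are exactly what a Young measure can carry while $f$ stays a function of $x_1$.
\item Your Step~1 (invariance of $\nu$ and $F^\infty$ in $x'$) is neither proved nor needed.
\item Your fix under ``main obstacle'' points at the right mechanism, but it only concerns moments of $\nu$. The inner variation $\mathfrak{I}(\psi e_1)=\mathcal{F}\bigl((2M^{\kappa\kappa}_{i1}-\delta_{i1}M^{\kappa\kappa}_{\ell\ell})\partial_i\psi\bigr)$ also sees the concentration part. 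If $F^\infty$ charged $M^{\kappa\kappa}_{jj}$ or the mixed entries $M^{\kappa\kappa}_{1j}$, you would only get an identity mixing $m_{11}$ with these unknown quantities, and could not isolate $I$.
\item $F^\infty(x^TM^{\kappa\kappa}x)=0$ does not say that $F^\infty$ has no $M_{11}$-mass. Even after the transverse entries are gone, it only places that mass on $\{x_1=0\}$.
\item A slowly varying cutoff in $x'$ has no room inside $B^n_1(0)$. It also becomes unnecessary once the transverse and mixed entries vanish.
\end{itemize}

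\textbf{The missing argument, run for the full $\mathcal{F}$ (this is the paper's proof).} By Proposition~\ref{prop:Y-spine}, $\mathcal{F}$ is $I$-homogeneous about every $te_i$ with $i\ge 2$ and $|t|<1$. The equality case of the monotonicity then gives, for $\psi\in C^0_c(B^n_1(0))$,
\[
\mathcal{F}\bigl(\psi(x)(x-te_i)^TM^{\kappa\kappa}(x-te_i)\bigr)=\int\psi(x)\sum_{\alpha=1}^Q|Df^\alpha(x)(x-te_i)|^2\,\ext x .
\]
The right side is independent of $t$ because $\partial_if=0$. So the $t^2$-coefficient gives $\mathcal{F}(\psi M^{\kappa\kappa}_{ii})=0$, for $\nu$ and $F^\infty$ simultaneously. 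The remaining steps are:
\begin{itemize}
\item Cauchy--Schwarz along a generating sequence, $|\mathcal{F}(\psi M^{\kappa\kappa}_{ij})|\le\mathcal{F}(|\psi|M^{\kappa\kappa}_{ii})^{1/2}\mathcal{F}(|\psi|M^{\kappa\kappa}_{jj})^{1/2}$, kills the mixed entries.
\item The inner variation for arbitrary vector fields then reduces to $\mathcal{F}(M^{\kappa\kappa}_{11}\,\mathrm{div}\,\phi)=0$. Hence $\psi\mapsto\mathcal{F}(\psi M^{\kappa\kappa}_{11})$ is a constant multiple of Lebesgue measure.
\item The paper finishes without using the form of $f$. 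This gives $\mathcal{D}(r)=Dr^2$, and \eqref{E:Y-freq-3} together with $\mathcal{H}(0^+)=0$ gives $\mathcal{H}(r)=Dr^2$, so $\mathcal{I}\equiv 1$.
\end{itemize}

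\textbf{Your endings.} Your own finish also works once the above is in place: the absolutely continuous density $I^2|a_\pm|^2|x_1|^{2I-2}$ of $m_{11}$ must be constant, and any $F^\infty$ contribution lives on the null set $\{x_1=0\}$.

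Your alternative route, by contrast, rests on a false claim. Homogeneity about a single centre only kills the radial part of the defect, namely $\int x^TM^{\kappa\kappa}x\,\ext\nu_{x,f^\alpha(x)}=|Df^\alpha(x)\,x|^2$ and $F^\infty(x^TM^{\kappa\kappa}x)=0$. It does not kill the whole defect.
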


\begin{proof}
	Since $S_f$ is a subspace of $\R^n$ we know that $\dim S_f \in \{0,1,\dotsc,n\}$. However, if $\dim S_f = n$, then $f$ is constant almost everywhere, which being homogeneous of degree $I>0$ would mean that $f\equiv 0$, contradicting our assumption that $f\not\equiv 0$. Thus, $\dim S_f \in \{0,1,\dotsc,n-1\}$.
	
	Now suppose $\dim S_f = n-1$: we want to show that $I = 1$. Without loss of generality choose coordinates of $\R^n$ so that $S_f = \{0\}\times\R^{n-1}$, and thus $f(x) \equiv f(x_1)$. By Proposition \ref{prop:Y-spine}, we know that $\mathcal{I}(z,r)\equiv I$ for all $z\in S_f$. As $S_f$ is a subspace, this means that for any $t\in \R$ and $i=2,\dotsc,n$, if $e_i$ denotes the $i^{\text{th}}$ standard basis vector of $\R^n$, then for any $\psi\in C^0_c(B^n_1(0))$:
	$$\mathcal{F}(\psi(x)(x-te_i)^TM^{\kappa\kappa}(x-te_i)) = \int\psi(x)\sum^Q_{\alpha=1}|Df(x)(x-te_i)|^2.$$
	Indeed, this follows from (the proof of) Proposition \ref{prop:Y-frequency}, as the derivative of $r\mapsto \mathcal{I}(te_i,r)$ is zero (cf.~Proposition \ref{prop:Y-constant-frequency}). Differentiating the above expression twice in $t$ and using that $f$ is independent of $x_i$ then gives
    \begin{equation}\label{E:Y-spine-n-1-1}
        \mathcal{F}(\psi(x)M_{ii}^{\kappa\kappa}) = 0
    \end{equation}
	for such $i$ (we stress that here the index $\kappa$ is being summed over but the index $i$ is not). It follows that
    \begin{equation}\label{E:Y-spine-n-1-2}
         \mathcal{F}(\psi(x)M^{\kappa\kappa}_{ij}) = 0
    \end{equation}
    provided at least one of $i,j$ is in $\{2,\dotsc,n\}$. Indeed, to see this, let $(f_p)_p$ be the sequence in $W^{1,2}(B^n_1(0);\A_Q(\R^k))$ obeying $\mathcal{E}_{f_p}\to \mathcal{F}$. Then, for any such $\psi$, we have
    \begin{align*}
        \left|\mathcal{E}_{f_p}(\psi(x)M^{\kappa\kappa}_{ij})\right| & = \left|\int_{B^n_1(0)}\psi(x)\del_if^\kappa_p\del_jf^\kappa_p\, \ext x\right|\\
        & \leq \int_{B^n_1(0)}|\psi(x)|\|\del_if_p\|\cdot\|\del_jf_p\|\, \ext x\\
        & \leq \left(\int_{B^n_1(0)}|\psi(x)| \|\del_if_p\|^2\right)^{1/2} \left(\int_{B^n_1(0)}|\psi(x)| \|\del_jf_p\|^2\right)^{1/2}\\
        & = \sqrt{\mathcal{E}_{f_p}(|\psi(x) |M_{ii}^{\kappa\kappa})}\sqrt{\mathcal{E}_{f_p}(|\psi(x)|M_{jj}^{\kappa\kappa})}.
    \end{align*}
    Thus, taking $p\to\infty$ we get
    $$\left|\mathcal{F}(\psi(x)M_{ij}^{\kappa\kappa})\right| \leq \sqrt{\mathcal{F}(|\psi(x)|M_{ii}^{\kappa\kappa})} \sqrt{\mathcal{F}(|\psi(x)|M_{jj}^{\kappa\kappa})}.$$
    Hence, \eqref{E:Y-spine-n-1-2} follows from this and \eqref{E:Y-spine-n-1-1}.

    By \eqref{E:Y-spine-n-1-2}, the inner variation formula for $\mathcal{F}$ reduces to
    $$\mathcal{F}(2M_{11}^{\kappa\kappa}\del_1\phi^1 - M_{11}^{\kappa\kappa}\div(\phi)) = 0$$
    for all $\phi = (\phi^1,\dotsc,\phi^n)\in C^1_c(B^n_1(0);\R^n)$. Replacing $\phi^1$ by $-\phi^1$ in the above we get
    $$\mathcal{F}(M^{\kappa\kappa}_{11}\div(\phi)) = 0 \qquad \text{for all }\phi\in C^1_c(B^n_1(0);\R^n).$$
    Now define $\mathscr{G}:C^0_c(B^n_1(0))\to \R$ to be the continuous linear functional defined by $\mathscr{G}(\psi):= \mathcal{F}(\psi(x)M^{\kappa\kappa}_{11})$. By the Riesz representation theorem, we can regard $\mathscr{G}$ as a (signed) Radon measure. By the above, we know that $\mathscr{G}(\div(\phi))=0$ for all $\phi\in C^1_c(B^n_1(0);\R^n)$, i.e.~the distributional derivative of $\mathscr{G}$ is zero. By a standard argument (using mollification to approximate $\mathscr{G}$ by a smooth function with zero derivative) it follows that $\mathscr{G}$ is a constant multiple of the Lebesgue measure, i.e.~there exists a constant $m\geq 0$ such that
    $$\mathscr{G}(\psi) = \int_{B^n_1(0)}m\psi(x)\, \ext x \qquad \text{for all }\psi\in C^0_c(B^n_1(0)).$$
    So by definition,
    \begin{equation}\label{E:Y-spine-n-1-3}
    \mathcal{F}(\psi(x)M^{\kappa\kappa}_{11}) = \int_{B^n_1(0)}m\psi(x)\, \ext x \qquad \text{for all }\psi\in C^0_c(B^n_1(0)).
    \end{equation}
    But \eqref{E:Y-spine-n-1-3} implies, by the definition of $\mathcal{D}(r)$, that $\mathcal{D}(r) = Dr^2$ for some $D\geq 0$. Then recalling \eqref{E:Y-freq-3}, we know that $\mathcal{H}^\prime(r) = \frac{2}{r}\mathcal{D}(r) = 2Dr$ and so since $\lim_{r\to 0}\mathcal{H}(r) = 0$ (by Proposition \ref{prop:Y-bounds}) we therefore get that $\mathcal{H}(r) = Dr^2$ (in particular, $D>0$ as $\mathcal{H}(r)>0$). But then $\mathcal{H}(r) = \mathcal{D}(r)$ giving that $\mathcal{I}(r) = 1$ for all $r>0$, meaning that $I = \lim_{r\to 0}\mathcal{I}(r) = 1$, as claimed.
\end{proof}

\bibliographystyle{alpha} 
\bibliography{references}

\end{document}